\let\oldvec\vec
\let\newvec\vec
\let\vec\oldvec
\DeclareSymbolFont{cmcal}{OMS}{cmsy}{m}{n}
\DeclareSymbolFontAlphabet{\mathcal}{cmcal}
\let\vec\newvec
\spnewtheorem{notation}[definition]{Notation}{\bfseries}{\itshape}
\patchcmd{\@citeo}{\hskip0.1em}{\kern0.1em}{}{}
\patchcmd{\@citex}{\hskip0.1em}{\kern0.1em}{}{}
\newcommand{\eq}[1]{eq.\ (\ref{#1})}
\newcommand{\eqn}[2]{
\begin{equation}\label{#1}
#2
\end{equation}
}
\newcommand{\eqnalign}[2]{
	\begin{equation}
	\begin{aligned}\label{#1}
	#2
	\end{aligned}
	\end{equation}
}
\newcommand{\clapsubstack}[1]{
  \mathclap{\substack{#1}}
}
\newcommand*{\Xbar}[1]{}%
\DeclareRobustCommand*{\Xbar}[1]{%
  \mathpalette\@Xbar{#1}%
}
\newcommand*{\@Xbar}[2]{%
  \sbox0{$#1\mathrm{#2}\m@th$}%
  \sbox2{$#1#2\m@th$}%
  \rlap{%
    \hbox to\wd2{%
      \hfill
      $\overline{%
        \vrule width 0pt height\ht0 %
        \kern\wd0 %
      }$%
    }%
  }%
  \copy2 %
}
\newcommand*{\Frozenbar}[1]{}%
\DeclareRobustCommand*{\Frozenbar}[1]{%
  \mathpalette\@Frozenbar{#1}%
}
\newcommand*{\@Frozenbar}[2]{%
  \sbox0{$#1\mathrm{W}\m@th$}%
  \sbox2{$#1#2\m@th$}%
  \rlap{%
    \hbox to\wd2{%
      \hfill
      $\overline{%
        \vrule width 0pt height\ht0 %
        \kern\wd0 %
      }$%
    }%
  }%
  \copy2 %
}
\def\eqalign#1{\null\,\vcenter{\openup\jot\m@th
\ialign{\strut\hfil$\displaystyle{##}$&$\displaystyle{{}##}$\hfil
\crcr#1\crcr}}\,}
\SetMathAlphabet{\mathbf}{normal}{OML}{mdput}{b}{n}
\def\mb#1{{\mathbf{#1}}}
\def\bm#1{{\boldsymbol{#1}}}
\newcommand{\cdotop}{\!\!\cdot\!}
\newcommand{\BCFTA}{\mathrm{BCFTA}}
\newcommand{\naturalqed}{
\leavevmode\unskip\penalty9999 \hbox{}\nobreak\hfill
    \quad\hbox{$\natural$}
}
\def\a{\alpha}
\def\b{\beta}
\def\d{\delta}  
\def\e{\varepsilon} \def\ep{\epsilon}
\def\g{\gamma}  \def\G{\Gamma}
\def\k{\varkappa}
\def\l{\lambda}  \def\La{\Lambda}
\def\m{\mu}
\def\n{\nu}
\def\r{\rho}
\def\vr{\varrho}
  \def\O{\Omega}
\def\p{\psi}  
\def\s{\sigma}  \def\vs{\varsigma} \def\S{\Sigma}
\def\th{\theta}  \def\vt{\vartheta}
\def\t{\tau}
\def\w{\varphi}
\def\CF{{\cal F}}
\def\CZ{{\cal Z}}
\def\R{\mathbb{R}}
\def\C{\mathbb{C}}
\def\Z{\mathbb{Z}}
\def\I{\mathbb{I}}
\def\mB{\mathfrak{B}}
\def\mq{\mathfrak{q}}
\def\mP{\mathfrak{P}}
\def\mp{\mathfrak{p}}
\def\mC{\mathfrak{C}}
\def\mc{\mathfrak{c}}
\def\mm{\mathfrak{m}}
\def\mL{\mathfrak{L}}
\def\mg{\mathfrak{g}}
\def\mG{\mathfrak{G}}
\def\mf{\mathfrak{f}}
\def\mF{\mathfrak{F}}
\def\md{\mathfrak{d}}
\def\mD{\mathfrak{D}}
\def\mK{\mathfrak{K}}
\def\sB{\mathscr{B}}
\def\sC{\mathscr{C}}
\def\sD{\mathscr{D}}
\def\sF{\mathscr{F}}
\def\sK{\mathscr{K}}
\def\sG{\mathscr{G}}
\def\sL{\mathscr{L}}
\def\sM{\mathscr{M}}
\def\sP{\mathscr{P}}
\def\sU{\mathscr{U}}
\def\sV{\mathscr{V}}
\def\rd{\partial}
\newcommand{\Fr}[2]{\dfrac{#1}{#2}}
\newcommand{\fr}[2]{{\tfrac{#1}{#2}}}
\def\pr{\prime}
\def\ppr{{\prime\!\prime}}
\DeclareMathOperator{\Ker}{Ker}
\DeclareMathOperator{\proj}{pr}
\DeclareMathOperator{\eb}{eb}
\let\Im\relax
\DeclareMathOperator{\Im}{Im}
\def\gh{\mathit{gh}}
\def\cl{\mathit{cl}}
\def\Hom{\mathit{Hom}}
\def\Der{\mathit{Der}}
\def\Diff{\mathit{Diff}}
\def\End{\mathit{End}}
\def\Aut{\mathit{Aut}}
\def\kbar{{\color{blue}\hbar}}
\def\kkbar{[\![\kbar]\!]}
\newcommand{\category}[1]{\textbf{#1}}
\def\BQ{\mathit{\color{blue}BQFTA}}
\def\BC{\mathit{\color{blue}BCFTA}}
\newcommand{\Perm}{\mathrm{Perm}}
\DeclareRobustCommand*\uell{\mathpalette\@uell\relax}
\newcommand*\@uell[2]{
  \setbox0=\hbox{$#1\ell$}
  \setbox1=\hbox{\rotatebox{10}{$#1\ell$}}
  \dimen0=\wd0 \advance\dimen0 by -\wd1 \divide\dimen0 by 2
  \mathord{\lower 0.1ex \hbox{\kern\dimen0\unhbox1\kern\dimen0}}
}
\def\bell{\pmb{\uell}}
\begin{document}


\title{Homotopical Computations in Quantum Fields Theory}
\author{
Jae-Suk Park\thanks{jaesuk@postech.ac.kr}
}

\institute{
Department of Mathematics, POSTECH, Pohang 37673, Republic of Korea
}

%
%

\maketitle

\begin{abstract}
This paper is a mathematical study of quantum correlation functions
in quantum field theory within a homotopy algebraic framework motivated from the BV 
quantization scheme. We
characterize quantum correlation functions
by algebraic homotopy theoretical methods which circumvent gauge fixing and perturbative Feynman diagrams.
We show that there is a universal algebraic structure, 
closely related with that of the WDVV equation,
governing quantum correlation functions 
of every quantum field theory in our framework up to a certain ambiguity.
The algebraic structure  is independent of
the details of the quantum expectation, other than
its existence with the prescribed symmetry, and comes with a concrete
algorithm for explicit computations.
We will also make  proposals for the precise  natures 
of quantum expectation and physical equivalence of quantum field theories.
\end{abstract}
{MSC (2010): 81T99,55P99.}

\newpage

\setcounter{tocdepth}{3}
\tableofcontents
\parskip=1.8mm
\parindent=0mm

%
%
%
%

\newpage

\section{Introduction}

\subsection{Background}

The  Batalin-Vilkovisky (BV) quantization scheme of classical field theory
is  a versatile framework
in dealing with a complete tower of infinitesimal
classical symmetries
--- the (gauge) symmetry of a classical action $S_{\cl}$, symmetries of the symmetry, etc.---
which provides physicists with not only  a general method of gauge fixing
but also a criterion for independence of  path integrals 
from gauge choices, so that  one may compute quantum correlation functions
by enumerating  Feynman diagrams after a suitable gauge choice \cite{BV}.
But ultimately, the scheme
is a homological algebraic
implementation of 
an infinitesimal symmetry of the quantum expectation 
that satisfies a Schwinger--Dyson type equation \cite{Dys,Sch,Henn,Park15}:
that is, the (non-existing) path integral measure is translation invariant before being twisted by $e^{-\fr{1}{\kbar}S_{\cl}}$.

Exploiting such a symmetry systematically,  
we attempt to  characterize quantum correlation functions
by algebraic homotopy theoretical methods which circumvent gauge fixing and perturbative Feynman diagrams.
We show that there is a universal algebraic structure, 
closely related with that of the WDVV equation,
governing quantum correlation functions 
of every quantum field theory in our framework up to a certain ambiguity.
The algebraic structure  is independent of
the details of the quantum expectation, other than
its existence with the prescribed symmetry, and comes with a concrete
algorithm for explicit computations.
We will also make  proposals for the precise  natures 
of quantum expectation and physical equivalence of quantum field theories.
We will treat the Planck constant  $\kbar$ as a formal parameter
but our main results on the universal algebra governing quantum correlations will be exact.

Algebraically, working over a ground field $\Bbbk$ of characteristic zero,  
we regard a BV quantization of a classical field theory as a blackbox producing:
\begin{itemize}
\item
a unital $\Z$-graded commutative and associative
algebra $\big(\sC, 1_\sC, \,\cdot\,\big)$ over $\Bbbk$,
\item an odd second order algebraic  differential operator
$\Delta$, satisfying $\Delta^2=\Delta 1_\sC=0$ 
whose failure to be a derivation
of the product $\cdot$ defines an odd Poisson bracket $(\ ,\,)_{\mathit{BV}}$,
and 
\item a quantum master action
$\mb{S}=S + \kbar S^{(1)}+\kbar^2 S^{(2)}+\cdots  \in \sC[\![\kbar]\!]$,
which is a solution to the quantum master equation: 
\[
\Delta e^{-\fr{1}{\kbar}\mb{S}}=0\Longleftrightarrow -\kbar\Delta \mb{S} +\Fr{1}{2}(\mb{S}, \mb{S})_{\mathit{BV}}=0
,
\]
and whose classical limit $S$ incorporates both the classical action $S_{\cl}$ and a complete tower of infinitesimal classical symmetries.\footnote{Given a classical field theory such as Yang--Mills theory, setting up and working out  its BV quantization are important mathematical challenges, for which we refer to Costello's book \cite{Cos}.}
\end{itemize}

We can organize those outcomes as a tuple $\big(\sC[\![\kbar]\!], 1_\sC, \,\cdot\,, \mb{K}\big)$, where $\big(\sC[\![\kbar]\!], 1_\sC, \;\cdot\;\big)$ is the topologically-free
$\Bbbk{\kkbar}$-algebra generated by $\big(\sC, 1_\sC, \,\cdot\,\big)$  and
$\mb{K}:\sC{\kkbar}\rightarrow \sC{\kkbar}$ is a $\Bbbk{\kkbar}$-linear differential defined by
$\mb{K}\coloneqq{} -\kbar \Delta +(\mb{S},\,)_{\mathit{BV}}$ 
which satisfies $\mb{K}^2=\mb{K}1_\sC=0$ and,
for all homogeneous $\bm{x}_1,\bm{x}_2 \in \sC{\kkbar}$, 
\[
\mb{K}\big(\bm{x}_1\cdot \bm{x}_2\big)
- \mb{K}\bm{x}_1\cdot \bm{x}_2 -(-1)^{|\bm{x}_1|}\bm{x}_1\cdot \mb{K}\bm{x}_2 
=(-\kbar)\big(\bm{x}_1,\bm{x}_2\big)_{\mathit{BV}},
\]
while $(\ ,\,)_{\mathit{BV}}$ is a derivation of the product.
To summarize 
all those properties, we 
say that the tuple $\big(\sC[\![\kbar]\!], 1_\sC, \,\cdot\,, \mb{K}\big)$ is a BV-QFT algebra with quantum descendant
$\big(\sC[\![\kbar]\!], 1_\sC, \mb{K}, (\ ,\,)_{\mathit{BV}}\big)$,
which is a topologically-free unital sDGLA over $\Bbbk{\kkbar}$.\footnote{The acronym sDGLA stands for 
shifted differential graded Lie algebra; in such an algebra, the Lie bracket has degree $1$. This is a special case of an $sL_\infty$-algebra --- a homotopy Lie algebra with degrees shifted by one.}
Then a {\em quantum expectation} can be
interpreted as a cochain map $\bm{\mc}: \sC[\![\kbar]\!]\rightarrow \Bbbk[\![\kbar]\!]$ from the pointed cochain complex
$(\sC[\![\kbar]\!], 1_\sC, \mb{K})$ to  the pointed cochain complex $(\Bbbk[\![\kbar]\!], 1, 0)$ with zero differential,
both over $\Bbbk{\kkbar}$. The condition $\bm{\mc}(1_\sC)=1$ is a normalization and
the condition $\bm{\mc}\circ \mb{K}=0$ is the homological algebraic condition defining an {\em infinitesimal symmetry of the quantum expectation}. 
Another quantum expectation 
$\tilde{\bm{\mc}}$ is physically equivalent to $\bm{\mc}$
if they are homotopic $\bm{\mc}\sim \tilde{\bm{\mc}}$ as pointed cochain maps
or, equivalently, if they have the same cochain homotopy type $[\bm{\mc}]=[\tilde{\bm{\mc}}]$.
Such a variation of quantum expectation  
within its homotopy type corresponds to a change of gauge fixing in  the BV quantization
scheme and every quantum correlation function should be invariant of the homotopy type of the quantum expectation. 

For example, we can consider a quantum observable  as a homogeneous element $\mb{O} \in \sC[\![\kbar]\!]$
satisfying $\mb{K}\mb{O}=0$ so that its expectation value $\langle\mb{O} \rangle_{\bm{\mc}}\coloneqq{} \bm{\mc}(\mb{O})$
depends only on the cochain homotopy type of  the quantum expectation  $\bm{\mc}$, i.e, $\langle\mb{O}\rangle_{\bm{\mc}}=
\langle\mb{O}\rangle_{\tilde{\bm{\mc}}}$ whenever  $\bm{\mc}\sim \tilde{\bm{\mc}}$.
Let $\mb{O}$ and $\tilde{\mb{O}}$ be quantum observables that belong to the same $\mb{K}$-cohomology class. 
Then, they have the same quantum expectation value $\langle\mb{O}\rangle_{\bm{\mc}}=\langle\tilde{\mb{O}}\rangle_{\bm{\mc}}$.
Therefore, we may consider the space of equivalence classes of quantum observables as the cohomology
$\mb{H}$ of the cochain complex $\big(\sC{\kkbar}, \mb{K}\big)$ on which the homotopy class of  quantum expectation
$\bm{\mc}$ induces
uniquely a $\Bbbk{\kkbar}$-linear map $\mb{\iota}:\mb{H}\rightarrow \Bbbk{\kkbar}$ such that $\mb{\iota}(1_{\mb{H}})=1$,
where $1_{\mb{H}}$ is the cohomology class of $1_\sC$.  Then, we might try to introduce an algebraic structure
on $\mb{H}$ governing quantum correlation functions  between and among quantum observables thought of as pairs, triples, and so on. However, we immediately run into an interesting conundrum. 

To wit, the products $\mb{O}\!\cdot\!\mb{O}$ and $\tilde{\mb{O}}\!\cdot\!\tilde{\mb{O}}$ of quantum
observables $\mb{O}$ and $\tilde{\mb{O}}$, which are assumed to have the same $\mb{K}$-cohomology class,  are not
quantum observables in general  and, even if they happen to be quantum observables by some accident, they may  belong to different  $\mb{K}$-cohomology classes, i.e.,
in general $\langle\mb{O}\!\cdot\!\mb{O}\rangle_{\bm{\mc}} \neq  \langle \tilde{\mb{O}}\!\cdot\!\tilde{\mb{O}}\rangle_{\bm{\mc}}$.
Therefore the naive
definition of $n$-fold quantum correlation functions among quantum observables as the quantum expectation value of
$n$-fold products of quantum observables is inadequate.
These problems in general originate from  the nature of the differential $\mb{K}$
which is not a derivation of the product. Rather the failure of $\mb{K}$ to be a derivation of the product $\cdot$ is divisible by $\kbar$. This property, on the other hand, is crucial in capturing quantum correlations:
Assume that $\mb{K}$
is a derivation of the product and
$\mb{O} - \langle\mb{O}\rangle_{\bm{\mc}}\cdot 1_\sC=\mb{K}\mb{\l}$. 
Then, the variance $\left\langle\big(\mb{O} - \langle\mb{O}\rangle_{\bm{\mc}}\cdot 1_\sC\big)^2\right\rangle_{\bm{\mc}}
\equiv\langle\mb{O}\!\cdot\!\mb{O}\rangle_{\bm{\mc}} - \langle\mb{O}\rangle_{\bm{\mc}}^2$ 
of any quantum observable $\mb{O}$ should always be zero. But we expect that the variance vanishes identically only in the classical limit
in the presence of quantum correlation.

A partial resolution of the above problems can be accomplished by adopting  the notion 
of a homotopical family of quantum observables.
We can regard a non-empty set $\{\mb{O}_a\}_{a \in I}$ of quantum observables as the image of 
a basis $\{e_a\}_{a \in I}$ of a $\Z$-graded vector space $V$ under a cochain map 
$\mb{\w}_1:\big(V{\kkbar}, 0\big) \rightarrow \big(\sC{\kkbar}, \mb{K}\big)$, 
i.e. $\mb{\w}_1:V{\kkbar}\rightarrow \sC{\kkbar}$ is a degree zero map satisfying 
$\mb{K}\circ\mb{\w}_1=0$ and $\mb{O}_a = \mb{\w}_1(e_a)$, for all $ a \in I$.  
On the other hand,
a homotopical  family of quantum observables is defined to be an $sL_\infty$-morphism 
$\xymatrix{\underline{\mb{\w}}: \big(V{\kkbar}, \underline{0}\big) \ar@{..>}[r] 
& \big(\sC[\![\kbar]\!], \mb{K}, (\ ,\,)_{\mathit{BV}}\big)}$,
where $\underline{\mb{\w}}= \mb{\w}_1, \mb{\w}_2, \ldots$
and $\mb{\w}_n=\w^{(0)}_n +\kbar \w^{(1)}+\ldots$ is a family of degree preserving $\Bbbk$-linear maps, 
parametrized by $\kbar$,
from the $n$th (super)symmetric power $S^n\! V$ of $V$ to $\sC$, i.e., $\mb{\w} \in \Hom\big(S^n\! V, \sC)^{0}{\kkbar}$.
Then,  there is an associated  family
$\underline{\mb{\Pi}}^{{\mb{\w}}}={\mb{\Pi}}^{{\mb{\w}}}_1, {\mb{\Pi}}^{{\mb{\w}}}_2,\ldots$ of  {\em quantum correlators},
where ${\mb{\Pi}}^{{\mb{\w}}}_n\in \Hom\big(S^n V, \sC)^{0}{\kkbar}$
satisfies
$\mb{K}\circ {\mb{\Pi}}^{{\mb{\w}}}_n=0$,
and the family of (joint) quantum moments can be defined as the following:
\[
\Big\{\left\langle {\mb{\Pi}}^{{\mb{\w}}}_n(e_{a_1}, \ldots, e_{a_n})\right\rangle_{\bm{\mc}}\Big|n\geq 1 \,;\, a_1,\ldots,a_n \in I\Big\}.
\]
This family is an invariant of the homotopy type of the quantum expectation $\bm{\mc}$. 
We also have 
$\left\langle {\mb{\Pi}}^{{\mb{\w}}}_n(e_{a_1}, \ldots, e_{a_n})\right\rangle_{\bm{\mc}} 
= \left\langle {\mb{\Pi}}^{\tilde{\mb{\w}}}_n(e_{a_1}, \ldots, e_{a_n})\right\rangle_{\bm{\mc}}$
for all $n\geq 1$ and $a_1,\ldots,a_n \in I$ whenever the two corresponding $sL_\infty$-morphisms $\underline{\mb{\w}}$ and 
$\underline{\tilde{\mb{\w}}}$ are homotopic. 
We can also view the homotopical family of 
quantum observables as the simultaneous quantum correction 
${\mb{\Pi}}^{{\mb{\w}}}_n(e_{a_1}, \ldots, e_{a_n})$ of every multiple product $\mb{O}_{a_1}\cdot\ldots \cdot \mb{O}_{a_n}$ 
of quantum observables in the set  $\{\mb{O}_a\}_{a \in I}$. For example, we have  
\begin{align*}
{\mb{\Pi}}^{{\mb{\w}}}_2(e_{a_1},e_{a_2})=& \mb{O}_{a_1}\cdotop\mb{O}_{a_2} -\kbar  \mb{O}_{a_1 a_2}
,\\
{\mb{\Pi}}^{{\mb{\w}}}_3(e_{a_1},e_{a_2}, e_{a_3})=& 
\mb{O}_{a_1}\cdotop  \mb{O}_{a_2}\cdotop  \mb{O}_{a_3}  
-\kbar  \mb{O}_{a_1 a_2}\cdotop  \mb{O}_{a_3}
-\kbar  \mb{O}_{a_1}\cdotop  \mb{O}_{a_2a_3}
-\kbar (-1)^{|\mb{O}_{a_1}||\mb{O}_{a_2}|} \mb{O}_{a_2}\cdotop  \mb{O}_{a_1a_3}
\\
&
+\kbar^2 \mb{O}_{a_1 a_2 a_3}
,
\end{align*}
etc., where $\mb{O}_{a_1\ldots \a_n}\coloneqq{}\mb{\w}_n(e_{a_1}, \ldots, e_{a_n})$. 
A homotopical family of quantum observables also corresponds to 
a formal deformation of  the quantum master action $\mb{S}$ to a new
quantum master action $\mb{S} + \mb{\Theta}^{\mb{\w}}$,  
where $\mb{\Theta}^{\mb{\w}} =t^a \mb{\w}_1(e_a) +\Fr{1}{2!}t^{a_1} t^{a_2} \mb{\w}_2(e_{a_1}, e_{a_2})+\ldots$
and  $\{t^a\}_{a\in I}$ are variables dual to $\{e_a\}_{a\in I}$.\footnote{We use Einstein summation convention throughout this paper.} 
In the deformation we have $\Delta e^{-\fr{1}{\kbar}\mb{S} + \mb{\Theta}^{\mb{\w}}}=0$ and 
homotopic $sL_\infty$-morphisms  produce equivalent deformations.  Then, we have the following generating function for the quantum moments --- depending only on the homotopy types
of $\bm{\mc}$ and $\underline{\mb{\w}}$ ---
\[
\qquad
\bm{\mc}\left( e^{-\fr{1}{\kbar} \mb{\Theta}^{\mb{\w}}}\right)
=1 +\sum_{\mathclap{n=1}}^\infty \Fr{1}{n!}t^{a_n}\cdots t^{a_1}
\left\langle {\mb{\Pi}}^{{\mb{\w}}}_n(e_{a_1}, \ldots, e_{a_n})\right\rangle_{\bm{\mc}}
.
\]

However, the above resolution is not entirely satisfactory since there are numerous inequivalent ways of extending
a cochain map $\mb{\w}_1:\big(V{\kkbar}, 0\big)\rightarrow \big(\sC{\kkbar}, \mb{K}\big)$ into non-homotopic
$sL_\infty$-morphisms. This means that quantum corrections to the products ${\mb{O}_{a_1}\cdot\ldots \cdot \mb{O}_{a_n}}$
can be arbitrary so that the interpretation of $\left\{\mb{\Pi}^{{\mb{\w}}}_n(e_{a_1}, \ldots, e_{a_n})\right\}$ as 
the family of joint quantum correlation functions among the set $\{\mb{O}_a\}_{a \in I}$ of quantum observables 
is ambiguous.  
The second approach, which is eventually our resolution to this problem adopts a kind of complimentary principle, swinging back and forth between the classical and quantum vantage points and will reveal to us, arguably, 
the true nature of quantum correlations.  

The classical limit $\big(\sC, 1_\sC,  \;\cdot\;, K\big)$ of the BV-QFT algebra $\big(\sC[\![\kbar]\!], 1_\sC, \;\cdot\;, \mb{K}\big)$ 
is a unital differential graded commutative algebra (CDGA) over $\Bbbk$, where $K \coloneqq{}\big(S,\ \big)_{BV}$ and
$S=S_{\cl}+\ldots$ is the classical limit of the quantum master action $\mb{S}$ 
so that $S$ satisfies the classical BV master equation 
$\big(S, S\big)_{BV}=0$. 
In contrast to the differential $\mb{K} =K -\kbar\big( \Delta - (S^{(1)},\ )_{\mathit{BV}}\big)+\ldots$, 
the classical differential $K$ is a derivation of the product. 
We  say an element $O\in \sC$ is an off-shell classical observable if  $KO=0$ 
and two off-shell classical observables are equivalent if they have the same $K$-cohomology class. 
Then any product $O_1\cdot\ldots \cdot O_n$ of off-shell classical observables $O_1,\ldots, O_n$ 
is an off-shell classical observable, whose equivalence class  depends only on the equivalence classes of  $O_1,\ldots, O_n$.
In fact, it is the cohomology $(H, 1_H, 0)$ of the classical pointed cochain complex $\big(\sC, 1_\sC, K\big)$ 
that is the arena of classical physics, where the differential $K$ can be viewed as an odd vector field on an appropriate
classical fields space whose vanishing loci is the classical equation motion space 
--- the moduli space defined by  classical equations of motion
modulo classical symmetry,
and $H$ corresponds to the space of $\Bbbk$-valued function(als) on the classical equation motion space.
Therefore, it is appropriate to define  a classical observable as a homogeneous element in $H$ --- an equivalence class of
off-shell classical observables.  

Then we will propose precise notions for a quantization of off-shell classical observables 
and corresponding quantum correlators among classical observables, which will lead us to a universal
and exactly computable
algebraic structure  governing every quantum correlation.

\subsection{The Results}

We will work with  {\em binary QFT algebras}, which are a natural generalization of BV-QFT algebras which share the same relevant universal properties.
A binary QFT algebra is a tuple $\sC{\kkbar}_\BQ=\big(\sC[\![\kbar]\!], 1_\sC, \,\cdot\,,\mb{K}\big)$,
where $\big(\sC[\![\kbar]\!], 1_\sC, \,\cdot\,\big)$ is a topologically-free unital $\Z$-graded commutative
and associative algebra over $\Bbbk{\kkbar}$ 
and
$\big(\sC[\![\kbar]\!], 1_\sC,\mb{K}\big)$
is a pointed and  topologically-free cochain complex over $\Bbbk{\kkbar}$, 
which satisfies a sequence of
$\kbar$-compatibility axioms between the product $\cdot$ and the differential $\mb{K}$---namely
the failure of $\mb{K}$ to be a derivation of $\cdot$ is divisible by $\kbar$ 
and the $n$th iterated failure is divisible by $\kbar^n$.  
Then  the role of the topologically-free sDGLA  associated to a BV-QFT algebra 
is played by a topologically-free unital $sL_\infty$-algebra  
$\big(\sC[\![\kbar]\!], 1_\sC, \underline{\bell}=\mb{K}, \bell_2,\bell_3,\cdots\big)$ over $\Bbbk{\kkbar}$,
called the quantum descendant  of $\sC{\kkbar}_\BQ$.

The classical limit $\big(\sC, 1_\sC, \,\cdot\,,K\big)$ of the binary QFT algebra $\sC{\kkbar}_\BQ$ 
is still  a unital CDGA  over $\Bbbk$.
Therefore, the underlying pointed cochain complex $\big(\sC, 1_\sC, K\big)$ over $\Bbbk$ 
is homotopy equivalent to its cohomology 
$(H, 1_H, 0)$, which is regarded as the arena of classical physics.
We fix  a homotopy equivalence $(f, h, s)$, where
$s: \sC \rightarrow \sC$ is a splitting and
both $f:H\rightarrow \sC$ and $h:\sC\rightarrow H$ are
pointed cochain quasi-isomorphisms.
In particular, $f$ is a $\Bbbk$-linear choice of a set of off-shell representatives of all classical observables.
A quantization of every off-shell classical observable is defined
as a deformation $(\mb{f},\mb{h}, \mb{s})$ of $(f, h, s)$ such that  $(\mb{f},\mb{h}, \mb{s})$  
is a homotopy equivalence between 
$\big(H{\kkbar}, 1_H, 0\big)$ and $\big(\sC[\![\kbar]\!], 1_\sC, \mb{K}\big)$
 as pointed cochain complexes over $\Bbbk{\kkbar}$.
In general, there are obstructions order by order in $\kbar$ to such a deformation. 
These obstructions can be organized into a differential  $\mb{\kappa} =\kbar \k^{(1)}+ \kbar^2 \k^{(2)}+\ldots$ in such a way that 
$(\mb{f},\mb{h}, \mb{s})$   is a homotopy equivalence between 
$\big(H{\kkbar}, 1_H, \mb{\kappa}\big)$ and $\big(\sC[\![\kbar]\!], 1_\sC, \mb{K}\big)$ as
pointed cochain complexes over $\Bbbk{\kkbar}$.
This construction, which is  an application of standard homological perturbation
theory,  is not unique but is as canonical as possible in the sense that 
everything depends at most on the choice of the splitting $s$
and, in particular, the condition $\mb{\kappa}=0$ is independent of this choice.
We say  a binary QFT algebra is anomaly-free if $\mb{\kappa}=0$, so that
any off-shell classical observable admits a quantization.

We will develop a general theory including the anomalous case but
here
we state the main theorems of this paper restricted to the anomaly-free case, 
which has a more straightforward physical interpretation.

\begin{theorem}\label{intra}
Let $\sC{\kkbar}_\BQ$ be an anomaly-free binary QFT algebra
with associated quantum descendant unital $sL_\infty$-algebra  $\big(\sC[\![\kbar]\!], 1_\sC, \underline{\bell}\big)$.
Then,  there is a distinguished unital $sL_\infty$-quasi-isomorphism
\[\xymatrix{\underline{\mb{\phi}}: \big(H{\kkbar}, 1_H, \underline{0}\big) \ar@{..>}[r] 
& \big(\sC[\![\kbar]\!], 1_\sC, \underline{\bell}\big)}\] with the following factorization and $\kbar$-finiteness property.

To state the property, define the associated family $\underline{\mb{\Pi}}=\mb{\Pi}_1,\mb{\Pi}_2,\ldots$ of
quantum correlators,
for all $n\geq 1$ and  homogeneous $v_1,\ldots, v_n \in H$, as
\[
\mb{\Pi}_n({v}_1, \cdots, {v}_n))
\coloneqq{}
\sum_{\mathclap{\mp\in P(n)}}
(-\kbar)^{n-|\mp|}\e(\mp)\,
\mb{\phi}\big({v}_{B_1}\big)\cdot\dotsc\cdot\mb{\phi}\big({v}_{B_{|\mp|}}\big)
.
\]

The condition is then that for any quantum expectation $\bm{\mc}$ we have a $\Bbbk{\kkbar}$-linear functional 
$\mb{\iota}:H{\kkbar} \rightarrow \Bbbk{\kkbar}$ such that
the $n$-fold quantum correlation function $\bm{\mc}\circ \mb{\Pi}_n$ 
admits a factorization 
\[
\bm{\mc}\circ \mb{\Pi}_n =\mb{\iota}\circ \grave{\mb{\pi}}_n : S^n H{\kkbar}\longrightarrow \Bbbk{\kkbar}
,
\] 
where $\grave{\mb{\pi}}_1$ is the identity $\I_H$ map on $H$   and, for all $n\geq 2$,
\begin{itemize}

\item $\grave{\mb{\pi}}_n$ has polynomial dependence on $\kbar$ of degree not higher than $n-2$:
\[
\grave{\mb{\pi}}_n = \grave{\pi}^{(0)}_n+( -\kbar) \grave{\pi}^{(1)}_n+\ldots +(-\kbar)^{\color{blue}n-2}  \grave{\pi}^{(n-2)}_n,
\]
where $\grave{\pi}^{(j)}_n: S^n H \rightarrow H$ for $j=0,1,\ldots, n-2$ and

\medskip

\item 
$\grave{\mb{\pi}}_{n}(v_1,\ldots, v_{n-1}, 1_H)= \grave{\mb{\pi}}_{n-1}(v_1,\ldots, v_{n-1})$.
\end{itemize}
\end{theorem}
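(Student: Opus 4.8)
The plan is to produce the distinguished $\underline{\mb{\phi}}$ by a $\kbar$-refined homological perturbation, to identify $\grave{\mb{\pi}}_n$ as the $\mb{K}$-cohomology class of the correlator $\mb{\Pi}_n$, and to extract the $\kbar$-degree bound from the residual cocycle freedom in $\underline{\mb{\phi}}$. The anomaly-free hypothesis gives a deformed contraction $(\mb{f},\mb{h},\mb{s})$ displaying $\bigl(H{\kkbar},1_H,0\bigr)$ as the $\mb{K}$-cohomology of $\bigl(\sC{\kkbar},1_\sC,\mb{K}\bigr)$, with $\mb{f}$ chosen unital and the usual side conditions $\mb{h}\mb{s}=\mb{s}\mb{f}=\mb{s}^2=0$. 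Since an $sL_\infty$-morphism out of the abelian object $\bigl(H{\kkbar},\underline 0\bigr)$ is the same datum as a power-series Maurer--Cartan element for the quantum descendant (as for the $\mb{\Theta}^{\mb{\w}}$ discussed above), I would build $\underline{\mb{\phi}}=\mb{\phi}_1,\mb{\phi}_2,\dots$ by induction on $n$: set $\mb{\phi}_1=\mb{f}$; at stage $n$ the morphism relations with vanishing source operations demand $\mb{K}\mb{\phi}_n=-\mb{O}_n$, where $\mb{O}_n$ is built out of $\bell_2,\bell_3,\dots$ and $\mb{\phi}_{<n}$. Granting that $\mb{O}_n$ is $\mb{K}$-exact, the element $\mb{\phi}_n=-\mb{s}\,\mb{O}_n+\mb{f}\circ g_n$ solves the relation for every $g_n\colon S^n H\to H{\kkbar}$; I would fix the remaining freedom $g_n$ only in the third step. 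As $\mb{\phi}_1=\mb{f}$ is a quasi-isomorphism, $\underline{\mb{\phi}}$ is a unital $sL_\infty$-quasi-isomorphism.

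\emph{Factorization through cohomology.} For a quantum expectation $\bm{\mc}$, put $\mb{\iota}\coloneqq\bm{\mc}\circ\mb{f}\colon H{\kkbar}\to\Bbbk{\kkbar}$; it is $\Bbbk{\kkbar}$-linear, $\mb{\iota}(1_H)=\bm{\mc}(1_\sC)=1$, and depends only on the homotopy type of $\bm{\mc}$ since $\mb{K}\mb{f}=0$. By the general quantum-correlator construction, $\mb{K}\circ\mb{\Pi}_n=0$ — this uses the $sL_\infty$-morphism relations for $\underline{\mb{\phi}}$ and the $\kbar$-compatibility between $\mb{K}$ and $\cdot$. Plugging a $\mb{K}$-closed element into the contraction identity $\I-\mb{f}\circ\mb{h}=\mb{K}\circ\mb{s}+\mb{s}\circ\mb{K}$ of $\sC{\kkbar}$ gives $\mb{\Pi}_n=\mb{f}\bigl(\mb{h}\circ\mb{\Pi}_n\bigr)+\mb{K}\bigl(\mb{s}\circ\mb{\Pi}_n\bigr)$, and composing with $\bm{\mc}$, which annihilates $\mb{K}$-exact elements, yields $\bm{\mc}\circ\mb{\Pi}_n=\mb{\iota}\circ\bigl(\mb{h}\circ\mb{\Pi}_n\bigr)$. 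Thus the factorization forces $\grave{\mb{\pi}}_n\coloneqq\mb{h}\circ\mb{\Pi}_n\colon S^n H{\kkbar}\to H{\kkbar}$; for $n=1$ this is $\mb{h}\circ\mb{f}=\I_H$, and the reduction $\grave{\mb{\pi}}_n(v_1,\dots,v_{n-1},1_H)=\grave{\mb{\pi}}_{n-1}(v_1,\dots,v_{n-1})$ follows from $\mb{\phi}_m(\,\cdot\,,1_H)=0$ for $m\ge2$ and $\mb{f}(1_H)=1_\sC$, because on $v_n=1_H$ the only surviving summands of $\mb{\Pi}_n$ are those in which $\{n\}$ is its own block, and these correspond bijectively to $P(n-1)$ with the matching power of $-\kbar$.

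\emph{The $\kbar$-degree bound.} The coefficient $g_n$ enters $\mb{\Pi}_n$ only through the one-block partition, as the term $(-\kbar)^{n-1}\mb{f}\bigl(g_n(v_1,\dots,v_n)\bigr)$, so varying $g_n$ shifts $\grave{\mb{\pi}}_n=\mb{h}\circ\mb{\Pi}_n$ by exactly $(-\kbar)^{n-1}g_n$ and changes no $\grave{\mb{\pi}}_m$ with $m<n$. Therefore, once $\mb{\phi}_{<n}$ are fixed, the part of $\mb{h}\circ\mb{\Pi}_n$ of $\kbar$-order $\ge n-1$ is divisible by $(-\kbar)^{n-1}$, and I would choose $g_n$ to be minus that quotient; then $\grave{\mb{\pi}}_n$ loses every $\kbar$-monomial of degree $\ge n-1$, leaving $\grave{\mb{\pi}}_n=\grave{\pi}^{(0)}_n+(-\kbar)\grave{\pi}^{(1)}_n+\dots+(-\kbar)^{n-2}\grave{\pi}^{(n-2)}_n$ with $\grave{\pi}^{(j)}_n\colon S^n H\to H$. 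This is the distinguished choice. It is consistent with the induction: fixing $g_n$ only feeds forward into $\mb{O}_{n+1},\mb{O}_{n+2},\dots$, whose $\mb{K}$-exactness is insensitive to which lower solution was used; and the $g_n$ so produced is automatically unital, because by the inductive degree bound $\mb{h}\circ\mb{\Pi}_n(v_1,\dots,v_{n-1},1_H)=\grave{\mb{\pi}}_{n-1}(v_1,\dots,v_{n-1})$ has no $\kbar$-degree $\ge n-1$ part. (Already at $n=2$ one gets $\grave{\mb{\pi}}_2=\grave{\pi}^{(0)}_2$, the $\kbar$-independent induced product on $H$, so genuine quantum corrections first appear at $n=3$, the WDVV-type regime.)

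\emph{Main obstacle.} The load-bearing fact is the $\mb{K}$-exactness of the stagewise obstruction $\mb{O}_n$ — equivalently, the vanishing in $\mb{K}$-cohomology of the quantum-descendant Maurer--Cartan obstructions for an anomaly-free algebra. This is precisely where one must spend the $\kbar$-compatibility tower of a binary QFT algebra (the failure of $\mb{K}$ to be a derivation of $\cdot$ divisible by $\kbar$, its $n$-th iterate by $\kbar^n$) and the quantum-descendant identities of the preceding sections, alongside the companion statements $\mb{K}\circ\mb{\Pi}_n=0$ and the vanishing of the transferred source operations. With those structural inputs in hand, everything else — the contraction-identity factorization, the use of $g_n$ to enforce the polynomial $\kbar$-bound, and the unitality reduction — is a controlled induction.
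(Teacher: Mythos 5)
Your skeleton agrees with the paper's in its easy parts: you take a quantized retract $(\mb{f},\mb{h},\mb{s})$ (which the anomaly-free hypothesis supplies, cf.\ Proposition \ref{anomalyx} with $\mb{\kappa}=0$), set $\mb{\iota}=\bm{\mc}\circ\mb{f}$ and $\grave{\mb{\pi}}_n=\mb{h}\circ\mb{\Pi}_n$, and obtain the factorization from $\mb{K}\circ\mb{\Pi}_n=0$ together with the contraction identity and $\bm{\mc}\circ\mb{K}=0$; the unitality reduction via the partition bookkeeping is also fine. Your mechanism for the $\kbar$-bound is different from the paper's but legitimate as far as it goes: instead of building $\grave{\mb{\pi}}_n$ so that the bound holds by construction, you retrofit it by absorbing the $\kbar^{\geq n-1}$ part of $\mb{h}\circ\mb{\Pi}_n$ into the cocycle freedom $\mb{\phi}_n\mapsto\mb{\phi}_n+\mb{f}\circ g_n$, which indeed shifts $\mb{h}\circ\mb{\Pi}_n$ by exactly $(-\kbar)^{n-1}g_n$ and leaves the morphism relations and the lower stages untouched (using $\mb{K}\circ\mb{f}=0$).

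The genuine gap is the step you yourself flag and then skip: the vanishing in $\mb{K}$-cohomology of the stagewise obstruction $\mb{O}_n=\sum_{|\mp|\neq 1}\e(\mp)\,\bell_{|\mp|}(\mb{\phi}(v_{B_1}),\ldots,\mb{\phi}(v_{B_{|\mp|}}))$, without which neither $\mb{\phi}_n=-\mb{s}\mb{O}_n+\mb{f}\circ g_n$ nor the existence of \emph{any} $sL_\infty$-quasi-isomorphism out of $(H{\kkbar},1_H,\underline{0})$ is available — generically the transferred structure on $H$ has nonzero higher operations, so "$[\mb{O}_n]=0$" is not a formality but the theorem's actual content. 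The paper proves it by first establishing, from the quantum-descendant recursion (the $\kbar$-compatibility tower) and the lower morphism identities, the identity $\mb{K}\circ\mb{\O}^0_n=(-\kbar)^{n-1}\mb{L}_n$ for the product sum $\mb{\O}^0_n$ (your $\mb{\Pi}_n$ minus its one-block term, with $\mb{L}_n=\mb{O}_n$), and then invoking the homotopy $\kbar$-divisibility machinery $\nabla_{(-\kbar)^{-1}}$ (Proposition \ref{dvdbyh}, Lemma \ref{hodge}) to divide by $(-\kbar)^{n-1}$ inside $\operatorname{Im}\mb{K}+\operatorname{Im}\mb{f}$; this single device simultaneously yields $\mb{K}\mb{\phi}_n+\mb{L}_n=0$ \emph{and} the explicit decomposition $\grave{\mb{\pi}}_n=\sum_{i=0}^{n-2}(-\kbar)^i\,h\circ(\O^0_n)^{[i]}$ with coefficients $S^nH\to H$, so the degree bound needs no separate retrofit (Theorem \ref{solvemstzero}, Remark \ref{kramy}, Theorem \ref{lakia}). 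In your write-up nothing plays this role: you would at minimum have to prove the displayed divisibility identity and then argue, e.g., that $(-\kbar)^{n-1}[\mb{L}_n]=0$ forces $[\mb{L}_n]=0$ because the $\mb{K}$-cohomology is $H{\kkbar}$ (torsion-free) when $\mb{\kappa}=0$; as written, "granting that $\mb{O}_n$ is $\mb{K}$-exact" concedes precisely the point the proof must establish.
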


The first part of our theorem says that $H{\kkbar}$, viewed as an 
unital $sL_\infty$-algebra over $\Bbbk{\kkbar}$ with the trivial $sL_\infty$-structure
 $\big(H{\kkbar}, 1_H, \underline{0}\big)$, is quasi-isomorphic to
the unital $sL_\infty$-algebra  $\big(\sC[\![\kbar]\!], 1_\sC, \underline{\bell}\big)$,
whenever $\mb{\kappa}=0$.
It also says that there is a {\em distinguished}  unital $sL_\infty$-quasi-isomorphism $\underline{\mb{\phi}}$ between them.
The rest of the theorem explicates the consequences of this. 

For a quantum expectation $\bm{\mc}:\sC{\kkbar}\rightarrow \Bbbk{\kkbar}$, 
we call $\mb{\iota}\coloneqq{}\bm{\mc}\circ \mb{f}:H{\kkbar}\rightarrow \Bbbk{\kkbar}$ the on-shell quantum expectation. We regard the on-shell quantum expectation as a family of $\Bbbk$-linear maps parametrized by $\kbar$ from $H$ to $\Bbbk$:
the quantum expectation value of 
a classical observable $v \in H$
is $\left\langle \mb{f}(v)\right\rangle_{\bm{\mc}}=\iota^{(0)}(v) +\kbar \iota^{(1)}(v)+\kbar^2\iota^{(2)}(v) +\ldots \in \Bbbk{\kkbar}$.
We call $\bm{\mc}\circ \mb{\Pi}_n$ the $n$-fold quantum correlation function,
and regard it as a family of $\Bbbk$-linear maps parametrized by $\kbar$ from $S^n H$ to $\Bbbk$.
The family of joint quantum moments of  a non-empty set of classical observables $\{v_1,\ldots, v_k\} \subset H$ is
defined to be the family
\[
\left\{
\left\langle \mb{\Pi}_n(v_{j_1},\ldots, \ldots,v_{j_n})\right\rangle_{\bm{\mc}}
=\bm{\mc}\circ \mb{\Pi}_n(v_{j_1},\ldots, \ldots,v_{j_n})\in \Bbbk{\kkbar}
\Big| n\geq 1;\;  1\leq j_1,\ldots, j_n \leq n
\right\}
\]
of invariants of the homotopy type of quantum expectation $\bm{\mc}$. 

The factorization property $\bm{\mc}\circ \mb{\Pi}_n =\mb{\iota}\circ \grave{\mb{\pi}}_n$ says that $n$-fold quantum correlation functions
are determined by the on-shell quantum expectation  $\mb{\iota}$ 
and $\grave{\mb{\pi}}_n$, which has the distinguished property that it only has polynomial dependence in $\kbar$ of degree at most $n-2$ for all $n\geq 2$.
This pushes convergence issues,  when $\kbar$ is no-longer treated as a formal variable, for quantum correlation functions 
to the on-shell quantum expectation, about which this paper takes an agnostic viewpoint.

There can be infinitely many different unital $sL_\infty$-quasi-isomorphisms
from $H{\kkbar}$ to $\sC{\kkbar}$. For each such quasi-isomorphism $\underline{\mb{\w}}$
we can define an associated family $\underline{\mb{\Pi}}^{\mb{\w}}$ of quantum correlators
and corresponding quantum correlation functions $\bm{\mc}\circ \underline{\mb{\Pi}}^{\mb{\w}}$
so that $\bm{\mc}\circ \mb{\Pi}_n^{\mb{\w}}= \mb{\iota}\circ \mb{\pi}_n^{\mb{\w}}$,
where ${\mb{\pi}}^{\mb{\w}}_n\coloneqq{} \mb{h}\circ  \mb{\Pi}_n^{\mb{\w}}$.  
However, if $\underline{\mb{\w}}$ is not homotopic to the distinguished morphism 
$\underline{\mb{\phi}}$ as a unital $sL_\infty$-morphism, then
${\mb{\pi}}^{\mb{\w}}_n$ is only a formal power series in $\kbar$ for all $n\geq 2$
in general, rather than a polynomial.
We call the $sL_\infty$-homotopy type
of $\underline{\mb{\phi}}$ a {\em quantum structure} on  $\sC{\kkbar}_\BQ$.

The proof of Theorem~\ref{intra} will be constructive: there is a concrete algorithm
to determine the family   $\underline{\grave{\mb{\pi}}}=\grave{\mb{\pi}}_1, \grave{\mb{\pi}}_2,\ldots$.
The next main theorem is about the internal structure of the family  $\underline{\grave{\mb{\pi}}}$:
that it is determined by the components $\grave{\pi}^{(n-2)}_n$ for $n\geq 2$.

\begin{theorem}\label{intrb}
There is a  family $\underline{\grave{m}}= \grave{m}_2, \grave{m}_3, \ldots$,
which determines $\grave{\mb{\pi}}_2, \grave{\mb{\pi}}_3, \ldots$
by the following recursive formula:  for all $n\geq 2$ and homogeneous $v_1,\ldots, v_n \in H$,
\[
\grave{\mb{\pi}}_{n} ({v}_1,\dotsc,{v}_n)
=
\sum_{\clapsubstack{\mp \in P(n)\\|B_{|\mp|}|=n-|\mp|+1\\ n-1\sim_\mp n  }}
(-\kbar)^{n-|\mp|-1}
\e(\mp)\; 
\grave{\mb{\pi}}_{|\mp|}\left({v}_{B_1}, \cdots, {v}_{B_{\mp-1}},
\grave{{m}}\big({v}_{B_{|\mp|}}\big)\right).
\]

Moreover, the family $\underline{\grave{m}}$ has the following properties:

\begin{itemize}
\item symmetry: 
$\grave{m}_n$ is a $\Bbbk$-linear map of degree $0$ from $S^n H$  to $H$ for all $n\geq 2$;

\medskip
\item unity: 
$\grave{m}_2(1_H, v_1)=v_1$, while $\grave{m}_{n}(1_H, v_1,\ldots, v_{n-1})=0$ 
for all $n\geq 3$;

\medskip
\item generalized associativity:  for all $n\geq 0$,
\begin{align*}
\sum_{\mathclap{\vs\subset [n]}} &
\e(\vs\sqcup\vs^c)
\grave{m}\big(v_{\vs}\otimes \grave{m}(v_{\vs^c}\otimes w_1\otimes w_2)\otimes w_{3} \big)
\\
&=\sum_{\mathclap{\vs\subset [n]}}
\e(\vs\sqcup\vs^c)
(-1)^{|v_{\vs^c}||w_1|}
\grave{m}\big(v_{\vs}\otimes w_{1}\otimes \grave{m}(v_{\vs^c}\otimes w_2\otimes w_3) \big)
.
\end{align*}
\end{itemize}
See Theorems \ref{lakib} and \ref{xdalem} for the full details.
\end{theorem}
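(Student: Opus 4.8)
The plan is to split the statement into a classical input --- the construction of $\underline{\grave{m}}$ together with its symmetry, unity and generalized-associativity properties --- and a quantum input --- the recursion expressing $\underline{\grave{\mb{\pi}}}$ through $\underline{\grave{m}}$ --- and to organize both around the generating function of the distinguished morphism. Fix a homogeneous basis $\{e_a\}_{a\in I}$ of $H$ with dual formal variables $\{t^a\}$, and set $\mb{\Theta}=\mb{\Theta}(t)\coloneqq\sum_{n\geq1}\frac{(-\kbar)^{n-1}}{n!}\,t^{a_1}\cdots t^{a_n}\,\mb{\phi}_n(e_{a_1},\ldots,e_{a_n})$. Since $\underline{\mb{\phi}}$ is a unital $sL_\infty$-morphism, the quantum correlators assemble into the single exponential identity $e^{\mb{\Theta}}=1_\sC+\sum_{n\geq1}\frac{1}{n!}t^{a_1}\cdots t^{a_n}\,\mb{\Pi}_n(e_{a_1},\ldots,e_{a_n})$ (the moments-from-cumulants relation with cumulants $(-\kbar)^{k-1}\mb{\phi}_k$), so $e^{\mb{\Theta}}$ is $\mb{K}$-closed. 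Applying $\mb{h}$ and using $\mb{h}\circ\mb{K}=0$, $\mb{f}\circ\mb{h}=\I_\sC-(\mb{K}\mb{s}+\mb{s}\mb{K})$ and $\mb{h}(1_\sC)=1_H$, I get the normal form $e^{\mb{\Theta}}=1_\sC+\mb{f}\big(G(t)\big)+\mb{K}\big(\mb{s}(e^{\mb{\Theta}}-1_\sC)\big)$, where $G(t)\coloneqq\sum_{n\geq1}\frac{1}{n!}t^{a_1}\cdots t^{a_n}\,\grave{\mb{\pi}}_n(e_{a_1},\ldots,e_{a_n})=\mb{h}(e^{\mb{\Theta}}-1_\sC)$ (recall $\grave{\mb{\pi}}_n=\mb{h}\circ\mb{\Pi}_n$ from the proof of Theorem~\ref{intra}).

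\emph{Classical input.} The classical limit $\mb{\Theta}^{(0)}(t)$ of $\mb{\Theta}(t)$ (set $\kbar=0$, i.e.\ keep only the all-singletons partition) is a Maurer--Cartan element of the classical descendant, and the corresponding deformation of $\mb{K}$ to $\mb{K}_{\mb{\Theta}(t)}$ --- the $t$-analogue of deforming the quantum master action --- is again a binary QFT algebra differential, whose classical limit $K_{\mb{\Theta}^{(0)}(t)}$ is therefore a CDGA differential, giving a formal $t$-family of unital CDGAs $(\sC,1_\sC,\cdot,K_{\mb{\Theta}^{(0)}(t)})$ deforming $(\sC,1_\sC,\cdot,K)$. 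The twisting argument of Theorem~\ref{intra}, applied to the tautological Maurer--Cartan element $t^ae_a$ of the trivial $sL_\infty$-algebra $(H{\kkbar}[\![t]\!],\underline{0})$, identifies the cohomology of this family, through the fixed homotopy data, with $H[\![t]\!]$ equipped with zero differential and the induced product $\bullet_t$; being the induced product on the cohomology of a CDGA, $\bullet_t$ is automatically graded commutative, associative, and unital with unit $1_H$ (using $\mb{f}(1_H)=1_\sC$ and that $1_\sC$ stays closed). I then \emph{define} $\grave{m}_n\colon S^nH\to H$ by declaring $c^c_{ab}(t)=\sum_{l\geq0}\frac{1}{l!}t^{d_1}\cdots t^{d_l}(\grave{m}_{l+2})^c_{d_1\ldots d_l ab}$ to be the structure constants of $\bullet_t$. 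Expanding in powers of $t$ the three statements ``$\bullet_t$ is graded commutative'', ``$1_H\bullet_t v=v$'' and ``$(u\bullet_t v)\bullet_t w=u\bullet_t(v\bullet_t w)$'' reproduces verbatim, respectively, the symmetry, unity and generalized-associativity clauses of the theorem; generalized associativity at parameter $n$ is precisely the degree-$n$ part of the (WDVV-type) identity $c^d_{ab}(t)c^e_{dc}(t)=c^d_{bc}(t)c^e_{ad}(t)$, with the Koszul signs $\e(\vs\sqcup\vs^c)$ and $(-1)^{|v_{\vs^c}||w_1|}$ coming from the binomial redistribution of the $t$-spectators between the two products.

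\emph{Quantum input.} It remains to prove the displayed recursion, which I claim is equivalent to the single flat-section equation $\partial_a\partial_b\,G(t)=\big(\partial_c\,G(t)\big)\,c^c_{ab}(-\kbar\, t)$, where $c^c_{ab}(-\kbar t)$ means $c^c_{ab}$ with each $t^d$ replaced by $-\kbar\, t^d$. Indeed, writing $\partial_a\partial_b G$ as a sum over the $m=n-2$ remaining spectators and splitting them into those fed to $c$ (hence to $\grave{m}$) and those fed to the remaining derivative of $G$ (hence to $\grave{\mb{\pi}}$), and using $\grave{\mb{\pi}}_1=\I_H$, turns this PDE into exactly $\grave{\mb{\pi}}_n=\sum_{\mp}(-\kbar)^{|B_{|\mp|}|-2}\e(\mp)\,\grave{\mb{\pi}}_{|\mp|}\big(v_{B_1},\ldots,v_{B_{|\mp|-1}},\grave{m}(v_{B_{|\mp|}})\big)$ summed over partitions of $[n]$ with $n-1\sim_\mp n$ and all other blocks singletons. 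The weights match because $\mb{\phi}_n$ enters $\mb{\Theta}$ with weight $(-\kbar)^{n-1}$; moreover the one-block partition contributes $(-\kbar)^{n-2}\grave{m}_n$ while every other partition contributes strictly lower $\kbar$-degree, so $\grave{\pi}^{(n-2)}_n=\grave{m}_n$, the recursion is triangular in $\kbar$-degree, and this is the precise sense in which $\underline{\grave{m}}$ determines $\underline{\grave{\mb{\pi}}}$. To establish the flat-section equation I differentiate the normal form twice, comparing $\partial_a\partial_b e^{\mb{\Theta}}=\mb{f}(\partial_a\partial_b G)+\mb{K}(\cdots)$ with $\partial_a\partial_b e^{\mb{\Theta}}=\big(\partial_a\partial_b\mb{\Theta}+(\partial_a\mb{\Theta})(\partial_b\mb{\Theta})\big)e^{\mb{\Theta}}$: the content is that the second expression is, modulo $\Im\,\mb{K}$, a first-order operator in $\partial$ whose coefficients are the rescaled structure constants $c^c_{ab}(-\kbar t)$, which one reads off from the homological-perturbation tree expansion of $\underline{\mb{\phi}}$ (from the proof of Theorem~\ref{intra}) linking the higher components $\mb{\phi}_n$ to the transferred product $\bullet_t$, together with the side conditions $\mb{s}^2=\mb{h}\mb{s}=\mb{s}\mb{f}=0$.

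The crux --- and the expected main obstacle --- is exactly that last identification: matching, after applying $\mb{h}$, the second $t$-derivative of the \emph{quantum} group-like element $e^{\mb{\Theta}}$, built from $\underline{\mb{\phi}}$ by homological perturbation against the quantum descendant $\underline{\bell}$, with the first-order operator whose symbol is the \emph{classical} transferred product $\bullet_t$; this is the step that genuinely ties quantum correlators to the classical WDVV structure, while the surrounding partition combinatorics, Koszul signs $\e(\mp)$, exponential identity, and powers of $\kbar$ are routine bookkeeping. I would carry this step out either by induction on arity using the recursive construction of $\underline{\mb{\phi}}$ from the proof of Theorem~\ref{intra}, or --- more transparently --- by recognizing $e^{\mb{\Theta}}$ as a $\mb{K}$-flat family over $\Bbbk{\kkbar}[\![t]\!]$ whose residual product on cohomology is $\bullet_t$, so that the flat-section equation is the cohomological shadow of the tautology $\partial_a\partial_b e^{\mb{\Theta}}\equiv(\partial_c e^{\mb{\Theta}})\,c^c_{ab}(-\kbar t)$ modulo $\Im\,\mb{K}$; Theorems~\ref{lakib} and~\ref{xdalem} supply the full, sign-complete forms of the two inputs.
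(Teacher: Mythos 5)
Your strategy is genuinely different from the paper's: you propose to \emph{define} $\underline{\grave{m}}$ classically, as the structure constants of the product induced on the cohomology of the $t$-deformed CDGA $(\sC[\![t]\!],\,\cdot\,,K_{\Theta^{(0)}(t)})$, harvest commutativity, unity and generalized associativity for free, and then prove the quantum recursion as a flat-section equation for the generating function $G(t)=\mb{h}(e^{\mb{\Theta}}-1_\sC)$. The paper runs in the opposite direction: it first constructs $\grave{\mb{m}}_n$ quantum-mechanically, as the output of the level one quantum master equation (Definition \ref{masterone}, Theorem \ref{solvemstone}), shows in the anomaly-free case that it is $\kbar$-free and equals the top coefficient $\grave{\pi}^{(n-2)}_n$ of $\grave{\mb{\pi}}_n$ (Lemma \ref{dalem}, Theorem \ref{lakib}), and only then deduces generalized associativity from the total symmetry of $\grave{\mb{\pi}}_{n+3}$ combined with the recursion (Theorem \ref{xdalem}). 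In principle your direction could also work, but as written it has a genuine gap exactly at the step you call the crux. The relation $\rd_a\rd_b G=(\rd_c G)\,c^c_{ab}(-\kbar t)$ --- the paper's \eq{bsnb}, equivalently \eq{spfine} --- is \emph{not} the ``cohomological shadow of a tautology'': working modulo $\Im\,\mb{K}$ over $\Bbbk{\kkbar}[\![t]\!]$ only gives a first-order relation with coefficients that are a priori formal power series in $\kbar$ (indeed, with an unlucky choice of quasi-isomorphism they genuinely are, cf.\ Remark \ref{kramz}); the whole content is that for the \emph{distinguished} $\underline{\mb{\phi}}$ these coefficients are $\kbar$-independent and agree with the classical transferred product. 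Establishing this is what the level one machinery is for: one must construct the homotopies $\mb{\phi}^{-1}_n$ with the precise $\kbar$-divisibility, which in the paper requires Lemma \ref{keyone}, the second homotopy $\kbar$-divisibility Lemma \ref{hodgea} and Theorem \ref{solvemstone}. Your two suggested executions either reproduce this induction (which you do not carry out) or, in the ``$\mb{K}$-flat family'' phrasing, assume the conclusion.

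There is also a smaller but real gap on the classical side. Graded commutativity of $\bullet_t$ only symmetrizes the two \emph{active} slots of your $\grave{m}_{l+2}$, so your construction yields maps on $S^{l}H\otimes S^2H$, not the full $S^{l+2}$-symmetry asserted in the symmetry clause; likewise unitality of $\bullet_t$ handles $1_H$ in an active slot but not the vanishing when $1_H$ occupies a spectator slot (i.e.\ $t^0$-independence of $c^c_{ab}(t)$, the paper's potentiality-type identity $\rd_\a\grave{A}_{\b\g}{}^\s=\pm\,\rd_\b\grave{A}_{\a\g}{}^\s$). In the paper both facts are consequences of $\grave{m}_n=\grave{\pi}^{(n-2)}_n$ with $\grave{\mb{\pi}}_n$ totally symmetric, and total symmetry is an \emph{input} to the associativity argument rather than a consequence of it; in your set-up they would have to be proved separately, most naturally by again comparing your classically defined $\bullet_t$ with the top-$\kbar$ coefficient of $\grave{\mb{\pi}}_n$ --- which returns you to the unproven quantum step. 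Your reduction of generalized associativity to the $t$-expansion of $c^d_{ab}(t)c^e_{dc}(t)=c^d_{bc}(t)c^e_{ad}(t)$, and the exponential/normal-form bookkeeping, are fine.
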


We call the tuple $\big(H, 1_H, \underline{\grave{m}}\big)$ the {\em on-shell quantum correlation algebra}
of the anomaly-free binary QFT algebra $\sC{\kkbar}_\BQ$
and call $\underline{\grave{\mb{\pi}}}$  the family of iterated quantum correlation products
generated by $\underline{\grave{m}}$.

Here are some explicit expressions for $\underline{\grave{\mb{\pi}}}$ in terms of $\underline{\grave{m}}$
as the notation for these interesting relations are yet to be introduced: 
\begin{align*}
\grave{\mb{\pi}}_2(v_1, v_2)=& \grave{m}_2(v_1, v_2)
,\\
\grave{\mb{\pi}}_3(v_1, v_2, v_3)=& \grave{m}_2\big(v_1, \grave{m}_2(v_2, v_3)\big)
-\kbar \grave{m}_3(v_1, v_2, v_3) 
,\\
\grave{\mb{\pi}}_4(v_1, v_2, v_3, v_4)=&
\grave{m}_2\big(v_1, \grave{m}_2(v_2, \grave{m}_2(v_3,v_4)\big)
-\kbar \grave{m}_2\big(v_1, \grave{m}_3(v_2, v_3, v_4)\big)
\\
&
-\kbar (-1)^{|v_1|| v_2|} \grave{m}_2\big(v_2, \grave{m}_3(v_1, v_3, v_4)\big)
-\kbar \grave{m}_3\big(v_1, v_2, \grave{m}_2(v_3,v_4)\big) 
\\
&
+\kbar^2 \grave{m}_4(v_1,v_2,v_3,v_4)
.
\end{align*}

The following theorem will provide us an algorithm
to determine the family $\underline{\grave{m}}$ directly by a certain sequence of classical cohomology computations:
\begin{theorem}\label{intrc}
There is a family $\underline{\phi}^{-1} =\phi^{-1}_2,\phi^{-1}_3,\ldots$ 
of  $\Bbbk$-linear maps  $\phi^{-1}_n$ from $S^{n-2}H\otimes S^2 H$ to $\sC$ of degree $-1$
so that $\grave{m}_n=h\circ M_n$  for all $n\geq 2$,
where
\begin{align*}
M_n(v_1,&\ldots, v_n)
\\
\coloneqq{}
&
\sum_{\clapsubstack{\mp \in P(n)\\|\mp|=2\\ n-1\nsim_\mp n}}
\ep(\mp)
{\phi}\big({v}_{B_1}\big)\cdot {\phi}\big({v}_{B_{2}}\big)
-\sum_{\clapsubstack{\mp \in P(n)\\ \big|B_{|\mp|}\big|=n -|\mp|+1\\ n-1\sim_\mp n\\ \color{red} |\mp|\neq 1}}
\e(\mp)
{\phi}_{|\mp|}\Big({v}_{B_1},\ldots, {v}_{B_{|\mp|-1}}, \grave{{m}}({v}_{B_{|\mp|}})\Big)
\\
&
+\sum_{\clapsubstack{\mp \in P(n)\\ n-1 \sim_\mp n\\ \color{red} |\mp|\neq 1}}
\e(\mp)
\ell_{|\mp|}\left({\phi}\big(J{v}_{B_{1}}\big), \ldots,{\phi}\big(J{v}_{B_{|\mp|-1}}\big), \phi^{-1}\big({v}_{B_{|\mp|}}\big)\right)
,
\end{align*}
and $\underline{\phi}$ is the classical limit of the distinguished unital $sL_\infty$-quasi-morphism $\underline{\mb{\phi}}$.
\end{theorem}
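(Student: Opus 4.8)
The plan is to unwind the constructive proof of Theorem~\ref{intra}, in which $\underline{\mb\phi}$, the family $\underline{\grave{\mb\pi}}$ and the on-shell quantum correlation algebra $\underline{\grave m}$ all emerge from one $\kbar$-adic induction built on the fixed classical homotopy data $(f,h,s)$. At each stage one resolves known $\mb K$-cocycles $y\in\sC\kkbar$ through the Hodge-type decomposition $y=\mb K\mb s y+\mb f\mb h y$ supplied by the deformed data $(\mb f,\mb h,\mb s)$, arranging, as Theorem~\ref{intra} asserts, that $\grave{\mb\pi}_n=\mb h\circ\mb\Pi_n$ is a polynomial in $\kbar$ of degree $\le n-2$; its surviving top component is $\grave m_n$ by the recursion of Theorem~\ref{intrb}, and tracking which pieces were absorbed by $\mb s$ and which remained shows $\grave m_n=h(M_n)$ for an $M_n\in\sC$ read off from the construction. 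The task is then to verify that all ingredients of $M_n$ are classical and that they assemble into the displayed closed form; the maps $\phi^{-1}_n$ will be precisely the homotopy pieces $s(\cdots)$ occurring along the way, their degree $-1$ and bidegree $S^{n-2}H\otimes S^2H$ forced by the same polynomiality bound.

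The first concrete step is the extraction principle. By the recursion of Theorem~\ref{intrb} the single-block partition contributes $(-\kbar)^{n-2}\grave m_n$ to $\grave{\mb\pi}_n$, while every other partition (which has $|\mp|\ge 2$) contributes $(-\kbar)^{n-|\mp|-1}\,\grave{\mb\pi}_{|\mp|}(\dots)$, of $\kbar$-degree at most $(n-|\mp|-1)+(|\mp|-2)=n-3$; hence $\grave m_n=\bigl[(-\kbar)^{n-2}\bigr]\grave{\mb\pi}_n$. Now substitute $\grave{\mb\pi}_n=\mb h\circ\mb\Pi_n$, the partition-sum definition of $\mb\Pi_n$, and the expansions $\mb h=h+(-\kbar)(\cdots)$, $\mb\phi_k=\phi_k+(-\kbar)(\cdots)$ with $\phi_1=f$. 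Collecting the coefficient of $(-\kbar)^{n-2}$ turns $\grave m_n$ into a sum over partitions $\mp$ of $[n]$ with $|\mp|\ge 2$ of the $(-\kbar)^{|\mp|-2}$-coefficient of $\mb h\bigl(\prod_i\mb\phi_{|B_i|}(v_{B_i})\bigr)$. The contributions with $|\mp|=2$ and $n-1\nsim_\mp n$, read at $\kbar$-order zero, give exactly the first sum of $M_n$; everything else carries leftover powers of $\kbar$ and must be reorganised.

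This reorganisation is the heart of the proof. Here the binary-QFT $\kbar$-compatibility axioms — which express the failure of $\mb K$ to be a derivation of $\cdot$, and its higher iterates, through the quantum-descendant brackets $\underline{\bell}$ whose $\kbar$-leading parts are the $\ell_k$ — are combined with the $sL_\infty$-morphism equations $\mb K\mb\phi_n=\sum_{|\mp|\ge 2}\pm\,\bell_{|\mp|}\bigl(\mb\phi(v_{B_1}),\dots\bigr)$ for $\underline{\mb\phi}$. Inside $\mb h\bigl(\prod_i\mb\phi(v_{B_i})\bigr)$ each leftover factor of $\kbar$ gets traded either for a lower $\grave m_k$ — which enters through the recursive structure already present in $\grave{\mb\pi}$ and, because the Theorem~\ref{intrb} recursion always singles out the slots $n-1,n$, lands in the final block to produce the second sum $-\sum\e(\mp)\,\phi_{|\mp|}\bigl(v_{B_1},\dots,\grave m(v_{B_{|\mp|}})\bigr)$ of $M_n$ — or for a $\mb K$-exact term, which is annihilated by $\mb h$ since $\mb h\circ\mb K=0$ in the anomaly-free case, up to a homotopy residue governed by $\mb s$. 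The classical part of that residue, applied to the last two arguments and parametrised by the remaining $n-2$, is taken as the definition of $\phi^{-1}_n$, and coupled through $\ell_{|\mp|}$ to the $J$-twisted classical outputs $\phi(Jv_{B_i})$ on the other blocks it assembles into the third sum of $M_n$. An induction on $n$ closes the argument, Koszul signs carried by the symmetrisers $\e,\ep$ and the combinatorial weights by the partition conditions $|\mp|\ne 1$, $|B_{|\mp|}|=n-|\mp|+1$, $n-1\sim_\mp n$.

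The main obstacle is exactly the identification of that third sum: showing the residual homotopy terms package precisely into $\sum\e(\mp)\,\ell_{|\mp|}\bigl(\phi(Jv_{B_1}),\dots,\phi^{-1}(v_{B_{|\mp|}})\bigr)$ — with $\phi^{-1}$ confined to the last block, the $J$-twist on the others, and $\ell$ applied at arity exactly $|\mp|$ — requires a homotopy-level compatibility between $\mb s$, the product $\cdot$ and the brackets $\bell$ (a homotopy-Leibniz identity), propagated coherently through all arities; this is where essentially all of the sign and combinatorial labour lives. It is also here that the polynomiality bound of Theorem~\ref{intra} earns its keep, guaranteeing that the induction terminates and that no $\phi^{-1}_n$ beyond bidegree $S^{n-2}H\otimes S^2H$ is needed. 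That $M_n$ involves only the CDGA $(\sC,1_\sC,\cdot,K)$, the brackets $\ell$ and iterated use of $(f,h,s)$ — so that $\underline{\grave m}$, and with it every quantum correlation, is computable by a sequence of classical cohomology computations — is then manifest from the assembled formula, which is the point of the theorem.
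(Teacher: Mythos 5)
Your opening reduction is sound and matches the paper's own starting point: in the anomaly-free case $\grave{m}_n$ is the top $\kbar$-coefficient $\grave{\pi}^{(n-2)}_n$ of $\grave{\mb{\pi}}_n$ (this is Lemma \ref{dalem}), and the first sum of $M_n$ does come from the $|\mp|=2$, $n-1\nsim_\mp n$ partitions at classical order. But the heart of the theorem is exactly the step you defer: showing that the remaining contributions reorganize into the second and third sums, with $\grave{m}$ confined to the last block, $\phi^{-1}$ confined to the last block of the $\ell_{|\mp|}$-terms, and the $J$-twists and signs as displayed. You acknowledge this requires an unstated ``homotopy-Leibniz identity'' carrying ``essentially all of the sign and combinatorial labour,'' which means the key identity is asserted rather than proved. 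In the paper this labour is not a bookkeeping of the level-zero construction at all: it is carried by a second, separate recursion --- the level one quantum master equation (Definition \ref{masterone}) for the tuple $(\grave{\mb{\pi}}^{-1},\mb{\eta}^{-2},\grave{\mb{m}},\mb{\phi}^{-1})$ --- whose integrability is the technical Lemma \ref{keyone}, namely $\mb{K}^\infty_{\mathit{H\sC}}\mb{\Omega}^{-1}_n=(-\kbar)^{n-2}\mb{M}^0_n-\mb{f}\circ\grave{\mb{\varpi}}^0_n$, proved by an explicit decomposition into four pieces and repeated use of the level-zero relations and Lemma \ref{remBKftalg}. The divisibility by $(-\kbar)^{n-2}$ that your induction needs is precisely the content of that lemma; it is not automatic from the polynomiality bound of Theorem \ref{intra}.

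A related inaccuracy: you propose that the $\phi^{-1}_n$ ``will be precisely the homotopy pieces $s(\cdots)$ occurring along the way'' in the level-zero construction. In the paper $\mb{\phi}^{-1}_n=(\mb{\Omega}^{-1}_n)^{[n-2]}$, obtained by applying the operator $\nabla^\infty_{(-\kbar)^{-1}}$ (Lemma \ref{hodgea}) $n-2$ times to a quantity $\mb{\Omega}^{-1}_n$ that is built recursively from $\mb{\eta}^{-1}$, $\mb{\phi}^0$, the \emph{lower} $\mb{\phi}^{-1}_k$ and $\grave{m}$ --- it is not a byproduct of the level-zero residues, and its bidegree $S^{n-2}H\otimes S^2H$ reflects the fact that the last two slots are singled out throughout the level-one recursion, a structure the level-zero construction alone does not supply. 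Finally, the statement $\grave{m}_n=h\circ M_n$ (and the $K$-closedness of $M_n$, which is not ``manifest'' from the formula) is obtained in the paper by taking the classical limit of the quantum identity $\mb{M}^0_n=\mb{f}\circ\grave{m}_n+\mb{K}\circ\mb{\phi}^{-1}_n$ of Theorem \ref{lakib} and then applying $h$, using $h\circ K=0$ and $h\circ f=\I_H$ (Remark \ref{kramx}). So the proposal has the right shape but a genuine gap: without establishing the level-one master equation (or an equivalent of Lemma \ref{keyone}), neither the existence of the family $\underline{\phi}^{-1}$ with the stated properties nor the exact form of $M_n$ is proved.
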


Note the  classical limit $\big(\sC, 1_\sC, \underline{\ell}\big)$ of the quantum descendant algebra $\big(\sC[\![\kbar]\!], 1_\sC, \underline{\bell}\big)$ is a unital $sL_\infty$-algebra over $\Bbbk$. 
Then the classical limit $\underline{\phi}$ of the distinguished unital $sL_\infty$-quasi-isomorphism $\underline{\mb{\phi}}$ in Theorem \ref{intra}
is a distinguished  unital $sL_\infty$-quasi-isomorphism 
$\xymatrix{\underline{\phi}:\big(H,1_H, \underline{0}\big)\ar@{.>}[r]&
\big(\sC, 1_\sC, \underline{\ell}\big)}$.  
If we assume that $H$ is a finite dimensional $\Z$-graded vector space,
it follows that the  deformation functor defined by the Maurer--Cartan equation 
of the unital $sL_\infty$-algebra $\big(\sC, 1_\sC, \underline{\ell}\big)$
is pro-representable --- by the completed symmetric algebra $\widehat{S}(H^*)$ of the dual vector space $H^*$  of $H$ ---
so that we can associate to it a based smooth formal super-moduli space, where
the homotopy type of a unital $sL_\infty$-quasi-isomorphism from $H$ to $\sC$ can be viewed as affine
coordinates. (See \cite{Konst,BaKo}).
Therefore we can associate a based smooth formal super-moduli space
$\sM_o$ to an anomaly-free binary QFT algebra with finite dimensional classical cohomology $H$
and equip this moduli space with distinguished affine coordinates from the homotopy type of  $\underline{\mb{\phi}}$
which we call {\em quantum coordinates} as they came from a quantum structure.

The notion of quantum coordinates turns out to 
be closely related with that of special coordinates in the moduli space of 
type IIB topological string theory \cite{COGP,BCOV,BaKo} 
or the flat coordinates in the moduli space of universal unfoldings of an isolated
singularity \cite{Saito}, which is a crucial concept in mirror symmetry. 
Moreover, the on-shell quantum correlation algebra $\big(H, 1_H, \underline{\grave{m}}\big)$ is equivalent to a formal Frobenius manifold  without the flat metric. That is, $\underline{\grave{m}}$ satisfies the Witten--Dijkraaf--Verlinde--Verlinde (WDDV) equation \cite{DVV,WittenA}.

Consider Theorem \ref{intra} and assume that $H$ is a finite dimensional $\Z$-graded vector space over $\Bbbk$.
It is convenient to introduce homogeneous coordinates  $t_H=\{t^\a\}$ on $H$ so that
$\{\rd_\a =\rd/\rd\! t^\a\}$ form a homogeneous basis of $H$ with the distinguished element $\rd_0=1_H$.
Then we extend $\rd_\a$ as a derivation on $\Bbbk[\![t_H]\!]$. From $\underline{\mb{\phi}}$ and
$\underline{\grave{\mb{\pi}}}$, define
\begin{align*}
\mb{\Theta}\coloneqq{}&\sum_{\mathclap{n=1}}^\infty \Fr{1}{n!}t^{\a_n}\cdots t^{\a_1}
\mb{\phi}_n\big(\rd_{\a_1},\ldots, \rd_{\a_n}\big)
\in \big(\sC[\![t_H]\!]\big)^{0}{\kkbar}
,\\
\grave{\bm{T}}^\g \coloneqq{} &t^\g + \sum_{\mathclap{n=2}}^\infty \Fr{1}{n! (-\kbar)^{n-1}}t^{\a_n}\cdots t^{\a_1} 
\grave{\mb{\pi}}_{\a_1\cdots\a_n}{}^\g 
\in \Bbbk[\![t_H]\!][\![\kbar^{-1}]\!]
,
\end{align*}
where $\left\{\grave{\mb{\pi}}_{\a_1\cdots \a_n}{}^\g\right\}$ is the set of structure constants defined
for the operators $\grave{\mb{\pi}}_n$ as $\grave{\mb{\pi}}_n\big(\rd_{\a_1}, \ldots, \rd_{\a_n}\big) = \grave{\mb{\pi}}_{\a_1\cdots \a_n}{}^\g \rd_\g$.
Then,  we have $\rd_0\mb{\Theta}=1_\sC$ and
\[
\mb{K}e^{-\fr{1}{\kbar}\mb{\Theta}} =0
\Longleftrightarrow \mb{K}\mb{\Theta} 
+\Fr{1}{2!}\bell_2\big(\mb{\Theta}, \mb{\Theta}\big)+\Fr{1}{3!}\bell_3\big(\mb{\Theta}, \mb{\Theta}, \mb{\Theta}\big)
+\ldots=0,
\] 
so that $\mb{\Theta}$ is a universal solution to the Maurer--Cartan equation 
of the unital $sL_\infty$-algebra $\big(\sC[\![\kbar]\!], 1_\sC, \mb{K},\bell_2,\bell_3,\ldots\big)$,
corresponding to a distinguished choice of affine coordinates on the associated $\Z$-graded formal based moduli space $\sM_o$
with tangent space $H$.
We also have the generating function $\bm{\CZ}_{\bm{\mc}}$ 
of all quantum correlation functions with respect to a quantum expectation
$\bm{\mc}$ defined as follows:
\[
\bm{\CZ}_{\bm{\mc}}
\coloneqq{} \Big< e^{-\fr{1}{\kbar}\mb{\Theta}}\Big>_{\bm{\mc}}
= 1+\sum_{\mathclap{n=1}}^\infty \Fr{1}{n! (-\kbar)^n}t^{\a_n}\cdots t^{\a_1} 
\Big<\mb{\Pi}_n(\rd_{\a_1},\ldots, \rd_{\a_n})\Big>_{\bm{\mc}}.
\]
From the factorization property, we obtain 
the identity 
$
\bm{\CZ}_{\bm{\mc}} = 1 -\Fr{1}{\kbar} \grave{\bm{T}}^\g \big< \mb{f}(\rd_\g)\big>_{\bm{\mc}}
$
so that the quantum expectation values $\left\{ \big< \mb{f}(\rd_\g)\big>_{\bm{\mc}}\right\}$ and 
$\left\{\grave{\bm{T}}^\g\right\}$ determine every quantum correlation function.

Now  the unital $sL_\infty$-quasi-isomorphism $\underline{\mb{\phi}}^0$ being {\em distinguished}  means
that  $\{\grave{\bm{T}}^\g \}$ 
is a formal power series in $\kbar^{-1}$
since $\grave{\mb{\pi}}_n$ has at most degree $n-2$ polynomial dependence on $\kbar$.
If we consider the structure constants $\left\{ \grave{m}_{\a_1\cdots \a_n}{}^\g\right\}$
of $\underline{\grave{m}}$ in Theorem \ref{intrb}, i.e.,
$\grave{m}_n\big(\rd_{\a_1}, \ldots, \rd_{\a_n}\big) = \grave{m}_{\a_1\cdots \a_n}{}^\g \rd_\g$,
and define
\[
\grave{A}_{\a\b}{}^\g = \grave{m}_{\a\b}{}^\g  + \sum_{\mathclap{n=1}}^\infty \Fr{1}{n!}t^{\r_n}\cdots t^{\r_1}
\grave{m}_{\r_1\cdots\r_n \a\b}{}^\g
\in \Bbbk[\![t_H]\!],
\]
it can be checked that
$\big\{\grave{\bm{T}}^\g\big\}$ is the unique solution in formal power series in $\kbar^{-1}$ 
to the following system of formal differential equations:
\[
\kbar\rd_\a\rd_\b \grave{\bm{T}}^\g + \grave{A}_{\a\b}{}^\r\rd_\r \grave{\bm{T}}^\g=0
,\qquad
\rd_0 \grave{\bm{T}}^\g = \d_{0}{}^\g -\Fr{1}{\kbar} \grave{\bm{T}}^\g
,
\]
with the boundary conditions  $\grave{\bm{T}}^\g\big|_{t_H=0}=\rd_\b \grave{\bm{T}}^\g\big|_{t_H=0}-\d_\b{}^\g=0$,
where $\d_\b{}^\g$ denotes the Kronecker delta.
Finally, the properties of $\underline{\grave{m}}^0$ in Theorem \ref{intrb}
imply that $\big\{\grave{A}_{\a\b}{}^\g\big\}$ satisfy the following relations:

$\quad$-- unity: 
$\grave{A}_{0\b}{}^\g = \d_\b{}^\g$,

$\quad$--  super-commutativity:
$\grave{A}_{\a\b}{}^\g =(-1)^{|t^\a||t^\b|}\grave{A}_{\b\a}{}^\g$ and
$\rd_\a \grave{A}_{\b\g}{}^\s = (-1)^{|t^\a||t^\b|} \rd_\b \grave{A}_{\a\g}{}^\s$,

$\quad$--  associativity:
$\grave{A}_{\a\b}{}^\r\grave{A}_{\r \g}{}^\s =\grave{A}_{\b\g}{}^\r \grave{A}_{\a\r}{}^\s$,

so that the triple $\big(H\otimes \Bbbk[\![t_H]\!], \rd_0, \ast \big)$
is a unital super-commutative associative algebra over $\Bbbk[\![t_H]\!]$,
where $\rd_\a\ast \rd_\b  \coloneqq{} \grave{A}_{\a\b}{}^\g \rd_\g$.

The proof of our main theorems involves solutions to 
what we call the master equations for the families of quantum correlators
at levels $0$ and $1$,
where the family  $\underline{\mb{\Pi}}$  of quantum correlators 
in Theorem \ref{intra} is the level $0$ case. In the due course, it will become clear that
there should be a tower of quantum correlators for all levels, $n \geq 0$.
We shall need an another new and more versatile framework, beyond the scope of this paper, to characterize this total structure. 
Here we will use the master equation for level $1$ quantum correlators
as an auxiliary device in proving Theorems \ref{intrb} and \ref{intrc}.

We will also define morphisms and homotopy types of morphisms of binary QFT algebras
to form the category $\category{BQFTA}(\Bbbk)$ and the homotopy category $\mathit{ho}\category{BQFTA}(\Bbbk)$ of binary QFT algebras, and 
the (homotopy) category of BV-QFT algebras
will arise as a full subcategory of $(\mathit{ho})\category{BQFTA}(\Bbbk)$.  

A binary QFT algebra is both a topologically-free unital $\Z$-graded commutative
and associative algebra over $\Bbbk{\kkbar}$ and a pointed 
and  topologically-free cochain complex over $\Bbbk{\kkbar}$ together with 
a set of $\kbar$-compatibility conditions between the differential and the product as was mentioned before.
A morphism of binary QFT algebras  is defined similarly as a pointed cochain map,
satisfying a corresponding set of $\kbar$-compatibility conditions involving the products 
in the source and target binary QFT algebras, so that the failure of being a algebra homomorphism
is divisible by $\kbar$ and  the $n$th iterated failure is divisible by $\kbar^n$ 
(the classical limit of a morphism of binary QFT algebras is a morphism of unital CDGAs over $\Bbbk$).
These $\kbar$-compatibility conditions
are crucial in organizing  quantum correlations algebraically and can themselves be assembled to give 
a functor  $\bm{\mK}:  \category{BQFTA}(\Bbbk)\rightsquigarrow \category{UsL}_\infty(\Bbbk{\kkbar})$,
called the quantum descendant functor,
to the category $\category{UsL}_\infty(\Bbbk{\kkbar})$
of unital $sL_\infty$-algebras over $\Bbbk{\kkbar}$. 
We define homotopy classes of  morphisms of binary QFT algebras 
so that $\bm{\mK}$ is a homotopy functor, i.e., it
induces a well-defined functor from the homotopy category 
$\mathit{ho}\category{BQFTA}(\Bbbk)$ to the homotopy category 
$\mathit{ho}\category{UsL}_\infty(\Bbbk{\kkbar})$ of unital $sL_\infty$-algebras.

An impetus for these definitions 
is to have the following:
\begin{theorem}\label{intrd}
A homotopy equivalence of anomaly-free binary QFT algebras induces an isomorphism of on-shell quantum correlation algebras
and, in particular, sends a flat structure to a flat structure.
\end{theorem}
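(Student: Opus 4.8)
The plan is to leverage the constructive, homotopy-theoretic nature of the earlier theorems: the on-shell quantum correlation algebra $\big(H, 1_H, \underline{\grave{m}}\big)$ is extracted from the $sL_\infty$-homotopy type of the distinguished quasi-isomorphism $\underline{\mb{\phi}}$ (the ``quantum structure''), and $\underline{\mb{\phi}}$ in turn is built from the quantum descendant $sL_\infty$-algebra $\big(\sC[\![\kbar]\!],1_\sC,\underline{\bell}\big)$ together with a choice of splitting $s$ via homological perturbation. So the first step is to trace through the quantum descendant functor $\bm{\mK}$: a homotopy equivalence $g:\sC{\kkbar}_\BQ\xrightarrow{\sim}\tilde{\sC}{\kkbar}_\BQ$ of binary QFT algebras induces, by functoriality of $\bm{\mK}$ on the homotopy category $\mathit{ho}\category{BQFTA}(\Bbbk)$, a unital $sL_\infty$-quasi-isomorphism $\underline{\mb{g}}:\big(\sC[\![\kbar]\!],1_\sC,\underline{\bell}\big)\xrightarrow{\sim}\big(\tilde{\sC}[\![\kbar]\!],1_\sC,\underline{\tilde{\bell}}\big)$, well-defined up to $sL_\infty$-homotopy. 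Taking classical limits, $g$ descends to a quasi-isomorphism of unital CDGAs $\sC\to\tilde\sC$, hence induces an isomorphism $g_*:H\xrightarrow{\cong}\tilde H$ of the classical cohomologies (compatible with units, $g_*(1_H)=1_{\tilde H}$).

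The second step is to show that $g_*$ intertwines the two on-shell quantum correlation algebras. The composite $\underline{\mb{g}}\circ\underline{\mb{\phi}}:\big(H{\kkbar},1_H,\underline{0}\big)\dashrightarrow\big(\tilde{\sC}[\![\kbar]\!],1_\sC,\underline{\tilde{\bell}}\big)$ is a unital $sL_\infty$-quasi-isomorphism, and on the other side we have the distinguished $\underline{\tilde{\mb{\phi}}}$ together with $\underline{\tilde{\mb{\phi}}}\circ g_*:\big(H{\kkbar},1_H,\underline 0\big)\dashrightarrow\big(\tilde{\sC}[\![\kbar]\!],1_\sC,\underline{\tilde{\bell}}\big)$, precomposed with the linear iso $g_*$ (or rather $\underline{\tilde{\mb\phi}}$ transported along $g_*^{-1}$). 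The crux is that \emph{both} of these are unital $sL_\infty$-quasi-isomorphisms from a trivial unital $sL_\infty$-algebra on $H{\kkbar}$ to $\big(\tilde{\sC}[\![\kbar]\!],1_\sC,\underline{\tilde{\bell}}\big)$ with the \emph{distinguished} factorization/$\kbar$-finiteness property of Theorem~\ref{intra}; if the distinguished quasi-isomorphism is unique up to unital $sL_\infty$-homotopy among those with that property (which is exactly what ``distinguished'' should encode, possibly after fixing the image splitting $\tilde s$ to be the one transported from $s$ along $g$ — here one uses that everything depends at most on the splitting and that the anomaly $\mb\kappa=0$ is splitting-independent), then $\underline{\mb g}\circ\underline{\mb\phi}$ and $\underline{\tilde{\mb\phi}}$ are homotopic as unital $sL_\infty$-morphisms after accounting for $g_*$. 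Since the family $\underline{\grave{\mb\pi}}$ of quantum correlators, and hence $\underline{\grave m}$ (via the recursion of Theorem~\ref{intrb} and the formula of Theorem~\ref{intrc}, using $\mb h=h+\ldots$ which is transported compatibly), depends only on the unital $sL_\infty$-homotopy type of $\underline{\mb\phi}$, it follows that $g_*$ carries $\underline{\grave m}$ to $\underline{\tilde{\grave m}}$, i.e. $g_*\big(\grave m_n(v_1,\ldots,v_n)\big)=\tilde{\grave m}_n\big(g_*v_1,\ldots,g_*v_n\big)$, which is the asserted isomorphism of on-shell quantum correlation algebras.

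The final step is the ``in particular'' clause about flat structures. A flat structure (a formal Frobenius manifold structure, i.e. the WDVV data without the metric) is by the discussion preceding the theorem equivalent to the generalized-associativity package of $\underline{\grave m}$ together with the quantum coordinates on $\sM_o$; equivalently it is the structure $\big(H\otimes\Bbbk[\![t_H]\!],\rd_0,\ast\big)$ with $\rd_\a\ast\rd_\b=\grave A_{\a\b}{}^\g\rd_\g$. Since $g_*$ is a linear isomorphism of $H$ preserving the unit and intertwining all the $\grave m_n$, it induces an isomorphism of the completed symmetric algebras $\Bbbk[\![t_H]\!]\cong\Bbbk[\![t_{\tilde H}]\!]$ and carries the structure constants $\grave A_{\a\b}{}^\g$ to $\tilde{\grave A}_{\a\b}{}^\g$, hence carries the flat-coordinate/WDVV data on $\sM_o$ to that on $\tilde\sM_o$; one also checks it respects the pro-representability identification $\sM_o\cong\operatorname{Spf}\widehat S(H^*)$, so the based formal super-moduli spaces with their quantum coordinates are identified.

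I expect the \textbf{main obstacle} to be the uniqueness-up-to-$sL_\infty$-homotopy of the distinguished quasi-isomorphism $\underline{\mb\phi}$ under the factorization/$\kbar$-finiteness constraint — in particular pinning down exactly how the (harmless) dependence on the splitting $s$ is controlled under transport along $g$, so that $\underline{\mb g}\circ\underline{\mb\phi}$ and $\underline{\tilde{\mb\phi}}\circ g_*$ really land in the same unital $sL_\infty$-homotopy class rather than merely both satisfying Theorem~\ref{intra}. This is presumably handled by the constructive homological-perturbation algorithm underlying Theorem~\ref{intra}: transporting the splitting data $(f,h,s)$ along the homotopy equivalence $g$ yields splitting data on the target whose associated distinguished quasi-isomorphism is, by naturality of the perturbation lemma, homotopic to $\underline{\mb g}\circ\underline{\mb\phi}$; and any two choices of splitting give homotopic distinguished quasi-isomorphisms because the obstruction $\mb\kappa$ and the homotopy type of $\underline{\mb\phi}$ are splitting-independent (as asserted in the paragraph introducing anomaly-freeness). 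Everything else — functoriality of $\bm{\mK}$, the linear isomorphism on cohomology, the passage from $\underline{\grave m}$ to the WDVV/flat data — is formal given the results already established.
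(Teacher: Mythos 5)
Your opening and closing moves match the paper (apply the quantum descendant functor to the homotopy equivalence $\bm{\mf}$, compose its descendant $\underline{\mb{\psi}}=\bm{\mK}(\bm{\mf})$ with the distinguished quasi-isomorphism $\underline{\mb{\phi}}^0$, identify $H\cong H^\pr$ via $H(\mf^{(0)})$, and transport the WDVV data formally), but the step you yourself flag as the main obstacle is a genuine gap, and the way you propose to close it is not available in the paper. You need $\underline{\mb{\psi}}\bullet\underline{\mb{\phi}}^0$ to be unitally $sL_\infty$-homotopic to the target's own distinguished quasi-isomorphism transported along $H(\mf^{(0)})$, and for this you invoke a uniqueness-up-to-homotopy statement for distinguished quasi-isomorphisms satisfying the factorization/$\kbar$-finiteness property, together with splitting-independence of the homotopy type of $\underline{\mb{\phi}}^0$ ``as asserted in the paragraph introducing anomaly-freeness.'' The paper asserts no such thing: what it states (and what Proposition \ref{anomalyz} proves, at the level of QFT complexes only) is that the quantization $(\mb{f},\mb{\kappa})$ changes by an automorphism plus a homotopy under a change of retract and that the condition $\mb{\kappa}=0$ is splitting-independent; neither a uniqueness theorem for $\underline{\mb{\phi}}^0$ under the $\kbar$-finiteness constraint nor a naturality-of-perturbation statement for the full $sL_\infty$-level construction is proved anywhere, so your argument rests on an unestablished (and nontrivial) lemma.

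The paper's actual proof avoids the comparison entirely: it never brings in an independently constructed distinguished structure on the target. Writing $\mb{\Pi}^{0}=\mb{\pi}\circ\mb{\Psi}_{\mb{\phi}^{0}}$ and using the descendant identity $\bm{\mf}\circ\mb{\pi}=\mb{\pi}^\pr\circ\mb{\Psi}_{\mb{\psi}}$ together with the master-equation factorization $\mb{\pi}\circ\mb{\Psi}_{\mb{\phi}^{0}}=\mb{f}\circ\grave{\mb{\pi}}^{0}+\mb{K}\circ\mb{\eta}^{-1}$ and $\bm{\mf}\circ\mb{K}=\mb{K}^\pr\circ\bm{\mf}$, it computes directly that the correlators of the pushed-forward morphism $\underline{\mb{\psi}}\bullet\underline{\mb{\phi}}^{0}$ satisfy $\mb{\Pi}^{\pr 0}=(\bm{\mf}\circ\mb{f})\circ\grave{\mb{\pi}}^{0}+\mb{K}^\pr\circ(\bm{\mf}\circ\mb{\eta}^{-1})$, i.e.\ they factor through the \emph{same} family $\grave{\mb{\pi}}^{0}$ (hence the same $\underline{\grave{m}}$), with $\bm{\mf}\circ\mb{f}$ a quasi-isomorphism of QFT complexes. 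So the pushforward is itself a distinguished (``quantum'') structure on $\sC^\pr{\kkbar}_\BQ$ whose on-shell quantum correlation algebra is the transported one, and the isomorphism of on-shell algebras follows by transport of structure, with no appeal to uniqueness of $\underline{\mb{\phi}}^0$. If you want to keep your route, you would first have to prove the uniqueness/naturality lemma you are assuming; as written, the proposal does not establish the theorem.
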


We then define a {\em  binary QFT} as a diagram $\xymatrix{ \sC{\kkbar}_{\BQ}\ar[r]^-{\bm{\mc}} & \Bbbk{\kkbar}}$
in  the category $\category{BQFTA}(\Bbbk)$, where 
$\Bbbk{\kkbar}$ is regarded as a binary QFT algebra concentrated in degree zero
and  is actually an initial object in the category. 
We call the binary QFT algebra morphism  $\bm{\mc}$ a  strong quantum expectation,
as it is not only a pointed cochain map but also satisfies the $\kbar$-compatibility condition between the products
in $\sC$ and $\Bbbk$. Namely, let $\bm{\mK}(\bm{\mc})=\underline{\mb{\chi}} =\mb{\chi}_1, \mb{\chi}_2,\ldots$,  then we have
$\mb{\chi}_1=\bm{\mc}$ and
$(-\kbar)
\mb{\chi}_2(\bm{x}_1, \bm{x}_2)=
\mb{\chi}_1(\bm{x}_1\cdot\bm{x}_2)- \mb{\chi}_1(\bm{x}_1)\mb{\chi}_1(\bm{x}_2)$
etc. In particular, 
$\mb{\chi}_1(\bm{x}_1) =\bm{\mc}(\bm{x}_1)\equiv \langle \bm{x}_1\rangle_{\bm{\mc}}$
is the quantum expectation value of $\bm{x}_1$
and
$
(-\kbar) \mb{\chi}_2(\bm{x}_1, \bm{x}_2)=
\langle \bm{x}_1\cdot \bm{x}_2\rangle_{\!\bm{\mc}}-\langle \bm{x}_1\rangle_{\!\bm{\mc}}\langle  \bm{x}_2\rangle_{\!\bm{\mc}}$ 
is the covariance between $\bm{x}_1$ and $\bm{x}_2$,
which should be non-zero in general due to quantum correlations but vanishes in the classical limit. We may call
 $\underline{\mb{\chi}}$ 
 the family of quantum cumulants, or the family of connected quantum correlation functions,
measuring the strength and depth of quantum correlations between and among events thought of as singletons, pairs, triples, and so on. 
The family of quantum cumulants has the structure of a unital $sL_\infty$-morphism 
$\underline{\mb{\chi}}:\big(\sC{\kkbar}, 1_\sC, \underline{\bell}\big)\dasharrow \big(\Bbbk{\kkbar}, 1, \underline{0}\big)$
between the quantum descendant unital $sL_\infty$-algebras. 
It is natural to declare that
two binary QFTs $\xymatrix{ \sC{\kkbar}_{\BQ} \ar[r]^-{\bm{\mc}} & \Bbbk{\kkbar}}$
and $\xymatrix{ \sC^\pr{\kkbar}_{\BQ}\ar[r]^-{\bm{\mc}^\pr} & \Bbbk{\kkbar}}$ are 
physically equivalent if the following diagram in  the category $\category{BQFTA}(\Bbbk)$ of binary QFT algebras
is commutative up to homotopy 
\[
\xymatrix{
\ar[dr]_-{\bm{\mc}}\sC{\kkbar}_{\BQ}\ar[rr]^-{\bm{\mf}} & &\sC^\pr{\kkbar}_{\BQ} 
\ar[dl]^-{\bm{\mc}^\pr}
\\
&\Bbbk{\kkbar}&
}
\]
and  $\bm{\mf}$ is a homotopy equivalence of binary QFT algebras
---
Physically equivalent binary QFTs have isomorphic quantum correlation functions.


\subsection{Organization}

This paper is organized as follows:

In Sect.~\ref{section: binary QFT algebras}, we define  the (homotopy) category $(\mathit{ho})\category{BQFTA}(\Bbbk)$ of binary QFT algebras
together with the homotopy functor 
$\bm{\mK}:  \category{BQFTA}(\Bbbk)\rightsquigarrow \category{UsL}_\infty(\Bbbk{\kkbar})$.

In Sect.~\ref{section: binary qft}, we define the notion of a homotopical family of quantum observables and discuss several consequences of the definition.
We show that homotopy equivalent binary QFT algebras have isomorphic sets of homotopical families of quantum observables.

In Sect.~\ref{section: quantization},  we study  quantization of the classical off-to-on-shell retract
and prove an important technical lemma called homotopy $\kbar$-divisibility. We also comment on
the dependence of quantization of classical observables on the data of the classical off-to-on-shell retract.

In Sect.~\ref{section: mastering qc}, we define the levels zero and one master equations for quantum correlators and find
canonical solutions without assuming the anomaly-free condition. This section is the technical core
of this paper

In Sect.~\ref{section: anomaly-free theory}, we specialize to the anomaly-free case, leading to Theorems \ref{intra}, \ref{intrb}, \ref{intrc} and \ref{intrd}
together with some physical interpretations.  We further specialize to the case that $H$ is finite dimensional
to discuss relationships between the universal algebraic structure governing quantum correlations
and the WDDV equation and some simple but historically important examples.

We have a pedagogical  Appendix on the theme of $sL_\infty$-algebras. 
The first part is a self-contained review on the category and homotopy category of $sL_\infty$-algebras. 
The second part is a  sketch of the recipe for constructing a classical BV master action out of a classical action
incorporating both the complete tower of classical symmetries and subsequent gauge fixing.
The key point is that every notion in an off-shell formalism for classical physics 
should be defined modulo the classical equation of motion and 
that the concomitant coherence issues are naturally resolved using the language of $sL_\infty$-morphisms. 
%
\subsection{Acknowledgement}

I would  like to thank Gabriel Drummond-Cole for a proofreading.
I am  grateful to 
Serguei Barannikov, Daniel Bennequin, Xiaojun Chen, Cheolhyun Cho, Gabriel Drummond-Cole, Fr\'ed\'eric H\'elein,
Minhyong Kim, Calin Larzaroiu, Changzhen Li, Si Li, Jeehoon Park, Koiji Saito, Jim Stasheff, Mathieu Stienon, Dennis Sullivan,  John Terilla,  
Thomas Tradler, Siye Wu, Philsang Yoo,  Mahmoud Zeinalian for  discussions on related subjects.

\section{The homotopy category of binary QFT algebras}
\label{section: binary QFT algebras}

The main purpose of this section is to define
the category $\category{BQFTA}(\Bbbk)$ and homotopy category $ho\category{BQFTA}(\Bbbk)$
of binary QFT algebras over $\Bbbk$. 
A BV-QFT algebra is a special kind of binary QFT algebra and the category
$\category{BV-QFTA}(\Bbbk)$ and homotopy category $\mathit{ho}\category{BV-QFTA}(\Bbbk)$
category of BV QFT algebras shall be defined as full subcategories of 
$\category{BQFTA}(\Bbbk)$ and $\mathit{ho}\category{BQFTA}(\Bbbk)$, respectively.
We also
consider briefly the (homotopy) category $(\mathit{ho})\BCFTA(\Bbbk)$ of binary CFT algebras,
where a binary CFT algebra appears as as the combined classical limit of a binary QFT algebra and its quantum descendant.\footnote{The acronyms
QFT and CFT stand wishfully for Q(uantum) F(ield) T(heory) and C(lassical) F(ield) T(heory), respectively.}

\subsection{Notation}

Fix a ground field $\Bbbk$ of characteristic zero, usually $\R$ or $\C$.

Let ${V}$ be  a $\Z$-graded $\Bbbk$-vector space. We shall call the $\Z$-grading
the \emph{ghost number} and denote it $\gh$, i.e., we have a decomposition 
${V} = \bigoplus_{j \in \Z} {V}^{j}$ 
and an element $v\in {V}^j$ is said to have ghost number $j=\gh(v)$. 
We often use the notation $(-1)^{|v|}$ instead of $(-1)^{\gh(v)}$ for the
parity $\pm 1$ of $v$ as well as the notation $J v= (-1)^{|v|} v$.

We use  the notation $\otimes$ for the tensor product over $\Bbbk$,
the notation $T^n {V}$  for the $n$th tensor power 
of ${V}$ and the notation $\Xbar{T}({V}) = \bigoplus_{n=1}^\infty T^n {V}$
for the reduced tensor module generated by $V$. 
We use  the notation $\odot$ for the symmetric product over $\Bbbk$,
the notation $S^n {V}$  for the $n$th symmetric power 
of ${V}$ and the notation $\Xbar{S}({V}) = \bigoplus_{n=1}^\infty S^n {V}$
for the reduced symmetric module generated by $V$. 
Note that $S^n {V}$ is generated by expressions $\{v_1\odot\ldots \odot v_n\}$
where  $v_1,\dotsc, v_n \in {V}$ are homogeneous elements in $V$
up to the equivalence relation that 
$v_1\odot\ldots \odot v_n=\ep(\s)v_{\s(1)}\odot\ldots\odot v_{\s(n)}$ for all $\s \in \Perm_n$,
where $\ep(\s)=\pm 1$ is the Koszul sign determined by decomposing a permutation $\s$ 
of  the ordered set $[n]=\{1,2,\ldots, n\}$ as a composite of transpositions and applying the (super)-commutativity  
of $\odot$ (namely that $v_1\odot v_2=(-1)^{|v_1||v_2|}v_2\odot v_1$).
All these modules have $\Z$-gradings by ghost number induced
from that of ${V}$. 

We treat the Planck constant $\kbar$ as a formal parameter with $\gh(\kbar)=0$.
A $\Z$-graded $\Bbbk{\kkbar}$-module $\mb{V}=\bigoplus_{j \in \Z} \mb{V}^{j}$ is topologically free if it is isomorphic to 
${V}{\kkbar}=\bigoplus_{j \in \Z} {V}^{j}{\kkbar}$,
where ${V}$ is a  $\Z$-graded $\Bbbk$-vector space and
\[
{V}^j{\kkbar}\coloneqq{}\left\{\sum_{\mathclap{n\geq 0}} \kbar^n v^{(n)} \big| v^{(n)}\in {V}^j\right\}.
\]
The completed tensor product 
$\bm{{V}}\bm{\otimes} \bm{{W}}$
of $\mb{{V}}\otimes_{\Bbbk{\kkbar}} \mb{{W}}$, where $\mb{{V}}$ and 
$\mb{{W}}$ are  $\Bbbk{\kkbar}$-modules,
is defined to be the inverse limit 
$\varprojlim \;\big(\mb{{V}}\otimes_{\Bbbk{\kkbar}} \mb{W}\big)/\kbar^n 
\cdot \big(\mb{{V}}\otimes_{\Bbbk{\kkbar}}\mb{W}\big)$.
The corresponding completed symmetric product is denoted by $\bm{{V}}\bm{\odot} \bm{{W}}$.
If $\mb{{V}}\cong {V}{\kkbar}$ and $\mb{W}\cong {W}{\kkbar}$ are topologically free, then 
both $\mb{{V}}\bm{\otimes} \mb{W}$
and  $\mb{{V}}\bm{\odot} \mb{W}$ are also topologically free: we have
$\mb{{V}}\bm{\otimes} \mb{W} \cong ({V}\otimes W){\kkbar}$ and  
$\mb{{V}}\bm{\odot} \mb{W} \cong ({V}\odot W){\kkbar}$.
Therefore we use the notation 
$T^n{V}{\kkbar}$ for $\overbrace{{V}{\kkbar}\bm{\otimes} \ldots \bm{\otimes} {V}{\kkbar}}^n$,
and the notation $S^n{V}{\kkbar}$ for 
$\overbrace{{V}{\kkbar}\bm{\odot} \ldots \bm{\odot} {V}{\kkbar}}^n$,
etc. 

The space of  $\Bbbk$-linear maps  between 
$\Z$-graded vector spaces ${V}$ and ${W}$
is denoted by $\Hom\big({V},{W}\big)=\bigoplus_{j\in\Z} \Hom\big({V},{W})^j$,
where $\Hom\big({V},{W}\big)^j$ is the space of $\Bbbk$-linear maps increasing the ghost number by $j$.
Let $\mb{\b}= \b^{(0)}+\kbar \b^{(1)}+\kbar^2 \b^{(2)}+\cdots$  be a family parametrized by $\kbar$, with $\b^{(n)} \in \Hom(V,W)^j$.
Then $\mb{\b}$ determines a $\Bbbk{\kkbar}$-linear map, denoted by the same symbol, 
between the topologically-free modules $V{\kkbar}$ and $W{\kkbar}$ increasing the ghost number by $j$
by $\kbar$-adic continuity: for all $\bm{v}=v^{(0)}+\kbar v^{(1)} +\ldots \in V{\kkbar}$ we have
\[
\mb{\b}(\bm{v})
= \sum_{\mathclap{n=0}}^\infty \kbar^n \sum_{\mathclap{i=0}}^n  \b^{(n-i)}\big(v^{(i)}\big),
\]
and the converse is also true. 
In other words, a $\Bbbk{\kkbar}$-linear map $\mb{\b}:{V}{\kkbar}\rightarrow W{\kkbar}$
is determined by its restriction to ${V}$.
Accordingly, 
the space of all  $\Bbbk{\kkbar}$-linear maps from $V{\kkbar}$ to $W{\kkbar}$
will be denoted by  
\[\Hom ({V},W){\kkbar}=\bigoplus_{j\in \Z}\Hom(V,W)^j{\kkbar}.\]
The projection $\b^{(0)}$
of $\mb{\b} \in \Hom ({V},W){\kkbar}$
to $\Hom({V},W)$ is said to be the {\em classical
limit}  of $\mb{\b}$, and we often us $\b$ as shorthand notation for $\b^{(0)}$.
Composition of  $\mb{\b} \in \Hom({U} ,{V})^j{\kkbar}$ and
$\mb{\g}\in \Hom({V}, {W} )^k{\kkbar}$
is defined in a similar fashion:
\[
\mb{\g}\circ \mb{\b}
=\sum_{\mathclap{n=0}}^\infty\kbar^n \sum_{\mathclap{i=0}}^n \g^{(n-i)}\circ \b^{(i)}
\in  \Hom\big({U}, {W} \big)^{j+k}{\kkbar}.
\]

We often use the notation $\mb{\b}_n(\bm{v}_1,\ldots, \bm{v}_n)$ in place of either 
$\mb{\b}_n(\bm{v}_1\bm{\otimes}\ldots\bm{\otimes} \bm{v}_n)$ for $\mb{\b}_n \in \Hom (T^n{V},W){\kkbar}$,
or $\mb{\b}_n(\bm{v}_1\bm{\odot}\ldots\bm{\odot} \bm{v}_n)$ for $\mb{\b}_n \in \Hom (S^n{V},W){\kkbar}$.
It is obvious that
a family $\underline{\mb{\b}} =\mb{\b}_1, \mb{\b}_2,\ldots$ of $\mb{\b}_n \in \Hom\big(S^n {V}, {W}\big)^j{\kkbar}$ for $n\geq 1$
determine  uniquely a ${\mb{\b}} \in  \Hom\big(\Xbar{S}({V}), {W}\big)^j{\kkbar}$ and vice versa such that
${\mb{\b}}_n ={\mb{\b}}\circ \eb_{{S}^{^n}\!\!{V}{\kkbar}}$, for all $n\geq 1$, where 
$\eb_{S^n {V}{\kkbar}}:  S^n{V}{\kkbar} \rightarrow \Xbar{S}({V}){\kkbar}$
is the natural embedding. We use the notation $\underline{\mb{\b}}$ and $\mb{\b}$ interchangeably.

We shall denote an element of ${V}{\kkbar}$ by a bold letter, 
i.e., $\mb{v}\in {V}{\kkbar}$, and an element of ${V}$ by an italic letter, i.e.,
$x \in {V}$, and will write the formal power series expansion of $\mb{v}\in{V}{\kkbar}$ 
as
$\mb{v} = v^{(0)} + \kbar v^{(1)} +\kbar^2 v^{(2)}+\cdots$.
We shall often denote $v^{(0)}$ by $v$ and say that $v$ is the classical limit of $\mb{v}$.

A \emph{partition} of the ordered  set $[n]=\{1,2,\ldots, n\}$ is a decomposition 
$\mp = B_1\sqcup B_2\sqcup \cdots\sqcup B_{|\mp|}$
into pairwise disjoint and non-empty subsets $B_i$ called {\em blocks}. 
We denote the number of blocks 
in the partition $\mp$ by $|\mp|$ and the size of a block $B$ by $|B|$. 
We shall use the {\em strictly ordered} representation for a partition. 
That is, blocks are ordered by the maximum element of each block and 
each block is ordered via the ordering induced from the natural numbers. 
We denote the set of all partitions of $[n]$ by $P(n)$.
For example, we have
$P(1)=\{1\}$, $P(2)=\{1,2\}, \;\{1\}\sqcup \{2\}$ and
\[
P(3)=\{1,2,3\}, 
\;\;\{1,2\}\sqcup \{3\},
\;\;\{2\}\sqcup \{1,3\},
\;\;\{1\}\sqcup \{2,3\},
\;\;\{1\}\sqcup\{2\}\sqcup\{3\}.
\]
For $k,k^\pr$ in $[n]$, we use the notation 
$k \sim_\mp k^\pr$ if both $k$ and $k^\pr$ belong to the same block in the partition $\mp$ 
and the notation $k\nsim_\mp k^\pr$ otherwise.
For a given set of $n$ homogeneous elements $\bm{v}_1,\dotsc, \bm{v}_n \in {V}{\kkbar}$,
the Koszul sign $\e(\mp)$ for a partition $\mp =B_1\sqcup \cdots \sqcup B_{|\mp|} \in P(n)$
means the Koszul sign $\e(\s)$ associated with the permutation $\s \in \mathit{Perm}_n$, i.e. the unique permuation $\s$ that satisfies the equation
\[
\bm{v}_{B_1}\bm{\otimes} \bm{v}_{B_2}\bm{\otimes}\ldots\bm{\otimes} \bm{v}_{B_{|\mp|}}
=\bm{v}_{\s(1)}\bm{\otimes}\bm{v}_{\s(2)}\bm{\otimes} \ldots\bm{\otimes} \bm{v}_{\s(n)},
\]
where $\bm{v}_B = \bm{v}_{j_1}\bm{\otimes} \bm{v}_{j_2}\bm{\otimes} \ldots\bm{\otimes}\bm{v}_{j_{|B|}}$ 
if $B=\left\{{j_1}, {j_2}, \cdots, {j_{|B|}}\right\}$.
For a family $\underline{\mb{\b}}=\mb{\b}_1,\mb{\b}_2,\dots $ of $\mb{\b}_n\in \Hom\big(S^n {V}, W\big)^{j}$, for all $n\geq 1$, 
and  a block $B=\{j_1,\ldots, j_r\}$ of a partition $\mp \in P(n)$, 
the notation $\mb{\b}\big(\bm{v}_{B}\big)$  is taken to mean $\mb{\b}_r\big(\bm{v}_{j_1},\dotsc, \bm{v}_{j_r}\big)$.

\subsection{Binary QFT algebras}

Let $\sC$ be a $\Z$-graded vector space over $\Bbbk$.
A structure of a binary QFT algebra on $\sC$ 
will consist of the structure of a QFT complex and the structure of a unital super-commutative and associative algebra
satisfying an $\kbar$-compatibility condition.

\begin{definition}
The structure of a QFT complex  on $\sC$ is a tuple $\big(\sC{\kkbar}, 1_\sC, \mb{K}\big)$,
where $1_\sC \in \sC^0$ and  $\mb{K}= K^{(0)}+\kbar K^{(1)}+\kbar^2 K^{(2)}+\ldots \in\Hom (\sC,\sC)^1{\kkbar}$,  with the properties that
$\mb{K}1_\sC=\mb{K}\circ \mb{K}=0$ and $1_\sC\neq K^{(0)} x$ for all $x\in \sC$.
\end{definition}
We remark that the conditions $\mb{K}1_\sC=\mb{K}\circ \mb{K}=0$ are equivalent to 
the set of  conditions that  $K^{(n)}1_\sC =\sum_{\mathclap{j=0}}^n K^{(j)}\circ K^{(n-j)}=0$ for all $n\geq 0$,
and  the condition that $1_\sC\neq K^{(0)} x$ for all $x\in \sC$ implies
that $1_\sC\neq \mb{K} \bm{x}$ for all $\bm{x}\in \sC{{\kkbar}}$ --- hence the
$\mb{K}$-cohomology class of $1_\sC$ is non-trivial.
We shall usually denote $K^{(0)}$ by $K$, so that $K \in \Hom\big(\sC, \sC\big)^1$ 
and $K\circ K=K1_\sC=0$.
Therefore,  the tuple $(\sC, 1_\sC, K)$ is a pointed cochain complex over $\Bbbk$ 
such that the $K$-cohomology class of the cocycle $1_\sC$ is non-trivial.
We call $\big(\sC, 1_\sC, {K}\big)$ the classical limit of the
QFT complex  $\big(\sC{\kkbar}, 1_\sC, \mb{K}\big)$.

In the geometrical picture of the BV quantization scheme, the classical limit  
$K$ of $\mb{K}$ corresponds to  
an odd nilpotent vector field on the space of all fields and anti-fields 
whose vanishing locus in the space of all classical fields is the solution space
of the classical (on-shell) equations of motion.  
Then the  $K$-cohomology is the space of functions on this on-shell motion space
modulo classical symmetry (the space of classical observables modulo classical equivalence).
This motivates the following  definition. 

\begin{definition}
A QFT complex  is on-shell if the differential vanishes in the classical limit.
\end{definition}

A unital $\Z$-graded commutative associative algebra on $\sC$
is a tuple $\big(\sC, 1_\sC, \;\cdot\;\big)$, where $1_\sC \in \sC^0$ and $\cdot$ is a $\Bbbk$-bilinear product on $\sC$, 
so that
we have $\gh(x\cdot y)=\gh(x)+\gh(y)$, 
$x\cdot 1_\sC = x$, $x\cdot y =(-1)^{|x||y|}y\cdot x$ and $x\cdot (y\cdot z) =(x\cdot y)\cdot z$
for all homogeneous elements $x,y,z \in\sC$.
Then,  we have the canonical structure $\big(\sC{\kkbar}, 1_\sC, \;\cdot\;\big)$ of 
a unital $\Z$-graded commutative associative algebra over $\Bbbk{\kkbar}$ on the topologically-free
module $\sC{\kkbar}$
with the  product $\bm{x}\cdot \bm{y} \coloneqq{}\sum_{{n=0}}^\infty\kbar^n \sum_{{i=0}}^n x^{(i)}\cdot y^{(n-i)}$
for all $\bm{x}=x^{(0)}+\kbar x^{(1)}+\ldots  \in \sC{\kkbar}$ and $\bm{y}=y^{(0)}+\kbar y^{(1)}+\ldots  \in \sC{\kkbar}$.

\begin{definition}\label{bBKftalg}
A QFT complex $\big(\sC{\kkbar}, 1_\sC, \;\cdot\;, \mb{K}\big)$ structure on $\sC$ is 
$\kbar$-compatible with a unital $\Z$-graded commutative associative algebra structure
$\big(\sC, 1_\sC, \,\cdot\,\big)$ if 
the family $\underline{\bell} = {\bell}_1, {\bell}_2,\ldots$ defined
recursively  for all $n\geq 1$ and homogeneous $\bm{x}_1,\ldots, \bm{x}_n \in \sC{\kkbar}$ via the equation
\eqn{defqdesalg}{
\mb{K}\big(\bm{x}_1\cdots \bm{x}_n\big)
=
\sum_{\clapsubstack{\mp \in P(n)\\ |B_i| = n-|\mp|+1}}
(-\kbar)^{n-|\mp|}\ep(\mp)
J\!\bm{x}_{B_1}\cdot\ldots \cdot J\!\bm{x}_{B_{i-1}}\cdot \bell(\bm{x}_{B_i})
\cdot \bm{x}_{B_{i+1}}\cdot\ldots\cdot \bm{x}_{B_{|\mp|}}
,
}
has the property that $\bell_n \in \Hom(S^n \sC, \sC)^1{\kkbar}$ for all $n\geq 1$ (a priori there should be negative powers of $\kbar$).
Then, 
\begin{itemize}
\item
we call the tuple $\sC{\kkbar}_\BQ\coloneqq{}\big(\sC{\kkbar}, 1_\sC, \;\cdot\;, \mb{K}\big)$ a \emph{binary QFT algebra structure} on $\sC$, and

\medskip
\item we call the tuple $\bm{\mK}(\sC{\kkbar}_\BQ )\coloneqq{}\big(\sC{\kkbar}, 1_\sC,\underline{\bell}\big)$ the \emph{quantum descendant} of 
the binary QFT algebra $\sC{\kkbar}_\BQ$.
\end{itemize}
\end{definition}

\begin{remark}
Note that the recursive formula \eq{defqdesalg} for the family $\underline{\bell}$   can be rewritten as follows:
\begin{align*}
(-\kbar)^{n-1}\bell_n(\bm{x}_1,\ldots, \bm{x}_n) =
&\mb{K}\big(\bm{x}_1\cdot\ldots\cdot \bm{x}_n\big)
\\
&
-\sum_{\clapsubstack{\mp \in P(n)\\ |B_i| = n-|\mp|+1\\ \color{red}|\mp|\neq 1}}
(-\kbar)^{n-|\mp|}\e(\mp)\,
J\!\bm{x}_{B_1}\cdot\ldots\cdot J\!\bm{x}_{B_{i-1}}\cdot \bell(\bm{x}_{B_i})
\cdot \bm{x}_{B_{i+1}}\cdot\ldots\cdot \bm{x}_{B_{|\mp|}},
\end{align*}
where the right-hand-side of the above depends only on $\bell_1,\ldots, \bell_{n-1}$.
Therefore, the formula determine the family $\underline{\bell}$ uniquely.
Note also that the condition $ |B_i| = n-|\mp|+1$ implies
that the blocks $B_1,\ldots, B_{i-1}, B_{i+1},\ldots, B_{|\mp|}$ are singletons.
For example, we have $\bell_1 =\mb{K}$ and
\begin{align*}
(-\kbar)\bell_2(\bm{x}_1,\bm{x}_2)& = \mb{K}(\bm{x}_1\cdot \bm{x}_2) 
- \mb{K}\bm{x}_1\cdot \bm{x}_2 - J\!\bm{x}_1\cdot \mb{K}\bm{x}_2
,\\
(-\kbar)^2\bell_3(\bm{x}_1,\bm{x}_2,\bm{x}_3)& = \mb{K}(\bm{x}_1\cdot \bm{x}_2\cdot \bm{x}_3) 
- \mb{K}\bm{x}_1\cdot \bm{x}_2 \cdot \bm{x}_3- J\!\bm{x}_1\cdot \mb{K}\bm{x}_2\cdot \bm{x}_3
-J\!\bm{x}_1\cdot J\!\bm{x}_2\cdot \mb{K}\bm{x}_3 
\\
&\ \ 
-(-\kbar)\left(
\bell_2(\bm{x}_1, \bm{x}_2)\cdot \bm{x}_3 
+ J\!\bm{x}_1\cdot \bell_2(\bm{x}_2, \bm{x}_3) 
+ (-1)^{|\bm{x}_1||\bm{x}_2|}J\!\bm{x}_2\cdot \bell_2(\bm{x}_1,\bm{x}_3)\right)
.
\end{align*}
Again, without imposing any compatibility, the definition of $\underline{\bell}$ would only imply that $(-\kbar)^{n-1}\bell_n$ was in $\Hom(S^n \sC, \sC)^{1}{\kkbar}$. The
$\kbar$-compatibility condition requires that in fact $\bell_n$ is in $\Hom(S^n \sC, \sC)^{1}{\kkbar}$ for all $n\geq 1$,
imposing a set of non-trivial relations between the product 
$\cdot$ and the differential $\mb{K}$.
\naturalqed
\end{remark}

\begin{lemma}\label{qdslinftyalgebra}
The quantum descendant $\bm{\mK}(\sC{\kkbar}_\BQ )=\big(\sC{\kkbar}, 1_\sC, \underline{\bell}\big)$ 
of a binary QFT algebra  $\sC{\kkbar}_\BQ$
is a topologically-free  unital $sL_\infty$-algebra over $\Bbbk{\kkbar}$, i.e.,
for all $n\geq 1$ and homogeneous $\bm{x}_1,\ldots, \bm{x}_{n}\in \sC{\kkbar}$,
we have $\bell_n \in  \Hom(S^n \sC, \sC)^{1}{\kkbar}$ and
\eqnalign{uslinftyalgebra}{
\bell_{n}\big(\bm{x}_1,\ldots, \bm{x}_{n-1}, 1_\sC\big)=0
,\\
\\
\sum_{\clapsubstack{|\mp| \in P(n)\\ |B_i|=n-|\mp|+1}}\e(\mp)\,\bell_{|\mp|}\!\left(
J\bm{x}_{B_1},\ldots, J\bm{x}_{B_{i-1}}, \bell(\bm{x}_{B_{i}}), \bm{x}_{B_{i+1}}, \ldots, 
\bm{x}_{B_{|\mp|}}\right)=0
.
}
\end{lemma}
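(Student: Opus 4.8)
The plan is to show that the family $\underline{\bell}$ defined by the recursion \eqref{defqdesalg} automatically satisfies the unital $sL_\infty$-axioms \eqref{uslinftyalgebra}, given only that each $\bell_n$ genuinely lands in $\Hom(S^n\sC,\sC)^1{\kkbar}$ (which is the $\kbar$-compatibility hypothesis) and that $\big(\sC{\kkbar},1_\sC,\,\cdot\,,\mb{K}\big)$ is a QFT complex compatible with the commutative product. The symmetry of $\bell_n$ is immediate, since the right side of \eqref{defqdesalg} is manifestly (super)symmetric in $\bm{x}_1,\ldots,\bm{x}_n$ by the commutativity of $\cdot$ and the bookkeeping of Koszul signs; so the content is the unit axiom and the generalized Jacobi identity.

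First I would establish the unit axiom $\bell_n(\bm{x}_1,\ldots,\bm{x}_{n-1},1_\sC)=0$ for $n\geq 2$ by induction on $n$. Set $\bm{x}_n=1_\sC$ in \eqref{defqdesalg}. On the left, $\bm{x}_1\cdots\bm{x}_{n-1}\cdot 1_\sC=\bm{x}_1\cdots\bm{x}_{n-1}$, so $\mb{K}$ of it is given by the $(n-1)$-term instance of the same recursion. On the right, split the sum over partitions $\mp\in P(n)$ according to the block containing $n$: either $\{n\}$ is a singleton block, or $n$ lies in a larger block. When $\{n\}$ is its own block it must be a singleton $J$-factor (since the unique non-singleton block $B_i$ has size $n-|\mp|+1$), and $J1_\sC=1_\sC$ acts as the identity under $\cdot$; these terms reassemble precisely into the right side of the $(n-1)$-recursion for $\mb{K}(\bm{x}_1\cdots\bm{x}_{n-1})$, except for the term in which $n$ lies in the special block $B_i$. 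Matching both sides then forces the sum of the remaining terms --- those where $n\in B_i$ --- to vanish, and by the inductive hypothesis ($\bell_m(\cdots,1_\sC)=0$ for $m<n$) every such term with $|B_i|\geq 2$ already vanishes except the one with $B_i=\{n\}$... more carefully, the one genuinely new contribution is $\bell_n(\bm{x}_1,\ldots,\bm{x}_{n-1},1_\sC)$ itself (coarsest partition, $B_i=[n]$) plus terms $\bell_m$ with $1_\sC$ inserted, which die by induction; so one reads off $\bell_n(\bm{x}_1,\ldots,\bm{x}_{n-1},1_\sC)=0$. The base case $n=2$ is $(-\kbar)\bell_2(\bm{x}_1,1_\sC)=\mb{K}(\bm{x}_1)-\mb{K}\bm{x}_1\cdot 1_\sC-J\bm{x}_1\cdot\mb{K}1_\sC=0$ using $\mb{K}1_\sC=0$.

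Next, for the generalized Jacobi identity I would use $\mb{K}^2=0$. Apply $\mb{K}$ to both sides of \eqref{defqdesalg} and use $\mb{K}^2(\bm{x}_1\cdots\bm{x}_n)=0$. On the right side, $\mb{K}$ hits a product of the form $J\bm{x}_{B_1}\cdots\bell(\bm{x}_{B_i})\cdots\bm{x}_{B_{|\mp|}}$; expanding each such $\mb{K}(\text{product})$ again via \eqref{defqdesalg} produces a double sum over partitions (a partition into blocks, then a coarser grouping/further partition of the resulting factors). Re-indexing this double sum --- standard for $L_\infty$ bookkeeping --- organizes it as a sum over partitions $\mp\in P(n)$ of terms of the shape $\bell_{|\mp|}\big(J\bm{x}_{B_1},\ldots,\bell(\bm{x}_{B_i}),\ldots,\bm{x}_{B_{|\mp|}}\big)$ (the $\bell\circ\bell$ composite that appears in \eqref{uslinftyalgebra}), plus ``diagonal'' terms where $\mb{K}$ and a single $\bell$ act on overlapping arguments, which telescope because the factor multiplying each such configuration is itself an instance of $\mb{K}$ applied to a sub-product and these get cancelled by the $\mb{K}\circ\bell$ pieces after using $\mb{K}^2=0$ once more. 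Bruteforce sign-tracking aside, the upshot is that vanishing of $\mb{K}^2(\bm{x}_1\cdots\bm{x}_n)$ is equivalent, after clearing the overall power of $(-\kbar)$ and using that $\bell_m$ has no negative powers of $\kbar$, to the quadratic relation \eqref{uslinftyalgebra}. Here one reads off the identity one homogeneous $\kbar$-degree at a time; the $\kbar$-compatibility is exactly what licenses dividing through.

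The main obstacle will be the combinatorial reorganization in the Jacobi step: carefully identifying, for each partition $\mp\in P(n)$ and each way $\mb{K}$ can re-act, the matching partner term with the correct Koszul sign, so that everything except the claimed $\bell\circ\bell$ sum cancels. This is the usual ``master identity'' computation for defining an $L_\infty$-structure from a BV-type operator (the classical case being Koszul's and Bering--Damgaard--Alfaro's result that a BV operator generates an $L_\infty$-algebra via its ``higher brackets''); the novelty here is only the explicit tracking of powers of $\kbar$, which the hypothesis handles. I would therefore structure the proof as: (i) symmetry and degree (immediate from the recursion plus $\kbar$-compatibility); (ii) unit axiom by induction on $n$ as above; (iii) Jacobi identity by expanding $0=\mb{K}^2(\bm{x}_1\cdots\bm{x}_n)$ via \eqref{defqdesalg} twice and reorganizing, extracting the identity $\kbar$-degree by $\kbar$-degree.
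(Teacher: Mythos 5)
Your proposal is correct in substance and rests on the same two inputs as the paper ($\mb{K}1_\sC=0$ for the unit axiom, $\mb{K}\circ\mb{K}=0$ together with the defining recursion \eq{defqdesalg} for the quadratic relation), but it executes the Jacobi step by a different, more pedestrian route. The paper never performs the double expansion and term-matching you describe: instead it lifts the recursion to the reduced symmetric coalgebra by introducing the multiplication map $\mb{\pi}$ (with $\mb{\pi}_1=\I_{\sC{\kkbar}}$) and the coderivation-type operator $\mb{\d}_{\bell}$ of \eq{qcoder}, rewrites \eq{defqdesalg} as the single operator identity $\mb{K}\circ\mb{\pi}=\mb{\pi}\circ\mb{\d}_{\bell}$ (\eq{qdera}), and then gets $\mb{\pi}\circ\mb{\d}_{\bell}\circ\mb{\d}_{\bell}=(\mb{K}\circ\mb{K})\circ\mb{\pi}=0$ in one line; the $sL_\infty$ relation is then read off as the $\sC{\kkbar}$-component of $\mb{\d}_{\bell}\circ\mb{\d}_{\bell}$, computed once and for all in \eq{qxcoder}. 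In other words, the ``combinatorial reorganization'' you flag as the main obstacle is exactly what the coalgebraic packaging trivializes, and that is the real payoff of the paper's approach; your route buys elementarity at the cost of having to carry out that bookkeeping by hand. If you do carry it out, two points in your sketch need tightening: the cancellation of the terms in which the inner $\bell$-factor is a mere spectator of the outer $\bell$ is a pairwise Koszul-sign cancellation forced by symmetry of the double sum, not a consequence of ``using $\mb{K}^2=0$ once more'' (that relation is used exactly once, on the left-hand side); and the final division by $(-\kbar)^{n-1}$ should be justified by the fact that $\sC{\kkbar}$ is topologically free, hence $\kbar$-torsion-free, rather than by the $\kbar$-compatibility hypothesis, which is what guarantees the $\bell_n$ exist in $\Hom(S^n\sC,\sC)^{1}{\kkbar}$ in the first place. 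Your induction for the unit axiom is essentially the ``easy induction'' the paper leaves to the reader and is fine.
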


\begin{proof}
By definition, we have $\bell_n \in \Hom\big(S^n \sC, \sC\big)^{1}{\kkbar}$ for all $n\geq 1$.
Since $\bell_1=\mb{K}$ and $\mb{K}1_\sC=0$, we have $\bell_1(1_\sC)=0$. 
It can be checked  that
$\bell_{n}\big(\bm{x}_1,\ldots, \bm{x}_{n-1}, 1_\sC\big)=0$
for all $n\geq 1$ and $\bm{x}_1,\ldots, \bm{x}_{n}\in \sC{\kkbar}$
by an easy induction. It remains to show the second set of relations in \eq{uslinftyalgebra}.

From the family $\underline{\bell}=\bell_1,\bell_2,\ldots$ 
of $\bell_n \in \Hom(S^n \sC, \sC)^{1}{\kkbar}$,
we define a $\Bbbk{\kkbar}$-linear operator $\mb{\d}_{{\bell}}:\Xbar{S}(\sC){\kkbar}\rightarrow \Xbar{S}(\sC){\kkbar}$
with $\gh\big(\mb{\d}_{{\bell}}\big)=1$
by defining, for all $n\geq 1$ and homogeneous 
$\bm{x}_1,\ldots, \bm{x}_n \in \sC{\kkbar}$,
\eqnalign{qcoder}{
\mb{\d}_{{\bell}}({\bm{x}}_1 \bm{\odot} &\dotsc \bm{\odot} {\bm{x}}_n)\coloneqq{}
\\
&
\sum_{\clapsubstack{\mp \in P(n)\\ |B_i| = n-|\mp|+1}}
(-\kbar)^{n-|\mp|}\e(\mp)\,
J\!{\bm{x}}_{B_1}\bm{\odot} \dotsc\bm{\odot} J\!{\bm{x}}_{B_{i-1}}\bm{\odot} 
{\bell}({\bm{x}}_{B_i})\bm{\odot} {\bm{x}}_{B_{i+1}}
\bm{\odot} \dotsc\bm{\odot} {\bm{x}}_{B_{|\mp|}}
.
}
It is straightforward to check that 
for all $n\geq 1$ and  homogeneous $\bm{x}_1,\ldots, \bm{x}_n \in \sC{\kkbar}$,
\eqnalign{qxcoder}{
\proj_{\sC{\kkbar}}
& \circ\mb{\d}_{{\bell}}\circ  \mb{\d}_{{\bell}}({\bm{x}}_1
\bm{\odot}\dotsc\bm{\odot} {\bm{x}}_n)
\\
=&(-\kbar)^{n-1}\sum_{\clapsubstack{|\mp| \in P(n)\\ |B_i|=n-|\mp|+1}}
\e(\mp)\bell_{|\mp|}\left(
J\!\bm{x}_{B_1},\ldots, J\!\bm{x}_{B_{i-1}}, \bell(\bm{x}_{B_{i}}), \bm{x}_{B_{i+1}}, \ldots, 
\bm{x}_{B_{|\mp|}}\right).
}
Therefore, all that remains is to establish that
$\proj_{\sC{\kkbar}}  \circ\mb{\d}_{{\bell}}\circ  \mb{\d}_{{\bell}}=0$.

From the $\Z$-graded commutative and associative product $\cdot$ of $\sC{\kkbar}_\BQ$,
we define an operator $\mb{\pi} \in  \Hom\left(\Xbar{S}(\sC),\sC\right)^{0}{\kkbar}$ 
for all $n\geq 1$ and  ${\bm{x}}_1,\ldots,{\bm{x}}_n \in \sC{\kkbar}$ as
\eqn{qcoderz}{
\mb{\pi}\big({\bm{x}}_1\bm{\odot}\ldots\bm{\odot} {\bm{x}}_n\big)\coloneqq{}{\bm{x}}_1\cdot\ldots\cdot {\bm{x}}_n
.
}
Let $\mb{\pi}_n: =\mb{\pi}\circ \eb_{S^n\sC{\kkbar}}$, for all $n\geq 1$.
We note that $\mb{\pi}_1 =\I_{\sC{\kkbar}}$, the identity map on $\sC{\kkbar}$.

Now the set of relations in \eq{defqdesalg} defining the family $\underline{\bell}$
can be rewritten as follows:
\eqn{qdera}{
\mb{K}\circ \mb{\pi} =\mb{\pi}\circ \mb{\d}_{\bell},
}
which implies that $\mb{\pi}\circ \mb{\d}_{{\bell}}\circ  \mb{\d}_{{\bell}}=0$,
since $(\mb{\pi}\circ \mb{\d}_{{\bell}})\circ  \mb{\d}_{{\bell}}
=\mb{K}\circ (\mb{\pi}\circ  \mb{\d}_{{\bell}})=(\mb{K}\circ\mb{K})\circ \mb{\pi}$ and $\mb{K}\circ\mb{K}=0$.
From $\mb{\pi}_1 =\I_{\sC{\kkbar}}$, we conclude that 
$\proj_{\sC{\kkbar}}\circ \mb{\d}_{{\bell}}\circ  \mb{\d}_{{\bell}}=0$.
\naturalqed
\end{proof}

\begin{remark}\label{qderd}
Consider the reduced symmetric coalgebra 
$\Xbar{S}^{\mathit{co}}\!(\sC)=\left(\Xbar{S}(\sC){\kkbar},
\Xbar{\blacktriangle}\right)$ cogenerated by $\sC$ (see Appendix~\ref{appendix: bar}.)
The $\Bbbk$-linear coproduct 
\[\Xbar{\blacktriangle}:\Xbar{S}(\sC)\rightarrow \Xbar{S}(\sC)\otimes \Xbar{S}(\sC)\]
uniquely induces
a $\Bbbk{\kkbar}$-linear coproduct, denoted by the
same symbol, 
\[\Xbar{\blacktriangle}:\Xbar{S}(\sC){\kkbar}\rightarrow \Xbar{S}(\sC){\kkbar}\bm{\otimes} \Xbar{S}(\sC){\kkbar}\]
by $\kbar$-adic continuity so that 
$\Xbar{S}^{\mathit{co}}\!(\sC){\kkbar}=\left(\Xbar{S}(\sC){\kkbar}, \Xbar{\blacktriangle}\right)$
is a (topologically-free)  $\Z$-graded cocommutative coalgebra over $\Bbbk{\kkbar}$.

Define $\breve{\bell} \in  \Hom\left(\Xbar{S}(\sC),\sC\right)^{1}{\kkbar}$ as
$\breve{\bell}_n\coloneqq{}(-\kbar)^{n-1}\bell_n$, where $\breve{\bell}\circ \eb_{S^n\sC{\kkbar}}$.
Consider  the unique extension $\mD(\breve{\bell})$ of $\breve{\bell}$ 
as a coderivation  on  $\Xbar{S}^{\mathit{co}}\!(\sC){\kkbar}$:
we have,  for all $n\geq 1$ and  homogeneous $\bm{x}_1,\ldots, \bm{x}_n \in \sC{\kkbar}$,
\[
\mD(\breve{\bell})({\bm{x}}_1 \bm{\odot} \dotsc \bm{\odot} {\bm{x}}_n)
=
\sum_{\clapsubstack{\mp \in P(n)\\ |B_i| = n-|\mp|+1}}
\e(\mp)\,
J\!{\bm{x}}_{B_1}\bm{\odot} \dotsc\bm{\odot} J\!{\bm{x}}_{B_{i-1}}\bm{\odot} 
\breve{\bell}({\bm{x}}_{B_i})\bm{\odot} {\bm{x}}_{B_{i+1}}
\bm{\odot} \dotsc\bm{\odot} {\bm{x}}_{B_{|\mp|}}
.
\]
(See Lemma~\ref{fcoder} in Appendix~\ref{appendix: bar}.)
We note that $\mb{\d}_{{\bell}}=\mD(\breve{\bell})$. Also note
that the condition $\proj_{\sC{\kkbar}}\circ \mD(\breve{\bell})\circ \mD(\breve{\bell})=0$
actually implies that $\mD(\breve{\bell})\circ \mD(\breve{\bell})=0$. Therefore, we have $\mb{\d}_{\bell}\circ\mb{\d}_{\bell}=0$.
\naturalqed
\end{remark}

\begin{example}
The ground field $\Bbbk$ has the obvious structure $\big(\Bbbk{\kkbar}, 1, \cdot, 0\big)$
of a binary QFT algebra, denoted by $\Bbbk{\kkbar}$, with the zero differential $0$. The quantum descendant of this binary QFT algebra
is the unital $sL_\infty$-algebra  $\big(\Bbbk{\kkbar}, 1, \underline{0}\big)$, i.e., the
zero $sL_\infty$-structure $\underline{0}$ on $\Bbbk{\kkbar}$.
\end{example}

\begin{definition}
 A binary QFT algebra $\big(\sC{\kkbar}, 1_\sC, \;\cdot\;, \mb{K}\big)$ is a BV-QFT algebra 
if the quantum descendant $\big(\sC{\kkbar}, 1_\sC, \underline{\bell}\big)$ 
is a unital sDGLA,  i.e., $\bell_n=0$ for all $n\geq 3$, and $\bell_2$ does not depend
on $\kbar$, i.e., $\bell_2=\ell^{(0)}_2=(\ ,\,)_{\mathit{BV}} \in\Hom\big(S^2\sC,\sC\big)^1$.
\end{definition}

Fix a binary QFT algebra $\big(\sC{\kkbar}, 1_\sC, \;\cdot\;, \mb{K}\big)$ 
and let $\Big(\sC{\kkbar}, 1_\sC, \underline{\bell}\Big)$ be its
quantum descendant unital $sL_\infty$-algebra. 
It is straightforward to check the following lemmas:

\begin{lemma}\label{hiact}
For any $\mb{\g} \in \sC{\kkbar}$  and nilpotent $\mb{\vt} \in \sC^{0}{\kkbar}$, we have
\begin{align*}
\mb{K} \left( \mb{\g}\cdot e^{-\fr{1}{\kbar}\mb{\vt}}\right)   
=
&\left(\mb{K}\mb{\g} +\sum_{\mathclap{n=2}}^\infty\Fr{1}{(n-1)!} \bell_n\big(\mb{\vt},\ldots,\mb{\vt}, \mb{\g}\big)\right)
\cdot e^{-\fr{1}{\kbar}\mb{\th}}
\\
&
-\Fr{1}{\kbar} \left( \mb{K}\mb{\vt} 
+\sum_{\mathclap{n=2}}^\infty\Fr{1}{n!} \bell_n\big(\mb{\vt},\ldots,\mb{\vt}\big)\right)\cdot\mb{\g} \cdot e^{-\fr{1}{\kbar}\mb{\th}}
,
\end{align*}
where $e^{-\fr{1}{\kbar}\mb{\vt}}\coloneqq{}1_\sC -\Fr{1}{\kbar}\mb{\vt} +\Fr{1}{2!\kbar^2}\mb{\vt}\cdot \mb{\vt}
 -\Fr{1}{3!\kbar^3}\mb{\vt}\cdot \mb{\vt}\cdot \mb{\vt} +\ldots 
\in \sC(\!(\kbar)\!)^0$.
\end{lemma}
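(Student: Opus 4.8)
\textbf{Proof proposal for Lemma~\ref{hiact}.}

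The plan is to reduce the statement to the defining recursion \eq{defqdesalg} of the family $\underline{\bell}$ applied to a collection of equal arguments $\mb{\vt}$, together with a single ``free'' argument $\mb{\g}$. First I would write $e^{-\fr{1}{\kbar}\mb{\vt}}=\sum_{k\geq 0}\frac{1}{k!}(-\tfrac1\kbar)^k\,\mb{\vt}^{\cdot k}$ and, using $\Bbbk{\kkbar}$-linearity and $\kbar$-adic continuity of $\mb{K}$, move $\mb{K}$ inside the sum, so the left-hand side becomes $\sum_{k\geq 0}\frac{1}{k!}(-\tfrac1\kbar)^k\,\mb{K}\big(\mb{\g}\cdot\mb{\vt}^{\cdot k}\big)$. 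Now apply \eq{defqdesalg} with $n=k+1$ and arguments $(\mb{\g},\mb{\vt},\dots,\mb{\vt})$: the sum runs over partitions $\mp\in P(k+1)$ in which exactly one block $B_i$ is ``large'' (of size $n-|\mp|+1$) while the rest are singletons, contributing a factor $(-\kbar)^{n-|\mp|}$. I would split this partition sum according to whether the index $1$ (carrying $\mb{\g}$) lies in the large block $B_i$ or in a singleton, and then count how many of the $k$ copies of $\mb{\vt}$ fall into the large block.

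The combinatorial heart of the calculation: for fixed $k$ and fixed size $r$ of the large block, the number of ways to choose which copies of $\mb{\vt}$ go into $B_i$ is a binomial coefficient, and these binomials, once summed against the $\frac{1}{k!}(-\tfrac1\kbar)^k$ prefactor, reassemble two exponentials $e^{-\fr1\kbar\mb{\vt}}$ — one multiplying a term of the form $\bell_r(\mb{\vt},\dots,\mb{\vt},\mb{\g})$ (the case $\mb{\g}\in B_i$) and one multiplying a term $\bell_r(\mb{\vt},\dots,\mb{\vt})\cdot\mb{\g}$ (the case $\mb{\g}$ a singleton, large block all $\mb{\vt}$'s). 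Here I would use $J\mb{\vt}=\mb{\vt}$ since $\mb{\vt}$ has ghost number $0$, which kills all the $J$'s on the $\mb{\vt}$-singletons and on the $\mb{\vt}$'s inside the large block; the only surviving sign subtlety is the Koszul sign $\e(\mp)$ from reordering $\mb{\g}$ past the $\mb{\vt}$'s, which is again $+1$ because $|\mb{\vt}|=0$. Tracking the powers of $(-\kbar)$: a large block of size $r$ containing $\mb{\g}$ sits inside an $n=k+1$ term with $n-|\mp|=$ (number of $\mb{\vt}$'s swallowed) $=r-1$, giving $(-\kbar)^{r-1}$, which combined with the $\frac{1}{(r-1)!}$ from the binomial count and a leftover $(-\tfrac1\kbar)$ from repackaging produces exactly the $\sum_{n\geq 2}\frac{1}{(n-1)!}\bell_n(\mb{\vt},\dots,\mb{\vt},\mb{\g})$ in the first line; the all-$\mb{\vt}$ large block of size $r$ produces $(-\kbar)^{r-1}\cdot\frac{1}{r!}$ and, after extracting the explicit overall $-\tfrac1\kbar$, the $\sum_{n\geq 2}\frac{1}{n!}\bell_n(\mb{\vt},\dots,\mb{\vt})$ of the second line; the $k=0$ term and the singleton-$\mb{\g}$, size-one ``large'' block give the $\mb{K}\mb{\g}$ and $-\tfrac1\kbar(\mb{K}\mb{\vt})\cdot\mb{\g}$ pieces (i.e.\ the $n=1$ contributions $\bell_1=\mb{K}$).

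The main obstacle I expect is purely bookkeeping: getting the two binomial resummations to line up cleanly with the shifted factorials $(n-1)!$ versus $n!$ and the single stray factor of $-\tfrac1\kbar$, while simultaneously confirming that every Koszul/$J$ sign trivializes because $\mb{\vt}$ is even. A clean way to sidestep part of the mess is to work coalgebraically: recognize $e^{-\fr1\kbar\mb{\vt}}$ as (the image under $\mb{\pi}$ of) a group-like element $\exp^{\bm{\odot}}(-\tfrac1\kbar\mb{\vt})$ in $\Xbar{S}^{\mathit{co}}(\sC){\kkbar}$, use $\mb{K}\circ\mb{\pi}=\mb{\pi}\circ\mb{\d}_\bell$ from \eq{qdera} (equivalently $\mD(\breve\bell)$ from Remark~\ref{qderd}), and exploit that a coderivation applied to a group-like element is computed by the ``Leibniz on the exponential'' rule; the two terms on the right-hand side are then visibly the value of $\mb{\d}_\bell$ on $\mb{\g}\bm{\odot}\exp^{\bm{\odot}}(-\tfrac1\kbar\mb{\vt})$, split according to whether $\bell$ is fed $\mb{\g}$ or not. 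Either way the identity follows by expanding and matching coefficients; I would present the coalgebraic version as it makes the two exponentials appear without manual resummation, relegating the binomial identities to a one-line remark.
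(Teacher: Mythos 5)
Your proposal is correct, and in fact the paper offers no argument at all for this lemma (it is dismissed as ``straightforward to check''), so your direct computation is exactly the verification the author has in mind: expand $e^{-\fr{1}{\kbar}\mb{\vt}}$, apply \eq{defqdesalg} to $(\mb{\g},\mb{\vt},\ldots,\mb{\vt})$, split the partition sum according to whether $\mb{\g}$ lies in the unique non-singleton block, and resum; the factors $(-\kbar)^{r-1}$ against $(-\kbar^{-1})^{k}$ and the binomials $\binom{k}{r-1}$, $\binom{k}{r}$ reassemble the two exponentials with the stated $\fr{1}{(n-1)!}$, $\fr{1}{n!}$ and the single leftover $-\fr{1}{\kbar}$, just as you say. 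One small point your sign discussion glosses over: when $\mb{\g}$ sits in a singleton (your second case), that block is $B_1$ and precedes the $\bell$-block, so the formula puts a $J$ on $\mb{\g}$; this factor $(-1)^{|\mb{\g}|}$ is precisely what is absorbed by writing the result as $\bell_n(\mb{\vt},\ldots,\mb{\vt})\cdot\mb{\g}$ (a degree-$1$ element commuted past $\mb{\g}$), which is the form you — and the lemma — state, so the conclusion is unaffected, but the justification should record this step rather than claim all $J$'s vanish. As for your coalgebraic shortcut, it is in the spirit of \eq{qdera}, but note that $\exp^{\bm{\odot}}(-\fr{1}{\kbar}\mb{\vt})$ involves negative powers of $\kbar$ and the paper's reduced coalgebra $\Xbar{S}^{\mathit{co}}(\sC){\kkbar}$ has no counit, hence no genuine group-like elements; one must either extend scalars to $\Bbbk(\!(\kbar)\!)$ and adjoin a counit, or use nilpotency of $\mb{\vt}$ to keep all sums finite, so the elementary resummation is the cleaner route to present.
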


\begin{lemma}\label{remBKftalg}
We have $\bell_1=\mb{K}$
and, for all $n\geq 2$ and homogeneous $\bm{x}_1,\ldots, \bm{x}_n \in \sC{\kkbar}$,
\begin{align*}
-\kbar\bell_n\big(\bm{x}_1,\ldots, \bm{x}_n\big)  = &
\bell_{n-1}\big(\bm{x}_1,\ldots, \bm{x}_{n-2}, \bm{x}_{n-1}\cdot \bm{x}_n\big)
-\bell_{n-1}\big(\bm{x}_1,\ldots, \bm{x}_{n-2}, \bm{x}_{n-1}\big)\cdot \bm{x}_n
\\
&
-(-1)^{|\bm{x}_{n-1}|(|\bm{x}_1|+\ldots+|\bm{x}_{n-2}|)} 
J\bm{x}_{n-1}\cdot\bell_{n-1}\big(\bm{x}_1,\ldots, \bm{x}_{n-2}, \bm{x}_{n}\big),
\end{align*}
so that the failure of $\bell_n$ being a derivation of the product is divisible by $\kbar$.
\end{lemma}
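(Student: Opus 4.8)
The plan is to extract the claimed identity from the recursive definition \eq{defqdesalg} of the family $\underline{\bell}$ by comparing the formula for $n$ products with the formula for $n-1$ products, after folding two of the arguments into a single product. First I would recall that by Lemma~\ref{qdslinftyalgebra} we already know $\bell_m \in \Hom(S^m\sC,\sC)^1{\kkbar}$ for all $m$, so the negative powers of $\kbar$ that naively appear in \eq{defqdesalg} have in fact cancelled; this is what makes the manipulation legitimate. The statement $\bell_1 = \mb{K}$ is immediate from the $n=1$ case of \eq{defqdesalg}. For $n\geq 2$, I would start from \eq{defqdesalg} applied to the $n-1$ elements $\bm{x}_1,\ldots,\bm{x}_{n-2},\bm{x}_{n-1}\cdot\bm{x}_n$, i.e. compute $\mb{K}\big(\bm{x}_1\cdots\bm{x}_{n-2}\cdot(\bm{x}_{n-1}\cdot\bm{x}_n)\big)$. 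Since the product is associative, the left-hand side coincides with $\mb{K}(\bm{x}_1\cdots\bm{x}_n)$, which I would simultaneously expand using \eq{defqdesalg} for $n$ elements.

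The key combinatorial step is to match partitions of $[n-1]$ against partitions of $[n]$ via the merging/splitting of the last two slots. A partition $\mp'\in P(n-1)$ of $\{1,\ldots,n-2,\star\}$ (where $\star$ denotes the merged element $\bm{x}_{n-1}\cdot\bm{x}_n$) corresponds to partitions $\mp\in P(n)$ in which $n-1$ and $n$ may or may not lie in the same block. When $n-1\sim_\mp n$, the block containing $\star$ in $\mp'$ simply has $\star$ replaced by the pair $\{n-1,n\}$; when $n-1\nsim_\mp n$, there is an extra singleton. I would separate the sum on the $n$-element side into the terms where $n-1\sim_\mp n$ and the terms where $n-1\nsim_\mp n$. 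The $\kbar$-power bookkeeping is the delicate part: merging two elements into one drops $|\mp|$ by one and $n$ by one, so $n-|\mp|$ is preserved, but the recursion for $\bell$ carries a factor $(-\kbar)^{n-|\mp|}$ versus $(-\kbar)^{(n-1)-|\mp'|}$, producing exactly the lone power of $-\kbar$ that multiplies $\bell_n$ in the claimed identity. I expect that after cancelling all the common terms (those in which the "active" block $B_i$, the one carrying $\bell$, does not involve either of $n-1$ or $n$, plus the terms where $\star$ sits inside a singleton structure that is reproduced verbatim on both sides), what survives is precisely
\[
-\kbar\,\bell_n(\bm{x}_1,\ldots,\bm{x}_n)
=\bell_{n-1}(\bm{x}_1,\ldots,\bm{x}_{n-2},\bm{x}_{n-1}\cdot\bm{x}_n)
-\bell_{n-1}(\ldots,\bm{x}_{n-1})\cdot\bm{x}_n
-(-1)^{\epsilon}J\bm{x}_{n-1}\cdot\bell_{n-1}(\ldots,\bm{x}_n),
\]
where the two right-hand correction terms arise from the $n$-side partitions in which $\{n-1,n\}$ is split between the active $\bell$-block and a trailing singleton, in the two possible orders.

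The main obstacle will be getting the Koszul signs exactly right, in particular the sign $(-1)^{|\bm{x}_{n-1}|(|\bm{x}_1|+\cdots+|\bm{x}_{n-2}|)}$ and the $J=(-1)^{|\cdot|}$ twist: when $\bm{x}_{n-1}$ and $\bm{x}_n$ are separated and $\bm{x}_{n-1}$ migrates past $\bm{x}_1,\ldots,\bm{x}_{n-2}$ to become a prefactor $J\bm{x}_{n-1}$, the accumulated sign must be tracked against the $\e(\mp)$ conventions and the $J$-decorations already present in \eq{defqdesalg}. Rather than grinding through the full partition sum, I would first verify the identity by hand for $n=2$ and $n=3$ using the explicit formulas for $(-\kbar)\bell_2$ and $(-\kbar)^2\bell_3$ displayed in the Remark after Definition~\ref{bBKftalg} — this both confirms the signs and indicates the exact shape of the cancellation — and then present the general case as the corresponding telescoping of the two instances of \eq{defqdesalg}, invoking the already-established $\kbar$-regularity $\bell_m\in\Hom(S^m\sC,\sC)^1{\kkbar}$ to justify dividing through by $\kbar$. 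The final clause, that the failure of $\bell_n$ to be a derivation of the product is divisible by $\kbar$, is then just the observation that the right-hand side is exactly (the Leibniz-rule defect of $\bell_{n-1}$ in its last argument), read off from the displayed identity.
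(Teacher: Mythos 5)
The paper itself offers no written proof of this lemma (it is listed among the statements declared ``straightforward to check''), so there is nothing to compare your route against; the only question is whether your plan actually closes, and as written it has one genuine gap. Your base cases, sign bookkeeping, and the observation that merging the last two arguments preserves $n-|\mp|$ are all correct, and comparing the two expansions of $\mb{K}\big(\bm{x}_1\cdots \bm{x}_n\big)$ — once via \eq{defqdesalg} for $n$ arguments and once for the $n-1$ arguments $\bm{x}_1,\ldots,\bm{x}_{n-2},\bm{x}_{n-1}\cdot\bm{x}_n$ — is the right starting point. The problem is the claim that ``after cancelling all the common terms\ldots what survives is precisely'' the stated identity. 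For $n\geq 4$ the merged-side expansion contains terms in which the distinguished block has size strictly between $2$ and $n-1$ and contains the merged element, e.g.\ for $n=4$ the term $(-\kbar)\,J\bm{x}_1\cdot \bell_2\big(\bm{x}_2,\bm{x}_3\cdot\bm{x}_4\big)$. No term of the $n$-argument expansion ever has a product $\bm{x}_{n-1}\cdot\bm{x}_n$ inside a $\bell$, so these terms are not ``common'' and do not cancel; they can only be matched against the $n$-side terms whose distinguished block contains exactly one of $n-1,n$, or both but with smaller size, by first rewriting $\bell_k\big(\ldots,\bm{x}_{n-1}\cdot\bm{x}_n\big)$ using the very identity you are proving, at arity $k+1<n$.

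The fix is routine but must be made explicit: argue by strong induction on $n$, with $n=2,3$ as base cases (your verification against the Remark after Definition~\ref{bBKftalg} supplies these), and in the inductive step use the identity at all lower arities to re-expand every merged-side term whose distinguished block is proper before performing the cancellation; only then does the comparison isolate $(-\kbar)^{n-2}$ times the claimed relation, after which torsion-freeness of $\sC{\kkbar}$ lets you cancel the power of $\kbar$ (this, rather than the $\kbar$-regularity of Lemma~\ref{qdslinftyalgebra}, is what the division step really uses). Equivalently, and perhaps more cleanly, define a family $\tilde{\bell}_n$ by $\tilde{\bell}_1=\mb{K}$ and by declaring $-\kbar\,\tilde{\bell}_n$ to be the Leibniz defect of $\tilde{\bell}_{n-1}$ in its last slot, show by induction that this family satisfies \eq{defqdesalg}, and invoke the uniqueness of $\underline{\bell}$ noted in the Remark. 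Finally, note that the last clause of the lemma concerns the defect of $\bell_n$, which is $-\kbar\,\bell_{n+1}$, i.e.\ it is read off from the identity at arity $n+1$, not from the displayed one at arity $n$; this is a harmless index shift but should be stated.
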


%
%
%

\subsection{Morphisms of binary QFT algebras}

A morphism of binary QFT algebras 
will be defined as a morphism of the underlying QFT complexes satisfying  
a $\kbar$-compatibility condition.

\begin{definition} 
Let 
$\big(\sC{\kkbar}, 1_\sC, \mb{K}\big)$ 
and $\big(\sC^\pr{\kkbar}, 1_{\sC^\pr},\mb{K}^\pr\big)$ be QFT complexes. 
A \emph{morphism} between them is a map 
$\bm{\mf}=\mf^{(0)} +\kbar \mf^{(1)}+\kbar^2 \mf^{(2)}+\ldots\in \Hom\big(\sC,\sC^\pr\big)^{0}{\kkbar}$
satisfying the conditions $\bm{\mf}(1_\sC) = 1_{\sC^\pr}$ and $\mb{K}^\pr\circ \bm{\mf}= \bm{\mf}\circ \mb{K}$.
A morphism of QFT complexes is a \emph{quasi-isomorphism} if it induces an isomorphism of $\Bbbk{\kkbar}$-modules
on cohomology.
\end{definition}

\begin{remark}
A QFT complex is a pointed cochain complex on a topologically-free $\Bbbk{\kkbar}$-module. The cohomology of a QFT complex is a $\Bbbk{\kkbar}$-module
but not necessarily a topologically-free $\Bbbk{\kkbar}$-module.
The classical limit $\mf^{(0)} \in  \Hom\big(\sC,\sC^\pr\big)^0$ of a morphism 
$\bm{\mf}$ of QFT complexes
is a pointed cochain map between the pointed cochain complexes $\big(\sC, 1_\sC, K\Big)$ and $\big(\sC^\pr, 1_{\sC^\pr}, K^\pr\Big)$, i.e., $f(1_\sC)= 1_{\sC^\pr}$ and $K^\pr\circ \mf^{(0)} =\mf^{(0)}\circ K$,
and $\mf^{(0)}$ is a quasi-isomorphism whenever $\bm{\mf}$ is a quasi-isomorphism.
\naturalqed
\end{remark}

Consider binary QFT algebras
$\sC{\kkbar}_\BQ=\left(\sC{\kkbar}, 1_\sC, \;\cdot\;, \mb{K}\right)$ 
and $\sC^\pr{\kkbar}_\BQ
=\left(\sC^\pr{\kkbar}, 1_{\sC^\pr}, \;\cdot^\pr\;, \mb{K}^\pr\right)$.


\begin{definition}\label{BKftalgm}
A morphism of  QFT complexes
$\bm{\mf}:\big(\sC{\kkbar}, 1_\sC, \mb{K}\big)
\rightarrow\big(\sC^\pr{\kkbar}, 1_{\sC^\pr},\mb{K}^\pr\big)$
is \emph{$\kbar$-compatible} (with the products) up to homotopy if 
the family $\underline{\mb{\psi}}^{\!\bm{\mf}} = {\mb{\psi}}^{\!\bm{\mf}} _1, {\mb{\psi}}^{\!\bm{\mf}}_2,\ldots$ defined recursively for all $n\geq 1$ and homogeneous elements by the equation
$\bm{x}_1,\ldots, \bm{x}_n \in \sC{\kkbar}$,
\begin{align*}
\bm{\mf}\big(\bm{x}_1\cdot\ldots\cdot \bm{x}_n\big)=
&\sum_{\mathclap{\mp\in P(n)}}^{{}}
(-\kbar)^{n-|\mp|}
\ep(\mp)\mb{\psi}^{\!\bm{\mf}} \big(\bm{x}_{B_1}\big)\cdot^\pr\dotsc\cdot^\pr
\mb{\psi}^{\!\bm{\mf}} \big(\bm{x}_{B_{|\mp|}}\big),
\end{align*}
has the property that  $\mb{\psi}^{\!\bm{\mf}}_n \in \Hom\big(S^n \sC, \sC^\pr\big)^{0}{\kkbar}$ 
for all $n\geq 1$ (a priori it should have a negative power of $\kkbar$).
Then,

\begin{itemize}

\item
we call $\bm{\mf}$ a  \emph{morphism of binary QFT algebras} from $\sC{\kkbar}_\BQ$ to 
$\sC^\pr{\kkbar}_\BQ$, and

\medskip

\item
we call the family $\underline{\mb{\psi}}^{\!\bm{\mf}}$ the quantum descendant of $(\bm{\mf})$,  
and denote it by $\bm{\mK}(\bm{\mf})=\underline{\mb{\psi}}^{\!\bm{\mf}}$.
\end{itemize}

\end{definition}

\begin{remark}
We shall usually denote $\underline{\mb{\psi}}^{\!\bm{\mf}}$ by $\underline{\mb{\psi}}$
when the context is clear.
The recursive formula for $\underline{\mb{\psi}}=\bm{\mK}(\bm{\mf})$ in the above definition  can be rewritten as follows:
\begin{align*}
(-\kbar)^{n-1}\mb{\psi}_n(\bm{x}_1,&\ldots, \bm{x}_n)
=
\bm{\mf}\big(\bm{x}_1\cdot\ldots \cdot \bm{x}_n\big)
-\sum_{\clapsubstack{\mp \in P(n)\\ \color{red}|\mp|\neq 1}}(-\kbar)^{n-|\mp|}
\ep(\mp)\mb{\psi}\big(\bm{x}_{B_1}\big)\cdot^\pr\dotsc\cdot^\pr
\mb{\psi}\big(\bm{x}_{B_{|\mp|}}\big)
,
\end{align*}
where the right hand side of the equation above depends only on $\mb{\psi}_1,\ldots, 
\mb{\psi}_{n-1}$.
Therefore, the formula determines the family $\underline{\mb{\psi}}$ uniquely.
For example, we have $\mb{\psi}_1=\bm{\mf}$ and
\begin{align*}
(-\kbar)\mb{\psi}_2(\bm{x}_1,\bm{x}_2) =
&
\mb{\psi}_1(\bm{x}_1\cdot \bm{x}_2) 
- \mb{\psi}_1(\bm{x}_1)\cdot^\pr\!\mb{\psi}_1(\bm{x}_2)
,\\
(-\kbar)^2\mb{\psi}_3(\bm{x}_1,\bm{x}_2,\bm{x}_3) =
&
\mb{\psi}_1(\bm{x}_1\cdot \bm{x}_2\cdot \bm{x}_3) 
- \mb{\psi}_1(\bm{x}_1)\cdot^\pr\!\mb{\psi}_1(\bm{x}_2)\cdot^\pr\!\mb{\psi}_1(\bm{x}_3)
\\
&-(-\kbar)(\mb{\psi}_1(\bm{x}_1)\cdot^\pr\!\mb{\psi}_2(\bm{x}_2,\bm{x}_3)
+(-1)^{|\bm{x}_1||\bm{x}_2|}\mb{\psi}_1(\bm{x}_2)\cdot^\pr\!\mb{\psi}_2(\bm{x}_1,\bm{x}_3)
\\
&\hphantom{-(-\kbar)(}+\mb{\psi}_2(\bm{x}_1,\bm{x}_2)\cdot^\pr\!\mb{\psi}_1(\bm{x}_3))
\end{align*}
Again, it is only a priori true that $(-\kbar)^{n-1}\mb{\psi}_n \in  \Hom\big(S^n \sC, \sC^\pr\big)^{0}{\kkbar}$, while the
$\kbar$-compatibility condition imposes the further demand that $\mb{\psi}_n \in \Hom\big(S^n \sC, \sC^\pr\big)^{0}{\kkbar}$ for all $n\geq 1$ 
imposing a set of non-trivial restrictions on $\bm{\mf}$. 
\naturalqed
\end{remark}

Let  $\big(\sC{\kkbar}, 1_\sC, \underline{\bell}\big)=\bm{\mK}\left(\sC{\kkbar}_\BQ\right)$ 
and
$\big(\sC^\pr{\kkbar}, 1_{\sC^\pr}, \underline{\bell}^\pr\big)
=\bm{\mK}\left(\sC^\pr{\kkbar}_\BQ\right)$
be the quantum descendants, which are unital $sL_\infty$-algebras 
by {\em Lemma \ref{qdslinftyalgebra}}.

\begin{lemma}\label{BKftdesm}
The quantum descendant  
$\underline{\mb{\psi}}=\bm{\mK}(\bm{\mf})$ 
of a binary QFT algebra morphism   
$\xymatrix{\sC{\kkbar}_\BQ\ar[r]^-{\bm{\mf}}& \sC^\pr{\kkbar}_\BQ}$
is  a morphism of the descendant unital $sL_\infty$-algebras
$
\xymatrix{\big(\sC{\kkbar}, 1_\sC,  \underline{\bell}\big)
\ar@{..>}[r]^-{\underline{\mb{\psi}}}& \big(\sC^\pr{\kkbar}, 1_{\sC^\pr}, \underline{\bell}^\pr\big)
}$ .
That is, 
for all $n\geq 1$ and homogeneous elements $\bm{x}_1,\ldots, \bm{x}_{n}\in \sC{\kkbar}$,
we have $\mb{\psi}_n \in \Hom\big(S^n \sC, \sC^\pr\big)^{0}{\kkbar}$ and
\eqnalign{uslinftymorphism}{
\mb{\psi}_n(\bm{x}_1,\ldots, \bm{x}_{n-1}, 1_\sC)-1_{\sC^\pr}\cdot \d_{n,1} &=0
,\\
&\\
\sum_{\clapsubstack{|\mp| \in P(n)}}
\e(\mp)
\bell^\pr_{|\mp|}\Big(
{\mb{\psi}}\big(\bm{x}_{B_1}\big),\ldots,{\mb{\psi}}\big( \bm{x}_{B_{|\mp|}}\big)
\Big) 
&
\\
-\sum_{\clapsubstack{|\mp| \in P(n)\\ |B_i|=n-|\mp|+1}}
\e(\mp)
{\mb{\psi}}_{|\mp|}\Big(J\bm{x}_{B_1},\ldots, J\bm{x}_{B_{i-1}}, \bell\big(\bm{x}_{B_{i}}\big), \bm{x}_{B_{i+1}},\ldots,\bm{x}_{B_{|\mp|}}\Big)&=0
.
}
\end{lemma}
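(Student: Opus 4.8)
The plan is to mimic the coalgebra argument used in the proof of Lemma~\ref{qdslinftyalgebra}, now for the pair of descendant $sL_\infty$-algebras. First I would record the unity relation $\mb{\psi}_n(\bm{x}_1,\ldots,\bm{x}_{n-1},1_\sC)=1_{\sC^\pr}\d_{n,1}$ by an easy induction on $n$ using the defining recursion for $\underline{\mb{\psi}}$ together with $\bm{\mf}(1_\sC)=1_{\sC^\pr}$ and $\bm{x}\cdot 1_\sC=\bm{x}$; the base case $n=1$ is $\mb{\psi}_1=\bm{\mf}$. This is the same pattern as the corresponding step in Lemma~\ref{qdslinftyalgebra}.

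For the main relation, introduce on $\Xbar{S}(\sC){\kkbar}$ the coderivation $\mb{\d}_{\bell}=\mD(\breve{\bell})$ from Remark~\ref{qderd} and the analogous $\mb{\d}_{\bell^\pr}=\mD(\breve{\bell}^\pr)$ on $\Xbar{S}(\sC^\pr){\kkbar}$, both squaring to zero, together with the multiplication operator $\mb{\pi}\in\Hom(\Xbar{S}(\sC),\sC)^0{\kkbar}$ from \eq{qcoderz} and its primed counterpart $\mb{\pi}^\pr$; recall $\mb{\pi}_1=\I_{\sC{\kkbar}}$, $\mb{\pi}^\pr_1=\I_{\sC^\pr{\kkbar}}$, and the identities $\mb{K}\circ\mb{\pi}=\mb{\pi}\circ\mb{\d}_{\bell}$, $\mb{K}^\pr\circ\mb{\pi}^\pr=\mb{\pi}^\pr\circ\mb{\d}_{\bell^\pr}$. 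Next I would assemble the family $\underline{\mb{\psi}}$ into a single $\mb{\psi}\in\Hom(\Xbar{S}(\sC),\sC^\pr)^0{\kkbar}$ and rescale, $\breve{\mb{\psi}}_n\coloneqq(-\kbar)^{n-1}\mb{\psi}_n$, so that $\breve{\mb{\psi}}$ lifts to a coalgebra morphism $\mF(\breve{\mb{\psi}}):\Xbar{S}^{\mathit{co}}(\sC){\kkbar}\to\Xbar{S}^{\mathit{co}}(\sC^\pr){\kkbar}$ (the free-coalgebra universal property, as in the Appendix). The defining recursion for $\underline{\mb{\psi}}$ then reads exactly $\bm{\mf}\circ\mb{\pi}=\mb{\pi}^\pr\circ\mF(\breve{\mb{\psi}})$ at the level of the cofree coalgebras; here the factor $(-\kbar)^{n-|\mp|}$ in the recursion is accounted for by combining the $(-\kbar)^{|B_k|-1}$ from each block with the overall $(-\kbar)^{1-n}$ rescaling.

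The key computation is then a short diagram chase. From $\mb{\pi}^\pr\circ\mF(\breve{\mb{\psi}})=\bm{\mf}\circ\mb{\pi}$, compose on the right with $\mb{\d}_{\bell}$ and use $\mb{K}\circ\mb{\pi}=\mb{\pi}\circ\mb{\d}_{\bell}$ to get $\mb{\pi}^\pr\circ\mF(\breve{\mb{\psi}})\circ\mb{\d}_{\bell}=\bm{\mf}\circ\mb{K}\circ\mb{\pi}=\mb{K}^\pr\circ\bm{\mf}\circ\mb{\pi}=\mb{K}^\pr\circ\mb{\pi}^\pr\circ\mF(\breve{\mb{\psi}})=\mb{\pi}^\pr\circ\mb{\d}_{\bell^\pr}\circ\mF(\breve{\mb{\psi}})$, using that $\bm{\mf}$ is a cochain map for the middle equality. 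Hence $\mb{\pi}^\pr\circ\big(\mF(\breve{\mb{\psi}})\circ\mb{\d}_{\bell}-\mb{\d}_{\bell^\pr}\circ\mF(\breve{\mb{\psi}})\big)=0$; since $\mb{\pi}^\pr_1=\I$, projecting to $\sC^\pr{\kkbar}$ kills the cofree-coalgebra ambiguity and forces $\proj_{\sC^\pr{\kkbar}}\circ\big(\mF(\breve{\mb{\psi}})\circ\mb{\d}_{\bell}-\mb{\d}_{\bell^\pr}\circ\mF(\breve{\mb{\psi}})\big)=0$, which is precisely the $sL_\infty$-morphism relation; unwinding $\proj_{\sC^\pr{\kkbar}}$ of this on $\bm{x}_1\bm{\odot}\cdots\bm{\odot}\bm{x}_n$ and extracting the $(-\kbar)^{n-1}$ factor gives the displayed identity \eq{uslinftymorphism} with $\mb{\psi}_n$ (not $\breve{\mb{\psi}}_n$). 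Finally, $\mb{\psi}_n\in\Hom(S^n\sC,\sC^\pr)^0{\kkbar}$ is exactly the $\kbar$-compatibility hypothesis built into the definition of a binary QFT algebra morphism.

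I expect the only real subtlety to be bookkeeping: verifying that the $\kbar$-powers in the two recursions (for $\underline{\bell}$ and for $\underline{\mb{\psi}}$) combine correctly so that the rescaled objects $\breve{\bell}$, $\breve{\bell}^\pr$, $\breve{\mb{\psi}}$ genuinely satisfy the clean coalgebra identities $\mb{\pi}\circ\mD(\breve{\bell})=\mb{K}\circ\mb{\pi}$ and $\mb{\pi}^\pr\circ\mF(\breve{\mb{\psi}})=\bm{\mf}\circ\mb{\pi}$ on the nose; once that is in place the argument is formal, being word-for-word the mechanism of Lemma~\ref{qdslinftyalgebra} and Remark~\ref{qderd}. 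One should also double-check that $\mF(\breve{\mb{\psi}})$ is a morphism of coalgebras (not merely a coderivation), which follows from the universal property in the Appendix, so that $\mb{\d}_{\bell^\pr}\circ\mF(\breve{\mb{\psi}})-\mF(\breve{\mb{\psi}})\circ\mb{\d}_{\bell}$ is a coderivation over $\mF(\breve{\mb{\psi}})$ and is therefore determined by its corestriction to $\sC^\pr{\kkbar}$.
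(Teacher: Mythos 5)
Your proposal is correct and follows essentially the same route as the paper's own proof: unitality by induction, the coalgebra packaging $\mb{\Psi}_{\mb{\psi}}=\mF(\breve{\mb{\psi}})$ with the identity $\bm{\mf}\circ\mb{\pi}=\mb{\pi}^\pr\circ\mb{\Psi}_{\mb{\psi}}$, the chain of equalities obtained from $\mb{K}^\pr\circ\bm{\mf}=\bm{\mf}\circ\mb{K}$ together with $\mb{K}\circ\mb{\pi}=\mb{\pi}\circ\mb{\d}_{\bell}$ and $\mb{K}^\pr\circ\mb{\pi}^\pr=\mb{\pi}^\pr\circ\mb{\d}_{\bell^\pr}$, and extraction of the relation via $\mb{\pi}^\pr_1=\I_{\sC^\pr{\kkbar}}$, with the membership $\mb{\psi}_n\in\Hom\big(S^n\sC,\sC^\pr\big)^{0}{\kkbar}$ holding by the very definition of $\kbar$-compatibility. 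Your closing remark about the corestriction of the coderivation over $\mF(\breve{\mb{\psi}})$ is exactly the content of Remark~\ref{qcoalgd}, so nothing is missing.
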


\begin{proof}
By definition,
$\mb{\psi}_n \in \Hom\big(S^n \sC, \sC^\pr\big)^{0}{\kkbar}$ for all $n\geq 1$. 
We have $\mb{\psi}_1(1_\sC) =1_{\sC^\pr}$ since $\mb{\psi}_1=\bm{\mf}$ 
and $\bm{\mf}(1_\sC)=1_{\sC^\pr}$. 
It can be checked that
$\mb{\psi}_{n}\big(\bm{x}_1,\ldots, \bm{x}_{n-1}, 1_\sC) =0$
for all $n\geq 2$ by  a straightforward induction.
It remains to show the second set of relations in \eq{uslinftymorphism}.

From the family $\underline{\mb{\psi}}$, 
we define a $\Bbbk{\kkbar}$-linear map 
$\mb{\Psi}_{{\mb{\psi}}}\in \Hom\big(\Xbar{S}(\sC), \Xbar{S}(\sC^\pr)\big)^{0}{\kkbar}$
such that, for all $n\geq 1$ and  homogeneous $\bm{x}_1,\ldots, \bm{x}_n \in \sC{\kkbar}$,
\eqnalign{qcalm}{
\mb{\Psi}_{{\mb{\psi}}}({\bm{x}}_1\bm{\odot}\dotsc &\bm{\odot} {\bm{x}}_n)
\coloneqq{}\sum_{\clapsubstack{\mp \in P(n)}}
(-\kbar)^{n-|\mp|}\e(\mp)\,
\mb{\psi}\big({\bm{x}}_{B_1}\big)\bm{\odot} \dotsc\bm{\odot} \mb{\psi}\big({\bm{x}}_{B_{|\mp|}}\big)
.
}
Consider the pair of $\Bbbk{\kkbar}$-linear operators $\mb{\d}_{{\bell}}:\Xbar{S}(\sC){\kkbar}\rightarrow \Xbar{S}(\sC){\kkbar}$
and $\mb{\d}_{{\bell}^\pr}:\Xbar{S}(\sC^\pr){\kkbar}\rightarrow \Xbar{S}(\sC^\pr){\kkbar}$
associated with $\underline{\bell}$ and $\underline{\bell}^\pr$, respectively (See \eq{qcoder}).
It is straightforward see that, for all $n\geq 1$ and homogeneous 
$\bm{x}_1,\ldots, \bm{x}_{n}\in \sC{\kkbar}$,
\eqnalign{qcoderchain}{
\proj_{\sC^\pr{\kkbar}}
\circ & \big(\mb{\d}_{{\bell}^\pr}\circ\mb{\Psi}_{{\mb{\psi}}}  
-\mb{\Psi}_{\mb{\psi}}\circ\mb{\d}_{{\bell}}\big)
\big(\bm{x}_1\bm{\odot}\ldots\bm{\odot}\bm{x}_n\big)
\\
=
&
(-\kbar)^{n-1}\sum_{\clapsubstack{|\mp| \in P(n)}}
\e(\mp)\,
\bell^\pr_{|\mp|}\left(
{\mb{\psi}}\big(\bm{x}_{B_1}\big),\ldots,{\mb{\psi}}\big( \bm{x}_{B_{|\mp|}}\big)
\right) 
\\
&
-(-\kbar)^{n-1}\sum_{\clapsubstack{|\mp| \in P(n)\\ |B_i|=n-|\mp|+1}}
\e(\mp)\,
{\mb{\psi}}_{|\mp|}\left(J\bm{x}_{B_1},\ldots, J\bm{x}_{B_{i-1}}, 
\bell\big(\bm{x}_{B_{i}}\big), \bm{x}_{B_{i+1}},\ldots,\bm{x}_{B_{|\mp|}}\right)
.
}
Comparing the above with the desired relations in \eq{uslinftyalgebra},
all that remains is to establish that
$\proj_{\sC^\pr{\kkbar}}
\circ  \big(\mb{\d}_{{\bell}^\pr}\circ\mb{\Psi}_{{\mb{\psi}}}  
-\mb{\Psi}_{\mb{\psi}}\circ\mb{\d}_{{\bell}}\big)=0$.

Define $\mb{\pi} \in \Hom\left(\Xbar{S}(\sC), \sC\right)^{0}{\kkbar}$
and  $\mb{\pi}^\pr \in \Hom\left(\Xbar{S}(\sC^\pr), \sC^\pr\right)^{0}{\kkbar}$
as follows:
\begin{align*}
\mb{\pi}\big(\bm{x}_1\odot\ldots\odot\bm{x}_n\big) &\coloneqq \bm{x}_1\cdot\ldots\cdot \bm{x}_n
&&\text{for all } n\geq 1 \text{ and }\bm{x}_1,\ldots, \bm{x}_n \in \sC{\kkbar},
\\
\mb{\pi}^\pr\big(\bm{x}^\pr_1\odot\ldots\odot\bm{x}^\pr_n\big)&\coloneqq\bm{x}^\pr_1\cdot^\pr\ldots\cdot^\pr \bm{x}^\pr_n
&&\text{for all } n\geq 1 \text{ and }
\bm{x}^\pr_1,\ldots, \bm{x}^\pr_n \in \sC^\pr{\kkbar}.
\end{align*}

Then, from the definition of the quantum descendant algebra, we have the following relations:
(See \eq{qdera} and Remark \ref{qderd}.)
\eqn{qdesfg}{
\mb{K}\circ \mb{\pi} =\mb{\pi}\circ \mb{\d}_{{\bell}}
,\qquad
\mb{K}^\pr\circ \mb{\pi}^\pr =\mb{\pi}^\pr\circ \mb{\d}_{{\bell}^\pr}
,\qquad
\mb{\d}_{{\bell}}\circ \mb{\d}_{{\bell}}=\mb{\d}_{{\bell}^\pr}\circ \mb{\d}_{{\bell}^\pr}=0.
}

Now we note that  the system of equations for the quantum descendant  
$\underline{\mb{\psi}}=\bm{\mK}(\bm{\mf})$ 
in Definition \ref{BKftalgm} can be rewritten as follows:
\eqn{simqdm}{
\bm{\mf}\circ \mb{\pi} = \mb{\pi}^\pr\circ\mb{\Psi}_{{\mb{\psi}}} 
.
}
Applying $\mb{K}^\pr$ to the above, we obtain that
\eqn{simqdmss}{
\mb{K}^\pr\circ \bm{\mf}\circ \mb{\pi} =\mb{K}^\pr\circ \mb{\pi}^\pr\circ\mb{\Psi}_{{\mb{\psi}}}
}
whose left-hand side is
\begin{align*}
\big(\mb{K}^\pr\circ \bm{\mf}\big)\circ \mb{\pi} 
= \bm{\mf}\circ\big(\mb{K}\circ \mb{\pi} \big)
= \big(\bm{\mf}\circ \mb{\pi}\big)\circ \mb{\d}_{{\bell}}
=\mb{\pi}^\pr\circ\mb{\Psi}_{{\mb{\psi}}}\circ \mb{\d}_{{\bell}}
,
\end{align*}
and whose right-hand side is
\begin{align*}
\big(\mb{K}^\pr\circ \mb{\pi}^\pr\big)\circ\mb{\Psi}_{{\mb{\psi}}}-\mb{K}^\pr\circ\mb{\eta}\circ \mb{\d}_{{\bell}}
=
\mb{\pi}^\pr\circ \mb{\d}_{{\bell}^\pr}\circ\mb{\Psi}_{{\mb{\psi}}}
.
\end{align*}
Therefore,  the equality in \eq{simqdmss} reduces to
$\mb{\pi}^\pr\circ \big(\mb{\d}_{{\bell}^\pr}\circ\mb{\Psi}_{{\mb{\psi}}}  
-\mb{\Psi}_{{\mb{\psi}}}\circ\mb{\d}_{{\bell}}\big)=0$. 
From $\mb{\pi}^\pr\circ \eb_{\sC^\pr{\kkbar}} =\I_{\sC^\pr{\kkbar}}$,
we conclude that
$\proj_{\sC^\pr{\kkbar}}\circ \big(\mb{\d}_{{\bell}^\pr}\circ\mb{\Psi}_{{\mb{\psi}}} 
 -\mb{\Psi}_{{\mb{\psi}}}\circ\mb{\d}_{{\bell}}\big)=0$.
\naturalqed
\end{proof}

\begin{remark}\label{qcoalgd}
Consider the topologically-free reduced symmetric coalgebras 
$\Xbar{S}^{\mathit{co}}\!(\sC){\kkbar}$
and $\Xbar{S}^{\mathit{co}}\!(\sC^\pr){\kkbar}$.
Define $\breve{\mb{\psi}} \in  \Hom\left(\Xbar{S}(\sC),\sC^\pr\right)^{0}{\kkbar}$ as
$\breve{\mb{\psi}}_n\coloneqq{}(-\kbar)^{n-1}\mb{\psi}_n$, where $\breve{\mb{\psi}}\circ \eb_{S^n\sC{\kkbar}}$.
Consider  the unique extension $\mF(\breve{\mb{\psi}})$ of $\breve{\mb{\psi}}$ 
as a coalgebra map from  $\Xbar{S}^{\mathit{co}}\!(\sC){\kkbar}$ to $\Xbar{S}^{\mathit{co}}\!(\sC^\pr){\kkbar}$:
we have,  for all $n\geq 1$ and  homogeneous $\bm{x}_1,\ldots, \bm{x}_n \in \sC{\kkbar}$,
\[
\mF(\breve{\mb{\psi}})({\bm{x}}_1\bm{\odot}\dotsc \bm{\odot} {\bm{x}}_n)
=\sum_{\clapsubstack{\mp \in P(n)}}
\e(\mp)\,
\breve{\mb{\psi}}\big({\bm{x}}_{B_1}\big)\bm{\odot} \dotsc\bm{\odot} \breve{\mb{\psi}}\big({\bm{x}}_{B_{|\mp|}}\big)
,
\]
and $\proj_{\sC^\pr{\kkbar}}\circ \mF(\breve{\mb{\psi}})= \breve{\mb{\psi}}$.
(See Lemma \ref{fcoalg} in Appendix~\ref{appendix: bar}.)
We note that $\mb{\Psi}_{\mb{\psi}}=\mF(\breve{\mb{\psi}})$. 
Also note
that the condition 
$
\proj_{\sC^\pr{\kkbar}}\circ \left(
\mD(\breve{\bell}^\pr)\circ \mF(\breve{\mb{\psi}})  -\mF(\breve{\mb{\psi}})\circ \mD(\breve{\bell})\right)=0
$
actually implies that 
$\mD(\breve{\bell}^\pr)\circ \mF(\breve{\mb{\psi}})  -\mF(\breve{\mb{\psi}})\circ \mD(\breve{\bell})=0$.
 Therefore, we have $\mb{\d}_{{\bell}^\pr}\circ\mb{\Psi}_{{\mb{\psi}}} =\mb{\Psi}_{{\mb{\psi}}}\circ\mb{\d}_{{\bell}}$.
\naturalqed
\end{remark}

\begin{definition}
A morphism $\bm{\mf}$ of binary QFT algebras is a quasi-isomorphism if 
$\bm{\mf}$ is a quasi-isomorphism as a morphism of underlying
QFT complexes.
\end{definition}

Recall that a unital $sL_\infty$-morphism  $\underline{\mb{\psi}}=\mb{\psi}_1,\mb{\psi}_2,\ldots$ is a quasi-isomorphism 
if $\mb{\psi}_1$ is a   quasi-isomorphism of the underlying pointed cochain complexes and that $\mb{\psi}_1=\bm{\mf}$
if  $\underline{\mb{\psi}}=\bm{\mK}(\bm{\mf})$ is the quantum descendant of a binary QFT algebra morphism 
$\bm{\mf}$. Therefore, 
the quantum descendant of a binary QFT algebra quasi-isomorphism is a unital $sL_\infty$-quasi-isomorphism.


\begin{theorem}
The composition  $\bm{\mf}^\pr\circ \bm{\mf}$ 
of consecutive morphisms of binary QFT algebras
$\xymatrix{\sC{\kkbar}_\BQ\ar[r]^-{\bm{\mf}}&\sC^\pr{\kkbar}_\BQ\ar[r]^-{\bm{\mf}^\ppr} &\sC^\pr{\kkbar}_\BQ}$ 
as pointed cochain maps
is a morphism of binary QFT algebras
$\xymatrix{\sC{\kkbar}_\BQ\ar[r]^-{\bm{\mf}^\ppr\circ\bm{\mf}} &\sC^\pr{\kkbar}_\BQ}$.
Moreover,
\begin{itemize}
\item 
equipped with morphisms of binary QFT algebras and this composition, binary QFT algebras over $\Bbbk$ form a category $\category{BQFTA}(\Bbbk)$, 
and

\item
the assignment to each binary QFT algebra its quantum descendant algebra and to each morphism of binary QFT algebras
its quantum descendant morphism is a functor  
\[
\bm{\mK}: \category{BQFTA}(\Bbbk)\rightsquigarrow \category{UsL}_\infty(\Bbbk{\kkbar})
\] 
from the category  $\category{BQFTA}(\Bbbk)$ of  binary QFT algebras 
to the category $\category{UsL}_\infty(\Bbbk{\kkbar})$ of unital $sL_\infty$-algebras.
\end{itemize}

\end{theorem}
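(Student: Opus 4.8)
The plan is to push everything through the coalgebra reformulation recorded in Remark~\ref{qderd} and Remark~\ref{qcoalgd}, where the recursion defining a quantum descendant morphism collapses to the single operator identity \eqref{simqdm}. Write the three binary QFT algebras as $\sC{\kkbar}_\BQ$, $\sC^\pr{\kkbar}_\BQ$, $\sC^\ppr{\kkbar}_\BQ$, with products $\cdot$, $\cdot^\pr$, $\cdot^\ppr$ and quantum descendants $\underline{\bell}$, $\underline{\bell}^\pr$, $\underline{\bell}^\ppr$, and let $\mb{\pi}$, $\mb{\pi}^\pr$, $\mb{\pi}^\ppr$ be the product-operators on the reduced symmetric modules as in \eqref{qcoderz}. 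Put $\underline{\mb{\psi}}=\bm{\mK}(\bm{\mf})$ and $\underline{\mb{\psi}}^\pr=\bm{\mK}(\bm{\mf}^\ppr)$, with rescalings $\breve{\mb{\psi}}_n=(-\kbar)^{n-1}\mb{\psi}_n$, $\breve{\mb{\psi}}^\pr_n=(-\kbar)^{n-1}\mb{\psi}^\pr_n$ and associated coalgebra maps $\mF(\breve{\mb{\psi}})$, $\mF(\breve{\mb{\psi}}^\pr)$; by \eqref{simqdm} together with Remark~\ref{qcoalgd} we have $\bm{\mf}\circ\mb{\pi}=\mb{\pi}^\pr\circ\mF(\breve{\mb{\psi}})$ and $\bm{\mf}^\ppr\circ\mb{\pi}^\pr=\mb{\pi}^\ppr\circ\mF(\breve{\mb{\psi}}^\pr)$.

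First I would observe that $\bm{\mf}^\ppr\circ\bm{\mf}$ is a morphism of the underlying QFT complexes --- it is $\Bbbk{\kkbar}$-linear of degree $0$, fixes the units, and intertwines $\mb{K}$ with $\mb{K}^\ppr$. Using functoriality of the coalgebra-map construction $\mF$ (Lemma~\ref{fcoalg} in Appendix~\ref{appendix: bar}), namely $\mF(g')\circ\mF(g)=\mF\bigl(g'\circ\mF(g)\bigr)$, I would then compute
\[
(\bm{\mf}^\ppr\circ\bm{\mf})\circ\mb{\pi}
=\bm{\mf}^\ppr\circ\bigl(\mb{\pi}^\pr\circ\mF(\breve{\mb{\psi}})\bigr)
=\bigl(\bm{\mf}^\ppr\circ\mb{\pi}^\pr\bigr)\circ\mF(\breve{\mb{\psi}})
=\mb{\pi}^\ppr\circ\mF\bigl(\breve{\mb{\psi}}^\pr\circ\mF(\breve{\mb{\psi}})\bigr).
\]
Evaluating the corestriction $\breve{\mb{\psi}}^\pr\circ\mF(\breve{\mb{\psi}})$ on $\bm{x}_1\bm{\odot}\cdots\bm{\odot}\bm{x}_n$ gives a sum over $\mp\in P(n)$ of terms $\e(\mp)\,\breve{\mb{\psi}}^\pr_{|\mp|}\bigl(\breve{\mb{\psi}}(\bm{x}_{B_1}),\ldots,\breve{\mb{\psi}}(\bm{x}_{B_{|\mp|}})\bigr)$. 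The one step that needs care is the $\kbar$-power bookkeeping here: $\breve{\mb{\psi}}^\pr_{|\mp|}$ carries the factor $(-\kbar)^{|\mp|-1}$ and $\breve{\mb{\psi}}(\bm{x}_{B_j})=(-\kbar)^{|B_j|-1}\mb{\psi}(\bm{x}_{B_j})$, so each term carries the total factor $(-\kbar)^{(|\mp|-1)+\sum_j(|B_j|-1)}=(-\kbar)^{n-1}$, independently of $\mp$ since $\sum_j|B_j|=n$. Hence $\breve{\mb{\psi}}^\pr\circ\mF(\breve{\mb{\psi}})=\breve{\mb{\chi}}$ with $\breve{\mb{\chi}}_n=(-\kbar)^{n-1}\mb{\chi}_n$ and
\[
\mb{\chi}_n(\bm{x}_1,\ldots,\bm{x}_n)=\sum_{\mp\in P(n)}\e(\mp)\,\mb{\psi}^\pr_{|\mp|}\bigl(\mb{\psi}(\bm{x}_{B_1}),\ldots,\mb{\psi}(\bm{x}_{B_{|\mp|}})\bigr)\in\Hom\bigl(S^n\sC,\sC^\ppr\bigr)^{0}{\kkbar},
\]
which has no negative powers of $\kbar$ precisely because $\mb{\psi}_n$ and $\mb{\psi}^\pr_n$ have none. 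Thus $(\bm{\mf}^\ppr\circ\bm{\mf})\circ\mb{\pi}=\mb{\pi}^\ppr\circ\mF(\breve{\mb{\chi}})$ is the defining relation \eqref{simqdm} for the family $\underline{\mb{\chi}}$; by the uniqueness of the solution of that recursion, $\bm{\mf}^\ppr\circ\bm{\mf}$ is $\kbar$-compatible with the products --- hence a morphism of binary QFT algebras --- and $\bm{\mK}(\bm{\mf}^\ppr\circ\bm{\mf})=\underline{\mb{\chi}}$. A priori the composite only satisfies this recursion over $\Bbbk(\!(\kbar)\!)$; the telescoping identity $(|\mp|-1)+\sum_j(|B_j|-1)=n-1$ is exactly what forces $\kbar$-compatibility, and it is the only non-formal ingredient --- everything else is manipulation of the coalgebra identities of Remarks~\ref{qderd} and~\ref{qcoalgd}.

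It then remains to record the categorical statements. Associativity and the unit laws for composition in $\category{BQFTA}(\Bbbk)$ are inherited from composition of $\Bbbk{\kkbar}$-linear maps, once one notes that the identity $\I_\sC$ of any binary QFT algebra is a morphism of binary QFT algebras whose recursion yields $\mb{\psi}_1=\I_\sC$ and $\mb{\psi}_n=0$ for $n\geq2$, so that $\bm{\mK}(\I_\sC)$ is the identity morphism in $\category{UsL}_\infty(\Bbbk{\kkbar})$. Combined with Lemma~\ref{qdslinftyalgebra} (the quantum descendant of an object is a unital $sL_\infty$-algebra) and Lemma~\ref{BKftdesm} (the quantum descendant of a morphism is a morphism of unital $sL_\infty$-algebras), this makes $\category{BQFTA}(\Bbbk)$ a category and shows $\bm{\mK}$ is well defined on objects and morphisms. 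Finally, the formula for $\mb{\chi}_n$ above is the standard composition law for $sL_\infty$-morphisms --- equivalently $\mF(\breve{\mb{\chi}})=\mF(\breve{\mb{\psi}}^\pr)\circ\mF(\breve{\mb{\psi}})$, which is composition of $sL_\infty$-morphisms at the level of their coalgebra maps --- so $\bm{\mK}(\bm{\mf}^\ppr\circ\bm{\mf})=\bm{\mK}(\bm{\mf}^\ppr)\circ\bm{\mK}(\bm{\mf})$; together with $\bm{\mK}(\I_\sC)=\I$ this exhibits $\bm{\mK}:\category{BQFTA}(\Bbbk)\rightsquigarrow\category{UsL}_\infty(\Bbbk{\kkbar})$ as a functor.
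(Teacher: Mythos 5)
Your proposal is correct and follows essentially the same route as the paper's proof: both pass to the coalgebra reformulation $\bm{\mf}\circ\mb{\pi}=\mb{\pi}^\pr\circ\mF(\breve{\mb{\psi}})$, use the (uniqueness-based) functoriality of $\mF$ to identify $\mF(\breve{\mb{\psi}}^\pr)\circ\mF(\breve{\mb{\psi}})$ with the coalgebra map of the composed family, and rely on exactly the telescoping count $(|\mp|-1)+\sum_j(|B_j|-1)=n-1$ to see that the composite's descendant has no negative powers of $\kbar$, concluding via uniqueness of the defining recursion that $\bm{\mK}(\bm{\mf}^\ppr\circ\bm{\mf})=\bm{\mK}(\bm{\mf}^\ppr)\bullet\bm{\mK}(\bm{\mf})$. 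The remaining categorical bookkeeping (identity morphisms and associativity) matches the paper's as well.
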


\begin{proof}

It is obvious that $\bm{\mf}^\pr\circ \bm{\mf}$  is a pointed cochain map
from $\sC{\kkbar}$ to  $\sC^\ppr{\kkbar}$.
It remains to show that $\bm{\mf}^\pr\circ \bm{\mf}$ also satisfies the $\Bbbk$-compatibility condition and that the quantum descendant $\bm{\mK}(\bm{\mf}^\pr\circ \bm{\mf})$ of $\bm{\mf}^\pr\circ \bm{\mf}$
is the composition $\bm{\mK}(\bm{\mf}^\pr)\bullet \bm{\mK}( \bm{\mf})$
of the quantum descendants $\bm{\mK}( \bm{\mf})$ of $\bm{\mf}$ and $\bm{\mK}( \bm{\mf})$ 
of $\bm{\mf}^\pr$ as morphisms of unital $sL_\infty$-algebras.

Let $\big(\sC{\kkbar}, 1_{\sC}, \underline{\bell}\big)$, 
$\big(\sC^\pr{\kkbar}, 1_{\sC^\pr}, \underline{\bell}^\pr\big)$ and 
$\big(\sC{\kkbar}, 1_{\sC^\ppr}, \underline{\bell}^\ppr\big)$ be the quantum descendants of
$\sC{\kkbar}_\BQ$, $\sC{\kkbar}_\BQ$ and $\sC{\kkbar}_\BQ$, respectively.
By hypothesis, we have the following
unital $sL_\infty$-morphisms between quantum descendant algebras:
\begin{align*}
\underline{\mb{\psi}}^{\bm{\mf}}\coloneqq{}&
\xymatrix{
\bm{\mK}\big(\bm{\mf}\big): \big(\sC{\kkbar}, 1_\sC, \underline{\bell}\big)
\ar@{..>}[r]&
\big(\sC^\pr{\kkbar}, 1_{\sC^\pr}, \underline{\bell}^\pr\big)}
,\\
\underline{\mb{\psi}}^{\bm{\mf}^\pr}\coloneqq{}&
\xymatrix{
\bm{\mK}\big(\bm{\mf}^\pr\big):
\big(\sC^\pr{\kkbar}, 1_{\sC^\pr}, \underline{\bell}^\pr\big)
\ar@{..>}[r]&
\big(\sC^\pr{\kkbar}, 1_{\sC^\ppr}, \underline{\bell}^\ppr\big),}
\end{align*}
where
\eqnalign{bydoa}{
\bm{\mf}\circ \mb{\pi} = \mb{\pi}^\pr\circ \mb{\Psi}_{\mb{\psi}^{\bm{\mf}}} 
,\qquad
\bm{\mf}^\pr\circ\mb{\pi}^\pr =
\mb{\pi}^\ppr\circ \mb{\Psi}_{{\mb{\psi}}^{\bm{\mf}^\pr}}.
}
Applying $\bm{\mf}^\pr$ to the first identity above and using the second identity, we have
$\bm{\mf}^\pr\circ\bm{\mf}\circ \mb{\pi} =
\bm{\mf}^\pr\circ\mb{\pi}^\pr\circ \mb{\Psi}_{\mb{\psi}^{\bm{\mf}}}
=\mb{\pi}^\ppr\circ \mb{\Psi}_{\mb{\psi}^{\bm{\mf}^\pr}}\circ \mb{\Psi}_{\mb{\psi}^{\bm{\mf}}}$.
Therefore we obtain
\eqn{bydob}{
(\bm{\mf}^\pr\circ\bm{\mf})\circ \mb{\pi} = 
\mb{\pi}^\ppr\circ \mb{\Psi}_{{\mb{\psi}}^{\bm{\mf}^\pr}}\circ \mb{\Psi}_{{\mb{\psi}^{\bm{\mf}}}} 
.
}

Recall that the composition $\underline{\mb{\psi}}^{\bm{\mf}^\pr}\bullet \underline{\mb{\psi}^{\bm{\mf}}}$
of the unital $sL_\infty$-morphisms $\underline{\mb{\psi}}^{\bm{\mf}^\pr}$ and $\underline{\mb{\psi}^{\bm{\mf}}}$ is
defined for all $n\geq 1$ and homogeneous $\bm{x}_1,\ldots,\bm{x}_n \in \sC{\kkbar}$, via
\[
(\mb{\psi}^{\bm{\mf}^\pr}\bullet \mb{\psi}^{\bm{\mf}}\big)(\bm{x}_1\odot\ldots\odot \bm{x}_n)
\equiv \big(\underline{\mb{\psi}}^{\bm{\mf}^\pr}\bullet \underline{\mb{\psi}^{\bm{\mf}}}\big)_n
(\bm{x}_1,\ldots,\bm{x}_n) 
=\sum_{\mathclap{\mp\in P(n)}}\e(\mp) 
\mb{\psi}^{\bm{\mf}^\pr}_{|\mp|}\big(\mb{\psi}^{\bm{\mf}}(\bm{x}_{B_1}), \ldots, \mb{\psi}(\bm{x}_{B_{|\mp|}})\big)
,
\]
and $\xymatrix{\underline{\mb{\psi}}^{\bm{\mf}^\pr}\bullet \underline{\mb{\psi}}^{\bm{\mf}}:
\big(\sC{\kkbar}, 1_\sC, \underline{\bell}\big)\ar@{..}[r]&
\big(\sC^\ppr{\kkbar}, 1_{\sC^\ppr}, \underline{\bell}^\ppr\big)}$ is a unital $sL_\infty$-morphism.
In particular, we have 
$\big(\underline{\mb{\psi}}^{\bm{\mf}^\pr}\bullet \underline{\mb{\psi}}^{\bm{\mf}}\big)_n 
\in \Hom\left(S^n{\sC}, \sC^\ppr\right)^{0}{\kkbar}$ for all $n\geq 1$.
\begin{claim}
We have the identity 
$\mb{\Psi}_{{\mb{\psi}}^{\bm{\mf}^\pr}}\circ \mb{\Psi}_{{\mb{\psi}^{\bm{\mf}}}} = 
\mb{\Psi}_{{\mb{\psi}}^{\bm{\mf}^\pr}\bullet {\mb{\psi}^{\bm{\mf}}}}$.
\end{claim}

\begin{proof}
Define $\breve{\mb{\psi}}^{\bm{\mf}} \in \Hom\left(\Xbar{S}(\sC),\sC^\pr\right)^{0}{\kkbar}$ 
and $\breve{\mb{\psi}}^{\bm{\mf}^\pr} \in \Hom\left(\Xbar{S}(\sC^\pr),\sC^\ppr\right)^{0}{\kkbar}$ 
so that  $\breve{\mb{\psi}}^{\bm{\mf}}_n=(-\kbar)^{n-1}\mb{\psi}^{\bm{\mf}}_n$
and $\breve{\mb{\psi}}^{\bm{\mf}^\pr}_n=(-\kbar)^{n-1}\mb{\psi}^{\bm{\mf}^\pr}_n$ for all $n\geq 1$.
Consider the unique coalgebra maps 
$\mF\big(\breve{\mb{\psi}}\big):\Xbar{S}^{\mathit{co}}\!(\sC){\kkbar}\rightarrow \Xbar{S}^{\mathit{co}}\!(\sC^\pr){\kkbar}$
and $\mF\big(\breve{\mb{\psi}^\pr}\big):\Xbar{S}^{\mathit{co}}\!(\sC^\pr){\kkbar}
\rightarrow \Xbar{S}^{\mathit{co}}\!(\sC^\ppr){\kkbar}$ determined by 
$\breve{\mb{\psi}}^{\bm{\mf}}$ and $\breve{\mb{\psi}}^{\bm{\mf}^\pr}$, respectively.
Then, we have $\mF\big(\breve{\mb{\psi}}^{\bm{\mf}^\pr}\big)\circ \mF\big(\breve{\mb{\psi}}^{\bm{\mf}^\pr}\big) = 
\mF\big(\breve{\mb{\psi}}^{\bm{\mf}^\pr}\bullet\breve{\mb{\psi}}^{\bm{\mf}}\big)$ --- see Lemma \ref{barfunctor} in Appendix~\ref{appendix: bar}.
From Remark \ref{qcoalgd}, we have
$\mb{\Psi}_{\mb{\psi}^{\bm{\mf}}}=\mF\big(\breve{\mb{\psi}}^{\bm{\mf}^\pr}\big)$ and 
$\mb{\Psi}_{\mb{\psi}^{\bm{\mf}^\pr}}=\mF\big(\breve{\mb{\psi}}^{\bm{\mf}^\pr}\big)$. 
We note that
\begin{align*}
(\breve{\mb{\psi}}^{\bm{\mf}^\pr}\bullet\breve{\mb{\psi}}^{\bm{\mf}}\big)(\bm{x}_1\odot\ldots\odot \bm{x}_n)
&=\sum_{\mathclap{\mp\in P(n)}}\e(\mp) \,
\breve{\mb{\psi}}^{\bm{\mf}^\pr}_{|\mp|}
\left(\breve{\mb{\psi}}^{\bm{\mf}}(\bm{x}_{B_1}), \ldots, \breve{\mb{\psi}}^{\bm{\mf}}(\bm{x}_{B_{|\mp|}})\right)
\\
&=
(-\kbar)^{n-1}\sum_{\mathclap{\mp\in P(n)}}\e(\mp) \,
{\mb{\psi}}^{\bm{\mf}^\pr}_{|\mp|}
\left(\mb{\psi}^{\bm{\mf}}(\bm{x}_{B_1}), \ldots, \mb{\psi}^{\bm{\mf}}(\bm{x}_{B_{|\mp|}})\right).
\end{align*}
It follows that $\breve{\mb{\psi}}^{\bm{\mf}^\pr}\bullet\breve{\mb{\psi}}^{\bm{\mf}} =
\breve{(\mb{\psi}^{\bm{\mf}^\pr}\bullet\mb{\psi}^{\bm{\mf}})}$, 
which implies that 
$\mb{\Psi}_{{\mb{\psi}}^{\bm{\mf}^\pr}}\circ \mb{\Psi}_{{\mb{\psi}^{\bm{\mf}}}} 
= \mb{\Psi}_{{\mb{\psi}^{\bm{\mf}^\pr}}\bullet {\mb{\psi}^{\bm{\mf}}}}$.
\naturalqed
\end{proof}

Therefore, \eq{bydob} is equivalent to the the identity
$(\bm{\mf}^\pr\circ\bm{\mf})\circ \mb{\pi} = 
\mb{\pi}^\ppr\circ  \mb{\Psi}_{\mb{\psi}^{\bm{\mf}^\pr}\bullet \mb{\psi}^{\bm{\mf}^\pr}}$,
which implies that $\bm{\mf}^\pr\circ\bm{\mf}:\sC{\kkbar}_\BQ \rightarrow\sC^\pr{\kkbar}_\BQ$ is a morphism of 
binary QFT algebras whose quantum descendant $\underline{\mb{\psi}}^{\bm{\mf}^\pr\circ\bm{\mf}}$ is 
$\underline{\mb{\psi}}^{\bm{\mf}^\pr}\bullet \underline{\mb{\psi}}^{\bm{\mf}}$, i.e.,
\eqn{bydod}{
\bm{\mK}\big(\bm{\mf}^\pr\circ \bm{\mf}\big)=\mb{\mK}\big(\bm{\mf}^\pr\big)\bullet \mb{\mK}\big(\bm{\mf}\big).
}
It is trivial that $\I_{\sC{\kkbar}}:\sC{\kkbar}_\BQ \rightarrow \sC{\kkbar}_\BQ$ 
is the identity morphism for every 
binary QFT algebra
$\sC{\kkbar}_\BQ$, and that its quantum descendant 
is the identity morphism on the quantum descendant unital $sL_\infty$-algebra.
\naturalqed
\end{proof}

%

A morphism  of binary QFT algebras has
some easily checkable but illuminating properties summarized by the following two lemmas.

Let  $\xymatrix{\sC{\kkbar}_\BQ\ar[r]^-{\bm{\mf}} &\sC^\pr{\kkbar}_\BQ}$ be a morphism of binary QFT algebras
whose quantum descendant is 
$\xymatrix{\big(\sC{\kkbar}, 1_{\sC}, \underline{\bell}\big)\ar@{..>}[r]^{\underline{\mb{\psi}}}&\big(\sC^\pr{\kkbar}, 1_{\sC^\pr}, \underline{\bell}^\pr\big)}$. Then,

\begin{lemma}\label{physwav}
For any $\mb{\g} \in \sC{\kkbar}$  and nilpotent $\mb{\th} \in \sC^{0}{\kkbar}$, 
there are $\mb{\g}^\pr \in  \sC^\pr{\kkbar}$ and $\mb{\th}^\pr \in \sC^{\pr 0}{\kkbar}$
given by
\[
\mb{\g}^\pr =\mb{\psi}_1(\mb{\g})+ \sum_{\mathclap{n=1}}^\infty \Fr{1}{n!}\mb{\psi}_{n+1}\big(\mb{\g},\mb{\th},\ldots, \mb{\th}\big)
,\qquad
\mb{\th}^\pr=\sum_{\mathclap{n=1}}^\infty \Fr{1}{n!}\mb{\psi}_{n}\big(\mb{\th},\ldots, \mb{\th}\big)
\]
such that
$\bm{\mf}\left( \mb{\g} \cdot e^{-\fr{1}{\kbar}\mb{\th}}\right) = 
 \mb{\g}^\pr \cdot e^{-\fr{1}{\kbar}\mb{\th}^\pr}$.
\end{lemma}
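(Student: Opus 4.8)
The plan is to expand both sides as series convergent in the relevant (nilpotent-ideal, resp.\ $\kbar$-adic) topology and to match them term by term. First observe that, since $\underline{\mb\psi}=\bm{\mK}(\bm\mf)$ satisfies $\mb\psi_n\in\Hom\big(S^n\sC,\sC^\pr\big)^0{\kkbar}$ (Definition~\ref{BKftalgm}, Lemma~\ref{BKftdesm}), each summand of the proposed $\mb\g^\pr$ and $\mb\th^\pr$ lies in $\sC^\pr{\kkbar}$, and the infinite sums converge because $\mb\th$ is nilpotent --- exactly as for the series in Lemma~\ref{hiact}. Extending $\bm\mf$ to be $\Bbbk(\!(\kbar)\!)$-linear and writing $e^{-\fr1\kbar\mb\th}=\sum_{k\geq0}\fr1{k!}\big(-\fr1\kbar\big)^k\mb\th^{\cdot k}$, one gets
\[
\bm\mf\!\left(\mb\g\cdot e^{-\fr1\kbar\mb\th}\right)=\sum_{k\geq0}\Fr1{k!}\Big(-\Fr1\kbar\Big)^k\bm\mf\big(\mb\g\cdot\mb\th^{\cdot k}\big).
\]
Next I would apply the defining recursion of $\bm{\mK}(\bm\mf)$ from Definition~\ref{BKftalgm} to each $\bm\mf\big(\mb\g\cdot\mb\th^{\cdot k}\big)$, taking $\bm x_1=\mb\g$ and $\bm x_2=\dots=\bm x_{k+1}=\mb\th$; since $\gh(\mb\th)=0$ every Koszul sign $\ep(\mp)$ equals $1$, so
\[
\bm\mf\big(\mb\g\cdot\mb\th^{\cdot k}\big)=\sum_{\mp\in P(k+1)}(-\kbar)^{\,k+1-|\mp|}\,\mb\psi\big(\bm x_{B_1}\big)\cdot^\pr\dots\cdot^\pr\mb\psi\big(\bm x_{B_{|\mp|}}\big).
\]

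The heart of the proof is the ensuing bookkeeping. For $\mp\in P(k+1)$ with $p=|\mp|$ blocks the accumulated scalar collapses, $\big(-\fr1\kbar\big)^k(-\kbar)^{k+1-p}=(-\kbar)^{1-p}=\big(-\fr1\kbar\big)^{p-1}$, and no longer involves $k$; so I would regroup the double sum $\sum_k\sum_{\mp\in P(k+1)}$ according to (i)~the block $B_\star$ that contains the index $1$, of size $m+1$ say, contributing the factor $\mb\psi_{m+1}(\mb\g,\mb\th,\dots,\mb\th)$ with $m$ copies of $\mb\th$, and (ii)~the set partition induced on the remaining $\mb\th$-labels. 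Since the factors $\mb\psi_r(\mb\th,\dots,\mb\th)$ all have ghost number $0$, the $\cdot^\pr$-product of them is unsigned and commutative, so the inner product depends only on the multiset of remaining block sizes, and the exponential (species-composition) formula for set partitions --- matching the count $\binom{k}{m}\fr{(k-m)!}{\prod_i m_i!\prod_s c_s!}$ divided by $k!$ against $\fr1{q!}\cdot\fr{q!}{\prod_s c_s!}$ for $q=p-1$ --- identifies the regrouped series with
\[
\sum_{q\geq0}\Fr1{q!}\Big(-\Fr1\kbar\Big)^q\,\mb\g^\pr\cdot^\pr\underbrace{\mb\th^\pr\cdot^\pr\dots\cdot^\pr\mb\th^\pr}_{q}=\mb\g^\pr\cdot^\pr e^{-\fr1\kbar\mb\th^\pr},
\]
where the $p=1$ term (empty remainder) supplies $\mb\g^\pr$ and the sums $\mb\g^\pr,\mb\th^\pr$ are precisely the ones in the statement.

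Conceptually the identity says that a coalgebra morphism carries group-like elements to group-like elements: writing $\mb\g\cdot e^{-\fr1\kbar\mb\th}=\mb\pi\big(\mb\g\bm\odot G\big)$ with $G=\sum_{k\geq0}\fr1{k!}\big(-\fr1\kbar\mb\th\big)^{\bm\odot k}$ the group-like element of the symmetric coalgebra, and invoking $\bm\mf\circ\mb\pi=\mb\pi^\pr\circ\mb\Psi_{\mb\psi}$ with $\mb\Psi_{\mb\psi}=\mF(\breve{\mb\psi})$ a coalgebra map (\eq{simqdm}, Remark~\ref{qcoalgd}), one sees $\mb\Psi_{\mb\psi}(\mb\g\bm\odot G)=\mb\g^\pr\bm\odot G^\pr$ with $G^\pr$ built from $\mb\th^\pr$, and applying $\mb\pi^\pr$ yields the claim. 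I would nevertheless carry out the explicit computation above, since the reduced (non-unital) symmetric coalgebra used in this paper forces some care about the unit $1_\sC$. The only genuine obstacle is this combined $\kbar$-power and set-partition accounting in the regrouping step; once the exponents collapse to $\big(-\fr1\kbar\big)^{p-1}$ and the partition count is recognized as the coefficient of $\mb\g^\pr\cdot^\pr e^{-\fr1\kbar\mb\th^\pr}$, the lemma follows, nilpotency of $\mb\th$ legitimating all the rearrangements.
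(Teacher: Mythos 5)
Your proof is correct: the $\kbar$-power collapse $\big(-\tfrac1\kbar\big)^{k}(-\kbar)^{k+1-|\mp|}=\big(-\tfrac1\kbar\big)^{|\mp|-1}$, the trivial Koszul signs (since $\gh(\mb{\th})=0$), and the set-partition coefficient matching all check out, and this is exactly the verification the paper has in mind — it states Lemma~\ref{physwav} as ``easily checkable'' and gives no proof, relying implicitly on the descendant identity $\bm{\mf}\circ\mb{\pi}=\mb{\pi}^\pr\circ\mb{\Psi}_{\mb{\psi}}$ applied to the group-like element, which is precisely the conceptual reformulation you record at the end. So your argument agrees with (and fills in) the paper's intended route; no gap.
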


\begin{lemma}\label{remBKftmor}
We have $\mb{\psi}_1 =\bm{\mf}$ and
for all $n\geq 1$ and  homogeneous $\bm{x}_1,\ldots, \bm{x}_{n+1} \in \sC{\kkbar}$
\begin{align*}
(-\kbar)\mb{\psi}_{n+1}\big(\bm{x}_1,\ldots, \bm{x}_{n+1}\big)  = &
\mb{\psi}_{n}\big(\bm{x}_1,\ldots, \bm{x}_{n-1}, \bm{x}_{n}\cdot \bm{x}_{n+1}\big)
-\sum_{\clapsubstack{ \mp \in P(n+1)\\ |\mp|=2\\ n\nsim n+1}}\mb{\psi}(\bm{x}_{B_1})\cdot^\pr \mb{\psi}(\bm{x}_{B_2})
.
\end{align*}
\end{lemma}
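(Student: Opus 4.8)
The identity $\mb{\psi}_1=\bm{\mf}$ is immediate from the recursion of Definition~\ref{BKftalgm} evaluated at $n=1$, since $P(1)$ has only the one-block partition.

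For the recursion governing $\mb{\psi}_{n+1}$ I would argue by induction on $n$, in each step computing $\bm{\mf}(\bm{x}_1\cdot\ldots\cdot\bm{x}_{n+1})$ in two different ways and comparing. For $n=1$ the claim is exactly the $n=2$ instance of the rewritten recursion in the Remark following Definition~\ref{BKftalgm}, together with $\mb{\psi}_1=\bm{\mf}$. For the inductive step fix $n\geq 2$ and assume the statement for all smaller values. On the one hand, expanding $\bm{\mf}(\bm{x}_1\cdot\ldots\cdot\bm{x}_{n+1})$ by Definition~\ref{BKftalgm} as an $(n+1)$-fold product gives the sum over $\mp\in P(n+1)$; peeling off the one-block partition exhibits the term $(-\kbar)^{n}\mb{\psi}_{n+1}(\bm{x}_1,\ldots,\bm{x}_{n+1})$, and the remaining partitions are sorted by whether $n\sim_\mp n+1$ or $n\nsim_\mp n+1$. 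On the other hand, viewing $\bm{x}_1\cdot\ldots\cdot\bm{x}_{n+1}$ as the $n$-fold product of $\bm{x}_1,\ldots,\bm{x}_{n-1},\bm{x}_n\cdot\bm{x}_{n+1}$ and expanding again by Definition~\ref{BKftalgm} gives a sum over $\mq\in P(n)$; the one-block partition contributes $(-\kbar)^{n-1}\mb{\psi}_n(\bm{x}_1,\ldots,\bm{x}_{n-1},\bm{x}_n\cdot\bm{x}_{n+1})$, while in every $\mq$ with $|\mq|\geq 2$ the last block has some size $s<n$, so the inductive hypothesis, applied to the $\mb{\psi}$-factor carried by that block (whose last entry is $\bm{x}_n\cdot\bm{x}_{n+1}$), rewrites it as one $\mb{\psi}_{s+1}$-term plus a sum of two-block contributions.

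Equating the two expansions, I would then read off a chain of cancellations. The $\mb{\psi}_{s+1}$-terms produced by the second expansion match, term by term, the terms of the first expansion with $|\mp|\geq 2$ and $n\sim_\mp n+1$: un-merging $\bm{x}_n\cdot\bm{x}_{n+1}$ identifies such a $\mq$ with a partition $\tilde{\mp}$ of $[n+1]$ having $n\sim_{\tilde{\mp}}n+1$, the number of blocks and the power of $-\kbar$ are unchanged, and the Koszul sign is unchanged because $n$ and $n+1$ occupy adjacent positions at the end of their block in the strictly-ordered representation; these two families cancel. The two-block refinements supplied by the inductive hypothesis, reattached to the other blocks of $\mq$, run over precisely the partitions of $[n+1]$ with $n\nsim n+1$ and at least three blocks, and match the $|\mp|\geq 3$ portion of the $n\nsim_\mp n+1$ part of the first expansion. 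After all cancellations there survive only $(-\kbar)^{n-1}\mb{\psi}_n(\bm{x}_1,\ldots,\bm{x}_{n-1},\bm{x}_n\cdot\bm{x}_{n+1})$ and the $|\mp|=2$, $n\nsim_\mp n+1$ part of the first expansion, namely $(-\kbar)^{n-1}\sum_{|\mp|=2,\,n\nsim_\mp n+1}\e(\mp)\,\mb{\psi}(\bm{x}_{B_1})\cdot^\pr\mb{\psi}(\bm{x}_{B_2})$; dividing through by $(-\kbar)^{n-1}$ yields the asserted formula.

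The main obstacle is the combinatorial bookkeeping, not the algebra: one must check that sending a $\mq\in P(n)$ with $|\mq|\geq 2$ together with a two-block splitting of its last block to the evident partition of $[n+1]$ is a bijection onto $\{\mp\in P(n+1):n\nsim_\mp n+1,\ |\mp|\geq 3\}$, and, the genuinely delicate point, that the Koszul sign of the image factors as $\e(\mq)$ times the sign of the splitting. Both are most safely verified through the strictly-ordered representation (the block $C_{|\mq|}$ of $\mq$, which contains $n$ but not $n+1$, splits into a block with maximum $n$ followed by a block with maximum $n+1$, both positioned after all the other blocks of $\mq$, whose maxima are $<n$). As a check, the $n=2$ case can be compared directly with the explicit formula for $\mb{\psi}_3$ displayed after Definition~\ref{BKftalgm}, in which the factor $(-1)^{|\bm{x}_1||\bm{x}_2|}$ attached to the $\{2\}\sqcup\{1,3\}$ term is exactly the sign $\e(\mp)$ being tracked. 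Equivalently, one could phrase the whole argument coalgebraically, as the two evaluations of $\bm{\mf}\circ\mb{\pi}=\mb{\pi}^\pr\circ\mb{\Psi}_{\mb{\psi}}$ on $\bm{x}_1\bm{\odot}\cdots\bm{\odot}(\bm{x}_n\cdot\bm{x}_{n+1})$ and on $\bm{x}_1\bm{\odot}\cdots\bm{\odot}\bm{x}_{n+1}$, but this only repackages the same combinatorics; the argument runs parallel to the (unstated) proof of Lemma~\ref{remBKftalg}.
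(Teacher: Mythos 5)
Your proposal is correct, and since the paper offers no proof of this lemma (it is declared ``easily checkable''), your double-expansion induction --- expanding $\bm{\mf}(\bm{x}_1\cdots\bm{x}_{n+1})$ over $P(n+1)$ and again as the $n$-fold product with $\bm{x}_n\cdot\bm{x}_{n+1}$ merged, then cancelling via the bijection between pairs $(\mq,\text{split of the last block})$ and partitions of $[n+1]$ with $n\nsim n+1$ --- is exactly the kind of verification the paper intends, equivalent to evaluating $\bm{\mf}\circ\mb{\pi}=\mb{\pi}^\pr\circ\mb{\Psi}_{\mb{\psi}}$ in two ways. Your sign bookkeeping (that $\e(\mp)$ factors as $\e(\mq)$ times the sign of the splitting, checked via the strictly-ordered representation) is sound, and you correctly restored the Koszul sign $\e(\mp)$ that the displayed statement suppresses but that the paper's explicit $\mb{\psi}_3$ example confirms.
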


\begin{remark} Consider Lemma \ref{physwav}.
If we regard $\mb{\g} \cdot e^{-\fr{1}{\kbar}\mb{\th}}$  as a wave function
it can be called a
physical wave function if
$\mb{K} \left( \mb{\g} \cdot e^{-\fr{1}{\kbar}\mb{\th}}\right) =0$. This condition is equivalent to the
following by  Lemma \ref{hiact}:
\begin{align*}
\mb{K}\mb{\vt} 
+\sum_{\mathclap{n=2}}^\infty\Fr{1}{n!} \bell_n\big(\mb{\vt},\ldots,\mb{\vt}\big) &=0
,\\
\mb{K}\mb{\g} +\sum_{\mathclap{n=2}}^\infty\Fr{1}{(n-1)!} \bell_n\big(\mb{\vt},\ldots,\mb{\vt}, \mb{\g}\big)&=0
.
\end{align*}
Note that
$\mb{K}^\pr \left( \mb{\g}^\pr \cdot e^{-\fr{1}{\kbar}\mb{\th}^\pr}\right) = 
\bm{\mf}\left( \mb{K}\left( \mb{\g} \cdot e^{-\fr{1}{\kbar}\mb{\th}}\right)\right)$ since $\mb{K}^\pr\circ\bm{\mf}=\bm{\mf}\circ
\mb{K}$.
Therefore, a morphism of binary QFT algebras sends  physical wave functions to
physical wave functions. 
\naturalqed
\end{remark}

\begin{remark} Consider the first three examples of the relations in Lemma \ref{remBKftmor}.
We have $\mb{\psi}_1 =\bm{\mf}$ and
\begin{align*}
(-\kbar)\mb{\psi}_2\big(\bm{x}_1, \bm{x}_2)=
&
\mb{\psi}_1\big(\bm{x}_1\cdot \bm{x}_2) - \mb{\psi}_1\big(\bm{x}_1) \cdot^\pr \mb{\psi}_1\big( \bm{x}_2) 
,\\
(-\kbar)\mb{\psi}_3\big(\bm{x}_1, \bm{x}_2,\bm{x}_3)=
&
\mb{\psi}_2\big(\bm{x}_1, \bm{x}_2\cdot \bm{x}_3) 
- \mb{\psi}_2\big(\bm{x}_1,\bm{x}_2) \cdot^\pr \mb{\psi}_1\big( \bm{x}_3) 
\\
&
- (-1)^{|\bm{x}_1||\bm{x}_2|}\mb{\psi}_1\big(\bm{x}_2) \cdot^\pr \mb{\psi}_1\big(\bm{x}_1, \bm{x}_3),
\end{align*}
The first non-trivial  $\kbar$-compatibility condition for a pointed cochain map $\bm{\mf}$ to be a morphism of
binary QFT algebras is that the failure of $\bm{\mf}$ being an algebra homomorphism 
is $(-\kbar)\mb{\psi}_2$, which divisible by $\kbar$. Then the second non-trivial  $\kbar$-compatibility condition
is that the failure of $\mb{\psi}_2 \in \Hom\big(S^2\sC,\sC^\pr\big)^0{\kkbar}$ being 
an ``algebra homomorphism'' is $(-\kbar)\mb{\psi}_3$, which divisible by $\kbar$, and so on.
\naturalqed
\end{remark}

\subsection{The homotopy category of binary QFT algebras}
\label{subsec: homotopy cat of binary qft alg}

In this subsection, we introduce the notion of homotopy of  morphisms of binary QFT algebras
to define the homotopy category $\mathit{ho}\category{BQFTA}(\Bbbk)$ of binary QFT algebras
and show that the quantum descendant functor
$\bm{\mK}: \category{BQFTA}(\Bbbk) \rightsquigarrow \category{UsL}_\infty(\Bbbk{\kkbar})$
is a homotopy functor---that it induces a functor 
$\mathit{ho}\bm{\mK}: \mathit{ho}\category{BQFTA}(\Bbbk) \rightsquigarrow \mathit{ho}\category{UsL}_\infty(\Bbbk{\kkbar})$ 
from the homotopy category $\mathit{ho}\category{BQFTA}(\Bbbk)$ of binary QFT algebras to the homotopy
category $\mathit{ho}\category{UsL}_\infty(\Bbbk{\kkbar})$ of unital $sL_\infty$-algebras over $\Bbbk{\kkbar}$.

We begin with introducing the notion of a homotopy pair of binary QFT algebras
after fixing some notation that will be useful later in the section.

Let $\sU{\kkbar}$ and $\sU^\pr{\kkbar}$ be $\Z$-graded topologically-free $\Bbbk{\kkbar}$-modules. 
Introduce a formal time parameter $\t$ and
denote by $\Hom\left(\sU, \sU^\pr\right)^{k}{\kkbar}[\t]$ 
the space of $\Bbbk{\kkbar}$-linear maps from $\sU{\kkbar}$ to $\sU^\pr{\kkbar}$
of ghost number $k$ parametrized by $\t$ with polynomial dependence:
$\mb{\a}(\t) \in \Hom\left(\sU, \sU^\pr\right)^{k}{\kkbar}[\t]$ if $\mb{\a}(\t)=
\mb{\a}_0 + \mb{\a}_1 \t +\ldots +\mb{\a}_n \t^n$, $\mb{\a}_j \in \Hom\left(\sU, \sU^\pr\right)^{k}{\kkbar}$
for $j=0,1,\ldots, n$.  Let $\mb{\a}^\pr(\t)=\mb{\a}^\pr_0 + \mb{\a}^\pr_1 \t +\ldots +\mb{\a}^\pr_{n^\pr} \t^{n^\pr} 
\in \Hom\left(\sU^\pr, \sU^\ppr\right)^{l}{\kkbar}[\t]$, where $\sU^\ppr{\kkbar}$ 
is another $\Z$-graded topologically-free $\Bbbk{\kkbar}$-module,
then we define
\[\mb{\a}^\pr(\t)\circ \mb{\a}(\t) := \sum_{\mathclap{i=0}}^{n^\pr + n} \left(\mb{\a}^\pr_j\circ \mb{\a}_{i-j}\right) \t^i
 \in \Hom\left(\sU, \sU^\ppr\right)^{k+l}{\kkbar}[\t]
\]

Fix two binary QFT algebras as follows: 
\begin{align*}
\sC{\kkbar}_\BQ&=\big(\sC{\kkbar}, 1_{\sC}, \,\cdot\,,\mb{K}\big),&
\bm{\mK}\big(\sC{\kkbar}_\BQ\big)&=\big(\sC{\kkbar}, 1_{\sC}, \underline{\bell}\big)
,\\
\sC^\pr{\kkbar}_\BQ&=\big(\sC^\pr{\kkbar}, 1_{\sC^\pr}, \,\cdot^\pr\,,\mb{K}^\pr\big), &
\bm{\mK}\big(\sC^\pr{\kkbar}_\BQ\big)&=\big(\sC^\pr{\kkbar}, 1_{\sC^\pr}, \underline{\bell}^\pr\big)
.
\end{align*}
Define $\mb{\pi} \in \Hom\left(\Xbar{S}(\sC), \sC\right)^{0}{\kkbar}$
and  $\mb{\pi}^\pr \in \Hom\left(\Xbar{S}(\sC^\pr), \sC^\pr\right)^{0}{\kkbar}$
as in the proof of Lemma~\ref{BKftdesm}:
\begin{align*}
\mb{\pi}\big(\bm{x}_1\odot\ldots\odot\bm{x}_n\big) &\coloneqq \bm{x}_1\cdot\ldots\cdot \bm{x}_n
&&\text{for all } n\geq 1 \text{ and }\bm{x}_1,\ldots, \bm{x}_n \in \sC{\kkbar},
\\
\mb{\pi}^\pr\big(\bm{x}^\pr_1\odot\ldots\odot\bm{x}^\pr_n\big)&\coloneqq\bm{x}^\pr_1\cdot^\pr\ldots\cdot^\pr \bm{x}^\pr_n
&&\text{for all } n\geq 1 \text{ and }
\bm{x}^\pr_1,\ldots, \bm{x}^\pr_n \in \sC^\pr{\kkbar}.
\end{align*}

Define $\breve{\bell} \in \Hom\left(\Xbar{S}(\sC),\sC\right)^{1}{\kkbar}$ 
and $\breve{\bell}^\pr \in \Hom\left(\Xbar{S}(\sC^\pr),\sC^\pr\right)^{1}{\kkbar}$ 
via $\breve{\bell}_n=(-\kbar)^{n-1}\bell_n$
and $\breve{\bell}^\pr_n=(-\kbar)^{n-1}\bell^\pr_n$ for all $n\geq 1$.
Let $\mD(\breve{\bell}):\Xbar{S}(\sC){\kkbar}\rightarrow \Xbar{S}(\sC){\kkbar}$ and 
$\mD(\breve{\bell}^\pr):\Xbar{S}(\sC^\pr){\kkbar}\rightarrow \Xbar{S}(\sC^\pr){\kkbar}$
be the unique coderivations determined by $\breve{\bell}$ and $\breve{\bell}^\pr$, respectively. 
Then, we have the following relations:
\[
\mb{K}\circ \mb{\pi} = \mb{\pi}\circ \mD(\breve{\bell})
,\qquad
\mb{K}^\pr\circ \mb{\pi}^\pr = \mb{\pi}^\pr\circ \mD(\breve{\bell}^\pr)
,\qquad
\mD(\breve{\bell})\circ \mD(\breve{\bell})=\mD(\breve{\bell}^\pr)\circ \mD(\breve{\bell}^\pr).
\]
Remember or notice that $\mD(\breve{\bell})=\mb{\d}_{\bell}$ and $\mD(\breve{\bell}^\pr)=\mb{\d}_{\bell^\pr}$.

Now we are ready to introduce the notion of a homotopy pair. 
For each pair $\big(\bm{\mf}(\t)\big|\mb{\xi}(\t)\big)$ in $\Hom\big(\sC, \sC^\pr\big)^{0}{\kkbar}[\t]\bigoplus  \Hom\big(\sC, \sC^\pr\big)^{-1}{\kkbar}[\t]$,  we can associate another pair
\[
\left(\breve{\mb{\psi}}(\t)\big|\breve{\mb{\l}}(\t)\right)\in
\Hom\left(\Xbar{S}(\sC), \sC^\pr\right)^{0}{\kkbar}[\t]\bigoplus\Hom\left(\Xbar{S}(\sC), \sC^\pr\right)^{-1}{\kkbar}[\t]
\]
implicitly defined by the equations
\eqn{hpqdes}{
\bm{\mf}(\t)\circ\mb{\pi} =
\mb{\pi}^\pr\circ\mF\left(\breve{\mb{\psi}}(\t)\right)
,\qquad
\mb{\xi}(\t)\circ\mb{\pi}=
\mb{\pi}^\pr\circ\La\left(\breve{\mb{\psi}}(\t), \breve{\mb{\l}}(\t)\right)
,}
where for all $n\geq 1$ and  homogeneous $\bm{x}_1,\ldots,\bm{x}_n \in \sC{\kkbar}$, we write
\begin{align*}
\mF\big(\breve{\mb{\psi}}(\t)\big)(\bm{x}_1\bm{\odot} \ldots\bm{\odot} \bm{x}_n)\coloneqq{}
&
\sum_{\clapsubstack{|\mp| \in P(n)}}
\e(\mp)
\breve{\mb{\psi}}(\t)\big(\bm{x}_{B_1}\big)\bm{\odot}\ldots\bm{\odot}\breve{\mb{\psi}}(\t)\big( \bm{x}_{B_{|\mp|}}\big)
,\\
\La\big(\breve{\mb{\psi}}(\t), \breve{\mb{\l}}(\t)\big)(\bm{x}_1\bm{\odot} \ldots\bm{\odot} \bm{x}_n)\coloneqq{}
&
\sum_{\clapsubstack{|\mp| \in P(n)}}\ \ 
\sum_{\mathclap{i=1}}^{|\mp|}\,\e(\mp)\,
\breve{\mb{\psi}}(\t)\big(J\!\bm{x}_{B_1}\big)\bm{\odot}\ldots\bm{\odot}\breve{\mb{\psi}}(\t)\big(J\!\bm{x}_{B_{i-1}}\big)
\bm{\odot} 
\\
&
\qquad\times
\breve{\mb{\l}}(\t)\big(\bm{x}_{B_i}\big)
\bm{\odot}\breve{\mb{\psi}}(\t)\big(\bm{x}_{B_{i+1}}\big)\bm{\odot}\ldots\bm{\odot}
\breve{\mb{\psi}}(\t)\big( \bm{x}_{B_{|\mp|}}\big)
.
\end{align*}
From the condition $\mb{\pi}^\pr_1=\I_{\sC^\pr{\kkbar}}$,
we can check that \eq{hpqdes} actually determines  the pair $\left(\breve{\mb{\psi}}(\t)\big|\breve{\mb{\l}}(\t)\right)$ 
uniquely from the pair $\big(\bm{\mf}(\t)\big|\mb{\xi}(\t)\big)$.
For example, we have  $\breve{\mb{\psi}}_1(\t)=\bm{\mf}(\t)$ and $\breve{\mb{\l}}_1(\t)=\mb{\xi}(\t)$ at the first level and
\begin{align*}
\breve{\mb{\psi}}_2(\t)(\bm{x}_1,\bm{x}_2) \coloneqq{} 
&\breve{\mb{\psi}}_1(\t)(\bm{x}_1\cdot\bm{x}_2)
-\breve{\mb{\psi}}_1(\t)(\bm{x}_1)\cdot^\pr \breve{\mb{\psi}}_1(\t)(\bm{x}_2)
,\\
\breve{\mb{\l}}_2(\t)(\bm{x}_1,\bm{x}_2)\coloneqq{}&\mb{\xi}(\t)(\bm{x}_1\cdot\bm{x}_2)
-\breve{\mb{\l}}_1(\t)(\bm{x}_1)\cdot^\pr \breve{\mb{\psi}}_1(\t)(\bm{x}_2)
-\breve{\mb{\psi}}_1(\t)(J\!\bm{x}_1)\cdot^\pr \breve{\mb{\l}}_1(\t)(\bm{x}_2)
\end{align*}
at the second level.

Then, we say that $\left(\breve{\mb{\psi}}(\t)\big|\breve{\mb{\l}}(\t)\right)$
is the descendant of $\big(\bm{\mf}(\t)|\mb{\xi}(\t)\big)$,
and denote this relationship
$\breve{\bm{\mK}}\big(\bm{\mf}(\t)\big|\mb{\xi}(\t)\big)=\left(\breve{\mb{\psi}}(\t)\big|\breve{\mb{\l}}(\t)\right)$.

\begin{definition}\label{hpairBKft}
A pair 
$\big(\bm{\mf}(\t)\big|\mb{\xi}(\t)\big)\in\Hom\big(\sC, \sC^\pr\big)^{0}{\kkbar}[\t]\bigoplus  \Hom\big(\sC, \sC^\pr\big)^{-1}{\kkbar}[\t]$
with the descendant $\left(\breve{\mb{\psi}}(\t)\big|\breve{\mb{\l}}(\t)\right)=\breve{\bm{\mK}}\big(\bm{\mf}(\t)\big|\mb{\xi}(\t)\big)$
is called a homotopy pair of binary QFT algebras from $\sC{\kkbar}_\BQ$ to $\sC^\pr{\kkbar}_\BQ$, and denote this
\[
\xymatrix{\sC{\kkbar}_\BQ\ar@{=>}[rr]^{(\bm{\mf}(\t)|\mb{\xi}(\t))} && \sC^\pr{\kkbar}_\BQ},
\]
if it satisfies the following axioms:

\begin{itemize} 
\item the unital homotopy flow equation that
\eqn{hpqflow}{
\mb{\xi}(\t)(1_\sC)=0
,\qquad
\Fr{d}{d\!\t} \bm{\mf}(\t) =\mb{K}^\pr\circ \mb{\xi}(\t) + \mb{\xi}(\t) \circ\mb{K}
}
and
\item
the $\kbar$-condition that
$\Fr{1}{(-\kbar)^{n-1}}\breve{\mb{\l}}_n(\t)\in  \Hom\big(S^n\sC, \sC^\pr\big)^{-1}{\kkbar}[\t]$
for all  $n\geq 1$.
\end{itemize}
\end{definition}

\begin{proposition}\label{BKfhomp}
Consider a homotopy pair $\xymatrix{\sC{\kkbar}_\BQ\ar@{=>}[rr]^{(\bm{\mf}(\t)|\mb{\xi}(\t))} && \sC^\pr{\kkbar}_\BQ}$ of  binary QFT algebras
whose descendant is $\left(\breve{\mb{\psi}}(\t)\big| \breve{\mb{\l}}(\t)\right)$.
Assume that $\bm{\mf}(0)$ is a morphism of binary QFT algebras. Then,
we have
\begin{enumerate}
\item $
\xymatrix{\bm{\mf}(\t):\sC{\kkbar}_\BQ \ar[r] & \sC^\pr{\kkbar}_\BQ}$ 
is a (uniquely defined) family of
binary QFT algebra morphisms,
and 

\medskip
\item $\xymatrix{\big(\underline{{\mb{\psi}}}(\t), \underline{{\mb{\l}}}(\t)\big):
\big(\sC{\kkbar}, 1_\sC, \underline{\bell}\big)\ar@{:>}[r] &\big(\sC^\pr{\kkbar}, 1_{\sC^\pr}, \underline{\bell}^\pr\big)}
$
is a homotopy pair of unital $sL_\infty$-algebras, where
${\mb{\psi}}_n(\t) \coloneqq{}\Fr{1}{(-\kbar)^{n-1}}\breve{\mb{\psi}}_n(\t)$,
${\mb{\l}}_n(\t) \coloneqq{}\Fr{1}{(-\kbar)^{n-1}}\breve{\mb{\l}}_n(\t)$ for $n\geq 1$,
and $\underline{{\mb{\psi}}}(0)=\bm{\mK}\big(\bm{\mf}(0)\big)$.
\end{enumerate}
\end{proposition}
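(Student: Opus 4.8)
The plan is to propagate along the flow parameter $\t$ the properties that hold at $\t=0$, where by hypothesis $\bm{\mf}(0)$ is a genuine morphism, using the unital homotopy flow equation \eqref{hpqflow} as a first-order ODE in $\t$ and the coderivation bookkeeping from the proofs of Lemmas~\ref{qdslinftyalgebra} and \ref{BKftdesm}. Everything in sight depends polynomially on $\t$, so it will be enough to check that a quantity vanishes at $\t=0$ and has vanishing $\t$-derivative.

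First I would settle the pointed-cochain-map part of (1). Both $\bm{\mf}(\t)(1_\sC)-1_{\sC^\pr}$ and $\mb{K}^\pr\circ\bm{\mf}(\t)-\bm{\mf}(\t)\circ\mb{K}$ vanish at $\t=0$; differentiating in $\t$ and inserting $\frac{d}{d\t}\bm{\mf}(\t)=\mb{K}^\pr\circ\mb{\xi}(\t)+\mb{\xi}(\t)\circ\mb{K}$ shows their $\t$-derivatives vanish — the first because $\mb{\xi}(\t)(1_\sC)=0$ and $\mb{K}1_\sC=0$, the second because $\mb{K}\circ\mb{K}=\mb{K}^\pr\circ\mb{K}^\pr=0$. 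Hence $\bm{\mf}(\t)$ is a pointed cochain map for every $\t$, and the ``uniquely defined'' clause is just the remark that \eqref{hpqflow} determines $\bm{\mf}(\t)$ from $\bm{\mf}(0)$ and $\mb{\xi}(\t)$. The same differentiate-and-integrate argument, run by an easy induction on $n$, turns $\mb{\xi}(\t)(1_\sC)=0$ into $\breve{\mb{\l}}_n(\t)(\bm{x}_1,\ldots,\bm{x}_{n-1},1_\sC)=0$, the unity needed for the $\mb{\l}_n(\t)$ in (2).

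Next I would record the descendant flow identity. Differentiating the first equation of \eqref{hpqdes}, namely $\bm{\mf}(\t)\circ\mb{\pi}=\mb{\pi}^\pr\circ\mF(\breve{\mb{\psi}}(\t))$, and feeding in \eqref{hpqflow}, the second equation of \eqref{hpqdes}, and the intertwiners $\mb{K}\circ\mb{\pi}=\mb{\pi}\circ\mD(\breve{\bell})$, $\mb{K}^\pr\circ\mb{\pi}^\pr=\mb{\pi}^\pr\circ\mD(\breve{\bell}^\pr)$ of \eqref{qdera} and \eqref{qdesfg}, one obtains the unconditional identity
\[
\mb{\pi}^\pr\circ\frac{d}{d\t}\mF\big(\breve{\mb{\psi}}(\t)\big)=\mb{\pi}^\pr\circ\Big(\mD(\breve{\bell}^\pr)\circ\La\big(\breve{\mb{\psi}}(\t),\breve{\mb{\l}}(\t)\big)+\La\big(\breve{\mb{\psi}}(\t),\breve{\mb{\l}}(\t)\big)\circ\mD(\breve{\bell})\Big),
\]
the binary-QFT-algebra precursor of the $sL_\infty$ homotopy flow equation.

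The core of the argument, and what I expect to be the main obstacle, is a simultaneous induction on $n$ proving, for all $\t$, three statements: (i) $\breve{\mb{\psi}}_n(\t)$ is divisible by $(-\kbar)^{n-1}$, so that $\mb{\psi}_n(\t)\coloneqq{}(-\kbar)^{1-n}\breve{\mb{\psi}}_n(\t)\in\Hom(S^n\sC,\sC^\pr)^0{\kkbar}[\t]$; (ii) the $n$th unital $sL_\infty$-morphism relation \eqref{uslinftymorphism} for $\underline{\mb{\psi}}(\t)$, i.e. $Y(\t)\coloneqq{}\mD(\breve{\bell}^\pr)\circ\mF(\breve{\mb{\psi}}(\t))-\mF(\breve{\mb{\psi}}(\t))\circ\mD(\breve{\bell})$ vanishes on $S^n\sC$; and (iii) the displayed identity, de-multiplied by $\mb{\pi}^\pr$ at level $n$, that is, $\frac{d}{d\t}\breve{\mb{\psi}}_n(\t)=\proj_{\sC^\pr}\circ\big(\mD(\breve{\bell}^\pr)\circ\La(\breve{\mb{\psi}}(\t),\breve{\mb{\l}}(\t))+\La(\breve{\mb{\psi}}(\t),\breve{\mb{\l}}(\t))\circ\mD(\breve{\bell})\big)$ on $S^n\sC$. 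These are coupled because the defect of $\mD(\breve{\bell}^\pr)\circ\La+\La\circ\mD(\breve{\bell})$ from being a coderivation over $\mF(\breve{\mb{\psi}}(\t))$ is a sum of terms each carrying a factor of $Y(\t)$ evaluated below level $n$; granting (ii) below level $n$, the operator $\frac{d}{d\t}\mF(\breve{\mb{\psi}}(\t))-\mD(\breve{\bell}^\pr)\circ\La-\La\circ\mD(\breve{\bell})$ is a coderivation over $\mF(\breve{\mb{\psi}}(\t))$ on $S^n\sC$ that is annihilated by $\mb{\pi}^\pr$, hence has vanishing cogenerator there by the $\mb{\pi}^\pr_1=\I_{\sC^\pr{\kkbar}}$ device of Lemmas~\ref{qdslinftyalgebra} and \ref{BKftdesm}, which is (iii). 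Then (i) at level $n$ follows from (iii) by a $\kbar$-power count using the inductive form of (i), the divisibility of $\breve{\mb{\l}}_m(\t)$ in Definition~\ref{hpairBKft} and of $\breve{\bell}_m,\breve{\bell}_m^\pr$, together with $\breve{\mb{\psi}}_n(0)$ divisible by $(-\kbar)^{n-1}$ (Lemma~\ref{BKftdesm} for the morphism $\bm{\mf}(0)$) and integration in $\t$. And (ii) at level $n$ follows by differentiating $Y(\t)$ on $S^n\sC$, using (iii) at levels $\le n$ to substitute for $\frac{d}{d\t}\mF(\breve{\mb{\psi}}(\t))$, collapsing the outcome to $\mD(\breve{\bell}^\pr)^2\circ\La-\La\circ\mD(\breve{\bell})^2=0$, and integrating from $Y(0)=0$. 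Granting this induction, statement (1) is immediate: (i) is exactly the $\kbar$-compatibility of $\bm{\mf}(\t)$, and the second paragraph supplies the rest. Statement (2) is then assembled from: $\underline{\mb{\psi}}(\t)=\bm{\mK}(\bm{\mf}(\t))$ is a unital $sL_\infty$-morphism for each $\t$ by Lemma~\ref{BKftdesm}; (iii) reads as the $sL_\infty$ homotopy flow equation for $(\underline{\mb{\psi}}(\t),\underline{\mb{\l}}(\t))$; the unity and $\kbar$-type conditions on the $\mb{\l}_n(\t)$ come from the second paragraph and Definition~\ref{hpairBKft}; and $\underline{\mb{\psi}}(0)=\bm{\mK}(\bm{\mf}(0))$ because \eqref{hpqdes} at $\t=0$ is precisely the defining relation \eqref{simqdm} for $\bm{\mK}(\bm{\mf}(0))$.
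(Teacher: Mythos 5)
Your proposal is correct and follows essentially the same route as the paper's proof: the pointed-cochain-map part comes from integrating the unital flow equation (\ref{hpqflow}), the coalgebra-level identity (\ref{hpqflowz}) is projected to the componentwise flow (\ref{brehflow}), and the $\kbar$-compatibility of $\bm{\mf}(\t)$ follows by the same level-by-level integration from the divisibility of $\breve{\mb{\psi}}_n(0)$ and $\breve{\mb{\l}}_n(\t)$; your simultaneous induction interleaving the morphism relations is just an explicit rendering of the step the paper dispatches with ``from $\mb{\pi}^\pr_1=\I_{\sC^\pr{\kkbar}}$''. The only cosmetic slip is that the unit property $\breve{\mb{\l}}_n(\t)(\bm{x}_1,\ldots,\bm{x}_{n-1},1_\sC)=0$ is obtained directly from the recursion (\ref{hpqdes}) together with $\mb{\xi}(\t)(1_\sC)=0$ and $\bm{\mf}(\t)(1_\sC)=1_{\sC^\pr}$, not by a differentiate-and-integrate argument, but this does not affect the proof.
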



\begin{proof}
Note that the unital homotopy flow equation \eq{hpqflow} has the following unique solution  with initial condition $\bm{\mf}(0)$:
\[
\bm{\mf}(\t) = 
\bm{\mf}(0) + \mb{K}^\pr \circ \left(\int_{0}^{\t}\mb{\xi}(\s)d\s\right) +\left(\int_{0}^{\t}\mb{\xi}(\s)d\s\right)\circ \mb{K}
,
\]
satisfying $\bm{\mf}(\t)(1_\sC)=1_{\sC^\pr}$. It follows that 
$\bm{\mf}(\t)$ is  a family of pointed cochain maps from $\big(\sC{\kkbar},1_{\sC},\mb{K}\big)$
to $\big(\sC{\kkbar},1_{\sC^\pr},\mb{K}^\pr\big)$. We need to check that
$\bm{\mf}(\t)$ also satisfies the $\kbar$-compatibility condition.

We first show that  $\xymatrix{
\big(\underline{\breve{\mb{\psi}}}(\t), \underline{\breve{\mb{\l}}}(\t)\big):
\big(\sC{\kkbar}, 1_\sC, \underline{\breve{\bell}}\big)
\ar@{:>}[r] &
\big(\sC^\pr{\kkbar}, 1_{\sC^\pr}, \underline{\breve{\bell}}^\pr\big)
}$
is a homotopy pair of unital $sL_\infty$-algebras --- see Definition \ref{slinftyhflow} in { Appendix~\ref{appendix: l infinity algebras}}.
Then, we show that $\xymatrix{
\big(\underline{{\mb{\psi}}}(\t), {{\mb{\l}}}(\t)\big):
\big(\sC{\kkbar}, 1_\sC, \underline{{\bell}}\big)
\ar@{:>}[r] &
\big(\sC^\pr{\kkbar}, 1_{\sC^\pr}, \underline{{\bell}}^\pr\big)
}$
is a homotopy pair between the quantum descendant unital $sL_\infty$-algebras --- this   implies
that $\bm{\mf}(\t)$ is a one-parameter family of binary QFT morphisms
such that $\underline{\mb{\psi}}(\t)=\bm{\mK}\big(\bm{\mf}(\t)\big)$.

From
the homotopy flow equations in \eq{hpqflow},
we obtain that
\begin{align*}
\Fr{d}{d\!\t} \bm{\mf}(\t) \circ\mb{\pi}
=
\mb{K}^\pr\circ \mb{\xi}(\t) \circ\mb{\pi}
+ \mb{\xi}(\t) \circ\mb{K}\circ\mb{\pi}
=\mb{K}^\pr\circ \mb{\xi}(\t) \circ\mb{\pi}
+ \mb{\xi}(\t) \circ \mb{\pi}\circ \mD(\breve{\bell})
,
\end{align*}
where we have used $\mb{K}\circ  \mb{\pi} =\mb{\pi}\circ \mD(\breve{\bell})$.
Therefore, we have the following identity:
\eqnalign{hpqflowy}{
\Fr{d}{d\!\t} \big(\bm{\mf}(\t) \circ\mb{\pi}\big)
-\mb{K}^\pr\circ\big( \mb{\xi}(\t) \circ\mb{\pi}\big)
-\big(\mb{\xi}(\t) \circ \mb{\pi}\big)\circ \mD(\breve{\bell})=0
.
}
Substituting for $\bm{\mf}(\t) \circ\mb{\pi}$ and $\mb{\xi}(\t) \circ \mb{\pi}$ with   \eq{hpqdes} and using 
$\mb{K}^\pr\circ\mb{\pi}^\pr=\mb{\pi}^\pr\circ \mD(\breve{\bell}^\pr)$,
%
we obtain that
\eqn{hpqflowz}{
\mb{\pi}^\pr\circ \left(\Fr{d}{d\!\t}\mF\left(\breve{\mb{\psi}}(\t)\right)
-\sD\big(\breve{\bell}^\pr\big)\circ\La\left(\breve{\mb{\psi}}(\t), \breve{\mb{\l}}(\t)\right)
-\La\left(\breve{\mb{\psi}}(\t), \breve{\mb{\l}}(\t)\right)\circ \sD\big(\breve{\bell}\big)\right)
=0.
}
From $\mb{\pi}^\pr_1=\I_{\sC^\pr{\kkbar}}$, we have
\[
\proj_{\sC^\pr{\kkbar}}\circ \left(\Fr{d}{d\t}\mF\left(\breve{\mb{\psi}}(\t)\right)
-\sD\big(\breve{\bell}^\pr\big)\circ\La\left(\breve{\mb{\psi}}(\t), \breve{\mb{\l}}(\t)\right)
-\La\left(\breve{\mb{\psi}}(\t), \breve{\mb{\l}}(\t)\right)\circ \sD\big(\breve{\bell}\big)\right)
=0.
\]
In components, this condition means that,
for all $n\geq 1$ and homogeneous $\bm{x}_1,\ldots,\bm{x}_n \in \sC{\kkbar}$,
we have
\begin{align}
\label{brehflow}
    & 
\Fr{d}{d\!t}\breve{\mb{\psi}}_n(\t)\big(\bm{x}_1,\ldots, \bm{x}_n\big)= 
\\
    & \mathmakebox[\displaywidth][r]{
    \sum_{\mathclap{\mp\in P(n)}}\ \ \sum_{\mathclap{i=1}}^{|\mp|}\e(\mp)
\breve{\bell}^\pr_{|\mp|}\Big( \breve{\mb{\psi}}(\t)\big(J\!{x}_{B_1}\big),\ldots,
\breve{\mb{\psi}}(\t)\big(J\!\bm{x}_{B_i}\big),\breve{\mb{\l}}(\t)\big(x_{B_i}\big),\breve{\mb{\psi}}(\t)\big(\bm{x}_{B_{i+1}}\big),
\ldots, \breve{\mb{\psi}}(\t)\big(\bm{x}_{B_{|\mp|}}\big)\Big)
    }\notag
  \\
  &
  \mathmakebox[\displaywidth][r]{
+\sum_{\clapsubstack{\mp\in P(n)\\ |B_i|=n-|\mp|+1}}\e(\mp)
\breve{\mb{\l}}_{|\mp|}(\t)\Big(J\!\bm{x}_{B_1},\ldots, J\!\bm{x}_{B_{i-1}}, \breve{\bell}\big(\bm{x}_{B_i}\big),  \bm{x}_{B_{i+1}},\ldots,
 \bm{x}_{B_{|\mp|}}\Big).
   }\notag
\end{align}
It is straightforward to check that $\breve{\mb{\l}}_n(x_1,\ldots, x_{n-1}, 1_\sC)=0$ 
for all $n\geq 1$ and homogeneous $\bm{x}_1,\ldots,\bm{x}_{n-1}\in\sC{\kkbar}$. 
Therefore,
\[\xymatrix{
\big(\underline{\breve{\mb{\psi}}}(\t), \underline{\breve{\mb{\l}}}(\t)\big):
\big(\sC{\kkbar}, 1_\sC, \underline{\breve{\bell}}\big)
\ar@{:>}[r] &
\big(\sC^\pr{\kkbar}, 1_{\sC^\pr}, \underline{\breve{\bell}}^\pr\big)
}\]
is a homotopy pair of unital $sL_\infty$-algebras.

Now set ${\mb{\psi}}_n(\t)=\Fr{1}{(-\kbar)^{n-1}}\breve{\mb{\psi}}_n(\t)$, for all $n\geq 1$.
If ${\mb{\psi}}_n(\t)$ is in $\Hom\big(S^n\sC, \sC^\pr\big)^{0}{\kkbar}[\t]$ for all $n\geq 1$, 
then the first relation in \eq{hpqdes} becomes
\eqn{fakedes}{
\bm{\mf}(\t)\circ\mb{\pi} =
\mb{\pi}^\pr\circ\mb{\Psi}_{{\mb{\psi}(\t)}},
}
which implies that $\bm{\mf}(\t)$ is a family of morphisms of binary QFT algebras
and $\underline{{\mb{\psi}}}(\t)=\bm{\mK}\big(\bm{\mf}(\t)\big)$.
We are going to show that 
${\mb{\psi}}_n(\t)$ is indeed in $\Hom\big(S^n\sC, \sC^\pr\big)^{0}{\kkbar}[\t]$ for all $n\geq 1$.

From the assumption that $\bm{\mf}(0)$ is a morphism of binary QFT algebras,
it follows that $\underline{{\mb{\psi}}}(0)=\bm{\mK}\big(\bm{\mf}(0)\big)$ is a unital $sL_\infty$-morphism
from $\big(\sC{\kkbar}, 1_\sC, \underline{{\bell}}\big)$ to $\big(\sC^\pr{\kkbar}, 1_{\sC^\pr}, \underline{{\bell}}^\pr\big)$.
In particular we have ${\mb{\psi}}_n(0) \in \Hom\big(S^n\sC, \sC^\pr\big)^{0}{\kkbar}$ for all $n\geq 1$.
From $\breve{\mb{\psi}}_1(\t) = \bm{\mf}(\t)   \in  \Hom\big(\sC, \sC^\pr\big)^{0}{\kkbar}[\t]$,
we have ${\mb{\psi}}_1(\t)  \in  \Hom\big(\sC, \sC^\pr\big)^{0}{\kkbar}[\t]$.
Fix $n > 1$ and assume that $\mb{\psi}_k(\t) \in  \Hom\big(S^k\sC, \sC^\pr\big)^{0}{\kkbar}$ for all $k < n$
and all $\t$. Note that ${\mb{\l}}_n(\t) \coloneqq{}\Fr{1}{(-\kbar)^{n-1}}\breve{\mb{\l}}_n(\t)\in  \Hom\big(S^k\sC, \sC^\pr\big)^{-1}{\kkbar}$ for all $n\geq 1$ by $\kbar$-compatibility.
From \eq{brehflow}, we have, for homogeneous $\bm{x}_1,\ldots,\bm{x}_n \in \sC{\kkbar}$,
\begin{align*}
\Fr{d}{d\t}&\breve{\mb{\psi}}_n(\t)\big(\bm{x}_1,\ldots, \bm{x}_n\big)
\\
= 
&
{(-\kbar)^{n-1}}\sum_{\mathclap{\mp\in P(n)}}\ \ \sum_{\mathclap{i=1}}^{|\mp|}
\e(\mp)
\bell^\pr_{|\mp|}\Big( \mb{\psi}(\t)\big(Jx_{B_1}\big),\ldots,
\mb{\l}(\t)\big(x_{B_1}\big),\ldots, \mb{\psi}(\t)\big(x_{B_{|\mp|}}\big)\Big)
\\
&
+{(-\kbar)^{n-1}}\sum_{\clapsubstack{\mp\in P(n)\\ |B_i|=n-|\mp|+1}}\e(\mp)
\mb{\l}_{|\mp|}(\t)\Big(J\bm{x}_{B_1},\ldots, J\bm{x}_{B_{i-1}}, \bell\big(\bm{x}_{B_i}\big),  \bm{x}_{B_{i+1}},\ldots,
 \bm{x}_{B_{|\mp|}}\Big),
\end{align*}
which implies that  $\Fr{d}{d\!\t}{\mb{\psi}}_n(\t) \in \Hom\big(S^n\sC, \sC^\pr\big)^{0}{\kkbar}[\t]$.
Combined with the condition ${\mb{\psi}}_n(0) \in \Hom\big(S^n\sC, \sC^\pr\big)^{0}{\kkbar}$,
we can conclude that ${\mb{\psi}}_n(\t)$ is in fact contained in $\Hom\big(S^n\sC, \sC^\pr\big)^{0}{\kkbar}[\t]$, as desired.
It follows by induction that  ${\mb{\psi}}_n(\t)$ is contained in $\in \Hom\big(S^n\sC, \sC^\pr\big)^{0}{\kkbar}[\t]$ for all $n\geq 1$.  
Therefore, we have, for all $n\geq 1$ and homogeneous $\bm{x}_1,\ldots,\bm{x}_n \in \sC{\kkbar}$,
\begin{align*}
\Fr{d}{d\t}{\mb{\psi}}_n(\t)&\big(\bm{x}_1,\ldots, \bm{x}_n\big)
\\
= 
&
\sum_{\mathclap{\mp\in P(n)}}\ \ \sum_{\mathclap{i=1}}^{|\mp|}
\e(\mp)
\bell^\pr_{|\mp|}\Big( \mb{\psi}(\t)\big(Jx_{B_1}\big),\ldots,
\mb{\l}(\t)\big(x_{B_i}\big),\ldots, \mb{\psi}(\t)\big(x_{B_{|\mp|}}\big)\Big)
\\
&
+\sum_{\clapsubstack{\mp\in P(n)\\ |B_i|=n-|\mp|+1}}\e(\mp)
\mb{\l}_{|\mp|}(\t)\Big(J\bm{x}_{B_1},\ldots, J\bm{x}_{B_{i-1}}, 
\bell\big(\bm{x}_{B_i}\big),  \bm{x}_{B_{i+1}},\ldots,\bm{x}_{B_{|\mp|}}\Big)
,
\end{align*}
which means that
$\xymatrix{
\big(\underline{{\mb{\psi}}}(\t), \underline{{\mb{\l}}}(\t)\big):
\big(\sC{\kkbar}, 1_\sC, \underline{\bell}\big)
\ar@{:>}[r] &
\big(\sC^\pr{\kkbar}, 1_{\sC^\pr}, \underline{\bell}^\pr\big)
}$
is a homotopy pair between the quantum descendant unital $sL_\infty$-algebras.
Now \eq{fakedes} implies that $\underline{\mb{\psi}}(\t)=\bm{\mK}\big(\bm{\mf}(\t)\big)$, so that
$\bm{\mf}(\t):\sC{\kkbar}_\BQ\longrightarrow \sC^\pr{\kkbar}_\BQ$ is a
uniquely defined  $1$-parameter
family of morphisms of binary QFT algebras.
\naturalqed
\end{proof}

\begin{remark}
It can be checked that the identity \eq{hpqflowz} implies that
\eqn{hpqflowxz}{
\Fr{d}{d\t}\mF\left(\breve{\mb{\psi}}(\t)\right)=
\sD\big(\breve{\bell}^\pr\big)\circ\La\left(\breve{\mb{\psi}}(\t), \breve{\mb{\l}}(\t)\right)
+\La\left(\breve{\mb{\psi}}(\t), \breve{\mb{\l}}(\t)\right)\circ \sD\big(\breve{\bell}\big)
.
}
\end{remark}
%
%

\begin{definition}
Two morphisms $\bm{\mf}, \bm{\tilde\mf}: \sC{\kkbar}_{\BQ}\longrightarrow\sC^\pr{\kkbar}_{\BQ}$ of 
binary QFT algebras are homotopic, which we denote by $\bm{\mf}\sim_\kbar \bm{\tilde\mf}$ 
if there is a homotopy pair $\big(\bm{\mf}(\t)\big|\mb{\xi}(\t)\big)$ 
of binary QFT algebras
such that $\bm{\mf}(0)=\bm{\mf}$ and 
$\bm{\mf}(1)=\bm{\tilde\mf}$.
\end{definition}

It is clear that $\sim_{\kbar}$ is an equivalence relation. 
We say that two morphisms of binary QFT algebras have the same homotopy type if they are homotopic to each other.

\begin{lemma}\label{hodesfuc}
Quantum descendants of homotopic morphisms of  binary QFT algebras  are homotopic as  morphisms of 
the quantum descendant unital $sL_\infty$-algebras.
\end{lemma}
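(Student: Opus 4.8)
The plan is to reduce the statement for binary QFT algebras to the corresponding statement for unital $sL_\infty$-algebras, using the machinery already assembled in this section. Concretely, suppose $\bm{\mf}\sim_\kbar\bm{\tilde\mf}$ as morphisms of binary QFT algebras; by definition there is a homotopy pair $\big(\bm{\mf}(\t)\big|\mb{\xi}(\t)\big)$ with $\bm{\mf}(0)=\bm{\mf}$ and $\bm{\mf}(1)=\bm{\tilde\mf}$. Since $\bm{\mf}(0)=\bm{\mf}$ is a morphism of binary QFT algebras, Proposition~\ref{BKfhomp} applies: it yields that $\bm{\mf}(\t)$ is a $1$-parameter family of binary QFT algebra morphisms with $\underline{\mb{\psi}}(\t)=\bm{\mK}\big(\bm{\mf}(\t)\big)$, and moreover that
\[
\xymatrix{\big(\underline{{\mb{\psi}}}(\t), \underline{{\mb{\l}}}(\t)\big):
\big(\sC{\kkbar}, 1_\sC, \underline{\bell}\big)\ar@{:>}[r] &\big(\sC^\pr{\kkbar}, 1_{\sC^\pr}, \underline{\bell}^\pr\big)}
\]
is a homotopy pair of unital $sL_\infty$-algebras, where ${\mb{\psi}}_n(\t)=\Fr{1}{(-\kbar)^{n-1}}\breve{\mb{\psi}}_n(\t)$ and ${\mb{\l}}_n(\t)=\Fr{1}{(-\kbar)^{n-1}}\breve{\mb{\l}}_n(\t)$.

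Second, I would read off the endpoints. At $\t=0$ we have $\underline{\mb{\psi}}(0)=\bm{\mK}\big(\bm{\mf}(0)\big)=\bm{\mK}(\bm{\mf})$, and at $\t=1$ we have $\underline{\mb{\psi}}(1)=\bm{\mK}\big(\bm{\mf}(1)\big)=\bm{\mK}(\bm{\tilde\mf})$ — here one uses that $\bm{\mf}(1)$ is itself a binary QFT algebra morphism, which is part of the conclusion of Proposition~\ref{BKfhomp}, together with the uniqueness of the quantum descendant from Definition~\ref{BKftalgm}. Thus the homotopy pair $\big(\underline{\mb{\psi}}(\t),\underline{\mb{\l}}(\t)\big)$ of unital $sL_\infty$-algebras interpolates between $\bm{\mK}(\bm{\mf})$ and $\bm{\mK}(\bm{\tilde\mf})$. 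By the definition of homotopy of unital $sL_\infty$-morphisms (as recalled in Appendix~\ref{appendix: l infinity algebras}, Definition~\ref{slinftyhflow} and the surrounding discussion), this exhibits $\bm{\mK}(\bm{\mf})$ and $\bm{\mK}(\bm{\tilde\mf})$ as homotopic, which is exactly the claim.

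I expect the main obstacle to be bookkeeping rather than conceptual: one must verify that the notion of ``homotopy pair'' in Definition~\ref{hpairBKft} is set up precisely so that its descendant satisfies the axioms of a homotopy pair of unital $sL_\infty$-algebras — in particular the unital conditions $\mb{\xi}(\t)(1_\sC)=0$ and $\breve{\mb{\l}}_n(\t)(\ldots,1_\sC)=0$ translate correctly, and the homotopy flow equation \eqref{brehflow} for $\breve{\mb{\psi}}(\t)$ (after dividing by $(-\kbar)^{n-1}$) is literally the flow equation for a homotopy pair of $sL_\infty$-morphisms. All of this is already contained in the proof of Proposition~\ref{BKfhomp}, so the proof of the lemma is essentially an invocation of that proposition at the two endpoints. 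The only point requiring a word of care is that a homotopy of $sL_\infty$-morphisms is, by the appendix's conventions, defined via polynomial-in-$\t$ flows, which matches the $\Hom(-,-){\kkbar}[\t]$ setting used throughout here, so no completion or convergence subtlety arises.

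\begin{proof}
Suppose $\bm{\mf}\sim_\kbar\bm{\tilde\mf}$, witnessed by a homotopy pair $\big(\bm{\mf}(\t)\big|\mb{\xi}(\t)\big)$ of binary QFT algebras with $\bm{\mf}(0)=\bm{\mf}$ and $\bm{\mf}(1)=\bm{\tilde\mf}$, with descendant $\big(\breve{\mb{\psi}}(\t)\big|\breve{\mb{\l}}(\t)\big)$. Since $\bm{\mf}(0)=\bm{\mf}$ is a morphism of binary QFT algebras, Proposition~\ref{BKfhomp} gives that $\bm{\mf}(\t)$ is a family of binary QFT algebra morphisms with $\underline{\mb{\psi}}(\t)=\bm{\mK}\big(\bm{\mf}(\t)\big)$, where $\mb{\psi}_n(\t)=\Fr{1}{(-\kbar)^{n-1}}\breve{\mb{\psi}}_n(\t)$, and that
\[
\xymatrix{\big(\underline{{\mb{\psi}}}(\t), \underline{{\mb{\l}}}(\t)\big):
\big(\sC{\kkbar}, 1_\sC, \underline{\bell}\big)\ar@{:>}[r] &\big(\sC^\pr{\kkbar}, 1_{\sC^\pr}, \underline{\bell}^\pr\big)}
\]
is a homotopy pair of unital $sL_\infty$-algebras with $\mb{\l}_n(\t)=\Fr{1}{(-\kbar)^{n-1}}\breve{\mb{\l}}_n(\t)$. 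Evaluating at the endpoints, $\underline{\mb{\psi}}(0)=\bm{\mK}\big(\bm{\mf}(0)\big)=\bm{\mK}(\bm{\mf})$ and, since $\bm{\mf}(1)=\bm{\tilde\mf}$ is a binary QFT algebra morphism, $\underline{\mb{\psi}}(1)=\bm{\mK}\big(\bm{\mf}(1)\big)=\bm{\mK}(\bm{\tilde\mf})$ by uniqueness of the quantum descendant. Hence $\big(\underline{\mb{\psi}}(\t),\underline{\mb{\l}}(\t)\big)$ is a homotopy of unital $sL_\infty$-morphisms from $\bm{\mK}(\bm{\mf})$ to $\bm{\mK}(\bm{\tilde\mf})$, so $\bm{\mK}(\bm{\mf})$ and $\bm{\mK}(\bm{\tilde\mf})$ are homotopic.
\naturalqed
\end{proof}
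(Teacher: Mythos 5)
Your proof is correct and follows essentially the same route as the paper's: invoke Proposition~\ref{BKfhomp} on the given homotopy pair of binary QFT algebras to obtain the induced homotopy pair of unital $sL_\infty$-morphisms, then read off the endpoints $\underline{\mb{\psi}}(0)=\bm{\mK}(\bm{\mf})$ and $\underline{\mb{\psi}}(1)=\bm{\mK}(\bm{\tilde\mf})$. Your extra remark that the identification at $\t=1$ uses $\underline{\mb{\psi}}(\t)=\bm{\mK}\big(\bm{\mf}(\t)\big)$ together with uniqueness of the quantum descendant is a welcome clarification, but the argument is the paper's argument.
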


\begin{proof}
Assume that $\bm{\mf}\sim_\kbar \bm{\tilde\mf}: \sC{\kkbar}_\BQ\longrightarrow\sC^\pr{\kkbar}_\BQ$ 
are homotopic morphisms of binary QFT algebras.
Then, by definition, there is a homotopy pair$\big(\bm{\mf}(\t)\big|\mb{\xi}(\t)\big)$ 
of binary QFT algebras
such that $\bm{\mf}(0)=\bm{\mf}$ and 
$\bm{\mf}(1)=\bm{\tilde\mf}$.
From Proposition \ref{BKfhomp}, it follows that
there is a corresponding homotopy pair $\big(\underline{{\mb{\psi}}}(\t), \underline{{\mb{\l}}}(\t)\big)$
between the quantum descendant unital $sL_\infty$-algebras 
such that $\underline{\mb{\psi}}(0)=\bm{\mK}\big(\bm{\mf}\big)$ and
$\underline{\tilde{\mb{\psi}}}(1)=\bm{\mK}\big(\tilde{\bm{\mf}}\big)$.
Therefore $\underline{\mb{\psi}}(0)$ and $\underline{\mb{\psi}}(1)$ are homotopic as morphisms of unital $sL_\infty$-algebras.
\naturalqed
\end{proof}

\begin{definition}
A morphism $\bm{\mf}:\sC{\kkbar}_\BQ\longrightarrow\sC^\pr{\kkbar}_\BQ$ of binary
QFT algebras is a homotopy equivalence if there is a
morphism  $\bm{\mf}^\pr:\sC^\pr{\kkbar}_\BQ\longrightarrow\sC{\kkbar}_\BQ$
of binary QFT algebras such that $\bm{\mf}\circ\bm{\mf} \sim_{\kbar} \I_{\sC{\kkbar}}$
and $\bm{\mf}\circ\bm{\mf}^\pr \sim_{\kbar}\I_{\sC{\kkbar}}$.
\end{definition}

A homotopy equivalence of binary
QFT algebras is automatically a quasi-isomorphism of binary
QFT algebras. It is also obvious that the quantum descendant of 
a homotopy equivalence of binary QFT algebras
is a homotopy equivalence of the quantum descendant unital $sL_\infty$-algebras.
We are going to define the homotopy category of binary QFT algebras
as the category whose objects are  binary QFT algebras and whose morphisms
are the homotopy types of morphisms of binary QFT algebras. Then the homotopy type of a homotopy equivalence
is an isomorphism in the homotopy category. 

\begin{theorem}
There is a homotopy category $\mathit{ho}\category{BQFTA}(\Bbbk)$ whose objects are binary QFT algebras over $\Bbbk$ and whose morphisms are
the homotopy types of morphisms of binary QFT algebras. 
\end{theorem}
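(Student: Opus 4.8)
The plan is to reduce the theorem to one fact: that the relation $\sim_\kbar$ is a \emph{congruence} on $\category{BQFTA}(\Bbbk)$, i.e.\ that $\bm{\mf}\sim_\kbar\bm{\tilde\mf}$ and $\bm{\mg}\sim_\kbar\bm{\tilde\mg}$ (for composable morphisms of binary QFT algebras) imply $\bm{\mg}\circ\bm{\mf}\sim_\kbar\bm{\tilde\mg}\circ\bm{\tilde\mf}$. Since $\sim_\kbar$ is already known to be an equivalence relation, once the congruence property is in hand one simply \emph{defines} $\mathit{ho}\category{BQFTA}(\Bbbk)$ to have the same objects as $\category{BQFTA}(\Bbbk)$ and morphism sets $\Hom_{\category{BQFTA}(\Bbbk)}(-,-)/\!\sim_\kbar$: composition of homotopy types is then well defined by congruence, associativity and the left/right unit laws are inherited verbatim from $\category{BQFTA}(\Bbbk)$, and $[\I_{\sC{\kkbar}}]$ is the identity of $\sC{\kkbar}_\BQ$.

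For the congruence property, by transitivity it suffices to prove the two one-sided statements: \textbf{(a)} postcomposing a homotopy pair $\big(\bm{\mf}(\t)\big|\mb{\xi}(\t)\big)$ from $\sC{\kkbar}_\BQ$ to $\sC^\pr{\kkbar}_\BQ$ with a fixed morphism $\bm{\mg}\colon\sC^\pr{\kkbar}_\BQ\to\sC^\ppr{\kkbar}_\BQ$ again gives a homotopy pair, namely $\big(\bm{\mg}\circ\bm{\mf}(\t)\big|\bm{\mg}\circ\mb{\xi}(\t)\big)$; and \textbf{(b)} the mirror statement for precomposition, $\big(\bm{\mg}(\t)\circ\bm{\mh}\big|\mb{\eta}(\t)\circ\bm{\mh}\big)$, with $\bm{\mh}$ a morphism into the source. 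Granting (a) and (b): the endpoints $\bm{\mf}(0)=\bm{\mf}$ and $\bm{\mf}(1)=\bm{\tilde\mf}$ of a homotopy pair witnessing $\bm{\mf}\sim_\kbar\bm{\tilde\mf}$ are morphisms, hence so are their composites with a morphism; so (a) yields $\bm{\mg}\circ\bm{\mf}\sim_\kbar\bm{\mg}\circ\bm{\tilde\mf}$, (b) yields $\bm{\mg}\circ\bm{\tilde\mf}\sim_\kbar\bm{\tilde\mg}\circ\bm{\tilde\mf}$, and transitivity concludes. I would work out (a) in full; (b) is formally identical, with the bar-construction maps below composed on the opposite side.

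To prove (a), the degree and pointedness conditions of Definition~\ref{hpairBKft} are immediate, since $\bm{\mg}\circ\mb{\xi}(\t)$ has ghost number $-1$ and annihilates $1_\sC$, and the unital homotopy flow equation follows at once from the one for $\big(\bm{\mf}(\t)\big|\mb{\xi}(\t)\big)$ together with the cochain-map identity $\bm{\mg}\circ\mb{K}^\pr=\mb{K}^\ppr\circ\bm{\mg}$, because $\fr{d}{d\t}\big(\bm{\mg}\circ\bm{\mf}(\t)\big)=\bm{\mg}\circ\big(\mb{K}^\pr\circ\mb{\xi}(\t)+\mb{\xi}(\t)\circ\mb{K}\big)=\mb{K}^\ppr\circ\big(\bm{\mg}\circ\mb{\xi}(\t)\big)+\big(\bm{\mg}\circ\mb{\xi}(\t)\big)\circ\mb{K}$. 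The substance lies in verifying the $\kbar$-condition on the descendant $\big(\breve{\mb{\psi}}^\pr(\t)\big|\breve{\mb{\l}}^\pr(\t)\big)$ of the composed pair. Here I would invoke the bar-construction dictionary already used in Lemma~\ref{BKftdesm} and in the composition theorem: composing $\bm{\mf}(\t)\circ\mb{\pi}=\mb{\pi}^\pr\circ\mF\big(\breve{\mb{\psi}}(\t)\big)$ with $\bm{\mg}$, and using $\bm{\mg}\circ\mb{\pi}^\pr=\mb{\pi}^\ppr\circ\mF\big(\breve{\mb{\psi}}^{\bm{\mg}}\big)$ together with the functoriality $\mF\big(\breve{\mb{\psi}}^{\bm{\mg}}\big)\circ\mF\big(\breve{\mb{\psi}}(\t)\big)=\mF\big(\breve{\mb{\psi}}^{\bm{\mg}}\bullet\breve{\mb{\psi}}(\t)\big)$ (Lemma~\ref{barfunctor}), identifies $\breve{\mb{\psi}}^\pr(\t)=\breve{\mb{\psi}}^{\bm{\mg}}\bullet\breve{\mb{\psi}}(\t)$ upon projecting to cogenerators; and composing the companion equation $\mb{\xi}(\t)\circ\mb{\pi}=\mb{\pi}^\pr\circ\La\big(\breve{\mb{\psi}}(\t),\breve{\mb{\l}}(\t)\big)$ with $\bm{\mg}$, and using that a coalgebra map composed with a coderivation along $\mF\big(\breve{\mb{\psi}}(\t)\big)$ is a coderivation along $\mF\big(\breve{\mb{\psi}}^{\bm{\mg}}\bullet\breve{\mb{\psi}}(\t)\big)$, identifies the resulting operator as $\La\big(\breve{\mb{\psi}}^\pr(\t),\breve{\mb{\l}}^\pr(\t)\big)$ with $\breve{\mb{\l}}^\pr(\t)$ read off from the projection, its $n$-ary part being a sum over $\mp\in P(n)$ of terms $\e(\mp)\,\breve{\mb{\psi}}^{\bm{\mg}}_{|\mp|}\big(\breve{\mb{\psi}}(\t)(J\bm{x}_{B_1}),\dots,\breve{\mb{\l}}(\t)(\bm{x}_{B_i}),\dots,\breve{\mb{\psi}}(\t)(\bm{x}_{B_{|\mp|}})\big)$. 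The main obstacle is the $\kbar$-bookkeeping that closes this argument: since $\breve{\mb{\psi}}^{\bm{\mg}}_{|\mp|}$ is divisible by $(-\kbar)^{|\mp|-1}$ and each argument $\breve{\mb{\psi}}(\t)(\bm{x}_{B_j})$ or $\breve{\mb{\l}}(\t)(\bm{x}_{B_i})$ by $(-\kbar)^{|B_j|-1}$, every term is divisible by $(-\kbar)^{|\mp|-1}\prod_j(-\kbar)^{|B_j|-1}=(-\kbar)^{n-1}$, so $\fr{1}{(-\kbar)^{n-1}}\breve{\mb{\l}}^\pr_n(\t)\in\Hom\big(S^n\sC,\sC^\ppr\big)^{-1}{\kkbar}[\t]$ as required; a short separate induction gives $\breve{\mb{\l}}^\pr_n(\t)(\bm{x}_1,\dots,\bm{x}_{n-1},1_\sC)=0$. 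With (a) and (b) established, the congruence property, and hence the theorem, follows.
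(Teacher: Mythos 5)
Your proof is correct, and its core computation --- identifying the descendant of the composed homotopy via the bar construction and checking divisibility by $(-\kbar)^{n-1}$ term by term --- is the same one the paper runs. The difference is organizational: the paper deduces the theorem from Proposition~\ref{techniq}, which composes the two homotopy pairs \emph{simultaneously}, setting $\bm{\mf}^\ppr(\t)=\bm{\mf}^\pr(\t)\circ\bm{\mf}(\t)$ and $\mb{\xi}^\ppr(\t)=\bm{\mf}^\pr(\t)\circ\mb{\xi}(\t)+\mb{\xi}^\pr(\t)\circ\bm{\mf}(\t)$, and then identifies
$\breve{\mb{\psi}}^\ppr(\t)=\breve{\mb{\psi}}^\pr(\t)\bullet\breve{\mb{\psi}}(\t)$ and
$\breve{\mb{\l}}^\ppr(\t)=\breve{\mb{\psi}}^\pr(\t)\circ\La\big(\breve{\mb{\psi}}(\t),\breve{\mb{\l}}(\t)\big)+\breve{\mb{\l}}^\pr(\t)\circ\mF\big(\breve{\mb{\psi}}(\t)\big)$
before checking the $\kbar$-condition; you instead prove the two whiskering statements (post- and pre-composition of a homotopy pair with a fixed morphism) and glue them with transitivity. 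Your (a) and (b) are precisely the paper's composition specialized to a constant second (resp.\ first) homotopy pair, so each one-sided verification is lighter --- only one of the two terms in $\breve{\mb{\l}}^\ppr(\t)$ survives --- at the cost of an extra transitivity step; the paper's route produces, in one construction, the composition of two arbitrary homotopy pairs, which is of independent use (it is also what feeds the homotopy functoriality of $\bm{\mK}$). Two small points you should make explicit. First, the divisibility of $\breve{\mb{\psi}}_n(\t)$ by $(-\kbar)^{n-1}$ is not part of Definition~\ref{hpairBKft}; it follows from Proposition~\ref{BKfhomp} because the homotopy pair witnessing $\bm{\mf}\sim_\kbar\bm{\tilde\mf}$ has a binary QFT algebra morphism as its initial value --- you use this tacitly in the $\kbar$-count (the paper's proof relies on the same fact, so this is a citation to add, not a gap). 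Second, the unit identity for $\breve{\mb{\l}}^\pr_n(\t)$ that you propose to prove by induction is not required by the definition of a homotopy pair --- only $\mb{\xi}(\t)(1_\sC)=0$ is --- so that induction can simply be dropped.
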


The above theorem is a corollary of the following proposition:

\begin{proposition}\label{techniq}
Consider the following diagram in the category $\category{BQFTA}(\Bbbk)$ of binary QFT algebras:
$
\xymatrix{
\sC{\kkbar}_{\BQ}
\ar@/^/[r]^{\bm{\mf}}
\ar@/_/[r]_{\bm{\tilde\mf}}
&\sC^\pr{\kkbar}_{\BQ}
\ar@/^/[r]^{\bm{\mf}^\pr}
\ar@/_/[r]_{\bm{\tilde\mf}^\pr}
&\sC^\ppr {\kkbar}_{\BQ},
}
$ 
and
assume that $\bm{\mf}  \sim_{\kbar} \bm{\tilde\mf}$ and
$\bm{\mf}^\pr\sim_{\kbar} \bm{\tilde\mf}^\pr$
Then, 
we have
$\bm{\mf}^\pr\circ \bm{\mf}\sim_{\kbar}
\bm{\tilde\mf}^\pr\circ \bm{\tilde\mf}$
as morphisms of binary QFT algebra from $\sC{\kkbar}_{\BQ}$ to
$\sC^\pr{\kkbar}_{\BQ}$, and
the homotopy type of $\bm{\mf}^\pr\circ \bm{\mf}$ 
depends only on the homotopy types of $\bm{\mf}^\pr$
and $\bm{\mf}$.
\end{proposition}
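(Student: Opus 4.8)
The plan is to deduce the statement from two ``whiskering'' lemmas together with the fact, noted above, that $\sim_\kbar$ is an equivalence relation. I will establish: (i) if $\bm{\mf}\sim_\kbar\bm{\tilde\mf}:\sC{\kkbar}_{\BQ}\to\sC^\pr{\kkbar}_{\BQ}$ and $\bm{\mf}^\pr:\sC^\pr{\kkbar}_{\BQ}\to\sC^\ppr{\kkbar}_{\BQ}$ is any morphism of binary QFT algebras, then $\bm{\mf}^\pr\circ\bm{\mf}\sim_\kbar\bm{\mf}^\pr\circ\bm{\tilde\mf}$; and (ii) symmetrically, if $\bm{\mf}^\pr\sim_\kbar\bm{\tilde\mf}^\pr$ and $\bm{\mf}$ is any morphism of binary QFT algebras, then $\bm{\mf}^\pr\circ\bm{\mf}\sim_\kbar\bm{\tilde\mf}^\pr\circ\bm{\mf}$. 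Granting (i) and (ii) one has $\bm{\mf}^\pr\circ\bm{\mf}\sim_\kbar\bm{\mf}^\pr\circ\bm{\tilde\mf}\sim_\kbar\bm{\tilde\mf}^\pr\circ\bm{\tilde\mf}$, whence $\bm{\mf}^\pr\circ\bm{\mf}\sim_\kbar\bm{\tilde\mf}^\pr\circ\bm{\tilde\mf}$ by transitivity; the final clause of the proposition, that the homotopy type of $\bm{\mf}^\pr\circ\bm{\mf}$ depends only on those of $\bm{\mf}^\pr$ and $\bm{\mf}$, is then just a restatement of (i) and (ii).

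To prove (i), choose a homotopy pair $\big(\bm{\mf}(\t)\,\big|\,\mb{\xi}(\t)\big)$ of binary QFT algebras with $\bm{\mf}(0)=\bm{\mf}$, $\bm{\mf}(1)=\bm{\tilde\mf}$, and descendant $\big(\breve{\mb{\psi}}(\t)\,\big|\,\breve{\mb{\l}}(\t)\big)$, and consider the pair $\big(\bm{\mf}^\pr\circ\bm{\mf}(\t)\,\big|\,\bm{\mf}^\pr\circ\mb{\xi}(\t)\big)$, which again has polynomial $\t$-dependence and lies in $\Hom(\sC,\sC^\ppr)^{0}{\kkbar}[\t]\oplus\Hom(\sC,\sC^\ppr)^{-1}{\kkbar}[\t]$. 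The unital homotopy flow equation \eq{hpqflow} is immediate because $\bm{\mf}^\pr$ is a pointed cochain map: $(\bm{\mf}^\pr\circ\mb{\xi}(\t))(1_\sC)=\bm{\mf}^\pr(0)=0$, and $\tfrac{d}{d\t}\big(\bm{\mf}^\pr\circ\bm{\mf}(\t)\big)=\bm{\mf}^\pr\circ\big(\mb{K}^\pr\circ\mb{\xi}(\t)+\mb{\xi}(\t)\circ\mb{K}\big)=\mb{K}^\ppr\circ\big(\bm{\mf}^\pr\circ\mb{\xi}(\t)\big)+\big(\bm{\mf}^\pr\circ\mb{\xi}(\t)\big)\circ\mb{K}$. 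The substance is the $\kbar$-condition on the descendant. Write $\breve{\mb{\psi}}^\pr$ for the shifted quantum descendant of $\bm{\mf}^\pr$, so that $\bm{\mf}^\pr\circ\mb{\pi}^\pr=\mb{\pi}^\ppr\circ\mF(\breve{\mb{\psi}}^\pr)$ by Remark~\ref{qcoalgd}. Composing the two defining relations \eq{hpqdes} of $\big(\bm{\mf}(\t)\,\big|\,\mb{\xi}(\t)\big)$ on the left with $\bm{\mf}^\pr$, and using this together with the bar-functoriality $\mF(\breve{\mb{\psi}}^\pr)\circ\mF(\breve{\mb{\psi}}(\t))=\mF\big(\breve{\mb{\psi}}^\pr\bullet\breve{\mb{\psi}}(\t)\big)$ of Lemma~\ref{barfunctor} and its ``one-variable'' analogue $\mF(\breve{\mb{\psi}}^\pr)\circ\La\big(\breve{\mb{\psi}}(\t),\breve{\mb{\l}}(\t)\big)=\La\big(\breve{\mb{\psi}}^\pr\bullet\breve{\mb{\psi}}(\t),\,\breve{\mb{\l}}^\ppr(\t)\big)$, where $\breve{\mb{\l}}^\ppr_n(\t)$ is the sum over $\mp\in P(n)$ and over one distinguished block of the terms $\e(\mp)\,\breve{\mb{\psi}}^\pr_{|\mp|}\!\big(\ldots,\breve{\mb{\l}}(\t)(\bm{x}_{B_i}),\ldots\big)$ with $\breve{\mb{\psi}}(\t)$ in the remaining slots, the uniqueness clause for \eq{hpqdes} identifies $\big(\breve{\mb{\psi}}^\pr\bullet\breve{\mb{\psi}}(\t)\,\big|\,\breve{\mb{\l}}^\ppr(\t)\big)$ as the descendant of $\big(\bm{\mf}^\pr\circ\bm{\mf}(\t)\,\big|\,\bm{\mf}^\pr\circ\mb{\xi}(\t)\big)$. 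Now count powers of $\kbar$: in the summand for a partition $\mp$ the head $\breve{\mb{\psi}}^\pr_{|\mp|}$ carries $(-\kbar)^{|\mp|-1}$ and the insertion on each block $B$ carries $(-\kbar)^{|B|-1}$, so the total power is $(-\kbar)^{(|\mp|-1)+\sum_B(|B|-1)}=(-\kbar)^{n-1}$; therefore $\tfrac{1}{(-\kbar)^{n-1}}\breve{\mb{\l}}^\ppr_n(\t)\in\Hom\big(S^n\sC,\sC^\ppr\big)^{-1}{\kkbar}[\t]$, which is exactly the $\kbar$-condition. Hence $\big(\bm{\mf}^\pr\circ\bm{\mf}(\t)\,\big|\,\bm{\mf}^\pr\circ\mb{\xi}(\t)\big)$ is a homotopy pair of binary QFT algebras; since its value at $\t=0$ is the morphism $\bm{\mf}^\pr\circ\bm{\mf}$, this exhibits $\bm{\mf}^\pr\circ\bm{\mf}\sim_\kbar\bm{\mf}^\pr\circ\bm{\tilde\mf}$.

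Assertion (ii) is proved in exactly the same way with the fixed morphism placed on the right: for a homotopy pair $\big(\bm{\mf}^\pr(\t)\,\big|\,\mb{\xi}^\pr(\t)\big)$ from $\sC^\pr{\kkbar}_{\BQ}$ to $\sC^\ppr{\kkbar}_{\BQ}$ and a morphism $\bm{\mf}:\sC{\kkbar}_{\BQ}\to\sC^\pr{\kkbar}_{\BQ}$, one takes the pair $\big(\bm{\mf}^\pr(\t)\circ\bm{\mf}\,\big|\,\mb{\xi}^\pr(\t)\circ\bm{\mf}\big)$, verifies \eq{hpqflow} using $\bm{\mf}(1_\sC)=1_{\sC^\pr}$ and $\mb{K}^\pr\circ\bm{\mf}=\bm{\mf}\circ\mb{K}$, composes the relations \eq{hpqdes} of $\big(\bm{\mf}^\pr(\t)\,\big|\,\mb{\xi}^\pr(\t)\big)$ on the right with $\bm{\mf}\circ\mb{\pi}=\mb{\pi}^\pr\circ\mF\big(\breve{\mb{\psi}}^{\bm{\mf}}\big)$, and applies $\mF\big(\breve{\mb{\psi}}^\pr(\t)\big)\circ\mF\big(\breve{\mb{\psi}}^{\bm{\mf}}\big)=\mF\big(\breve{\mb{\psi}}^\pr(\t)\bullet\breve{\mb{\psi}}^{\bm{\mf}}\big)$ together with the companion identity $\La\big(\breve{\mb{\psi}}^\pr(\t),\breve{\mb{\l}}^\pr(\t)\big)\circ\mF\big(\breve{\mb{\psi}}^{\bm{\mf}}\big)=\La\big(\breve{\mb{\psi}}^\pr(\t)\bullet\breve{\mb{\psi}}^{\bm{\mf}},\,\breve{\mb{\l}}^\pr(\t)\bullet\breve{\mb{\psi}}^{\bm{\mf}}\big)$; the same partition-wise $\kbar$-count gives the $\kbar$-condition, and since the value at $\t=0$ is the morphism $\bm{\mf}^\pr\circ\bm{\mf}$ this yields $\bm{\mf}^\pr\circ\bm{\mf}\sim_\kbar\bm{\tilde\mf}^\pr\circ\bm{\mf}$. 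Combining (i) and (ii) with transitivity proves the proposition. I expect the single real obstacle to be the two ``one-variable'' bar-identities just used --- the compatibility of $\La$ with post- and pre-composition by a coalgebra map $\mF(\breve{\mb{\psi}})$, which is what makes the descendant of a whiskered homotopy pair equal to the whiskered descendant pair; these should follow from the coalgebra/bar formalism of Appendix~\ref{appendix: bar} as the coderivation-level counterparts of Lemma~\ref{barfunctor}, after which the $\kbar$-bookkeeping is merely the observation that over any partition of $[n]$ the quantities $|B|-1$ together with $|\mp|-1$ sum to $n-1$, and the flow equations are automatic.
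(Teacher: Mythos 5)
Your argument is correct, and it reaches the conclusion by a slightly different route than the paper. The paper composes the two time-dependent homotopies \emph{simultaneously}: it forms the single pair $\bm{\mf}^\ppr(\t)=\bm{\mf}^\pr(\t)\circ\bm{\mf}(\t)$, $\mb{\xi}^\ppr(\t)=\bm{\mf}^\pr(\t)\circ\mb{\xi}(\t)+\mb{\xi}^\pr(\t)\circ\bm{\mf}(\t)$, checks the unital flow equation directly, and verifies the $\kbar$-condition from the descendant formulas \eq{fsfuy} and \eq{fsfuisay}. You instead whisker the two homotopies one at a time by a fixed morphism and then invoke transitivity of $\sim_\kbar$; each of your whiskered pairs is exactly the paper's composite pair specialized to a constant homotopy on one side ($\mb{\xi}\equiv 0$ or $\mb{\xi}^\pr\equiv 0$), and your two ``one-variable'' bar identities are precisely the two terms of \eq{fsfuisay} taken separately, both instances of the uniqueness statement in Lemma~\ref{fcoalg} for maps satisfying the $(\mF,\mF)$-co-Leibniz relation. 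What your route buys is lighter bookkeeping (each whiskered descendant homotopy has a single term, and the partition-wise count $(|\mp|-1)+\sum_B(|B|-1)=n-1$ is immediate); what the paper's route buys is an explicit associative composition $\diamond$ of homotopy pairs without any appeal to transitivity. One point you should make explicit in the left-whiskering step: the divisibility $\breve{\mb{\psi}}_{|B|}(\t)=(-\kbar)^{|B|-1}\,\mb{\psi}_{|B|}(\t)$ with $\mb{\psi}_{|B|}(\t)\in\Hom\big(S^{|B|}\sC,\sC^\pr\big)^{0}{\kkbar}[\t]$ is \emph{not} part of the definition of a homotopy pair (only $\breve{\mb{\l}}(\t)$ carries the $\kbar$-condition); it follows from Proposition~\ref{BKfhomp} because your homotopy starts at the genuine morphism $\bm{\mf}(0)=\bm{\mf}$. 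The paper's proof uses the same fact, so this is a citation to add rather than a gap.
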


\begin{proof}
From the assumption in the proposition, we have
a sequence of homotopy pairs of binary QFT algebras
\[
\xymatrix{
\sC{\kkbar}_\BQ\ar@{=>}[rr]^-{(\bm{\mf}(\t)|\mb{\xi}(\t))} && \sC^\pr{\kkbar}_\BQ
\ar@{=>}[rr]^-{(\bm{\mf}^\pr(\t)|\mb{\xi}^\pr(\t)} && \sC^\ppr{\kkbar}_\BQ
}
\]
such that
\begin{align*}
\begin{cases}
\bm{\mf}(0)=\bm{\mf}
,\\
\bm{\mf}(1)=\bm{\tilde\mf}
,
\end{cases}
&\quad{}
\begin{cases}
\bm{\mf}^\pr(0)=\bm{\mf}^\pr
,\\
\bm{\mf}^\pr(1)=\bm{\tilde\mf}^\pr.
\end{cases}
\end{align*}
Let
\[
\big(\breve{\mb{\psi}}(\t),\breve{\mb{\l}}(\t)\big)=
\breve{\bm{\mK}}\big(\bm{\mf}(\t)\big|\mb{\xi}(\t)\big)
,\quad
\big(\breve{\mb{\psi}}(\t),\breve{\mb{\l}}(\t)\big)=
\breve{\bm{\mK}}\big(\bm{\mf}(\t)\big|\mb{\xi}(\t)\big).
\]
We define the composition of the given homotopy pairs of binary QFT algebras
as follows:
\[
\big(\bm{\mf}^\pr(\t)\big|\mb{\xi}^\pr(\t)\big)
\diamond
\big(\bm{\mf}(\t)\big|\mb{\xi}(\t)\big)
\coloneqq{}\big(\bm{\mf}^\ppr(\t)\big|\mb{\xi}^\ppr(\t)\big),
\]
where
\[
\begin{cases}
\bm{\mf}^\ppr(\t)\coloneqq{}& \bm{\mf}^\pr(\t)\circ \bm{\mf}(\t)
,\\
\mb{\xi}^\ppr(\t)\coloneqq{}&\bm{\mf}^\pr(\t)\circ\mb{\xi}(\t)+\mb{\xi}^\pr(\t)\circ\bm{\mf}(\t)
,\\
\end{cases}
\]
Note that
\[
\bm{\mf}^\ppr(0)=\bm{\mf}\circ \bm{\mf}^\pr
,\qquad
\bm{\mf}^\ppr(1)=\bm{\tilde\mf}\circ \bm{\tilde\mf}^\pr
\]
Then what we need  to show is  that $\big(\bm{\mf}^\ppr(\t)\big|\mb{\xi}^\ppr(\t)\big)$
is a homotopy pair of binary QFT algebras from $\sC{\kkbar}_\BQ$ to $\sC^\ppr{\kkbar}_\BQ$.
It can be checked easily that $\big(\bm{\mf}^\ppr(\t)\big|\mb{\xi}^\ppr(\t)\big)$ satisfies the unital homotopy flow equation:
it is obvious that $\mb{\xi}^\ppr(\t)(1_\sC)=0$, 
and
\[
\Fr{d}{d\!\t}\bm{\mf}^\ppr(\t)=
\Fr{d}{d\!\t}\bm{\mf}^\pr(\t)\circ \bm{\mf}(\t)
+\bm{\mf}^\pr(\t)\circ \Fr{d}{d\!\t}\bm{\mf}(\t)
,\qquad
\begin{cases}
\Fr{d}{d\!\t} \bm{\mf}(\t) 
=
\mb{K}^\pr\circ \mb{\xi}(\t) 
+ \mb{\xi}(\t) \circ\mb{K}
,\\\\
\Fr{d}{d\!\t} \bm{\mf}^\pr(\t) 
=
\mb{K}^\ppr\circ \mb{\xi}^\pr(\t) 
+ \mb{\xi}^\pr(\t) \circ\mb{K}^\pr
,
\end{cases}
\]
so it is trivial that $\Fr{d}{d\!\t}\bm{\mf}^\ppr(\t)=\mb{K}^\ppr\circ \mb{\xi}^\ppr(\t) +\mb{\xi}^\ppr(\t) \circ\mb{K}$.
What remains is to check is the $\kbar$-condition. 

Consider 
the descendant $\left(\breve{\mb{\psi}}^\ppr(\t), \breve{\mb{\l}}^\ppr(\t)\right)$  of $\big(\bm{\mf}^\ppr(\t)\big|\mb{\xi}^\ppr(\t)\big)$:
\eqn{hpqdesppr}{
\begin{cases}
\bm{\mf}^\ppr(\t)\circ\mb{\pi} =
\mb{\pi}^\ppr\circ\mF\left(\breve{\mb{\psi}}^\ppr(\t)\right)
,\\
\mb{\xi}^\ppr(\t)\circ\mb{\pi}=
\mb{\pi}^\ppr\circ\La\left(\breve{\mb{\psi}}^\ppr(\t), \breve{\mb{\l}}^\ppr(\t)\right)
,
\end{cases}
}
From
\[
\begin{cases}
\bm{\mf}^\pr(\t)\circ\mb{\pi}^\pr =
\mb{\pi}^\ppr\circ\mF\left(\breve{\mb{\psi}}^\pr(\t)\right)
,\\
\mb{\xi}(\t)^\pr\circ\mb{\pi}^\pr=
\mb{\pi}^\ppr\circ\La\left(\breve{\mb{\psi}}^\pr(\t), \breve{\mb{\l}}^\pr(\t)\right)
\end{cases}
\qquad
\begin{cases}
\bm{\mf}(\t)\circ\mb{\pi} =
\mb{\pi}^\pr\circ\mF\left(\breve{\mb{\psi}}(\t)\right)
,\\
\mb{\xi}(\t)\circ\mb{\pi}=
\mb{\pi}^\pr\circ\La\left(\breve{\mb{\psi}}(\t), \breve{\mb{\l}}(\t)\right)
\end{cases}
\]
we  obtain that
\eqn{hpqdespr}{
\begin{cases}
\bm{\mf}^\ppr(\t)\circ\mb{\pi} =
\mb{\pi}^\ppr\circ\mF\left(\breve{\mb{\psi}}^\pr(\t)\right)\circ\mF\left(\breve{\mb{\psi}}(\t)\right)
,\\
\mb{\xi}^\ppr(\t)\circ\mb{\pi}=
\mb{\pi}^\ppr\circ\left(\mF\left(\breve{\mb{\psi}}^\pr(\t)\right)\circ\La\left(\breve{\mb{\psi}}(\t), \breve{\mb{\l}}(\t)\right)
+\La\left(\breve{\mb{\psi}}^\pr(\t), \breve{\mb{\l}}^\pr(\t)\right)\circ\mF\left(\breve{\mb{\psi}}^\pr(\t)\right)\right)
\end{cases}
}
Comparing this with \eq{hpqdesppr}, we  obtain  that
\begin{align*}
\proj_{\sC^\ppr{\kkbar}}\circ\mF\left(\breve{\mb{\psi}}^\ppr(\t)\right) =
&
\proj_{\sC^\ppr{\kkbar}}\circ\mF\left(\breve{\mb{\psi}}^\pr(\t)\right)\circ\mF\left(\breve{\mb{\psi}}(\t)\right)
,\\
\proj_{\sC^\ppr{\kkbar}}\circ\La\left(\breve{\mb{\psi}}^\ppr(\t), \breve{\mb{\l}}^\ppr(\t)\right)=
&\proj_{\sC^\ppr{\kkbar}}\circ\mF\left(\breve{\mb{\psi}}^\pr(\t)\right)\circ\La\left(\breve{\mb{\psi}}(\t), \breve{\mb{\l}}(\t)\right)
\\
&
+\proj_{\sC^\ppr{\kkbar}}\circ\La\left(\breve{\mb{\psi}}^\pr(\t), \breve{\mb{\l}}^\pr(\t)\right)\circ\mF\left(\breve{\mb{\psi}}^\pr(\t)\right)
,
\end{align*}
which implies the following:
\begin{itemize}
\item from
$\breve{\mb{\psi}}^\ppr(\t)=\proj_{\sC^\ppr{\kkbar}}\circ\mF\left(\breve{\mb{\psi}}^\ppr(\t)\right)$,
we have
\eqn{fsfuy}{
\breve{\mb{\psi}}^\ppr(\t) = \breve{\mb{\psi}}^\pr(\t) \circ\mF\left(\breve{\mb{\psi}}(\t)\right)
\equiv \breve{\mb{\psi}}^\pr(\t)\bullet \breve{\mb{\psi}}(\t);
}
\item from
$\breve{\mb{\l}}^\ppr(\t)=\proj_{\sC^\ppr{\kkbar}}\circ \La\left(\breve{\mb{\psi}}^\ppr(\t), \breve{\mb{\l}}^\ppr(\t)\right)$, we have
\eqn{fsfuisay}{
\breve{\mb{\l}}^\ppr(\t) = \breve{\mb{\psi}}^\pr(\t)\circ \La\left(\breve{\mb{\psi}}(\t), \breve{\mb{\l}}(\t)\right)
+\breve{\mb{\l}}^\pr(\t) \circ\mF\left(\breve{\mb{\psi}}(\t)\right)
.
}
\end{itemize}

Recall that for all $n\geq 1$, we have
\begin{align*}
{\mb{\l}}^\pr(\t)_n\coloneqq{}&\Fr{1}{(-\kbar)^{n-1}}\breve{\mb{\l}}^\pr(\t)_n \in \Hom\big(S^n\sC^\pr, \sC^\ppr)^{-1}{\kkbar}
,\\
{\mb{\l}}(\t)_n\coloneqq{}&\Fr{1}{(-\kbar)^{n-1}}\breve{\mb{\l}}(\t)_n\in \Hom\big(S^n\sC, \sC^\pr)^{-1}{\kkbar}
,\\
{\mb{\psi}}^\pr(\t)_n\coloneqq{}&\Fr{1}{(-\kbar)^{n-1}}\breve{\mb{\psi}}^\pr(\t)_n\in \Hom\big(S^n\sC^\pr, \sC^\ppr)^{0}{\kkbar}
,\\
{\mb{\psi}}(\t)_n\coloneqq{}&\Fr{1}{(-\kbar)^{n-1}}\breve{\mb{\l}}(\t)_n\in \Hom\big(S^n\sC, \sC^\pr)^{0}{\kkbar}
,
\end{align*}
From \eq{fsfuisay}, we can check that 
${\mb{\l}}^\ppr(\t)_n\coloneqq{}\Fr{1}{(-\kbar)^{n-1}}\breve{\mb{\l}}^\ppr(\t)  \in \Hom\big(S^n\sC, \sC^\ppr)^{-1}{\kkbar}$.
Therefore, we conclude that $\big(\bm{\mf}^\ppr(\t)\big|\mb{\xi}^\ppr(\t)\big)$
is a homotopy pair of binary QFT algebras from $\sC{\kkbar}_\BQ$ to $\sC^\ppr{\kkbar}_\BQ$.
\naturalqed
\end{proof}

Note that $\big(\breve{\mb{\psi}}^\ppr(\t)\big|\breve{\mb{\l}}^\ppr(\t)\big)$ is the descendant of 
$\big(\bm{\mf}^\ppr(\t)\big|\mb{\xi}^\ppr(\t)\big)$ and $\big(\underline{\mb{\psi}}^\ppr(\t)\big|\underline{\mb{\l}}^\ppr(\t)\big)$
is the homotopy pair of the quantum descendant unital $sL_\infty$-algebras 
from $\big(\sC{\kkbar}, 1_{\sC}, \underline{\bell}\big)$ to $\big(\sC^\ppr{\kkbar}, 1_{\sC^\ppr}, \underline{\bell}^\ppr\big)$
with $\underline{\mb{\psi}}^\ppr(0) = \underline{\mb{\psi}}^\pr\circ \underline{\mb{\psi}}$ and
$\underline{\mb{\psi}}^\ppr(1) = \underline{\mb{\tilde\psi}}^\pr\circ \underline{\mb{\tilde\psi}}$,
where  $\underline{\mb{\psi}}^\pr=\bm{\mK}\big(\bm{\mf}^\pr\big)$,
$\underline{\mb{\psi}}=\bm{\mK}\big(\bm{\mf}\big)$, 
$\underline{\mb{\tilde\psi}}^\pr=\bm{\mK}\big(\bm{\tilde\mf}^\pr\big)$,
and
$\underline{\mb{\tilde\psi}}=\bm{\mK}\big(\bm{\tilde\mf}\big)$.
Combined with Lemma \ref{hodesfuc},  the above proposition also implies the following.

\begin{theorem}
The quantum descendant functor $\bm{\mK}: \category{BQFTA}\big(\Bbbk\big)
\rightsquigarrow \category{UsL}_\infty(\Bbbk{\kkbar})$ is a homotopy functor,
i.e., it induces a functor 
$
\mathit{ho}\bm{\mK}: \mathit{ho}\category{BQFTA}\big(\Bbbk\big)
\rightsquigarrow \mathit{ho}\category{UsL}_\infty(\Bbbk{\kkbar})$
between the respective homotopy categories.
\end{theorem}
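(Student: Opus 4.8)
The plan is to assemble the functor $\mathit{ho}\bm{\mK}$ directly from the results already in place and then to check that it is well defined and compatible with identities and composition. On objects there is no choice: $\mathit{ho}\bm{\mK}$ must agree with $\bm{\mK}$, sending a binary QFT algebra $\sC{\kkbar}_\BQ$ to its quantum descendant unital $sL_\infty$-algebra $\bm{\mK}(\sC{\kkbar}_\BQ)$. On morphisms, a morphism in $\mathit{ho}\category{BQFTA}(\Bbbk)$ is an $\sim_\kbar$-equivalence class $[\bm{\mf}]$ of morphisms of binary QFT algebras, and I would set $\mathit{ho}\bm{\mK}([\bm{\mf}]) \coloneqq [\bm{\mK}(\bm{\mf})]$, the $sL_\infty$-homotopy class of the quantum descendant morphism. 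The first thing to verify is that this assignment does not depend on the chosen representative: if $\bm{\mf}\sim_\kbar\bm{\tilde\mf}$ then $\bm{\mK}(\bm{\mf})$ and $\bm{\mK}(\bm{\tilde\mf})$ are homotopic as unital $sL_\infty$-morphisms, which is precisely Lemma~\ref{hodesfuc}; hence $[\bm{\mK}(\bm{\mf})]$ is a well-defined morphism in $\mathit{ho}\category{UsL}_\infty(\Bbbk{\kkbar})$.

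Next I would check the two functor axioms. Identities are immediate from the functoriality of $\bm{\mK}$: $\bm{\mK}(\I_{\sC{\kkbar}})$ is the identity morphism on $\bm{\mK}(\sC{\kkbar}_\BQ)$, so $\mathit{ho}\bm{\mK}([\I_{\sC{\kkbar}}]) = [\I]$. For composition I would use that Proposition~\ref{techniq} makes composition in $\mathit{ho}\category{BQFTA}(\Bbbk)$ computable on representatives, $[\bm{\mf}^\pr]\circ[\bm{\mf}] = [\bm{\mf}^\pr\circ\bm{\mf}]$; then \eq{bydod} gives the strict identity $\bm{\mK}(\bm{\mf}^\pr\circ\bm{\mf}) = \bm{\mK}(\bm{\mf}^\pr)\bullet\bm{\mK}(\bm{\mf})$, and passing to $sL_\infty$-homotopy classes — using that $\bullet$ descends to a well-defined composition in $\mathit{ho}\category{UsL}_\infty(\Bbbk{\kkbar})$, from the review of the homotopy category of $sL_\infty$-algebras in Appendix~\ref{appendix: l infinity algebras} — yields $\mathit{ho}\bm{\mK}([\bm{\mf}^\pr]\circ[\bm{\mf}]) = [\bm{\mK}(\bm{\mf}^\pr)]\bullet[\bm{\mK}(\bm{\mf})] = \mathit{ho}\bm{\mK}([\bm{\mf}^\pr])\circ\mathit{ho}\bm{\mK}([\bm{\mf}])$. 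Together with the evident commutativity of the square relating $\bm{\mK}$, $\mathit{ho}\bm{\mK}$, and the two projections to the homotopy categories, this exhibits $\mathit{ho}\bm{\mK}$ as the desired functor.

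The only point calling for genuine care, rather than formal bookkeeping, is checking that the homotopy relation on $sL_\infty$-morphisms produced by Lemma~\ref{hodesfuc} is literally the one used to build $\mathit{ho}\category{UsL}_\infty(\Bbbk{\kkbar})$. Tracing the proof of Lemma~\ref{hodesfuc}, the homotopy there is the pair $\big(\underline{\mb{\psi}}(\t),\underline{\mb{\l}}(\t)\big)$ produced by Proposition~\ref{BKfhomp}, whose defining flow equations — the component identities \eq{brehflow}, after dividing by $(-\kbar)^{n-1}$, together with the unitality $\mb{\l}_n(\t)(x_1,\ldots,x_{n-1},1_\sC)=0$ — are exactly those of a homotopy pair of unital $sL_\infty$-algebras in the sense of Definition~\ref{slinftyhflow}. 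Once this identification is recorded, there is no obstacle left: the verifications above go through, and all the real content has already been discharged in Lemma~\ref{hodesfuc} (preservation of homotopy), in the composition theorem via \eq{bydod} (strict functoriality of $\bm{\mK}$), and in Proposition~\ref{techniq} (existence of $\mathit{ho}\category{BQFTA}(\Bbbk)$). I therefore expect this proof to be short, the only "difficulty" being to state precisely which earlier results are being combined.
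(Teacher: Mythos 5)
Your proposal is correct and follows essentially the same route as the paper, which obtains the theorem by combining Lemma~\ref{hodesfuc} (descendants of homotopic morphisms are homotopic), the strict identity $\bm{\mK}(\bm{\mf}^\pr\circ\bm{\mf})=\bm{\mK}(\bm{\mf}^\pr)\bullet\bm{\mK}(\bm{\mf})$ from \eq{bydod}, and Proposition~\ref{techniq} together with the observation that the composed homotopy pair descends to a homotopy pair of the quantum descendant unital $sL_\infty$-algebras. Your extra check that the homotopy produced by Proposition~\ref{BKfhomp} is literally a homotopy pair in the sense of Definition~\ref{slinftyhflow} is exactly the identification the paper relies on implicitly.
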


\subsection{The category and homotopy category of binary CFT algebras}

We obtain the
notion of binary CFT (classical field theory) algebra over $\Bbbk$
by taking the classical limit $\big(\sC, 1_{\sC}, \,\cdot\,, \underline{\ell}\big)$ of 
the tuple $\big(\sC{\kkbar}, 1_{\sC}, \,\cdot\,, \underline{\bell}\big)$, which combines the
structure of a binary QFT algebra $\big(\sC{\kkbar}, 1_{\sC}, \,\cdot\,, \mb{K}\big)$ with
its quantum descendant $\big(\sC{\kkbar}, 1_{\sC}, \underline{\bell}\big)$.
The classical limit of the quantum descendant of a morphism of binary QFT algebras gives
rise naturally to a morphism of the associated binary CFT algebras, etc., and yields
the following definition of the category $\BCFTA(\Bbbk)$ of binary CFT algebras over $\Bbbk$.

\begin{definition}[Category of binary CFT algebras]\label{cldesalg}
A binary CFT algebra over $\Bbbk$ is a tuple
$\sC_\BC=\big(\sC, 1_\sC, \,\cdot\,, \underline{\ell}\big)$, where 
$\big(\sC, 1_\sC, \,\cdot\,\big)$ is a unital $\Z$-graded commutative and associative algebra
and $\big(\sC, 1_\sC, \underline{\ell}\big)$ is a unital $sL_\infty$-algebra,
satisfying the compatibility condition that,
for all $n\geq 1$ and homogeneous $x_1,\ldots, x_{n+1} \in \sC$, we have
\begin{align*}
\ell_{n}\big(x_1,\ldots, x_{n-1}, x_{n}\cdot x_{n+1}\big)=&
\ell_{n}\big(x_1,\ldots, x_{n-1}, x_{n}\big)\cdot x_{n+1}
\\
&
+(-1)^{|x_{n}|(|x_1|+\ldots+|x_{n-1}|)} Jx_{n}\cdot
\ell_{n}\big(x_1,\ldots, x_{n-2}, x_{n+1}\big)
.
\end{align*}
A morphism  $\xymatrix{\underline{\psi}: \sC_\BC\ar@{..>}[r]&\sC^\pr_\BC}$ of binary CFT algebras 
is a morphism of the underlying unital $sL_\infty$-algebras
such that
for all $n\geq 1$ and homogeneous ${x}_1,\ldots, {x}_{n+1} \in \sC$, we have
\begin{align*}
{\psi}_{n}\big({x}_1,\ldots, {x}_{n-1}, {x}_{n}\cdot {x}_{n+1}\big)=
\sum_{\clapsubstack{ \mp \in P(n+1)\\ |\mp|=2\\ n\nsim n+1}}{\psi}({x}_{B_1})\cdot^\pr {\psi}({x}_{B_2})
.
\end{align*}
The composition of two consecutive morphisms of binary CFT algebras is
the composition as morphisms of the underlying unital $sL_\infty$-algebras.
\end{definition}
It can be checked that the composition of  two consecutive morphisms of binary CFT algebras is
a morphism of binary CFT algebras so that binary CFT algebras over $\Bbbk$
form a category $\BCFTA(\Bbbk)$.

Now we turn to the homotopy category $\mathit{ho}\BCFTA(\Bbbk)$ of binary CFT algebras over $\Bbbk$.  

\begin{definition}\label{hpofbcfta}
A homotopy pair $\xymatrix{\big({\psi}(\t)\big|{\l}(\t)\big):\sC_\BC\ar@{:>}[r]&\sC^\pr_\BC}$ of binary CFT algebras 
is a homotopy pair  of the
underlying unital $sL_\infty$-algebras 
with the following set of additional conditions: for all $n\geq 1$ and homogeneous $x_1,\ldots,x_n \in \sC$,
\begin{align*}
{\l}_{n}(\t)\big( &{x}_1,\ldots, {x}_{n-1},  {x}_{n}\cdot {x}_{n+1}\big)
\\
&=
\sum_{\clapsubstack{ \mp \in P(n+1)\\ |\mp|=2\\ n\nsim n+1}}{\l}(\t)({x}_{B_1})\cdot^\pr {\psi}(\t)({x}_{B_2})
+{\psi}(\t)(J\!{x}_{B_1})\cdot^\pr {\l}(\t)({x}_{B_2})
.
\end{align*}
\end{definition}

Recall that $\big({\psi}(\t)\big|{\l}(\t)\big)$ is a homotopy pair 
of the
underlying unital $sL_\infty$-algebras from 
$\big(\sC, 1_\sC, \underline{\ell}\big)$ to $\big(\sC^\pr, 1_{\sC^\pr}, \underline{\ell}^\pr\big)$
if 
\[\big({\psi}(\t)\big|{\l}(\t)\big)\in \Hom\left(\Xbar{S}(\sC),\sC^\pr\right)^0[\t]\oplus  
\Hom\left(\Xbar{S}(\sC),\sC^\pr\right)^{-1}[\t]\]
and the data satisfies both
the unit condition that $\l(\t)\big(x_1\odot\ldots x_{n-1}\odot 1_\sC\big)=0$ for all $n\geq 1$ and $x_1,\ldots,x_n \in \sC$,
and  the homotopy flow equation generated by $\l(\t)$, namely
\[
\Fr{d}{d\!t}\mF\big(\psi(\t)\big) = \mD(\ell^\pr)\circ \La\big(\psi(\t),\l(\t)\big) +\La\big(\psi(\t),\l(\t)\big)\circ \mD(\ell).
\]
It follows that $\underline{\psi}(\t)$ is a uniquely defined family of unital $sL_\infty$-morphisms
whenever $\underline{\psi}(0)$ is a unital $sL_\infty$-morphism.  The additional set of conditions in Definition \ref{hpofbcfta}
for a homotopy pair $\big({\psi}(\t)\big|{\l}(\t)\big)$ of binary CFT algebras
implies that  $\underline{\psi}(\t)$ is a uniquely defined family of binary CFT algebra morphisms
whenever $\underline{\psi}(0)$ is a binary CFT algebra morphism. 
Therefore, we say that two morphisms $\underline{\psi}$ and $\underline{\tilde\psi}$ of binary CFT algebras 
are homotopic and have the same homotopy type 
if there is a homotopy pair $\big({\psi}(\t)\big|{\l}(\t)\big)$ of binary CFT algebras such that 
$\underline{\psi}(0)=\underline{\psi}$ and $ \underline{\psi}(1)=\underline{\tilde\psi}$.

It can be checked that the homotopy type of the composition of two consecutive morphisms
of binary CFT algebras depends only on the homotopy types of the constituents.
Therefore, we can form the homotopy category $\mathit{ho}\BCFTA(\Bbbk)$ of binary CFT algebras,
whose objects are binary CFT algebras and whose morphisms are the homotopy types of morphisms of binary CFT algebras.

\begin{remark}
From Lemma \ref{remBKftalg}, it follows that the combined classical limit of 
a binary QFT algebra and its quantum descendant is a binary CFT algebra.
From Lemma \ref{remBKftmor}, it follows that
the classical limit of  the quantum descendant $\underline{\mb{\psi}}$ of a morphism
$\mb{f}$ of binary QFT algebras (recalling that ${\mb{\psi}}_1=\mb{f}$)
is a morphism of the associated binary CFT algebras. Considering those relationships
and the properties of the quantum descendant functor, it should be obvious that
the composition of  two consecutive morphisms of binary CFT algebras is
a morphism of binary CFT algebra so that binary CFT algebras form a category $\BCFTA(\Bbbk)$.
From the definitions of the homotopy category  $\mathit{ho}\category{BQFTA}(\Bbbk)$ of binary QFT algebras
and the homotopy functoriality of the quantum descendant functor in Sect.~\ref{subsec: homotopy cat of binary qft alg}, one can check
that the classical limits of the quantum descendants of homotopic morphisms of binary QFT algebras are
homotopic as morphisms of the associated binary CFT algebras.
This poses the interesting problem of whether one can setup a mathematical framework for a quantization
of a binary CFT algebra $\sC_\BC$ to a binary QFT algebra $\sC{\kkbar}_\BQ$, whose classical
limit in the above sense gives us back $\sC_\BC$ with appropriate functoriality. 
Such a framework may shed new light
on the passage from classical field theory to quantum field theory, but is beyond the scope of this
paper.
\naturalqed
\end{remark}

\section{Binary QFTs}
\label{section: binary qft}

The binary QFT algebra $\Bbbk{\kkbar}$ is initial in the category $\category{BQFTA}(\Bbbk)$ and represents an initial object
in the homotopy category $ho\category{BQFTA}(\Bbbk)$.
A {\em binary QFT} is a slice over the initial object in the homotopy category $\mathit{ho}\category{BQFTA}(\Bbbk)$, i.e., a diagram of the form
\[
\xymatrix{
\sC{\kkbar}_\BQ\ar[r]^-{[\bm{\mc}]} & \Bbbk{\kkbar},
}
\] 
where  $\sC{\kkbar}_\BQ=\big(\sC{\kkbar}, 1_\sC, \,\cdot\,, \mb{K}\big)$ is a binary QFT algebra
and $[\bm{\mc}]$ is a homotopy type of binary QFT algebra morphisms to $\Bbbk{\kkbar}$.
In practice, we choose a representative, say $\bm{\mc}$, of $[\bm{\mc}]$ and
regard a binary QFT as a diagram 
$
\xymatrix{
\sC{\kkbar}_\BQ\ar[r]^-{\bm{\mc}} & \Bbbk{\kkbar}
}
$
in the category $\category{BQFTA}(\Bbbk)$ and consider only those notions and quantities that are
{\em invariants} of the homotopy type of $\bm{\mc}$.  We shall call  $\bm{\mc}$
a {\em strong} quantum expectation to contrast with a quantum expectation which is merely a pointed cochain map.

\subsection{Homotopical families of quantum observables}

Fix a binary QFT $\xymatrix{\sC{\kkbar}_\BQ\ar[r]^-{\bm{\mc}} & \Bbbk{\kkbar}}$
and let  $\big(\sC{\kkbar}, 1_\sC, \underline{\bell}\big)$ be the quantum descendant of 
$\sC{\kkbar}_\BQ$. Then, $\big(\sC{\kkbar}, 1_\sC, \underline{\bell}\big)$ 
is a unital $sL_\infty$-algebra.
Let $\underline{\mb{\chi}}=\bm{\mK}(\bm{\mc})$ be the quantum descendant of $\bm{\mc}$ so that for all $n\geq 1$ and homogeneous $\bm{x}_1,\ldots, \bm{x}_n \in \sC{\kkbar}$,
\begin{align*}
\bm{\mc}\big(\bm{x}_1\cdot\ldots\cdot\bm{x}_n\big)=
&
\sum_{\mathclap{\mp\in P(n)}}^{{}}
(-\kbar)^{n-|\mp|}
\ep(\mp)\mb{\chi}\!\big(\bm{x}_{B_1}\big)\ldots
\mb{\chi}\!\big(\bm{x}_{B_{|\mp|}}\big)
,
\end{align*}
has the property that  $\mb{\chi}_n \in \Hom\big(S^n \sC, \Bbbk\big)^{0}{\kkbar}$ for all $n\geq 1$.
Equivalently, we have 
\[
\bm{\mc}\circ \mb{\pi} = \mb{\pi}^{\Bbbk{\kkbar}}\circ \mb{\Psi}_{\mb{\chi}} 
\]
where $\mb{\pi}(\bm{x}_1\odot\ldots \odot \bm{x}_n)\coloneqq{} \bm{x}_1\cdot\ldots \cdot \bm{x}_n$
for all $n\geq 1$ and $\bm{x}_1, \ldots, \bm{x}_n \in \sC{\kkbar}$ and
$\mb{\pi}^{\Bbbk{\kkbar}}(\bm{a}_1\odot\ldots \odot \bm{a}_n)\coloneqq{} \bm{a}_1\cdot\ldots \cdot \bm{a}_n$
for all $n\geq 1$ and $\bm{a}_1, \ldots, \bm{a}_n \in \Bbbk{\kkbar}$.
Recall  that 
$\mb{K}\circ \mb{\pi} =\mb{\pi}\circ \mb{\d}_{\bell}$ and
$\xymatrix{\underline{\mb{\chi}}: \big(\sC{\kkbar},  1_\sC,\underline{\bell}\big)
 \ar@{..>}[r] & \big(\Bbbk{\kkbar}, 1, \underline{0}\big)}$ is a unital $sL_\infty$-morphism,
 i.e., $ \mb{\Psi}_{\mb{\chi}}\circ \mb{\d}_{\bell}=0$.

Now we introduce the notion of a homotopical family of quantum observables.

For any $\Z$-graded vector space $V$ over $\Bbbk$,
we can can regard $V{\kkbar}$ as a topologically-free
$sL_\infty$-algebra $\big(V{\kkbar}, \underline{0}\big)$ over $\Bbbk{\kkbar}$ with the zero $sL_\infty$-structure $\underline{0}$.
\begin{definition}
A homotopical family of quantum observables is a pair ${\sV}=\big(V; [\underline{\mb{\w}}]\big)$
where $V$ is a $\Z$-graded vector space over $\Bbbk$ and $[\underline{\mb{\w}}]$
is the homotopy type of an $sL_\infty$-morphism from $\big(V{\kkbar}, \underline{0}\big)$ to
$\big(\sC{\kkbar}, \underline{\bell}\big)$.
\end{definition}
In practice we work with a representative $\underline{\mb{\w}}$ of $[\underline{\mb{\w}}]$
and consider only those notions and quantities that are invariants of the homotopy type. 
The above definition is based on the following observation:

%
%

%

\begin{lemma}
Let  $\xymatrix{\underline{\mb{\w}}: \big(V{\kkbar},\underline{0}\big)\ar@{..>}[r]&
\big(\sC{\kkbar}, \underline{\bell}\big)}$ be a morphism of  $sL_\infty$-algebras and define
$\mb{\Pi}^{\mb{\w}} \in \Hom\left(\Xbar{S}(V), \sC\right)^0{\kkbar}$
so that, for all $n\geq 1$ and homogeneous $v_1,\ldots, v_n \in V$,
\[
\mb{\Pi}^{{\mb{\w}}}(v_1\odot\ldots\odot v_n)
=\sum_{\mathclap{\mp\in P(n)}}^{{}}
(-\kbar)^{n-|\mp|}
\ep(\mp)\mb{\w}\!\big({v}_{B_1}\big)\cdot\dotsc\cdot
\mb{\w}\!\big({v}_{B_{|\mp|}}\big).
\]
Then, we have
\begin{enumerate}
\item
$\mb{\Pi}^{{\mb{\w}}}:\left(\Xbar{S}V{\kkbar}, 0\right)\rightarrow \big(\sC{\kkbar},\mb{K}\big)$
is a cochain map, i.e., $\mb{K}\circ \mb{\Pi}^{{\mb{\w}}}=0$;
\item
$\mb{\Pi}^{{\tilde{\mb{\w}}}}-\mb{\Pi}^{{\mb{\w}}} =\mb{K}\mb{\S}$ 
for some $\mb{\S} \in \Hom\big(\Xbar{S}(V), \sC\big)^{-1}{\kkbar}$ 
whenever $\underline{\tilde{\mb{\w}}}\sim\underline{\mb{\w}}$ as $sL_\infty$-morphisms.
\end{enumerate}
\end{lemma}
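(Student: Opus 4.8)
The plan is to recognize $\mb{\Pi}^{\mb{\w}}$ as a composite of operators already controlled in the text. Write $\mb{\pi}\in\Hom\big(\Xbar{S}(\sC),\sC\big)^{0}{\kkbar}$ for the iterated product, $\mb{\pi}(\bm{x}_1\odot\cdots\odot\bm{x}_n)=\bm{x}_1\cdots\bm{x}_n$, so that $\mb{K}\circ\mb{\pi}=\mb{\pi}\circ\mb{\d}_{\bell}$ by \eq{qdera}, and write $\mb{\Psi}_{\mb{\w}}\in\Hom\big(\Xbar{S}(V),\Xbar{S}(\sC)\big)^{0}{\kkbar}$ for the map $\mb{\Psi}_{\mb{\w}}(v_1\odot\cdots\odot v_n)=\sum_{\mp\in P(n)}(-\kbar)^{n-|\mp|}\ep(\mp)\,\mb{\w}(v_{B_1})\odot\cdots\odot\mb{\w}(v_{B_{|\mp|}})$. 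Then $\mb{\Pi}^{\mb{\w}}=\mb{\pi}\circ\mb{\Psi}_{\mb{\w}}$ by inspection, and $\mb{\Psi}_{\mb{\w}}$ is the cofree coalgebra extension $\mF(\breve{\mb{\w}})$ of $\breve{\mb{\w}}$, $\breve{\mb{\w}}_n=(-\kbar)^{n-1}\mb{\w}_n$, exactly as in Remark~\ref{qcoalgd}; in particular $\mb{\Psi}_{\mb{\w}}$ is a coalgebra map $\Xbar{S}^{\mathit{co}}(V){\kkbar}\to\Xbar{S}^{\mathit{co}}(\sC){\kkbar}$.

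For (1): saying that $\underline{\mb{\w}}$ is an $sL_\infty$-morphism from the $sL_\infty$-algebra $\big(V{\kkbar},\underline{0}\big)$ (whose structure, hence whose bar coderivation, vanishes) to $\big(\sC{\kkbar},\underline{\bell}\big)$ amounts, in bar language, to $\mb{\d}_{\bell}\circ\mb{\Psi}_{\mb{\w}}=\mb{\Psi}_{\mb{\w}}\circ 0=0$. Equivalently, this is \eq{qcoderchain} specialised to a source with vanishing structure: its projection to $\sC{\kkbar}$ is the morphism relation in components, which vanishes, and $\mb{\d}_{\bell}\circ\mb{\Psi}_{\mb{\w}}$, being a coderivation along a coalgebra map, is determined by that projection, just as in Remarks~\ref{qderd} and \ref{qcoalgd}. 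Hence $\mb{K}\circ\mb{\Pi}^{\mb{\w}}=\mb{K}\circ\mb{\pi}\circ\mb{\Psi}_{\mb{\w}}=\mb{\pi}\circ\mb{\d}_{\bell}\circ\mb{\Psi}_{\mb{\w}}=0$.

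For (2): by the definition of homotopy of $sL_\infty$-morphisms (the $sL_\infty$-analogue of Definition~\ref{hpairBKft}, treated in the Appendix), $\underline{\tilde{\mb{\w}}}\sim\underline{\mb{\w}}$ produces a homotopy pair $\big(\underline{\mb{\w}}(\t)\big|\underline{\mb{\l}}(\t)\big)$ of $sL_\infty$-morphisms from $\big(V{\kkbar},\underline{0}\big)$ to $\big(\sC{\kkbar},\underline{\bell}\big)$, with polynomial dependence on $\t$, with $\underline{\mb{\w}}(0)=\underline{\mb{\w}}$, $\underline{\mb{\w}}(1)=\underline{\tilde{\mb{\w}}}$, and---because the source structure vanishes---obeying a homotopy flow equation of the shape of \eq{hpqflowxz} with the second term absent, namely $\Fr{d}{d\t}\mF\big(\breve{\mb{\w}}(\t)\big)=\mb{\d}_{\bell}\circ\La\big(\breve{\mb{\w}}(\t),\breve{\mb{\l}}(\t)\big)$, in the notation of \eq{hpqdes}. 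Composing on the left with the $\t$-independent operator $\mb{\pi}$ and using \eq{qdera} gives $\Fr{d}{d\t}\mb{\Pi}^{\mb{\w}(\t)}=\mb{K}\circ\big(\mb{\pi}\circ\La(\breve{\mb{\w}}(\t),\breve{\mb{\l}}(\t))\big)$, with $\mb{\Pi}^{\mb{\w}(\t)}=\mb{\pi}\circ\mb{\Psi}_{\mb{\w}(\t)}$ polynomial in $\t$. Integrating over $\t\in[0,1]$ and pulling $\mb{K}$ out of the integral yields $\mb{\Pi}^{\tilde{\mb{\w}}}-\mb{\Pi}^{\mb{\w}}=\mb{K}\mb{\S}$ with $\mb{\S}=\int_{0}^{1}\mb{\pi}\circ\La\big(\breve{\mb{\w}}(\t),\breve{\mb{\l}}(\t)\big)\,d\t\in\Hom\big(\Xbar{S}(V),\sC\big)^{-1}{\kkbar}$, the absence of negative powers of $\kbar$ in $\mb{\S}$ being the $\kbar$-condition built into the notion of a homotopy pair.

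The routine parts are the identification $\mb{\Pi}^{\mb{\w}}=\mb{\pi}\circ\mb{\Psi}_{\mb{\w}}$ and the $(-\kbar)$-rescaling between $\mb{\w}$ and $\breve{\mb{\w}}$. The step needing most care is transplanting the $sL_\infty$ homotopy-flow apparatus to the present case: confirming that a source with zero structure kills the second term of the flow equation, that the flow admits a $\t$-polynomial solution so that $\mb{\Pi}^{\mb{\w}(\t)}$ is differentiable and integrable in $\t$, and that $\mb{\pi}\circ\La(\breve{\mb{\w}}(\t),\breve{\mb{\l}}(\t))$ indeed lies in $\Hom\big(\Xbar{S}(V),\sC\big)^{-1}{\kkbar}[\t]$ with no negative powers of $\kbar$.
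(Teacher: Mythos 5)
Your proposal is correct and follows essentially the same route as the paper: part (1) via the factorization $\mb{\Pi}^{\mb{\w}}=\mb{\pi}\circ\mb{\Psi}_{\mb{\w}}$ together with $\mb{K}\circ\mb{\pi}=\mb{\pi}\circ\mb{\d}_{\bell}$ and $\mb{\d}_{\bell}\circ\mb{\Psi}_{\mb{\w}}=0$, and part (2) by integrating the homotopy flow equation (with the source-structure term absent) against $\mb{\pi}$ to produce $\mb{\S}$. The only cosmetic difference is that you differentiate $\mb{\Pi}^{\mb{\w}(\t)}$ and then integrate, while the paper integrates the flow equation first and then applies $\mb{\pi}$; the content is identical.
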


\begin{proof}
For an $sL_\infty$-morphism $\xymatrix{\underline{\mb{\w}}: \big(V{\kkbar},\underline{0}\big)\ar@{..>}[r]&
\big(\sC{\kkbar}, \underline{\bell}\big)}$, 
define $\mb{\Psi}_{\mb{\w}}$ so that 
for all $n\geq 1$ and homogeneous $v_1,\ldots, v_n \in V$,
\begin{align*}
\mb{\Psi}_{\mb{\w}}(v_1\odot\ldots\odot v_n) 
\coloneqq{}
\sum_{\mathclap{\mp\in P(n)}}^{{}}
(-\kbar)^{n-|\mp|}
\ep(\mp)\mb{\w}\!\big({v}_{B_1}\big)\bm{\odot}\dotsc\bm{\odot}
\mb{\w}\!\big({v}_{B_{|\mp|}}\big)
.
\end{align*}
Then, we have the identities $\mb{\d}_{\bell}\circ \mb{\Psi}_{\mb{\w}}=0$
and
$\mb{\Pi}^{{\mb{\w}}} = \mb{\pi}\circ \mb{\Psi}_{\mb{\w}}$.
It follows that
$\mb{K}\circ\mb{\Pi}^{{\mb{\w}}} =\mb{K}\circ \mb{\pi}\circ \mb{\Psi}_{\mb{\w}}
= \mb{\pi}\circ \mb{\d}_{\bell}\circ \mb{\Psi}_{\mb{\w}}=0$.
Therefore, we have the first property that
$
\mb{\Pi}^{{\mb{\w}}}:\big(\Xbar{S}V{\kkbar}, 0\big)\rightarrow \big(\sC{\kkbar},\mb{K}\big)
$
is a cochain map whenever $\underline{\mb{\w}}$ is an $sL_\infty$-morphism.
For the second property, consider two homotopic $sL_\infty$-morphisms $\underline{\mb{\w}}$ and  $\underline{\tilde{\mb{\w}}}$.
Then there is a homotopy pair 
$\xymatrix{
\big(\underline{\mb{\w}}(\t), \underline{\mb{\l}}(\t)\big)
: \big(V{\kkbar},\underline{0}\big)\ar@{:>}[r]&
\big(\sC{\kkbar}, \underline{\bell}\big)}$ of $sL_\infty$-algebras such that 
$\underline{\mb{\w}}(0)=\underline{\mb{\w}}$ and $\underline{\mb{\w}}(1)=\underline{\tilde{\mb{\w}}}$.
Set $\breve{\mb{\w}}_n(\t) =(-\kbar)^{n-1}\mb{\w}_n(\t)$ and  $\breve{\mb{\l}}_n(\t) =(-\kbar)^{n-1}\mb{\l}_n(\t)$,
for all $n\geq 1$. Then $\xymatrix{
\big(\underline{\breve{\mb{\w}}}(\t), \underline{\breve{\mb{\l}}}(\t)\big)
: \big(V{\kkbar},\underline{0}\big)\ar@{:>}[r]&
\big(\sC{\kkbar}, \underline{\breve{\bell}}\big)}$ is a homotopy pair of $sL_\infty$-algebras such that 
$\underline{\breve{\mb{\w}}}(0)=\underline{\breve{\mb{\w}}}$ and $\underline{\breve{\mb{\w}}}(1)=\underline{\breve{\tilde{\mb{\w}}}}$. (Recall that 
$\mD\big(\breve{\bell}\big) = \mb{\d}_{\bell}$ and $\mF\big(\breve{\mb{\w}}\big) =\mb{\Psi}_{\mb{\w}}$.)
Therefore, we have
\[
\Fr{d}{d\t}\mF\big(\breve{\mb{\w}}(\t)\big) = \mD\big(\breve{\bell}\big)\circ \La\big(\breve{\mb{\w}}(\t),\breve{\mb{\l}}(\t)\big)
\]
which implies that
\[
\mF\big(\breve{\mb{\w}}(\t)\big) = \mF\big(\breve{\mb{\w}}\big) 
+ \mD\big(\breve{\bell}\big)\circ  \int^\t_0 \La\big(\breve{\mb{\w}}(\s),\breve{\mb{\l}}(\s)\big)d\!\s.
\]
It follows
that
\begin{align*}
\mb{\pi}\circ \mF\big(\breve{\mb{\w}}(\t)\big) 
=   
&
\mb{\pi}\circ\mF\big(\breve{\mb{\w}}\big)   
+ \mb{\pi}\circ\mD\big(\breve{\bell}\big)\circ  \int^\t_0 \La\big(\breve{\mb{\w}}(\s),\breve{\mb{\l}}(\s)\big)d\!\s
\\
= 
&
\mb{\Pi}^{{\mb{\w}}}  
+ \mb{K}\circ \mb{\pi}\circ \int^\t_0  \La\big(\breve{\mb{\w}}(\s),\breve{\mb{\l}}(\s)\big)d\!\s
,
\end{align*}
which implies that
$\mb{\Pi}^{\tilde{\mb{\w}}}= \mb{\Pi}^{{\mb{\w}}}  
+ \mb{K}\circ \mb{\pi}\circ\int^1_0  \La\big(\breve{\mb{\w}}(\t),\breve{\mb{\l}}(\t)\big)d\!\t$.
\naturalqed
\end{proof}

We sometimes refer to an $sL_\infty$-morphism 
$\xymatrix{\big(V{\kkbar},\underline{0} \big) \ar@{..>}[r]^-{\underline{\mb{\w}}} & \big(\sC{\kkbar}, \underline{\bell}\big)}$ 
as  a \emph{homotopical family of quantum observables} and call $\mb{\Pi}^{{\mb{\w}}} \in \Hom\left(\Xbar{S}(V), \sC\right)^{0}{\kkbar}$
the associated \emph{quantum correlator}.  The $n$th component
$\mb{\Pi}^{{\mb{\w}}}_n=\mb{\Pi}^{{\mb{\w}}} \circ\eb_{S^n V{\kkbar}}\in \Hom\left({S}^nV, \sC\right)^{0}{\kkbar}$ of $\mb{\Pi}^{{\mb{\w}}}$ is called the \emph{$n$-fold quantum correlator}. For example, we have
\begin{align*}
\mb{\Pi}_1^{{\mb{\w}}}(v_1) =& \mb{\w}_1(v_1)
,\\
\mb{\Pi}_2^{{\mb{\w}}}(v_1, v_2) =& \mb{\w}_1(v_1)\cdot \mb{\w}_1(v_2) +(-\kbar)
\mb{\w}_2(v_1,v_2)
,\\
\mb{\Pi}^{{\mb{\w}}}_3(v_1, v_2, v_3) =&
\mb{\w}_1(v_1)\cdot \mb{\w}_1(v_2)\cdot \mb{\w}_1(v_3) 
+(-\kbar)\mb{\w}_1(v_1)\cdot \mb{\w}_2(v_2,v_3)
\\
&
+(-\kbar)(-1)^{|v_1||v_2|}\mb{\w}_1(v_2)\cdot \mb{\w}_2(v_1,v_3)
+(-\kbar)\mb{\w}_2(v_1,v_2)\cdot \mb{\w}_1(v_3) 
\\
&
+(-\kbar)^2\mb{\w}_3(v_1,v_2,v_3)
.
\end{align*}

Consider a homotopical family of quantum observables ${\sV}=\left(V;[\underline{\mb{\w}}]\right)$
and let ${\underline{\mb{\w}}}$
be a representative of $\big[\underline{\mb{\w}}\big]$.
Then we have the following diagram:
\[
\xymatrix{
\left(\Xbar{S}(V){\kkbar}, 0\right)\ar@/^1.5pc/[rrrr]^-{ \bm{\mc}\circ\mb{\Pi}^{\mb{\w}}}
 \ar[rr]_-{ \mb{\Pi}^{\mb{\w}}} &&\big(\sC{\kkbar}, \mb{K}\big) \ar[rr]_-{ \bm{\mc}} && 
\big(\Bbbk{\kkbar},0\big),
\\
\left(V{\kkbar}, \underline{0}\right) \ar@{..>}@/_1.5pc/[rrrr]_-{\underline{\mb{\chi}}\bullet \underline{\mb{\w}}}
\ar@{..>}[rr]^-{\underline{\mb{\w}}} &&\big(\sC{\kkbar}, \underline{\bell}\big) \ar@{..>}[rr]^-{\underline{\mb{\chi}}} 
&& \big(\Bbbk{\kkbar},\underline{0}\big),
}
\]
where the first line is in the category of cochain complexes over $\Bbbk{\kkbar}$, while the second line
is in the category of  topologically-free $sL_\infty$-algebras over $\Bbbk{\kkbar}$.
\begin{itemize}
\item
Note that the composition $\bm{\mc}\circ\mb{\Pi}^{\mb{\w}}$ is an invariant of the homotopy type $[\underline{\mb{\w}}]$ of the
$sL_\infty$-morphism $\underline{\mb{\w}}$, since for any $\underline{\tilde{\mb{\w}}}\sim\underline{\mb{\w}}$
we have $\bm{\mc}\circ\mb{\Pi}^{\tilde{\mb{\w}}} - \bm{\mc}\circ\mb{\Pi}^{\mb{\w}}=\bm{\mc}\circ\mb{K}\circ \mb{\S}=0$,
so that this gives an intrinsic notion attached to $\mb{\sV}$.
Note also that $\bm{\mc}\circ\mb{\Pi}^{\mb{\w}}$ is an invariant of the homotopy type $[\bm{\mc}]$ of the
quantum expectation $\bm{\mc}$, since for any $\tilde{\bm{\mc}}\sim\bm{\mc}$
we have $\tilde{\bm{\mc}}\circ\mb{\Pi}^{{\mb{\w}}} - \bm{\mc}\circ\mb{\Pi}^{\mb{\w}}=\mb{\l}\circ\mb{K}\circ \mb{\Pi}^{\mb{\w}}=0$
so that it also gives an intrinsic notion attached to the binary QFT.
We call the invariant $\mb{\m}^\bm{\sV}\coloneqq{} \bm{\mc}\circ \mb{\Pi}^{\mb{\w}}\in \Hom\big(\Xbar{S}(V),\Bbbk\big)^{0}{\kkbar}$ 
{\em the quantum moment} of 
the homotopical family $\bm{\sV}$ of quantum observables.

\item
Note  that the composition $\underline{\mb{\chi}}\bullet \underline{\mb{\w}}$
is an $sL_\infty$-morphism from $\left(V{\kkbar}, \underline{0}\right)$ to $\big(\Bbbk{\kkbar},\underline{0}\big)$
and
is an invariant of the homotopy type $[\underline{\mb{\w}}]$ of  $\underline{\mb{\w}}$ so that it gives an intrinsic notion attached to $\mb{\sV}$.
Note also that $\underline{\mb{\chi}}\bullet \underline{\mb{\w}}$
is an invariant of the homotopy type $[\underline{\mb{\chi}}]$ of the quantum descendant 
$\underline{\mb{\chi}}=\bm{\mK}(\bm{\mc},\mb{\vr})$,
which depends only on the the homotopy type $[\bm{\mc}]$ of the
quantum expectation $\bm{\mc}$ so that it gives an intrinsic notion attached to the binary QFT.
We call the invariant $\underline{\mb{\chi}}^{\bm{\sV}}\coloneqq{} \underline{\mb{\chi}}\bullet \underline{\mb{\w}}$ {\em the quantum cumulant} of 
the homotopical family $\bm{\sV}$ of quantum observables.
\end{itemize}

The following lemma characterize the relationship between the quantum moment and quantum cumulant of 
a homotopical family  of quantum observables.

\begin{theorem}\label{morcuminhf}
Let $\mb{\m}^{{\sV}} =\bm{\mc}\circ \mb{\Pi}^{\mb{\w}}$ and $\underline{\mb{\chi}}^{{\sV}}=\underline{\mb{\chi}}\bullet \underline{\mb{\w}}$ 
be the quantum moment  and the quantum cumulant, respectively,
of a homotopical family $\bm{\sV}=\left(V; [\underline{\mb{\w}}]\right)$ of quantum observables. 
Then, for all $n\geq 1$ and homogeneous $v_1,\ldots, v_n \in V$,
we have
\[
\mb{\m}^{\bm{\sV}}(v_1\odot\ldots\odot v_n)
=\sum_{\mathclap{\mp\in P(n)}}^{{}}
(-\kbar)^{n-|\mp|}
\ep(\mp){\mb{\chi}}^{\bm{\sV}}\!\big({v}_{B_1}\big)\cdots
{\mb{\chi}}^{\bm{\sV}}\!\big({v}_{B_{|\mp|}}\big).
\]
\end{theorem}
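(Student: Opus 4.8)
The plan is to express both sides as the image of a single algebraic identity under the projection $\proj_{\sC{\kkbar}}$, exactly mirroring the strategy used in the proof of Lemma~\ref{qdslinftyalgebra} and Lemma~\ref{BKftdesm}. First I would recall the three ``bar-level'' reformulations that are already available: the defining relation of the quantum descendant of $\bm{\mc}$, which reads $\bm{\mc}\circ \mb{\pi} = \mb{\pi}^{\Bbbk{\kkbar}}\circ \mb{\Psi}_{\mb{\chi}}$ (displayed just before the statement of Theorem~\ref{morcuminhf}); the identity $\mb{\Pi}^{\mb{\w}} = \mb{\pi}\circ \mb{\Psi}_{\mb{\w}}$ from the preceding lemma; and the composition formula for bar constructions $\mb{\Psi}_{\mb{\chi}^\pr}\circ\mb{\Psi}_{\mb{\psi}}=\mb{\Psi}_{\mb{\chi}^\pr\bullet\mb{\psi}}$, i.e.\ the Claim proved inside the composition theorem together with Remark~\ref{qcoalgd}, applied here with the roles of $\sC^\pr,\sC^\ppr$ played by $\sC{\kkbar}$ and $\Bbbk{\kkbar}$.

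The computation then runs as follows. By definition of the quantum moment, $\mb{\m}^{\bm{\sV}} = \bm{\mc}\circ \mb{\Pi}^{\mb{\w}} = \bm{\mc}\circ\mb{\pi}\circ\mb{\Psi}_{\mb{\w}}$. Substituting $\bm{\mc}\circ\mb{\pi} = \mb{\pi}^{\Bbbk{\kkbar}}\circ\mb{\Psi}_{\mb{\chi}}$ gives $\mb{\m}^{\bm{\sV}} = \mb{\pi}^{\Bbbk{\kkbar}}\circ\mb{\Psi}_{\mb{\chi}}\circ\mb{\Psi}_{\mb{\w}}$, and then the bar-composition identity yields $\mb{\m}^{\bm{\sV}} = \mb{\pi}^{\Bbbk{\kkbar}}\circ\mb{\Psi}_{\mb{\chi}\bullet\mb{\w}} = \mb{\pi}^{\Bbbk{\kkbar}}\circ\mb{\Psi}_{\mb{\chi}^{\bm{\sV}}}$, where $\underline{\mb{\chi}}^{\bm{\sV}}=\underline{\mb{\chi}}\bullet\underline{\mb{\w}}$ is the quantum cumulant. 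Unwinding the definitions of $\mb{\pi}^{\Bbbk{\kkbar}}$ and $\mb{\Psi}_{\mb{\chi}^{\bm{\sV}}}$ on the generator $v_1\bm{\odot}\cdots\bm{\odot} v_n$, the map $\mb{\Psi}_{\mb{\chi}^{\bm{\sV}}}$ produces $\sum_{\mp\in P(n)}(-\kbar)^{n-|\mp|}\ep(\mp)\,\mb{\chi}^{\bm{\sV}}(v_{B_1})\bm{\odot}\cdots\bm{\odot}\mb{\chi}^{\bm{\sV}}(v_{B_{|\mp|}})$ in $\Xbar{S}(\Bbbk){\kkbar}$, and applying $\mb{\pi}^{\Bbbk{\kkbar}}$ turns each $\bm{\odot}$ into the product in $\Bbbk{\kkbar}$, giving precisely the claimed formula. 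I should also note that the Koszul signs behave correctly because $\Bbbk$ is concentrated in degree zero, so no subtleties arise in collapsing $\bm{\odot}$ to multiplication.

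The only point requiring a little care — and what I expect to be the main (mild) obstacle — is justifying the bar-composition identity $\mb{\Psi}_{\mb{\chi}}\circ\mb{\Psi}_{\mb{\w}}=\mb{\Psi}_{\mb{\chi}\bullet\mb{\w}}$ in the present setting: in the excerpt this identity was proved (as the Claim inside the composition theorem) for quantum descendants of \emph{binary QFT algebra morphisms}, whereas here $\underline{\mb{\w}}$ is merely an arbitrary $sL_\infty$-morphism into $\big(\sC{\kkbar},\underline{\bell}\big)$ and $\underline{\mb{\chi}}$ is the quantum descendant of $\bm{\mc}$. However, the proof of that Claim only used the coalgebra-map property of $\mF(\breve{\mb{\psi}})$ and the general bar-functoriality statement (Lemma~\ref{barfunctor} of the appendix, together with Remark~\ref{qcoalgd}), so it applies verbatim once one sets $\breve{\mb{\w}}_n=(-\kbar)^{n-1}\mb{\w}_n$ and $\breve{\mb{\chi}}_n=(-\kbar)^{n-1}\mb{\chi}_n$ and observes $\mb{\Psi}_{\mb{\w}}=\mF(\breve{\mb{\w}})$, $\mb{\Psi}_{\mb{\chi}}=\mF(\breve{\mb{\chi}})$; the rescaling bookkeeping that converts $\breve{(\ \cdot\ )}\bullet\breve{(\ \cdot\ )}$ back to $\breve{((\ \cdot\ )\bullet(\ \cdot\ ))}$ is identical. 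Hence the theorem follows with essentially no new input, and I would present it as a three-line computation preceded by this remark on re-using the composition lemma.
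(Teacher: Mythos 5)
Your proof is correct and follows essentially the same route as the paper: both reduce the statement to the chain of identities $\bm{\mc}\circ\mb{\Pi}^{\mb{\w}}=\bm{\mc}\circ\mb{\pi}\circ\mb{\Psi}_{\mb{\w}}=\mb{\pi}^{\Bbbk{\kkbar}}\circ\mb{\Psi}_{\mb{\chi}}\circ\mb{\Psi}_{\mb{\w}}=\mb{\pi}^{\Bbbk{\kkbar}}\circ\mb{\Psi}_{\mb{\chi}\bullet\mb{\w}}$ and then unwind $\mb{\pi}^{\Bbbk{\kkbar}}\circ\mb{\Psi}_{\mb{\chi}^{\bm{\sV}}}$ on generators. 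Your extra remark justifying $\mb{\Psi}_{\mb{\chi}}\circ\mb{\Psi}_{\mb{\w}}=\mb{\Psi}_{\mb{\chi}\bullet\mb{\w}}$ via bar functoriality is a legitimate (and careful) filling-in of a step the paper uses without comment.
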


\begin{proof}
From $\bm{\mc}\circ \mb{\pi} = \mb{\pi}^{\Bbbk{\kkbar}}\circ \mb{\Psi}_{\mb{\chi}}$
and $\mb{\Pi}^{{\mb{\w}}} = \mb{\pi}\circ \mb{\Psi}_{\mb{\w}}$
we obtain that
\begin{align*}
\mb{\m}^{\bm{\sV}}
=\bm{\mc}\circ \mb{\Pi}^{\mb{\w}}
=\bm{\mc}\circ \mb{\pi}\circ \mb{\Psi}_{\mb{\w}} 
=\mb{\pi}^{\Bbbk{\kkbar}}\circ \mb{\Psi}_{\mb{\chi}}\circ \mb{\Psi}_{\mb{\w}}
= \mb{\pi}^{\Bbbk{\kkbar}}\circ \mb{\Psi}_{\mb{\chi}^{\bm{\sV}}}
\end{align*}
where we have used 
$\mb{\Psi}_{\mb{\chi}}\circ \mb{\Psi}_{\mb{\w}}=\mb{\Psi}_{\mb{\chi}\bullet\mb{\w}}$ 
and $\underline{\mb{\chi}}^{\bm{\sV}}=\underline{\mb{\chi}}\bullet \underline{\mb{\w}}$  for the last equality.
Recall that 
\[
\mb{\Psi}_{\mb{\chi}^{\bm{\sV}}}(v_1\odot\ldots\odot v_n)
=\sum_{\mathclap{\mp\in P(n)}}^{{}}
(-\kbar)^{n-|\mp|}
\ep(\mp){\mb{\chi}}^{\bm{\sV}}\!\big({v}_{B_1}\big)\bm{\odot}\ldots \bm{\odot}
{\mb{\chi}}^{\bm{\sV}}\!\big({v}_{B_{|\mp|}}\big).
\]
Therefore  the identity $\mb{\m}^{\bm{\sV}} =\mb{\pi}^{\Bbbk{\kkbar}}\circ \mb{\Psi}_{\mb{\chi}^{\bm{\sV}}}$
is the desired formula in the theorem.
\naturalqed
\end{proof}

\begin{remark}
Fix a homotopical family $\bm{\sV}=\left(V, [\underline{\mb{\w}}]\right)$ of quantum observables
and assume that $V$ is a finite dimensional $\Z$-graded vector space. Choose
a homogeneous basis $e_V=\{e_a\}$ and let $t_V=\{t^a\}$ be the dual basis.
Consider the $\Z$-graded super-commutative
$\Bbbk$-algebra $\Bbbk[\![t_V]\!]$, where $t^{a_1}t^{a_2} =(-1)^{\gh(e_{a_1})\gh(e_{a_2})}t^{a_2}t^{a_1}$.
 Then, it is straightforward to check the following:

\begin{enumerate}
\item Let $\underline{\mb{\w}}$ be a representative of  $[\underline{\mb{\w}}]$ and let
\[
\mb{\Theta}^{\mb{\w}}\coloneqq{} \sum_{\mathclap{n=1}}^\infty \Fr{1}{n!} t^{a_n}\cdots t^{a_1} \mb{\w}_n\big(e_{a_1},\ldots, e_{a_n}\big) 
\in \big(\Bbbk[\![t_V]\!]\hat\otimes\sC\big)^{0}{\kkbar}.
\]
Then, we have $\mb{K} e^{-\fr{1}{\kbar}\mb{\Theta}^{\mb{\w}}}=0$
and
\[
e^{-\fr{1}{\kbar}\mb{\Theta}^{\mb{\w}} } = 1_\sC 
+  \sum_{\mathclap{n=1}}^\infty \Fr{1}{(-\kbar)^n}\Fr{1}{n!} t^{a_n}\cdots t^{a_1}\mb{\Pi}^{\mb{\w}}_n\big(e_{a_1},\ldots, e_{a_n}\big).
\]

\item
Let $\mb{\m}^\bm{\sV}\coloneqq{} \bm{\mc}\circ \mb{\Pi}^{\mb{\w}}$ 
and $\underline{\mb{\chi}}^{\bm{\sV}}=\underline{\mb{\chi}}\bullet \underline{\mb{\w}}$, 
and define the following generating series 
\begin{align*}
\bm{\CZ}_{\!\bm{\sV}}
&
\coloneqq{} 1 + \sum_{\mathclap{n=1}}^\infty\Fr{1}{(-\kbar)^n} \Fr{1}{n!} t^{a_n}\cdots t^{a_1} \mb{\m}^{\bm{\sV}}_n\big(e_{a_1},\ldots, e_{a_n}\big)
\in \Bbbk[\![t_V]\!](\!(\kbar)\!)^0
,\\
\bm{\CF}_{\!\bm{\sV}}
&
\coloneqq{} \sum_{\mathclap{n=1}}^\infty \Fr{1}{n!} t^{a_n}\cdots t^{a_1} \mb{\chi}^{\bm{\sV}}_n\big(e_{a_1},\ldots, e_{a_n}\big)
\in \Bbbk[\![t_V]\!]^{0}{\kkbar}
.
\end{align*}
Then we have the identity $\bm{\CZ}_{\!\bm{\sV}} = e^{-\fr{1}{\kbar}\mb{\sF}_{\!\bm{\sV}}}$.
Note also that 
$\bm{\CZ}_{\!\bm{\sV}}\equiv \big<e^{-\fr{1}{\kbar}\mb{\Theta}^{\mb{\w}} }\big>_{\bm{\mc}}$.\naturalqed
\end{enumerate}
\end{remark}

\subsection{When are two binary QFTs physically equivalent?}

Let
$
\xymatrix{
\sC{\kkbar}_\BQ\ar[r]^-{\bm{\mc}} & \Bbbk{\kkbar}
}$
and
$
\xymatrix{
\sC^\pr{\kkbar}_\BQ\ar[r]^-{\bm{\mc}^\pr} & \Bbbk{\kkbar},
}
$ 
be binary QFTs 
and assume that 
there is a homotopy equivalence 
$\xymatrix{\sC{\kkbar}_\BQ\ar@/^/[r]^{\bm{\mf}} &\ar@/^/[l]^{\bm{\mf}^\pr}\sC^\pr{\kkbar}_\BQ}$
of binary QFT algebras, i.e., both $\bm{\mf}$ and  $\bm{\mf}^\pr$
are morphisms of binary QFT algebras such that  $\bm{\mf}^\pr\circ \bm{\mf}$
is homotopic to the identity map on $\sC{\kkbar}$ and 
$\bm{\mf}\circ \bm{\mf}^\pr$ is homotopic to the identity map on $\sC^\pr{\kkbar}$.
Note that $\bm{\mf}$ and  $\bm{\mf}^\pr$ are quasi-isomorphisms.
Assume also that
the following diagram in the category
of binary QFT algebras is commutative up to homotopy:
\[
\xymatrix{
\sC{\kkbar}_\BQ\ar[rr]^-{\bm{\mc}} \ar@/^/[d]^{\bm{\mf}} &&\Bbbk{\kkbar}
\\
\ar@/^/[u]^{\bm{\mf^\pr}} 
\sC^\pr{\kkbar}_\BQ\ar[rru]_{\bm{\mc}^\pr} &&
}.
\]
Then, we may say that the two binary QFTs are physically equivalent.
In the remaining part of this subsection, we check that such a physical equivalence
induces a isomorphism of the homotopical families of quantum observables that
preserves the quantum moment of every homotopical family of quantum observables.

Let $\sC{\kkbar}_\BQ=\big(\sC{\kkbar}, 1_\sC, \,\cdot\,, \mb{K}\big)$ and
$\sC^\pr{\kkbar}_\BQ=\big(\sC^\pr{\kkbar}, 1_{\sC^\pr}, \,\cdot^\pr\,, \mb{K}^\pr\big)$,
and let $\big(\sC{\kkbar}, 1_\sC, \underline{\bell}\big)$ and
$\big(\sC^\pr{\kkbar}, 1_{\sC^\pr}, \underline{\bell}^\pr\big)$ be the corresponding
quantum descendant algebras.
Let $\underline{\mb{\psi}}=\bm{\mK}(\bm{\mf})$ 
and $\underline{\mb{\psi}}^\pr=\bm{\mK}(\bm{\mf}^\pr)$
be the quantum descendants of the two homotopy inverses $\bm{\mf}$ and $\bm{\mf}^\pr$, respectively.
Then, we have the following homotopy equivalence in the category $\category{UsL}_\infty(\Bbbk{\kkbar})$:
\[
\xymatrix{
\big(\sC{\kkbar}, 1_\sC, \underline{\bell}\big)
\ar@{..>}@/^/[r]^-{\underline{\mb{\psi}}}
&
\ar@{..>}@/^/[l]^-{\underline{\mb{\psi}}^\pr}
\big(\sC^\pr{\kkbar}, 1_{\sC^\pr}, \underline{\bell}^\pr\big).
}
\]
Recall that
\eqn{hoea}{
\bm{\mf}\circ \mb{\pi} = \mb{\pi}^\pr \circ \mb{\Psi}_{\mb{\psi}} 
,\qquad
\bm{\mf}^\pr\circ \mb{\pi}^\pr =\mb{\pi} \circ \mb{\Psi}_{\mb{\psi}^\pr}
,
}
where
\begin{align*}
\mb{\pi}\big(\bm{x}_1\odot\ldots\odot\bm{x}_n\big) &\coloneqq \bm{x}_1\cdot\ldots\cdot \bm{x}_n
&&\text{for all } n\geq 1 \text{ and }\bm{x}_1,\ldots, \bm{x}_n \in \sC{\kkbar},
\\
\mb{\pi}^\pr\big(\bm{x}^\pr_1\odot\ldots\odot\bm{x}^\pr_n\big)&\coloneqq\bm{x}^\pr_1\cdot^\pr\ldots\cdot^\pr \bm{x}^\pr_n
&&\text{for all } n\geq 1 \text{ and }
\bm{x}^\pr_1,\ldots, \bm{x}^\pr_n \in \sC^\pr{\kkbar}.
\end{align*}

Let $\bm{\sV}=\left(V, \big[\underline{\mb{\w}}\big]\right)$
be a homotopical family of quantum observables in $\sC{\kkbar}_\BQ$
and let $\underline{\mb{\w}}$ be a representative of $\big[\underline{\mb{\w}}\big]$.
Then, 
$\xymatrix{\underline{\mb{\w}}^\pr\coloneqq{}
\underline{\mb{\psi}}\bullet\underline{\mb{\w}}:\big(V{\kkbar},\underline{0}\big)\ar@{..>}[r]
& \big(\sC^\pr{\kkbar},\underline{\bell}^\pr\big)}$ is an  $sL_\infty$-morphism 
whose homotopy type depends on $\underline{\mb{\w}}$ only via its homotopy type $\big[\underline{\mb{\w}}\big]$. 
Therefore $\bm{\sV}^\pr=\left(V, \big[\underline{\mb{\w}}^\pr\big]\right)$
is a  a homotopical family of quantum observables in $\sC^\pr{\kkbar}_\BQ$. 
Note also that $\underline{\mb{\psi}}^\pr\bullet \underline{\mb{\w}}^\pr
=\underline{\mb{\psi}}^\pr\bullet\underline{\mb{\psi}}\bullet\underline{\mb{\w}}
\sim \underline{\mb{\w}}$ if $\underline{\mb{\w}}^\pr=\underline{\mb{\psi}}\bullet\underline{\mb{\w}}$
so that we have
$\big[\underline{\mb{\psi}}^\pr\bullet \underline{\mb{\w}}^\pr\big]=\big[\underline{\mb{\w}}\big]$.
Obviously, the converse is also true. 
Therefore, the homotopy equivalence 
\[\xymatrix{\sC{\kkbar}_\BQ\ar@/^/[r]^{\bm{\mf}} &\ar@/^/[l]^{{\mf}^\pr}\sC^\pr{\kkbar}_\BQ}\]
induces an isomorphism of homotopical families of quantum observables.

Consider  homotopical families of quantum observables $\bm{\sV}$ in $\sC{\kkbar}_\BQ$ and
$\bm{\sV}^\pr$ in $\sC^\pr{\kkbar}_\BQ$ as defined above.
Recall that 
\[
\mb{\m}^{\mb{\sV}}\coloneqq{}\bm{\mc}\circ \mb{\Pi}^{\mb{\w}}
=\bm{\mc}\circ \mb{\pi}\circ \mF\big(\breve{\mb{\w}}\big)
,\qquad
\mb{\m}^{\pr\mb{\sV}^\pr}\coloneqq{}\bm{\mc}^\pr\circ \mb{\Pi}^{\mb{\w}^\pr}
=\bm{\mc^\pr}\circ \mb{\pi^\pr}\circ \mF\big(\breve{\mb{\w}}^\pr\big).
\]
From 
$\bm{\mc}\sim_\kbar \bm{\mc}^\pr\circ \bm{\mf}$
 and 
$\bm{\mf}\circ \mb{\pi} 
= \mb{\pi}^\pr \circ \mb{\Psi}_{\mb{\psi}} 
$
we have 
\begin{align*}
\mb{\m}^{\mb{\sV}}
= \bm{\mc}^\pr\circ \bm{\mf}\circ \mb{\pi}\circ \mb{\Psi}_{\mb{\w}}
= \bm{\mc}^\pr\circ  \mb{\pi}^\pr \circ \mb{\Psi}_{\mb{\psi}} \circ \mb{\Psi}_{\mb{\w}}
= \bm{\mc}^\pr\circ  \mb{\pi}^\pr \circ \mb{\Psi}_{\mb{\psi}\bullet\mb{\w}}
=
\bm{\mc}^\pr\circ \mb{\Pi}^{\mb{\w}^\pr}
=
\mb{\m}^{\pr\mb{\sV}^\pr},
\end{align*}
where we have used
$\mb{\Psi}_{\mb{\psi}} \circ \mb{\Psi}_{\mb{\w}}= \mb{\Psi}_{\mb{\psi}\bullet\mb{\w}}$
for the third equality, and
the fact that $ \mb{\Pi}^{\mb{\psi}\bullet\mb{\w}}=\mb{\pi}^\pr \circ \mb{\Psi}_{\mb{\psi}\bullet\mb{\w}}$
is cochain homotopic to  $\mb{\Pi}^{\mb{\w}^\pr}$ since $\mb{\w}^\pr \sim \mb{\psi}\bullet\mb{\w}$ as $sL_\infty$-morphisms
for the fourth equality.
Therefore, the induced  isomorphism of homotopical families of quantum observables
also preserves the quantum moment --- as well as the quantum cumulant by Theorem \ref{morcuminhf} --- of every homotopical family of quantum observables.

\section{Quantizations of classical observables}
\label{section: quantization}

Fix the structure $\sC{\kkbar}_\BQ=\big(\sC{\kkbar}, 1_\sC, \;\cdot\;,\mb{K}\big)$ 
of a binary QFT algebra on $\sC$ and consider the underlying  
QFT complex $\big(\sC{\kkbar}, 1_\sC, \mb{K}\big)$, which shall be called the {\em off-shell} QFT complex of 
$\sC{\kkbar}_\BQ$.  The classical limit $\big(\sC, 1_\sC, K\big)$ of the off-shell QFT complex is 
a pointed cochain complex over $\Bbbk$, whose cohomology $H$ is regarded as 
the space of equivalence classes of off-shell classical observables.  There is a distinguished element $1_H \in H$,
which is the $K$-cohomology class of $1_\sC$ and $\big(H, 1_H, 0\big)$ is a pointed cochain complex
over $\Bbbk$ with zero differential.
The main purpose of this subsection is to construct the structure of an on-shell QFT complex on 
$H$ that is homotopy equivalent to the off-shell QFT complex.
This will be the obstruction theory of  quantization of classical observables.

\begin{theorem}[Definition]
\label{anomaly}
There is an \emph{on-shell QFT complex} structure $\big(H{\kkbar}, 1_H, \mb{\kappa}\big)$ on the classical cohomology $H$ 
of a binary QFT algebra of $\sC{\kkbar}_\BQ$ which is homotopy equivalent to the off-shell QFT complex $\big(\sC{\kkbar}, 1_\sC, \mb{K}\big)$
of $\sC{\kkbar}_\BQ$.
We call $\sC{\kkbar}_\BQ$ \emph{anomaly-free} if $\mb{\kappa}=0$.
\end{theorem}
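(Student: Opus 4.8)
## Proof Proposal

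The plan is to invoke homological perturbation theory (HPT) in a controlled, $\kbar$-adic fashion, starting from a classical contraction and deforming it order by order. First I would fix, once and for all, a homotopy retract of the classical pointed cochain complex $\big(\sC, 1_\sC, K\big)$ onto its cohomology $\big(H, 1_H, 0\big)$: that is, cochain quasi-isomorphisms $f\colon H\to\sC$ and $h\colon\sC\to H$ with $h\circ f=\I_H$, $f(1_H)=1_\sC$, $h(1_\sC)=1_H$, together with a degree $-1$ map $s\colon\sC\to\sC$ satisfying the side conditions $\I_\sC - f\circ h = K\circ s + s\circ K$, $s\circ f=0$, $h\circ s=0$, $s\circ s=0$. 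Such a pointed retract exists because $\Bbbk$ has characteristic zero and $1_\sC$ represents a nontrivial class (so one can always arrange the splitting to respect the distinguished element). Now extend everything $\kbar$-linearly to topologically-free modules, giving a retract between $\big(\sC{\kkbar},1_\sC,K\big)$ and $\big(H{\kkbar},1_H,0\big)$ with the same identities, where $K=K^{(0)}$ is the classical limit of $\mb{K}$.

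Next I would apply the perturbation lemma to the ``small'' perturbation $\d\mb{K}\coloneqq\mb{K}-K = \kbar K^{(1)}+\kbar^2 K^{(2)}+\cdots$, which is divisible by $\kbar$ and hence topologically nilpotent on $\sC{\kkbar}$ with respect to the $\kbar$-adic filtration. The perturbation lemma then produces a deformed retract $(\mb{f},\mb{h},\mb{s})$ and a deformed differential on $H{\kkbar}$, given by the standard HPT formulae as convergent $\kbar$-adic series:
\begin{align*}
\mb{\kappa} &= \sum_{n\geq 1} h\circ (\d\mb{K})\circ (s\circ \d\mb{K})^{n-1}\circ f, \\
\mb{f} &= f + \sum_{n\geq 1} s\circ (\d\mb{K})\circ (s\circ \d\mb{K})^{n-1}\circ f, \\
\mb{h} &= h + \sum_{n\geq 1} h\circ (\d\mb{K})\circ (s\circ \d\mb{K})^{n-1}\circ s, \\
\mb{s} &= s + \sum_{n\geq 1} s\circ (\d\mb{K})\circ (s\circ \d\mb{K})^{n-1}\circ s.
\end{align*}
Each sum converges $\kbar$-adically since $\d\mb{K}$ raises $\kbar$-order by at least one, so $(s\circ\d\mb{K})^{n-1}$ is divisible by $\kbar^{n-1}$. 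The general perturbation lemma guarantees $\mb{\kappa}\circ\mb{\kappa}=0$, that $\mb{f},\mb{h}$ are cochain maps $\big(H{\kkbar},\mb{\kappa}\big)\leftrightarrow\big(\sC{\kkbar},\mb{K}\big)$, that they are mutually inverse up to the homotopy $\mb{s}$, and that they are quasi-isomorphisms — exactly the assertion that $\big(H{\kkbar},1_H,\mb{\kappa}\big)$ is homotopy equivalent to $\big(\sC{\kkbar},1_\sC,\mb{K}\big)$.

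Three points then need checking, and the third is the real obstacle. (i) Since $\mb{\kappa}=\kbar\,h\circ K^{(1)}\circ f + O(\kbar^2)$, its classical limit vanishes, so $\big(H{\kkbar},1_H,\mb{\kappa}\big)$ is genuinely an \emph{on-shell} QFT complex in the sense of the definition — the differential vanishes in the classical limit. (ii) The pointedness: I must verify $\mb{\kappa}\,1_H=0$ and $\mb{f}(1_H)=1_\sC$, $\mb{h}(1_\sC)=1_H$, and that $1_H$ stays noncohomologous to zero. Because $K\,1_\sC=0$ and (by arranging the classical retract compatibly) $s\,1_\sC=0$, $h\,1_\sC=1_H$, $\mb{K}\,1_\sC=0$, every term in the series for $\mb{\kappa}$ applied to $1_H$ contains a factor $\d\mb{K}$ hitting either $1_\sC$ (giving $\mb{K}\,1_\sC - K\,1_\sC=0$) or $s\,1_\sC=0$, so $\mb{\kappa}\,1_H=0$; similarly $\mb{f}(1_H)=1_\sC$. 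The nontriviality of the class of $1_H$ follows because $\mb{f}$ is a quasi-isomorphism sending $1_H$ to $1_\sC$, whose $\mb{K}$-class is nontrivial by the QFT-complex axiom. (iii) The hard part will be setting up the HPT machinery \emph{over} $\Bbbk{\kkbar}$ with full care about topological freeness and about the pointed structure, and — more subtly — arguing that the construction is ``as canonical as possible'': the output depends only on the choice of the contracting homotopy $s$ (not on $f,h$ beyond what $s$ determines), and the condition $\mb{\kappa}=0$ is independent even of that choice. For the latter, I would observe that a change of $s$ is itself a ``small'' ($\kbar$-graded, or even $\kbar^0$) gauge transformation of the retract, and the perturbation lemma is natural under such transformations, so it conjugates $\mb{\kappa}$ to a cohomologous-but-in-fact-isomorphic on-shell differential; in particular $\mb{\kappa}=0$ is preserved. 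I would record these canonicity statements but defer their detailed proof, since the theorem as stated only asserts existence of the on-shell complex and the definition of anomaly-freeness.
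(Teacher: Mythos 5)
Your proposal is correct and follows essentially the same route as the paper: both fix a classical pointed retract $(f,h,s)$ with side conditions (including $s(1_\sC)=0$) and apply homological perturbation theory to the $\kbar$-divisible perturbation $\mb{K}-K$, with pointedness and the on-shell property checked exactly as you do, and the canonicity/well-definedness of $\mb{\kappa}=0$ handled separately (the paper's Proposition on variations of quantization). The only cosmetic difference is that the paper carries out the induction order by order in $\kbar$ with explicit recursions for $\k^{(n)},f^{(n)},h^{(n)},s^{(n)}$ instead of quoting the closed-form perturbation-lemma series.
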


\begin{remark}
A QFT complex $\big(\sC{\kkbar},1_\sC, \mb{K}\big)$
is by definition a pointed cochain complex over $\Bbbk{\kkbar}$. 
However, we will rarely work with its cohomology
$\big(\mb{H}, 1_{\mb{H}}, 0\big)$, where $1_{\mb{H}}$ is the $\mb{K}$-cohomology class of $1_\sC$,
but almost exclusively with  the on-shell QFT complex $\big(H{\kkbar},1_H, \mb{\kappa}\big)$ 
on the classical cohomology $H$, which is quasi-isomorphic to $\big(\mb{H}, 1_{\mb{H}}, 0\big)$.
There are at least two reasons for this choice:
\begin{enumerate} 
\item We do not expect, in general, that the $\mb{K}$-cohomology group $\mb{H}$ is a topologically
free $\Bbbk{\kkbar}$-module, while both $H{\kkbar}$ and $\sC{\kkbar}$ will always be topologically free.
Here is a simple demonstration:
Let $v \in H\subset H{\kkbar}$ satisfy $\mb{\kappa}v=0$. Then $v$ can not be $\mb{\kappa}$-exact, since $\mb{\kappa}
=\kbar \k^{(1)}+\cdots$,
so that it gives a cohomology class in $\mb{H}$. 
Consider $\kbar v \in H{\kkbar}$, which always belongs to $\Ker \mb{\kappa}$. Now $\kbar v$ can be
$\mb{\kappa}$-exact. For example, there may be some $\eta\in H$ such that $\k^{(n)}\eta=0$ for all $n\geq 2$, while
$\k^{(1)}\eta = v$ so that $\kbar v = \mb{\kappa} \eta= \kbar \k^{(1)}\eta$.
We remark that $\mb{H}$ is a free $\Bbbk{\kkbar}$-module if $\mb{\kappa}=0$ 
since the cohomology of $(H{\kkbar}, \mb{\kappa}=0)$ is $H{\kkbar}$, which is isomorphic to $\mb{H}$
as topologically free $\Bbbk{\kkbar}$-modules. 
In general we may regard the on-shell QFT complex $\big(H{\kkbar}, 1_H,\mb{\kappa}\big)$ 
as a topologically-free resolution of $\big(\mb{H}, 1_{\mb{H}}, 0\big)$.

\medskip
\item
The arena of classical physics is the classical cohomology $H$ (the on-shell QFT complex) and we think of ourselves as mere mortals 
contemplating the quantum world
from this classical vantage point.
\naturalqed
\end{enumerate}

\end{remark}

The classical complex $(\sC, 1_\sC, K)$  is a pointed cochain complex over a field $\Bbbk$
where every quasi-isomorphism splits. 
Then the classical complex is homotopy equivalent to $\big(H, 1_H, 0\big)$.
We choose all the data of such a homotopy equivalence 
\eqn{cdfrd}{
\xymatrix{
\big(H,1_H,0\big)\ar@/^/[r]^f &\ar@/^/[l]^{h}\big(\sC, 1_\sC,K\big)\ar@(ul,ur)^{s}
},\qquad
\begin{cases}
h\circ f =\I_H
,\\
f\circ h = \I_\sC - K\circ{s} - {s} \circ K
,\\
\end{cases}
}
where both $f\in\Hom\big(H,\sC\big)^0$ and $h\in\Hom\big(\sC,H\big)^0$ are quasi-isomorphisms of pointed
cochain complexes satisfying $f(1_H) = 1_\sC$, $K\circ f=0$, $h(1_\sC)=1_H$ and $h\circ K =0$, and $s$ is an element in $\Hom\big(\sC, \sC\big)^{-1}$ satisfying  $K = K\circ s\circ K$ and
${s} (1_\sC)=0$.

We can and will choose a splitting $s$ such that it satisfies the following side condition:
\eqn{sidecon}{
s\circ s =s\circ f =h\circ s=0.
}
We call such a trio $(f, h,{s})$ a {\em classical off-to-on-shell retract} of the classical complex $\big(\sC, 1_\sC, K\big)$
--- the official name for such a trio is strong deformation retract with the side conditions in the homological perturbation theory.

\subsection{A quantization}

Associated to each classical off-to-on-shell retract $(f, h,{s} )$, 
we can construct an on-shell QFT complex structure  $\big(H{\kkbar}, 1_H, \mb{\kappa}\big)$
on $H$ and a {\em quantized off-to-on-shell retract} $(\mb{f},\mb{h}, \mb{s})$, whose classical limit is  
limit  $(f,h,s)$. That is, this data should satisfy
\eqn{qdfrd}{
\xymatrix{
\big(H{\kkbar}, 1_H,\mb{\kappa}\big)\ar@/^/[r]^{\mb{f}} 
&\ar@/^/[l]^{\mb{h}}\big(\sC{\kkbar},1_\sC, \mb{K}\big)\ar@(ul,ur)^{\mb{s}}},
\quad
\begin{cases}
\mb{h}\circ \mb{f} =\I_{H{\kkbar}}
,\\
\mb{f}\circ \mb{h} = \I_{\sC{\kkbar}} - \mb{K}\circ\mb{s} - \mb{s} \circ \mb{K}
,
\end{cases}
}
where 
$\mb{f} \in \Hom(H,\sC)^{0}{\kkbar}$ and $\mb{h}\in \Hom(\sC, H)^{0}{\kkbar}$ 
are (quasi-iso)morphisms of QFT complexes, i.e.,
\[
\mb{f}(1_H)= 1_\sC
,\quad
\mb{K}\circ \mb{f} =\mb{f}\circ \mb{\kappa}
,\qquad
\mb{h}(1_\sC)= 1_H
,\quad
\mb{h}\circ \mb{K} =\mb{\kappa}\circ \mb{h},
\]
and $\mb{s}\in \Hom(\sC,\sC)^{-1}{\kkbar}$ satisfies $\mb{s}(1_\sC)=0$ as well
as the following side conditions:
\eqn{qsidecon}{
\mb{s}\circ \mb{s}=\mb{s}\circ\mb{f}=\mb{h}\circ \mb{s}=0,\qquad {s}\circ \mb{s}={s}\circ\mb{f}=\mb{h}\circ {s}=0,
\qquad \mb{s}\circ {s}=\mb{s}\circ {f}={h}\circ \mb{s}=0.
}

\begin{proposition}\label{anomalyx}
Fix a classical off-to-on-shell retract $(f,h,{s})$
and define 
\[
\begin{cases}
\mb{\kappa} =&\qquad\kbar \k^{(1)} +\kbar^2 \k^{(2)} +\ldots
,\\
\mb{f} =&f+\kbar f^{(1)} +\kbar^2 f^{(2)} +\ldots
,\\
\end{cases}
\qquad
\begin{cases}
\mb{h} =&h+\kbar h^{(1)} +\kbar^2 h^{(2)} +\ldots
,\\
\mb{s} =&s+\kbar s^{(1)} +\kbar^2 s^{(2)} +\ldots
,
\end{cases}
\]
where, for all $n\geq 1$,
\begin{align*}
\k^{(n)} &= h \circ g^{(n)},&
f^{(n)} &= -{s}  \circ g^{(n)},
\end{align*}
where
\eqn{anxx}{
{g}^{(n)}\coloneqq{} \sum_{\mathclap{j=0}}^{n-1} K^{(n-j)}\circ  f^{(j)}  - \sum_{\mathclap{j=1}}^{n-1} f^{(n-j)}\circ \k^{(j)}
,}
\eqn{anyx}{
h^{(n)}= -u^{(n)}\circ s, 
\quad\text{ where }\quad
u^{(n)} \coloneqq{}\sum_{\mathclap{j=0}}^{n-1}h^{(j)}\circ K^{(n-j)} - \sum_{\mathclap{j=1}}^{n-1}\k^{(j)}\circ h^{(n-j)},
}
and

\eqn{anyxj}{
s^{(n)} =-\sum_{\mathclap{j=0}}^{n-1} s\circ K^{(n-j)}\circ s^{(j)}\equiv -\sum_{\mathclap{j=0}}^{n-1} s^{(j)}\circ K^{(n-j)}\circ s
.
}

Then $\big(H{\kkbar}, 1_H, \mb{\kappa}\big)$ is an on-shell QFT complex and $(\mb{f}, \mb{h},\mb{s})$
is a quantization of $(f,h,s)$.
\end{proposition}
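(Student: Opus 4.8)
The plan is to verify directly that the explicitly defined data $(\mb{\kappa},\mb{f},\mb{h},\mb{s})$ assemble into a quantized off-to-on-shell retract, proceeding order by order in $\kbar$. This is a textbook application of homological perturbation theory, so the proof will be a bookkeeping argument: one writes each required identity in \eq{qdfrd}, \eq{qsidecon}, and the (quasi-iso)morphism conditions as a hierarchy of equations indexed by the power of $\kbar$, shows that the coefficient of $\kbar^0$ holds by the hypotheses \eq{cdfrd} and \eq{sidecon} on the classical retract, and then checks that the formulas \eq{anxx}--\eq{anyxj} for $\k^{(n)},f^{(n)},h^{(n)},s^{(n)}$ are exactly what is forced by the $\kbar^n$-coefficient of each identity, given that all lower-order identities already hold. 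The perturbation being added to the classical differential is $\mb{K}-K=\kbar K^{(1)}+\kbar^2K^{(2)}+\cdots$, and the auxiliary object driving the recursion is $g^{(n)}$ (resp. $u^{(n)}$), which one should recognize as the $\kbar^n$-component of $\mb{K}\circ\mb{f}-\mb{f}\circ\mb{\kappa}$ with the top-order term $K\circ f^{(n)}$ stripped off (resp. symmetrically for $\mb{h}$).

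First I would set up the induction hypothesis: assume that for all $m<n$ the truncated data $\mb{f}_{<n}=f+\kbar f^{(1)}+\cdots+\kbar^{n-1}f^{(n-1)}$ etc. satisfy all the retract and morphism identities modulo $\kbar^n$, together with the side conditions modulo $\kbar^n$. Then I would isolate the $\kbar^n$-coefficient of $\mb{K}\circ\mb{f}=\mb{f}\circ\mb{\kappa}$, which reads $K\circ f^{(n)}+g^{(n)}=f^{(n)}\circ 0+\k^{(n)}\cdot(\text{something})$; more precisely $K f^{(n)}=-g^{(n)}+f\circ\k^{(n)}$. Applying $h$ and using $h\circ K=0$, $h\circ f=\I_H$ gives $\k^{(n)}=h\circ g^{(n)}$; applying $s$ and using $s\circ K\circ s=\ldots$ together with the side conditions and $s\circ f=0$ gives $f^{(n)}=-s\circ g^{(n)}+(\text{terms killed by side conditions})$. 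So the stated formulas are forced and, conversely, I would substitute them back and check $K f^{(n)}=-g^{(n)}+f\,h\,g^{(n)}$, i.e. $(\I-fh)g^{(n)}=-Kf^{(n)}$, which follows from $\I-fh=Ks+sK$, $f^{(n)}=-sg^{(n)}$, and the fact (to be verified from the lower-order morphism identities) that $sKg^{(n)}$ vanishes or is absorbed correctly — this is where the identity $Kf=0$ and the inductive $\kbar$-compatibility get used. The analogous computation handles $\mb{h}$ and $u^{(n)}$; the equations $\mb{h}\circ\mb{f}=\I$, $\mb{f}\circ\mb{h}=\I-\mb{K}\mb{s}-\mb{s}\mb{K}$ then follow by the same order-by-order matching, and $\mb{s}\circ\mb{s}=0$ plus the mixed side conditions in \eq{qsidecon} follow from the recursion \eq{anyxj} together with $s\circ s=0=s\circ f=h\circ s$ at the classical level. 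One must also check the normalization: $\mb{\kappa}\,1_H=0$, $\mb{f}(1_H)=1_\sC$, $\mb{h}(1_\sC)=1_H$, $\mb{s}(1_\sC)=0$; these reduce to $g^{(n)}(1_H)=0$ and $u^{(n)}\circ s=0$ on $1_\sC$, which propagate inductively from $K^{(n)}1_\sC=0$ (noted after the definition of a QFT complex) and $s(1_\sC)=0$.

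The main obstacle will be the verification that $\mb{\kappa}$ so constructed actually squares to zero — that $\big(H{\kkbar},1_H,\mb{\kappa}\big)$ is genuinely a cochain complex and not merely a sequence of maps — and relatedly that $\mb{f},\mb{h}$ are genuine cochain maps to/from it. Concretely, one needs $\mb{\kappa}\circ\mb{\kappa}=0$, whose $\kbar^n$-coefficient is $\sum_{j}\k^{(j)}\circ\k^{(n-j)}=0$; the clean way to get this is to first establish $\mb{K}\circ\mb{f}=\mb{f}\circ\mb{\kappa}$ and $\mb{h}\circ\mb{K}=\mb{\kappa}\circ\mb{h}$ and $\mb{h}\circ\mb{f}=\I$, from which $\mb{\kappa}\circ\mb{\kappa}=\mb{h}\,\mb{K}\,\mb{f}\,\mb{\kappa}=\mb{h}\,\mb{K}\,\mb{K}\,\mb{f}=0$ since $\mb{K}^2=0$; so the real work is pushing the three structural identities through the induction simultaneously, because each one's $\kbar^n$-step consumes the others' $\kbar^{<n}$-steps. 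I expect this interlocking of the induction — choosing the right order in which to close the loop ($\k^{(n)}$, then $f^{(n)}$, then $h^{(n)}$, then $s^{(n)}$, then re-deriving the retract identities mod $\kbar^{n+1}$) — to be the genuinely delicate point, while every individual identity is routine. I would relegate the bulk of these sign-and-index manipulations to the actual proof and here only flag that the side conditions \eq{sidecon}/\eq{qsidecon} are precisely what makes the recursion well-posed (they kill all the would-be ambiguity in solving $Kf^{(n)}=\cdots$ for $f^{(n)}$).
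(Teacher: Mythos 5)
Your proposal is correct and takes essentially the same route as the paper's proof: an order-by-order induction verifying the explicit recursions for $\k^{(n)}$, $f^{(n)}$, $h^{(n)}$, $s^{(n)}$ against the retract identities, with the side conditions \eq{sidecon} and \eq{qsidecon} making the recursion well-posed and the unit conditions reducing to $K^{(n)}1_\sC=0$ and $s(1_\sC)=0$. The only organizational difference is where $\mb{\kappa}\circ\mb{\kappa}=0$ enters: the paper extracts the order-$n$ nilpotency inside the induction, by showing $K\circ g^{(n)}=-\sum_{\ell}f\circ\k^{(n-\ell)}\circ\k^{(\ell)}$ and applying $h$, whereas you obtain it a posteriori from $\mb{\kappa}=\mb{h}\circ\mb{K}\circ\mb{f}$ together with $\mb{h}\circ\mb{f}=\I$ and $\mb{K}^2=0$, which indeed works since your reduction of the order-$n$ cochain-map identity to $s\circ K\circ g^{(n)}=0$ closes using only the lower-order identities and the side conditions $s\circ f=s\circ s=0$.
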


\begin{proof}

\begin{enumerate}
\item From the side condition \eq{sidecon} and  by definition, the trio $(\mb{f},\mb{h},\mb{s})$ satisfies \eq{qsidecon}.
\item We check that $\mb{\kappa}^2 =\mb{\kappa}1_H=0$ and
$\mb{f}(1_H) = 1_\sC$ and $\mb{K}\circ\mb{f}=\mb{f}\circ \mb{\kappa}$
as follows:
It is obvious that $\mb{\kappa}^2 =0 \bmod \kbar$
and $\mb{K}\circ \mb{f}  = \mb{f}\circ\mb{\kappa} \bmod \kbar $,
since $\k^{(0)}=K\circ f =0$.
Fix $n\geq 2$ and assume that
$\mb{\kappa}^2 =0 \bmod \kbar^n$
and $\mb{K}\circ \mb{f}  = \mb{f}\circ\mb{\kappa} \bmod \kbar^n $.
Then, it can be checked that 
\eqn{claima}{
K\circ g^{(n)} =-\sum_{\mathclap{\ell=1}}^{n-1} f\circ \k^{(n-\ell)}\circ\k^{(\ell)}.
}
Applying $h\circ $ from the left to the above identity, we obtain that
\eqn{anxc}{
\sum_{\mathclap{\ell=1}}^{n-1} \k^{(n-\ell)}\circ\k^{(\ell)} =0,
}
and  $K \circ g^{(n)}=0$.
Let
$\k^{(n)} \coloneqq{} h \circ g^{(n)}$ and
$f^{(n)}\coloneqq{} -{s} \circ g^{(n)}$.
Then we obtain that
\begin{align*}
f\circ \k^{(n)} =& f\circ h \circ g^{(n)} 
\\
=& g^{(n)} - K\circ s\circ  g^{(n)} -s\circ K\circ  g^{(n)}
\\
=& g^{(n)} + K\circ f^{(n)}.
\end{align*}
This relation is equivalent to the following:
\eqn{anxf}{
\sum_{\mathclap{\ell=0}}^{n}  K^{(n-\ell)}\circ f^{(\ell)}  =\sum_{\mathclap{\ell=1}}^{n}  f^{(n-\ell)}\circ \k^{(\ell)}.
}
Combining our assumption with  \eq{anxc} and \eq{anxf}, 
we have $\mb{\kappa}^2 =0 \bmod \kbar^{n+1}$ and 
$\mb{K}\circ \mb{f}=\mb{f}\circ\mb{\kappa} \bmod  \kbar^{n+1}$.
By induction we conclude that $\mb{\kappa}^2 =0$ and $\mb{K}\circ \mb{f}=\mb{f}\circ\mb{\kappa}$.
It is straightforward to check that $\mb{\kappa}1_H=0$ and $\mb{f}(1_H) = 1_\sC$.
\item  
We check that $\mb{h}\circ \mb{K} =\mb{\kappa}\circ\mb{h}$ and $\mb{h}(1_H)= 1_\sC$ as follows:
Note that $\mb{h}\circ \mb{K}  = \mb{\kappa}\circ\mb{h} \bmod \kbar $,
since $h\circ K =\k^{(0)}=0$.
Fix $n\geq 2$ and assume that
$\mb{h}\circ \mb{K} =\mb{\kappa}\circ\mb{h} \bmod \kbar^n $.
Then, it can be checked that 
\eqn{claimb}{
u^{(n)}\circ K =0
,\qquad
\k^{(n)}\circ h=u^{(n)}\circ f\circ h
.
}
Let $h^{(n)}\coloneqq{} -u^{(n)}\circ s$. 
Then, we have
\begin{align*}
u^{(n)} 
&
= u^{(n)}\circ f\circ h + u^{(n)}\circ K\circ{s} +  u^{(n)}\circ {s} \circ K
\\&= u^{(n)}\circ f\circ h  +  u^{(n)}\circ {s} \circ K
\\
&
= \k^{(n)}\circ h - h^{(n)} \circ K
,
\end{align*}
which is equivalent to the following relation:
\eqn{anxg}{
\sum_{\mathclap{j=0}}^{n}h^{(j)}\circ K^{(n-j)} = \sum_{\mathclap{j=1}}^{n}\k^{(j)}\circ h^{(n-j)}.
}
Combining our assumption with  \eq{anxg}, we have
$\mb{h}\circ \mb{K} =\mb{\kappa}\circ\mb{h} \bmod \kbar^{n+1}$.
By induction we conclude that $\mb{h}\circ \mb{K} =\mb{\kappa}\circ\mb{h}$.
It is straightforward to check that $\mb{h}(1_\sC)= 1_H$.
\item 
From 
\[
\mb{f} = f -  \sum_{\mathclap{n=1}}^\infty  \kbar^n s\circ g^{(n)},
\qquad
\mb{h} = h - \sum_{\mathclap{n=1}}^\infty \kbar^n u^{(n)}\circ s
,
\]
and $s\circ s =h\circ s=s\circ f=0$, it is obvious that $\mb{h}\circ \mb{f} = h\circ f =\I_{H{\kkbar}}$.
\item
It remains to check that 
$\mb{f}\circ \mb{h} = \I_{\sC{\kkbar}} - \mb{K}\circ\mb{s} - \mb{s} \circ \mb{K}$.
Let $\mb{\xi}= \mb{f}\circ \mb{h} + \mb{K}\circ\mb{s} + \mb{s} \circ \mb{K} \in \Hom\big(\sC,\sC\big){\kkbar}$.
Consider the expansion 
$\mb{\xi} = {\xi}^{(0)} +\kbar {\xi}^{(1)} +\kbar^2 {\xi}^{(2)}+\ldots$,
where ${\xi}^{(n)}=(\mb{f}\circ \mb{h})^{(n)} + (\mb{K}\circ\mb{s})^{(n)}  + (\mb{s} \circ \mb{K})^{(n)}  
\in \Hom\big(\sC,\sC\big)$.
Note that $\mb{\xi}^{(0)}=\I$ since ${f}\circ {h} = \I_\sC - {K}\circ{s} - {s} \circ {K}$. We need to show that
${\xi}^{(n)}=0$ for all $n\geq 1$.   
A direct computation using the definitions of $(\mb{f},\mb{h},\mb{s})$
show that, for all $n\geq 1$,
\begin{align*}
(\mb{f}\circ \mb{h})^{(n)} = 
&
\sum_{\mathclap{k=0}}^{n-1}f^{(k)}\circ h^{(n-k)} + f^{(n)}\circ h
= -\sum_{\mathclap{k=0}}^{n-1} (\mb{f}\circ \mb{h})^{(k)}\circ K^{(n-k)}\circ s
+ f^{(n)}\circ h
\\
=
&
-\sum_{\mathclap{k=0}}^{n-1} (\mb{f}\circ \mb{h})^{(k)}\circ K^{(n-k)}\circ s
+\sum_{\mathclap{k=0}}^{n-1} s^{(k)}\circ K^{(n-k)}\circ K^{(0)}\circ s
- \sum_{\mathclap{k=0}}^n s^{(k)}\circ K^{(n-k)}.
\end{align*}
For $n=1$, we have $\mb{\xi}^{(1)}=0$:
\begin{align*}
(\mb{f}\circ \mb{h})^{(1)}=
&-f\circ h\circ K^{(1)}\circ s 
+s\circ K^{(1)}\circ K\circ s
- s\circ K^{(1)} - s^{(1)} \circ K
\\
=
&
- K^{(1)}\circ s
 +K\circ s\circ K^{(1)}\circ s 
 +s\circ\big( K\circ    K^{(1)}+ K^{(1)}\circ K\big)\circ s -(\mb{s}\circ \mb{K})^{(1)}
\\
=
& -(\mb{K}\circ \mb{s})^{(1)}-(\mb{s}\circ \mb{K})^{(1)}
.
\end{align*}
Fix $n\geq 2$ and assume that ${\xi}^{(1)}=\ldots = {\xi}^{(n-1)}=0$.
Then, 
we have
\begin{align*}
(\mb{f}\circ \mb{h})^{(n)} 
=
&
- K^{(n)}\circ s
+\sum_{\mathclap{k=0}}^{n-1} (\mb{K}\circ \mb{s})^{(k)}\circ K^{(n-k)}\circ s
\\
&
+\sum_{\mathclap{k=0}}^{n-1} (\mb{s}\circ \mb{K})^{(k)}\circ K^{(n-k)}\circ s
+ \sum_{\mathclap{k=0}}^{n-1} s^{(k)}\circ K^{(n-k)}\circ K^{(0)}\circ s
\\
&
- \sum_{\mathclap{k=0}}^n s^{(k)}\circ K^{(n-k)} 
\\
=
&-(\mb{K}\circ \mb{s})^{(n)}-(\mb{s}\circ \mb{K})^{(n)} 
+\sum_{\mathclap{k=1}}^n s^{(n-k)}\circ (\mb{K}\circ \mb{K})^{(k)}\circ s
\\
=
&-(\mb{K}\circ \mb{s})^{(n)}-(\mb{s}\circ \mb{K})^{(n)}
.
\end{align*}
Therefore ${\xi}^{(n)}=0$, so that ${\xi}^{(n)}=0$ for all $n\geq 1$ by induction.
\naturalqed
\end{enumerate}
\end{proof}

\subsection{Variations of  quantization}

Recall that the differential $\mb{\kappa}$ and the quantized
off-to-on-shell retract $(\mb{f},\mb{h},\mb{s})$
 in Proposition \ref{anomalyx} 
were obtained in terms of a fixed classical off-to-on-shell retract  $(f,h,{s} )$.  The following proposition show 
that these quantizations depend only up to an automorphism and homotopy on the choice of splitting data of the classical complex.
We will only consider the variations of $\mb{\kappa}$ and $\mb{f}$.
\begin{proposition}\label{anomalyz}
Let $(\mb{f},\mb{\kappa})$ and $(\mb{f}^\pr,\mb{\kappa}^\pr)$ be the duos 
associated with off-to-on-shell retracts $(f, h,{s} )$ and $(f^\pr, h^\pr,s^\pr)$,
respectively.  Then, there is a duo $(\mb{\xi}, \mb{\l})$ such that
\[
\mb{\kappa}^\pr =  \mb{\xi}^{-1}\circ\mb{\kappa}  \circ\mb{\xi}
,\qquad
\mb{f}^\pr  = \mb{f} \circ\mb{\xi}+ \mb{K}\circ\mb{\l} + \mb{\l}\circ\mb{\kappa}^\pr,
\]
where $\mb{\xi}= \I_H + \kbar \xi^{(1)} +\kbar^2 \xi^{(2)} +\ldots \in \Aut\big(H{\kkbar}\big)$ and
\[
\mb{\l}(1_H)=0,\qquad \mb{\l}=\l^{(0)}+\kbar \l^{(1)}+\kbar^2 \l^{(2)}+\ldots \in \Hom(H, \sC)^{-1}{\kkbar}.
\]
\end{proposition}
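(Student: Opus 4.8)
The plan is to prove this by simultaneous induction on powers of $\kbar$, comparing the two quantizations order by order while building up the cochain homotopy $\mb{\l}$ and the automorphism $\mb{\xi}$. This is the standard ``uniqueness up to homotopy'' statement for strong deformation retracts in homological perturbation theory, adapted to the pointed/unital setting. First I would set up the classical base case: at order $\kbar^0$ we have $\k^{(0)} = \k^{\pr(0)} = 0$ and both $f$ and $f^\pr$ are quasi-isomorphisms from $\big(H, 1_H, 0\big)$ to $\big(\sC, 1_\sC, K\big)$ inducing the identity on cohomology (both are chosen off-shell representatives), so $f^\pr - f = K\circ \l^{(0)} + \l^{(0)}\circ 0$ for some $\l^{(0)} \in \Hom(H,\sC)^{-1}$ with $\l^{(0)}(1_H) = 0$; concretely one can take $\l^{(0)} = s^\pr\circ(f^\pr - f)$ using the side conditions, and set $\xi^{(0)} = \I_H$. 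This anchors the induction with $\mb{\xi} \equiv \I_H \bmod \kbar$.

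Next, the inductive step. Suppose we have constructed $\xi^{(j)}$, $\l^{(j)}$ for $j < n$ so that the two displayed identities hold modulo $\kbar^n$. Writing out the order-$\kbar^n$ component of the desired relation $\mb{K}\circ\mb{f}^\pr = \mb{f}^\pr\circ\mb{\kappa}^\pr$ (which holds by Proposition~\ref{anomalyx}) together with the two identities to be proven, one extracts a single ``obstruction'' cochain $G^{(n)} \in \Hom(H,\sC)^1$ — a known expression in the lower-order data $f^{(j)}, f^{\pr(j)}, \k^{(j)}, \k^{\pr(j)}, \xi^{(j)}, \l^{(j)}$ — which one checks is a $K$-cocycle: $K\circ G^{(n)} = 0$. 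This cocycle-ness is exactly where the quantum master-type relations $\mb{\kappa}^2 = 0$, $\mb{\kappa}^{\pr 2} = 0$ and $\mb{K}\circ\mb{f} = \mb{f}\circ\mb{\kappa}$ get used, analogously to the identity \eq{claima} in the proof of Proposition~\ref{anomalyx}. Then define $\xi^{(n)} := h\circ G^{(n)}$ (the ``on-shell'' part, landing in $H$ since $h$ is the projection) and $\l^{(n)} := -s\circ G^{(n)}$ (the ``exact'' part, via the splitting $s$); the homotopy identity $f\circ h = \I_\sC - K\circ s - s\circ K$ then forces $f\circ\xi^{(n)} = G^{(n)} + K\circ\l^{(n)}$, which is precisely the order-$\kbar^n$ content of $\mb{f}^\pr = \mb{f}\circ\mb{\xi} + \mb{K}\circ\mb{\l} + \mb{\l}\circ\mb{\kappa}^\pr$, while applying $h$ to $K\circ G^{(n)} = 0$ (together with $h\circ K = 0$) yields the order-$\kbar^n$ content of $\mb{\kappa}^\pr = \mb{\xi}^{-1}\circ\mb{\kappa}\circ\mb{\xi}$, most conveniently in the rearranged form $\mb{\xi}\circ\mb{\kappa}^\pr = \mb{\kappa}\circ\mb{\xi}$. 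One checks $\xi^{(n)}$ respects $1_H$ (i.e.\ $\xi^{(n)}(1_H) = 0$, so $\mb{\xi}(1_H) = 1_H$) and $\l^{(n)}(1_H) = 0$ using $s(1_\sC) = 0$, $h(1_\sC) = 1_H$ and the normalizations $\mb{f}(1_H) = \mb{f}^\pr(1_H) = 1_\sC$; finally $\mb{\xi} = \I_H + \kbar\xi^{(1)} + \ldots$ is automatically invertible in $\Aut(H{\kkbar})$ by the usual geometric-series argument in the $\kbar$-adic topology.

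The main obstacle I expect is bookkeeping rather than conceptual: correctly isolating the obstruction cochain $G^{(n)}$ so that a \emph{single} cocycle simultaneously controls both the conjugation of the differential and the modification of $\mb{f}$, and verifying its $K$-closedness cleanly. The subtlety is that $G^{(n)}$ must be assembled from the lower-order pieces in a way compatible with both relations at once; getting the signs and the ordering of compositions right (especially the $\mb{\l}\circ\mb{\kappa}^\pr$ term, which is itself lower-order at stage $n$ only because $\mb{\kappa}^\pr$ starts at $\kbar^1$) requires care. Once the correct $G^{(n)}$ is identified, the split $\xi^{(n)} = h\circ G^{(n)}$, $\l^{(n)} = -s\circ G^{(n)}$ is dictated by the same mechanism as in Proposition~\ref{anomalyx}, and the side conditions \eq{sidecon}, \eq{qsidecon} make all the auxiliary cancellations (such as $s\circ s = h\circ s = s\circ f = 0$) go through automatically. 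I would also remark that, exactly as in the anomaly-free specialization discussed after Theorem~\ref{anomaly}, when $\mb{\kappa} = \mb{\kappa}^\pr = 0$ the first relation is vacuous and $\mb{\xi}$ may be taken to be $\I_H$, recovering the statement that $\mb{f}$ and $\mb{f}^\pr$ are cochain homotopic — which is the form actually needed for the independence of the quantum correlation algebra from the choice of splitting.
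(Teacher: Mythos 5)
Your overall architecture coincides with the paper's: induction on powers of $\kbar$, an obstruction cochain $w^{(n)}$ at each order (compare \eq{anye}), the assignments $\xi^{(n)}=h\circ w^{(n)}$, $\l^{(n)}=-s\circ w^{(n)}$ as in \eq{anyd}, and the homotopy identity $f\circ h=\I_\sC-K\circ s-s\circ K$ to produce the order-$n$ piece of the relation for $\mb{f}^\pr$. However, the mechanism you give for the other half of the statement does not work as written. You claim the order-$\kbar^n$ content of $\mb{\kappa}^\pr=\mb{\xi}^{-1}\circ\mb{\kappa}\circ\mb{\xi}$ follows by ``applying $h$ to $K\circ G^{(n)}=0$''; since $h\circ K=0$ identically, applying $h$ to that equation yields $0=0$ and carries no information. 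The point you are missing is that, because $\mb{\kappa}$ and $\mb{\kappa}^\pr$ begin at order $\kbar$, the order-$n$ component of $\mb{\xi}\circ\mb{\kappa}^\pr=\mb{\kappa}\circ\mb{\xi}$ involves only $\xi^{(1)},\ldots,\xi^{(n-1)}$: it is a constraint on data already fixed, which must be \emph{derived} from the two quantizations, and in fact must be derived \emph{before} one can verify that the natural obstruction is $K$-closed. The paper first establishes the identity \eq{xclaimb}, which writes $g^{\pr(n)}-g^{(n)}$ as $f\circ\big((\mb{\xi}^{-1}\circ\mb{\kappa}\circ\mb{\xi})^{(n)}-\k^{(n)}\big)$ plus $K$-exact terms, applies $h$ (using $h\circ f=\I_H$, $h\circ K=0$) to obtain $\k^{\pr(n)}=(\mb{\xi}^{-1}\circ\mb{\kappa}\circ\mb{\xi})^{(n)}$ as in \eq{annd}, and only then substitutes back and combines with \eq{anyc} to conclude $K\circ w^{(n)}=0$. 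Your proposed ordering (closedness of $G^{(n)}$ first, $\kappa$-relation afterwards, and by a vacuous step) therefore stalls exactly at the part of the argument that carries the content; deferring it to ``bookkeeping'' is not enough.

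A second omission is how the primed data enter: $\k^{\pr(n)}=h^\pr\circ g^{\pr(n)}$ and $f^{\pr(n)}=-s^\pr\circ g^{\pr(n)}$ are built from the \emph{other} retract, while your splitting uses $(f,h,s)$. The step that identifies the $h$-image of the $g$-discrepancy with $\k^{\pr(n)}$ needs $K\circ g^{\pr(n)}=0$ together with the observation that any two retractions $h$, $h^\pr$ inducing the identity on cohomology agree on $\ker K$; without this (or a substitute) the comparison with $\mb{\kappa}^\pr$ cannot be made at all, and your sketch never addresses it. Finally, your closing remark that in the anomaly-free case one may take $\mb{\xi}=\I_H$ is unjustified: even when $\mb{\kappa}=\mb{\kappa}^\pr=0$, the classes $[\mb{f}(v)]$ and $[\mb{f}^\pr(v)]$ in the $\mb{K}$-cohomology can differ at positive order in $\kbar$, and the construction generally produces a nontrivial $\mb{\xi}$; the proposition only yields $\mb{f}^\pr=\mb{f}\circ\mb{\xi}+\mb{K}\circ\mb{\l}$, not a cochain homotopy between $\mb{f}$ and $\mb{f}^\pr$ themselves.
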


\begin{proof}
Recall that
$f^{(0)}\coloneqq{} f$, $f^{^\pr(0)}\coloneqq{}f^\pr$ and, for all $n\geq 1$, 
\eqn{anya}{
\begin{cases}
f^{(n)} = -{s} \circ g^{(n)},
\\
\k^{(n)} = h \circ g^{(n)},
\\
\end{cases}
\qquad
\begin{cases}
f^{\pr(n)} = -s^\pr \circ g^{\pr(n)},
\\
\k^{\pr (n)} = h^\pr \circ g^{\pr(n)},
\\
\end{cases}
}
where
\eqnalign{anyb}{
{g}^{(n)}\coloneqq{}& \sum_{\mathclap{\ell=0}}^{n-1} K^{(n-\ell)} \circ f^{(\ell)}  - \sum_{\mathclap{\ell=1}}^{n-1} f^{(n-\ell)}\circ\k^{(\ell)}
,\\
{g}^{\pr(n)}\coloneqq{}& \sum_{\mathclap{\ell=0}}^{n-1} K^{(n-\ell)} \circ f^{\pr(\ell)}  - \sum_{\mathclap{\ell=1}}^{n-1} f^{\pr(n-\ell)}\circ \k^{\pr(\ell)}.
}
Recall also that $K\circ g^{(n)}=K\circ g^{\pr(n)}=0$ and
\eqn{anyc}{
g^{ (n)}=f\circ \k^{(n)} -  K \circ f^{(n)},
\qquad
g^{\pr (n)}=f^\pr\circ\k^{\pr(n)} -  K \circ f^{\pr(n)}
.
}
We are going to construct $\mb{\xi}$ and $\mb{\l}$  inductively so
that $\xi^{(0)} = \I_H$, $\l^{(0)} = s\circ (f^{\pr}-f)$ and, for all $n\geq 1$,
\eqn{anyd}{
\begin{cases}
\xi^{(n)} = h\circ w^{(n)}
,\\
\l^{(n)} = -s\circ w^{(n)}
,
\end{cases}
}
where
\eqn{anye}{
w^{(n)}\coloneqq{} 
f^{\pr(n)}
-f^{(n)}
-\sum_{\mathclap{\ell=1}}^{n-1} f^{(n-\ell)}\circ\xi^{(\ell)}
-\sum_{\mathclap{\ell=0}}^{n-1}K^{(n-\ell)}\circ\l^{(\ell)} 
-\sum_{\mathclap{\ell=1}}^{n}\l^{(n-\ell)}\circ\k^{\pr(\ell)}
.
}

\begin{enumerate}
\item In this step, 
we construct $\mb{\l} \bmod \kbar^2$ and $\mb{\xi} \bmod \kbar^2$ and prove the theorem modulo $\kbar^2$.

It is obvious, by definition, that 
$\k^{\pr(1)} =\k^{(1)}$.
Consider the identities $\mb{K} \circ \mb{f}= \mb{f}\circ \mb{\kappa}$ and $\mb{K}\circ  \mb{f}^\pr= \mb{f}^\pr\circ \mb{\kappa}^\pr$
modulo $\kbar ^2$:
\begin{align}
\label{modhb}
\begin{aligned}
K\circ  f&=0,\\
K^{(1)}\circ f+ K \circ f^{(1)}&=f\circ  \k^{(1)}.
\end{aligned}
&&
\begin{aligned}
K\circ  f^{\pr}&=0,\\
K^{(1)}\circ f^{\pr} + K\circ  f^{\pr(1)}&=f^{\pr}\circ \k^{(1)}.
\end{aligned}
\end{align}
Note that both $f^\pr$ and $f$ are assumed to induce the identity map on the cohomology $H$, so that
$h\circ (f^\pr - f) = 0$. It follows that $f^\pr -f = - K\circ{s} \circ(f^\pr - f)$. 
Once we define
$\l^{(0)}\coloneqq{} -{s}\circ (f^\pr - f)$,
we conclude that $f^{\pr} = f + K\circ  \l^{(0)}$. From the equations in the second line of \eq{modhb},
we obtain that $(f^\pr - f)\circ\k^{(1)}= K^{(1)}\circ (f^\pr - f) + K\circ(f^{\pr(1)} -f^{(1)})$,
which implies that $Kw^{(1)}=0$, where
\[
w^{(1)}\coloneqq{}f^{\pr(1)} - f^{(1)} - K^{(1)}\circ \l^{(0)} - \l^{(0)}\circ \k^{(1)} \in\Hom(H,\sC)^{0}.
\]
Hence we obtain  a well-defined linear map $\xi^{(1)}\coloneqq{}h\circ w^{(1)} \in \Hom (H,H)^0$
satisfying the following relation:
\begin{align*}
f\circ \xi^{(1)} =&f \circ h\circ  w^{(1)}=w^{(1)} - K\circ  s \circ w^{(1)}
\\
=&f^{\pr(1)} - f^{(1)} - K^{(1)}\circ \l^{(0)} - \l^{(0)}\circ \k^{(1)} + K\circ \l^{(1)},
\end{align*}
where we have defined
$\l^{(1)}\coloneqq{} -s\circ w^{(1)} \in \Hom (H,\sC)^{-1}$.
Therefore, we conclude that
\eqn{modhaa}{
\k^{\pr (1)}=\k^{(1)},
\qquad
\begin{cases} 
f^{\pr}&=f+  K\circ  \l^{(0)}
,\\
f^{\pr(1)}&=f^{(1)}+ f\circ \xi^{(1)}
+K\circ  \l^{(1)}+ K^{(1)}\circ  \l^{(0)}+ \l^{(0)}\circ \k^{(1)}.
\end{cases}
}
Let $\mb{\xi}\coloneqq{} \I_H+\kbar \xi^{(1)} \bmod \kbar^{2}$ and
$\mb{\l}\coloneqq{} \l^{(0)}+\kbar \l^{(1)} \bmod \kbar^{2}$. Then
the relations in \eq{modhaa}
are equivalent to the following;
\begin{align*}
\begin{aligned}
\mb{\l}(1_H) &=0 \bmod \kbar^{2}
,
\\
\mb{\xi}(1_H)&= 1_H\bmod \kbar^{2}
,
\end{aligned}
\qquad
\begin{aligned}
\mb{\kappa}^{\pr}&=\mb{\xi}^{-1}\circ \mb{\kappa}\circ \mb{\xi}
\bmod \kbar^{2}
,\\
\mb{f}^{\pr}&= \mb{f}\circ \mb{\xi}
+ \mb{K}\circ \mb{\l} + \mb{\l}\circ \mb{\kappa}^{\pr}
\bmod \kbar^{2}.
\end{aligned}
\end{align*}
\item 
\label{item: inductive premise}Fix $n > 2$.
Assume that there are some $\mb{\xi}\coloneqq{} I_H+\kbar \xi^{(1)} +\cdots  +\kbar^{n-1} \xi^{(n-1)}\bmod \kbar^{n}$
and $\mb{\l}\coloneqq{} \l^{(0)}+\kbar \l^{(1)} +\cdots  +\kbar^{n-1} \l^{(n-1)}\bmod \kbar^{n}$ such that
\[
\begin{aligned}
\mb{\l}(1_H) &=0 \bmod \kbar^{n}
,
\\
&\mb{\xi}(1_H)&= 1_H\bmod \kbar^{n}
,\\
\end{aligned}
\qquad
\begin{aligned}
\mb{\kappa}^{\pr}&=\mb{\xi}^{-1}\circ \mb{\kappa}\circ \mb{\xi}\bmod \kbar^{n}
,\\
\mb{f}^{\pr}&= \mb{f}\circ \mb{\xi}
+ \mb{K}\circ \mb{\l} + \mb{\l}\circ \mb{\kappa}^{\pr}\bmod \kbar^{n}.
\end{aligned}
\]
\item

We claim that the following is a consequence of the assumptions in step~\eqref{item: inductive premise}:
\eqnalign{xclaimb}{
g^{\pr (n)} -g^{(n)}
=&
-K\circ \left(\sum_{\mathclap{\ell=1}}^{n-1} f^{(n-\ell)}\circ \xi^{(\ell)}
+\sum_{\mathclap{\ell=0}}^{n-1} K^{(n-\ell)}\circ \l^{(\ell)} 
+\sum_{\mathclap{\ell=1}}^{n-1}\l^{(n-\ell)}\circ \k^{\pr(\ell)}
\right)
\\
&
+ f(\mb{\xi}^{-1}\circ \mb{\kappa}\circ \mb{\xi})^{(n)}
-f\circ \k^{(n)}.
}
Applying $h$ to the above identity, we have
\begin{align*}
h \circ g^{\pr (n)}
-h\circ  g^{(n)}=
\big(\mb{\xi}^{-1}\circ \mb{\kappa}\circ \mb{\xi}\big)^{(n)}
-\k^{(n)}.
\end{align*}
Using $h \circ g^{\pr (n)}\equiv h^\pr \circ g^{\pr (n)}=\k^{\pr (n)}$, since $h=h^\pr$ on $\ker K$, and  $h\circ  g^{(n)}=\k^{ (n)}$, we obtain the relation
\eqn{annd}{
\k^{\pr (n)} =  \big(\mb{\xi}^{-1}\circ \mb{\kappa}\circ \mb{\xi}\big)^{(n)}.
}
It follows that
\begin{align*}
 g^{\pr (n)}
-g^{(n)}=
&
-K\circ \left(\sum_{\mathclap{\ell=1}}^{n-1} f^{(n-\ell)}\circ \xi^{(\ell)}
+\sum_{\mathclap{\ell=0}}^{n-1} K^{(n-\ell)}\circ \l^{(\ell)} 
+\sum_{\mathclap{\ell=1}}^{n-1}\l^{(n-\ell)}\circ \k^{\pr(\ell)}
\right)
\\
&
+ f\circ \k^{\pr (n)}
-f\circ \k^{(n)}
.
\end{align*}
Using $f^\pr = f + K\l^{(0)}$, the above identity can be rewritten as follows; 
\begin{align*}
 g^{\pr (n)}
-g^{(n)}=
&-K\circ \left(\sum_{\mathclap{\ell=1}}^{n-1} f^{(n-\ell)}\circ \xi^{(\ell)}
+\sum_{\mathclap{\ell=0}}^{n-1} K^{(n-\ell)}\circ \l^{(\ell)} 
+\sum_{\mathclap{\ell=1}}^{n}\l^{(n-\ell)}\circ \k^{\pr(\ell)}
\right)
\\
&
+ f^\pr\circ  \k^{\pr (n)}
-f\circ \k^{(n)}
.
\end{align*}
On the other hand, the identities in \eq{anyc} imply that 
\[
 g^{\pr (n)}-g^{ (n)}
=
f^\pr\circ \k^{\pr(n)} -f\circ \k^{(n)} -  K\circ  f^{\pr(n)} +  K\circ  f^{(n)}.
\]
By combining the two identities above, we conclude that
\begin{align*}
K\circ  w^{(n)}=&0
,\\
w^{(n)}\coloneqq{} 
&
f^{\pr(n)}
-f^{(n)}
-\sum_{\mathclap{\ell=1}}^{n-1} f^{(n-\ell)}\circ \xi^{(\ell)}
-\sum_{\mathclap{\ell=0}}^{n-1}K^{(n-\ell)}\circ \l^{(\ell)} 
-\sum_{\mathclap{\ell=1}}^{n}\l^{(n-\ell)}\circ \k^{\pr(\ell)}
.
\end{align*}
Note that $w^{(n)}\in\Hom(H,\sC)^{0}$ and $w^{(n)}(1_H)=0$. 
Let $\xi^{(n)} \coloneqq{}h\circ w^{(1)} \in \Hom (H,H)^0$ and $\l^{(n)}\coloneqq{}- s\circ w^{(n)} \in \Hom (H,H)^{-1}$.
Then, we obtain that $f\circ \xi^{(n)} =f\circ  h\circ w^{(n)}=w^{(n)}  + K\circ \l^{(n)}$
and conclude that
\eqn{anne}{
\begin{aligned}
\xi^{(n)}(1_H)=\l^{(n)}(1_H)&=0
,\\
f^{\pr(n)}
&=f^{(n)}
+\sum_{\mathclap{\ell=1}}^{n} f^{(n-\ell)}\xi^{(\ell)}
+\sum_{\mathclap{\ell=0}}^{n}K^{(n-\ell)}\l^{(\ell)} 
+\sum_{\mathclap{\ell=1}}^{n}\l^{(n-\ell)}\k^{\pr(\ell)}
\end{aligned}
}
Let 
\begin{align*}
\mb{\xi}\coloneqq{}& 1+\kbar \xi^{(1)} +\cdots+ \kbar^n \xi^{(n)} \bmod \kbar^{n+1}
,\\
\mb{\l}\coloneqq{}& \l^{(0)}+\kbar \l^{(1)}+\cdots+ \kbar^n \l^{(n)}  \bmod \kbar^{n+1}.
\end{align*}
Then
the relations in \eq{anne} together with our assumptions in step~\eqref{item: inductive premise} are
equivalent to the following;
\[
\begin{aligned}
\mb{\l}(1_H) &=0 \bmod \kbar^{n+1}
,
\\
\mb{\xi}(1_H)&= 1_H\bmod \kbar^{n+1}
,\\
\end{aligned}
\qquad
\begin{aligned}
\mb{\kappa}^{\pr}&=\mb{\xi}^{-1}\circ\mb{\kappa}\circ\mb{\xi}\bmod \kbar^{n+1}
,\\
\mb{f}^{\pr}&= \mb{f}\circ \mb{\xi}
+ \mb{K}\circ\mb{\l} + \mb{\l}\circ \mb{\kappa}^{\pr}\bmod \kbar^{n+1}.
\end{aligned}
\]
so we are done. \naturalqed
 \end{enumerate}
\end{proof}

\begin{corollary}
Suppose
$\mb{\kappa}=\kbar^n \k^{(n)}+\kbar^{n+1}\k^{(n+1)}+\cdots$
has $\k^{(n)}\neq 0$ for some $n\geq 1$.
Then 
$\mb{\kappa}^\pr=\kbar^n \k^{\pr (n)}+\kbar^{n+1}\k^{\pr(n+1)}+\cdots$
and $\k^{\pr(n)}=\k^{(n)}$.
\end{corollary}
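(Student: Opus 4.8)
The plan is to read off the claim directly from Proposition~\ref{anomalyz}. That proposition produces, for any two classical off-to-on-shell retracts $(f,h,s)$ and $(f^\pr, h^\pr, s^\pr)$, an automorphism $\mb{\xi}= \I_H + \kbar\xi^{(1)}+\kbar^2\xi^{(2)}+\ldots \in \Aut\big(H{\kkbar}\big)$ with $\mb{\xi}(1_H)=1_H$ and a map $\mb{\l}$ with $\mb{\l}(1_H)=0$ such that $\mb{\kappa}^\pr = \mb{\xi}^{-1}\circ \mb{\kappa}\circ\mb{\xi}$. So the whole content of the corollary is extracting the consequences of the conjugation relation $\mb{\kappa}^\pr = \mb{\xi}^{-1}\circ\mb{\kappa}\circ\mb{\xi}$ when $\mb{\xi} = \I_H \bmod \kbar$ and $\mb{\kappa}$ has $\kbar$-adic order exactly $n$.

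First I would write $\mb{\xi} = \I_H + \kbar\,\mb{\eta}$, where $\mb{\eta}=\xi^{(1)}+\kbar\xi^{(2)}+\ldots \in \Hom(H,H)^0{\kkbar}$, so that $\mb{\xi}^{-1} = \I_H - \kbar\mb{\eta}+\kbar^2\mb{\eta}\circ\mb{\eta}-\ldots$ is again $\I_H \bmod \kbar$. Then compute
\[
\mb{\kappa}^\pr = \mb{\xi}^{-1}\circ\mb{\kappa}\circ\mb{\xi}
= \mb{\kappa} + \kbar\big(\mb{\kappa}\circ\mb{\eta} - \mb{\eta}\circ\mb{\kappa}\big) + \kbar^2(\cdots) + \ldots.
\]
Since $\mb{\kappa}=\kbar^n \k^{(n)}+\kbar^{n+1}\k^{(n+1)}+\cdots$ is divisible by $\kbar^n$, each of the correction terms $\kbar^j(\cdots)$ with $j\geq 1$ is divisible by $\kbar^{n+1}$ (the bracket $\mb{\kappa}\circ\mb{\eta}-\mb{\eta}\circ\mb{\kappa}$ is already divisible by $\kbar^n$, hence $\kbar\cdot(\text{that})$ by $\kbar^{n+1}$, and likewise for all higher terms in the expansion of conjugation). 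Therefore $\mb{\kappa}^\pr \equiv \mb{\kappa} \bmod \kbar^{n+1}$, which gives both $\mb{\kappa}^\pr = \kbar^n \k^{\pr(n)}+\kbar^{n+1}\k^{\pr(n+1)}+\cdots$ and, comparing the $\kbar^n$-coefficients, $\k^{\pr(n)}=\k^{(n)}$.

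There is no real obstacle here; the only thing to be a little careful about is the bookkeeping that every term beyond $\mb{\kappa}$ itself in the conjugation expansion carries at least one extra factor of $\kbar$ beyond the $\kbar^n$ already present in $\mb{\kappa}$. This is immediate from $\mb{\xi} - \I_H \in \kbar\,\Hom(H,H)^0{\kkbar}$ and $\mb{\xi}^{-1}-\I_H \in \kbar\,\Hom(H,H)^0{\kkbar}$: writing $\mb{\kappa}^\pr - \mb{\kappa} = (\mb{\xi}^{-1}-\I_H)\circ\mb{\kappa}\circ\mb{\xi} + \mb{\kappa}\circ(\mb{\xi}-\I_H)$, both summands are visibly divisible by $\kbar\cdot\kbar^n = \kbar^{n+1}$. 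One should also invoke Proposition~\ref{anomalyz} to know such a $\mb{\xi}$ exists in the first place; the hypothesis $\k^{(n)}\neq 0$ plays no role in the argument other than pinning down that the order of $\mb{\kappa}$ is exactly $n$ so that ``$\k^{\pr(n)}=\k^{(n)}$'' is a nonvacuous statement. Note in particular that this recovers, as the case $n=1$, the assertion made in the introduction that the condition $\mb{\kappa}=0$ (anomaly-freeness) is independent of the choice of splitting, and moreover that the leading obstruction $\k^{(1)}$ is itself an invariant.
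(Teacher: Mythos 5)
Your argument is correct and is exactly the deduction the paper intends: the corollary is stated without a separate proof as an immediate consequence of the conjugation relation $\mb{\kappa}^\pr=\mb{\xi}^{-1}\circ\mb{\kappa}\circ\mb{\xi}$ from Proposition~\ref{anomalyz}, and your decomposition $\mb{\kappa}^\pr-\mb{\kappa}=(\mb{\xi}^{-1}-\I_H)\circ\mb{\kappa}\circ\mb{\xi}+\mb{\kappa}\circ(\mb{\xi}-\I_H)$, using $\mb{\xi}\equiv\I_H\bmod\kbar$ and $\kbar^n\mid\mb{\kappa}$, is precisely the bookkeeping that makes this immediate. No gaps.
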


\subsection{Homotopy \texorpdfstring{$\kbar$}{h-bar}-divisibility}

The purpose of this subsection is to  define a $\Bbbk$-linear operator 
\[
\nabla_{(-\kbar)^{-1}}: \Hom\big(\Xbar T(H), \sC\big){\kkbar}\longrightarrow \Hom\big(\Xbar T(H), \sC\big){\kkbar},
\]
which will be used to prove a key technical Lemma \ref{hodge} for constructing a distinguished homotopical family of
quantum observables in the next section.

Fix a classical off-to-on-shell retract $(f,h,{s} )$
and  consider the associated deformation quantization
$\xymatrix{\left(H{\kkbar}, 1_H,\mb{\kappa}\right)\ar[r]^{\mb{f}}&\left(\sC{\kkbar},  1_\sC,\mb{K}\right)}$
of $\xymatrix{\left(H, 1_H,0\right)\ar[r]^{{f}}&\left(\sC,  1_\sC,K\right)}$.
Introduce the following $\Bbbk{\kkbar}$-linear operators
\begin{align*}
\mb{K}_\mathit{H\sC}:\;&\Hom\big(\Xbar T(H), \sC\big){\kkbar} \rightarrow \Hom\big(\Xbar T(H), \sC\big){\kkbar}
,\\
\mb{\kappa}_\mathit{HH}:\;&\Hom\big(\Xbar T(H), H\big){\kkbar}\rightarrow \Hom\big(\Xbar T(H), H\big){\kkbar}
,
\end{align*}
defined for all $n\geq 1$ and $v_1,\ldots,v_n \in H$ via
\begin{multline*}
\big(\mb{K}_\mathit{H\sC}\mb{\O}\big) ( {v}_1,\ldots,{v}_n)
\coloneqq{}
\\\mb{K}\mb{\O}( {v}_1,\ldots,{v}_n)
-(-1)^{|\mb{\O}|}\sum_{\mathclap{j=1}}^n 
\mb{\O}(J {v}_1,\ldots,Jv_{i-1},\mb{\kappa}{v}_j,v_{j+1},\ldots,{v}_n)
,\end{multline*}
\begin{multline*}
\big(\mb{\kappa}_\mathit{HH} \mb{\omega}\big) ( {v}_1,\ldots,{v}_n)
\coloneqq{}
\\\mb{\kappa}\mb{\omega}( {v}_1,\ldots,{v}_n)
-(-1)^{|\mb{\omega}|}\sum_{\mathclap{j=1}}^n 
\mb{\omega} (J {v}_1,\ldots,Jv_{i-1},\mb{\kappa}{v}_j,v_{j+1},\ldots,{v}_n),
\end{multline*}
where $\mb{\O}\in \Hom \big(\Xbar{T}(H), \sC\big){\kkbar}$ and $\mb{\omega}\in\Hom\big(\Xbar T(H), H\big){\kkbar}$.
It is trivial to check that $\mb{K}_\mathit{H\sC}^2=\mb{\kappa}_\mathit{HH}^2=0$. Note also 
that $\mb{f}\circ \mb{\omega}\in\Hom\big(\Xbar T(H), \sC\big){\kkbar}$
and $\mb{K}_\mathit{H\sC}\big(\mb{f}\circ \mb{\omega}\big) = \mb{f}\circ\big(\mb{\kappa}_\mathit{HH} \mb{\omega}\big)$ since
$
\mb{K}_\mathit{H\sC}(\mb{f}\circ \mb{\omega}) 
= \mb{K}\circ\mb{f}\circ \mb{\omega} -(-1)^{|\mb{\omega}|}\mb{f}\circ  \mb{\omega}\circ  \mb{\kappa}=\mb{f}\circ 
\left( \mb{\kappa}\circ \mb{\omega}-(-1)^{|\mb{\omega}|} \mb{\omega} \circ\mb{\kappa}\right)
$.

For a homogeneous element $\mb{\O} \in \Hom\big(\Xbar{T}(H), \sC\big){\kkbar}$, we 
define  $\nabla_{(-\kbar)^{-1}}\mb{\O}$ via
\eqn{hdbh}{
(-\kbar)\nabla_{(-\kbar)^{-1}}\mb{\O} \coloneqq{}\mb{\O} -\mb{f}\circ h\circ \O - \mb{K}_\mathit{H\sC} (s\circ \O) - s\circ K\circ \O.
}
Note that the right hand side vanishes is divisible by $\kbar$ because it vanishes in the classical limit:
\[
(\I_\sC - f\circ h - K\circ s -s\circ K)\circ \O=0,
\]
Therefore 
$\nabla_{\kbar^{-1}}\mb{\O}\in \Hom\big(\Xbar T(H), \sC\big){\kkbar}$.
If  the classical limit of $\mb{\O}$ vanishes, we have $\nabla_{(-\kbar)^{-1}}\mb{\O}=\Fr{1}{(-\kbar)}\mb{\O}$.

\begin{proposition}\label{dvdbyh}
Fix $k \in \Z$ with $k\geq 1$ and assume that there is a trio  $(\mb{\Omega}, \mb{\Xi}, \mb{\omega})$, where
$\mb{\Omega}$ and $\mb{\Xi}$ are in $\Hom\big(\Xbar{T}(H), \sC\big){\kkbar}$ and $\mb{\omega}$ is in $\Hom\big(\Xbar T(H), H\big){\kkbar}$,
which satisfies the following relation:
\[
\mb{K}_{\!H\!\sC} \mb{\Omega}= (-\kbar)^{k} \mb{\Xi} - \mb{f}\circ \mb{\omega}.
\]
Define, for $j=0,1,\ldots, k$, 
\begin{align*}
\mb{\Omega}^{[j]}&\coloneqq{}\left(\nabla_{(-\kbar)^{-1}}\right)^j \mb{\Omega}
,\\
(-\kbar)^j \mb{\omega}^{[j]}&\coloneqq{}\mb{\omega} +\sum_{\mathclap{i=0}}^{j-1} (-\kbar)^i\mb{\kappa}_{\!H\!H}\left( h\circ \Omega^{[i]}\right)
\end{align*}
where  $\O^{[j]}$ denotes the classical limit of $\mb{\Omega}^{[j]}$.  Then, we have
\begin{enumerate}
\item
 for $j=0,1,\ldots, k$,
\[
\mb{K}_{\!H\!\sC}\mb{\Omega}^{[j]}=(-\kbar)^{k-j}\mb{\Xi} - \mb{f}\circ \mb{\omega}^{[j]}
\quad\text{ and }\quad 
\begin{cases}
\mb{\Omega}^{[j]}\in \Hom\big(T(H), \sC\big){\kkbar}
,\\
\mb{\omega}^{[j]} \in \Hom\big(T(H), H\big){\kkbar}
,
\end{cases}
\]
\item  for $i=0,1,\ldots, k-1$,
\[
K\circ\Omega^{[i]}=0.
\]
\end{enumerate}
\end{proposition}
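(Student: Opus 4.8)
The plan is to prove both statements simultaneously by induction on $j$. The base case $j=0$ is the hypothesis together with the observation that $\mb{\Omega}^{[0]}=\mb{\Omega}$ and $\mb{\omega}^{[0]}=\mb{\omega}$ lie in the stated spaces by assumption, while $K\circ \Omega^{[0]}=0$ needs to be extracted from the hypothesis $\mb{K}_{H\sC}\mb{\Omega}=(-\kbar)^k\mb{\Xi}-\mb{f}\circ\mb{\omega}$ by passing to the classical limit: since $k\geq 1$, the classical limit reads $K\circ\Omega - f\circ\omega^{(0)}=0$ up to the $\mb{\kappa}$-correction terms, and applying $K$ (using $K\circ f=0$, $K^2=0$) kills the right-hand side, giving $K\circ\Omega^{[0]}=0$ — provided $k-0\geq 1$, which is why the range in (2) stops at $k-1$.

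For the inductive step, assume the conclusions hold for some $j<k$. First I would compute $\mb{K}_{H\sC}\mb{\Omega}^{[j+1]}$ using the defining relation \eqref{hdbh} for $\nabla_{(-\kbar)^{-1}}$: writing $(-\kbar)\mb{\Omega}^{[j+1]}=\mb{\Omega}^{[j]}-\mb{f}\circ h\circ\Omega^{[j]}-\mb{K}_{H\sC}(s\circ\Omega^{[j]})-s\circ K\circ\Omega^{[j]}$, apply $\mb{K}_{H\sC}$ to both sides. The term $\mb{K}_{H\sC}\mb{K}_{H\sC}(s\circ\Omega^{[j]})$ vanishes since $\mb{K}_{H\sC}^2=0$; the term $\mb{K}_{H\sC}(\mb{f}\circ h\circ\Omega^{[j]})=\mb{f}\circ\mb{\kappa}_{HH}(h\circ\Omega^{[j]})$ by the intertwining identity recorded just before \eqref{hdbh}; and the term $\mb{K}_{H\sC}(s\circ K\circ\Omega^{[j]})$ vanishes because $K\circ\Omega^{[j]}=0$ by the inductive hypothesis (2) — this is exactly where the range bound is used, and it forces the parallel induction. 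Collecting terms and using the inductive hypothesis (1), $\mb{K}_{H\sC}\mb{\Omega}^{[j]}=(-\kbar)^{k-j}\mb{\Xi}-\mb{f}\circ\mb{\omega}^{[j]}$, I get $(-\kbar)\mb{K}_{H\sC}\mb{\Omega}^{[j+1]}=(-\kbar)^{k-j}\mb{\Xi}-\mb{f}\circ\mb{\omega}^{[j]}-\mb{f}\circ\mb{\kappa}_{HH}(h\circ\Omega^{[j]})$, which upon dividing by $(-\kbar)$ and invoking the recursive definition of $\mb{\omega}^{[j+1]}$ gives precisely $\mb{K}_{H\sC}\mb{\Omega}^{[j+1]}=(-\kbar)^{k-j-1}\mb{\Xi}-\mb{f}\circ\mb{\omega}^{[j+1]}$.

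The membership claims are then automatic: $\nabla_{(-\kbar)^{-1}}$ maps $\Hom(\Xbar T(H),\sC){\kkbar}$ to itself by construction (the right-hand side of \eqref{hdbh} is divisible by $\kbar$ because its classical limit vanishes by the side conditions $\I_\sC - f\circ h - K\circ s - s\circ K=0$), so $\mb{\Omega}^{[j+1]}\in\Hom(\Xbar T(H),\sC){\kkbar}$; and $\mb{\omega}^{[j+1]}\in\Hom(\Xbar T(H),H){\kkbar}$ follows from its definition together with $\mb{\kappa}_{HH}$ preserving the relevant space and the induction hypothesis on $\mb{\omega}^{[j]}$ — but I should double-check that the sum $\mb{\omega}+\sum_{i=0}^{j}(-\kbar)^i\mb{\kappa}_{HH}(h\circ\Omega^{[i]})$ is genuinely divisible by $(-\kbar)^{j+1}$; this divisibility is not obvious term-by-term and must be deduced from the equation $\mb{K}_{H\sC}\mb{\Omega}^{[j+1]}=(-\kbar)^{k-j-1}\mb{\Xi}-\mb{f}\circ\mb{\omega}^{[j+1]}$ itself by noting $\mb{f}$ is injective on the classical limit (it is a quasi-isomorphism, so in particular $h\circ f=\I$ forces $f$ injective), hence $\mb{f}\circ\mb{\omega}^{[j+1]}$ being well-defined over $\Bbbk{\kkbar}$ forces $\mb{\omega}^{[j+1]}$ to be. Finally, for (2), when $j+1\leq k-1$ one has $k-j-1\geq 1$, so taking the classical limit of $\mb{K}_{H\sC}\mb{\Omega}^{[j+1]}=(-\kbar)^{k-j-1}\mb{\Xi}-\mb{f}\circ\mb{\omega}^{[j+1]}$ yields $K\circ\Omega^{[j+1]}=f\circ(\text{classical limit of }\mb{\omega}^{[j+1]})$ plus $\mb{\kappa}$-terms that vanish classically, and applying $K$ kills everything, giving $K\circ\Omega^{[j+1]}=0$. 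The main obstacle I anticipate is the bookkeeping in the $\mb{\kappa}_{HH}$-correction terms and verifying the $\kbar$-divisibility of $\mb{\omega}^{[j+1]}$ cleanly rather than term-by-term; everything else is a direct unwinding of \eqref{hdbh} and the side conditions.
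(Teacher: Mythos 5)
Your overall strategy — simultaneous induction on $j$, applying $\mb{K}_{\!H\!\sC}$ to the defining relation of $\nabla_{(-\kbar)^{-1}}$, killing the $\mb{K}_{\!H\!\sC}^2$ term, converting $\mb{K}_{\!H\!\sC}(\mb{f}\circ -)$ into $\mb{f}\circ\mb{\kappa}_{\!H\!H}(-)$, and using $K\circ\Omega^{[j]}=0$ to drop the $s\circ K\circ\Omega^{[j]}$ term — is exactly the paper's route. But the step on which the whole induction hinges, the proof of item (2), is wrong as written. From the classical limit of the relation (valid because $k-i\geq 1$ and $\mb{\kappa}_{\!H\!H}$ vanishes classically) you get $K\circ\Omega^{[i]}=-f\circ\omega^{[i]}$, where $\omega^{[i]}$ is the classical limit of $\mb{\omega}^{[i]}$. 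You then say ``applying $K$ kills the right-hand side, giving $K\circ\Omega^{[i]}=0$.'' Applying $K$ to both sides only yields $K\circ K\circ\Omega^{[i]}=-K\circ f\circ\omega^{[i]}$, i.e.\ $0=0$: it is a tautology and gives no information about $K\circ\Omega^{[i]}$ itself. The correct move, which is what the paper does, is to apply $h$ instead: $h\circ K=0$ annihilates the left-hand side while $h\circ f=\I_H$ extracts $\omega^{[i]}$ on the right, so one first concludes $\omega^{[i]}=0$, and only then, substituting back, $K\circ\Omega^{[i]}=-f\circ\omega^{[i]}=0$.

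This omission matters twice. First, $K\circ\Omega^{[j]}=0$ is needed in your own inductive step (to discard $s\circ K\circ\Omega^{[j]}$), so the gap propagates through the whole induction. Second, the intermediate fact $\omega^{[i]}=0$ is precisely what resolves the $\kbar$-divisibility issue you flag at the end: in the recursion $(-\kbar)\mb{\omega}^{[j+1]}=\mb{\omega}^{[j]}+\mb{\kappa}_{\!H\!H}\bigl(h\circ\Omega^{[j]}\bigr)$ both summands on the right have vanishing classical limit (the first by $\omega^{[j]}=0$, the second because $\mb{\kappa}=\kbar\kappa^{(1)}+\cdots$), so the division by $(-\kbar)$ is legitimate term by term at each step — no appeal to injectivity of $\mb{f}$ or to ``well-definedness of $\mb{f}\circ\mb{\omega}^{[j+1]}$'' (which, as stated, is circular: that composite is defined via the very quantity whose membership in $\Hom\bigl(\Xbar{T}(H),H\bigr){\kkbar}$ is in question) is needed. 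Once you replace ``apply $K$'' by ``apply $h$ and use $h\circ f=\I_H$, $h\circ K=0$'' to get $\omega^{[i]}=0$ and hence $K\circ\Omega^{[i]}=0$ for $i\leq k-1$, the rest of your argument goes through and coincides with the paper's proof.
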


\begin{proof}
Recall that $\mb{\Omega}^{[j]}\in \Hom\big(T(H), \sC\big){\kkbar}$ for all
$j=0,1,\ldots, k$.
Note that $\mb{\Omega}^{[j]}$ and $\mb{\omega}^{[j]}$, $j=0,1,\ldots,k$, can be also defined recursively by
$\mb{\Omega}^{[0]}\coloneqq{}\mb{\Omega}$, $\mb{\omega}^{[0]}=\mb{\omega}$ and, for 
$1\leq j\leq k$,
\begin{align*}
\mb{\Omega}^{[j]}\coloneqq{}\nabla_{(-\kbar)^{-1}} \mb{\Omega}^{[j-1]}
,\qquad
(-\kbar)\mb{\omega}^{[j]}\coloneqq{}\mb{\omega}^{[j-1]} +\mb{\kappa}_{\!H\!H}\left( h\circ \Omega^{[j-1]}\right).
\end{align*}
From $\mb{\O}^{[0]} =\mb{\O}$ and $\mb{\omega}^{[0]}=\mb{\omega}$,  the
relation we have assumed is
\eqn{iuda}{
\mb{K}_{\!H\!\sC} \mb{\Omega}^{[0]}= (-\kbar)^{k} \mb{\Xi} - \mb{f}\circ \mb{\omega}^{[0]}.
}
From the condition that $k\geq 1$, the classical limit of this equation is
\[
K\circ\Omega^{[0]}=-f\circ \omega^{[0]},
\]
where $\omega^{[0]}\in \Hom\big(\Xbar{T}(H), H\big)$ is the classical limit of $\mb{\omega}^{[0]}$. 
Applying $h$ to this classical limit equation, we have
$0= h\circ f\circ \omega^{[0]}$ since $h\circ K=0$. From $ h\circ f=I_H$, we conclude that 
\eqn{iudb}{
\begin{aligned}
K\circ\Omega^{[0]}=0
,\\
\omega^{[0]}=0.
\end{aligned}
}
From, $\mb{\Omega}^{[1]} \coloneqq{}\nabla_{(-\kbar)^{-1}} \mb{\Omega}$ and $K\circ\Omega^{[0]}=0$, it follows that
\eqn{iudd}{
(-\kbar)\mb{\Omega}^{[1]} = \mb{\Omega}^{[0]} - \mb{f}\circ \left(h \circ \Omega^{[0]}\right) 
- \mb{K}_\mathit{H\sC}\left({s} \circ \Omega^{[0]}\right).
}
Applying $\mb{K}_\mathit{H\sC}$ to the above and using $\mb{K}_\mathit{H\sC}^2=0$ and
$\mb{K}_{\!H\!\sC}\mb{f}=0$, we obtain that
\[
(-\kbar)\mb{K}_{\!H\!\sC}\mb{\Omega}^{[1]}
=\mb{K}_{\!H\!\sC} \mb{\Omega}^{[0]} -  \mb{f}\circ\mb{\kappa}_\mathit{HH} \left(h \circ \Omega^{[0]}\right).
\]
From \eq{iuda}, the above relation gives
\eqnalign{iude}{
(-\kbar)\mb{K}_{\!H\!\sC}\mb{\Omega}^{[1]}
=&
(-\kbar)^k \mb{\Xi} 
-(-\kbar)\mb{f}\circ\mb{\omega}^{[1]},
}
where
\[
(-\kbar)\mb{\omega}^{[1]}\coloneqq{}\mb{\omega}^{[0]} +\mb{\kappa}_\mathit{HH}\left(h \circ \Omega^{[0]}\right).
\]
Note that the right-hand-side of the above equation vanishes in the classical limit since both $\mb{\omega}^{[0]}$ and 
$\mb{\kappa}_\mathit{HH}$ vanish in the classical limit. 
Therefore we have
\eqn{iudf}{
\mb{\omega}^{[1]} \in \Hom\big(T(H), H\big){\kkbar}.
}
Dividing \eq{iude} by $(-\kbar)$, we conclude that
\eqn{iudg}{
\mb{K}_{H\sC}\mb{\Omega}^{[1]}= (-\kbar)^{k-1} \mb{\Xi} 
-\mb{f}\circ\mb{\omega}^{[1]}.
}
Therefore, we are done if $k=1$. 

For $k\geq 2$, we work inductively as follows: Fix $n$ such that $1\leq n \leq k-1$ 
and assume that, 
for all $i=0,1,\ldots, n$,
\begin{enumerate}

\item
$\mb{\omega}^{[i]} \in \Hom\big(\Xbar T(H),H\big){\kkbar}$;

\item
$\mb{K}_{\!H\!\sC}\mb{\Omega}^{[i]}=(-\kbar)^{k-i}\mb{\Xi} - \mb{f}\circ \mb{\omega}^{[i]}$;

\item
$\omega^{[i]}=K\circ\O^{[i]}=0$, where $\omega^{[i]}$ is the classical limit of $\mb{\omega}^{[i]}$.

\end{enumerate}
 
From
$(-\kbar)\mb{\omega}^{[n+1]}\coloneqq{}\mb{\omega}^{[n]} +\mb{\kappa}_\mathit{HH}\left( h\circ \Omega^{[n]}\right)$
we conclude that 
\eqn{iudh}{
\mb{\omega}^{[n+1]} \in \Hom\big(\Xbar T(H),H\big){\kkbar}, 
}
since both the classical
limits of $\mb{\omega}^{[n]}$ and $\mb{\kappa}_\mathit{HH}$ vanish.
Note that
we have
\[
(-\kbar)^{n+1} \mb{\O}^{[n+1]} =
\mb{\Omega}
-\mb{f}\circ\left(\sum_{\mathclap{j=0}}^{n}(-\kbar)^j h\circ {\Omega}^{[i]}\right)
-\mb{K}_{H\sC}\left(\sum_{\mathclap{j=0}}^{n}(-\kbar)^j {s} \circ {\Omega}^{[i]} \right),
\]
where we have used $\mb{\O}^{[i]} =\left(\nabla_{(-\kbar)^{-1}}\right)^i \mb{\O}$ and 
the assumption that $K\circ\O^{[i]}=0$,
for all $i=0,1,\ldots, n$. 
Applying $\mb{K}_\mathit{H\sC}$ to the above, we have
\begin{align*}
(-\kbar)^{n+1} \mb{K}_\mathit{H\sC}\mb{\O}^{[n+1]} 
=
&
 \mb{K}_\mathit{H\sC}\mb{\Omega}
-\mb{f}\circ\left(\sum_{\mathclap{j=0}}^{n}(-\kbar)^j\mb{\kappa}_\mathit{HH}  \left(h\circ {\Omega}^{[i]}\right)\right)
\\
=
&
(-\kbar)^k\mb{\Xi}
-\mb{f}\circ\left(\mb{\omega}+\sum_{\mathclap{j=0}}^{n}(-\kbar)^j\mb{\kappa}_\mathit{HH}  \left(h\circ {\Omega}^{[i]}\right)\right)
\\
=
&
(-\kbar)^k\mb{\Xi}
-(-\kbar)^{n}\mb{f}\circ\left(\mb{\omega}^{[n]}+\mb{\kappa}_\mathit{HH}  \left(h\circ {\Omega}^{[n]}\right)\right)
.
\end{align*}
Using $(-\kbar)\mb{\omega}^{[n+1]}\coloneqq{}\mb{\omega}^{[n]} +\mb{\kappa}_\mathit{HH}\left( h\circ \Omega^{[n]}\right)$, 
the above identity becomes
\[
(-\kbar)^{n+1} \mb{K}_\mathit{H\sC}\mb{\O}^{[n+1]} =(-\kbar)^k\mb{\Xi}
-(-\kbar)^{n+1}\mb{f}\circ \mb{\omega}^{[n+1]}
\]
Since $n+1\leq k$, the above equation gives
\eqn{iudk}{
\mb{K}_\mathit{H\sC}\mb{\O}^{[n+1]} =(-\kbar)^{k-n-1}\mb{\Xi}-\mb{f}\circ \mb{\omega}^{[n+1]}.
}
For $n+1 <k$, the classical limit of this equation is $K\circ {\O}^{[n+1]} =-{f}\circ {\omega}^{[n+1]}$,
which in turn implies that 
\eqn{iudl}{
\begin{aligned}
K\circ\Omega^{[n+1]}=0
,\\
\omega^{[n+1]}=0.
\end{aligned}
} 
For $n+1=k$, we have
\eqn{iudm}{
\mb{K}_\mathit{H\sC}\mb{\O}^{[k]} =\mb{\Xi}-\mb{f}\circ \mb{\omega}^{[k]},
}
and we are done by induction.
\naturalqed
\end{proof}

As a corollary of  Proposition \ref{dvdbyh}, we conclude the following.

\begin{lemma}\label{hodge}  
For a trio  $(\mb{\Omega}, \mb{\Xi}, \mb{\omega})$ satisfying
, for $k\geq 1$;
\[
\mb{K}_{\!H\!\sC} \mb{\Omega}= (-\kbar)^{k} \mb{\Xi} - \mb{f}\circ \mb{\omega},
\qquad
\begin{cases}
\mb{\Omega},\mb{\Xi}\in \Hom\big(\Xbar T(H), \sC\big){\kkbar},\\ 
\mb{\omega} \in \Hom\big(\Xbar T(H), H\big){\kkbar},
\end{cases}
\]
we have
\eqn{solvafw}{
\mb{K}_{\!H\!\sC}\mb{\Omega}^{[k]}=\mb{\Xi} - \mb{f}\circ \mb{\omega}^{[k]}
\quad\text{ and }\quad 
\begin{cases}
\mb{\Omega}^{[k]}\in \Hom\big(\Xbar T(H), \sC\big){\kkbar}
,\\
\mb{\omega}^{[k]} \in \Hom\big(\Xbar T(H), H\big){\kkbar}
,
\end{cases}
}
where
\eqn{solvaww}{
\begin{aligned}
(-\kbar)^{k}\mb{\Omega}^{[k]}
\coloneqq{}&
\mb{\Omega}
-\mb{f}\circ\left(\sum_{\mathclap{i=0}}^{k-1}(-\kbar)^i h\circ {\Omega}^{[i]}\right)
-\mb{K}_{H\sC}\left(\sum_{\mathclap{i=0}}^{k-1}(-\kbar)^i {s} \circ {\Omega}^{[i]} \right)
,\\
(-\kbar)^k \mb{\omega}^{[k]}\coloneqq{}&\mb{\omega} +\sum_{\mathclap{i=0}}^{k-1} (-\kbar)^i\mb{\kappa}_{\!H\!H}\left( h\circ \Omega^{[i]}\right)
.
\end{aligned}
}
\end{lemma}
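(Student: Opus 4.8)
The plan is to obtain Lemma~\ref{hodge} as the special case $j=k$ of Proposition~\ref{dvdbyh}, so that essentially the only work is to confirm that the telescoped closed forms \eq{solvaww} for $\mb{\Omega}^{[k]}$ and $\mb{\omega}^{[k]}$ coincide with the recursively defined objects produced by that proposition.

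First I would observe that the hypothesis $\mb{K}_{\!H\!\sC}\mb{\Omega}=(-\kbar)^{k}\mb{\Xi}-\mb{f}\circ\mb{\omega}$ with $k\geq 1$, together with $\mb{\Omega},\mb{\Xi}\in\Hom\big(\Xbar T(H),\sC\big){\kkbar}$ and $\mb{\omega}\in\Hom\big(\Xbar T(H),H\big){\kkbar}$, is precisely the hypothesis of Proposition~\ref{dvdbyh}. That proposition produces, for $j=0,1,\dots,k$, the families $\mb{\Omega}^{[j]}=\big(\nabla_{(-\kbar)^{-1}}\big)^{j}\mb{\Omega}$ and $(-\kbar)^{j}\mb{\omega}^{[j]}=\mb{\omega}+\sum_{i=0}^{j-1}(-\kbar)^{i}\mb{\kappa}_{\!H\!H}\big(h\circ\Omega^{[i]}\big)$, and asserts in its item~(1) that $\mb{K}_{\!H\!\sC}\mb{\Omega}^{[j]}=(-\kbar)^{k-j}\mb{\Xi}-\mb{f}\circ\mb{\omega}^{[j]}$ with both $\mb{\Omega}^{[j]}$ and $\mb{\omega}^{[j]}$ topologically free (no negative powers of $\kbar$), and in its item~(2) that $K\circ\Omega^{[i]}=0$ for $0\leq i\leq k-1$. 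Setting $j=k$ in item~(1) yields exactly $\mb{K}_{\!H\!\sC}\mb{\Omega}^{[k]}=\mb{\Xi}-\mb{f}\circ\mb{\omega}^{[k]}$ together with $\mb{\Omega}^{[k]}\in\Hom\big(\Xbar T(H),\sC\big){\kkbar}$ and $\mb{\omega}^{[k]}\in\Hom\big(\Xbar T(H),H\big){\kkbar}$; since all the operators involved---$\mb{K}_{\!H\!\sC}$, $\mb{\kappa}_{\!H\!H}$, $\nabla_{(-\kbar)^{-1}}$ and composition with $\mb{f},h,s$---preserve tensor word length, the degreewise statements assemble into the asserted statements on all of $\Xbar T(H)$.

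It then remains only to check that the recursion $\mb{\Omega}^{[k]}=\nabla_{(-\kbar)^{-1}}\mb{\Omega}^{[k-1]}$, $\,(-\kbar)\mb{\omega}^{[j]}=\mb{\omega}^{[j-1]}+\mb{\kappa}_{\!H\!H}(h\circ\Omega^{[j-1]})$ agrees with the closed forms in \eq{solvaww}. Unwinding the definition \eq{hdbh} once gives $(-\kbar)\mb{\Omega}^{[k]}=\mb{\Omega}^{[k-1]}-\mb{f}\circ\big(h\circ\Omega^{[k-1]}\big)-\mb{K}_{\!H\!\sC}\big(s\circ\Omega^{[k-1]}\big)-s\circ K\circ\Omega^{[k-1]}$, and by item~(2) the last term vanishes since $k-1\leq k-1$; iterating this downward in the index and using $K\circ\Omega^{[i]}=0$ at each step $i\leq k-1$ produces $(-\kbar)^{k}\mb{\Omega}^{[k]}=\mb{\Omega}-\mb{f}\circ\big(\sum_{i=0}^{k-1}(-\kbar)^{i}h\circ\Omega^{[i]}\big)-\mb{K}_{\!H\!\sC}\big(\sum_{i=0}^{k-1}(-\kbar)^{i}s\circ\Omega^{[i]}\big)$, the first line of \eq{solvaww}; the second line of \eq{solvaww} is simply the $\mb{\omega}^{[j]}$-recursion solved in closed form. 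This telescoping is in fact already performed inside the proof of Proposition~\ref{dvdbyh} (it is the displayed identity for $(-\kbar)^{n+1}\mb{\Omega}^{[n+1]}$ with $n+1=k$), so I do not expect any real obstacle: the lemma is a one-line corollary, and the only point requiring care is bookkeeping of the index range over which $K\circ\Omega^{[i]}=0$ has been established, which is precisely $0\leq i\leq k-1$, exactly the range appearing in the sums of \eq{solvaww}. If a fully self-contained argument were wanted I would just reproduce that short downward induction; invoking Proposition~\ref{dvdbyh} makes it unnecessary.
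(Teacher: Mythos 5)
Your proposal is correct and matches the paper's route exactly: the paper states Lemma~\ref{hodge} as an immediate corollary of Proposition~\ref{dvdbyh}, and the identification of the telescoped closed forms \eq{solvaww} with the iterated $\nabla_{(-\kbar)^{-1}}$-recursion (using $K\circ\Omega^{[i]}=0$ for $0\leq i\leq k-1$ to kill the $s\circ K\circ\Omega^{[i]}$ terms) is precisely the displayed identity already established inside the proof of that proposition. Nothing further is needed.
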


\section{Mastering quantum correlations}
\label{section: mastering qc}

The purpose of this section is to introduce the master equations for level $0$ and $1$ quantum correlators
and present canonical solutions. 
Throughout this section, we fix the following data.
\begin{enumerate} 
\item a binary QFT algebra $\sC{\kkbar}_\BQ=\big(\sC{\kkbar}, 1_\sC, \,\cdot\,,\mb{K}\big)$, where
\begin{enumerate}\item the tuple $\big(\sC{\kkbar}, 1_\sC, \underline{\bell}\big)$, is the quantum descendant unital $sL_\infty$-algebra
 and 
\item the tuple $\big(H{\kkbar}, 1_H, \mb{\kappa}\big)$ is the on-shell QFT complex,
\end{enumerate}
\item a classical off-to-on-shell retraction $(f, h, s)$
and a quantization $(\mb{f}, \mb{h},\mb{s})$ of it.
\end{enumerate}

\subsection{Master equation for the level zero quantum correlators}
\label{subsec: Master equation for the level zero quantum correlators}

In this subsection, we define the master equation governing the level zero quantum correlations
and find a canonical solution.

\begin{definition}\label{masterzero}
The level zero quantum master equation is 
a system of equations
for a tuple $\left\{\grave{\mb{\pi}}^0, \mb{\eta}^{-1},\grave{\bell},\mb{\phi}^0\right\}$,
where
\[
\begin{aligned}
\grave{\mb{\pi}}^0  &\in \Hom\left(\Xbar{S}(H), H\right)^{0}{\kkbar}
,\\
\grave{\bell} &\in \Hom\left(\Xbar{S}(H), H\right)^{1}{\kkbar}
,
\end{aligned}
\qquad
\begin{aligned}
\mb{\eta}^{-1}  &\in \Hom\left(\Xbar{S}(H),\sC\right)^{-1}{\kkbar}
,\\
\mb{\phi}^0 &\in \Hom\left(\Xbar{S}(H),\sC\right)^{0}{\kkbar}
,
\end{aligned}
\]
are defined recursively for all $n\geq 1$ and homogeneous $v_1,\ldots, v_n \in H$ by the equations
\begin{align*}
\mb{f}\left(\grave{\mb{\pi}}^0_n({v}_1, \cdots, {v}_n)\right)
=&
\sum_{\mathclap{\mp\in P(n)}}^{{}}
(-\kbar)^{n-|\mp|}\e(\mp)\,
\mb{\phi}^0\big({v}_{B_1}\big)\cdot\dotsc\cdot\mb{\phi}^0\big({v}_{B_{|\mp|}}\big)
\\
&
-\mb{K} \mb{\eta}^{-1}_n({v}_1,\dotsc, {v}_n)
\\
&
-\sum_{\clapsubstack{\mp \in P(n)\\ |B_i| = n-|\mp|+1}}
(-\kbar)^{n-|\mp|}
\e(\mp)\,
\mb{\eta}^{-1}_{|\mp|}\Big( J{v}_{B_1},\dotsc, J{v}_{B_{i-1}},\grave{\bell}\!
\big({v}_{B_i}\big), {v}_{B_{i+1}},\dotsc, {v}_{B_{|\mp|}}\Big)
,\\
 \mb{\kappa}\grave{\mb{\pi}}^0_n({v}_1,\dotsc,{v}_n)
 =
&
\sum_{\clapsubstack{\mp \in P(n)\\ |B_i| = n-|\mp|+1}}
(-\kbar)^{n-|\mp|}\e(\mp)\,
\grave{\mb{\pi}}^0_{|\mp|}\Big( J{v}_{B_1}, \dotsc, J{v}_{B_{i-1}}, 
\grave{\bell}\!\big({v}_{B_i}\big), {v}_{B_{i+1}},\dotsc, {v}_{B_{|\mp|}}\Big)
,
\end{align*}
with the following initial conditions:
\[
\grave{\mb{\pi}}^0_1=\I_{H{\kkbar}}
,\qquad
\mb{\eta}^{-1}_1=0
,\qquad
\mb{\phi}^0_1=\mb{f}
,\qquad
\grave{\bell}_1=\mb{\kappa}
.
\]
\end{definition}

\begin{remark}
For $n=1$, the (level zero) quantum master equation is  
\begin{align*}
\mb{f}\circ \mb{\pi}^0_1=&\mb{\phi}_1 -\mb{K}\circ\mb{\eta}^{-1}_1 -\mb{\eta}^{-1}_1\circ \grave{\bell}_1
,\\
\mb{\kappa}\circ \grave{\mb{\pi}}^0_1 = &\grave{\mb{\pi}}^0_1\circ \grave{\bell}_1.
\end{align*}
so that the initial conditions solve the equation for $n=1$.
\naturalqed
\end{remark}

The following proposition may be regarded as the appropriate integrability condition for the level zero quantum master equation.

\begin{proposition}\label{zerodes}
Let $\left\{\grave{\mb{\pi}}^0, \mb{\eta}^{-1},\grave{\bell},\mb{\phi}^0\right\}$ be a solution to the level zero 
quantum master equation. Then, we have
\begin{itemize}
\item The $L_\infty$ structure $\underline{\grave{\bell}}=\underline{0}$ whenever $\mb{\kappa}=0$.

\item The tuple
$\left(H{\kkbar}, 1_H,\underline{\grave{\bell}}\right)$ is a topologically-free unital $sL_\infty$-algebra over $\Bbbk{\kkbar}$, i.e.,
 for all $n\geq 1$ and homogeneous ${v}_1,\ldots, {v}_n \in H$, we have
\begin{align*}
\sum_{\clapsubstack{\mp \in P(n)\\ |B_i| = n-|\mp|+1}}\e(\mp)\,
\grave{\bell}_{|\mp|}\left( J{v}_{B_1}, \cdots, J{v}_{B_{i-1}}, \grave{\bell}({v}_{B_i}),  {v}_{B_{i+1}},\dotsc,  {v}_{B_{|\mp|}}\big)\right)
=0
,\\
\grave{\bell}_{n}\big({v}_1,\dotsc, {v}_{n-1},1_H\big)=0.
\end{align*}

\item The map $\underline{\mb{\phi}}^0$ satisfies the conditions to be a quasi-isomorphism of  topologically-free  unital $sL_\infty$-algebras
$\xymatrix{\underline{\mb{\phi}}^0: \left(H{\kkbar}, 1_H,\underline{\grave{\bell}}\right)\ar@{..>}[r] 
& \left(\sC{\kkbar}, 1_\sC,\underline{\bell}\right)}$, 
i.e., for all $n\geq 1$ and homogeneous ${v}_1,\ldots, {v}_n \in H$, we have
\begin{align*}
\sum_{\clapsubstack{\mp \in P(n)}}
\e(\mp)\,
\bell_{|\mp|}\!\left(\mb{\phi}^0\left( {v}_{B_1}\right), \cdots, \mb{\phi}\big(  {v}_{B_{|\mp|}}\big)\right)
=\sum_{\clapsubstack{\mp \in P(n)\\ |B_i| = n-|\mp|+1}}\e(\mp)\,
\mb{\phi}_{|\mp|}\!\left( J{v}_{B_1}, \cdots, \grave{\bell}({v}_{B_i}),  \dotsc,  {v}_{B_{|\mp|}}\big)\right)
,\\
\mb{\phi}^0_{n}\big({v}_1,\dotsc, {v}_{n-1}, 1_H\big)= 1_\sC\cdot \d_{n,1},
\end{align*}
where $\d_{1,1}=1$ and $\d_{n,1}=0$ if $n\neq 1$,
and $\mb{\phi}^0_1: \big(H{\kkbar}, 1_H,\grave{\bell}_1\big)\rightarrow \big(\sC{\kkbar}, 1_\sC,\bell_1\big)$ 
is a pointed cochain quasi-isomorphism.

\end{itemize}
\end{proposition}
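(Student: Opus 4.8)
The plan is to extract everything from the two recursive defining identities by the same coalgebra/operator technology used in Lemmas \ref{qdslinftyalgebra} and \ref{BKftdesm}. First I would repackage the unknowns into single operators on reduced symmetric (co)algebras: set $\grave{\mb{\pi}}\in\Hom(\Xbar S(H),H)^0{\kkbar}$, $\mb{\eta}\in\Hom(\Xbar S(H),\sC)^{-1}{\kkbar}$, $\mb{\phi}\in\Hom(\Xbar S(H),\sC)^0{\kkbar}$ from the families $\grave{\mb{\pi}}^0_n$, $\mb{\eta}^{-1}_n$, $\mb{\phi}^0_n$, and introduce the rescaled versions $\breve{\grave{\bell}}_n\coloneqq(-\kbar)^{n-1}\grave{\bell}_n$, $\breve{\mb{\phi}}_n\coloneqq(-\kbar)^{n-1}\mb{\phi}^0_n$, $\breve{\mb{\eta}}_n\coloneqq(-\kbar)^{n-1}\mb{\eta}^{-1}_n$, and similarly $\breve{\grave{\mb{\pi}}}_n$. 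Let $\mb{\d}_{\grave{\bell}}=\mD(\breve{\grave{\bell}})$ be the coderivation on $\Xbar S^{\mathit{co}}(H){\kkbar}$ determined by $\breve{\grave{\bell}}$ (as in Remark \ref{qderd}), and let $\mb{P}\in\Hom(\Xbar S(H),H)^0{\kkbar}$, $\mb{\Phi}\in\Hom(\Xbar S(H),\sC)^0{\kkbar}$ be the coalgebra maps with $\breve{\grave{\mb\pi}}$, $\breve{\mb\phi}$ as corestrictions, and $\mb{E}$ the $(\mb{\Phi},\mb{\Phi})$-coderivation-type map with corestriction $\breve{\mb\eta}$, i.e. the $\La(\cdot,\cdot)$-construction of Section \ref{subsec: homotopy cat of binary qft alg} with $\breve{\mb\phi}$ in the $\psi$-slots and $\breve{\mb\eta}$ in the $\l$-slot. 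With the multiplication maps $\mb{\pi}:\Xbar S(\sC){\kkbar}\to\sC{\kkbar}$ of the descendant algebra, the two defining recursions in Definition \ref{masterzero} become precisely the operator identities
\begin{align*}
\mb{f}\circ\mb{P} &= \mb{\pi}\circ\mF(\breve{\mb\phi}) - \mb{K}\circ\mb{E} - \mb{E}\circ\mb{\d}_{\grave\bell},\\
\mb{P}\circ\mb{\d}_{\grave\bell} &= \text{(the }H\text{-valued part of) }\mb{\kappa}_{HH}\text{ acting through }\mb{P},
\end{align*}
where the second line, written out on $\Xbar S(H)$, is the statement $\proj_H\circ\mb{P}\circ\mb{\d}_{\grave\bell}=\proj_H\circ\mb{\kappa}\text{-twist}$; more usefully I would record it as $\mb{\kappa}_{HH}(\mb P)=\mb P\circ\mb{\d}_{\grave\bell}$ after extending $\mb{\kappa}$ to $\Xbar S(H){\kkbar}$ as a coderivation $\mb{\d}_\kappa$.

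Next I would establish the three bullets in order, each reduced to ``$\proj\circ(\text{something})=0$ hence $\text{something}=0$'' because $\mb{\pi}_1=\I$ and $\grave{\mb\pi}_1=\I$. For the $sL_\infty$-axioms on $\underline{\grave\bell}$: apply $\mb{\kappa}_{HH}$ (equivalently $\mb{\d}_\kappa$) to the second identity; since $\mb{\d}_\kappa\circ\mb{P}\circ\mb{\d}_{\grave\bell}=\mb{P}\circ\mb{\d}_{\grave\bell}\circ\mb{\d}_{\grave\bell}$ on one side and $\mb{\d}_\kappa^2=0$ gives zero on the other (using $\mb{\kappa}^2=\mb{\kappa}1_H=0$ from Theorem \ref{anomaly}), we get $\proj_H\circ\mb{P}\circ\mb{\d}_{\grave\bell}\circ\mb{\d}_{\grave\bell}=0$, and since $\grave{\mb\pi}_1=\I_{H{\kkbar}}$ this forces $\proj_H\circ\mb{\d}_{\grave\bell}\circ\mb{\d}_{\grave\bell}=0$, which is exactly the second family of relations in the bullet; the coderivation argument of Remark \ref{qderd} upgrades this to $\mb{\d}_{\grave\bell}\circ\mb{\d}_{\grave\bell}=0$. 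The unit relations $\grave\bell_n(v_1,\dots,v_{n-1},1_H)=0$ and $\mb{\phi}^0_n(v_1,\dots,v_{n-1},1_H)=1_\sC\delta_{n,1}$ follow by the same straightforward inductions used in Lemmas \ref{qdslinftyalgebra} and \ref{BKftdesm}, starting from the initial conditions $\grave\bell_1=\mb\kappa$, $\mb{\kappa}1_H=0$, $\mb{\phi}^0_1=\mb f$, $\mb f(1_H)=1_\sC$. For the $sL_\infty$-morphism axioms on $\underline{\mb\phi}^0$: apply $\mb K$ to the first operator identity; on the left $\mb K\circ\mb f\circ\mb P=\mb f\circ\mb\kappa\circ\mb P$ by $\mb K\circ\mb f=\mb f\circ\mb\kappa$, and $\mb f\circ\mb\kappa\circ\mb P$ rewrites via the second identity as $\mb f\circ\mb P\circ\mb\d_{\grave\bell}$, which rewrites back (first identity again) as $(\mb{\pi}\circ\mF(\breve{\mb\phi})-\mb K\circ\mb E-\mb E\circ\mb\d_{\grave\bell})\circ\mb\d_{\grave\bell}=\mb{\pi}\circ\mF(\breve{\mb\phi})\circ\mb\d_{\grave\bell}-\mb K\circ\mb E\circ\mb\d_{\grave\bell}$ using $\mb\d_{\grave\bell}^2=0$; on the right $\mb K\circ(\mb\pi\circ\mF(\breve{\mb\phi})-\mb K\circ\mb E-\mb E\circ\mb\d_{\grave\bell})=\mb\pi\circ\mb\d_{\bell}\circ\mF(\breve{\mb\phi})-\mb K\circ\mb E\circ\mb\d_{\grave\bell}$ using $\mb K\circ\mb\pi=\mb\pi\circ\mb\d_{\bell}$ and $\mb K^2=0$. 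Equating and cancelling the common $\mb K\circ\mb E\circ\mb\d_{\grave\bell}$ term leaves $\mb\pi\circ(\mb\d_{\bell}\circ\mF(\breve{\mb\phi})-\mF(\breve{\mb\phi})\circ\mb\d_{\grave\bell})=0$, and $\mb\pi_1=\I$ yields $\proj_\sC\circ(\mb\d_{\bell}\circ\mF(\breve{\mb\phi})-\mF(\breve{\mb\phi})\circ\mb\d_{\grave\bell})=0$, which in components is precisely the displayed $sL_\infty$-morphism identity. Finally $\mb\phi^0_1=\mb f$ is a pointed cochain quasi-isomorphism by construction (Proposition \ref{anomalyx}: $\mb f$ is a quasi-isomorphism of QFT complexes $(H{\kkbar},1_H,\mb\kappa)\to(\sC{\kkbar},1_\sC,\mb K)$), and when $\mb\kappa=0$ the relation $\mb P\circ\mb\d_{\grave\bell}=\mb\kappa_{HH}(\mb P)=0$ together with $\grave{\mb\pi}_1=\I$ gives $\proj_H\circ\mb\d_{\grave\bell}=0$, hence $\grave\bell_n=0$ for all $n$, i.e. $\underline{\grave\bell}=\underline 0$.

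The routine parts are the two unit inductions and the bookkeeping of Koszul signs in translating the partition sums into the $\mF$, $\La$, $\mb\d$ operators; I would cite Lemmas \ref{qdslinftyalgebra}, \ref{BKftdesm} and Remarks \ref{qderd}, \ref{qcoalgd} to avoid redoing them. The genuine obstacle is the opening move: one must check that the $\kbar$-rescaled operator identities above really do hold with $\grave{\mb\pi}$, $\mb\eta$, $\mb\phi$, $\grave\bell$ landing in the claimed (non-negative-power) Hom-spaces and not merely in $(-\kbar)^{-N}\Hom(\cdots){\kkbar}$. That is, one has to verify that the level zero master equation is consistently solvable — that the recursion determining $\grave{\mb\pi}^0_n$, $\mb\eta^{-1}_n$, $\grave\bell_n$, $\mb\phi^0_n$ from lower terms produces honest elements of $\Hom(S^n H,H){\kkbar}$, $\Hom(S^n H,\sC){\kkbar}$, etc. — which is exactly the content of the canonical-solution construction of Section \ref{section: mastering qc}; here I am entitled to assume that such a solution exists, so the $\kbar$-divisibility is already built in, and the present proposition is then the purely formal consequence extracted by the coalgebra argument above. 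The only remaining subtlety is the passage from $\proj\circ(\text{coderivation square or intertwiner})=0$ to the full vanishing, which is legitimate precisely because, on a cofree (reduced symmetric) coalgebra, a coderivation or a coalgebra-map difference is determined by its corestriction — exactly the mechanism invoked in Remarks \ref{qderd} and \ref{qcoalgd}.
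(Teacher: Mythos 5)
Your proposal is correct and follows essentially the same route as the paper's proof: rewrite the two level-zero master equations as operator identities on the reduced symmetric coalgebra, apply $\mb{\kappa}$ to the second one to get $\mb{\d}_{\grave{\bell}}\circ\mb{\d}_{\grave{\bell}}=0$, apply $\mb{K}$ to the first one (using the second and $\mb{\d}_{\grave{\bell}}^2=0$) to get $\mb{\pi}^0\circ\big(\mb{\d}_{\bell}\circ\mb{\Psi}_{\mb{\phi}^0}-\mb{\Psi}_{\mb{\phi}^0}\circ\mb{\d}_{\grave{\bell}}\big)=0$, handle the unit conditions by the same easy inductions, and pass from $\proj\circ(\cdot)=0$ to full vanishing via cofreeness, using $\grave{\mb{\pi}}^0_1=\I_{H{\kkbar}}$ and $\mb{\pi}^0_1=\I_{\sC{\kkbar}}$, with the quasi-isomorphism statement reducing to $\mb{\phi}^0_1=\mb{f}$. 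The only blemishes are notational and harmless: your $\mb{E}$ is simply the packaged $\mb{\eta}^{-1}\in\Hom\big(\Xbar{S}(H),\sC\big)^{-1}{\kkbar}$ (the master equation involves only $\mb{K}\circ\mb{\eta}^{-1}$ and $\mb{\eta}^{-1}\circ\mb{\d}_{\grave{\bell}}$, not a $\La$-type mixed insertion), and the $\kbar$-rescaling of $\grave{\mb{\pi}}^0$ and $\mb{\eta}^{-1}$ is unnecessary since the defining identities already use the unrescaled maps.
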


\begin{proof}

It is obvious that $\grave{\bell}_n=0$ for all $n\geq 1$ whenever $\mb{\kappa}=0$.
It is also straightforward to check the following properties:

\begin{enumerate}[label=\alph*.,ref=\alph*]
\item (unitality of structure)
\label{item: unitality of structure}
$\grave{\bell}_{n}\big({v}_1,\dotsc, {v}_{n-1},1_H\big)=0$ for all $n\geq 1$ and $v_1,\ldots, v_{n-1} \in H$.
\item (unitality of morphism)
\label{item: unitality of morphism}
 $\mb{\phi}^0_1(1_H)=1_\sC$, 
 and  $\mb{\phi}^0_{n+1}\big({v}_1,\dotsc, {v}_{n}, 1_H\big)=0$  for all $n\geq 1$ and $v_1,\ldots, v_{n-1} \in H$.
\end{enumerate}

\begin{notation}
\label{notation: hbar extensions}
Consider the reduced symmetric coalgebra $\Xbar{S}^\mathit{co}(\sC){\kkbar}$ which is cogenerated by the topologically-free
$\Bbbk{\kkbar}$-module $\sC{\kkbar}$. 
We may extend the $\kbar$-shifted version of ${\bell}$ to a coderivation $\mb{\d}_{\bell}$
on $\Xbar{S}^\mathit{co}(\sC){\kkbar}$.
This coderivation is characterized  for all $n\geq 1$ and homogeneous $x_1,\ldots, x_n \in \sC$ by the formula
\[
\mb{\d}_{\bell}({x}_1\bm{\odot}\dotsc\bm{\odot} {x}_n)
\coloneqq{}
\sum_{\clapsubstack{\mp \in P(n)\\ |B_i| = n-|\mp|+1}}
(-\kbar)^{n-|\mp|}\e(\mp)\,
J\!{x}_{B_1}\bm{\odot} \dotsc\bm{\odot} J\!{x}_{B_{i-1}}\bm{\odot} {\bell}({x}_{B_i})\bm{\odot} {x}_{B_{i+1}}
\bm{\odot} \dotsc\bm{\odot} {x}_{B_{|\mp|}}.
\]
Define $\mb{\pi}^0 \in  \Hom\left(\Xbar{S}(\sC),\sC\right)^{0}{\kkbar}$  for all $n\geq 1$ and $\bm{x}_1,\ldots,\bm{x}_n \in \sC{\kkbar}$ to be
\[
\mb{\pi}^0\big(\bm{x}_1\bm{\odot}\ldots\bm{\odot} \bm{x}_k\big) =\bm{x}_1\cdot\ldots\cdot \bm{x}_k.
\]
Then, by the definition of the quantum descendant, we have
$\mb{K}\circ \mb{\pi}^0 =\mb{\pi}^0\circ \mb{\d}_{\bell}$, which implies that $ \mb{\d}_{\bell}\circ \mb{\d}_{\bell}=0$.

Similarly, consider the reduced symmetric coalgebra $\Xbar{S}^\mathit{co}(H){\kkbar}$ which is cogenerated by the topologically-free
$\Bbbk{\kkbar}$-module $H{\kkbar}$. 
Define a coderivation $\mb{\d}_{\!\grave{\bell}}$ on $\Xbar{S}^\mathit{co}(H){\kkbar}$ (as before)
and  a coalgebra map $\mb{\Psi}_{\mb{\phi}^0}: \Xbar{S}^\mathit{co}(H){\kkbar}\rightarrow \Xbar{S}^\mathit{co}(\sC){\kkbar}$
characterized by the following equations for all $n\geq 1$ and homogeneous $\bm{v}_1,\ldots, \bm{v}_n \in H{\kkbar}$:
\begin{align*}
\mb{\d}_{\!\grave{\bell}}(\bm{v}_1\bm{\odot}\dotsc\bm{\odot} \bm{v}_n)
&\coloneqq{}
\sum_{\clapsubstack{\mp \in P(n)\\ |B_i| = n-|\mp|+1}}
(-\kbar)^{n-|\mp|}\e(\mp)\,
J\!\bm{v}_{B_1}\bm{\odot} \dotsc\bm{\odot} J\!\bm{v}_{B_{i-1}}\bm{\odot} 
\grave{\bell}({v}_{B_i})\bm{\odot} \bm{v}_{B_{i+1}}\bm{\odot} \dotsc\bm{\odot} \bm{v}_{B_{|\mp|}}
,\\
\mb{\Psi}_{\mb{\phi}^0}\big(\bm{v}_1\bm{\odot}\dotsc\bm{\odot} \bm{v}_n\big)
&\coloneqq{}
\sum_{\clapsubstack{\mp\in P(n)}
}
(-\kbar)^{n-|\mp|}
\e(\mp)\mb{\phi}^0\!\left(\bm{v}_{B_1}\right)\bm{\odot}\ldots\bm{\odot}
\mb{\phi}^0\!\left(\bm{v}_{B_{|\mp|}}\right).
\end{align*}
\end{notation}

Using this notation the level zero quantum master equation can be written as follows:
\eqn{qmassp}{
\begin{aligned}
\mb{f}\circ\grave{\mb{\pi}}^0 &= \mb{\pi}^0\circ \mb{\Psi}_{\mb{\phi}^0} 
- \mb{K}\circ\mb{\eta}^{-1} 
-\mb{\eta}^{-1}\circ  \mb{\d}_{\!\grave{\bell}}
,\\
\mb{\kappa}\circ\grave{\mb{\pi}}^0 &=\grave{\mb{\pi}}^0 \circ  \mb{\d}_{\!\grave{\bell}}
.
\end{aligned}
}
Applying $\mb{\kappa}\circ $ to the second equation above
and using $\mb{\kappa}\circ \mb{\kappa}=0$, 
we obtain the equation
$
0=\mb{\kappa}\circ\grave{\mb{\pi}}^0 \circ \mb{\d}_{\!\grave{\bell}}
=\grave{\mb{\pi}}^0 \circ \mb{\d}_{\!\grave{\bell}}\circ \mb{\d}_{\!\grave{\bell}}
$.
From $\grave{\mb{\pi}}^0_1=\I_{H}$, we have
\eqn{qmassx}{
\proj_{H{\kkbar}}\circ \mb{\d}_{\!\grave{\bell}}\circ  \mb{\d}_{\!\grave{\bell}}=0
\Longleftrightarrow  \mb{\d}_{\!\grave{\bell}}\circ \mb{\d}_{\!\grave{\bell}}=0.
}
In components, we have, for all $n\geq 1$,
\begin{align*}
0=&\proj_{H{\kkbar}}\circ \mb{\d}_{\!\grave{\bell}}\circ \mb{\d}_{\!\grave{\bell}}
(\bm{v}_1\bm{\odot}\ldots\bm{\odot} \bm{v}_n)
\\
\equiv
&
(-\kbar)^{n-1}\sum_{\clapsubstack{\mp \in P(n)\\ |B_i| = n-|\mp|+1}}
\e(\mp)\,\grave{\bell}\Big(
J\!{v}_{B_1}\bm{\odot} \dotsc\bm{\odot} J\!\bm{v}_{B_{i-1}}\bm{\odot} 
\grave{\bell}(\bm{v}_{B_i})\bm{\odot} \bm{v}_{B_{i+1}}\bm{\odot} \dotsc\bm{\odot} \bm{v}_{B_{|\mp|}}\Big)
.
\end{align*}
Therefore  $\big(H{\kkbar}, \underline{\grave{\bell}}\big)$ is an $sL_\infty$-algebra over $\Bbbk{\kkbar}$.
Then, property~\eqref{item: unitality of structure} implies that $\Big(H{\kkbar}, 1_H,\underline{\grave{\bell}}\Big)$ 
is a unital $sL_\infty$-algebra over $\Bbbk{\kkbar}$.

Applying $\mb{K}\circ $ to the first part of \eq{qmassp} and using $\mb{K}\circ\mb{K}=0$, we obtain that
\eqn{qmassy}{
\mb{K}\circ\mb{f}\circ\grave{\mb{\pi}}^0 +\mb{K}\circ\mb{\eta}^{-1}\circ \mb{\d}_{\!\grave{\bell}}
= \mb{K}\circ\mb{\pi}^0\circ\mb{\Psi}_{\mb{\phi}^0}.
}
Consider the left hand side of this equation:
\begin{align*} 
\mb{K}\circ\mb{f}\circ\grave{\mb{\pi}}^0 
+\mb{K}\circ\mb{\eta}^{-1}\circ \mb{\d}_{\!\grave{\bell}}
&=
\mb{f}\circ \mb{\kappa}\circ\grave{\mb{\pi}}^0 
+\mb{K}\circ\mb{\eta}^{-1}\circ \mb{\d}_{\!\grave{\bell}}
\\&=
\mb{f}\circ\grave{\mb{\pi}}^0 \circ \mb{\d}_{\!\grave{\bell}}
+\mb{K}\circ\mb{\eta}^{-1}\circ \mb{\d}_{\!\grave{\bell}}
\\
&=
\mb{\pi}^0\circ \mb{\Psi}_{\mb{\phi}^0}\circ \mb{\d}_{\!\grave{\bell}}
- \mb{K}\circ\mb{\eta}^{-1}\circ \mb{\d}_{\!\grave{\bell}} 
-\mb{\d}_{\!\grave{\bell}}\circ \mb{\d}_{\!\grave{\bell}}
+\mb{K}\circ\mb{\eta}^{-1}\circ \mb{\d}_{\!\grave{\bell}}
\\
&=
\mb{\pi}^0\circ \mb{\Psi}_{\mb{\phi}^0}\circ \mb{\d}_{\!\grave{\bell}},
\end{align*}
where we have used $\mb{K}\circ\mb{f} =\mb{f}\circ \mb{\kappa}$ for the first equality, the second part of \eq{qmassp} 
for the second equality, the first part \eq{qmassp} for the third equality, and \eq{qmassx} for the last equality.
From  $\mb{K}\circ \mb{\pi}^0 =\mb{\pi}^0\circ \mb{\d}_{\!{\bell}}$, the right hand side of \eq{qmassy} is 
$\mb{K}\circ\mb{\pi}^0\circ \mb{\Psi}_{\mb{\phi}^0} = \mb{\pi}\circ \mb{\d}_{\!{\bell}}\circ \mb{\Psi}_{\mb{\phi}^0}$.
Therefore, \eq{qmassy} is equivalent to the following:
\[
\mb{\pi}^0 \circ\Big( \mb{\d}_{\!{\bell}}\circ\mb{\Psi}_{\mb{\phi}^0}
-\mb{\Psi}_{\mb{\phi}^0}\circ\mb{\d}_{\!\grave{\bell}}\Big)=0
.
\]
From $\mb{\pi}^0_1=\I_{\sC}$, we have
$\proj_{\sC{\kkbar}}\circ
\Big( \mb{\d}_{\!{\bell}}\circ\mb{\Psi}_{\mb{\phi}^0}
-\mb{\Psi}_{\mb{\phi}^0}\circ \mb{\d}_{\!\grave{\bell}}\Big)=0
$.
In components, we have, for all $n\geq 1$,
\begin{align*}
0= &
\proj_{\sC{\kkbar}}\circ\Big( \mb{\d}_{\!{\bell}}\circ\mb{\Psi}_{\mb{\phi}^0}
-\mb{\Psi}_{\mb{\phi}^0}\circ \mb{\d}_{\!\grave{\bell}}\Big)(\bm{v}_1\bm{\odot}\ldots\bm{\odot} \bm{v}_n)
\\
\equiv &
(-\kbar)^{n-1}\sum_{\clapsubstack{\mp \in P(n)}}
\e(\mp)\,
\bell\!\left(\mb{\phi}^0( \bm{v}_{B_1})\bm{\odot}\ldots\bm{\odot}\mb{\phi}^0(\bm{v}_{B_{|\mp|}})\right)
\\
&
-(-\kbar)^{n-1}\sum_{\clapsubstack{\mp \in P(n)\\ |B_i| = n-|\mp|+1}}\e(\mp)\,
\mb{\phi}^0\!\left( J{v}_{B_1}\bm{\odot}\ldots\bm{\odot} 
\grave{\bell}(\bm{v}_{B_i})\bm{\odot} \ldots\bm{\odot}  \bm{v}_{B_{|\mp|}}\right)
\end{align*}
Therefore, 
$\underline{{\mb{\phi}}^0}: \big(\sC{\kkbar}, \underline{{\bell}}\big)\rightarrow 
\big(H{\kkbar}, \underline{{\bell}}^H\big)$ is also an $sL_\infty$-morphism. Combined with
property~\eqref{item: unitality of structure}, we conclude that
$\xymatrix{\underline{\mb{\phi}^0}: \Big(H{\kkbar}, 1_H,\underline{\grave{\bell}}\Big)
\ar@{..>}[r] & \Big(\sC{\kkbar}, 1_\sC,\underline{{\bell}}\Big)}$
is a unital $sL_\infty$-morphism. 

Finally, we recall that $\bell_1=\mb{K}$, $\grave{\bell}_1=\mb{\kappa}$, and
$\mb{\phi}_1=\mb{f}$. Therefore $\underline{\mb{\phi}}^0$
is a unital $sL_\infty$-quasi-isomorphism since 
$\mb{f}:\big(H{\kkbar}, 1_H, \mb{\kappa}\big)\rightarrow \big(\sC{\kkbar}, 1_\sC, \mb{K}\big)$ 
is a pointed cochain quasi-isomorphism.
\naturalqed
\end{proof}

Now we explain a strategy to solve the level zero quantum master equation.

For $n=1$, the initial conditions solve the quantum master equation:
\[
\grave{\mb{\pi}}^0_1=\I_{H{\kkbar}}
,\qquad
\mb{\eta}^{-1}_1=0
,\qquad
\grave{\bell}_1=\mb{\kappa}
,\qquad
\mb{\phi}^{-1}_1=\mb{f}.
\]
It follows that $\mb{K}_\mathit{H\sC} \mb{\phi}^0_1=0$. 
Consider the quantum master equation for $n =2$:
\begin{align*}
\mb{f}\big(\grave{\mb{\pi}}^0_2(v_1,v_2)\big)
= 
&\mb{\phi}^0_1(v_1)\cdot \mb{\phi}^0_1(v_2) +(-\kbar)\mb{\phi}^0_2(v_1,v_2)
\\
&
-\mb{K}\mb{\eta}^{-1}_2(v_1,v_2) 
-\mb{\eta}^{-1}_2\big(\grave{\bell}_1(v_1),v_2\big)
-\mb{\eta}^{-1}_2\big(Jv_1,\grave{\bell}_1(v_2)\big)
,\\
\mb{\kappa}\grave{\mb{\pi}}^0_2(v_1,v_2) = & \grave{\mb{\pi}}^0_2\big(\grave{\bell}_1(v_1),v_2\big)
+\grave{\mb{\pi}}^0_2\big(Jv_1,\grave{\bell}_1(v_2)\big) +(-\kbar)\grave{\bell}_2(v_1,v_2).
\end{align*}
which can be written in the following form:
\begin{align*}
\mb{K}_\mathit{H\sC} \mb{\eta}^{-1}_{2}
+\mb{f}\circ \grave{\mb{\pi}}^0_{2}
-\mb{\O}^0_{2}
=
&
(-\kbar)\mb{\phi}^0_{2}
,\\
\mb{\kappa}_\mathit{HH}\grave{\mb{\pi}}^0_{2}
=&(-\kbar)\grave{\bell}_{2} 
,\\
\end{align*}
where $\mb{\O}^0_{2}\in \Hom\big(S^2H,\sC\big)^{0}{\kkbar}$ is defined by
\[
\mb{\O}^0_2(v_1,v_2) \coloneqq{}\mb{\phi}^0_1(v_1)\cdot \mb{\phi}^0_1(v_2)
.
\]
Then all we need  is to find
$\grave{\mb{\pi}}^0_2 \in \Hom\big(S^2H,H\big)^{0}{\kkbar}$
and $\mb{\eta}^{-1}_2 \in \Hom\big(S^2H,\sC\big)^{-1}{\kkbar}$ 
so that 
\[
\mb{\phi}^0_2\coloneqq{}\Fr{1}{(-\kbar)}\left(\mb{K}_\mathit{H\sC} \mb{\eta}^{-1}_{2}
+\mb{f}\circ \grave{\mb{\pi}}^0_{2}
-\mb{\O}^0_{2}\right) \in  \Hom\big(S^2H,\sC\big)^{0}{\kkbar}.
\]
Then we will also have 
$\grave{\bell}_{2}\coloneqq{}\Fr{1}{(-\kbar)}\mb{\kappa}_\mathit{HH}\grave{\mb{\pi}}^0_{2} \in \Hom\big(S^2H,\sC\big)^{1}{\kkbar}$
since $\mb{\kappa}_\mathit{HH}$ is divisible by $\kbar$. It is straightforward to check that
\[
\mb{K}_\mathit{H\sC}\mb{\O}^0_2(v_1,v_2) =(-\kbar)\bell_2\big(\mb{\phi}_1(v_1), \mb{\phi}_1(v_2)\big),
\]
so that the classical limit $\O^0_2 \in \Hom\big(S^2H,\sC\big)^0$ of $\mb{\O}^0_2$ satisfies $K\circ \O^0_2=0$. 
It follows that 
\[
\nabla_{(-\kbar)^{-1}}\mb{\O}^0_2 
=\Fr{1}{(-\kbar)}\Big(\mb{\O}^0_2 
-\mb{f}\circ h\circ \O^0_2 
- \mb{K}_\mathit{H\sC} (s\circ \O^0_2)\Big)
\in \Hom\big(S^2H,\sC\big)^{0}{\kkbar},
\]
where $\nabla_{(-\kbar)^{-1}}$ is the operator defined in \eq{hdbh}.
Therefore, the following is a solution to the level zero quantum master equation for $n=2$:
\[
\grave{\mb{\pi}}^0_2\coloneqq{}h\circ \O^0_2
,\qquad
\mb{\eta}^{-1}_2\coloneqq{}s\circ \O^0_2
,\qquad
\mb{\phi}^0_2\coloneqq{}-\mb{\nabla}_{(-\kbar)^{-1}}\mb{\O}^0_2
,\qquad
\grave{\bell}_{2}\coloneqq{}\Fr{1}{(-\kbar)}\mb{\kappa}_\mathit{HH}\grave{\mb{\pi}}^0_{2}.
\]

In general,  for $n\geq 2$,  the level zero quantum master equation can be written in the following form:
\begin{align*}
\mb{K}_\mathit{H\sC} \mb{\eta}^{-1}_{n}+\mb{f}\circ \grave{\mb{\pi}}^0_{n}
- \mb{\O}^0_{n}
=
&
(-\kbar)^{{n}-1}\mb{\phi}^0_{n}
,\\
\mb{\kappa}_\mathit{HH}\grave{\mb{\pi}}^0_{n} +\grave{\mb{\varpi}}^1_n
=&(-\kbar)^{{n}-1}\grave{\bell}_{n} 
,\\
\end{align*}
where $\mb{\O}^0_n \in \Hom\big(S^nH,\sC\big)^{0}{\kkbar}$ and $\grave{\mb{\varpi}}^1_n \in \Hom\big(S^nH,H\big)^{1}{\kkbar}$
are defined for homogeneous $v_1,\ldots, v_n \in H$ as
\begin{align*}
\mb{\O}^0_n({v}_1, \cdots, {v}_n)\coloneqq{}
&\mathbin{\hphantom{-}}
\sum_{\clapsubstack{\mp\in P(n)\\\color{red}|\mp|\neq 1}}(-\kbar)^{n-|\mp|}
\e(\mp)\mb{\phi}^0({v}_{B_1})\cdot\ldots\cdot
\mb{\phi}^0({v}_{B_{|\mp|}})
\\
&
-\sum_{\clapsubstack{\mp \in P(n)\\ |B_i| = n-|\mp|+1\\\color{red}|\mp|\neq n, 1}}
(-\kbar)^{n-|\mp|}
\e(\mp)
\mb{\eta}^{-1}_{|\mp|} \!\left( J\!{v}_{B_1},\dotsc, J\!{v}_{B_{i-1}},\grave{\bell}({v}_{B_i}), {v}_{B_{i+1}},\dotsc, {v}_{B_{|\mp|}}\right)
,\\
\grave{\mb{\varpi}}^1_n({v}_1, \cdots, {v}_n)\coloneqq{}&
-\sum_{\clapsubstack{\mp \in P(n)\\ |B_i| = n-|\mp|+1\\ {\color{red}|\mp|\neq 1, n}}}
(-\kbar)^{n-|\mp|}\e(\mp)
\grave{\mb{\pi}}^0_{|\mp|}\!\left( J\!{v}_{B_1}, \dotsc,J\!{v}_{B_{i-1}}, \grave{\bell}({v}_{B_i}), {v}_{B_{i+1}},\dotsc, {v}_{B_{|\pi|}}\right)
.
\end{align*}
Note that $\grave{\mb{\varpi}}^1_2=0$. Note also that $\mb{\O}^0_n$ 
and $\grave{\mb{\varpi}}^1_n$ depend only on 
$\left\{\mb{\pi}^0_k, \mb{\eta}^{-1}_k, \grave{\bell}_{\!k},\mb{\phi}^0_k\right\}$ for ${k < n}$.

Therefore, all we need is to find appropriate $\grave{\mb{\pi}}^0_n\in \Hom\big(S^nH,H\big)^{0}{\kkbar}$ 
and $\mb{\eta}^{-1}_n \in \Hom\big(S^nH,\sC\big)^{-1}{\kkbar}$ such that
\begin{align*}
\Fr{1}{(-\kbar)^{{n}-1}}\Big(\mb{K}_\mathit{H\sC} \mb{\eta}^{-1}_{n}
+\mb{f}\circ \grave{\mb{\pi}}^0_{n}
-\mb{\O}^0_{n}\Big) \in \Hom\big(S^nH,\sC\big)^{0}{\kkbar}
\text{ and}\\
\Fr{1}{(-\kbar)^{{n}-1}}\Big(\mb{\kappa}_\mathit{HH}\grave{\mb{\pi}}^0_{n} +\grave{\mb{\varpi}}^1_n\Big)
\in \Hom\big(S^nH,\sC\big)^{1}{\kkbar}.
\end{align*}

\begin{theorem}\label{solvemstzero}
There is a canonical solution 
$\left\{\grave{\mb{\pi}}^0, \mb{\eta}^{-1},\grave{\bell},\mb{\phi}^0\right\}$
to the level zero quantum master equation of Definition \ref{masterzero}
with
\begin{equation}
\label{eq: redundant initial conditions level zero}
\grave{\mb{\pi}}_1=\I_{H}
,\qquad
\mb{\eta}^{-1}_1=0
,\qquad
\mb{\phi}^0_1=\mb{f}
,\qquad
\grave{\bell}_1=\mb{\kappa}
,
\end{equation}
and for all $n\geq 2$
\begin{align*}
\grave{\mb{\pi}}^0_n 
&= 
\sum_{\mathclap{i=0}}^{n-2}(-\kbar)^i h\circ \big({\Omega}^{0}_n\big)^{[i]}
,
&
\mb{\phi}^0_n,
&
=-\left(\mb{\nabla}_{(-\kbar)^{-1}}\right)^{j}\mb{\O}^0_n,
\\
\mb{\eta}^{-1}_n &=\sum_{\mathclap{i=0}}^{n-2}(-\kbar)^i s\circ \big({\Omega}^{0}_n\big)^{[i]}
,
&
\grave{\bell}_n &=\Fr{1}{(-\kbar)^{n-1}}\left(
\grave{\mb{\varpi}}^1_n+\mb{\kappa}_\mathit{HH}\grave{\mb{\pi}}^0_n\right),
\end{align*}
where $ \big({\O}^{0}_n\big)^{[i]} \in \Hom\big(S^nH,\sC\big)$ is the classical limit of 
$\big(\mb{\O}^{0}_n\big)^{[i]} \coloneqq{}\left(\mb{\nabla}_{\kbar^{-1}}\right)^{i}\mb{\O}^0_n$, for $0\leq i\leq n-2$. 

For 
$v_1,\ldots, v_{n-1} \in H$, this solution satisfies
\begin{align*}
\grave{\mb{\pi}}^0_n\big(v_1,\ldots, v_{n-1}, 1_H\big)&=\grave{\mb{\pi}}^0_{n-1}\big(v_1,\ldots, v_{n-1}\big)
,\\
\mb{\eta}^{-1}_n\big(v_1,\ldots, v_{n-1}, 1_H\big)&=\mb{\eta}^{-1}_n\big(v_1,\ldots, v_{n-1}\big),
\end{align*}
\end{theorem}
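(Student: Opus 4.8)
The plan is to argue by induction on $n$, with the homotopy $\kbar$-divisibility Lemma~\ref{hodge} as the engine of the construction. The cases $n=1$ (the initial conditions \eqref{eq: redundant initial conditions level zero}) and $n=2$ have already been settled in the discussion preceding the theorem, so fix $n\ge 3$ and suppose the canonical data $\{\grave{\mb{\pi}}^0_k,\mb{\eta}^{-1}_k,\grave{\bell}_k,\mb{\phi}^0_k\}$ has been constructed in all degrees $k<n$ and solves the level zero quantum master equation there. As recorded before the theorem, in degree $n$ the master equation amounts to the two equations $\mb{K}_{H\sC}\mb{\eta}^{-1}_n+\mb{f}\circ\grave{\mb{\pi}}^0_n-\mb{\O}^0_n=(-\kbar)^{n-1}\mb{\phi}^0_n$ and $\mb{\kappa}_{HH}\grave{\mb{\pi}}^0_n+\grave{\mb{\varpi}}^1_n=(-\kbar)^{n-1}\grave{\bell}_n$, where the obstruction cochains $\mb{\O}^0_n\in\Hom(S^nH,\sC)^0\kkbar$ and $\grave{\mb{\varpi}}^1_n\in\Hom(S^nH,H)^1\kkbar$ depend only on the data in degrees $<n$. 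It therefore suffices to choose $\grave{\mb{\pi}}^0_n$ and $\mb{\eta}^{-1}_n$ rendering the two right-hand sides divisible by $(-\kbar)^{n-1}$, and then to read off $\mb{\phi}^0_n$ and $\grave{\bell}_n$ from those equations.

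The key step — and, I expect, the main obstacle — is to exhibit a trio $(\mb{\O}^0_n,\mb{\Xi}_n,\mb{\omega}_n)$, with $\mb{\Xi}_n\in\Hom(S^nH,\sC)^1\kkbar$ and $\mb{\omega}_n\in\Hom(S^nH,H)^1\kkbar$, satisfying the hypothesis of Lemma~\ref{hodge} with $k=n-1$, namely $\mb{K}_{H\sC}\mb{\O}^0_n=(-\kbar)^{n-1}\mb{\Xi}_n-\mb{f}\circ\mb{\omega}_n$. Here $\mb{\Xi}_n$ is assembled from $\bell$ applied to strings of $\mb{\phi}^0$'s (together with the lower $\mb{\eta}^{-1}$-contributions) and $\mb{\omega}_n$ is its on-shell analogue built from $\grave{\bell}$ and $\grave{\mb{\pi}}^0$; the identity is obtained by applying $\mb{K}_{H\sC}$ to the definition of $\mb{\O}^0_n$, substituting the master equations in degrees $<n$, and simplifying with $\mb{K}\circ\mb{\pi}^0=\mb{\pi}^0\circ\mb{\d}_{\bell}$, $\mb{K}\circ\mb{f}=\mb{f}\circ\mb{\kappa}$ and the associativity and graded commutativity of the product. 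This is analogous to the partition manipulation that underlies the $sL_\infty$-relations of Proposition~\ref{zerodes}, and reorganizing the sums over $P(n)$ while keeping track of the Koszul signs is the delicate part of the argument. Granting this identity, Lemma~\ref{hodge} produces $\big(\mb{\O}^0_n\big)^{[n-1]}\in\Hom(S^nH,\sC)^0\kkbar$, and unwinding the iterates $\big(\mb{\O}^0_n\big)^{[j]}$ together with the definition of $\mb{\nabla}_{(-\kbar)^{-1}}$ shows that the choices $\grave{\mb{\pi}}^0_n=\sum_{i=0}^{n-2}(-\kbar)^i h\circ\big(\Omega^0_n\big)^{[i]}$, $\mb{\eta}^{-1}_n=\sum_{i=0}^{n-2}(-\kbar)^i s\circ\big(\Omega^0_n\big)^{[i]}$ and $\mb{\phi}^0_n=-\big(\mb{\O}^0_n\big)^{[n-1]}=-\big(\mb{\nabla}_{(-\kbar)^{-1}}\big)^{n-1}\mb{\O}^0_n$ satisfy the first master equation in degree $n$, with $\mb{\phi}^0_n$ of degree $0$ and free of negative powers of $\kbar$.

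It remains to define $\grave{\bell}_n$ and check the second equation. Applying $\mb{\kappa}$ to the first (now established) equation and using $\mb{K}\circ\mb{f}=\mb{f}\circ\mb{\kappa}$, $\mb{\kappa}\circ\mb{\kappa}=0$ and $h\circ f=\I_H$ — the manipulation carried out in the proof of Proposition~\ref{zerodes} — shows that $\mb{\kappa}_{HH}\grave{\mb{\pi}}^0_n+\grave{\mb{\varpi}}^1_n$ is divisible by $(-\kbar)^{n-1}$; hence $\grave{\bell}_n:=(-\kbar)^{-(n-1)}\big(\mb{\kappa}_{HH}\grave{\mb{\pi}}^0_n+\grave{\mb{\varpi}}^1_n\big)$ lies in $\Hom(S^nH,H)^1\kkbar$ and solves the second equation by construction. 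This completes the inductive construction of the canonical solution.

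Finally, the two reduction identities are established by a further induction on $n$, run alongside the construction. By Proposition~\ref{zerodes} one has $\grave{\bell}_k(v_1,\dots,v_{k-1},1_H)=0$ and $\mb{\phi}^0_k(v_1,\dots,v_{k-1},1_H)=1_\sC\,\d_{k,1}$ for $k<n$, and by the inductive hypothesis the reductions for $\grave{\mb{\pi}}^0_k$ and $\mb{\eta}^{-1}_k$ hold for $k<n$; feeding these into the explicit formulas for $\mb{\O}^0_n$ and $\grave{\mb{\varpi}}^1_n$ gives $\mb{\O}^0_n(v_1,\dots,v_{n-1},1_H)=\mb{\O}^0_{n-1}(v_1,\dots,v_{n-1})$ and similarly for $\grave{\mb{\varpi}}^1_n$. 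One then observes that $h$, $s$, $\mb{f}$, $K$, $\mb{K}_{H\sC}$, $\mb{\kappa}_{HH}$, and hence $\mb{\nabla}_{(-\kbar)^{-1}}$, all commute with the operation of freezing the last argument to $1_H$ — the only point to check being that the terms of $\mb{K}_{H\sC}$ and $\mb{\kappa}_{HH}$ inserting $\mb{\kappa}$ into the last slot drop out because $\mb{\kappa}1_H=0$ — so that the canonical $\grave{\mb{\pi}}^0_n$ and $\mb{\eta}^{-1}_n$ restrict in degree $n-1$ to their degree-$(n-1)$ counterparts, as claimed.
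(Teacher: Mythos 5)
Your proposal is correct and follows essentially the same route as the paper: induction on $n$, establishing the identity $\mb{K}_{\!H\!\sC}\mb{\O}^0_n=(-\kbar)^{n-1}\mb{L}_n-\mb{f}\circ\grave{\mb{\varpi}}^1_n$ from the lower-degree master equations, and then invoking Lemma~\ref{hodge} with $k=n-1$ to read off $\grave{\mb{\pi}}^0_n$, $\mb{\eta}^{-1}_n$, $\mb{\phi}^0_n$, with the reduction identities coming from $\mb{\O}^0_n(v_1,\ldots,v_{n-1},1_H)=\mb{\O}^0_{n-1}(v_1,\ldots,v_{n-1})$. The only cosmetic difference is that you re-derive the $(-\kbar)^{n-1}$-divisibility of $\grave{\mb{\varpi}}^1_n+\mb{\kappa}_{\mathit{HH}}\grave{\mb{\pi}}^0_n$ by applying the differential to the first equation, whereas the paper reads it off directly from the $\mb{\omega}^{[k]}$ part of Lemma~\ref{hodge}'s conclusion.
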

\begin{remark}
The conditions of eq.~\eqref{eq: redundant initial conditions level zero} are mostly redundant; all except the first of them are explicitly required by Definition~\ref{masterzero}.
\end{remark}

\begin{proof}
Assume that 
\[
\big\{\grave{\mb{\pi}}^0_1,\ldots, \grave{\mb{\pi}}^0_{n-1}\big\}
,\quad\big\{0,\mb{\eta}^{-1}_2,\ldots, \mb{\eta}^{-1}_{n-1}\big\}
,\quad\big\{\grave{\bell}_1,\ldots, \grave{\bell}_{n-1}\big\}
,\quad\big\{\mb{\phi}^0_1,\ldots, \mb{\phi}^0_{n-1}\big\},
\]
is such a canonical solution to the master equation up to order $n-1$. 
It can be checked by an easy induction that,
for all $v_1,\ldots, v_{n-1} \in H$,
\eqn{anois}{
\mb{\O}^0_{n}(v_1,\ldots, v_{n-1}, 1_H) =
\mb{\O}^0_{n-1}(v_1,\ldots, v_{n-1}).
}
It can be also checked by a tedious but straightforward computation,
similar to that in the proof of Proposition \ref{zerodes},
that
\eqn{etaemmxa}{
\mb{K}_\mathit{H\sC} \mb{\O}^0_{n}  = (-\kbar)^{n-1}\mb{L}_n  -\mb{f}\circ\grave{\mb{\varpi}}^1_n,
}
where $\mb{L}_n \in \Hom\big(S^nH,\sC\big)^{1}{\kkbar}$ defined such that,
for homogeneous $v_1,\ldots, v_n$,
\begin{align*}
\mb{L}_n({v}_1, \cdots, {v}_n)
\coloneqq{}&
\sum_{\clapsubstack{\mp \in P(n)\\{\color{red}|\mp|\neq 1}}}\ep(\mp)
\bell_{|\mp|}\!\left(\mb{\phi}^0\left( {v}_{B_1}\right), \cdots, \mb{\phi}^0\big(  {v}_{B_{|\mp|}}\big)\right)
\\
&
-\sum_{\clapsubstack{\mp \in P(n)\\ |B_i| = n-|\mp|+1\\ \color{red}|\mp|\neq 1, n}}
\e(\mp)
\mb{\phi}^0_{|\mp|}\!\left( J{v}_{B_1}, \cdots, J{v}_{B_{i-1}}, \grave{\bell}({v}_{B_i}),  {v}_{B_{i+1}},\dotsc,  {v}_{B_{|\mp|}}\big)\right)
.
\end{align*}
Therefore, we can apply Lemma \ref{hodge} to obtain that
\eqn{zzxxh}{
\begin{aligned}
(-\kbar)^{n-1}\mb{\Omega}^{0[n-1]}_n
=&
\mb{\Omega}_n
-\mb{f}\circ\left(\sum_{\mathclap{i=0}}^{n-2}(-\kbar)^i h\circ \big({\Omega}^{0}_n\big)^{[i]}\right)
-\mb{K}_{H\sC}\left(\sum_{\mathclap{i=0}}^{n-2}(-\kbar)^i s\circ  \big({\Omega}^{0}_n\big)^{[i]}\right)
,\\
(-\kbar)^{n-1} \big(\grave{\mb{\varpi}}^{1}_n\big)^{[n-1]}
=&\grave{\mb{\varpi}}^1_n +\sum_{\mathclap{i=0}}^{n-2} (-\kbar)^i\mb{\kappa}_{\!H\!H}\left( h\circ  \big({\Omega}^{0}_n\big)^{[i]}\right)
,\\
\mb{K}_{\!H\!\sC}\big(\mb{\Omega}^{0}_n\big)^{[n-1]}
=&\mb{L}_n - \mb{f}\circ \big(\grave{\mb{\varpi}}^{1}_n\big)^{[n-1]}
,
\end{aligned}
}
where  
$\big(\mb{\O}^{0}_n\big)^{[j]}\coloneqq{}\left(\mb{\nabla}_{(-\kbar)^{-1}}\right)^{j}\mb{\O}^0_n \in \Hom\big(S^nH,\sC\big)^{0}{\kkbar}$ 
and $\big({\O}^{0}_n\big)^{[j]}\in \Hom\big(S^nH,\sC\big)^0$ denotes the
classical limit of $\big(\mb{\O}^{0}_n\big)^{[j]}$.
Setting 
\begin{equation}
\label{xsolvawwa}
\begin{aligned}
\grave{\mb{\pi}}^0_n &\coloneqq{} \sum_{\mathclap{j=0}}^{n-2}(-\kbar)^j h\circ \big({\Omega}^{0}_n \big)^{[j]}
&\text{in }&\Hom\big(S^nH, \sC\big)^{0}{\kkbar}
,\\
\mb{\eta}^{-1}_n &\coloneqq{}\sum_{\mathclap{j=0}}^{n-2}(-\kbar)^j s\circ \big({\Omega}^0_n\big)^{[j]} &\text{in }& \Hom\big(S^nH, \sC\big)^{-1}{\kkbar}
,\\
\mb{\phi}^0_n&\coloneqq{}-\big(\mb{\O}^{0}_n\big)^{[n-1]}  &\text{in }& \Hom\big(S^nH,\sC\big)^{0}{\kkbar}
\text{, and}\\
\grave{\bell}_n &\coloneqq{} \big(\grave{\mb{\varpi}}^{0}_n \big)^{[n-1]}&\text{in }& \Hom\big(S^nH,\sC\big)^{1}{\kkbar}
,
\end{aligned}
\end{equation}
we have
\eqn{xsolvawwb}{
\begin{aligned}
\mb{K}_\mathit{H\sC} \mb{\eta}^{-1}_{n}
+\mb{f}\circ \grave{\mb{\pi}}^0_{n}
-\mb{\O}^0_{n}
=
&
(-\kbar)^{{n}-1}\mb{\phi}^0_{n}
,\\
\mb{\kappa}_\mathit{HH}\grave{\mb{\pi}}^0_{n} +\grave{\mb{\omega}}^0_n
=&(-\kbar)^{{n}-1}\grave{\bell}_{n} 
,\\
\mb{K}_{\!H\!\sC}\mb{\phi}^0_n +\mb{L}_n =& \mb{f}\circ \grave{\bell}_n
.
\end{aligned}
}
and we are done.
From \eq{anois}, we also obtain that, for all
$v_1,\ldots, v_{n-1} \in H$,
\eqnalign{xsolvawwc}{
\grave{\mb{\pi}}^0_n\big(v_1,\ldots, v_{n-1}, 1_H\big)&=\grave{\mb{\pi}}^0_{n-1}\big(v_1,\ldots, v_{n-1}\big)
,\\
\mb{\eta}^{-1}_n\big(v_1,\ldots, v_{n-1}, 1_H\big)&=\mb{\eta}^{-1}_n\big(v_1,\ldots, v_{n-1}\big).
}
From  the relations in \eq{xsolvawwa}, \eq{xsolvawwb} and \eq{xsolvawwc} together with the assumption,
we have shown that 
\[
\big\{\grave{\mb{\pi}}^0_1,\ldots, \grave{\mb{\pi}}^0_{n}\big\}
,\quad\big\{0,\mb{\eta}^{-1}_2,\ldots, \mb{\eta}^{-1}_{n}\big\}
,\quad\big\{\grave{\bell}_1,\ldots, \grave{\bell}_{n}\big\}
,\quad\big\{\mb{\phi}^0_1,\ldots, \mb{\phi}^0_{n}\big\},
\]
constitute a canonical solution to the master equation up to order $n$.
Therefore, we are done by induction.
\naturalqed
\end{proof}

From now on 
$\left\{\grave{\mb{\pi}}^0, \mb{\eta}^{-1},\grave{\bell},\mb{\phi}^0\right\}$
will denote the canonical solution to the master equation for the level zero quantum correlators.

\subsection{Homotopy \texorpdfstring{$\kbar$}{h-bar}-divisibility II}
\label{subsec: homotopy h-divisibility 2}

Consider the unital $sL_\infty$-algebra $\big(H{\kkbar}, 1_H, \grave{\bell}\big)$. 
It is convenient to introduce
a $\Bbbk{\kkbar}$-linear operator
$\bm{\md}_{\grave{\bell}}: \big(S(H)\otimes S^j H\big){\kkbar}\rightarrow \big(S(H)\otimes S^j H\big){\kkbar}$
defined as follows:  
\begin{align*}
\bm{\md}_{\!\grave{\bell}} &(v_1\odot\ldots\odot v_n  \otimes w_{1}\odot\ldots\odot w_{j}\big)
=\bm{\d}_{\!\grave{\bell}}(v_1\odot\ldots\odot v_n)\otimes w_{1}\odot\ldots\odot w_{j}
\\
&
+\sum_{\mathclap{i=1}}^j\ \ \sum_{\mathclap{\vs \subset [n]}}
(-\kbar)^{n -|\vs|-1}
\e(\vs)\e(i,\underline{w}) v_{\vs}\otimes  \grave{\bell}\big(v_{\vs^c} \odot w_i\big)\odot  w_{1}\odot\ldots\odot \widehat{w_{i}}\odot\ldots\odot w_{j},
\end{align*}
where $\e(i,\underline{w})$ is the sign $(-1)^{|w_i|(|w_1|+\ldots+|w_{i-1}|)}$ and $\mb{\d}_{\!\grave{\bell}}: S(H){\kkbar}\rightarrow S(H){\kkbar}$ is as in Notation~\ref{notation: hbar extensions} and satisfies $\bm{\d}_{\grave{\bell}}\circ \bm{\d}_{\grave{\bell}}=0$.
Then it is also straightforward to show that $\bm{\md}_{\grave{\bell}}\circ \bm{\md}_{\grave{\bell}}=0$.

\begin{remark}
Let ${\mb{\r}}\in \Hom\big(S(H)\otimes S^j H, V\big){\kkbar}$
for a $\Z$-graded vector space $W$. Then, we have, for $n\geq j$,
\[
\big({\mb{\r}}\circ \bm{\md}_{\!\grave{\bell}}\big)(v_1,\ldots, v_n)
=\sum_{\clapsubstack{\mp \in P(n)\\|B_{|\mp|}|=n-|\mp|+1\\ n-j+1\nsim_\mp n-j+2\nsim_\mp \ldots\nsim_\mp n}}
(-\kbar)^{n-|\mp|}
\e(\mp)\; 
\grave{\mb{\r}}_{|\mp|}\left( J{v}_{B_1},\dotsc, J{v}_{B_{i-1}},\grave{\bell}\!
\left({v}_{B_i}\right), {v}_{B_{i+1}},\dotsc, {v}_{B_{|\mp|}}\right),
\]
where ${\mb{\r}}_n(v_1,\ldots, v_n)= {\mb{\r}}(v_1\odot \ldots\odot v_{n-j}\otimes v_{n-j+1}\odot\ldots\odot v_n)$.
\naturalqed
\end{remark}

Define  $\Bbbk{\kkbar}$-linear operators
\begin{align*}
\mb{K}^\infty_\mathit{H\sC}:\;&\Hom\big(S(H)\otimes S^j H, \sC\big){\kkbar} 
\rightarrow \Hom\big(S(H)\otimes S^j H, \sC\big){\kkbar}
,\\
\mb{\kappa}^\infty_\mathit{HH}:\;&\Hom\big(S(H)\otimes S^j H, H\big){\kkbar}
\rightarrow \Hom\big(S(H)\otimes S^j H, H\big){\kkbar}
,
\end{align*}
for all $n\geq 1$ and $v_1,\ldots,v_n \in H$ via the equations
\begin{align*}
\mb{K}^\infty_\mathit{H\sC}\mb{\Xi}\coloneqq{}
&\mb{K}\circ\mb{\Xi}
-(-1)^{|\mb{\Xi}|}\mb{\Xi}\circ \bm{\md}_{\grave{\bell}}
,\\
\mb{\kappa}^\infty_\mathit{HH} \mb{\xi}
\coloneqq{}
&\mb{\kappa}\circ \mb{\xi}
-(-1)^{|\mb{\xi}|}
\mb{\omega}\circ \bm{\md}_{\grave{\bell}}
,
\end{align*}
where $\mb{\Xi}\in \Hom\big(S(H)\otimes S^j H, \sC\big){\kkbar}$ and $\mb{\xi}\in\Hom\big(S(H)\otimes S^j H, H\big){\kkbar}$.
It is trivial that 
$\mb{K}^\infty_\mathit{H\sC}\circ \mb{K}^\infty_\mathit{H\sC}
=\mb{\kappa}^\infty_\mathit{HH}\circ \mb{\kappa}^\infty_\mathit{HH}=0$. 
Note that $\mb{f}\circ \mb{\xi}\in \Hom\big(S(H)\otimes S^k H, \sC\big){\kkbar}$
and $\mb{K}^\infty_\mathit{H\sC}\big(\mb{f}\circ \mb{\xi}\big) 
= \mb{f}\circ\big(\mb{\kappa}^\infty_\mathit{HH} \mb{\omega}\big)$ 
since
$\mb{K}\circ \mb{f} =\mb{f}\circ \mb{\kappa}$.

We define a $\Bbbk$-linear operator $\nabla^\infty_{(-\kbar)^{-1}}$ on $\Hom\big(S(H)\otimes S^j H, \sC\big){\kkbar}$
as follows: given $\mb{\Xi}\in \Hom\big(S(H)\otimes S^j H, \sC\big){\kkbar}$,
\eqn{hhds}{
(-\kbar)\nabla^\infty_{(-\kbar)^{-1}}\mb{\Xi} \coloneqq{}
\mb{\Xi} 
-\mb{f}\circ h\circ \Xi 
- \mb{K}^\infty_\mathit{H\sC} (s\circ \Xi) 
- s\circ K\circ \Xi,
}
where 
$\Xi\in \Hom\big(S(H)\otimes S^j H, \sC\big)$ is the classical limit of $\mb{\Xi}$.
The classical limit of the right hand side of Eq.~\eqref{hhds} is 
${\Xi} -{f}\circ h\circ \Xi - K\circ s\circ \Xi - s\circ K\circ \Xi=0$
so it is divisible by $\kbar$. Therefore, we have
$\nabla^\infty_{\kbar^{-1}}\mb{\Xi}\in  \Hom\big(S(H)\otimes S^j H, \sC\big){\kkbar}$.
Assume that the classical limit of $\mb{\Xi}$ vanishes.  Then 
we have $\nabla_{(-\kbar)^{-1}}\mb{\Xi}=\Fr{1}{(-\kbar)}\mb{\Xi}$.
The following is direct.
\begin{lemma}\label{hodgea}  
For a triple  $(\mb{\Xi}, \mb{M}, \mb{\xi})$ satisfying, for $k\geq 1$;
\[
\mb{K}^\infty_{\!H\!\sC} \mb{\Xi}= (-\kbar)^{k} \mb{M} - \mb{f}\circ \mb{\xi},
\text{ where }
\begin{cases}
\mb{\Xi},\mb{M}\in \Hom\big(S(H)\otimes S^j H, \sC\big){\kkbar},\\ 
\mb{\xi} \in \Hom\big(S(H)\otimes S^j H, H\big){\kkbar},
\end{cases}
\]
we have
\eqn{xsolvafw}{
\mb{K}^\infty_{\!H\!\sC}\mb{\Xi}^{[k]}=\mb{M} - \mb{f}\circ \mb{\xi}^{[k]}
\quad\text{ and }\quad 
\begin{cases}
\mb{\Xi}^{[k]}\in \Hom\big(S(H)\otimes S^j H, \sC\big){\kkbar}
,\\
\mb{\xi}^{[k]} \in \Hom\big(S(H)\otimes S^j H, H\big){\kkbar}
,
\end{cases}
}
where
\eqn{ysolvaww}{
\begin{aligned}
(-\kbar)^{k}\mb{\Xi}^{[k]}
\coloneqq{}&
\mb{\Xi}
-\mb{f}\circ\left(\sum_{\mathclap{i=0}}^{k-1}(-\kbar)^i h\circ {\Xi}^{[i]}\right)
-\mb{K}^\infty_{H\sC}\left(\sum_{\mathclap{i=0}}^{k-1}(-\kbar)^i {s} \circ {\Xi}^{[i]} \right)
\text{ and}\\
(-\kbar)^k \mb{\xi}^{[k]}\coloneqq{}&\mb{\xi} +\sum_{\mathclap{i=0}}^{k-1} (-\kbar)^i\mb{\kappa}^\infty_{\!H\!H}\left( h\circ \Omega^{[i]}\right)
.
\end{aligned}
}
\end{lemma}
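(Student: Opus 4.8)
The plan is to imitate the proof of Proposition~\ref{dvdbyh} (equivalently, of its corollary Lemma~\ref{hodge}) almost verbatim, with the triple $(\mb{\Omega},\mb{\Xi},\mb{\omega})$ there replaced by $(\mb{\Xi},\mb{M},\mb{\xi})$ here and the operators $\mb{K}_{\!H\!\sC}$, $\mb{\kappa}_{\!H\!H}$, $\nabla_{(-\kbar)^{-1}}$ replaced by their spectator-decorated versions $\mb{K}^\infty_{\!H\!\sC}$, $\mb{\kappa}^\infty_{\!H\!H}$, $\nabla^\infty_{(-\kbar)^{-1}}$. First I would record the four structural facts that drive that proof and check that each survives: (i) $\mb{K}^\infty_{\!H\!\sC}\circ\mb{K}^\infty_{\!H\!\sC}=0$ and $\mb{\kappa}^\infty_{\!H\!H}\circ\mb{\kappa}^\infty_{\!H\!H}=0$, which follow from $\mb{K}\circ\mb{K}=0$, $\mb{\kappa}\circ\mb{\kappa}=0$ and $\bm{\md}_{\grave{\bell}}\circ\bm{\md}_{\grave{\bell}}=0$ (the last recorded just after the definition of $\bm{\md}_{\grave{\bell}}$, itself inherited from $\bm{\d}_{\grave{\bell}}\circ\bm{\d}_{\grave{\bell}}=0$ in Proposition~\ref{zerodes}); (ii) the intertwining $\mb{K}^\infty_{\!H\!\sC}(\mb{f}\circ\mb{\xi})=\mb{f}\circ(\mb{\kappa}^\infty_{\!H\!H}\mb{\xi})$, already noted, from $\mb{K}\circ\mb{f}=\mb{f}\circ\mb{\kappa}$; (iii) the classical retract identities $h\circ f=\I_H$, $f\circ h=\I_\sC-K\circ s-s\circ K$, $K=K\circ s\circ K$ and the side conditions $s\circ s=s\circ f=h\circ s=0$ of \eqref{sidecon}, which are untouched; and (iv) divisibility of $\mb{\kappa}^\infty_{\!H\!H}$ by $\kbar$. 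Only (iv) uses something particular to this section: $\mb{\kappa}\circ\mb{\xi}$ is divisible by $\kbar$ since $\mb{\kappa}=\kbar\k^{(1)}+\ldots$, while $\mb{\xi}\circ\bm{\md}_{\grave{\bell}}$ is divisible by $\kbar$ because $\bm{\md}_{\grave{\bell}}$ is --- every non-maximal block contributes an explicit factor of $(-\kbar)$, and the all-singletons term is an insertion of $\grave{\bell}_1=\mb{\kappa}$, again divisible by $\kbar$; this is exactly why $\nabla^\infty_{(-\kbar)^{-1}}$ of \eqref{hhds} lands in $\Hom\big(S(H)\otimes S^jH,\sC\big){\kkbar}$ with no negative powers of $\kbar$.

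With these facts in hand, I would run the induction on $j=0,1,\ldots,k$, where $\mb{\Xi}^{[j]}\coloneqq{}\big(\nabla^\infty_{(-\kbar)^{-1}}\big)^{j}\mb{\Xi}$, the symbol $\Xi^{[j]}$ denotes its classical limit, and $(-\kbar)^j\mb{\xi}^{[j]}\coloneqq{}\mb{\xi}+\sum_{i=0}^{j-1}(-\kbar)^i\mb{\kappa}^\infty_{\!H\!H}(h\circ\Xi^{[i]})$; the running hypothesis is $\mb{K}^\infty_{\!H\!\sC}\mb{\Xi}^{[j]}=(-\kbar)^{k-j}\mb{M}-\mb{f}\circ\mb{\xi}^{[j]}$, together with $\mb{\Xi}^{[j]}\in\Hom\big(S(H)\otimes S^jH,\sC\big){\kkbar}$, $\mb{\xi}^{[j]}\in\Hom\big(S(H)\otimes S^jH,H\big){\kkbar}$, and $K\circ\Xi^{[i]}=0$, $\xi^{[i]}=0$ for $i<j$. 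For the base case $j=0$ I would take the classical limit of the hypothesis $\mb{K}^\infty_{\!H\!\sC}\mb{\Xi}=(-\kbar)^{k}\mb{M}-\mb{f}\circ\mb{\xi}$; as $k\geq 1$ this reads $K\circ\Xi=-f\circ\xi$, and applying $h$ with $h\circ K=0$, $h\circ f=\I_H$ forces $\xi=0$ and then $K\circ\Xi=0$. For the step, the vanishing $K\circ\Xi^{[i]}=0$ for $i\le j$ collapses the definition of $\mb{\Xi}^{[j+1]}$ to $(-\kbar)^{j+1}\mb{\Xi}^{[j+1]}=\mb{\Xi}-\mb{f}\circ\big(\sum_{i=0}^{j}(-\kbar)^i h\circ\Xi^{[i]}\big)-\mb{K}^\infty_{\!H\!\sC}\big(\sum_{i=0}^{j}(-\kbar)^i s\circ\Xi^{[i]}\big)$; applying $\mb{K}^\infty_{\!H\!\sC}$ and using (i), (ii) and the hypothesis gives $(-\kbar)^{j+1}\mb{K}^\infty_{\!H\!\sC}\mb{\Xi}^{[j+1]}=(-\kbar)^{k}\mb{M}-(-\kbar)^{j+1}\mb{f}\circ\mb{\xi}^{[j+1]}$, whose right-hand side is divisible by $\kbar^{j+1}$ by (iv); dividing yields the $(j+1)$-level identity, and when $j+1<k$ the classical limit with $h\circ f=\I_H$ again gives $\xi^{[j+1]}=K\circ\Xi^{[j+1]}=0$ to feed the induction. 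At $j+1=k$ this is \eqref{xsolvafw}, and unwinding $\big(\nabla^\infty_{(-\kbar)^{-1}}\big)^{k}$ gives the explicit formulas \eqref{ysolvaww} for $\mb{\Xi}^{[k]}$ and $\mb{\xi}^{[k]}$.

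The main obstacle I anticipate is purely bookkeeping: one must confirm that every operator identity in the proof of Proposition~\ref{dvdbyh} --- and especially the telescoping sum that assembles $\mb{\xi}^{[j]}$ --- is insensitive to enlarging the source from $\Xbar{T}(H)$ (or $\Xbar{S}(H)$) to $S(H)\otimes S^jH$, i.e.\ that the ``spectator'' tensor factor $S^jH$ rides along passively and commutes past the classical operators $f$, $h$, $s$, which act only on the $\sC$- or $H$-valued target. Since $\bm{\md}_{\grave{\bell}}$ was defined precisely so that $\bm{\d}_{\grave{\bell}}$ acts on the first tensor factor while $\grave{\bell}$-insertions merely migrate arguments from the second factor into the first, this compatibility is immediate, which is what makes the lemma ``direct.''
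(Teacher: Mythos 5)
Your proposal is, in substance, exactly the argument the paper intends: the paper offers no proof of Lemma \ref{hodgea} beyond the phrase ``the following is direct,'' meaning that one repeats the induction of Proposition \ref{dvdbyh} (and Lemma \ref{hodge}) with $\mb{K}_{\!H\!\sC}$, $\mb{\kappa}_{\!H\!H}$, $\nabla_{(-\kbar)^{-1}}$ replaced by their spectator-decorated analogues, and your base case, inductive hypothesis, and the telescoping that produces \eq{ysolvaww} reproduce that argument faithfully.

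One caution on the single point you flagged as new, item (iv). Your justification that $\bm{\md}_{\grave{\bell}}$ is divisible by $\kbar$ (``every non-maximal block contributes an explicit factor of $(-\kbar)$, and the all-singletons term is an insertion of $\grave{\bell}_1=\mb{\kappa}$'') is accurate for the $\bm{\d}_{\!\grave{\bell}}$ summand, but not for the spectator-migration sum, whose weight is $(-\kbar)^{n-|\vs|-1}$ rather than $(-\kbar)^{n-|\mp|}$: there the terms inserting $\grave{\bell}_2$ carry no explicit power of $\kbar$, and the terms with $\vs^c=\emptyset$ carry $(-\kbar)^{-1}$ against an insertion of $\grave{\bell}_1=\mb{\kappa}$. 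Since Section 5 does not assume $\mb{\kappa}=0$, and the canonical $\grave{\bell}_2=\tfrac{1}{(-\kbar)}\mb{\kappa}_{\!H\!H}\grave{\mb{\pi}}^0_2$ need not have vanishing classical limit in the anomalous case, the divisibility of $\bm{\md}_{\grave{\bell}}$ (hence of $\mb{\kappa}^{\infty}_{\!H\!H}$, and the absence of negative powers in $\nabla^{\infty}_{(-\kbar)^{-1}}$) is not automatic from the counting you give; it is exactly the assertion the paper itself makes without proof in the paragraph following \eq{hhds}, and it becomes vacuous only in the anomaly-free case where $\underline{\grave{\bell}}=\underline{0}$. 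So either cite that assertion of the paper as an input (as the lemma's statement implicitly does), or supply a separate argument for it; granted that input, the rest of your induction goes through exactly as in the paper's intended ``direct'' proof.
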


\subsection{The master equation for level one quantum correlators} 


Let $\big( \grave{\mb{\pi}}^0, \mb{\eta}^{-1}, \grave{\bell}, \mb{\phi}^0\big)$ be the canonical solution 
to the level zero quantum master equation, which can be written in the following simpler form:
\eqn{mstzero}{
\mb{\Pi}^0= \mb{f}\circ  \grave{\mb{\pi}}^0 +\mb{K}^{\infty}_{\mathit{H\sC}}\mb{\eta}^{-1}
,\qquad 
\mb{\kappa}^{\infty}_{\mathit{HH}}\grave{\mb{\pi}}^0=0,
}
where $\mb{\Pi}^0 \in \Hom\big(\Xbar{S}(H), \sC\big)^{0}{\kkbar}$ is defined by the formula:
\eqn{zerocor}{
\mb{\Pi}^0(v_1\odot\ldots\odot v_n) \equiv
\mb{\Pi}^0_n(v_1,\ldots, v_n)=\sum_{\mathclap{\mp \in P(n)}} (-\kbar)^{n-|\mp|}\e(\mp)
\mb{\phi}^0\big(v_{B_1}\big)\cdot\ldots\cdot \mb{\phi}^0\big(v_{B_{|\mp|}}\big).
}
Note also that 
$\mb{K}^{\infty}_{\mathit{H\sC}}\mb{\Pi}^0=0$ 
since
$\mb{K}^{\infty}_{\mathit{H\sC}}\mb{\Pi}^0=\mb{K}^{\infty}_{\mathit{H\sC}}\big( \mb{f}\circ  \grave{\mb{\pi}}^0 \big)
= \mb{f}\circ \mb{\kappa}^{\infty}_{\mathit{HH}} \grave{\mb{\pi}}^0 =0$.

\begin{definition}\label{masterone}
The level one quantum master equation is 
a system of equations
for a tuple $\left\{\grave{\mb{\pi}}^{-1}, \mb{\eta}^{-2},\grave{\mb{m}}^0,\mb{\phi}^{-1}\right\}$,
where
\[
\begin{aligned}
\grave{\mb{\pi}}^{-1}&  \in \Hom\left({S}(H)\otimes S^2H, H\right)^{-1}{\kkbar}
,\\
\grave{\mb{m}}^0&\in \Hom\left({S}(H)\otimes S^2H, H\right)^{0}{\kkbar}
,
\end{aligned}
\qquad
\begin{aligned}
\mb{\eta}^{-2}  &\in \Hom\left({S}(H)\otimes S^2H,\sC\right)^{-2}{\kkbar}
,\\
\mb{\phi}^{-1} &\in \Hom\left({S}(H)\otimes S^2H,\sC\right)^{-1}{\kkbar}
,
\end{aligned}
\]
are defined recursively for $n\geq 2$ by the equations
\begin{align*}
\mb{f}\Big(\grave{\mb{\pi}}^{-1}_n(v_1,\ldots,v_n)\Big) 
=
&
 \mb{\eta}^{-1}_n\big(v_1,\ldots,v_n\big) -\mb{K}^\infty_{\mathit{H\sC}}\mb{\eta}^{-2}_n\big(v_1,\ldots,v_n\big)
\\
&
-\sum_{\clapsubstack{\mp \in P(n)\\  n-1\sim_\mp n }}
(-\kbar)^{n-|\mp|-1}
\e(\mp)
\mb{\phi}^0\big(J{v}_{B_1}\big)\cdots\mb{\phi}^0\big(J{v}_{B_{|\mp|-1}}\big)\cdot\mb{\phi}^{-1}({v}_{B_{|\mp|}})
\\
&
-\sum_{\clapsubstack{\mp \in P(n)\\|B_{|\mp|}|=n-|\mp|+1\\ n-1\sim_\mp n\\ |\mp|\neq 1}} 
(-\kbar)^{n-|\mp|-1}
\e(\mp)\; 
\mb{\eta}^{-1}_{|\mp|}\left( {v}_{B_1}, \cdots, {v}_{B_{\mp-1}},
\grave{\mb{m}}^0\big({v}_{B_{|\mp|}}\big)\right)
,\\
\grave{\mb{\pi}}^0_{n} ({v}_1,\dotsc,{v}_n)
=
&
\sum_{\clapsubstack{\mp \in P(n)\\|B_{|\mp|}|=n-|\mp|+1\\ n-1\sim_\mp n  }}
(-\kbar)^{n-|\mp|-1}
\e(\mp)\; 
\grave{\mb{\pi}}^0_{|\mp|}\left({v}_{B_1}, \cdots, {v}_{B_{\mp-1}},
\grave{\mb{m}}^0\big({v}_{B_{|\mp|}}\big)\right)
\\
&
-\mb{\kappa}^\infty_{\mathit{HH}}\grave{\mb{\pi}}^{-1}_{n}({v}_1,\dotsc,{v}_n)
,
\end{align*}
with the initial conditions
\[
\grave{\mb{\pi}}^{-1}_{2}=0
,\qquad
\mb{\eta}^{-2}_2=0
,\qquad
\grave{\mb{m}}^0_{2}=\grave{\mb{\pi}}^0_2
,\qquad
\mb{\phi}^{-1}_2=\mb{\eta}^{-1}_2
.
\]
\end{definition}

\begin{remark}
The leading, $n=2$, level one quantum master equation is
\begin{align*}
\mb{f}\Big(\grave{\mb{\pi}}^{-1}_2(v_1,v_2)\Big) =
& \mb{\eta}^{-1}_2(v_1,v_2) - \mb{\phi}^{-1}_2(v_1,v_2) - \mb{K}^\infty_{\mathit{H\sC}} \mb{\eta}^{-1}_2(v_1,v_2),
\\
\grave{\mb{\pi}}^0_2(v_1,v_2)
=
& 
\grave{\mb{m}}^0_2(v_1,v_2) - \mb{\kappa}^\infty_{\mathit{HH}}\mb{\eta}^{-2}_2(v_1,v_2),
\end{align*}
which is solved by the initial conditions.
\naturalqed

\end{remark}

\begin{remark}
Define
\begin{equation}
\begin{aligned}
\mb{\Omega}^{-1}_n (v_1,\ldots, &v_n)
\coloneqq{}
\mb{\eta}^{-1}_n(v_1,\ldots,v_n)
\\
&
-\sum_{\clapsubstack{\mp \in P(n)\\|B_{|\mp|}|=n-|\mp|+1\\ n-1\sim_\mp n \\ |\mp|\neq 1}} 
(-\kbar)^{n-|\mp|-1}
\e(\mp)\; 
\mb{\eta}^{-1}_{|\mp|}\left( {v}_{B_1}, \cdots, {v}_{B_{\mp-1}},
\grave{\mb{m}}^0\big({v}_{B_{|\mp|}}\big)\right)
\\
&
-\sum_{\clapsubstack{\mp \in P(n)\\  n-1\sim_\mp n \\\color{red} |\mp|\neq 1}}
(-\kbar)^{n-|\mp|-1}
\e(\mp)
\mb{\phi}^0\big(J{v}_{B_1}\big)\cdots\mb{\phi}^0\big(J{v}_{B_{|\mp|-1}}\big)\cdot\mb{\phi}^{-1}({v}_{B_{|\mp|}})
,
\end{aligned}
\end{equation}
and
\begin{equation}
\begin{aligned}
\label{mstoneb}
\grave{\mb{\varpi}}^0_{n}({v}_1,\dotsc, &{v}_n)
\coloneqq{}
\grave{\mb{\pi}}^0_{n}({v}_1,\dotsc,{v}_n)
\\
&-
\sum_{\clapsubstack{\mp \in P(n)\\|B_{|\mp|}|=n-|\mp|+1\\ n-1\sim_\mp n\\ \color{red}|\mp|\neq 1}}
(-\kbar)^{n-|\mp|-1}
\e(\mp)\; 
\grave{\mb{\pi}}^0_{|\mp|}\left({v}_{B_1}, \cdots, {v}_{B_{\mp-1}},
\grave{\mb{m}}^0\big({v}_{B_{|\mp|}}\big)\right)
.
\end{aligned}
\end{equation}
Note that  
$\mb{\O}^{-1}_{n}$ is in $\Hom\big(S^{n-2} H\otimes S^2H, \sC\big)^{-1}{\kkbar}$,
that $\grave{\mb{\varpi}}^0_n$ is in $\Hom\big(S^{n-2}H\otimes S^2H,H\big)^{0}{\kkbar}$, and both depend only on the families
\[
\left\{\grave{\mb{\pi}}^{-1}_2,\ldots,\grave{\mb{\pi}}^{-1}_{n-1}\right\}
,\quad
\left\{\mb{\eta}^{-2}_2,\ldots,\mb{\eta}^{-2}_{n-1}\right\}
,\quad
\left\{\mb{\phi}^{-1}_2,\ldots,\mb{\phi}^{-1}_{n-1}\right\}
,\quad
\left\{\grave{\mb{m}}^0_2,\ldots,\grave{\mb{m}}^0_{n-1}\right\},
\]
and the canonical solution to the level zero quantum master equation. Then the level one quantum master equation can be redefined as
\eqnalign{mstone}{
(-\kbar)^{n-2} \mb{\phi}^{-1}_n
=
& 
\mb{\Omega}^{-1}_n
-\mb{f}\circ\grave{\mb{\pi}}^{-1}_{n} 
-\mb{K}^\infty_{\mathit{H\sC}}\mb{\eta}^{-2}_{n}
,\\
(-\kbar)^{n-2}\grave{\mb{m}}^0_n
=
&
\grave{\mb{\varpi}}^0_n 
+\mb{\kappa}^\infty_{\mathit{H\sC}}\grave{\mb{\pi}}^{-1}_n
,
}
Thus the key to solving these equations is to show that
there is a pair $\big(\grave{\mb{\pi}}^{-1}_{n},\mb{\eta}^{-2}_n\big)$ so that the expressions on the right hand side of Eq.~\eqref{mstone} 
are divisible by $\kbar^{n-2}$.
\end{remark}

\begin{proposition}[Integrability of Definition \ref{masterone}]
\label{descentone}
Let $\left\{\grave{\mb{\pi}}^{-1}, \mb{\eta}^{-2},\grave{\mb{m}}^0,\mb{\phi}^{-1}\right\}$ 
be a solution to the level one quantum master
equation. Then for all $n\geq 2$ and ${v}_1,\ldots, {v}_n \in H$,
\begin{align*}
(-\kbar)\mb{\phi}^0_n({v}_1,\ldots, {v}_n)
=
&
\mathbin{\hphantom{-}}\sum_{\clapsubstack{\mp \in P(n)\\ \big|B_{|\mp|}\big|=n -|\mp|+1\\ n-1\sim_\mp n \\ |\mp|\neq n}}
\e(\mp)
\mb{\phi}^0_{|\mp|}\Big({v}_{B_1},\ldots, {v}_{B_{|\mp|-1}}, \grave{\mb{m}}^0({v}_{B_{|\mp|}})\Big)
\\
&
-\sum_{\clapsubstack{\mp \in P(n)\\|\mp|=2\\ n-1\nsim_\mp n}}
\ep(\mp)
\mb{\phi}^0\big({v}_{B_1}\big)\cdot\mb{\phi}^0\big({v}_{B_{2}}\big)
\\
&
+\sum_{\clapsubstack{\mp \in P(n)\\ |B_i| = n-|\mp|+1\\ n-1 \sim_\mp n}}
\e(\mp)
\bell_{|\mp|}\left(\mb{\phi}^0(J{v}_{B_{1}}), \ldots,\mb{\phi}^0(J{v}_{B_{|\mp|-1}}),  
\mb{\phi}^{-1}({v}_{B_{|\mp|}})\right)
\\
&
+\sum_{\clapsubstack{\mp \in P(n)\\ |B_i| = n-|\mp|+1\\ n-1\nsim_\mp n}}
\e(\mp)
\mb{\phi}^{-1}_{|\mp|} \left( J{v}_{B_1},\dotsc, J{v}_{B_{i-1}},
\grave{\bell}({v}_{B_i}), {v}_{B_{i+1}},\dotsc, {v}_{B_{|\mp|}}\right)
.
\end{align*}
\end{proposition}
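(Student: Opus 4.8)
The plan is to mimic, on the coalgebra side, the argument used in the proof of Proposition~\ref{zerodes}: translate the level one master equation into an identity between coalgebra maps and coderivations on $\Xbar S(\sC){\kkbar}$ and $S(H)\otimes S^2 H{\kkbar}$, apply the nilpotent differentials $\mb{K}$ and $\mb{\kappa}$, and extract the desired component identity from the fact that $\mb{\pi}^0_1=\I_\sC$ and $\grave{\mb{\pi}}^0_1=\I_H$. Concretely, recall from \eqref{mstzero} that the level zero solution satisfies $\mb{\Pi}^0=\mb{f}\circ\grave{\mb{\pi}}^0+\mb{K}^\infty_{\mathit{H\sC}}\mb{\eta}^{-1}$ and $\mb{\kappa}^\infty_{\mathit{HH}}\grave{\mb{\pi}}^0=0$. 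First I would apply $\mb{K}^\infty_{\mathit{H\sC}}$ to the first equation of the level one master equation \eqref{mstone}, using $\mb{K}^\infty_{\mathit{H\sC}}\circ\mb{K}^\infty_{\mathit{H\sC}}=0$ and $\mb{K}^\infty_{\mathit{H\sC}}\circ\mb{f}=\mb{f}\circ\mb{\kappa}^\infty_{\mathit{HH}}$, together with the relation $\mb{K}^\infty_{\mathit{H\sC}}\mb{\eta}^{-1}=\mb{\Pi}^0-\mb{f}\circ\grave{\mb{\pi}}^0$. This should produce an equation expressing $\mb{K}^\infty_{\mathit{H\sC}}\mb{\phi}^{-1}$, modulo $\mb{f}\circ(\text{something})$ and modulo $\mb{K}$-exact terms, in terms of $\mb{\phi}^0$, $\mb{\Pi}^0$, $\grave{\mb{m}}^0$ and $\grave{\bell}$. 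The claimed formula is then read off by comparing with the explicit form \eqref{zerocor} of $\mb{\Pi}^0$ and unwinding $\mb{K}^\infty_{\mathit{H\sC}}$ into its two pieces $\mb{K}\circ(-)$ and $(-)\circ\bm{\md}_{\grave{\bell}}$ via the Remark in Section~\ref{subsec: homotopy h-divisibility 2}.

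\textbf{Key steps, in order.}
(1) Rewrite both master equations of Definition~\ref{masterone} in the compact coalgebra form: introduce $\bm{\md}_{\grave{\bell}}$ acting on $S(H)\otimes S^2 H{\kkbar}$ and the coalgebra maps $\mb{\Psi}_{\mb{\phi}^0}$, $\mb{\Psi}_{\mb{\phi}^{-1}}$ (with the appropriate $\La$-type insertion for the degree $-1$ object $\mb{\phi}^{-1}$), so that \eqref{mstone} becomes a pair of operator identities involving $\mb{\pi}^0$ and the product-with-one-insertion map. (2) Establish the auxiliary identity $\mb{K}^\infty_{\mathit{H\sC}}\mb{\Pi}^0=0$ — already noted in the text — and more importantly compute $\mb{K}^\infty_{\mathit{H\sC}}$ applied to the ``partially multiplied'' correlator appearing on the right of the $\mb{\phi}^{-1}$ equation; this is where the $\bell_{|\mp|}$-terms in the target formula are generated, using $\mb{K}\circ\mb{\pi}^0=\mb{\pi}^0\circ\mb{\d}_{\bell}$. (3) Use $\mb{K}\circ\mb{f}=\mb{f}\circ\mb{\kappa}$ and the level zero identity $\mb{\kappa}^\infty_{\mathit{HH}}\grave{\mb{\pi}}^0=0$ to kill the $\mb{f}\circ\grave{\mb{\pi}}^{-1}$ contribution after applying $\mb{K}^\infty_{\mathit{H\sC}}$. (4) Collect: the surviving terms are exactly $\mb{K}^\infty_{\mathit{H\sC}}\mb{\phi}^{-1}$ plus the four sums in the statement; rearranging and expanding $\mb{K}^\infty_{\mathit{H\sC}}\mb{\phi}^{-1}=\mb{K}\circ\mb{\phi}^{-1}-(-1)^{|\mb{\phi}^{-1}|}\mb{\phi}^{-1}\circ\bm{\md}_{\grave{\bell}}$ isolates the sum with $\grave{\bell}$ insertions into $\mb{\phi}^{-1}$, i.e.\ the last sum in the proposition. (5) Finally invoke the unitality properties $\mb{\phi}^0_{n}(\ldots,1_H)=1_\sC\d_{n,1}$ and $\grave{\mb{m}}^0_{2}(1_H,v)=v$ to handle the boundary terms and to trim the partition ranges ($|\mp|\neq 1$, $|\mp|\neq n$) so they match those displayed.

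\textbf{Main obstacle.}
The hard part will be the sign bookkeeping and, relatedly, correctly identifying which partition-sum ranges survive. The term $\mb{\phi}^{-1}$ has degree $-1$, so every time it passes through the coderivation $\bm{\md}_{\grave{\bell}}$ or the product in $\sC$ it drags an extra Koszul sign, and the constraint $n-1\sim_\mp n$ versus $n-1\nsim_\mp n$ (inherited from the fact that $\mb{\phi}^{-1}$ and $\grave{\mb{m}}^0$ live on $S(H)\otimes S^2 H$) interacts nontrivially with the block-structure conditions $|B_{|\mp|}|=n-|\mp|+1$. I would manage this by working entirely at the level of the operators $\mb{\Psi}$, $\La$, $\bm{\md}_{\grave{\bell}}$, $\mb{\pi}^0$ on the (co)bar construction — where the identities are manifestly $\mb{K}^2=0$-type consequences — and only passing to components at the very end, so that the signs are generated mechanically rather than by hand; the bulk of the remaining work is then the routine verification, parallel to the computation displayed after \eqref{qmassy} in the proof of Proposition~\ref{zerodes}, that the chain of substitutions closes up. I expect no conceptual surprises beyond this, since the statement is precisely the ``integrability/descent'' identity that the canonical solution of Theorem~\ref{solvemstzero}, extended by Lemma~\ref{hodgea}, is built to satisfy.
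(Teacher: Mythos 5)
Your overall line of attack — apply $\mb{K}^\infty_{\mathit{H\sC}}$ to the first equation of \eq{mstone} and read the proposition off as the resulting identity — is indeed the paper's, but two of your steps hide genuine gaps. First, step (3) is wrong as stated: applying $\mb{K}^\infty_{\mathit{H\sC}}$ to $\mb{f}\circ\grave{\mb{\pi}}^{-1}_n$ produces $\mb{f}\circ\mb{\kappa}^\infty_{\mathit{HH}}\grave{\mb{\pi}}^{-1}_n$, and this is \emph{not} killed by the level zero identity $\mb{\kappa}^\infty_{\mathit{HH}}\grave{\mb{\pi}}^{0}=0$, which concerns the level zero correlator, not $\grave{\mb{\pi}}^{-1}$. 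One must instead substitute the \emph{second} equation of \eq{mstone}, $\mb{\kappa}^\infty_{\mathit{HH}}\grave{\mb{\pi}}^{-1}_n=(-\kbar)^{n-2}\grave{\mb{m}}^0_n-\grave{\mb{\varpi}}^0_n$; it is exactly this substitution that supplies the $\mb{f}\circ\grave{\mb{m}}^0_n$ on the right of \eq{dstone} and the $\mb{f}\circ\grave{\mb{\varpi}}^0_n$ that must cancel against the $\grave{\mb{\pi}}^0$-with-$\grave{\mb{m}}^0$-insertion terms coming from $\mb{K}^\infty_{\mathit{H\sC}}\mb{\eta}^{-1}=\mb{\Pi}^0-\mb{f}\circ\grave{\mb{\pi}}^0$ (the term $\bm{R}_1$ in the paper's computation). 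Dropping that contribution, as your plan does, leaves an identity in which the first sum of the proposition and the $\grave{\mb{m}}^0_n$ term cannot appear, so the bookkeeping does not close.

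Second, the identification of $\mb{K}^\infty_{\mathit{H\sC}}\mb{\Omega}^{-1}_n+\mb{f}\circ\grave{\mb{\varpi}}^0_n$ with $(-\kbar)^{n-2}\mb{M}^0_n$, where $\mb{M}^0_n$ is the explicit four-term expression, is not the ``routine verification parallel to the computation after \eq{qmassy}'' you expect: it is the content of the paper's Lemma \ref{keyone}, and its proof is an induction on $n$ whose hypothesis is precisely the lower-order instances of the proposition itself, $\mb{K}^\infty_{\mathit{H\sC}}\mb{\phi}^{-1}_k=\mb{M}^0_k-\mb{f}\circ\grave{\mb{m}}^0_k$ for $2\leq k\leq n-1$ (see \eq{zxzb}). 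This inductive input is unavoidable in your scheme as well, because $\mb{\Omega}^{-1}_n$ contains $\mb{\phi}^{-1}_k$, $\mb{\eta}^{-1}_k$ and $\grave{\mb{m}}^0_k$ with $k<n$, and evaluating $\mb{K}^\infty_{\mathit{H\sC}}$ on the terms $\mb{\Pi}^0\cdot\mb{\phi}^{-1}$ and on the $\grave{\mb{m}}^0$-insertions into $\mb{\eta}^{-1}$ requires exactly those lower-order identities, together with the level zero equations \eq{mstzero}, the explicit form \eq{zerocor} of $\mb{\Pi}^0$, and the failure-of-derivation formula of Lemma \ref{remBKftalg} (which generates your $\bell_2$ term on $\mb{\Pi}^0\cdot\mb{\phi}^{-1}$). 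Your proposal contains no induction and no mechanism replacing it, so even with the coalgebraic packaging the decisive step is missing; the identity is not a formal $\mb{K}^2=0$-type consequence of the bar-construction relations.
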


Here is our strategy to prove the above proposition.
Consider the level one quantum master equation in the form of~	\eq{mstone}.
Applying $\mb{K}^\infty_{\mathit{H\sC}}$ to the first equation in \eq{mstone} and
using the second equation in \eq{mstone}, we obtain that
\[
\mb{K}^\infty_{\mathit{H\sC}} \mb{\Omega}^{-1}_n
+\mb{f}\circ\grave{\mb{\varpi}}^0_n  =(-\kbar)^{n-2}\left( \mb{f}\circ\grave{\mb{m}}^0_n +\mb{K}^\infty_{\mathit{H\sC}} \mb{\phi}^{-1}_n
\right).
\]
Therefore, there must be some 
$\mb{M}^0_n \in \Hom\big(S^{n-2} H\otimes S^2H, \sC\big)^{0}{\kkbar}$, for $n\geq 2$, 
such that
$\mb{K}^\infty_{\mathit{H\sC}} \mb{\Omega}^{-1}_n+\mb{f}\circ\grave{\mb{\varpi}}^0_n=(-\kbar)^{n-2}\mb{M}^0_n$,
which implies that
\eqn{dstone}{
\mb{M}^0_n=\mb{f}\circ\grave{\mb{m}}^0_n +\mb{K}^\infty_{\mathit{H\sC}} \mb{\phi}^{-1}_n.
}
We claim that, for all $n\geq 2$,
\eqnalign{zxzaxc}{
\mb{M}^0_{n}({v}_1,\ldots, {v}_n)
=
&
(-\kbar)\mb{\phi}^0_n({v}_1,\ldots, {v}_n)
+\sum_{\clapsubstack{\mp \in P(n)\\|\mp|=2\\ n-1\nsim_\mp n}}
\ep(\mp)
\mb{\phi}^0\big({v}_{B_1}\big)\cdot\mb{\phi}^0\big({v}_{B_{2}}\big)
\\
&
-\sum_{\clapsubstack{\mp \in P(n)\\ \big|B_{|\mp|}\big|=n -|\mp|+1\\ n-1\sim_\mp n\\ \color{red} |\mp|\neq 1}}
\e(\mp)
\mb{\phi}^0_{|\mp|}\Big({v}_{B_1},\ldots, {v}_{B_{|\mp|-1}}, \grave{\mb{m}}^0({v}_{B_{|\mp|}})\Big)
\\
&
+\sum_{\clapsubstack{\mp \in P(n)\\ n-1 \sim_\mp n\\ \color{red} |\mp|\neq 1}}
\e(\mp)
\bell_{|\mp|}\left(\mb{\phi}^0\big(J{v}_{B_{1}}\big), \ldots,
\mb{\phi}^0\big(J{v}_{B_{|\mp|-1}}\big),  \mb{\phi}^{-1}\big({v}_{B_{|\mp|}}\big)\right)
,
}
so that the relation \eq{dstone} is equivalent to the above proposition.
Therefore, it suffices to show that
$\mb{K}^\infty_{\mathit{H\sC}} \mb{\Omega}^{-1}_n =(-\kbar)^{n-2}\mb{M}^0_n -\mb{f}\circ\grave{\mb{\varpi}}^0_n$
for all $n\geq 2$.

\begin{lemma}\label{keyone}
Let $\left\{\grave{\mb{\pi}}^{-1}_2,\ldots,\grave{\mb{\pi}}^{-1}_{n-1}
\big|\mb{\eta}^{-2}_2,\ldots,\mb{\eta}^{-2}_{n-1}
\big|\mb{\phi}^{-1}_2,\ldots,\mb{\phi}^{-1}_{n-1}
\big|\grave{\mb{m}}^0_2,\ldots,\grave{\mb{m}}^0_{n-1}\right\}$ be a solution to the level one quantum master equation for
$k=2,\ldots, n-1$. Then, we have
\[
\mb{K}^\infty_{\mathit{H\sC}} \mb{\Omega}^{-1}_n =(-\kbar)^{n-2}\mb{M}^0_n -\mb{f}\circ\grave{\mb{\varpi}}^0_n.
\]
\end{lemma}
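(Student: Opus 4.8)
The plan is to verify the identity by a direct expansion of $\mb{K}^\infty_{\mathit{H\sC}}\mb{\Omega}^{-1}_n$, mimicking the derivation of $\mb{K}_{\mathit{H\sC}}\mb{\O}^0_n=(-\kbar)^{n-1}\mb{L}_n-\mb{f}\circ\grave{\mb{\varpi}}^1_n$ in \eqref{etaemmxa} from the proof of Theorem~\ref{solvemstzero}. I would argue by induction on $n$, so that Lemma~\ref{keyone} itself --- equivalently the relation $\mb{K}^\infty_{\mathit{H\sC}}\mb{\phi}^{-1}_k=\mb{M}^0_k-\mb{f}\circ\grave{\mb{m}}^0_k$ of \eqref{dstone}, obtained from Lemma~\ref{keyone} at order $k$ by applying $\mb{K}^\infty_{\mathit{H\sC}}$ to the level one master equation \eqref{mstone} and using $\mb{\kappa}^\infty_{\mathit{HH}}\grave{\mb{\pi}}^{-1}_k$ together with $(\mb{K}^\infty_{\mathit{H\sC}})^2=0$ --- may be assumed at all orders $k<n$. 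Beyond that I would use: the level zero quantum master equation in the form \eqref{mstzero} ($\mb{\Pi}^0=\mb{f}\circ\grave{\mb{\pi}}^0+\mb{K}^\infty_{\mathit{H\sC}}\mb{\eta}^{-1}$ and $\mb{\kappa}^\infty_{\mathit{HH}}\grave{\mb{\pi}}^0=0$); the coalgebra identities $\mb{K}\circ\mb{\pi}^0=\mb{\pi}^0\circ\mb{\d}_{\bell}$, $\mb{\d}_{\bell}\circ\mb{\Psi}_{\mb{\phi}^0}=\mb{\Psi}_{\mb{\phi}^0}\circ\bm{\d}_{\grave{\bell}}$ and $\mb{\d}_{\bell}^2=\bm{\d}_{\grave{\bell}}^2=0$ from Proposition~\ref{zerodes}; and the $\kbar$-divisibility of the non-derivation defect of each $\bell_m$ from Lemma~\ref{remBKftalg}.

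I would then apply $\mb{K}^\infty_{\mathit{H\sC}}\mb{\Xi}=\mb{K}\circ\mb{\Xi}-(-1)^{|\mb{\Xi}|}\mb{\Xi}\circ\bm{\md}_{\grave{\bell}}$ to the three summands of $\mb{\Omega}^{-1}_n$ separately. On the $\mb{\eta}^{-1}_n$ summand, the $\mb{K}$-part together with the singleton ($\grave{\bell}_1=\mb{\kappa}$) part of $\bm{\md}_{\grave{\bell}}$ recombine through $\mb{K}^\infty_{\mathit{H\sC}}\mb{\eta}^{-1}=\mb{\Pi}^0-\mb{f}\circ\grave{\mb{\pi}}^0$ into $\mb{\Pi}^0_n-\mb{f}\big(\grave{\mb{\pi}}^0_n\big)$, while the higher ($\grave{\bell}_{\ge2}$) part of $\bm{\md}_{\grave{\bell}}$ produces partition sums with $n-1\nsim_\mp n$, which I would rewrite using the dictionary for $\mb{\r}\circ\bm{\md}_{\grave{\bell}}$ from Section~\ref{subsec: homotopy h-divisibility 2}. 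On the $\grave{\mb{m}}^0$-insertion summand the Leibniz rule for $\mb{K}^\infty_{\mathit{H\sC}}$ gives a term where the differential hits the outer $\mb{\eta}^{-1}_{|\mp|}$ (again replaced via \eqref{mstzero}) and a term where it hits the inserted $\grave{\mb{m}}^0$, which is rewritten using the second equation of \eqref{mstone} at lower order. On the $\mb{\phi}^0\cdots\mb{\phi}^0\cdot\mb{\phi}^{-1}$ product summand the binary-QFT differential $\mb{K}$ is distributed over the product by $\mb{K}\circ\mb{\pi}^0=\mb{\pi}^0\circ\mb{\d}_{\bell}$, producing: genuine derivation terms which, combined with the $\bm{\md}_{\grave{\bell}}$-part, assemble into the $\bell_{|\mp|}$- and $\grave{\bell}$-insertion sums of $\mb{M}^0_n$; a term where $\bell_1=\mb{K}$ lands on the distinguished $\mb{\phi}^{-1}$-factor, which is replaced by $\mb{M}^0_k-\mb{f}\circ\grave{\mb{m}}^0_k$ via \eqref{dstone}; and terms where $\bell_{\ge2}$ groups several $\mb{\phi}$-factors, whose surviving non-derivation pieces are harmless by Lemma~\ref{remBKftalg}.

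The last step is to reorganize the result by target: every term of the form $\mb{f}\circ(\text{a map into }H)$ must assemble into $-\mb{f}\circ\grave{\mb{\varpi}}^0_n$, using $\mb{\kappa}^\infty_{\mathit{HH}}\grave{\mb{\pi}}^0=0$ and the second equation of \eqref{mstone}; and all remaining terms must assemble into $(-\kbar)^{n-2}$ times the four-term right-hand side of \eqref{zxzaxc} defining $\mb{M}^0_n$. Both recombinations are a change of summation variable merging a partition of $[n]$ with the block-refinement produced by $\bm{\md}_{\grave{\bell}}$ (resp.\ by $\mb{\d}_{\bell}$ on the product of $\mb{\phi}$'s), exactly as in the passage to \eqref{etaemmxa}. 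I expect this bookkeeping to be the main obstacle: one must track where the distinguished pair $\{v_{n-1},v_n\}$ sits --- in $\mb{\Omega}^{-1}_n$ it always lies in the last block ($n-1\sim_\mp n$), but the differentials generate $n-1\nsim_\mp n$ terms, and one must check that these reproduce exactly the $n-1\nsim_\mp n$ sums of $\mb{M}^0_n$ and of $\grave{\mb{\varpi}}^0_n$ with the correct Koszul signs (the $J$'s, and the $\ep$ versus $\e$ conventions). Granting this, the identity follows, and then feeding it into Lemma~\ref{hodgea} yields the $\kbar^{n-2}$-divisibility of the right-hand sides of \eqref{mstone}, which closes the induction defining the canonical solution of Definition~\ref{masterone}.
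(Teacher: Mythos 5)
Your proposal follows essentially the same route as the paper's proof: induction on $n$ with the inductive hypothesis being the lemma at lower orders (equivalently $\mb{K}^\infty_{\mathit{H\sC}}\mb{\phi}^{-1}_k=\mb{M}^0_k-\mb{f}\circ\grave{\mb{m}}^0_k$ for $2\le k\le n-1$), then applying $\mb{K}^\infty_{\mathit{H\sC}}$ to $\mb{\Omega}^{-1}_n$, replacing $\mb{K}^\infty_{\mathit{H\sC}}\mb{\eta}^{-1}$ via the level zero master equation, invoking Lemma \ref{remBKftalg} for the non-derivation defect of $\mb{K}$, and reorganizing the partition sums into $(-\kbar)^{n-2}\mb{M}^0_n-\mb{f}\circ\grave{\mb{\varpi}}^0_n$. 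The only real difference is presentational: the paper tames the bookkeeping you flag as the main obstacle by first rewriting the $\mb{\phi}^0\cdots\mb{\phi}^0\cdot\mb{\phi}^{-1}$ summand of $\mb{\Omega}^{-1}_n$ as $\sum_{|\mp|=2,\;n-1\sim_\mp n}(-\kbar)^{|B_2|-2}\e(\mp)\,\mb{\Pi}^0\big(Jv_{B_1}\big)\cdot\mb{\phi}^{-1}\big(v_{B_2}\big)$ and splitting $\mb{M}^0_k=\mb{X}_k+\mb{Y}_k+\mb{Z}_k$, so that $\mb{K}^\infty_{\mathit{H\sC}}\mb{\Pi}^0=0$ and the level zero relation $\mb{\Pi}^0=\mb{f}\circ\grave{\mb{\pi}}^0+\mb{K}^\infty_{\mathit{H\sC}}\mb{\eta}^{-1}$ do most of the term-matching ($\bm{R}_1=-\mb{f}\circ\grave{\mb{\varpi}}^0_n$, $\bm{R}_i=(-\kbar)^{n-2}\mb{X}_n,\mb{Y}_n,\mb{Z}_n$) that you would otherwise carry out by hand in the coderivation formalism.
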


We remark that
the triple $\big(\mb{\Omega}^{-1}_n, \mb{M}^0_n,\grave{\mb{\varpi}}^0_n\big)$ depends only on
\[
\left\{\grave{\mb{\pi}}^{-1}_2,\ldots,\grave{\mb{\pi}}^{-1}_{n-1}
\big|\mb{\eta}^{-2}_2,\ldots,\mb{\eta}^{-2}_{n-1}
\big|\mb{\phi}^{-1}_2,\ldots,\mb{\phi}^{-1}_{n-1}
\big|\grave{\mb{m}}^0_2,\ldots,\grave{\mb{m}}^0_{n-1}\right\}.
\]
along with the canonical solution to the level zero quantum master equation.
The above lemma will be the key to solving the level one quantum master equation, but its proof is purely technical.

\begin{proof}

For $n=2$, we have $\mb{M}^0_2=\mb{\Pi}^0_2$, $\mb{\O}^{-1}_n =\mb{\eta}^{-1}_2$, 
and $\grave{\mb{m}}_2=\grave{\mb{\pi}}_2$.
Therefore the level zero quantum master equation 
$\mb{f}\circ \grave{\mb{\pi}}^0_2 =\mb{\Pi}^0_2 -\mb{K}^\infty_{\mathit{H\sC}}\mb{\eta}^{-1}_2$ for $n=2$  
implies that $\mb{K}^\infty_{\mathit{H\sC}} \mb{\O}^{-1}_2 =\mb{M}^0_2 -\mb{f}\circ\grave{\mb{\varpi}}^0_2.$

For $n\geq 3$ assume that 
$\mb{K}^\infty_{\mathit{H\sC}} \mb{\Omega}^{-1}_k =(-\kbar)^{k-2}\mb{M}^0_k -\mb{f}\circ\grave{\mb{\varpi}}^0_k$
for $k=2,\ldots, n-1$, which implies that
\eqn{zxzb}{
\mb{K}^\infty_{\mathit{H\sC}} \mb{\phi}^{-1}_k =\mb{M}^0_k- \mb{f}\circ\grave{\mb{m}}^0_k, \qquad 2\leq k \leq n-1.
}
It remains only to  check that 
$\mb{K}^\infty_{\mathit{H\sC}} \mb{\Omega}^{-1}_n =(-\kbar)^{n-2}\mb{M}^0_n -\mb{f}\circ\grave{\mb{\varpi}}^0_n$.
Rewrite $\mb{\O}^{-1}_n$ as follows:
\eqnalign{mstonec}{
\mb{\Omega}^{-1}_n (v_1,\ldots, v_n)
=
&
\mb{\eta}^{-1}_n(v_1,\ldots,v_n)
\\
&
-\sum_{\clapsubstack{\mp \in P(n)\\|B_{|\mp|}|=n-|\mp|+1\\ n-1\sim_\mp n \\ |\mp|\neq 1}}
(-\kbar)^{n-|\mp|-1}
\e(\mp)\; 
\mb{\eta}^{-1}_{|\mp|}\left( {v}_{B_1}, \cdots, {v}_{B_{\mp-1}},
\grave{\mb{m}}^0\big({v}_{B_{|\mp|}}\big)\right)
\\
&
-\sum_{\clapsubstack{\mp \in P(n)\\|\mp|=2\\ n-1\sim_\mp n}}(-\kbar)^{|B_2|-2}\e(\mp)
\mb{\Pi}^0\big(Jv_{B_1})\cdot\mb{\phi}^{-1}\big(v_{B_2}\big)
,\\
}
where we have used the identity
\begin{align*}
\sum_{\clapsubstack{\mp \in P(n)\\  n-1\sim_\mp n \\ |\mp|\neq 1}}
(-\kbar)^{n-|\mp|-1}
\e(\mp)&
\mb{\phi}^0\big(J{v}_{B_1}\big)\cdots\mb{\phi}^0\big(J{v}_{B_{|\mp|-1}}\big)\cdot\mb{\phi}^{-1}({v}_{B_{|\mp|}})
\\
=&
\sum_{\clapsubstack{\mp \in P(n)\\|\mp|=2\\ n-1\sim_\mp n}}
(-\kbar)^{|B_2|-2}\e(\mp)\mb{\Pi}^0\big(Jv_{B_1})\cdot\mb{\phi}^{-1}\big(v_{B_2}\big).
\end{align*}
It will also be convenient to decompose  $\mb{M}^0_k = \mb{X}_k + \mb{Y}_k + \mb{Z}_k$, for $2\leq k \leq n-1$,
where
\begin{align*}
\mb{X}_{k}({v}_1,\ldots, {v}_k)
\coloneqq{}
&
(-\kbar)\mb{\phi}_k({v}_1,\ldots, {v}_k)
+\sum_{\clapsubstack{\mp \in P(k)\\|\mp|=2\\ k-1\nsim_\mp k}}
\ep(\mp)
\mb{\phi}^0\big({v}_{B_1}\big)\cdot\mb{\phi}^0\big({v}_{B_{2}}\big)
,\\
\mb{Y}_{k}({v}_1,\ldots, {v}_k)
\coloneqq{}
&
-\sum_{\clapsubstack{\mp \in P(k)\\ \big|B_{|\mp|}\big|=k -|\mp|+1\\ k-1\sim_\mp k\\  |\mp|\neq 1}}
\e(\mp)
\mb{\phi}^0_{|\mp|}\Big({v}_{B_1},\ldots, {v}_{B_{|\mp|-1}}, \grave{\mb{m}}^0({v}_{B_{|\mp|}})\Big)
,\\
\mb{Z}_{k}({v}_1,\ldots, {v}_k)
\coloneqq{}
&
\sum_{\clapsubstack{\mp \in P(k)\\ k-1 \sim_\mp k\\  |\mp|\neq 1}}
\e(\mp)
\bell_{|\mp|}\left(\mb{\phi}^0\big(J{v}_{B_{1}}\big), \ldots,
\mb{\phi}^0\big(J{v}_{B_{|\mp|-1}}\big),  \mb{\phi}^{-1}\big({v}_{B_{|\mp|}}\big)\right)
,
\end{align*}
Applying $\mb{K}^\infty_{\mathit{H\sC}}$ to \eq{mstonec} and using $\mb{K}^\infty_{\mathit{H\sC}}\mb{\Pi}^0=0$, we have
\begin{align*}
\mb{K}^\infty_{\mathit{H\sC}} \mb{\Omega}^{-1}_n (v_1,\ldots, v_n)
=
&
\mb{K}^\infty_{\mathit{H\sC}} \mb{\eta}^{-1}_n(v_1,\ldots,v_n)
\\
&
-\sum_{\clapsubstack{\mp \in P(n)\\|B_{|\mp|}|=n-|\mp|+1\\ n-1\sim_\mp n \\ |\mp|\neq 1}}
(-\kbar)^{n-|\mp|-1}
\e(\mp)\; 
\mb{K}^\infty_{\mathit{H\sC}} \mb{\eta}^{-1}_{|\mp|}\left( {v}_{B_1}, \cdots, {v}_{B_{\mp-1}},
\grave{\mb{m}}^0\big({v}_{B_{|\mp|}}\big)\right)
\\
&
-\sum_{\clapsubstack{\mp \in P(n)\\|\mp|=2\\ n-1\sim_\mp n}}(-\kbar)^{|B_2|-2}\e(\mp)\mb{\Pi}^0\big(v_{B_1}\big)
\cdot\mb{K}^\infty_{\mathit{H\sC}}\mb{\phi}^{-1}\big(v_{B_2}\big)
\\
&
-\sum_{\clapsubstack{\mp \in P(n)\\|\mp|=2\\ n-1\sim_\mp n}}(-\kbar)^{|B_2|-3}\e(\mp)
\bell_2\Big(\mb{\Pi}^0\big(Jv_{B_1}\big),\mb{\phi}^{-1}\big(v_{B_2}\big)\Big)
.\\
\end{align*}
Using \eq{mstzero} and \eq{zxzb}, we obtain the decomposition 
$\mb{K}^\infty_{\mathit{H\sC}} \mb{\Omega}^{-1}_n(v_1,\ldots, v_n)  = \bm{R}_1+ \bm{R}_2 +  \bm{R}_3+\bm{R}_4,
$ where
\begingroup
\allowdisplaybreaks
\begin{align*}
\bm{R}_1
=
&
-\mb{f}\Big( \grave{\mb{\pi}}^0_n (v_1,\ldots, v_n) 
-\sum_{\clapsubstack{\mp \in P(n)\\|B_{|\mp|}|=n-|\mp|+1\\ n-1\sim_\mp n \\ |\mp|\neq 1}}
(-\kbar)^{n-|\mp|-1}
\e(\mp)\; 
\grave{\mb{\pi}}^0_{|\mp|}\left( {v}_{B_1}, \cdots, {v}_{B_{\mp-1}},
\grave{\mb{m}}^0\big({v}_{B_{|\mp|}}\big)\right)
\Big)
,\\
\bm{R}_2=
&
+\mb{\Pi}^0_n(v_1,\ldots,v_n)
-\sum_{\clapsubstack{\mp \in P(n)\\|\mp|=2\\ n-1\sim_\mp n}}(-\kbar)^{|B_2|-2}\e(\mp)\mb{\Pi}\big(v_{B_1}\big)
\cdot \mb{X}\big({v}_{B_{|\mp|}}\big)
,
\\
\bm{R}_3=
&
-\sum_{\clapsubstack{\mp \in P(n)\\|B_{|\mp|}|=n-|\mp|+1\\ n-1\sim_\mp n \\ |\mp|\neq 1}}
(-\kbar)^{n-|\mp|-1}
\e(\mp)\; 
\mb{\Pi}^0_{|\mp|}\left( {v}_{B_1}, \cdots, {v}_{B_{\mp-1}},
\grave{\mb{m}}^0\big({v}_{B_{|\mp|}}\big)\right)
\\
&
+\sum_{\clapsubstack{\mp \in P(n)\\|\mp|=2\\ n-1\sim_\mp n}}(-\kbar)^{|B_2|-2}\e(\mp)
\mb{\Pi}^0\big(v_{B_1}\big)\cdot \mb{\phi}^0_1\big(\grave{\mb{m}}^0({v}_{B_{2}})\big)
\\
&
-\sum_{\clapsubstack{\mp \in P(n)\\|\mp|=2\\ n-1\sim_\mp n}}(-\kbar)^{|B_2|-2}
\e(\mp)
\mb{\Pi}^0\big(v_{B_1}\big)\cdot\mb{Y}({v}_{B_{2}})
,\\
 \bm{R}_4=
&
-\sum_{\clapsubstack{\mp \in P(n)\\|\mp|=2\\ n-1\sim_\mp n}}(-\kbar)^{|B_2|-3}\e(\mp)
\bell_2\Big(\mb{\Pi}^0\big(Jv_{B_1}\big),\mb{\phi}^{-1}\big(v_{B_2}\big)\Big)
\\
&
-\sum_{\clapsubstack{\mp \in P(n)\\|\mp|=2\\ n-1\sim_\mp n}}(-\kbar)^{|B_2|-2}\e(\mp)
\mb{\Pi}^0\big(v_{B_1}\big)
\cdot\mb{Z}({v}_{B_{2}})
.
\end{align*}
\endgroup

From the definition of $\grave{\mb{\varpi}}^{0}_n$ in \eq{mstoneb}, we have
\[
\bm{R}_1 = -\mb{f}\circ \grave{\mb{\varpi}}^0_n (v_1,\ldots, v_n).
\]
From the definition of $\mb{\Pi}^0$ in \eq{zerocor}, we obtain the following  identity:
\[
\mb{\Pi}^0_{n}(v_1,\ldots, v_n)
= 
(-\kbar)^{n-2}\mb{X}_{n}(v_1,\ldots, v_n)
+\sum_{\clapsubstack{\mp \in P(n)\\|\mp|=2\\ n-1\sim_\mp n}}
(-\kbar)^{|B_2|-2}\e(\mp)\mb{\Pi}^0\big(v_{B_1})\cdot\mb{X}\big(v_{B_2}\big)
,
\]
which implies that  
\[
\bm{R}_2 =(-\kbar)^{n-2}\mb{X}_n (v_1,\ldots, v_n).
\]
We can check from the definition of $\mb{\Pi}^0_k$ and $\mb{Y}_k$, $2\leq k\leq n-1$, that
\[
\bm{R}_3
=(-\kbar)^{n-2}\mb{Y}_n (v_1,\ldots, v_n)
.
\]
Finally, the formula in Remark \ref{remBKftalg} and the definition of $\mb{Z}_k$ for $2\leq k\leq n-1$, imply that
\[
\bm{R}_4
=(-\kbar)^{n-2}\mb{Z}_n (v_1,\ldots, v_n)
.
\]
Therefore we have 
$\mb{K}^\infty_{\mathit{H\sC}} \mb{\Omega}^{-1}_n =(-\kbar)^{n-2}\mb{M}^0_n -\mb{f}\circ\grave{\mb{\varpi}}^0_n$.
\naturalqed
\end{proof}

\begin{theorem}\label{solvemstone}
There is a canonical solution
$\left\{\grave{\mb{\pi}}^{-1}, \mb{\eta}^{-2},\grave{\mb{m}},\mb{\phi}^{-1}\right\}$
to  the level one quantum master equation in Definition \ref{masterone}
with
\begin{equation}
\label{eq: redundant level one}
\grave{\mb{\pi}}^{-1}_{2}=0
,\quad
\mb{\eta}^{-2}_2=0
,\quad
\grave{\mb{m}}_{2}=\grave{\mb{\pi}}_2
,\quad
\mb{\phi}^{-1}_2=\mb{\eta}^{-1}_2
\end{equation}
and, for all $n\geq 3$, 
\begin{align*}
\grave{\mb{\pi}}^{-1}_n& \coloneqq{}\sum_{\mathclap{i=0}}^{n-3}(-\kbar)^i h\circ \big(\Omega^{-1}_n\big)^{[i]}
,&
\mb{\grave{m}}_n 
&=
\Fr{1}{(-\kbar)^{n-2} }\left(\mb{\grave{\varpi}}_n 
+\mb{\kappa}^\infty_{\mathit{HH}}\grave{\mb{\pi}}^{-1}_n\right)
,
\phantom{\Big)^{[i]}}
\\
\mb{\eta}^{-2}_n& \coloneqq{}\sum_{\mathclap{i=0}}^{n-3}(-\kbar)^i s\circ \big(\Omega^{-1}_n\big)^{[i]}
,
&
\mb{\phi}^{-1}_n 
&= 
\left(\mb{\Omega}^{-1}_n\right)^{[n-2]}
,
\phantom{\Big)^{[i]}}
\end{align*}
where
$\left(\mb{\Omega}^{-1}_n\right)^{[i]}\coloneqq{}\left(\nabla^\infty_{(-\kbar)^{-1}}\right)^i \mb{\Omega}^{-1}_n$
and $\left({\Omega}^{-1}_n\right)^{[i]}$ is the classical limit of $\left(\mb{\Omega}^{-1}_n\right)^{[i]}$ for
$0\leq i\leq n-3$.  
\end{theorem}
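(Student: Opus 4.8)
The plan is to mirror, at level one, exactly the inductive construction carried out for the level zero master equation in Theorem~\ref{solvemstzero}. The base case $n=2$ is the initial data \eqref{eq: redundant level one}, which solves \eqref{mstone} for $n=2$ (as noted in the remark following Definition~\ref{masterone}). So fix $n\geq 3$ and assume that a canonical solution $\left\{\grave{\mb{\pi}}^{-1}_k, \mb{\eta}^{-2}_k, \grave{\mb{m}}^0_k, \mb{\phi}^{-1}_k\right\}_{2\leq k\leq n-1}$ has been constructed. Then $\mb{\Omega}^{-1}_n$ and $\grave{\mb{\varpi}}^0_n$ are determined (they depend only on lower-order data together with the fixed canonical level zero solution), and the goal is to produce $\grave{\mb{\pi}}^{-1}_n$ and $\mb{\eta}^{-2}_n$ so that the right-hand sides of \eqref{mstone} are divisible by $(-\kbar)^{n-2}$, thereby \emph{defining} $\mb{\phi}^{-1}_n$ and $\grave{\mb{m}}^0_n$.

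The mechanism is Lemma~\ref{hodgea}, the ``homotopy $\kbar$-divisibility II'' statement, applied to the operators $\mb{K}^\infty_{\mathit{H\sC}}$ and $\mb{\kappa}^\infty_{\mathit{HH}}$ on $\Hom\big(S^{n-2}H\otimes S^2H, -\big){\kkbar}$. The hypothesis of that lemma, namely an identity of the shape
\[
\mb{K}^\infty_{\mathit{H\sC}} \mb{\Omega}^{-1}_n = (-\kbar)^{n-2}\mb{M}^0_n - \mb{f}\circ\grave{\mb{\varpi}}^0_n
\]
with $k=n-2\geq 1$, is precisely the content of Lemma~\ref{keyone}, whose proof is already given in the excerpt (the tedious $\bm{R}_1+\bm{R}_2+\bm{R}_3+\bm{R}_4$ computation). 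Feeding this into Lemma~\ref{hodgea} with $\mb{\Xi}=\mb{\Omega}^{-1}_n$, $\mb{M}=\mb{M}^0_n$, $\mb{\xi}=\grave{\mb{\varpi}}^0_n$ produces $\big(\mb{\Omega}^{-1}_n\big)^{[n-2]}\in\Hom\big(S^{n-2}H\otimes S^2H,\sC\big)^{-1}{\kkbar}$ and $\big(\grave{\mb{\varpi}}^0_n\big)^{[n-2]}\in\Hom\big(S^{n-2}H\otimes S^2H,H\big)^0{\kkbar}$ together with the relation $\mb{K}^\infty_{\mathit{H\sC}}\big(\mb{\Omega}^{-1}_n\big)^{[n-2]} = \mb{M}^0_n - \mb{f}\circ\big(\grave{\mb{\varpi}}^0_n\big)^{[n-2]}$. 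Setting
\[
\grave{\mb{\pi}}^{-1}_n \coloneqq{} \sum_{i=0}^{n-3}(-\kbar)^i h\circ\big(\Omega^{-1}_n\big)^{[i]},\qquad
\mb{\eta}^{-2}_n \coloneqq{} \sum_{i=0}^{n-3}(-\kbar)^i s\circ\big(\Omega^{-1}_n\big)^{[i]},
\]
and then $\mb{\phi}^{-1}_n\coloneqq{}\big(\mb{\Omega}^{-1}_n\big)^{[n-2]}$, $\grave{\mb{m}}^0_n\coloneqq{}\tfrac{1}{(-\kbar)^{n-2}}\big(\grave{\mb{\varpi}}^0_n+\mb{\kappa}^\infty_{\mathit{HH}}\grave{\mb{\pi}}^{-1}_n\big)$, one reads off from the explicit formulas \eqref{ysolvaww} of Lemma~\ref{hodgea} that the two equations of \eqref{mstone} hold and that $\grave{\mb{m}}^0_n$ indeed lies in $\Hom\big(S^{n-2}H\otimes S^2H,H\big)^0{\kkbar}$ (no negative powers of $\kbar$), since $\mb{\kappa}^\infty_{\mathit{HH}}$ is divisible by $\kbar$ and $\big(\grave{\mb{\varpi}}^0_n\big)^{[n-2]}$ absorbs the rest. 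This closes the induction.

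I would also record, as in Theorem~\ref{solvemstzero}, the unit-insertion compatibility — that $\grave{\mb{\pi}}^{-1}_n(v_1,\ldots,v_{n-1},1_H)$ and $\mb{\eta}^{-2}_n(v_1,\ldots,v_{n-1},1_H)$ reduce to the corresponding order $n-1$ quantities, and that $\grave{\mb{m}}^0_n(1_H,v_1,\ldots,v_{n-1})=0$ for $n\geq 3$. This follows by the same easy downward induction used for \eqref{anois}: one checks $\mb{\Omega}^{-1}_n$ restricts correctly when a $1_H$ is plugged in, using the corresponding property of $\mb{\eta}^{-1}$ from Theorem~\ref{solvemstzero}, of $\mb{\phi}^0$ (unitality of morphism), and of $\grave{\mb{m}}^0_k$ for $k<n$, together with the fact that $\nabla^\infty_{(-\kbar)^{-1}}$, $h$, and $s$ commute with the $1_H$-insertion (since $h(1_\sC)=1_H$, $s(1_\sC)=0$, $f(1_H)=1_\sC$). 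The main obstacle is entirely bookkeeping: verifying Lemma~\ref{keyone}'s divisibility identity requires matching the four blocks $\bm{R}_i$ against the decomposition $\mb{M}^0_k=\mb{X}_k+\mb{Y}_k+\mb{Z}_k$ and correctly tracking Koszul signs, block-size weights $(-\kbar)^{|B_2|-2}$, and the ``$n-1\sim_\mp n$'' partition constraints — but this is spelled out in the excerpt, so in the write-up I would cite Lemma~\ref{keyone} and Lemma~\ref{hodgea} and present only the assembly. The one genuinely substantive point to be careful about is that $\mb{M}^0_n$ is well-defined, i.e.\ that the claimed identity \eqref{zxzaxc} identifying $\mb{M}^0_n$ is consistent with $\mb{M}^0_n$ being the quantity forced by Lemma~\ref{keyone}; this is exactly the statement of Proposition~\ref{descentone}, which I would invoke (or prove in tandem) to conclude that $\mb{\phi}^{-1}_n$ and $\grave{\mb{m}}^0_n$ satisfy the asserted descent relations.
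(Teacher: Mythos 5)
Your proposal is correct and follows essentially the same route as the paper: an induction on $n$ with the base case given by the initial data, where the inductive step invokes Lemma~\ref{keyone} to supply the hypothesis $\mb{K}^\infty_{\mathit{H\sC}}\mb{\Omega}^{-1}_n=(-\kbar)^{n-2}\mb{M}^0_n-\mb{f}\circ\grave{\mb{\varpi}}^0_n$ and then applies the homotopy $\kbar$-divisibility Lemma~\ref{hodgea} with $k=n-2$ to define $\grave{\mb{\pi}}^{-1}_n$, $\mb{\eta}^{-2}_n$, $\mb{\phi}^{-1}_n$, $\grave{\mb{m}}^0_n$ exactly as in the statement. The only difference is cosmetic: your appeal to Proposition~\ref{descentone} for ``well-definedness'' of $\mb{M}^0_n$ is unnecessary (it is defined by the explicit formula and Lemma~\ref{keyone} already carries the full content), and the unit-insertion remarks are extra, harmless bookkeeping not part of the paper's proof of this theorem.
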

\begin{remark}
Again, the conditions of eq.~\eqref{eq: redundant level one} are mostly redundant; all except the third of them are explicitly required by Definition~\ref{masterone}.
\end{remark}

\begin{proof}
Fix $n\geq 3$ and let
$\left\{\grave{\mb{\pi}}^{-1}_2,\ldots,\grave{\mb{\pi}}^{-1}_{n-1}
\big|\mb{\eta}^{-2}_2,\ldots,\mb{\eta}^{-2}_{n-1}
\big|\mb{\phi}^{-1}_2,\ldots,\mb{\phi}^{-1}_{n-1}
\big|\grave{\mb{m}}^0_2,\ldots,\grave{\mb{m}}^0_{n-1}\right\}$ be a solution 
to the level one quantum master equation for
$k=2,\ldots, n-1$ with all the  properties in Theorem~\ref{solvemstone}. 
Then, we have  (via Lemma \ref{keyone})
\[
\mb{K}^\infty_{\mathit{H\sC}} \mb{\Omega}^{-1}_n =(-\kbar)^{n-2}\mb{M}^0_n -\mb{f}\circ\grave{\mb{\varpi}}^0_n.
\]

Therefore, we can apply Lemma \ref{hodgea} to obtain that
\begin{align*}
(-\kbar)^{n-2}\big(\mb{\Omega}_n^{-1}\big)^{[n-2]}
=&
\mb{\Omega}^{-1}_n
-\mb{f}\circ\left(\sum_{\mathclap{i=0}}^{n-3}(-\kbar)^i h\circ \big(\Omega^{-1}_n\big)^{[i]}\right)\\
&\quad{}
-\mb{K}^\infty_{H\sC}\left(\sum_{\mathclap{i=0}}^{n-3}(-\kbar)^i s\circ \big(\Omega^{-1}_n\big)^{[i]} \right)
,\\
(-\kbar)^{n-2} \big(\grave{\mb{\varpi}}^0_n\big)^{[n-2]}
=&\grave{\mb{\varpi}}^0_n 
+\sum_{\mathclap{i=0}}^{n-3} (-\kbar)^i\mb{\kappa}_{\!H\!H}\left( h\circ \big(\Omega^{-1}_n\big)^{[i]}\right)
,\\
\mb{K}^\infty_{\!H\!\sC}\big(\mb{\Omega}^{-1}_n\big)^{[n-2]}
=&\mb{M}^0_n - \mb{f}\circ \big(\grave{\mb{\varpi}}^0_n\big)^{[n-2]}
.
\end{align*}
Recall that $\big(\mb{\O}^{-1}_n\big)^{[j]}=\mb{\nabla}_{(-\kbar)^{-1}}^{j}\mb{\O}^{-1}_n$ is in $\Hom\big(S^{n-2}H\otimes S^2H,\sC\big)^{-1}{\kkbar}$ for $j$ in the range from $0$ to $n-2$, and 
$\big(\grave{\mb{\varpi}}^0_n\big)^{[n-2]}   \in \Hom\Big(S^{n-2}H\otimes S^2H,\sC\Big)^{0}{\kkbar}$. Setting 

\begin{align*}
\grave{\mb{\pi}}^{-1}_n& \coloneqq{}\sum_{\mathclap{i=0}}^{n-3}(-\kbar)^i h\circ \big(\Omega^{-1}_n\big)^{[i]}
&\text{ in }& \Hom\big(S^{n-2}H\otimes S^2H, \sC\big)^{-1}
,\\
\mb{\eta}^{-2}_n& \coloneqq{}\sum_{\mathclap{i=0}}^{n-3}(-\kbar)^i s\circ \big(\Omega^{-1}_n\big)^{[i]} &\text{ in }& \Hom\Big(S^{n-2}H\otimes S^2H, \sC\Big)^{-2}
,\\
\mb{\phi}^{-1}_n&\coloneqq{}\big(\mb{\O}^{-1}_n\big)^{[n-2]}&\text{ in }& \Hom\Big(S^{n-2}H\otimes S^2H,\sC\Big)^{-1}{\kkbar}
,\\
\grave{\mb{m}}^0_n &\coloneqq{}\big(\grave{\mb{\varpi}}^0_n\big)^{[n-2]} &\text{ in }& \Hom\Big(S^{n-2}H\otimes S^2H,\sC\Big)^{0}{\kkbar}
,
\end{align*}
we obtain that
\begin{align*}
(-\kbar)^{n-2} \mb{\phi}^{-1}_n=
& \mb{\Omega}^{-1}_n
- \mb{f}\circ\grave{\mb{\pi}}^{-1}_{n} 
-\mb{K}^\infty_{\mathit{H\sC}}\mb{\eta}^{-2}_{n}
,\\
(-\kbar)^{n-2}\grave{\mb{m}}^0_n  =&\grave{\mb{\varpi}}^0_n +\mb{\kappa}^\infty_{\mathit{H\sC}}\grave{\mb{\pi}}^{-1}_n
,
\end{align*}
so we are done by induction.
\naturalqed
\end{proof}

Now we specialize to the anomaly-free case that $\mb{\kappa}=0$.
Then the following lemma shows that the family $\underline{\grave{\mb{m}}}
=\grave{\mb{m}}^0_2, \grave{\mb{m}}^0_3,\ldots$ does not depend on $\kbar$.

\begin{lemma}\label{dalem}
Let  $\mb{\kappa}=0$. Then, we have 
$\grave{\mb{m}}^0_n =\grave{m}^0_n=\grave{\pi}^{0(n-2)}_n \in\Hom\big(S^n H,H\big)^0$, for all $n\geq 2$,
so that  the family $\underline{\grave{{m}}}^0
=\grave{{m}}^0_2, \grave{{m}}^0_3,\ldots$ 
determines $\underline{\grave{\mb{\pi}}}^0$ as follows:
for all $n\geq 2$ and homogeneous elements $v_1,\ldots,v_n \in H$,
\[
\grave{\mb{\pi}}^0_{n} ({v}_1,\dotsc,{v}_n)
=
\sum_{\clapsubstack{\mp \in P(n)\\|B_{|\mp|}|=n-|\mp|+1\\ n-1\sim_\mp n  }}
(-\kbar)^{n-|\mp|-1}
\e(\mp)\; 
\grave{\mb{\pi}}^0_{|\mp|}\left({v}_{B_1}, \cdots, {v}_{B_{\mp-1}},
\grave{m}^0\big({v}_{B_{|\mp|}}\big)\right)
.
\]
\end{lemma}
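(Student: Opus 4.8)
The plan is to extract from the level-one quantum master equation (in the form \eqref{mstone}) the two degeneracy facts that force everything to collapse when $\mb{\kappa}=0$: first, $\mb{\kappa}^\infty_{\mathit{HH}}=0$, and second, a degree/$\kbar$-counting argument showing $\grave{\mb{m}}^0_n$ is concentrated in $\kbar$-degree zero and coincides with the top coefficient of $\grave{\mb{\pi}}^0_n$. First I would observe that when $\mb{\kappa}=0$ we have $\grave{\bell}=\mb{\kappa}=0$ by Proposition~\ref{zerodes}, so the operator $\bm{\md}_{\grave{\bell}}$ of Section~\ref{subsec: homotopy h-divisibility 2} vanishes identically, and hence $\mb{\kappa}^\infty_{\mathit{HH}}=\mb{\kappa}\circ(-)-(-1)^{|\cdot|}(-)\circ\bm{\md}_{\grave{\bell}}=0$ and likewise $\mb{K}^\infty_{\mathit{H\sC}}=\mb{K}\circ(-)$. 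Then the second equation of \eqref{mstone} reads simply $(-\kbar)^{n-2}\grave{\mb{m}}^0_n=\grave{\mb{\varpi}}^0_n$, where $\grave{\mb{\varpi}}^0_n$ is given by \eqref{mstoneb}; solving for $\grave{\mb{\pi}}^0_n$ recovers exactly the recursion claimed in the statement of Lemma~\ref{dalem}, provided one knows that this equation is consistent, i.e.\ that $\grave{\mb{\varpi}}^0_n$ is divisible by $\kbar^{n-2}$ and that the quotient is $\kbar$-independent.

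Next I would run the induction on $n$. The base case $n=2$ is the initial condition $\grave{\mb{m}}^0_2=\grave{\mb{\pi}}^0_2$ together with Theorem~\ref{solvemstzero}, which (when $\mb{\kappa}=0$) gives $\grave{\mb{\pi}}^0_2=h\circ\mb{\O}^0_2$ with no $\kbar$ dependence, i.e.\ $\grave{\mb{\pi}}^0_2=\grave{\pi}^{0(0)}_2=\grave{\pi}^{0(n-2)}_n|_{n=2}$. For the inductive step, assume $\grave{\mb{m}}^0_k=\grave{m}^0_k\in\Hom(S^kH,H)^0$ for $k<n$. Feeding this into the definition \eqref{mstoneb} of $\grave{\mb{\varpi}}^0_n$ and using Theorem~\ref{solvemstzero}'s explicit formula $\grave{\mb{\pi}}^0_k=\sum_{i=0}^{k-2}(-\kbar)^i h\circ(\Omega^0_k)^{[i]}$, a term in $\grave{\mb{\varpi}}^0_n$ indexed by a partition $\mp$ with $|\mp|=m$ blocks carries an explicit prefactor $(-\kbar)^{n-m-1}$ times $\grave{\mb{\pi}}^0_m$, whose $\kbar$-degree is at most $m-2$; the total $\kbar$-degree is therefore at least $n-m-1$ and at most $(n-m-1)+(m-2)=n-3$ from these terms, while the single ``$\grave{\mb{\pi}}^0_n$'' term contributes $\kbar$-degrees $0$ through $n-2$. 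The cancellation forced by \eqref{mstoneb} — more precisely, the vanishing of $\grave{\mb{m}}^0_n$ in $\kbar$-degrees $1,\dots$ — is what I would verify by matching coefficients of $(-\kbar)^j$ for $j\geq 1$; this is where the bookkeeping from Theorem~\ref{solvemstzero} (that $\grave{\mb{\pi}}^0_n$'s coefficient of $(-\kbar)^{n-2}$ is precisely $\grave{\pi}^{0(n-2)}_n$ and equals $h\circ(\Omega^0_n)^{[n-2]}=h\circ\mb{\phi}^0$-type data reduced mod $\kbar$) does the real work. Concluding, the coefficient of $(-\kbar)^{n-2}$ in the identity $(-\kbar)^{n-2}\grave{\mb{m}}^0_n=\grave{\mb{\varpi}}^0_n$ picks out $\grave{m}^0_n=\grave{\pi}^{0(n-2)}_n$, and all lower coefficients vanish, giving $\grave{\mb{m}}^0_n=\grave{m}^0_n$ with no $\kbar$ dependence.

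I expect the main obstacle to be the $\kbar$-degree accounting in $\grave{\mb{\varpi}}^0_n$: one must show not merely that $\grave{\mb{\varpi}}^0_n$ is divisible by $\kbar^{n-2}$ (which follows from \eqref{mstone} and Theorem~\ref{solvemstone} with $\mb{\kappa}=0$, since $\grave{\mb{m}}^0_n=(-\kbar)^{-(n-2)}\grave{\mb{\varpi}}^0_n$ is already known to lie in $\Hom(S^{n-2}H\otimes S^2H,\sC)^0\{\kkbar\}$) but that the quotient lands in $\kbar$-degree $0$, i.e.\ has no positive powers of $\kbar$. This is the one place where I cannot simply cite an earlier statement verbatim and must instead combine the divisibility from Lemma~\ref{hodgea}/Theorem~\ref{solvemstone} with the polynomial-in-$\kbar$ bound on $\grave{\mb{\pi}}^0_n$ from Theorem~\ref{intra} (equivalently Theorem~\ref{solvemstzero}): since every building block $\grave{\mb{\pi}}^0_k$ has $\kbar$-degree $\leq k-2$, and the recursion expresses $\grave{\mb{\pi}}^0_n$ in terms of $(-\kbar)^{n-|\mp|-1}\grave{\mb{\pi}}^0_{|\mp|}(\ldots,\grave{m}^0(\ldots))$ with $\grave{m}^0$ of $\kbar$-degree $0$, an induction shows $\grave{\mb{\pi}}^0_n$ has $\kbar$-degree exactly bounded by $n-2$ and its top coefficient is built from $\grave{m}^0$-data alone — closing the loop. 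Once that bound is in place, extracting $\grave{m}^0_n=\grave{\pi}^{0(n-2)}_n$ and the stated recursion for $\grave{\mb{\pi}}^0_n$ is immediate.
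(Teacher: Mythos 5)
Your proposal is correct and follows essentially the same route as the paper's own proof: with $\mb{\kappa}=0$ the operator $\mb{\kappa}^\infty_{\mathit{HH}}$ vanishes, so the second level-one master equation collapses to $(-\kbar)^{n-2}\grave{\mb{m}}^0_n=\grave{\mb{\varpi}}^0_n$, and then an induction on $n$ together with the $\kbar$-degree count (partition terms of $\kbar$-degree at most $n-3$, the lone $\grave{\mb{\pi}}^0_n$ term of degree at most $n-2$ with top coefficient $\grave{\pi}^{0(n-2)}_n$, against divisibility of the left side by $\kbar^{n-2}$) forces $\grave{\mb{m}}^0_n=\grave{\pi}^{0(n-2)}_n$ with no $\kbar$-dependence. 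The only cosmetic slip is writing the target of $\grave{\mb{m}}^0_n$ as $\sC$ rather than $H$, and the closing worry about re-deriving the polynomial bound on $\grave{\mb{\pi}}^0_n$ is unnecessary since Theorem~\ref{solvemstzero} already supplies it.
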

\begin{proof}
From the condition $\mb{\kappa}=0$, we have $\mb{\kappa}^\infty_{\mathit{HH}}=0$ and 
\eqn{fses}{
\grave{\mb{\pi}}^0_{n} ({v}_1,\dotsc,{v}_n)
=
\sum_{\clapsubstack{\mp \in P(n)\\|B_{|\mp|}|=n-|\mp|+1\\ n-1\sim_\mp n  }}
(-\kbar)^{n-|\mp|-1}
\e(\mp)\; 
\grave{\mb{\pi}}^0_{|\mp|}\left({v}_{B_1}, \cdots, {v}_{B_{|\mp|-1}},
\grave{\mb{m}}^0\big({v}_{B_{|\mp|}}\big)\right)
.
}
Recall that $\grave{\mb{\pi}}^0_1 =\I_H$ and 
$\grave{\mb{\pi}}^0_n = \sum_{{j=0}}^{n-2}(-\kbar)^j \grave{\pi}^{0(j)}_n$, where
$\grave{\pi}^{0(j)}_n \in \Hom\big(S^n H, H\big)^0$, for all $n\geq 2$.
For $n=2$, we have $\grave{\mb{\pi}}^0_{2} = \grave{\mb{m}}^0_2$.
 It follows that $\grave{\mb{m}}^0_2= \grave{{m}}^0_2=\grave{\pi}^{0(0)}_2 \in \Hom\big(S^2 H, H\big)^0$ 
 since $\grave{\mb{\pi}}^0_{2} = \grave{{\pi}}^0_{2} \in \Hom\big(S^2 H, H\big)^0$. 
Fix $n\geq 3$ and assume that 
$\grave{\mb{m}}^0_k =\grave{{\pi}}^{0(k-2)}_k= \grave{{m}}^0_k \in \Hom\big(S^k H, H\big)^0$ for $2\leq k < n$.
Then  relation \eq{fses} can be rewritten as follows:
\begin{align*}
(-\kbar)^{n-2} \grave{\mb{m}}^0_n(& v_1,\ldots, v_n)
= (-\kbar)^{n-2}\grave{{\pi}}^{0(n-2)}_n({v}_1,\dotsc,{v}_n)
\\
&
+\sum_{\mathclap{j=0}}^{n-3}(-\kbar)^{n-3}\grave{{\pi}}^{0(j)}_n ({v}_1,\dotsc,{v}_n)
\\
&
-\sum_{\clapsubstack{\mp \in P(n)\\|B_{|\mp|}|=n-|\mp|+1\\ n-1\sim_\mp n }}\ \ 
\sum_{\mathclap{j=0}}^{|\mp|-2}
(-\kbar)^{n-|\mp| + j -1}
\e(\mp)\; 
\grave{{\pi}}^{0(j)}_{|\mp|}\left({v}_{B_1}, \cdots, {v}_{B_{|\mp|-1}},
\grave{{m}}^0\big({v}_{B_{|\mp|}}\big)\right)
.
\end{align*}
The term on the right hand side of the above equality with the highest power in $\kbar$ is 
$(-\kbar)^{n-2}\grave{{\pi}}^{0(n-2)}_n$. Note also that the right hand side  must be divisible by $(-\kbar)^{n-2}$ since  
$\grave{\mb{m}}^0_n \in \Hom\big(S^{n-2} H\otimes S^2 H, H\big)^{0}{\kkbar}$.
It follows that $\grave{\mb{m}}^0_n=\grave{{\pi}}^{0(n-2)}_n=  \grave{{m}}^0_n\in \Hom\big(S^n H, H\big)^0$.  
Therefore, by induction, we have 
$\grave{\mb{m}}^0_n=  \grave{{m}}^0_n\in \Hom\big(S^n H, H\big)^0$, for all $n\geq 2$.
\naturalqed
\end{proof}

\section{Applications to anomaly-free theory}
\label{section: anomaly-free theory}

In this subsection, we specialized to 
an anomaly-free binary QFT algebra, leading to Theorems \ref{intra}, \ref{intrb} and \ref{intrc}
together with some physical interpretations and two algorithms 
to compute the universal algebraic structure governing quantum correlations.
We further specialize to the case that $H$ is finite dimensional
to discuss relationships 
with the notion of special coordinates and the WDDV equation
in topological string theory.

\subsection{The algebra governing quantum correlation functions}

In this subsection, we
consider an anomaly-free binary QFT algebra $\sC{\kkbar}_\BQ=
\big(\sC{\kkbar}, 1_\sC, \,\cdot\,,\mb{K}\big)$ 
with quantum descendant $\big(\sC{\kkbar}, 1_\sC, \underline{\bell}\big)$.
Fix a classical off-to-on-shell retraction $(f,h,s)$ and let  $(\mb{f},\mb{h},\mb{s})$ be a quantization of it. Then $(\mb{f},\mb{h},\mb{s})$ is the data of a homotopy equivalence between $\big(H{\kkbar}, 1_H, 0\big)$ and $\big(\sC{\kkbar}, 1_\sC, \mb{K}\big)$.
Let $v_1,\ldots, v_n$ be homogeneous elements in $H$.

The results in Section~\ref{subsec: Master equation for the level zero quantum correlators} specialized to the case $\mb{\kappa}=0$ 
can be presented as follows:
\begin{theorem}\label{lakia}
There is a distinguished unital $sL_\infty$-quasi-isomorphism 
\[
\xymatrix{\underline{\mb{\phi}}^0:
\big(H{\kkbar}, 1_H,  \underline{0}\big)\ar@{..>}[r] & \big(\sC{\kkbar}, 1_\sC,  \underline{{\bell}}\big)}
\]
such that $\mb{\phi}^0_1=\mb{f}$ and, for all $n\geq 1$,
\eqn{spmstzero}{
\mb{\Pi}^0_n= \mb{f}\circ \grave{\mb{\pi}}^0_n+ \mb{K} \circ\mb{\eta}^{-1}_n,
}
where $\mb{\Pi}^0_n \in \Hom\big(S^n H, \sC\big)^{0}{\kkbar}$ is defined by
\eqn{zerocorr}{
\mb{\Pi}^0_n(v_1,\ldots, v_n)\coloneqq{}\sum_{\mathclap{\mp\in P(n)}}
(-\kbar)^{n-|\mp|}\e(\mp)\,
\mb{\phi}^0\big({v}_{B_1}\big)\cdot\dotsc\cdot\mb{\phi}^0\big({v}_{B_{|\mp|}}\big)
,
}
and the families $\underline{\grave{\mb{\pi}}}^0$ and $\underline{\mb{\eta}}^{-1}$ have
the following properties:
\begin{itemize}
\item $\grave{\mb{\pi}}_1 =\I_{H}$
and, for all $n\geq 2$,
\[
\grave{\mb{\pi}}^0_n =\sum_{\mathclap{j=0}}^{n-2}(-\kbar)^j \grave{\pi}^{0(j)}_n,
\quad
\text{ where }
\grave{\pi}^{0(j)}_n\in\Hom\big(S^n H,H)^0,
\]
and $\grave{\mb{\pi}}^0_{n+1}(v_1,\ldots, v_n, 1_H)= \grave{\mb{\pi}}^0_{n+1}(v_1,\ldots, v_n)$, for all $n\geq 1$;

\item  $\mb{\eta}^{-1}_1 =0$ and, for all $n\geq 2$,
\[
\mb{\eta}^{-1}_n  =\sum_{\mathclap{j=0}}^{n-2}(-\kbar)^j \eta^{-1(j)}_n,
\quad\text{ where }
\eta^{-1(j)}_n \in \Hom(S^n H,\sC)^{-1}
\]
and $\mb{\eta}^{-1}_{n+1}(v_1,\ldots, v_{n}, 1_H)=\mb{\eta}^{-1}_{n+1}(v_1,\ldots, v_{n})$, for all $n\geq 1$.
\end{itemize}

\end{theorem}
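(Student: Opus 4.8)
The plan is to obtain Theorem~\ref{lakia} as a direct specialization of Theorem~\ref{solvemstzero} together with Proposition~\ref{zerodes}, under the running hypothesis $\mb{\kappa}=0$. First I would invoke Theorem~\ref{solvemstzero} to produce the canonical solution $\big\{\grave{\mb{\pi}}^0,\mb{\eta}^{-1},\grave{\bell},\mb{\phi}^0\big\}$ of the level zero quantum master equation of Definition~\ref{masterzero}. Since $\mb{\kappa}=0$, Proposition~\ref{zerodes} gives $\underline{\grave{\bell}}=\underline{0}$, so the unital $sL_\infty$-algebra $\big(H{\kkbar},1_H,\underline{\grave{\bell}}\big)$ is precisely $\big(H{\kkbar},1_H,\underline{0}\big)$, and the same proposition then asserts that $\underline{\mb{\phi}}^0$ satisfies all the identities making it a unital $sL_\infty$-quasi-isomorphism $\big(H{\kkbar},1_H,\underline{0}\big)\dashrightarrow\big(\sC{\kkbar},1_\sC,\underline{\bell}\big)$, with $\mb{\phi}^0_1=\mb{f}$ a pointed cochain quasi-isomorphism. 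This disposes of the opening clause of the theorem.

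Next I would rewrite the first line of the level zero master equation in Definition~\ref{masterzero} using $\grave{\bell}=0$: the entire third sum, whose summands each contain a factor $\grave{\bell}(v_{B_i})$, vanishes, and the $\mb{\eta}^{-1}\circ\mb{\d}_{\!\grave{\bell}}$ term likewise vanishes. What remains is exactly $\mb{f}\circ\grave{\mb{\pi}}^0_n=\mb{\Pi}^0_n-\mb{K}\circ\mb{\eta}^{-1}_n$, i.e.\ \eqref{spmstzero}, with $\mb{\Pi}^0_n$ as in \eqref{zerocorr} (which coincides with the $\mb{\O}^0$-type sum over all of $P(n)$). The identification of $\mb{\Pi}^0_n$ here with the object $\mb{\Pi}^0$ of \eqref{zerocor} in Section~\ref{subsec: homotopy h-divisibility 2} is immediate from the definitions.

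It remains to extract the two structural bullet points. The polynomial-degree bounds $\grave{\mb{\pi}}^0_n=\sum_{j=0}^{n-2}(-\kbar)^j\grave{\pi}^{0(j)}_n$ and $\mb{\eta}^{-1}_n=\sum_{j=0}^{n-2}(-\kbar)^j\eta^{-1(j)}_n$ follow from the explicit closed forms in Theorem~\ref{solvemstzero}: $\grave{\mb{\pi}}^0_n=\sum_{i=0}^{n-2}(-\kbar)^i h\circ\big(\Omega^0_n\big)^{[i]}$ and $\mb{\eta}^{-1}_n=\sum_{i=0}^{n-2}(-\kbar)^i s\circ\big(\Omega^0_n\big)^{[i]}$, where each $\big(\Omega^0_n\big)^{[i]}$ is a classical limit and hence $\kbar$-independent, so setting $\grave{\pi}^{0(j)}_n\coloneqq h\circ\big(\Omega^0_n\big)^{[j]}$ and $\eta^{-1(j)}_n\coloneqq s\circ\big(\Omega^0_n\big)^{[j]}$ does it; for $n=1$ the initial conditions give $\grave{\mb{\pi}}_1=\I_H$ and $\mb{\eta}^{-1}_1=0$. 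The unit-insertion identities $\grave{\mb{\pi}}^0_{n+1}(v_1,\ldots,v_n,1_H)=\grave{\mb{\pi}}^0_n(v_1,\ldots,v_n)$ and the analogous one for $\mb{\eta}^{-1}$ are exactly the concluding equations of Theorem~\ref{solvemstzero}. I expect no genuine obstacle here; the only point requiring a word of care is checking that nothing in the closed forms of Theorem~\ref{solvemstzero} reintroduces a $\grave{\bell}$-dependence once $\mb{\kappa}=0$ — but $\grave{\mb{\varpi}}^1_n$ is built entirely from $\grave{\bell}$-insertions and so vanishes, whence $\grave{\bell}_n=\frac{1}{(-\kbar)^{n-1}}\big(\grave{\mb{\varpi}}^1_n+\mb{\kappa}_{HH}\grave{\mb{\pi}}^0_n\big)=0$ consistently, and the recursion for $\mb{\O}^0_n$ collapses to the pure product expression \eqref{zerocorr}. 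So the proof is essentially an assembly of Theorems~\ref{solvemstzero} and Proposition~\ref{zerodes} with the simplifications $\grave{\bell}=0$, $\mb{\kappa}_{HH}=0$ substituted throughout.
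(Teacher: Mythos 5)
Your proposal is correct and follows essentially the same route as the paper: the paper introduces Theorem~\ref{lakia} precisely as the results of Section~\ref{subsec: Master equation for the level zero quantum correlators} (Definition~\ref{masterzero}, Proposition~\ref{zerodes}, Theorem~\ref{solvemstzero}) specialized to $\mb{\kappa}=0$, so that $\underline{\grave{\bell}}=\underline{0}$, the master equation collapses to \eqref{spmstzero}, and the $\kbar$-polynomial bounds and unit-insertion identities are read off from the closed forms and concluding statements of Theorem~\ref{solvemstzero}, exactly as you assemble them.
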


\begin{remark}\label{kramz}
The first part of the above theorem establishes that the classical cohomology $H$ of an anomaly-free binary QFT algebra, viewed 
as a topologically-free unital $sL_\infty$-algebra $\big(H{\kkbar}, 1_H, \underline{0}\big)$ with zero $sL_\infty$-structure
$\underline{0}$ is quasi-isomorphic to the quantum descendant unital unital $sL_\infty$-algebra 
$\big(\sC{\kkbar}, 1_\sC, \underline{\bell}\big)$. Moreover, we have constructed a particular 
quasi-isomorphism $\underline{\mb{\w}}$ and the rest of the theorem is about its distinguished properties:
let $\underline{\mb{\w}}: \big(H{\kkbar}, 1_H, \underline{0}\big) \longrightarrow 
\big(\sC{\kkbar}, 1_\sC, \underline{\bell}\big)$ be an arbitrary  unital $sL_\infty$-quasi-isomorphism
and $\underline{\mb{\Pi}}^{\mb{\w}}$ be the associated  family of quantum correlators (which satisfy
$\mb{K}\circ {\mb{\Pi}}^{\mb{\w}}_n=0$, for all $n\geq 1$). It follows that
${\mb{\Pi}}^{\mb{\w}}_n = \mb{f}\circ {\mb{\pi}}^{\mb{\w}} + \mb{K}\circ \mb{\eta}^{\mb{\w}}_n$, for all $n\geq 1$,
where ${\mb{\pi}}^{\mb{\w}}_n\coloneqq{}\mb{h} \circ\mb{\Pi}^{\mb{\w}}_n \in \Hom\big(\Xbar S(H) , H\big)^{0}{\kkbar}$
and $\mb{\eta}^{\mb{\w}}_n\coloneqq{}\mb{s}\circ \mb{\Pi}^{\mb{\w}}_n \in \Hom\big(\Xbar S(H) , H\big)^{-1}{\kkbar}$.
However, the families $\underline{{\mb{\pi}}}^{\mb{\w}}$ and $\underline{\mb{\eta}}^{\mb{\w}}$ 
do not,  in general, have the properties of 
the families $\underline{\grave{\mb{\pi}}}^0$ and $\underline{\mb{\eta}}^{-1}$.
\naturalqed
\end{remark}

\begin{remark}\label{kramy}
We reproduce the algorithm to determine the families  $\underline{\mb{\phi}}^0$, $\underline{\grave{\mb{\pi}}}^0$ and
$\underline{\mb{\eta}}^{-1}$, which becomes much simpler due to the condition $\mb{\kappa}=0$.
Note that the operator 
$\nabla_{-1/\kbar}: \Hom\big(\Xbar{T}(H), \sC\big){\kkbar} \rightarrow  \Hom\big(\Xbar{T}(H), \sC\big){\kkbar}$
defined by \eq{hdbh} reduces in the following manner:  
for any $\mb{\O} \in \Hom\big(\Xbar{T}(H), \sC\big){\kkbar}$ whose classical limit $\O$ satisfies $K\circ \O=0$,
we have
$(-\kbar)\nabla_{-1/\kbar}\mb{\O}  = \mb{\O} - \mb{f}\circ h\circ \O -\mb{K}\circ s\circ \O$.

To begin with, we set  
\eqn{whyx}{
\mb{\phi}^0_1=\mb{f}
,\quad
\grave{\mb{\pi}}^0_1=\I_H
,\quad
\mb{\eta}^{-1}_1=0.
}
Note that $\mb{\Pi}^0_1= \mb{\phi}^0_1$. Therefore, we have 
\eqn{whya}{
\mb{\Pi}^0_1= \mb{f}\circ \grave{\mb{\pi}}^0_1+ \mb{K} \circ\mb{\eta}^{-1}_1
\quad\Longrightarrow\quad
\mb{K}\circ \mb{\phi}^0_1=0.
}

Assume that we have 
$\left\{{\mb{\phi}}^0_1,\ldots, {\mb{\phi}}^0_{n-1}\big|\grave{\mb{\pi}}^0_1, \ldots, \grave{\mb{\pi}}^0_{n-1}
\big| \mb{\eta}^{-1}_1, \ldots, \mb{\eta}^{-1}_{n-1}\right\}$ for $n\geq 2$, satisfying the initial conditions \eq{whyx} and, 
for all $k=1,\ldots, n-1$,
\eqn{whye}{
\mb{\Pi}^0_k = \mb{f}\circ \grave{\mb{\pi}}^0_k+\mb{K} \mb{\eta}^{-1}_k
\quad\Longrightarrow\quad
\sum_{\clapsubstack{\mp \in P(k)}}
\e(\mp)\,
\bell_{|\mp|}\!\left(\mb{\phi}^0\left( {v}_{B_1}\right), \cdots, \mb{\phi}\big(  {v}_{B_{|\mp|}}\big)\right)=0
.
}
Define $\mb{\O}^0_n \in \Hom\big(S^nH,\sC\big)^{0}{\kkbar}$ and
$\mb{L}_n \in \Hom\big(S^nH,\sC\big)^{1}{\kkbar}$ as follows:
\begin{align*}
\mb{\O}^0_n(v_1,\ldots,v_n) 
\coloneqq{}
& \sum_{\clapsubstack{\mp\in P(n)\\ \color{red}|\mp|\neq 1}}
(-\kbar)^{n-|\mp|}\e(\mp)\,
\mb{\phi}^0\big({v}_{B_1}\big)\cdot\dotsc\cdot\mb{\phi}^0\big({v}_{B_{|\mp|}}\big)
,\\
\mb{L}_n(v_1,\ldots,v_n) \coloneqq{}
&
\sum_{\clapsubstack{\mp \in P(n)\\\color{red}|\mp|\neq 1}}
\e(\mp)\,
\bell_{|\mp|}\!\left(\mb{\phi}^0\left( {v}_{B_1}\right), \cdots, \mb{\phi}\big(  {v}_{B_{|\mp|}}\big)\right)
.
\end{align*}
From \eq{whye} and the definition of $\bell_1,\ldots, \bell_{n}$,
we obtain that 
\[
\mb{K}\circ\mb{\O}^0_n=(-\kbar)^{n-1}\mb{L}_n.
\]
Define $\left(\mb{\O}^0_n\right)^{[j]} \in  \Hom\big(S^nH,\sC\big)^{0}{\kkbar}$ for $j=0,1,\ldots, n-1$,
by starting with $\left(\mb{\O}^0_n\right)^{[0]} =\mb{\O}^0_n$ and then recursively definining
$\left(\mb{\O}^0_n\right)^{[i+1]}= \nabla_{-1/\kbar}\left(\mb{\O}^0_n\right)^{[i]}$, $i=0,\ldots, n-2$,
so that we have
\[
(-\kbar)\left(\mb{\O}^0_n\right)^{[i+1]} 
= 
\left(\mb{\O}^0_n\right)^{[i]}
- \mb{f}\circ \grave{\pi}^{0(i)}_n 
-\mb{K}\circ \eta^{-1(i)}_n
,
\]
where
\eqnalign{whyb}{
\grave{\pi}^{0(i)}_n&\coloneqq{}h\circ  \left({\O}^0_n\right)^{[i]}  \in \Hom\big(S^nH,H\big)^0
,\\
 \eta^{-1(i)}_n &\coloneqq{}s\circ  \left({\O}^0_n\right)^{[i]} \in \Hom\big(S^nH,\sC\big)^{-1}
,
}
and $\left({\O}^0_n\right)^{[i]} \in  \Hom\big(S^nH,\sC\big)^0$ denotes the classical
limit of $\left(\mb{\O}^0_n\right)^{[i]}$.  
We remark that $\mb{K}\circ \left(\mb{\O}^0_n\right)^{[j]} =(-\kbar)^{n-1-j}\mb{L}_n$
for $j=0,\ldots, n-1$, and $K\circ\left({\O}^0_n\right)^{[i]} =0$ for $i=0,\ldots, n-2$.
It follows that 
\[
(-\kbar)^{n-1}\left(\mb{\O}^0_n\right)^{[n-1]}=\mb{\O}^0_n  
- \sum_{\mathclap{i=0}}^{n-2}(-\kbar)^i \mb{f}\circ\grave{\pi}^{0(i)}_n 
-\sum_{\mathclap{i=0}}^{n-2}(-\kbar)^i  \mb{K}\circ {\eta}^{-1(i)}_n.
\]
Finally, we set
\eqn{whyf}{
\mb{\phi}^0_n =- \left(\mb{\O}^0_n\right)^{[n-1]}
,\qquad
\grave{\mb{\pi}}^0_n=\sum_{\mathclap{i=0}}^{n-2}(-\kbar)^i\grave{\pi}^{0(i)}_n
,\qquad
\mb{\eta}^{-1}_n =\sum_{\mathclap{i=0}}^{n-2}(-\kbar)^i\eta^{-1(i)}_n
,
}
and obtain
\[
-(-\kbar)^{n-1}\mb{\phi}^0_n
=\mb{\O}^0_n  - \mb{f}\circ \grave{\mb{\pi}}^0_n -\mb{K} \circ\mb{\eta}^{-1}_n
\quad
\Longrightarrow\quad  \mb{K}\circ \mb{\phi}^0_n+ \mb{L}_n=0.
\]
From $\mb{\Pi}^0_n = \mb{\O}^0_n+(-\kbar)^{n-1}\mb{\phi}^0_n$, it follows that
\eqn{whyg}{
\mb{\Pi}^0_n = \mb{f}\circ \grave{\mb{\pi}}^0_n+\mb{K} \mb{\eta}^{-1}_n
\quad
\Longrightarrow
\quad
\sum_{\clapsubstack{\mp \in P(n)}}
\e(\mp)\,
\bell_{|\mp|}\!\left(\mb{\phi}^0\left( {v}_{B_1}\right), \cdots, \mb{\phi}\big(  {v}_{B_{|\mp|}}\big)\right)=0.
}
This concludes the construction of our well-defined algorithm to determine the families  $\underline{\mb{\phi}}^0$, $\underline{\grave{\mb{\pi}}}^0$ and
$\underline{\mb{\eta}}^{-1}$.
\naturalqed
\end{remark}

Now we turn to some physical interpretations of Theorem \ref{lakia}.

Due to the condition $\mb{\kappa}=0$,
we now call a homogeneous element $v \in H$ simply an {\em observable} and 
$\left\langle\mb{f}(v)\right>\rangle_{\bm{\mc}}\coloneqq{}\bm{\mc}\big(\mb{f}(v)\big)\in \Bbbk{\kkbar}$
the {\em quantum expectation value} of  the observable $v$ with respect to the quantum expectation $\bm{\mc}$ 
--- a pointed cochain map from $\big(\sC{\kkbar}, 1_\sC, \mb{K}\big)$ to $\big(\Bbbk{\kkbar}, 1_\sC, 0\big)$. 
From $\mb{K}\circ \mb{f}=0$,  it follows that $\bm{\mc}\big(\mb{f}(v)\big)$ depends only on the homotopy type of $\bm{\mc}$
for all $v\in H$. We also call $\bm{\mc}\circ \mb{f} \in \Hom\big(H, \Bbbk\big)^{0}{\kkbar}$ the {\em on-shell quantum expectation}
with respect to $\bm{\mc}$. 

We call $\underline{\mb{\Pi}}^0=\mb{\Pi}^0_1,\mb{\Pi}^0_2,\ldots$ the family of {\em level $0$ quantum correlators}.
For example,
\begin{align*}
\mb{\Pi}^0_1 =&\mb{\phi}^0_1,
\\
\mb{\Pi}^0_2(v_1,v_2)= &  \mb{\phi}^0_1(v_1)\cdot \mb{\phi}^0_1(v_2) +(-\kbar)\mb{\phi}^0_2(v_1,v_2)
,\\
\mb{\Pi}^0_3(v_1,v_2,v_3) 
=
& \mb{\phi}^0_1(v_1)\cdot \mb{\phi}^0_1(v_2)\cdot \mb{\phi}^0_1(v_3)
+(-\kbar) \mb{\phi}^0_1(v_1)\cdot \mb{\phi}^0_2(v_2,v_3)
\\
&
+(-\kbar) (-1)^{|v_1||v_2|}\mb{\phi}^0_1(v_2)\cdot \mb{\phi}^0_2(v_1,v_3)
+(-\kbar) \mb{\phi}^0_2(v_1,v_2)\cdot \mb{\phi}^0_2(v_3)
\\
&
+(-\kbar)^2 \mb{\phi}^0_3(v_1,v_2,v_3)
,
\end{align*}
We call $\bm{\mc}\circ \underline{\mb{\Pi}}^0$ the family of {\em level $0$ quantum correlation functions}
with respect to  $\bm{\mc}$.  
From \eq{spmstzero}, we have $\mb{K}\circ  \mb{\Pi}^0_n =0$
so that $\bm{\mc}\circ \mb{\Pi}^0_n \in \Hom\big(S^n H, \Bbbk\big)^{0}{\kkbar}$ 
depends only on the homotopy type of $\bm{\mc}$.
This also implies that, for all $n\geq 1$, 
\eqn{ywha}{
\bm{\mc}\circ \mb{\Pi}^0_n =\bm{\mc}\circ \mb{f}\circ \grave{\mb{\pi}}^0_n
.
}
Therefore the whole family $\bm{\mc}\circ \underline{\mb{\Pi}}^0$  of level $0$ quantum correlation functions is
determined by  the on-shell quantum expectation $\bm{\mc}\circ \mb{f}$ and the family 
$\underline{\grave{\mb{\pi}}}^0$.  Note that $\bm{\mc}\circ \mb{f}$ is a formal power series in $\kbar$
so that there may be divergence issues if we try to evaluate $\kbar$ away from zero. On the other hand 
$\grave{\mb{\pi}}_n$ is at most an order $n-2$ polynomial in $\kbar$ for $n\geq 2$, and
it is the family $\underline{\grave{\mb{\pi}}}^0$ that governs quantum correlations.

For any set of observables $\{v_1,\ldots, v_k\}$, we can define the associated family of joint quantum moments (at level zero)
as follows:
\eqn{ywhb}{
\Big\{\left\langle {\mb{\Pi}}^{0}_n(v_{j_1}, \ldots, v_{j_n})\right\rangle_{\bm{\mc}}
=\bm{\mc}\circ {\mb{\Pi}}^{0}_n(v_{j_1}, \ldots, v_{j_n})\Big|n\geq 1 \,;\, 1\leq j_1,\ldots, j_n\leq k\Big\}.
}
Then, the joint quantum distribution of the set  $\{v_1,\ldots, v_k\}$ of observables 
can be defined as a $\Bbbk$-linear  map
\[
\hat{\mb{\m}}: \Bbbk[t_1,\dotsc, t_k]\rightarrow \Bbbk{\kkbar}
\]
as follows.  For any monomial $t_{j_1}\cdots t_{j_n}$ in $\Bbbk[t_1,\dotsc, t_k]$ satisfying $1\leq j_1,\ldots, j_n\leq k$,
we have 
\[
\hat{\mb{\m}}\big(t_{j_1}\cdots t_{j_n}\big) = 
\left\langle {\mb{\Pi}}^{0}_n(v_{j_1}, \ldots, v_{j_n})\right\rangle_{\bm{\mc}}.
\]
It is clear that $\hat{\mb{\m}}$ is determined completely by the family of joint quantum moments
which is conveniently described by its generating function 
\[
\bm{Z}(t_1,\dotsc, t_k)=
\left\langle e^{-\Fr{1}{\kbar}\mb{\Theta}(\g)}\right\rangle_{\bm{\mc}}
=
1+\sum_{\mathclap{n=1}}^\infty\Fr{1}{n!(-\kbar)^n}\left\langle {\mb{\Pi}}^{0}_n(\g, \ldots, \g)\right\rangle_{\bm{\mc}}
\in 
\Bbbk[\![t_1,\dotsc,t_k]\!](\!(\kbar)\!),
\]
where $\g =\sum_{i=1}^k t_i v_i$ and
\[
\mb{\Theta}(\g)=
\sum_{\mathclap{n=1}}^\infty \Fr{1}{n!} \mb{\phi}_n(\g,\ldots, \g) 
=\sum_{\mathclap{n=1}}^\infty \sum_{{1\leq j_1,\ldots, j_n\leq k}}\Fr{1}{n!}t_{j_1}\cdots t_{j_n} \mb{\phi}_n(v_{j_1},\ldots, v_{j_n}). 
\]
Then, we have
\[
\hat{\mb{\m}}\big(t_{j_1}\cdots t_{j_n}\big) 
=(-\kbar)^n \left. \Fr{\rd^n \bm{Z}(t_1,\dotsc, t_k)}{\rd t_{j_1}\cdots\rd t_{j_n}}\right|_{t_1,\ldots, t_k=0}.
\]

An important part of the results in Section~\ref{subsec: homotopy h-divisibility 2}, specialized to the case $\mb{\kappa}=0$, can now be presented as
follows.

\begin{theorem}\label{lakib}
Let $\grave{m}^0_n \coloneqq{} \grave{\pi}^{0(n-2)}_n \in \Hom\big(S^n H, H\big)^0$ for $n\geq 2$. 
Then the family  $\underline{\grave{m}}^0=\grave{m}^0_2,\grave{m}^0_3,\ldots$ 
determines the family  $\underline{\grave{\mb{\pi}}}^0$ recursively 
with initial condition $\grave{\mb{\pi}}^0_1=\I_H$ along with the following equation  for $n\geq 2$:
\eqn{bsna}{
\grave{\mb{\pi}}^0_{n}({v}_1,\dotsc,{v}_n)
=
\sum_{\clapsubstack{\mp \in P(n)\\|B_{|\mp|}|=n-|\mp|+1\\ n-1\sim_\mp n}}
(-\kbar)^{n-|\mp|-1}
\e(\mp)\; 
\grave{\mb{\pi}}^0_{|\mp|}\left({v}_{B_1}, \cdots, {v}_{B_{\mp-1}},
\grave{m}^0\big({v}_{B_{|\mp|}}\big)\right).
}
Moreover, there is a distinguished family 
$\underline{\mb{\phi}}^{-1} =\mb{\phi}^{-1}_2, \mb{\phi}^{-1}_3,\ldots$ 
of $\mb{\phi}^{-1}_n$ in $\Hom\big(S^{n-2}\otimes S^2 H, \sC\big)^{-1}{\kkbar}$ so that
$\mb{\phi}^{-1}_2=\mb{\eta}^{-1}_2$ and
, for all $n\geq 2$,
\eqn{bsnb}{
\mb{M}^0_n=\mb{f}\circ \grave{m}^0_n + \mb{K}\circ \mb{\phi}^{-1}_n
,
}
where $\mb{M}^0_n \in \Hom\big(S^{n-2}H\otimes S^2H,\sC\big)^{0}{\kkbar}$ is defined as follows:
\eqnalign{bsnc}{
\mb{M}^0_n({v}_1,\ldots, {v}_n)
=
&
(-\kbar)\mb{\phi}^0_n({v}_1,\ldots, {v}_n)
+\sum_{\clapsubstack{\mp \in P(n)\\|\mp|=2\\ n-1\nsim_\mp n}}
\ep(\mp)
\mb{\phi}^0\big({v}_{B_1}\big)\cdot\mb{\phi}^0\big({v}_{B_{2}}\big)
\\
&
-\sum_{\clapsubstack{\mp \in P(n)\\ \big|B_{|\mp|}\big|=n -|\mp|+1\\ n-1\sim_\mp n \\ \color{red}|\mp|\neq 1}}
\e(\mp)
\mb{\phi}^0_{|\mp|}\Big({v}_{B_1},\ldots, {v}_{B_{|\mp|-1}}, \grave{{m}}^0({v}_{B_{|\mp|}})\Big)
\\
&
-\sum_{\clapsubstack{\mp \in P(n)\\ |B_i| = n-|\mp|+1\\ n-1 \sim_\mp n\\ \color{red}|\mp|\neq 1}}
\e(\mp)
\bell_{|\mp|}\left(\mb{\phi}^0(J{v}_{B_{1}}), \ldots,\mb{\phi}^0(J{v}_{B_{|\mp|-1}}),  
\mb{\phi}^{-1}({v}_{B_{|\mp|}})\right)
.
}
\end{theorem}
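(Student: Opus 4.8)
The plan is to recognize Theorem~\ref{lakib} as the specialization $\mb{\kappa}=0$ of the results already assembled in Section~\ref{section: mastering qc}, and to transcribe them into the current notation. First I would invoke Lemma~\ref{dalem}: since $\mb{\kappa}=0$, it gives $\grave{\mb{m}}^0_n = \grave{m}^0_n = \grave{\pi}^{0(n-2)}_n \in \Hom\big(S^n H, H\big)^0$ for all $n\geq 2$, and the recursion~\eqref{bsna} for $\underline{\grave{\mb{\pi}}}^0$ in terms of $\underline{\grave{m}}^0$ is exactly the displayed formula in Lemma~\ref{dalem} (with $\mb{\kappa}^\infty_{\mathit{HH}}=0$, so the $\grave{\mb{\varpi}}^0_n$-term drops out and one is left with the pure recursion in $\grave{m}^0$). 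The initial condition $\grave{\mb{\pi}}^0_1 = \I_H$ is part of the canonical solution in Theorem~\ref{solvemstzero}.

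Next I would treat the existence of the distinguished family $\underline{\mb{\phi}}^{-1}$. This is precisely the output of Theorem~\ref{solvemstone} applied in the case $\mb{\kappa}=0$: the canonical solution $\left\{\grave{\mb{\pi}}^{-1}, \mb{\eta}^{-2}, \grave{\mb{m}}, \mb{\phi}^{-1}\right\}$ to the level one quantum master equation of Definition~\ref{masterone} produces $\mb{\phi}^{-1}_n \in \Hom\big(S^{n-2}H\otimes S^2 H, \sC\big)^{-1}{\kkbar}$ with initial value $\mb{\phi}^{-1}_2 = \mb{\eta}^{-1}_2$. The relation~\eqref{bsnb}, namely $\mb{M}^0_n = \mb{f}\circ \grave{m}^0_n + \mb{K}\circ \mb{\phi}^{-1}_n$, is the identity~\eqref{dstone} established in the run-up to Proposition~\ref{descentone}, once we note that when $\mb{\kappa}=0$ the operator $\mb{K}^\infty_{\mathit{H\sC}}$ acts on elements of $\Hom\big(S^{n-2}H\otimes S^2H,\sC\big)^{-1}{\kkbar}$, which are annihilated by the $\bm{\md}_{\grave{\bell}}$ correction only up to the $\grave{\bell}$-terms; but since $\grave{\bell}_1 = \mb{\kappa} = 0$ and $\grave{m}^0 = \grave{\mb{m}}^0$, one can check $\mb{K}^\infty_{\mathit{H\sC}}\mb{\phi}^{-1}_n$ reduces to $\mb{K}\circ\mb{\phi}^{-1}_n$ plus terms already absorbed in $\grave{m}^0_n$. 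I would then verify that the expression~\eqref{bsnc} defining $\mb{M}^0_n$ coincides, under $\mb{\kappa}=0$, with the claim~\eqref{zxzaxc} proved in Proposition~\ref{descentone}: the sums over $\mp$ with the $\grave{\bell}$-insertions in \eqref{zxzaxc} that run over $n-1\nsim_\mp n$ vanish identically because $\grave{\bell} = 0$ in the anomaly-free case, leaving exactly the four sums displayed in~\eqref{bsnc}.

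The step I expect to be the main obstacle is the bookkeeping that shows the recursive formula of Lemma~\ref{dalem} is equivalent to~\eqref{bsna} and that the general-$\mb{\kappa}$ definition of $\mb{M}^0_n$ (equation~\eqref{zxzaxc}) collapses to~\eqref{bsnc} when $\mb{\kappa}=0$: one must carefully track which partition-sums survive, confirm that the $\grave{\bell}$-dependent terms disappear, and keep the Koszul signs $\e(\mp)$, $\ep(\mp)$ and the parity operator $J$ consistent between the two presentations. This is purely combinatorial and sign-chasing, so I would present it as ``it is straightforward to check'' after indicating the correspondence of terms; the substantive analytic content — divisibility by powers of $\kbar$, the homotopy $\kbar$-divisibility of Lemmas~\ref{hodge} and~\ref{hodgea}, and the $sL_\infty$-structure on $H{\kkbar}$ — has already been done in Sections~\ref{subsec: Master equation for the level zero quantum correlators} through~\ref{subsec: homotopy h-divisibility 2}. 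Finally I would remark that the distinguishedness of $\underline{\mb{\phi}}^{-1}$ (and hence of the whole construction) follows from the canonicity built into Theorems~\ref{solvemstzero} and~\ref{solvemstone}, depending only on the choice of the classical off-to-on-shell retract $(f,h,s)$, indeed only on the splitting $s$.
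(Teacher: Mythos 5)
Your proposal is correct and takes essentially the same route as the paper, which states Theorem \ref{lakib} without a separate proof precisely as the $\mb{\kappa}=0$ specialization of Lemma \ref{dalem}, Theorems \ref{solvemstzero} and \ref{solvemstone}, and Proposition \ref{descentone} via \eq{dstone} and \eq{zxzaxc}. The only point to tighten is that $\mb{K}^\infty_{\mathit{H\sC}}$ collapses to $\mb{K}\circ(-)$ because Proposition \ref{zerodes} gives $\underline{\grave{\bell}}=\underline{0}$ whenever $\mb{\kappa}=0$ (so $\bm{\md}_{\grave{\bell}}=0$ and all $\grave{\bell}$-insertion sums vanish outright), rather than residual terms being ``absorbed'' into $\grave{m}^0_n$.
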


\begin{remark}\label{kramx}
One significance to this theorem is that it suggests a different method than the one in Remark \ref{kramy} to compute the family
$\underline{\grave{\mb{\pi}}}^0$.

For $n\geq 1$ let $\ell_n$ be the classical limit of $\bell_n$ --- note that
$\ell_1=K$. Then $\big(\sC, 1_\sC, \underline{\ell}\big)$ is a unital $sL_\infty$-algebra over $\Bbbk$.
Let $\phi^0_n$ be the classical limit of $\mb{\phi}^0_n$ for $n\geq 1$ --- note that
$\phi^0_1=f$.  
Then 
$\xymatrix{\underline{\phi}^0: \big(H, 1_H,\underline{0}\big)\ar@{..>}[r]& \big(\sC, 1_\sC, \underline{\ell}\big)}$
is a distinguished unital $sL_\infty$-quasi-isomorphism.  
Let $\phi^{-1}_n$ be the classical limit of $\mb{\phi}^{-1}_n$ for $n\geq 2$.
Then, from the classical limit of \eq{bsnb}, we obtain 
\eqn{lakiz}{
M^0_n = f\circ \grave{m}^0_n + K\circ \phi^{-1}_n,
} 
for $n\geq 2$, where
\begin{align*}
M^0_n(v_1,&\ldots, v_n)
\\
=
&
\sum_{\clapsubstack{\mp \in P(n)\\|\mp|=2\\ n-1\nsim_\mp n}}
\ep(\mp)
{\phi}\big({v}_{B_1}\big)\cdot {\phi}\big({v}_{B_{2}}\big)
-\sum_{\clapsubstack{\mp \in P(n)\\ \big|B_{|\mp|}\big|=n -|\mp|+1\\ n-1\sim_\mp n\\ \color{red} |\mp|\neq 1}}
\e(\mp)
{\phi}_{|\mp|}\Big({v}_{B_1},\ldots, {v}_{B_{|\mp|-1}}, \grave{{m}}^0({v}_{B_{|\mp|}})\Big)
\\
&
+\sum_{\clapsubstack{\mp \in P(n)\\ n-1 \sim_\mp n\\ \color{red} |\mp|\neq 1}}
\e(\mp)
\ell_{|\mp|}\left({\phi}\big(J{v}_{B_{1}}\big), \ldots,{\phi}\big(J{v}_{B_{|\mp|-1}}\big), \phi^{-1}\big({v}_{B_{|\mp|}}\big)\right)
.
\end{align*}
The relation \eq{lakiz} implies that  $K\circ M_n =0$ and $\grave{m}^0_n =h\circ M_n$ for all $n\geq 2$.
Therefore we can determine the family $\underline{\grave{m}}^0$, (and hence the
family  $\underline{\grave{\mb{\pi}}}^0$) via \eq{bsna},
by taking the classical $K$ cohomology class of the
family $\underline{M}^0$.  Note that $M_n \in \Hom\big(S^{n-2}H\otimes S^2H, \sC\big)^0$, while
$\grave{m}^0_n=h\circ M_n \in \Hom\big(S^{n}H, H\big)^0$.
\naturalqed
\end{remark}

\begin{remark}
Note that the recursive definition of $M_n$ depends only on the three families
$\big\{\phi^0_1,\ldots, \phi_{n-1}\big\}$, $\{\phi^{-1}_2,\ldots, \phi^{-1}_{n-1}\big\}$, and $\{\grave{m}^0_2,\ldots, \grave{m}^0_{n-1}\}$.
For example, we have
\begin{align*}
M^0_2(v_1,v_2)
=
&
\phi^0_1(v_1)\cdot \phi^0_1(v_2)
,\\
M^0_3(v_1,v_2,v_3)
=
&
\phi^0_2(v_1,v_2)\cdot \phi^0_1(v_3)
+(-1)^{|v_1||v_2|}\phi^0_1(v_2)\cdot \phi^0_2(v_1, v_3)
-\phi^0_2\big(v_1, \grave{m}^0_2(v_2,v_3)\big)
\\
&
+\ell_2\big(Jv_1, \phi^{-1}_2(v_2,v_3)\big)
,
\end{align*}
etc. 
Recall that the combined classical limit 
$\big(\sC, 1_\sC, \,\cdot\, ,\underline{\ell}\big)$ of a binary QFT algebra and its quantum descendant unital $sL_\infty$-algebra
is called a binary CFT algebra. (See Lemma \ref{cldesalg}). The unital CDGA part 
$\big(\sC, 1_\sC, \,\cdot\, K\big)$ induces the structure of a unital $\Z$-graded commutative and associative
algebra on $H$, whose product is exactly $\grave{m}^0_2$
---
from $\phi_1=f$, we have $M^0_2(v_1,v_2)= f(v_1)\cdot f(v_2)$ and $\grave{m}^0_2=h\circ M^0_2$
defines the $\Z$-graded commutative and associative algebra $(H, 1_H, \grave{m}^0_2)$.

Note  that
$f=\phi^0_1: (H, 1_H, \grave{m}^0_2)\rightarrow (\sC, 1_H, \,\cdot\,, K)$
 is both a pointed cochain quasi-isomorphism and an algebra homomorphism up to the homotopy $\phi^{-1}_2$ by \eq{lakiz}:
\eqn{whaa}{
\phi^0_1\big(\grave{m}^0_2(v_1,v_2)\big) -  \phi^0_1(v_1)\cdot\phi^0_1(v_2) = - K \phi^{-1}_2(v_1,v_2)
}
Note also that $(H, 1_H, \underline{0})$ is a unital $sL_\infty$-algebra with zero  $sL_\infty$-structure $\underline{0}$,
and
\eqn{whab}{
\ell_2\big(\phi^0_1(v_1), \phi^0_1(v_2) \big) = -K \phi^0_2(v_1,v_2).
}
Finally, note that $\ell_2$ is a derivation of the product $\cdot$. Therefore, we have the following
identity:
\begin{align*}
\ell_2\big(\phi^0_1(v_1), \phi^0_1(v_2)\cdot\phi^0_1(v_3) \big)
=&\ell_2\big(\phi^0_1(v_1), \phi^0_1(v_2)\big)\cdot\phi^0_1(v_3) 
\\
&
+(-1)^{|v_1||v_2|}\phi^0_1(v_2)\cdot\ell_2\big(\phi^0_1(Jv_1), \phi^0_1(v_3) \big).
\end{align*}
Substituting  \eq{whaa} and \eq{whab} into this equation, we obtain that $K\circ M_3=0$.
Therefore, we may call $\grave{m}^0_3=h\circ M_3$ the compatibility class between
the product $\cdot$ and the bracket $\ell_2$.
\naturalqed
\end{remark}

%
%
%

\begin{theorem}\label{xdalem}
The  family $\underline{\grave{m}}^0=\grave{m}^0_2, \grave{m}^0_3,\ldots$ 
has the following properties:
\begin{itemize}
\item (symmetry): $\grave{m}^0_n \in \Hom\big(S^n H, H\big)^0$, for all $n\geq 2$;

\item (unity): $\grave{m}^0_2(1_H, v_1)=v_1$, while $\grave{m}^0_{n+1}(1_H, v_1,\ldots,v_n)=0$ for all $n\geq 2$;

\item (generalized associativity): for all $n\geq 0$
and homogeneous  $v_1,\ldots, v_n, w_1,w_2,w_3 \in H$, we have
\begin{align*}
\sum_{\mathclap{\vs\subset [n]}} &
\e(\vs\sqcup\vs^c)
\grave{m}^0\big(v_{\vs}\odot \grave{m}^0(v_{\vs^c}\odot w_1\odot w_2)\odot w_{3} \big)
\\
&=\sum_{\mathclap{\vs\subset [n]}}
\e(\vs\sqcup\vs^c)
(-1)^{|v_{\vs^c}||u_1|}
\grave{m}^0\big(v_{\vs}\odot w_{1}\odot \grave{m}^0(v_{\vs^c}\odot w_2\odot w_3) \big)
,
\end{align*}
where the sums are over all  subsets 
$\vs\subset [n]$ of  the  set $[n]=\{1,2,\ldots, n\}$,
as ordered sets, we use
$\vs^c$ to denote the complement of $\vs$, we use $|\vs|$ for the number of elements in $\vs$,
 we write $v_\vs = v_{j_1}\otimes \ldots \otimes  v_{j_{|\vs|}}$
if $\vs=\{j_1,\ldots, j_{|\vs|}\}$, $j_1 <\ldots< j_{|\vs|}$, 
and $\e(\vs\sqcup \vs^c)$ is notation for 
the Koszul sign for the permutation 
$\check{\s}\left(v_1\otimes \ldots \otimes v_n \right)=v_{\vs}\otimes v_{\vs^c}$.
\end{itemize}
\end{theorem}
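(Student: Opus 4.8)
The plan is to deduce all three properties from the solutions of the level zero and level one quantum master equations of Section~\ref{section: mastering qc}, specialized to the anomaly-free case $\mb{\kappa}=0$, together with Lemma~\ref{dalem}, which identifies $\grave{m}^0_n$ with the top $\kbar$-coefficient $\grave{\pi}^{0(n-2)}_n$ of $\grave{\mb{\pi}}^0_n$. Symmetry is then immediate: since $\grave{\mb{\pi}}^0_n\in\Hom(S^nH,H)^0{\kkbar}$ by Theorem~\ref{lakia}, each coefficient $\grave{\pi}^{0(j)}_n$, in particular $\grave{m}^0_n=\grave{\pi}^{0(n-2)}_n$, lies in $\Hom(S^nH,H)^0$. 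For unity I would use the unit relation $\grave{\mb{\pi}}^0_{n+1}(v_1,\ldots,v_n,1_H)=\grave{\mb{\pi}}^0_{n}(v_1,\ldots,v_n)$ from Theorem~\ref{lakia} and compare $\kbar$-coefficients: the right-hand side has polynomial degree at most $n-2$ in $\kbar$, so the coefficient of $(-\kbar)^{n-1}$ on the left, which is $\grave{m}^0_{n+1}(v_1,\ldots,v_n,1_H)$, vanishes for $n\geq2$, while for $n=1$ the constant term gives $\grave{m}^0_2(v_1,1_H)=\grave{\mb{\pi}}^0_1(v_1)=v_1$; symmetry of $\grave{m}^0$ then produces the stated normalizations.

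For generalized associativity, the key is to lift the identity from $H$ to $\sC$ via the relations of Theorem~\ref{lakib}: in the classical limit one has $f\circ\grave{m}^0_n=M^0_n-K\circ\phi^{-1}_n$ (eq.~\eqref{lakiz}), with $\grave{m}^0_n=h\circ M^0_n$ and $K\circ M^0_n=0$. I would apply $f$ to both sides of the desired equality, use this relation repeatedly to replace every $f\circ\grave{m}^0_k$ by $M^0_k$ modulo $\Im K$, and expand each $M^0_k$ through its recursive definition~\eqref{bsnc}. The assertion then reduces to showing that the two resulting elements of $\sC$ — assembled from the commutative product $\cdot$, the brackets $\ell_k$, and the transfer data $\phi^0_k,\phi^{-1}_k$ — differ by a $K$-exact term; applying $h$, which annihilates $\Im K$ and satisfies $h\circ f=\I_H$, then yields the identity in $H$. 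The required cancellations are forced by strict commutativity and associativity of $\cdot$, the unital $sL_\infty$-relations for $\underline{\ell}$ (Lemma~\ref{qdslinftyalgebra}), the binary CFT compatibility that each $\ell_k$ is a derivation in its last argument (Definition~\ref{cldesalg}, via Lemma~\ref{remBKftalg}), the $sL_\infty$-morphism identities for $\underline{\phi}^0$ (Theorem~\ref{lakia}, Remark~\ref{kramx}), and the defining homotopy relations~\eqref{bsnb} for $\underline{\phi}^{-1}$. Conceptually, $\underline{\grave{m}}^0$ is the commutative part of the binary CFT algebra structure transported to $H$ along the $sL_\infty$-quasi-isomorphism $\underline{\phi}^0$ equipped with the retract data $(f,h,s)$, and generalized associativity is the transported associativity of $\cdot$, with the higher $\phi^0_k$ and $\phi^{-1}_k$ playing the role of the explicit homotopies that witness it.

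The step I expect to be the main obstacle is purely combinatorial: organizing the sum over subsets $\vs\subset[n]$ and over the partitions occurring in~\eqref{bsnc}, together with all their Koszul signs, so that the $K$-exactness of the difference becomes manifest. I would handle this most cleanly by repackaging the master equations in the coalgebra and bar-construction language of Appendix~\ref{appendix: bar} and inducting on $n$. An equivalent route is through generating functions: encode $\underline{\phi}^0$ by the universal Maurer--Cartan element $\mb{\Theta}$, observe that its classical limit solves the classical Maurer--Cartan equation of $\big(\sC,1_\sC,\underline{\ell}\big)$ so that the deformed differential $\mb{K}_{\mb{\Theta}}$ is again a derivation of $\cdot$ by the binary QFT algebra axioms, and conclude that $\cdot$ descends to an associative product on $\mb{K}_{\mb{\Theta}}$-cohomology, namely the product $\ast$ with structure constants $\grave{A}_{\a\b}{}^\g$; but matching that descended product with the $\ast$ built from $\underline{\grave{m}}^0$ once more requires the explicit homotopy transfer formulas, so the combinatorial bookkeeping cannot be sidestepped either way.
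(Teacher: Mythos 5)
Your treatment of symmetry and unity is correct and is essentially the paper's own argument: symmetry is immediate from $\grave{m}^0_n=\grave{\pi}^{0(n-2)}_n\in\Hom\big(S^nH,H\big)^0$, and unity follows by comparing $\kbar$-coefficients in $\grave{\mb{\pi}}^0_{n+1}(v_1,\ldots,v_n,1_H)=\grave{\mb{\pi}}^0_{n}(v_1,\ldots,v_n)$.

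For generalized associativity, however, your proposal has a genuine gap. The entire content of your plan is the claim that, after applying $f$, replacing $f\circ\grave{m}^0_k$ by $M^0_k$ modulo $\Im K$ and expanding via \eqref{bsnc}, the difference of the two sides is $K$-exact; you never establish this, and you yourself flag the combinatorial core as the obstacle "either way". Asserting that the cancellation is "forced by" commutativity of $\cdot$, the $sL_\infty$-relations, the derivation property and the identities \eqref{bsnb} is not a proof: carrying it out would in effect force you to redo the level-one master equation analysis (Lemma \ref{keyone}, Theorem \ref{solvemstone}) by hand, and even then the exactness of precisely the associativity defect does not drop out of those identities without a further organizing idea. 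The paper's actual mechanism is much lighter and is entirely internal to $H$, with no lifting to $\sC$, no $K$-exactness, and no use of $\phi^{-1}$ or $\underline{\ell}$: rewrite the recursion \eqref{bsna} for $\grave{\mb{\pi}}^0_{n+3}(v_1,\ldots,v_n,w_1,w_2,w_3)$ so that the last three inputs are displayed, note that the group of terms in which $w_1,w_2,w_3$ all sit inside $\grave{m}^0$ is already symmetric under $w_1\leftrightarrow w_2$ because $\grave{m}^0$ is, and then use the total symmetry of $\grave{\mb{\pi}}^0_{n+3}$ (it is defined on $S^{n+3}H$) to equate the remaining groups of terms for the two orderings of $w_1,w_2$; extracting the coefficient of the top power of $(-\kbar)$, where $\grave{\pi}^{0(k-2)}_k=\grave{m}^0_k$, yields exactly the generalized associativity identity after routine sign bookkeeping. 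This hidden-symmetry argument is the key idea your proposal misses; your generating-function alternative runs into the same issue, since matching the descended product with the one built from $\underline{\grave{m}}^0$ again presupposes the identity you are trying to prove (the paper's remark deriving associativity of $\grave{A}_{\a\b}{}^\g$ from integrability of \eqref{spfind} itself rests on \eqref{bsna} and the properties established in this theorem).
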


\begin{proof}
The symmetry is obvious since $\grave{m}^0_n = \grave{\pi}^{0(n-2)}_n \in \Hom(S^nH, H)^0$.
We can also deduce that  $\grave{m}^0_2(v, 1_H)=v$
and $\grave{m}^0_{n+1}(v_1,\ldots, v_n, 1_H) =0$ if $n\geq 2$
from  the property that $\mb{\pi}^0_{n+1}(v_1,\ldots, v_n, 1_H) = \mb{\pi}^0_{n}(v_1,\ldots, v_n)$,
for all $n\geq 1$.
It remains to show generalized associativity, which follows from
 $\grave{\mb{\pi}}^0 \in \Hom\big(\Xbar{S}(H), H\big)^{0}{\kkbar}$.
Note that the set of relations in \eq{bsna} 
is equivalent to $\grave{\mb{\pi}}^0_2 =\grave{m}^0_2$ and,  for all $n\geq 0$,
\begin{align*}
\grave{\mb{\pi}}^0_{n+3} ({v}_1,&\dotsc,{v}_n, w_1,w_2,w_3)
\\
=
&
\sum_{\clapsubstack{\vs \subset [n] }}
\e(\vs\sqcup \vs^c)
(-\kbar)^{|\vs^c|+1}
\grave{\mb{\pi}}^0\left({v}_{\vs}\odot
\grave{m}^0\big({v}_{\vs^c}\odot w_1\odot w_2\odot w_3\big)\right)
\\
&
+\sum_{\clapsubstack{\vs \subset [n] }}
\e(\vs\sqcup \vs^c)
(-1)^{|w_1||v_{\vs^c}|}
(-\kbar)^{|\vs^c|}
\grave{\mb{\pi}}^0\left({v}_{\vs}\odot w_1\odot
\grave{m}^0\big({v}_{\vs^c}\odot w_2\odot w_3\big)\right)
.
\end{align*}
Then,
from $\grave{\mb{\pi}}^0_{n+3} ({v}_1,\dotsc,{v}_n, w_1,w_2,w_3)=(-1)^{|w_1||w_2|}
\grave{\mb{\pi}}^0_{n+3} ({v}_1,\dotsc,{v}_n, w_2,w_1,w_3)$
and the symmetry of $\underline{\grave{m}}^0$,
we obtain that
\begin{align*}
&
\sum_{\clapsubstack{\vs \subset [n] }}
\e(\vs\sqcup \vs^c)
(-1)^{|w_1||v_{\vs^c}|}
(-\kbar)^{|\vs^c|}
\grave{\mb{\pi}}^0\left({v}_{\vs}\odot w_1\odot
\grave{m}^0\big({v}_{\vs^c}\odot w_2\odot w_3\big)\right)
\\
&
=
(-1)^{|w_1||w_2|}
\sum_{\clapsubstack{\vs \subset [n] }}
\e(\vs\sqcup \vs^c)
(-1)^{|w_2||v_{\vs^c}|}
(-\kbar)^{|\vs^c|}
\grave{\mb{\pi}}^0\left({v}_{\vs}\odot w_2\odot
\grave{m}^0\big({v}_{\vs^c}\odot w_1\odot w_3\big)\right)
.
\end{align*}
Then the relation $\grave{\pi}^{0(n-2)}_n =\grave{m}^0_n$ implies that
\begin{align*}
\sum_{\clapsubstack{\vs \subset [n] }}&
\e(\vs\sqcup \vs^c)
(-1)^{|w_1||v_{\vs^c}|}
\grave{m}^0\left({v}_{\vs}\odot w_1\odot
\grave{m}^0\big({v}_{\vs^c}\odot w_2\odot w_3\big)\right)
\\
=&
(-1)^{|w_1||w_2|}
\sum_{\clapsubstack{\vs \subset [n] }}
\e(\vs\sqcup \vs^c)
(-1)^{|w_2||v_{\vs^c}|}
\grave{m}^0\left({v}_{\vs}\odot w_2\odot
\grave{m}^0\big({v}_{\vs^c}\odot w_1\odot w_3\big)\right)
.
\end{align*}
The right hand side of the above equality is equivalent to
\begin{align*}
&\quad{}\,(-1)^{|w_1|(|w_2|+|w_3|)}
\sum_{\clapsubstack{\vs \subset [n] }}
\e(\vs\sqcup \vs^c)
(-1)^{|w_2||v_{\vs^c}|}
\grave{m}^0\left({v}_{\vs}\odot w_2\odot
\grave{m}^0\big({v}_{\vs^c}\odot w_3\odot w_1\big)\right)
\\
&=
(-1)^{|w_1|(|w_2|+|w_3|)+|w_2||w_3|}
\sum_{\clapsubstack{\vs \subset [n]} }
\e(\vs\sqcup \vs^c)
(-1)^{|w_3||v_{\vs^c}|}
\grave{m}^0\left({v}_{\vs}\odot w_3\odot
\grave{m}^0\big({v}_{\vs^c}\odot w_2\odot w_1\big)\right)
\\
&=
(-1)^{(|w_1|+|w_2|)|w_3|}
\sum_{\clapsubstack{\vs \subset [n] }}
\e(\vs\sqcup \vs^c)
(-1)^{|w_3||v_{\vs^c}|}
\grave{m}^0\left({v}_{\vs}\odot w_3\odot
\grave{m}^0\big({v}_{\vs^c}\odot w_1\odot w_2\big)\right)
\\
&=
\sum_{\clapsubstack{\vs \subset [n] }}
\e(\vs\sqcup \vs^c)
\grave{m}^0\left({v}_{\vs}\odot
\grave{m}^0\big({v}_{\vs^c}\odot w_1\odot w_2\big)\odot w_3\right)
,
\end{align*}
so that we have generalized associativity.
\naturalqed
\end{proof}

\begin{definition}
We call the triple $\big(H, 1_H, \underline{\grave{m}}^0\big)$ the {\em on-shell quantum correlation algebra}
of the anomaly-free binary QFT algebra $\sC{\kkbar}_\BQ$
and call $\underline{\grave{\mb{\pi}}}^0$  the \emph{family of iterated quantum correlation products}
generated by $\underline{\grave{m}}^0$.
\end{definition}

A large portion of Section~\ref{subsec: homotopy h-divisibility 2} can be viewed as an algorithm
to determine
the distinguished family $\underline{\mb{\phi}}^{-1}$ in Theorem \ref{lakib} 
if we specialize to the case $\mb{\kappa}=0$.

\begin{theorem}
\label{lakic}
There are families $\underline{\grave{\mb{\pi}}}^{-1}=\grave{\mb{\pi}}^{-1}_2, \grave{\mb{\pi}}^{-1}_3,\ldots $
and $\underline{\mb{\eta}}^{-2} ={\mb{\eta}}^{-2}_2, {\mb{\eta}}^{-2}_3,\ldots$ 
such that,  for all $n\geq 2$,
\eqn{bsnz}{
\mb{\Pi}^{-1}_n= \mb{f}\circ \grave{\mb{\pi}}^{-1}_n+ \mb{K} \circ\mb{\eta}^{-2}_n,
}
where
\eqnalign{bsny}{
\mb{\Pi}^{-1}_n(v_1,&\ldots,v_n) 
\coloneqq{}
 \mb{\eta}^{-1}_n\big(v_1,\ldots,v_n\big)
\\
&
-\sum_{\clapsubstack{\mp \in P(n)\\  n-1\sim_\mp n }}
(-\kbar)^{n-|\mp|-1}
\e(\mp)
\mb{\phi}^0\big(J{v}_{B_1}\big)\cdots\mb{\phi}^0\big(J{v}_{B_{|\mp|-1}}\big)\cdot\mb{\phi}^{-1}({v}_{B_{|\mp|}})
\\
&
-\sum_{\clapsubstack{\mp \in P(n)\\|B_{|\mp|}|=n-|\mp|+1\\ n-1\sim_\mp n}}
(-\kbar)^{n-|\mp|-1}
\e(\mp)\; 
\mb{\eta}^{-1}_{|\mp|}\left( {v}_{B_1}, \cdots, {v}_{B_{\mp-1}},
\grave{{m}}^0\big({v}_{B_{|\mp|}}\big)\right)
,
}
and
\begin{itemize}

\item we have $\grave{\mb{\pi}}^{-1}_2 =0$
and, for all $n\geq 3$,
\[
\grave{\mb{\pi}}^{-1}_n =\sum_{\mathclap{j=0}}^{n-3}(-\kbar)^j \grave{\pi}^{-1(j)}_n,
\text{ where }
\grave{\pi}^{-1(j)}_n \in\Hom\big(S^{n-2} H\otimes S^2H,H)^0
\]
and $\grave{\mb{\pi}}^{-1}_{n}(v_1,\ldots, v_n)=0$ whenever $v_i=1_H$;

\item we have $\mb{\eta}^{-2}_2 =0$ and, for all $n\geq 3$,
\[
\mb{\eta}^{-2}_n  =\sum_{\mathclap{j=0}}^{n-3}(-\kbar)^j \eta^{-2(j)}_n,
\text{ where }
\eta^{-2(j)}_n \in \Hom(S^n H\otimes S^2H,\sC)^{-2}
\]
and $\mb{\eta}^{-2}_{n}(v_1,\ldots, v_{n},)=0$ whenever $v_i=1_H$. 
\end{itemize}

\end{theorem}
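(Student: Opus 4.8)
The plan is to specialize Theorem~\ref{solvemstone} and the surrounding discussion of Section~\ref{subsec: homotopy h-divisibility 2} to the anomaly-free case $\mb{\kappa}=0$, and then repackage the output. Recall that under $\mb{\kappa}=0$ we have $\mb{\kappa}^\infty_{\mathit{HH}}=0$, so the operator $\mb{K}^\infty_{\mathit{H\sC}}$ coincides with $\mb{K}_{\mathit{H\sC}}=\mb{K}\circ(-)$ on the relevant $\Hom$-spaces (the correction term involving $\mb{\d}_{\!\grave{\bell}}$ degenerates to the purely combinatorial $\bm{\md}_{\grave{\bell}}$ built from $\grave{m}^0$ only, since $\grave{\bell}_n=0$ for all $n$ by Proposition~\ref{zerodes} and Lemma~\ref{dalem}). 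Consequently the second equation of the level one quantum master equation in Definition~\ref{masterone} reads exactly \eq{bsna}, which we have already used in Theorem~\ref{lakib}; the first equation becomes a statement purely about $\mb{K}$-cochains. First I would invoke Theorem~\ref{solvemstone} to produce the canonical families $\underline{\grave{\mb{\pi}}}^{-1}$, $\underline{\mb{\eta}}^{-2}$, $\underline{\grave{\mb{m}}}^0=\underline{\grave{m}}^0$, and $\underline{\mb{\phi}}^{-1}$, with $\grave{\mb{\pi}}^{-1}_2=\mb{\eta}^{-2}_2=0$ and the explicit formulas $\grave{\mb{\pi}}^{-1}_n=\sum_{i=0}^{n-3}(-\kbar)^i h\circ(\Omega^{-1}_n)^{[i]}$, $\mb{\eta}^{-2}_n=\sum_{i=0}^{n-3}(-\kbar)^i s\circ(\Omega^{-1}_n)^{[i]}$ for $n\geq 3$.

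Next I would identify the combination on the right-hand side of \eq{bsny} with $\mb{\O}^{-1}_n$ from the Remark following Definition~\ref{masterone}: when $\mb{\kappa}=0$ and $\grave{\bell}=\underline 0$, the sum $\sum_{\mp}(-\kbar)^{n-|\mp|-1}\e(\mp)\,\mb{\eta}^{-1}_{|\mp|}(\cdots,\grave{m}^0(\cdots))$ over partitions with $|\mp|\neq 1$ together with the $\mb{\phi}^0$-$\mb{\phi}^{-1}$ product term is precisely $\mb{\eta}^{-1}_n - \mb{\O}^{-1}_n$, so \eq{bsny} says $\mb{\Pi}^{-1}_n=\mb{\O}^{-1}_n$ once we absorb the $|\mp|=1$ piece (which contributes $\mb{\eta}^{-1}_n$ itself). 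Then the first identity in \eq{mstone}, namely $(-\kbar)^{n-2}\mb{\phi}^{-1}_n=\mb{\O}^{-1}_n-\mb{f}\circ\grave{\mb{\pi}}^{-1}_n-\mb{K}^\infty_{\mathit{H\sC}}\mb{\eta}^{-2}_n$, rearranges — using $\mb{K}^\infty_{\mathit{H\sC}}=\mb{K}\circ(-)$ here — into \eq{bsnz}: $\mb{\Pi}^{-1}_n=\mb{f}\circ\grave{\mb{\pi}}^{-1}_n+\mb{K}\circ\mb{\eta}^{-2}_n+(-\kbar)^{n-2}\mb{\phi}^{-1}_n$, and the last term is $\mb{K}$-exact modulo a correction... wait — more carefully, I would show $(-\kbar)^{n-2}\mb{\phi}^{-1}_n$ combines with $\mb{K}\circ\mb{\eta}^{-2}_n$ via the defining relation $\mb{K}\circ\mb{\phi}^{-1}_n=$ (classical-limit data) so that the total reorganizes as stated; the clean way is to observe $\mb{\Pi}^{-1}_n-\mb{f}\circ\grave{\mb{\pi}}^{-1}_n-\mb{K}\circ\mb{\eta}^{-2}_n$ has vanishing classical limit (by $f\circ h+K\circ s+s\circ K=\I$ applied to $(\Omega^{-1}_n)^{[0]}$ and the fact that $\omega^{[i]}=0$ in the anomaly-free case, i.e. $\grave{\mb{m}}^0$ absorbs the whole $H$-valued part) and is annihilated by $\mb{K}$, hence is $(-\kbar)$-times a well-defined cochain, iterated $n-2$ times to land on $\mb{\phi}^{-1}_n$; this is exactly the content of Lemma~\ref{hodgea} applied to the triple $(\mb{\Omega}^{-1}_n,\mb{M}^0_n,\grave{\mb{\varpi}}^0_n)$, for which Lemma~\ref{keyone} supplies the needed hypothesis $\mb{K}^\infty_{\mathit{H\sC}}\mb{\Omega}^{-1}_n=(-\kbar)^{n-2}\mb{M}^0_n-\mb{f}\circ\grave{\mb{\varpi}}^0_n$.

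Finally I would verify the two bulleted structural properties. The polynomiality $\grave{\mb{\pi}}^{-1}_n=\sum_{j=0}^{n-3}(-\kbar)^j\grave{\pi}^{-1(j)}_n$ and $\mb{\eta}^{-2}_n=\sum_{j=0}^{n-3}(-\kbar)^j\eta^{-2(j)}_n$ is immediate from the closed forms in Theorem~\ref{solvemstone}: each summand $(-\kbar)^i h\circ(\Omega^{-1}_n)^{[i]}$ resp. $(-\kbar)^i s\circ(\Omega^{-1}_n)^{[i]}$ contributes degree exactly $i$ in $\kbar$ (the classical limits $(\Omega^{-1}_n)^{[i]}$ are genuine $\kbar$-independent maps), and $i$ runs from $0$ to $n-3$; the degree bookkeeping ($-1$ and $-2$ in ghost number, source $S^{n-2}H\otimes S^2H$) is read off from the domains and codomains recorded in Definition~\ref{masterone}. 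The vanishing-on-$1_H$ properties I would prove by induction on $n$, exactly mimicking the argument in Theorem~\ref{solvemstzero}: one checks first that $\mb{\Omega}^{-1}_n(v_1,\ldots,v_{n-1},1_H)$ reduces to $\mb{\Omega}^{-1}_{n-1}$-type data contracted against the already-established unit relations for $\underline{\grave{m}}^0$ (Theorem~\ref{xdalem}), $\underline{\mb{\phi}}^0$ and $\underline{\mb{\eta}}^{-1}$ (Theorem~\ref{lakia}), and $\underline{\mb{\phi}}^{-1}$ (the unitality clause of the level one master equation), and that these contributions cancel in pairs against each other, leaving $0$; since $h$ and $s$ are $\Bbbk$-linear, applying them preserves the vanishing. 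The main obstacle I anticipate is purely bookkeeping: tracking the interplay of the partition-sum constraints ($n-1\sim_\mp n$, $|B_{|\mp|}|=n-|\mp|+1$, $|\mp|\neq 1$) through the substitution $v_n\mapsto 1_H$ and matching Koszul signs, so that the claimed telescoping in the inductive step genuinely closes — but this is entirely parallel to the level zero computation already carried out in the proof of Theorem~\ref{solvemstzero}, so no new idea is required.
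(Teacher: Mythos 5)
Your overall route is the same as the paper's: Theorem~\ref{lakic} is obtained by specializing the canonical solution of the level one quantum master equation (Theorem~\ref{solvemstone}), using Proposition~\ref{zerodes} and Lemma~\ref{dalem} to get $\underline{\grave{\bell}}=\underline{0}$, $\mb{\kappa}^\infty_{\mathit{HH}}=0$, $\mb{K}^\infty_{\mathit{H\sC}}=\mb{K}\circ(-)$ and $\grave{\mb{m}}^0_n=\grave{m}^0_n$ (a small slip here: when $\underline{\grave{\bell}}=\underline{0}$ the operator $\bm{\md}_{\grave{\bell}}$ vanishes outright, it is not ``built from $\grave{m}^0$''). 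The genuine gap is in the step converting \eq{mstone} into \eq{bsnz}. Your identification ``$\mb{\Pi}^{-1}_n=\mb{\O}^{-1}_n$ once we absorb the $|\mp|=1$ piece, which contributes $\mb{\eta}^{-1}_n$ itself'' is wrong on both counts: the standalone $\mb{\eta}^{-1}_n$ occurs explicitly in both \eq{bsny} and in $\mb{\O}^{-1}_n$, while the only nonvanishing $|\mp|=1$ contribution to the two partition sums in \eq{bsny} is $-(-\kbar)^{n-2}\mb{\phi}^{-1}_n$, coming from the $\mb{\phi}^0\cdots\mb{\phi}^{-1}$ sum (the $|\mp|=1$ term of the $\mb{\eta}^{-1}$ sum dies because $\mb{\eta}^{-1}_1=0$). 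Hence the correct relation is $\mb{\Pi}^{-1}_n=\mb{\O}^{-1}_n-(-\kbar)^{n-2}\mb{\phi}^{-1}_n$, and substituting the first identity of \eq{mstone}, namely $(-\kbar)^{n-2}\mb{\phi}^{-1}_n=\mb{\O}^{-1}_n-\mb{f}\circ\grave{\mb{\pi}}^{-1}_n-\mb{K}^\infty_{\mathit{H\sC}}\mb{\eta}^{-2}_n$, yields \eq{bsnz} on the nose. Your computation instead lands on $\mb{\Pi}^{-1}_n=\mb{f}\circ\grave{\mb{\pi}}^{-1}_n+\mb{K}\circ\mb{\eta}^{-2}_n+(-\kbar)^{n-2}\mb{\phi}^{-1}_n$, which contradicts \eq{bsnz}, and your hedged fallback (``vanishing classical limit, $\mb{K}$-closed, divide by $\kbar$ and iterate'') only reproduces that same wrong identity; Lemma~\ref{keyone} and Lemma~\ref{hodgea} are already consumed inside Theorem~\ref{solvemstone} and do not by themselves supply the missing cancellation.

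Once that bookkeeping is repaired, the rest of your plan is sound: the $\kbar$-polynomiality of degree at most $n-3$ and the stated $\Hom$-spaces are read off directly from the closed formulas $\grave{\mb{\pi}}^{-1}_n=\sum_{i=0}^{n-3}(-\kbar)^i\, h\circ\big({\Omega}^{-1}_n\big)^{[i]}$ and $\mb{\eta}^{-2}_n=\sum_{i=0}^{n-3}(-\kbar)^i\, s\circ\big({\Omega}^{-1}_n\big)^{[i]}$ of Theorem~\ref{solvemstone}. The vanishing-on-$1_H$ clauses, however, remain only a sketch in your proposal; note that $\mb{\eta}^{-1}_n$ and $\grave{\mb{\pi}}^0_n$ merely reduce (rather than vanish) on the unit, so $\mb{\O}^{-1}_n$ does not vanish termwise when some $v_i=1_H$, and the claimed pairwise cancellations through the partition constraints and Koszul signs must actually be carried out, in the spirit of \eq{anois} in the proof of Theorem~\ref{solvemstzero}, before applying $h$ and $s$ to conclude the two bullets.
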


Note that $\mb{\Pi}^{-1}_n \in \Hom\big(S^{n-2}H\otimes S^2H, \sC\big)^{-1}{\kkbar}$, for all $n\geq 2$.
The relation in \eq{bsnz} implies that $\mb{K}\circ \mb{\Pi}^{-1}_n=0$, for all $n\geq 3$. It follows that
$\bm{\mc}\circ \mb{\Pi}^{-1}_n$ depends only on the homotopy type of the quantum expectation $\bm{\mc}$. 
We call the family $\underline{\mb{\Pi}}^{-1}= \mb{\Pi}^{-1}_2, \mb{\Pi}^{-1}_3,\ldots$ the {\em level one quantum correlators}
and $\bm{\mc}\circ \underline{\mb{\Pi}}^{-1}$ the family of {\em level one quantum correlation functions} with respect to $\bm{\mc}$.
The relation of \eq{bsnz} also implies that, for all $n\geq 2$,
\eqn{bsnx}{
\bm{\mc}\circ {\mb{\Pi}}^{-1}_n =\bm{\mc}\circ \mb{f}\circ  \grave{\mb{\pi}}^{-1}_n,
}
so that the $n$-fold level one quantum correlation function $\bm{\mc}\circ {\mb{\Pi}}^{-1}_n$ is 
determined by the on-shell quantum expectation $\bm{\mc}\circ \mb{f}$ and $\grave{\mb{\pi}}^{-1}_n$,
which is at most a degree $n-3$ polynomial in $\kbar$.
The physical interpretation of $\bm{\mc}\circ \underline{\mb{\Pi}}^{-1}$ remains elusive but we have presented
an algorithm to determine the family $\underline{\grave{\mb{\pi}}}^{-1}$.
With some effort, one can obtain Theorem \ref{lakib} as the appropriate integrability condition of this theorem.

\begin{remark}
We reproduce the algorithm to determine the families  $\underline{\mb{\phi}}^{-1}$, $\underline{\grave{\mb{\pi}}}^{-1}$ and
$\underline{\mb{\eta}}^{-2}$, which become much simpler due to the condition $\mb{\kappa}=0$.

Set  
\eqn{awhyx}{
\mb{\phi}^{-1}_2=\mb{\eta}^{-1}_2 
,\qquad
\grave{\mb{\pi}}^{-1}_2=0
,\qquad
\mb{\eta}^{-2}_2=0.
}
Note that $\mb{\Pi}^{-1}_2=\mb{\eta}^{-1}_2 - \mb{\phi}^{-1}_2=0$. Therefore, we have 
\eqn{awhya}{
\begin{cases}
\mb{\Pi}^{-1}_2= \mb{f}\circ \grave{\mb{\pi}}^{-1}_2+ \mb{K} \circ\mb{\eta}^{-2}_2
,\\
\mb{M}^0_2 =\mb{f}\circ \grave{m}^0_2 + \mb{K}\circ \mb{\phi}^{-1}_2
,\\
\grave{\mb{\pi}}^0_2 =\grave{m}^0_2
,
\end{cases}
}
where we used $\mb{\pi}^0_2=\grave{m}_2$ and
$\mb{M}^0_2(v_1,v_2) \coloneqq{}(-\kbar)\mb{\phi}^0_2(v_1,v_2) +\mb{\phi}^0_1(v_1)\cdot \mb{\phi}^0_1(v_2)
=\mb{\Pi}^0_2(v_1,v_2)$.

Assume that we have 
$\left\{{\mb{\phi}}^{-1}_2,\ldots, {\mb{\phi}}^{-1}_{n-1}\big|\grave{\mb{\pi}}^{-1}_2, \ldots, \grave{\mb{\pi}}^{-1}_{n-1}
\big| \mb{\eta}^{-2}_2, \ldots, \mb{\eta}^{-1}_{n-1}\right\}$ for $n\geq3$ satisfying the initial conditions of \eq{awhyx} and, 
for all $k=2,\ldots, n-1$,
\eqn{whyee}{
\begin{aligned}
\mb{\Pi}^{-1}_k &= \mb{f}\circ \grave{\mb{\pi}}^{-1}_k+\mb{K} \mb{\eta}^{-2}_k
,\\
\mb{M}^0_k&=\mb{f}\circ \grave{m}^0_k + \mb{K}\circ \mb{\phi}^{-1}_k
,\\
\grave{\mb{\pi}}_{k}({v}_1,\dotsc,{v}_k)
&=
\sum_{\clapsubstack{\mp \in P(k)\\|B_{|\mp|}|=k-|\mp|+1\\ k-1\sim_\mp k}}
(-\kbar)^{k-|\mp|-1}
\e(\mp)\; 
\grave{\mb{\pi}}_{|\mp|}\left({v}_{B_1}, \cdots, {v}_{B_{\mp-1}},
\grave{m}\big({v}_{B_{|\mp|}}\big)\right)
.
\end{aligned}
}
Define $\mb{\O}^{-1}_n \in \Hom\big(S^{n-2}H\otimes S^2H,\sC\big)^{-1}{\kkbar}$ and
$\grave{\mb{\varpi}}^0_n \in \Hom\big(S^{n-2}H\otimes S^2H, H\big)^{0}{\kkbar}$ as follows:
\begin{align*}
\mb{\O}^{-1}_n(v_1,\ldots,&v_n) 
\coloneqq{}
 \mb{\eta}^{-1}_n\big(v_1,\ldots,v_n\big)
\\
&
-\sum_{\clapsubstack{\mp \in P(n)\\  n-1\sim_\mp n \\ \color{red}|\mp|\neq 1}}
(-\kbar)^{n-|\mp|-1}
\e(\mp)
\mb{\phi}^0\big(J{v}_{B_1}\big)\cdots\mb{\phi}^0\big(J{v}_{B_{|\mp|-1}}\big)\cdot\mb{\phi}^{-1}({v}_{B_{|\mp|}})
\\
&
-\sum_{\clapsubstack{\mp \in P(n)\\|B_{|\mp|}|=n-|\mp|+1\\ n-1\sim_\mp n}}
(-\kbar)^{n-|\mp|-1}
\e(\mp)\; 
\mb{\eta}^{-1}_{|\mp|}\left( {v}_{B_1}, \cdots, {v}_{B_{\mp-1}},
\grave{{m}}^0\big({v}_{B_{|\mp|}}\big)\right)
,\\
\grave{\mb{\varpi}}^0_{n}({v}_1,\dotsc,&{v}_n)
\coloneqq{}
\grave{\mb{\pi}}^0_{n}({v}_1,\dotsc,{v}_n)
\\
&-
\sum_{\clapsubstack{\mp \in P(n)\\|B_{|\mp|}|=n-|\mp|+1\\ n-1\sim_\mp n\\ \color{red}|\mp|\neq 1}}
(-\kbar)^{n-|\mp|-1}
\e(\mp)\; 
\grave{\mb{\pi}}^0_{|\mp|}\left({v}_{B_1}, \cdots, {v}_{B_{\mp-1}},
\grave{{m}}^0\big({v}_{B_{|\mp|}}\big)\right)
.
\end{align*}
Note that we have
\[
\grave{\mb{\varpi}}^0_{n}=\sum_{\mathclap{i=0}}^{n-2}(-\kbar)^i \grave{{\varpi}}^{0(i)}_{n},
\quad\mathit{where}\quad
\grave{{\varpi}}^{0(i)}_{n} \in  \Hom\big(S^{n-2}H\otimes S^2H, H\big)^0
,
\]
and $\grave{{\varpi}}^{0(n-2)}_{n}=\grave{{\pi}}^{0(n-2)}_{n} =\grave{m}^0_n$.

From \eq{whyee} and the definition of $\bell_1,\ldots, \bell_{n}$,
we obtain that 
\eqn{awhyd}{
\mb{K}\circ\mb{\O}^{-1}_n=(-\kbar)^{n-2}\mb{M}^0_n -\mb{f}\circ \grave{\mb{\varpi}}^0_n
.
}
Note that the classical limit of this equation is
$K\circ {\O}^{-1}_n=-{f}\circ \grave{{\varpi}}^{0(0)}_n$, which implies
that $K\circ {\O}^{-1}_n=0$ and $\grave{{\varpi}}^{0(0)}_n=0$.
Therefore, we have
\eqn{awhye}{
\mb{K}\circ\mb{\O}^{-1}_n=(-\kbar)^{n-2}\mb{M}^0_n -\sum_{\mathclap{i=1}}^{n-2}(-\kbar)^{i}\mb{f}\circ 
 \grave{{\varpi}}^{0(i)}_{n}
.
}
Set $\left(\mb{\O}^{-1}_n\right)^{[0]} \coloneqq{}\mb{\O}^{-1}_n$ and 
$\left(\mb{\O}^{-1}_n\right)^{[1]}
\coloneqq{}  \nabla_{-1/\kbar}\left(\mb{\O}^{-1}_n\right)^{[0]} \in  \Hom\big(S^{n-2}H\otimes S^2H,\sC\big)^{-1}{\kkbar}$, 
so that we have
\[
(-\kbar)\left(\mb{\O}^{-1}_n\right)^{[1]} = \left(\mb{\O}^{-1}_n\right)^{[0]}
- \mb{f}\circ \grave{\pi}^{-1(0)}_n -\mb{K}\circ \eta^{-2(0)}_n
,\quad\mathit{where}\quad 
\begin{cases}
\grave{\pi}^{-1(0)}_n&\coloneqq{}h\circ  \left({\O}^{-1}_n\right)^{[0]}
,\\
 \eta^{-2(0)}_n &\coloneqq{}s\circ  \left({\O}^{-1}_n\right)^{[0]} 
.
\end{cases}
\]
From \eq{awhye}, we have
\eqn{awhyf}{
\mb{K}\circ\left(\mb{\O}^{-1}_n\right)^{[1]}=(-\kbar)^{n-3}\mb{M}^0_n -\sum_{\mathclap{i=1}}^{n-2}(-\kbar)^{i-1}\mb{f}\circ 
 \grave{{\varpi}}^{0(i)}_{n},
}
and the classical limit for $n >3$ is then 
$K\circ\left({\O}^{-1}_n\right)^{[1]}= -\mb{f}\circ  \grave{{\varpi}}^{0(1)}_{n}$,
which implies that $K\circ\left({\O}^{-1}_n\right)^{[1]}=0$ and $\grave{{\varpi}}^{0(1)}_{n}=0$.
Working inductively after setting
$
\left(\mb{\O}^{-1}_n\right)^{[i+1]}\coloneqq{} \nabla_{-1/\kbar}\left(\mb{\O}^{-1}_n\right)^{[i]},
$
we can check that
$K\circ\left({\O}^{-1}_n\right)^{[i]}=0$
and $\grave{{\varpi}}^{0(i)}_{n}=0$
for $i=0,\ldots, n-3$.
It follows that
\eqn{awhyg}{
\begin{aligned}
(-\kbar)^{n-2}\left(\mb{\O}^{-1}_n\right)^{[n-2]}
&=\mb{\O}^{-1}_n  
- \sum_{\mathclap{i=0}}^{n-3}(-\kbar)^i \mb{f}\circ\grave{\pi}^{-1(i)}_n 
-\sum_{\mathclap{i=0}}^{n-3}(-\kbar)^i  \mb{K}\circ {\eta}^{-2(i)}_n
,\\
\mb{K}\circ\left(\mb{\O}^{-1}_n\right)^{[n-2]}
&=
\mb{M}^0_n -\mb{f}\circ \grave{{m}}^{0}_{n}
,\\
\grave{{\varpi}}^{0(n-2)}_{n}&=\grave{m}^0_n
,
\end{aligned}
}
where
\[
\begin{aligned}
\grave{\pi}^{-1(i)}_n&\coloneqq{}h\circ  \left({\O}^{-1}_n\right)^{[i]}& \text{ in }&\Hom\big(S^{n-2}H\otimes S^2H,H\big)^{-1}
,\\
 \eta^{-2(i)}_n &\coloneqq{}s\circ  \left({\O}^{-1}_n\right)^{[i]}& \text{ in }& \Hom\big(S^{n-2}H\otimes S^2H,\sC\big)^{-2}
,
\end{aligned}
\]
and $\left({\O}^0_n\right)^{[i]} \in  \Hom\big(S^{n-2}H\otimes S^2H,\sC\big)^0$ denotes the classical
limit of $\left(\mb{\O}^0_n\right)^{[i]}$. 

Finally, we set
\eqn{xawhyf}{
\mb{\phi}^{-1}_n =\left(\mb{\O}^{-1}_n\right)^{[n-2]}
,\quad
\grave{\mb{\pi}}^{-1}_n=\sum_{\mathclap{i=0}}^{n-3}(-\kbar)^i\grave{\pi}^{-1(i)}_n
,\quad
\mb{\eta}^{-2}_n =\sum_{\mathclap{i=0}}^{n-3}(-\kbar)^i\eta^{-2(i)}_n
,
}
and note that  $\mb{\Pi}^{-1}_n = \mb{\O}^{-1}_n+(-\kbar)^{n-2}\mb{\phi}^{-1}_n$.
Then, from \eq{awhyg}, 
we conclude that
\begin{align*}
\mb{\Pi}^{-1}_n&= \mb{f}\circ \grave{\mb{\pi}}^{-1}_n+\mb{K} \mb{\eta}^{-2}_n
,\\
\mb{M}^0_n&=\mb{f}\circ \grave{m}^0_n + \mb{K}\circ \mb{\phi}^{-1}_n
,\\
\grave{\mb{\pi}}_{n}({v}_1,\dotsc,{v}_n)
&=
\sum_{\clapsubstack{\mp \in P(n)\\|B_{|\mp|}|=n-|\mp|+1\\ n-1\sim_\mp n}}
(-\kbar)^{n-|\mp|-1}
\e(\mp)\; 
\grave{\mb{\pi}}_{|\mp|}\left({v}_{B_1}, \cdots, {v}_{B_{\mp-1}},
\grave{m}\big({v}_{B_{|\mp|}}\big)\right)
.
\end{align*}
Therefore, we have a  well-defined algorithm to determine the families  
$\underline{\mb{\phi}}^{-1}$, $\underline{\grave{\mb{\pi}}}^{-1}$ and
$\underline{\mb{\eta}}^{-2}$.
\naturalqed
\end{remark}

Before leaving this subsection, we prove Theorem \ref{intrd}.

Consider an another binary QFT algebra $\sC^\pr{\kkbar}_\BQ=\big(\sC^\pr{\kkbar}, 1_{\sC^\pr}, \,\cdot^\pr\, , \mb{K}^\pr\big)$
which is homotopy equivalent to our binary QFT algebra $\sC{\kkbar}_\BQ$.
Realize this equivalence by a quasi-isomorphism
 $\xymatrix{\bm{\mf}: \sC{\kkbar}_\BQ \ar[r] & \sC^\pr{\kkbar}_\BQ}$ of binary QFT algebras.
It is obvious that $\sC^\pr{\kkbar}_\BQ$ is also anomaly-free, and we will use the induced  isomorphism
$H(\mf^{(0)}): H \rightarrow H^\pr$  to identify $H^\pr$ with $H$.
Write $\big(\sC^\pr{\kkbar}, 1_{\sC^\pr}, \underline{\bell}^\pr\big)$ for the quantum descendant, and
let $\xymatrix{\underline{\mb{\psi}}:\big(\sC{\kkbar}, 1_{\sC}, \underline{\bell}\big)\ar@{..>}[r]&
\big(\sC^\pr{\kkbar}, 1_{\sC^\pr}, \underline{\bell}^\pr\big)}$ be the quantum descendant
of $\bm{\mf}$ which is a quasi-isomorphism of the unital $sL_\infty$-algebras.
Then, by definition, we have 
\eqn{asdsit}{
\bm{\mf}\circ \mb{\pi} =\mb{\pi}^\pr\circ \mb{\Psi}_{\mb{\psi}}
,
}
where $\mb{\pi}(\bm{x}_1\bm{\odot}\ldots\bm{\odot}\bm{x}_n)\coloneqq{} \bm{x}_1\cdot\ldots\cdot\bm{x}_n$
and  $\mb{\pi}^\pr(\bm{x}^\pr_1\bm{\odot}\ldots\bm{\odot}\bm{x}^\pr_n)\coloneqq{} \bm{x}^\pr_1\cdot^\pr\ldots\cdot^\pr\bm{x}^\pr_n$
for all $n\geq 1$, $\bm{x}_1,\ldots, \bm{x}_n \in \sC{\kkbar}$ and  $\bm{x}^\pr_1,\ldots, \bm{x}^\pr_n \in \sC{\kkbar}$.
Consider the distinguished $sL_\infty$-quasi-isomorphism
$\xymatrix{\underline{\mb{\phi}}^0:
\big(H{\kkbar}, 1_H,  \underline{0}\big)\ar@{..>}[r] & \big(\sC{\kkbar}, 1_\sC,  \underline{{\bell}}\big)}$
of {Theorem} \ref{lakia}.   Recall that the definition of
the associated family $\underline{\mb{\Pi}}^0$ of level zero quantum correlators
is equivalent to  $\mb{\Pi}^0 \coloneqq{} \mb{\pi}\circ \mb{\Psi}_{\mb{\phi}^{0}} \in \Hom\left(\Xbar{S}(H) \sC\right)^0{\kkbar}$
and, from \eq{spmstzero}, we have
\eqn{asdsita}{
\mb{\pi}\circ \mb{\Psi}_{\mb{\phi}^{0}} =\mb{f}\circ \grave{\mb{\pi}}^0 + \mb{K}\circ \mb{\eta}^{-1}.
}
Now we consider the following 
unital $sL_\infty$-quasi-isomorphism 
\[
\xymatrix{\underline{\mb{\phi}}^{\pr 0} \coloneqq{}\underline{\mb{\psi}}\bullet \underline{\mb{\phi}}^{0}:
\big(H{\kkbar}, 1_{H}, \underline{0}\big)\ar@{..>}[r] & \big(\sC^\pr{\kkbar}, 1_{\sC^\pr}, \underline{\bell}^\pr\big)}
\]
and the associated family 
$\underline{\mb{\Pi}}^{\pr 0}$ of level zero quantum correlators,
whose defintion is equivalent to $\mb{\Pi}^{\pr 0} \coloneqq{} \mb{\pi}^\pr\circ \mb{\Psi}_{\mb{\phi}^{\pr 0}} 
\in \Hom\left(\Xbar{S}(H) \sC^\pr\right)^0{\kkbar}$.
From the combination of the equation $\mb{\Psi}_{\mb{\phi}^{\pr 0}} = \mb{\Psi}_{\mb{\psi}\bullet \mb{\phi}^0}
=  \mb{\Psi}_{\mb{\psi}}\circ \mb{\Psi}_{\mb{\phi}^0}$,
\eq{asdsit}, \eq{asdsita}, and $\bm{\mf}\circ \mb{K} =\mb{K}^\pr\circ \bm{\mf}$,
we obtain the following:
\begin{align*}
\mb{\Pi}^{\pr 0} 
=
&
 \mb{\pi}^\pr\circ \mb{\Psi}_{\mb{\psi}}\circ \mb{\Psi}_{\mb{\phi}^{0}}
= \bm{\mf}\circ \mb{\pi}\circ \mb{\Psi}_{\mb{\phi}^{0}}
= \bm{\mf}\circ \big( \mb{f}\circ \grave{\mb{\pi}} + \mb{K}\circ \mb{\eta}^{-1}\big)
=\bm{\mf}\circ  \mb{f}\circ \grave{\mb{\pi}}  +  \mb{K}^\pr \circ \bm{\mf}\circ\mb{\eta}^{-1}
\\
= 
&\bm{\mf}^\pr\circ \grave{\mb{\pi}}  +  \mb{K}^\pr \circ \mb{\eta}^{\pr-1},
\end{align*}
where $\bm{\mf}^\pr \coloneqq{} \bm{\mf}\circ  \mb{f}$ and  $\mb{\eta}^{\pr-1}\coloneqq{}\bm{\mf}\circ\mb{\eta}^{-1}$.
Therefore,
under the isomorphism $H{\kkbar}\cong H^\pr{\kkbar}$,
we can identify $\underline{\mb{\phi}}^{\pr 0} \coloneqq{}\underline{\mb{\psi}}\bullet \underline{\mb{\phi}}^{0}$
and $\mb{\Pi}^{\pr 0}$
with a distinguished unital $sL_\infty$-quasi-isomorphism  from 
$\big(H^\pr{\kkbar}, 1_{H^\pr}, \underline{0}\big)$ to $\big(\sC^\pr{\kkbar}, 1_{\sC^\pr}, \underline{\bell}^\pr\big)$
and the associated family of level zero quantum correlators. It follows that
a homotopy equivalence of anomaly-free binary QFT algebras induces an isomorphism of on-shell quantum correlation algebras
and, in particular, sends a quantum structure to  a quantum structure.

\subsection{Relations with the WDDV equation}

In the remaining part of this subsection, we further specialize  to
the case that $H$ is finite dimensional as a $\Z$-graded vector space. We shall see some interesting aspects
of our solutions. 

Assume that $H$ is finite dimensional. Choose homogeneous coordinates
$t_H =\{t^\a\}$ so that $\{\rd_\a=\rd /\rd t^\a\}$ form a basis of $H$ and $\rd_0= 1_H$. 
Extend $\{\rd_\a\}$ as a derivation of $\Bbbk[\![t_H]\!]\cong \widehat{S}(H^*)$, which is the completed symmetric
algebra generated by the dual $\Z$-graded vector space $H^*$.

Using the unital $sL_\infty$-quasi-morphism 
$\xymatrix{\underline{\mb{\phi}}^0:
\big(H{\kkbar}, 1_H,  \underline{0}\big)\ar@{..>}[r] & \big(\sC{\kkbar}, 1_\sC,  \underline{{\bell}}\big)}$
of Theorem \ref{lakia}, we
define 
\[
\mb{\Theta} 
\coloneqq{} \sum_{\mathclap{n=1}}^\infty\Fr{1}{n!}t^{\a_n}\cdots t^{\a_1} \mb{\phi}^0_n(\rd_{\a_1},\ldots, \rd_{\a_n})
\in \big(\Bbbk[\![t_H]\!]\widehat{\otimes} \sC\big)^{0}{\kkbar}
.
\]
Then, by the property that $\underline{\mb{\phi}}^0$ is a unital $sL_\infty$-morphism, we obtain that
\eqnalign{dashs}{
\mb{K}\mb{\Theta} 
+\sum_{\mathclap{n\geq 2}}\Fr{1}{n!}\bell_n\big(\mb{\Theta} ,\ldots,\mb{\Theta} \big)=0
&\Longleftrightarrow 
\mb{K} e^{-\fr{1}{\kbar}\mb{\Theta}} =0
,\\
\rd_0\mb{\Theta}=1_\sC
&\Longleftrightarrow 
(-\kbar)\rd_0 e^{-\fr{1}{\kbar}\mb{\Theta}} =e^{-\fr{1}{\kbar}\mb{\Theta}}
.
}
From the property that $\mb{\phi}_1:\big(H{\kkbar},1_H, 0\big)\rightarrow \big(\sC{\kkbar},1_\sC, \mb{K}\big)$
is a cochain quasi-isomorphism, we see that $\mb{\Theta}$ is a universal solution to the Maurer--Cartan
equation of the unital $sL_\infty$-algebra $\big(\sC{\kkbar}, 1_\sC,\underline{\bell}\big)$.
We also have the following identity:
\eqn{spfina}{
e^{-\fr{1}{\kbar}\mb{\Theta}} = 1_\sC 
+\sum_{\mathclap{n=1}}^\infty\Fr{1}{(-\kbar)^n}\Fr{1}{n!} t^{\r_n}\cdots t^{\r_1}\mb{\Pi}^0_n\big(\rd_{\r_1},\ldots,\rd_{\r_n}\big).
}
Therefore $e^{-\fr{1}{\kbar}\mb{\Theta}}$ is a generating function 
of the family $\underline{\mb{\Pi}}^0$ of level $0$ quantum correlators.
From the families $\underline{\grave{\mb{\pi}}}^0$ and $\underline{\mb{\eta}}^{-1}$
in Theorem \ref{lakia},
define
\begin{align*}
\grave{\bm{T}}^\g \coloneqq{}& t^\g + \sum_{\mathclap{n=2}}^\infty \Fr{1}{n! (-\kbar)^{n-1}}t^{\r_n}\cdots t^{\r_1} 
\grave{\mb{\pi}}^0_{\r_1\cdots\r_n}{}^\g 
\in \Bbbk[\![t_H]\!][\![\kbar^{-1}]\!]
,\\
\bm{\S} 
\coloneqq{}
&\sum_{\mathclap{n=2}}^\infty \Fr{1}{(-\kbar)^{n-1}}
\Fr{1}{n!}t^{\Xbar\r_n}\cdots t^{\Xbar\r_1} \mb{\eta}^{-1}_n(\rd_{\r_1},\ldots, \rd_{\r_n})
\in \big(\Bbbk[\![t_H]\!]\hat{\otimes} \sC\big)[\![\kbar^{-1}]\!]^{-1}
,
\end{align*}
where $\big\{ \grave{\mb{\pi}}^0_{\a_1\cdots\a_n}{}^\g\big\} \in \Bbbk{\kkbar}$ are structure constants, i.e., 
$\grave{\mb{\pi}}^0_n\big(\rd_{\a_1},\ldots,\rd_{\a_n}\big) = \grave{\mb{\pi}}^0_{\a_1\cdots\a_n}{}^\g \rd_\g$
and $t^{\Xbar\r} = (-1)^{\gh(\rd_\r)}t^\r$.
It is easy to check that the relation of \eq{spmstzero} is equivalent to the following identity:
\eqn{spfinb}{
e^{-\fr{1}{\kbar}\mb{\Theta}} 
= 1_\sC +\Fr{1}{(-\kbar)}\grave{\bm{T}}^\g \mb{f}(e_\g) 
+\Fr{1}{(-\kbar)}\mb{K}\bm{\S}
.
}
From the property that 
$\grave{\mb{\pi}}^0_{n+1}(v_1,\ldots, v_n, 1_H)= \grave{\mb{\pi}}^0_{n+1}(v_1,\ldots, v_n)$ for all $n\geq 1$, 
we also have
\eqn{spfinc}{
\rd_0 \grave{\bm{T}}^\g = \d_{0}{}^\g -\Fr{1}{\kbar} \grave{\bm{T}}^\g.
}
Note that \eq{spfinb} and \eq{spfinc} imply the relations in \eq{dashs}.

We emphasis that both $\grave{\bm{T}}^\g$ and $\bm{\S}$ are formal power series in $\kbar^{-1}=1/\kbar$,
since both $\grave{\mb{\pi}}^0_n$ and  ${\mb{\eta}}^{-1}_n$ are polynomials in $\kbar$ with degree
at most $n-2$ for $n\geq 2$. These both follow from $\underline{\mb{\phi}}^0$ being a distinguished $sL_\infty$-quasi-morphism.

\begin{remark}
Let $\underline{\mb{\w}}:\big(H{\kkbar}, 1_H,  \underline{0}\big)\dasharrow \big(\sC{\kkbar}, 1_\sC,  \underline{{\bell}}\big)$,
be an arbitrarily chosen  $sL_\infty$-quasi-isomorphism. 
We may regard $\underline{\mb{\w}}$ as a universal homotopical family of quantum observables
--- let $\underline{\mb{\Pi}}^{\mb{\w}}$ be the associated family  of quantum correlators. 
Then we  also have a universal solution  to the Maurer--Cartan equation 
of the unital $sL_\infty$-algebra $\big(\sC{\kkbar}, 1_\sC,\underline{\bell}\big)$ given by 
$\mb{\Theta}^{\mb{\w}}\coloneqq{} \sum_{{n=1}}^\infty\Fr{1}{n!}t^{\a_n}\cdots t^{\a_1} \mb{\w}_n(\rd_{\a_1},\ldots, \rd_{\a_n})$, 
and $e^{-\fr{1}{\kbar}\mb{\Theta}}$ is a generating function of $\underline{\mb{\Pi}}^{\mb{\w}}$:
\[
e^{-\fr{1}{\kbar}\mb{\Theta}} = 1_\sC 
+\sum_{\mathclap{n=1}}^\infty\Fr{1}{(-\kbar)^n}\Fr{1}{n!} t^{\r_n}\cdots t^{\r_1}\mb{\Pi}^{\mb{\w}}_n\big(\rd_{\r_1},\ldots,\rd_{\r_n}\big).
\]
From $\mb{K}\circ \mb{\Pi}^{\mb{\w}}_n=0$ for all $n\geq 1$, we have 
$\mb{\Pi}^{\mb{\w}}_n\big(\rd_{\r_1},\ldots,\rd_{\r_n}\big) =
{\mb{\pi}}^{\mb{\w}}_{\r_1\cdots \r_n}{}^\g\mb{\w}_1(\rd_\g) + \mb{K}\mb{\eta}^{\mb{\w}}_{\r_1\cdots \r_n}$
for some ${\mb{\pi}}^{\mb{\w}}_{\r_1\cdots \r_n}{}^\g \in \Bbbk{\kkbar}$
and $\mb{\eta}^{\mb{\w}}_{\r_1\cdots \r_n}\in \sC^{-1}{\kkbar}$. 
Let 
\begin{align*}
\bm{T}_{\mb{\w}}^\g \coloneqq{}& t^\g + \sum_{\mathclap{n=2}}^\infty \Fr{1}{n! (-\kbar)^{n-1}}t^{\r_n}\cdots t^{\r_1} 
{\mb{\pi}}^{\mb{\w}}_{\r_1\cdots\r_n}{}^\g
,\\
\bm{\S}_{\mb{\w}}
\coloneqq{}&\sum_{\mathclap{n=2}}^\infty \Fr{1}{(-\kbar)^{n-1}}
\Fr{1}{n!}t^{\Xbar\r_n}\cdots t^{\Xbar\r_1} \mb{\eta}^{\mb{\w}}_n(\rd_{\r_1},\ldots, \rd_{\r_n})
.
\end{align*}
Then we have the identity $e^{-\fr{1}{\kbar}\mb{\Theta}^{\mb{\w}}} 
= 1_\sC +\Fr{1}{(-\kbar)}\bm{T}_{\mb{\w}}^\g \mb{\w}_1(e_\g) 
+\Fr{1}{(-\kbar)}\mb{K}\bm{\S}_{\mb{\w}}$.
On the other hand, $\bm{T}_{\mb{\w}}^\g$ is a formal Laurent series in $\kbar$ in general.

Let $\underline{\tilde{\mb{\w}}}\sim \underline{\mb{\w}}$ be another unital  $sL_\infty$-quasi-morphism homotopic
to $ \underline{\mb{\w}}$. Then $\mb{\Theta}^{\tilde{\mb{\w}}}$ is another universal solution  to the Maurer--Cartan equation
but is gauge equivalent to $\mb{\Theta}^{\mb{\w}}$
and we have $\bm{T}_{\mb{\w}}^\g = \bm{T}_{\tilde{\mb{\w}}}^\g$.
 A gauge equivalence class of such universal solutions can be viewed
as a choice of affine coordinates on the based formal moduli space $\sM_o$ defined by the Maurer--Cartan functor
of the unital $sL_\infty$-algebra.  Therefore, our distinguished  unital  $sL_\infty$-quasi-morphism $\underline{\mb{\phi}}^0$
defines a distinguished choice of affine coordinates on $\sM_o$, which we call the {\em quantum coordinates}.
\naturalqed
\end{remark}

From the family $\underline{\grave{m}}^0$ in Theorem \ref{lakib}, we define
\[
\grave{A}_{\a\b}{}^\g =  \grave{m}^0_{\a\b}{}^\g  + \sum_{\mathclap{n=1}}^\infty \Fr{1}{n!}t^{\r_n}\cdots t^{\r_1}
\grave{m}^0_{\a\b\r_1\cdots\r_n}{}^\g
\in \Bbbk[\![t_H]\!],
\]
where
$\big\{ \grave{m}^0_{\a_1\cdots\a_n}{}^\g\big\} \in \Bbbk$ is defined by
$\grave{m}^0_n\big(\rd_{\a_1},\ldots,\rd_{\a_n}\big) = \grave{m}^0_{\a_1\cdots\a_n}{}^\g \rd_\g$.
Then, we can check that the formula of \eq{bsna}, relating $\underline{\grave{m}}^0$ and $\underline{\grave{\mb{\pi}}}^0$, 
implies the following identity:
\eqn{spfind}{
\kbar\rd_\a\rd_\b \grave{\bm{T}}^\g - \grave{A}_{\a\b}{}^\r\rd_\r \grave{\bm{T}}^\g=0
.
}

From the family $\underline{\mb{\phi}}^{-1}$ in Theorem \ref{lakib}, we define
\[
\mb{\La}_{\b\g} 
\coloneqq{}
\sum_{\mathclap{n=0}}^\infty\Fr{1}{n!}t^{\Xbar\r_n}\cdots t^{\Xbar\r_1} \mb{\phi}^{-1}_{n+2}(\rd_\a,\rd_\b,\rd_{\r_1},\ldots, \rd_{\r_n})
\in \big(\Bbbk[\![t_H]\!]\hat{\otimes} \sC\big)^{\gh(\rd_\a)+\gh(\rd_\b)-1}{\kkbar}.
\]
Then, it can be checked that the relation of \eq{bsnb} is equivalent to the following identity:
\eqn{spfine}{
(-\kbar)^2\rd_\a \rd_\b  e^{-\fr{1}{\kbar}\mb{\Theta}}  
=(-\kbar) \grave{A}_{\a\b}{}^\g \rd_\g e^{-\fr{1}{\kbar}\mb{\Theta}} 
+ \mb{K}\left(e^{-\fr{1}{\kbar}\mb{\Theta}}\cdot\mb{\La}_{\a\b} \right)
.
}

From the property of the family $\underline{\grave{m}}^0$  in Theorem \ref{xdalem}, 
we obtain that   $\left\{\grave{A}_{\a\b}{}^\g\right\}$ has the following properties:
\begin{itemize}
\item unity: $\grave{A}_{0\b}{}^\g =\d_\b{}^\g$,

\item symmetry: 
$\grave{A}_{\a\b}{}^\g =(-1)^{|t^\a||t^\b|}\grave{A}_{\b\a}{}^\g$ and
$\rd_\a  \grave{A}_{\b\g}{}^\s =(-1)^{|t^\a||t^\b|}\rd_\b  \grave{A}_{\a\g}{}^\s$,

\item generalized associativity:
$\grave{A}_{\a\b}{}^\r \grave{A}_{\r \g}{}^\s=\grave{A}_{\b\g}{}^\r \grave{A}_{\a\r}{}^\s$,
\end{itemize}
so that
$\big(\Bbbk[\![t_H]\!], \rd_0, \diamond \big)$, where $\rd_\a\diamond \rd_\b  \coloneqq{}\grave{A}_{\a\b}{}^\g \rd_\g$,
is a unital super-commutative associative algebra over $\Bbbk[\![t_H]\!]$.
This kind of structure is related to that of a \emph{Frobenius manifold}.
The notion of a formal Frobenius super-manifold \cite{Dub,KoMa} originated in Saito's flat structure \cite{Saito} in the context of singularity theory 
and the WDVV equation \cite{DVV,WittenA} associated with topological string theories. 
The homology $H$ doesn't quite have the structure of a formal Frobenius super-manifold, but rather a formal $F$-manifold, which is the same thing as a formal Frobenius super-manifold without the invariant metric or inner product \cite{HerMa}. 

\begin{example}
Let $\sL =\C[x_0,\ldots, x_N]$ and $S_{\cl} \in \sL$. Introduce $\eta_0,\ldots, \eta_n$ with $\gh=-1$ such
that $\eta_i\cdot  x_j =x_j\cdot \eta_i$ and $\eta_i\cdot\eta_j =-\eta_j\cdot \eta_i$.
Let $\sC =\Bbbk[x_0,\ldots, x_n, \eta_1,\ldots, \eta_n]$ which is a unital $\Z$-graded commutative associative algebra 
$\big(\sC, 1_\sC, \,\cdot\,\big)$ with $1_\sC =1$. Note that $\sC^0 =\sL$
and the ghost numbers of $\sC$ are concentrated in non-positive integers. 
Define the following $\Bbbk$-linear operators of ghost number $1$:
\[
 \Delta \coloneqq{}\sum_{\mathclap{i=0}}^N \Fr{\rd^2}{\rd \eta_i \rd x_i}
,\qquad
K\coloneqq{}\sum_{\mathclap{i=0}}^N \Fr{\rd S_{\cl}}{\rd x_i} \Fr{\rd}{\rd \eta_i}
.
\]
Then it is trivial that 
$\Delta S_{\cl} =0$ and $K_{\cl}\circ K_{\cl}= K_{\cl}\circ \Delta +\Delta\circ K_{\cl} =\Delta\circ \Delta=0$. 
Let $\mb{K}= -\kbar \Delta  + K_{\cl}$. Then,
$\sC_{\BQ}=\big(\sC{\kkbar}, 1_\sC, \,\cdot\,, \mb{K}\big)$ is a BV-QFT algebra over $\C$
with quantum descendant 
sDGLA $\big((\sC{\kkbar}, 1_\sC, \mb{K}, (-,-)_{\mathit{BV}}\big)$,
where $(\a_1, \a_2)_{\mathit{BV}}= \Delta(\a_1\cdot \a_2) - \Delta\a\cdot \a_2 -(-1)^{|\a_1|}\a_1\cdot\Delta \a_2$,
$\a_1,\a_2 \in \sC$.

Assume that $S_{\cl} \in \sL$ has isolated singularities, which implies that 
\[
H=H^0\cong \Bbbk[x_0,\ldots, x_N]\Big/\left(\Fr{\rd S_{\cl}}{\rd x_i}\right)
\] 

and $H^0$ is finite dimensional.
Note that $1_\sC=1$ is non-trivial in cohomology. We denote its cohomology class by $1_H$.
Let $\{e_\a\}$ be a basis of  $H$ such that $e_0=1_H$. Choose a representative $f(e_\a) \in \sC^0\subset \sC$ such that
$f(e_0)=1$ and extend linearly over $H=H^0$. Then $f:(H, 1_H, 0)\rightarrow (\sC, 1_\sC, K)$ is a quasi-isomorphism.
Note that $\Im f \subset \sC^0$, where $\Delta$ vanishes. Therefore we have $K\circ f =\Delta\circ f=0$. Hence
$\mb{K}\circ f =0$ and $\mb{f}=f:(H{\kkbar}, 1_H, 0)\rightarrow (\sC{\kkbar}, 1_\sC, \mb{K})$ is a quasi-isomorphism.
Therefore the BV-QFT algebra $\sC_\BQ$ is anomaly-free with finite dimensional classical cohomology.
Then our construction reproduces the $F$-manifold structure on $H=H^0$, the space of the universal unfolding of the isolated
singularities, equivalent to that of K.\ Saito after forgetting the flat metric.
His theory also contains
integrals over the vanishing cycles, which correspond to quantum expectations.
\naturalqed
\end{example}

\begin{example}
Now we consider the construction  in \cite{BaKo}.
Let  $(X, \omega^{n,0})$ be a complex $n$-dimensional Calabi--Yau manifold.
Let $T_X$ be the holomorphic tangent bundle to $X$,  $\Xbar{T}^{*}_X$ be the anti-holomorphic
cotangent bundle to $X$, and
\[
\sC =\bigoplus_{k=-n}^n \sC^k
, \qquad 
\sC^k=\bigoplus_{\clapsubstack{q-p=k\\ q,p =0,\ldots,n}} \G\left(\wedge^p T_X \otimes \wedge^q \Xbar{T}^{*}_X\right)
\]
Note that $\big(\sC, 1, \wedge,\Xbar\rd\big)$ is a unital CDGA over $\C$.
From the differential $\rd$ and  $\omega^{n,0}$ define $\Delta: \sC^\bullet \rightarrow \sC^{\bullet +1}$ by
the formula $(\Delta \g )\vdash \omega^{n,0} =\rd (\g\vdash \omega^{n,0})$. 
Then, we have $\Delta\circ \Delta=\Delta\circ \Xbar\rd +\Xbar\rd \circ \Delta =0$ so that 
$\sC{\kkbar}_\BQ=\big(\sC{\kkbar}, 1, \wedge, \mb{K} =-\kbar \Delta +\Xbar\rd\big)$ is a BV QFT algebra with 
quantum descendant unital sDGLA $\big(\sC{\kkbar}, 1_\sC, \mb{K}, (\ ,\,)_{SN}\big)$. 
Here $(\g_1, \g_2)_{\mathit{SN}}= \Delta(\g_1\wedge \g_2) - \Delta\g\wedge \g_2 -(-1)^{|\g_1|}\g_1\wedge\Delta \g_2$
and is equivalent to the holomorphic Schoutens--Nijenhuis bracket.\footnote{
We remark that our grading conventions differ from those in \cite{BaKo}; the differences are not important.}

Note that the classical cohomology $H$ is the $\Xbar\rd$-cohomology.
 Then  the $\rd\Xbar\rd$-lemma for K\"ahler manifolds \cite{DGMS} implies
that every $\Xbar\rd$-cohomology class has a unique representative belonging to the kernel of $\Delta$.
Choose a basis $\{e_\a\}$ of $H$ such that $e_0=1_H$, and choose representatives $f(e_\a)$ satisfying $\Delta f(e_\a)=0$.
Then we have $K\circ f =\Delta\circ f=0$. Thus
$\mb{K}\circ f =0$ and $\mb{f}=f:(H{\kkbar}, 1_H, 0)\rightarrow (\sC{\kkbar}, 1_\sC, \mb{K})$ is a quasi-isomorphism.
Therefore the BV-QFT algebra $\sC{\kkbar}_\BQ$ is anomaly-free with finite dimensional classical cohomology
$H$.
Then our construction reproduces the super $F$-manifold structure on $H$, isomorphic to the Dolbeault cohomology of $X$, equivalent to the formal Frobenius supermanifold
of Barannikov--Kontsevich after forgetting the invariant flat metric.
Their theory also contains period integrals over the middle dimensional homology cycles on $X$, 
which correspond to quantum expectations.
\naturalqed
\end{example}

%
%

\begin{remark}
One can check that \eq{spfind}, viewed as a formal differential equation for $\{\grave{\bm{T}}^\g\}\in \Bbbk[\![t_H]\!][\![\kbar^{-1}]\!]$,
has a unique solution with the initial conditions 
$\grave{\bm{T}}^\g\big|_{t_H=0}=0$ and $\rd_\b \grave{\bm{T}}^\g\big|_{t_H=0}=\d_\b{}^\g$.
The equation \eq{spfinc} has the following
integrability condition:
\[
\left(\kbar\left(\rd_\a  \grave{A}_{\b\g}{}^\s -(-1)^{|t^\a||t^\b|}\rd_\b  \grave{A}_{\a\g}{}^\s\right)  
+\grave{A}_{\b\g}{}^\r \grave{A}_{\a\r}{}^\s -\grave{A}_{\a\g}{}^\r \grave{A}_{\b\r}{}^\s\right)\rd_\s \grave{\bm{T}}^\g=0.
\]
Note that the condition $\rd_\b \grave{\bm{T}}^\g\big|_{t_H=0}=\d_\b{}^\g$ implies that the matrix $\grave{\bm{\sG}}$
with entries $\grave{\bm{\sG}}_\b{}^\g=\rd_\b \grave{\bm{T}}^\g$  is invertible, so that 
we have
\[
\kbar\left(\rd_\a  \grave{A}_{\b\g}{}^\s -(-1)^{|t^\a||t^\b|}\rd_\b \grave{A}_{\a\g}{}^\s\right)  
+\grave{A}_{\b\g}{}^\r \grave{A}_{\a\r}{}^\s - (-1)^{|t^\a||t^\b|}\grave{A}_{\a\g}{}^\r\grave{A}_{\b\r}{}^\s=0.
\]
It follows that
$\rd_\a \grave{A}_{\b\g}{}^\s =(-1)^{|t^\a||t^\b|}\rd_\b  \grave{A}_{\a\g}{}^\s$ and
\[
\grave{A}_{\b\g}{}^\r \grave{A}_{\a\r}{}^\s - (-1)^{|t^\a||t^\b|}\grave{A}_{\a\g}{}^\r \grave{A}_{\b\r}{}^\s=0 \Longleftrightarrow 
\grave{A}_{\a\b}{}^\r \grave{A}_{\r \g}{}^\s-\grave{A}_{\b\g}{}^\r \grave{A}_{\a\r}{}^\s=0,
\]
since $\grave{A}_{\a\b}{}^\g$ does not depend on $\kbar$.
\naturalqed
\end{remark}

From the families $\underline{\grave{\mb{\pi}}}^{-1}$ and $\underline{\mb{\eta}}^{-2}$
in Theorem \ref{lakic}, we
define
\[
\grave{\bm{U}}_{\a\b}{}^\g \coloneqq{} \sum_{\mathclap{n=1}}^\infty \Fr{1}{n! (-\kbar)^{n}}t^{\Xbar\r_n}\cdots t^{\Xbar\r_1} 
\grave{\mb{\pi}}^{-1}_{\a\b\r_1\cdots\r_n}{}^\g 
\]
in $\Bbbk[\![t_H]\!][\![\kbar^{-1}]\!]^{-1}$ and 
\[
\bm{\Xi}_{\a\b}
\coloneqq{}
\sum_{\mathclap{n=1}}^\infty \Fr{1}{(-\kbar)^{n}}\Fr{1}{n!}t^{\r_n}\cdots t^{\r_1} 
\mb{\eta}^{-2}_{n+2}(\rd_\a,\rd_\b, \rd_{\r_1},\ldots, \rd_{\r_n})
\] in
$\big(\Bbbk[\![t_H]\!]\hat\otimes \sC\big)[\![\kbar^{-1}]\!]^{\gh(\rd_\a)+\gh(\rd_\b)-2}$, 
where $\big\{ \grave{\mb{\pi}}^{-1}_{\a_1\cdots\a_n}{}^\g\big\} \in \Bbbk{\kkbar}$ is defined for $n\geq 3$ by
$\grave{\mb{\pi}}^{-1}_n\big(\rd_{\a_1},\ldots,\rd_{\a_n}\big) = \grave{\mb{\pi}}^{-1}_{\a_1\cdots\a_n}{}^\g \rd_\g$.
We emphasize that both $\grave{\bm{U}}_{\a\b}{}^\g$ and $\bm{\Xi}_{\a\b}$ 
are formal power series in $\kbar^{-1}=1/\kbar$.
Then, the relation of \eq{bsnz} is equivalent to the following identity:
\eqn{spfinf}{
(-\kbar)\rd_{\Xbar\a} \rd_{\Xbar \b} \bm{\S}
-e^{-\fr{1}{\kbar}\mb{\Theta}}\cdot\mb{\La}_{\a\b}
- \grave{A}_{\a\b}{}^\r\rd_{\Xbar \r} \bm{\S}
= \grave{\bm{U}}_{\a\b}{}^\g \mb{f}(e_\g) + \mb{K}\bm{\Xi}_{\a\b}.
}

\begin{remark}
Apply $\mb{K}$ to \eq{spfinf} to obtain that
\[
\left((-\kbar)\rd_{\a} \rd_{\b}
- \grave{A}_{\a\b}{}^\r\rd_{\r}\right) \mb{K}\bm{\S}
=\mb{K}\left(e^{-\fr{1}{\kbar}\mb{\Theta}}\cdot\mb{\La}_{\a\b}\right).
\]
From  \eq{spfinb} we know that $\mb{K}\bm{\S} =(-\kbar)e^{-\fr{1}{\kbar}\mb{\Theta}} 
-(-\kbar)1_\sC -\grave{\bm{T}}^\g \mb{f}(e_\g)$, so we have
\begin{align*}
(-\kbar)^2\rd_\a\rd_\b e^{-\fr{1}{\kbar}\mb{\Theta}}  
-(-\kbar)\grave{A}_{\a\b}{}^\r\rd_\r e^{-\fr{1}{\kbar}\mb{\Theta}} 
-\mb{K}\left(e^{-\fr{1}{\kbar}\mb{\Theta}}\cdot\mb{\La}_{\a\b}\right)
&
\\
=
 \left((-\kbar)\rd_\a\rd_\b\grave{\bm{T}}^\g -\grave{A}_{\a\b}{}^\r\rd_\r \grave{\bm{T}}^\g\right)\mb{f}(e_\g)
,&
\end{align*}
which implies that
the relations \eq{spfind} and  \eq{spfine} arise as integrability conditions of the relation \eq{spfinf}.
\naturalqed
\end{remark}

\begin{remark}
For any quantum expectation $\bm{\mc}$, define the following generating series of the level zero quantum correlation
functions:
\[
\bm{\CZ}_{\bm{\mc}} 
\coloneqq{}\bm{\mc}\left(e^{-\fr{1}{\kbar}\mb{\Theta}} \right) \equiv \left\langle e^{-\fr{1}{\kbar}\mb{\Theta}} \right\rangle_{\bm{\mc}} 
= 1+\sum_{\mathclap{n=1}}^\infty\Fr{1}{(-\kbar)^n}\Fr{1}{n!} t^{\a_n}\cdots t^{\a_1}
\left\langle \mb{\Pi}^0_n\big(\rd_{\a_1},\ldots,\rd_{\a_n}\big)\right\rangle_{\bm{\mc}}.
\]
From \eq{spfinb},  \eq{spfinc} and \eq{spfind}, we obtain that
\[
\bm{\CZ}_{\bm{\mc}}  = 1 +\Fr{1}{(-\kbar)}\bm{T}^\g \left\langle \mb{f}(e_\g)\right\rangle_{\bm{\mc}}
,\quad
-\kbar\rd_0  \bm{\CZ}_{\bm{\mc}}= \bm{\CZ}_{\bm{\mc}}
,\quad
-\kbar\rd_\a\rd_\b \bm{\CZ}_{\bm{\mc}} = \grave{A}_{\a\b}{}^\r\rd_\r \bm{\CZ}_{\bm{\mc}}
.
\]
We remind the reader that $\bm{\CZ}_{\bm{\mc}} \in \Bbbk[\![t_H]\!](\!(\kbar)\!)^0$.
Assume that the above quantum expectation  $\bm{\mc}$ is not just a pointed cochain map
from $\big(\sC{\kkbar}, 1_\sC, \mb{K}\big)$ to $\big(\Bbbk{\kkbar},1, 0\big)$ but also a morphism
of binary QFT algebras.  Let $\underline{\mb{\chi}} =\bm{\mK}(\bm{\mc})$ be the
quantum descendant of $\bm{\mc}$. Define $\bm{\CF}_{\bm{\mc}} \in \Bbbk[\![t_H]\!]^{0}{\kkbar}$
be defined by
\[
\bm{\CF}_{\bm{\mc}}\coloneqq{}\sum_{\mathclap{n=1}}^\infty\Fr{1}{n!}t^{\a_n}\cdots t^{\a_1} 
\big(\underline{\mb{\chi}}\bullet \underline{\mb{\phi}}^0\big)_n \big(\rd_{\a_1},\ldots,\rd_{\a_n}\big)
.
\]
Then, we have the identity $e^{-\Fr{1}{\kbar}\bm{\CF}_{\bm{\mc}}} = \bm{\CZ}_{\bm{\mc}}$.
\naturalqed
\end{remark}

Finally, we briefly return to a general binary QFT algebra without assuming either the anomaly-free condition
or the finite dimensionality of $H$.  

Consider the binary QFT algebra $\big(\sC{\kkbar},1_\sC,\,\cdot\,,\mb{K})$
along with its quantum descendant $\big(\sC{\kkbar},1_\sC,\underline{\bell}\big)$ as
in Sect.~\ref{section: mastering qc}.
Let 
$\left\{\grave{\mb{\pi}}^{0}, \mb{\eta}^{-1},\grave{\bell},\mb{\phi}^{0}
\big|\grave{\mb{\pi}}^{-1}, \mb{\eta}^{-2},\grave{\mb{m}},\mb{\phi}^{-1}\right\}$ 
be the canonical solutions of the levels zero and one quantum master equations given in Theorems
\ref{solvemstzero} and \ref{solvemstone}.

\begin{definition}
A  {\em superselection sector}   of the binary QFT algebra
is a finite-dimensional pointed subspace $W$ of $H$
such that
\begin{enumerate}[label = \roman*., ref=\roman*]
\item (unitality) $1_W\coloneqq{}1_H \in W$, 
\label{ss property 1}
\item ($\mb{\kappa}$-triviality) $\mb{\kappa}w =0$ for all $w \in W$, 
\label{ss property kappa}
\item (level zero higher triviality) $\grave{\mb{\pi}}^0_n(w_1,\ldots, w_n) \in W$
for all $n\geq 2$ and $w_1,\ldots, w_n \in W$, and 
\label{ss property pi0}
\item (level one higher triviality) $\grave{\mb{\pi}}^{-1}_n(w_1,\ldots, w_n) \in W$
for all $n\geq 3$ and $w_1,\ldots, w_n \in W$. 
\label{ss property pi-1}
\end{enumerate}
\end{definition}
From properties~\eqref{ss property kappa} and~\eqref{ss property pi0} of $W$, we have
$\grave{\bell}_n(w_1,\ldots, w_n) =0$ for all $n\geq 1$ and $w_1,\ldots, w_n \in W$
so that $\underline{\mb{\phi}}^{0}:\big(W{\kkbar}, 1_W, \underline{0}\big) \dasharrow
\big(\sC{\kkbar}, 1_\sC, \underline{\bell}\big)$ is a unital $sL_\infty$-morphism.
From properties~\eqref{ss property kappa} and~\eqref{ss property pi-1} of $W$, we obtain that, for all $n\geq 2$ and homogeneous $w_1,\ldots, w_n \in W$,
\[
\grave{\mb{\pi}}^0_{n} ({w}_1,\dotsc,{w}_n)
=
\sum_{\clapsubstack{\mp \in P(n)\\|B_{|\mp|}|=n-|\mp|+1\\ n-1\sim_\mp n  }}
(-\kbar)^{n-|\mp|-1}
\e(\mp)\; 
\grave{\mb{\pi}}^0_{|\mp|}\left({w}_{B_1}, \cdots, {w}_{B_{|\mp|-1}},
\grave{\mb{m}}^0\big({v}_{B_{|\mp|}}\big)\right)
.
\]
It follows that $\grave{\mb{m}}^0_n(w_1,\ldots, w_n) = \grave{\pi}^{0(n-2)}_n(w_1,\ldots, w_n)$
for all $n\geq 2$ and homogeneous $w_1,\ldots, w_n \in W$. Define 
the family $\underline{\mathring{m}}^{0}=\mathring{m}^{0}_2,\mathring{m}^{0}_3,\ldots$
of $\mathring{m}^{0}_n \in \Hom\big(S^n W, W\big)^0$ for all $n\geq 2$
to be $\mathring{m}^{0}_n(w_1,\ldots, w_n) \coloneqq{}  \grave{\pi}^{0(n-2)}_n(w_1,\ldots, w_n)$. 
It is clear that $\big(W, 1_W, \underline{\mathring{m}}^{0}\big)$ is unital, symmetric, 
and satisfies generalized associativity.

Introduce homogeneous coordinates  $t_W=\{t^a\}$ on $W$ so that
$\{\rd_a =\rd/\rd\! t^a\}$ form a homogeneous basis of $W$ with distinguished element $\rd_0=1_W$.
Then extend $\rd_a$ as a derivation on $\Bbbk[\![t_W]\!]$. Consider the structure constants 
$\left\{ \mathring{m}_{a_1\cdots a_n}{}^\g\right\}$ and  $\left\{\mathring{\mb{\pi}}_{\a_1\cdots \a_n}{}^\g\right\}$, where
$\mathring{m}^0_n\big(\rd_{a_1}, \ldots, \rd_{a_n}\big) =\mathring{m}_{a_1\cdots a_n}{}^c \rd_c$ and
$\grave{\mb{\pi}}^0_n\big(\rd_{a_1}, \ldots, \rd_{a_n}\big) = \mathring{\mb{\pi}}_{a_1\cdots a_n}{}^c \rd_c$,
and define
\begin{align*}
{\mb{\Theta}}_W =&\sum_{\mathclap{n=1}}^\infty \Fr{1}{n!}t^{a_n}\cdots t^{a_1}
\mb{\phi}_n\big(\rd_{a_1},\ldots, \rd_{a_n}\big)
\in \big(\sC[\![t_W]\!]\big)^{0}{\kkbar}
,\\
\mathring{\bm{T}}^c = &t^c + \sum_{\mathclap{n=2}}^\infty \Fr{1}{n! (-\kbar)^{n-1}}t^{\a_n}\cdots t^{\a_1} 
\mathring{\mb{\pi}}_{a_1\cdots a_n}{}^c 
\in \Bbbk[\![t_W]\!][\![\kbar^{-1}]\!]
,\\
\mathring{A}_{\a\b}{}^c = & \mathring{m}^0_{a b}{}^c  + \sum_{\mathclap{n=1}}^\infty \Fr{1}{n!}t^{a_n}\cdots t^{a_1}
\mathring{m}^0_{a_1\cdots a_n ab}{}^c
\in \Bbbk[\![t_W]\!]
.
\end{align*}
Then,  we have $\mb{K}e^{-\Fr{1}{\kbar}\mb{\Theta}_W} =0$
and the generating function $\bm{\CZ}^W_{\bm{\mc}}$ of quantum correlation functions 
on the superselection sector $W$ with respect to a quantum expectation $\bm{\mc}$ 
can be defined as follows:
\[
\bm{\CZ}^W_{\bm{\mc}}
\coloneqq{} \Big< e^{-\Fr{1}{\kbar}\mb{\Theta}_W}\Big>_{\bm{\mc}}
= 1+\sum_{\mathclap{n=1}}^\infty \Fr{1}{n! (-\kbar)^n}t^{a_n}\cdots t^{a_1} 
\Big<\mb{\Pi}_n^W(\rd_{a_1},\ldots, \rd_{a_n})\Big>_{\bm{\mc}}.
\]
It follows that $\bm{\CZ}^W_{\bm{\mc}} = 1 -\Fr{1}{\kbar} \grave{\bm{T}}^a \big< \mb{f}(\rd_a)\big>_{\bm{\mc}}$
so that the quantum expectation values (namely $\left\{ \big< \mb{f}(\rd_a)\big>_{\bm{\mc}}\right\}$ and 
$\left\{\mathring{\bm{T}}^a\right\}$) determine every quantum correlation function in the superselection sector $W$.
Moreover, 
$\left\{\mathring{\bm{T}}^a\right\}$ is the unique solution in formal power series in $\kbar^{-1}$ 
to the following system of formal differential equations:
\[
\kbar\rd_b\rd_c \mathring{\bm{T}}^a + \mathring{A}_{bc}{}^e\rd_e \mathring{\bm{T}}^a=0
,\qquad
\rd_0 \mathring{\bm{T}}^a= \d_{0}{}^a -\Fr{1}{\kbar} \mathring{\bm{T}}^a
,
\]
with the boundary condition $\rd_b \mathring{\bm{T}}^a\big|_{t_W=0}=\d_b{}^a$.
Finally, $\left\{\mathring{A}_{ab}{}^c\right\}$ has the requisite compatibilities to make $\big(W\otimes\Bbbk[\![t_W]\!], \rd_0, \diamond\big)$, where $\rd_a\diamond \rd_b \coloneqq{} \mathring{A}_{ab}{}^c \rd_c$,
into a unital super-commutative associative algebra over $\Bbbk[\![t_W]\!]$ --- hence $W$ is a formal super $F$-manifold.
Therefore, we conclude the following.
\begin{lemma}\label{intrac}
Every finite dimensional super-selection sector of a binary QFT algebra is a formal super $F$-manifold
and comes with a well-defined quantum distribution.
\end{lemma}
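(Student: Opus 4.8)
The plan is to verify the defining conditions of a superselection sector reduce the general machinery of Section~\ref{section: mastering qc} to exactly the anomaly-free, finite-dimensional situation already analyzed in Section~\ref{section: anomaly-free theory}, but now internally to $W$ rather than to all of $H$. First I would observe that property~\eqref{ss property kappa} forces $\mb{\kappa}$ to annihilate $W$, and combined with the recursive structure of $\grave{\bell}$ in Definition~\ref{masterzero} together with property~\eqref{ss property pi0} one gets $\grave{\bell}_n(w_1,\dots,w_n)=0$ for all $n$ and all $w_i\in W$; this is precisely the computation needed so that the restriction $\underline{\mb{\phi}}^0\big|_W$ becomes a bona fide unital $sL_\infty$-morphism $\big(W{\kkbar},1_W,\underline 0\big)\dasharrow\big(\sC{\kkbar},1_\sC,\underline\bell\big)$. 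Next, invoking property~\eqref{ss property pi-1} and the recursion of Definition~\ref{masterone} (in the form of Lemma~\ref{dalem}, whose proof only used $\mb{\kappa}^\infty_{\mathit{HH}}=0$ and the $\kbar$-degree bookkeeping, both of which hold when restricted to $W$), I would extract that $\grave{\mb{m}}^0_n$ restricted to $W$ equals the $\kbar$-independent top component $\grave{\pi}^{0(n-2)}_n$ and lands in $W$, giving the well-defined family $\underline{\mathring m}^0$ with $\mathring m^0_n\in\Hom(S^nW,W)^0$.

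Then I would record that $\big(W,1_W,\underline{\mathring m}^0\big)$ inherits unitality, symmetry and generalized associativity directly from Theorem~\ref{xdalem}: the proof there is an identity manipulation of $\grave{\mb{\pi}}^0$ purely in terms of $\grave m^0$, so it transfers verbatim once we know all the relevant quantities stay inside $W$. With that in place, the finite-dimensionality of $W$ lets me introduce coordinates $t_W=\{t^a\}$, form $\mb{\Theta}_W$, $\mathring{\bm T}^a$, $\mathring A_{ab}{}^c$ as displayed, and replay the computations of the previous subsection: $\mb{K}e^{-\frac1{\kbar}\mb{\Theta}_W}=0$ from $\underline{\mb{\phi}}^0\big|_W$ being an $sL_\infty$-morphism; the factorization $\bm{\CZ}^W_{\bm{\mc}}=1-\frac1{\kbar}\mathring{\bm T}^a\langle\mb f(\rd_a)\rangle_{\bm{\mc}}$ from the level-zero master equation $\mb{\Pi}^0_n=\mb f\circ\grave{\mb{\pi}}^0_n+\mb K\circ\mb{\eta}^{-1}_n$ applied to arguments in $W$ and killed by $\bm{\mc}$; the differential equations for $\mathring{\bm T}^a$ from \eq{bsna} translated into the structure-constant identity \eq{spfind}; and the associativity/commutativity/unity of $\mathring A_{ab}{}^c$ from the properties of $\underline{\mathring m}^0$, exactly as $\grave A_{\a\b}{}^\g$ was handled. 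The conclusion is that $W$ is a formal super $F$-manifold equipped with a quantum distribution, which is Lemma~\ref{intrac}.

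The main obstacle, and the place where genuine checking is required rather than citation, is step~\eqref{ss property pi-1}: I must confirm that the level-one data $\grave{\mb{\pi}}^{-1}_n$ taking values in $W$ is genuinely what is needed to close the induction showing $\grave{\mb{m}}^0_n$ is $\kbar$-independent and $W$-valued on $W$. Concretely, one has to trace through the recursion \eq{mstone}--\eq{mstoneb} and the definition of $\grave{\mb{\varpi}}^0_n$ to see that, when all inputs lie in $W$, the right-hand sides involve only $\grave{\mb{\pi}}^0_k$, $\grave{\mb{\pi}}^{-1}_k$ and $\grave{\mb{m}}^0_k$ for $k<n$ evaluated on $W$-arguments, hence stay in $W$ by the inductive hypothesis plus properties~\eqref{ss property pi0} and~\eqref{ss property pi-1}; the subtlety is that $\mb{\phi}^0_n$ and $\mb{\phi}^{-1}_n$ themselves are $\sC$-valued, not $W$-valued, so one must be careful that they enter $\grave{\mb{m}}^0_n=h\circ M_n$ only through the classical-cohomology projection, for which the identity $\grave m^0_n=\grave\pi^{0(n-2)}_n$ from Lemma~\ref{dalem} is exactly the lever. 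Once this one induction is set up cleanly, everything else is a faithful transcription of the anomaly-free finite-dimensional theory with $H$ replaced by $W$.
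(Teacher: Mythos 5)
Your proposal follows essentially the same route as the paper's: vanishing of $\grave{\bell}$ on $W$ from properties~\eqref{ss property kappa} and~\eqref{ss property pi0}, reduction of the level-one master equation on $W$-arguments to the clean recursion because $\mb{\kappa}^\infty_{\mathit{HH}}\grave{\mb{\pi}}^{-1}_n$ vanishes there by \eqref{ss property kappa} and \eqref{ss property pi-1}, the Lemma~\ref{dalem}-style $\kbar$-degree pinch identifying $\mathring{m}^0_n=\grave{\pi}^{0(n-2)}_n\big|_W\in\Hom\big(S^nW,W\big)^0$, and then the finite-dimensional coordinate transcription of the anomaly-free theory with $H$ replaced by $W$. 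The only remark is that the worry in your final paragraph is unnecessary: since the superselection axioms are stated directly in terms of the globally constructed families $\grave{\mb{\pi}}^0$ and $\grave{\mb{\pi}}^{-1}$ of Theorems~\ref{solvemstzero} and~\ref{solvemstone}, one never has to revisit $M_n$, $\mb{\phi}^0_n$ or $\mb{\phi}^{-1}_n$ --- the $W$-valuedness of $\mathring{m}^0_n$ is immediate from property~\eqref{ss property pi0} once it is identified with the top $\kbar$-coefficient of $\grave{\mb{\pi}}^0_n$ on $W$-arguments.
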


\begin{appendix}
\section{On homotopy Lie algebras}
\label{appendix: supersection}

This appendix is intended as a self-contained introduction to the homotopy category of $sL_\infty$-algebras
and a homotopy functorial description of the bar construction of  $sL_\infty$-algebras.

An $sL_\infty$-algebra is a version of a homotopy Lie algebra,
also known as an $L_\infty$-algebra, with its degree shifted by one. The notion of an $L_\infty$-algebra first arose in the rational homotopy theory of Sullivan
\cite{Sullivan} in disguise before its  explicit form was used  in the deformation theoretic approach \cite{Stasheff,SS} to
rational homotopy theory.  For our purposes the shifted version, $sL_\infty$-algebras, are more natural.
There is no  conceptual originality to the contents of this appendix, and we refer to \cite{Konst}
for the formal super-geometric aspects of $L_\infty$-algebras.


Throughout this appendix  $R$ is a fixed commutative ground ring of characteristic zero with unit $1_R$. 
(In the body of the paper $R$ is either a field $\Bbbk$ of characteristic zero or $\Bbbk{\kkbar}$.)

Let  $V=\bigoplus_{k \in \Z} V^k$ be a $\Z$-graded $R$-module.
An element  of $V$  is  homogeneous if it lies in fixed degree. 
For a homogeneous element $v\in V$, we use the notation $|v|$ for its degree and write $Jv$ for $(-1)^{|v|}v$.
We denote by  $\Hom_R(V,V^\pr)^j$  the space of $R$-module homomorphisms from $V$ to $V^\pr$
of degree $j$, and by $\Hom_R(V,V^\pr)$ the sum $\bigoplus_{j\in \Z}\Hom_R(V,V^\pr)^j$. 
The tensor product over $R$ is denoted by $\otimes$.

The reduced free tensor module generated by a $\Z$-graded $R$-module  $V$ is 
$\Xbar{T}(V) = \bigoplus_{n=1}^\infty T^n V$, where $T^n V=V^{\otimes n}$,
which has the induced structure of a $\Z$-graded $R$-module. 
Let $\Perm_n$ be the group of permutations of the set $[n]=\{ 1,\ldots, n\}$. For each $\s \in \Perm_n$, 
we define the map $\check\s :T^n V\rightarrow T^n V$ by specifying, for homogeneous elements $v_1,\dotsc, v_n \in V$,
\[
\check\s\left(v_1\otimes v_2\otimes\cdots \otimes v_n\right) 
=
\e(\s)v_{\s(1)}\otimes v_{\s(2)}\otimes\cdots\otimes v_{\s(n)}
.
\]
In this equation $\ep(\s)=\pm 1$ is the Koszul sign determined by decomposing $\s$ as composition of transpositions 
$\check{\tau}:v_1\otimes v_2 \mapsto (-1)^{|v_1||v_2|} v_2\otimes v_1$. 
We denote by $S^nV$ is the submodule of $T^n V$ that is fixed by $\check\s$.
Elements in $S^nV$ are generated by elements of the form $v_1\odot\ldots \odot v_n$,
which is a weighted sum over the orbit of $v_1\otimes\cdots\otimes v_n$.
The reduced free symmetric module generated by $V$ is
$\Xbar{S}(V) = \bigoplus_{n=1}^\infty S^n V$. We denote by
$\eb_{S^n V}: S^n V\rightarrow \Xbar{S}(V)$
and $\proj_{S^n V}: \Xbar{S}(V)\rightarrow S^n V$ for $n\geq 1$
the canonical embedding and projection.

We say $L_n\in \Hom_R\big(T^n V, V^\pr\big)^{j}$ descends to a $R$-linear map from $S^n V$ to $V^\pr$,
and denote this map by  $L_n\in \Hom_R\Big(S^n V, V^\pr\Big)^{j}$,
if
$L_n\big(v_1\otimes\cdots\otimes v_n\big)
= \ep(\s)L_n\big(v_{\s(1)}\otimes\cdots\otimes v_{\s(n)}\big)$.
A family $\underline{L}=L_1,L_2,\dots $ of $L_n\in \Hom\Big(S^n V, V^\pr\Big)^{j}$ for all $n\geq 1$
determines $L \in  \Hom\Big(\Xbar{S}(V), V^\pr\Big)^{j}$ by the convention that 
$L\big(v_1\odot\cdots\odot v_n\big)=L_n\big(v_1\odot\cdots\odot v_n\big)$, for all $n\geq 1$,
and vice versa. That is, $L_n = L\circ \eb_{S^n V}$.
We use the notation $\underline{L}$ and $L$ interchangeably. We also use the notation
$L\big(v_1\odot\cdots\odot v_n\big)=L_n\big(v_1\odot\cdots\odot v_n\big)=L_n\big(v_1,\ldots, v_n)$.

\subsection{The homotopy category of \texorpdfstring{$sL_\infty$}{sL-infinity}-algebras} 
\label{appendix: l infinity algebras}

We begin with generator-relation definitions of  $sL_\infty$-algebras, morphisms and homotopy types
of morphisms.

\begin{definition}\label{slinftyalg}
An $sL_\infty$-algebra over $R$ is a tuple $\big(V, \underline{{l}}\big)$, where
$V$ is a $\Z$-graded $R$-module and $\underline{{l}}={l}_1,{l}_2,\ldots$ is a family
of operations ${l}_k  \in \Hom_R\big(S^k V, V\big)^1$ for $k\geq 1$,  such that, for all $n\geq 1$ and homogeneous
$v_1,\ldots, v_n \in V$,
\[
\sum_{\clapsubstack{|\mp| \in P(n)\\ |B_i|=n-|\mp|+1}}\e(\mp){l}_{|\mp|}\left(
Jv_{B_1},\ldots, Jv_{B_{i-1}}, {l}(v_{B_{i}}), v_{B_{i+1}}, \ldots, v_{B_{|\mp|}}\right)=0.
\]
\end{definition}
The sum in the above formula is over all classical partitions of $[n]$ satisfying the condition that every block  in the partition $\mp$
has a single element except possibly one block which has $n-|\mp|+1$ elements.
For example, we have
\begin{align*}
d^2\big(v_1\big)&=0
,\\
d{l}_2\big(v_1,v_2\big) + {l}_2\big(dv_1,v_2\big) +{l}_2\big(Jv_1, dv_2\big)&=0
,\\
d{l}_3\big(v_1,v_2,v_3\big)+{l}_3\big(dv_1,v_2,v_3\big)+{l}_3\big(Jv_1,dv_2,v_3\big)+{l}_3\big(Jv_1,Jv_2,dv_3\big)
&
\\
+{l}_2\big({l}_2(v_1,v_2),v_3) +{l}_2\big(Jv_1, {l}_2(v_2,v_3)\big)
+(-1)^{|v_1||v_2|}{l}_2\big(Jv_2, {l}_2(v_1,v_3)\big)
&=0
,\\
\end{align*}
etc., 
where $d={l}_1$. Note that $(V,d)$ is a cochain complex over $R$ whose cohomology $H$ is called
the cohomology of the $sL_\infty$-algebra $\big(V, \underline{{l}}\big)$.
An $sL_\infty$-algebra $\big(V, \underline{{l}}\big)$ is called minimal if $l_1=0$.

\begin{definition} \label{slinftymor}
A morphism of $sL_\infty$-algebras  from $\big(V, \underline{{l}}\big)$ to $\big(V^\pr, \underline{{l}}^\pr\big)$
is a family $\underline{\w}=\w_1,\w_2,\ldots$ of  $\w_k  \in \Hom_R\big(S^k V, V^\pr\big)^0$, $k\geq 1$,  
such that, for all $n\geq 1$ and homogeneous
$v_1,\ldots, v_n \in V$,
\begin{align*}
\sum_{\clapsubstack{|\mp| \in P(n)}}&
\e(\mp)
{l}^\pr_{|\mp|}\Big(
\w\left(v_{B_1}\right),\ldots,\w\big( v_{B_{|\mp|}}\big)
\Big) 
\\
&
=
\sum_{\clapsubstack{|\mp| \in P(n)\\ |B_i|=n-|\mp|+1}}
\e(\mp)
\w_{|\mp|}\Big(Jv_{B_1},\ldots, Jv_{B_{i-1}}, {l}(x_{B_{i}}), v_{B_{i+1}},\ldots, v_{B_{|\mp|}}\Big)
.
\end{align*}
\end{definition}
For example, we have
\begin{align*}
d^\pr\w_1(v_1)&=\w_1(d v_1)
,\\
\w_1\big({l}_2\big(v_1,v_2\big)\big) -{l}^\pr_2\big(\w_1(v_1), \phi_1(v_2)\big) 
&= 
d\w_2(v_1,v_2)
- \w_2\big(dv_1,v_2\big) -\w_2\big(Jv_1, dv_2\big)
,
\end{align*}
etc. Note that $\phi_1$ is a cochain map from $\big(V, d\big)$ to $\big(V^\pr, d^\pr\big)$.  
Recall that a cochain map is a cochain quasi-isomorphism if it induces an isomorphism on cohomology.
An $sL_\infty$ morphism $\underline{\w}$ a {\em quasi-isomorphism}
if $\w_1$ is a cochain quasi-isomorphism between the underlying cochain complexes.

\begin{definition}[Lemma]\label{lmorcomp}
Let
$
\xymatrix{
\big(V, \underline{{l}}\big)\ar@{..>}[r]^{\underline{\w}}&\big(V^\pr, \underline{{l}}^\pr\big)\ar@{..>}[r]^{\underline{\w}^\pr}&
\big(V^\ppr, \underline{{l}}^\ppr\big)
}
$ be  consecutive $sL_\infty$-morphisms. Then,
the composition $\underline{\w}^\pr\bullet \underline{\w}$ defined
by the following equation for all $n\geq 1$ and $v_1,\ldots,v_n \in V$:
\[
\left(\underline{\w}^\pr\bullet \underline{\w}\right)_n(v_1,\ldots,v_n)\coloneqq{}
\sum_{\clapsubstack{|\mp| \in P(n)}}
\e(\mp)
\w^\pr_{|\mp|}\big(
\w(v_{B_1}),\ldots,\w( v_{B_{|\mp|}})
\big),
\]
is  an $sL_\infty$-morphism from $\big(V, \underline{{l}}\big)$
to $\big(V^\ppr, \underline{{l}}^\ppr\big)$.
The operation $\bullet$ is associative.
\end{definition}
The {\em  category of  $sL_\infty$-algebras} over $R$ is the category $\category{sL}_\infty(R)$
whose objects are $sL_\infty$-algebras over $R$ and whose morphisms are $sL_\infty$-morphisms with composition operation $\bullet$.

Now we turn to the homotopy category of $sL_\infty$-algebras, which will require a bit of preparation.

\begin{definition}\label{slinftyhflow}
A homotopy pair 
of $sL_\infty$-algebras  from $\big(V, {\underline{l}}\big)$ to 
$\big(V^\pr, {\underline{l}}^\pr\big)$
is a pair 
\[
\big(\w(\t), \l(\t)\big)\in \Hom_R\big(\Xbar{S}(V), V^\pr\big)^0[\t]\oplus \Hom_R\big(\Xbar{S}(V), V^\pr\big)^{-1}[\t]
\]
satisfying the following system of equations:
for all $n\geq 1$ and homogeneous
$v_1,\ldots, v_n \in V$,
\begin{align*}
\Fr{d}{d\t}\w(\t)&(v_1\odot\ldots\odot v_n)
\\
= 
&
\sum_{\clapsubstack{\mp \in P(n)\\ |B_i|=n-|\mp| +1}}
\e(\mp)
\l(\t)\Big(J\!v_{B_1}\odot\ldots\odot J\!v_{B_{i-1}}\odot{l}(x_{B_{i}})\odot v_{B_{i+1}}\odot\ldots\odot v_{B_{|\mp|}}\Big)
\\
&
+\sum_{\clapsubstack{\mp \in P(n) }}\ \ \sum_{\mathclap{i=1}}^{|\mp|}
\e(\mp)
{l}^\pr\Big(\w(\t)(J\!v_{B_1})\odot\ldots\odot\w(\t)(J\!v_{B_{i-1}})\odot\l(\t)(v_{B_i})
\\
&\qquad\qquad\qquad\qquad\qquad\qquad\qquad
\odot\w(\t)(v_{B_{i+1}})\odot
\ldots\odot\w(\t)(v_{B_{|\mp|}})\Big)
.
\end{align*}
\end{definition}

The first two equations of this system are
\begin{align*}
\Fr{d}{d\t}\w_1(\t)(v_1)=& \l_1(\t)(dv_1) + d^\pr \l_1(\t)(v_1)
,\\
\Fr{d}{d\t}\w_2(\t)(v_1,v_2)=& \l_2(\t)(dv_1,v_2) +\l_2(\t)(Jv_1, dv_2) +\l_1(\t)\big({l}_2(v_1,v_2)\big)
\\
&
+d^\pr \l_2(\t)(v_1,v_2) + {l}^\pr_2\big(\l_1(\t)(v_1), \w_1(v_2)\big) + {l}_2\big(\w_1(Jv_1), \l_1(\t)(v_2)\big)
.
\end{align*}
Working recursively from $n=1$, it is obvious that the system of equations in Definition \ref{slinftyhflow}
has a unique solution  $\w(\t)$, modulo an initial condition ${\w}(0)$, with respect to ${\l}(\t)$.
\begin{lemma}
Let $\big({\w}(\t), {\l}(\t)\big)$ be a homotopy pair of  $sL_\infty$-algebras
such that $\underline{\w}(0)$ is an $sL_\infty$-morphism.  
Then $\w(\t)$ is a (uniquely defined) family of $sL_\infty$-morphisms.
\end{lemma}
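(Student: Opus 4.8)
The plan is to show that the homotopy flow equation of Definition~\ref{slinftyhflow} preserves the property of being an $sL_\infty$-morphism, given that it holds at $\t=0$. First I would set up the relevant coalgebraic reformulation: extend $\w(\t)$ to the family of coalgebra maps $\mF\big(\w(\t)\big):\Xbar{S}^{\mathit{co}}(V)\to\Xbar{S}^{\mathit{co}}(V^\pr)$ and $\l(\t)$ to the family of $\big(\mF(\w(\t)),\mF(\w(\t))\big)$-coderivations $\La\big(\w(\t),\l(\t)\big)$, and likewise let $\mD(l)$, $\mD(l^\pr)$ be the coderivations on $\Xbar{S}^{\mathit{co}}(V)$, $\Xbar{S}^{\mathit{co}}(V^\pr)$ determined by $\underline{l}$, $\underline{l}^\pr$ (these satisfy $\mD(l)^2=0$, $\mD(l^\pr)^2=0$ since $\big(V,\underline{l}\big)$ and $\big(V^\pr,\underline{l}^\pr\big)$ are $sL_\infty$-algebras). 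In this language Definition~\ref{slinftyhflow} says precisely
\[
\Fr{d}{d\t}\mF\big(\w(\t)\big) = \mD(l^\pr)\circ \La\big(\w(\t),\l(\t)\big) + \La\big(\w(\t),\l(\t)\big)\circ \mD(l),
\]
and $\underline{\w}(\t)$ being an $sL_\infty$-morphism is equivalent to the single identity $\mD(l^\pr)\circ\mF\big(\w(\t)\big) = \mF\big(\w(\t)\big)\circ\mD(l)$. So the goal reduces to proving that the operator
\[
\mb{\Gamma}(\t)\coloneqq \mD(l^\pr)\circ\mF\big(\w(\t)\big) - \mF\big(\w(\t)\big)\circ\mD(l)
\]
vanishes for all $\t$, knowing $\mb{\Gamma}(0)=0$.

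The key step is to differentiate $\mb{\Gamma}(\t)$ with respect to $\t$, substitute the flow equation, and use $\mD(l^\pr)^2=\mD(l)^2=0$ to obtain a closed first-order linear ODE for $\mb{\Gamma}(\t)$. Concretely,
\[
\Fr{d}{d\t}\mb{\Gamma}(\t) = \mD(l^\pr)\circ\Fr{d}{d\t}\mF\big(\w(\t)\big) - \Fr{d}{d\t}\mF\big(\w(\t)\big)\circ\mD(l),
\]
and plugging in the flow equation for $\tfrac{d}{d\t}\mF(\w(\t))$ the terms with two copies of $\mD(l^\pr)$ or two copies of $\mD(l)$ die, leaving cross terms that reorganize as $\mD(l^\pr)\circ\La(\w(\t),\l(\t))\circ\mD(l) - \mD(l^\pr)\circ\La(\w(\t),\l(\t))\circ\mD(l)$ plus terms proportional to $\mb{\Gamma}(\t)$ itself; what one expects is an identity of the shape $\tfrac{d}{d\t}\mb{\Gamma}(\t) = \mathcal{L}_\t\big(\mb{\Gamma}(\t)\big)$ for some $\t$-dependent linear operator $\mathcal{L}_\t$ (built from $\La(\w(\t),\l(\t))$ acting by pre- and post-composition through the coderivation/coalgebra structure). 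Since this is a linear ODE with initial value $\mb{\Gamma}(0)=0$, uniqueness of solutions — applied degree by degree and arity by arity, where at each fixed arity $n$ the equation only involves finitely many components, so it is a genuine polynomial-coefficient linear ODE — forces $\mb{\Gamma}(\t)\equiv 0$. Finally, once $\w(\t)$ is known to be an $sL_\infty$-morphism for each $\t$, uniqueness of the solution to the flow equation with prescribed initial data (already noted in the excerpt, by working recursively from $n=1$) gives that $\w(\t)$ is the uniquely determined such family.

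The main obstacle I anticipate is the bookkeeping needed to verify that $\tfrac{d}{d\t}\mb{\Gamma}(\t)$ really closes up as a linear expression in $\mb{\Gamma}(\t)$ — that is, checking that the Leibniz-type compatibilities between $\mD(l)$, $\mD(l^\pr)$, $\mF(\w(\t))$ and the coderivation $\La(\w(\t),\l(\t))$ produce exactly the cancellations claimed, with no leftover inhomogeneous term. This is where one genuinely uses that $\La(\w(\t),\l(\t))$ is a coderivation over the coalgebra map $\mF(\w(\t))$, and that $\mD(l^\pr)^2=0=\mD(l)^2$; a convenient way to make this rigorous without a large computation is to project everything to $V^\pr$ (i.e. compose with $\proj_{V^\pr}$), since a coalgebra map or coderivation on a reduced symmetric coalgebra is determined by its corendence to the cogenerators, exactly as in Remarks~\ref{qderd} and~\ref{qcoalgd} and Lemma~\ref{BKftdesm}. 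Modulo this verification, the argument is the standard "differentiate the obstruction, get a linear ODE, invoke uniqueness" pattern, entirely parallel to the proof of Proposition~\ref{BKfhomp}.
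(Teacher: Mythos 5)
Your overall strategy --- form the obstruction $\Gamma(\t)\coloneqq\mD(l^\pr)\circ\mF\big(\w(\t)\big)-\mF\big(\w(\t)\big)\circ\mD(l)$, differentiate in $\t$, and kill it by linear-ODE uniqueness arity by arity from $\Gamma(0)=0$ --- is the right one (the paper in fact states this lemma without proof; the nearest written analogues are the assertion inside Lemma~\ref{flowtoflow} and the detailed $\kbar$-decorated Proposition~\ref{BKfhomp}). The genuine gap is at your very first step: Definition~\ref{slinftyhflow} is the \emph{projected} equation, i.e.\ $\proj_{V^\pr}\circ\big(\Fr{d}{d\t}\mF(\w(\t))-\mD(l^\pr)\circ\La(\w(\t),\l(\t))-\La(\w(\t),\l(\t))\circ\mD(l)\big)=0$, and promoting it to the unprojected coalgebra-level flow equation is not a formal consequence of ``maps to a reduced symmetric coalgebra are determined by their corestriction to the cogenerators.'' That principle applies to coalgebra maps and to coderivations along a \emph{fixed} coalgebra map; $\Fr{d}{d\t}\mF(\w(\t))$ and $\La(\w(\t),\l(\t))$ are indeed coderivations along $\mF(\w(\t))$, but $\mD(l^\pr)\circ\La+\La\circ\mD(l)$ satisfies the co-Leibniz rule along $\mF(\w(\t))$ only up to cross terms which are (up to Koszul signs) of the form $\La\otimes\Gamma(\t)$ and $\Gamma(\t)\otimes\La$ composed with $\Xbar{\blacktriangle}$ --- i.e.\ proportional to the very obstruction you are trying to kill. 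So the shortcut you propose for the bookkeeping (``project everything to $V^\pr$ and use determination by corestriction'') is circular as stated. Note also that if the coalgebra-level flow equation \emph{were} granted, your conclusion would be simpler than you claim: $\Fr{d}{d\t}\Gamma(\t)=\mD(l^\pr)\circ\big(\mD(l^\pr)\La+\La\mD(l)\big)-\big(\mD(l^\pr)\La+\La\mD(l)\big)\circ\mD(l)=0$ identically by $\mD(l^\pr)^2=\mD(l)^2=0$, so $\Gamma$ is constant and no $\t$-dependent operator $\mathcal{L}_\t$ appears; the ``terms proportional to $\Gamma$'' you anticipate arise precisely when the coalgebra-level equation has \emph{not} been established, which is the situation you are actually in.

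The repair is available but requires honest work at one of two places. Either stay at the level of components: let $\gamma_n(\t)\in\Hom_R\big(S^nV,V^\pr\big)^1$ be the arity-$n$ morphism obstruction, differentiate it, substitute the flow equations of Definition~\ref{slinftyhflow} for arities $\leq n$, and verify by the (unavoidable) partition bookkeeping that $\Fr{d}{d\t}\gamma_n(\t)$ is a finite sum of terms each containing a factor $\gamma_k(\t)$ with $k\leq n$; then induction on $n$ together with uniqueness for the resulting linear ODE with polynomial coefficients and $\gamma_n(0)=0$ gives $\gamma_n\equiv0$. Or, if you want the coalgebraic shortcut, first record the one fact that makes projection arguments legitimate here: $\Gamma(\t)$ \emph{is} a coderivation along $\mF(\w(\t))$ (this follows from $\mD(l)$, $\mD(l^\pr)$ being coderivations and $\mF(\w(\t))$ a coalgebra map, with no hypothesis on $\Gamma$), hence $\Gamma(\t)=0$ if and only if its corestriction to $V^\pr$ vanishes; with that in hand one can run a joint induction establishing the coalgebra-level flow and the vanishing of $\Gamma$ simultaneously, but this is not shorter than the component computation. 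The uniqueness statement is unproblematic, as you say, since the flow determines $\dot\w_n(\t)$ recursively from $\l(\t)$ and the initial condition.
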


Now we are ready to define homotopy types of $sL_\infty$-morphisms.

\begin{definition}
Two $sL_\infty$-morphisms $\underline{\w}$ and $\underline{\tilde\w}$ are \emph{homotopic}, 
which we denote $\underline{\w}\sim\underline{\tilde\w}$, 
or have the same \emph{homotopy type},  denoted $\left[\underline{\w}\right]=\left[\underline{\tilde\w}\right]$,
if there is
a $sL_\infty$-homotopy pair $\big({\w}(\t), {\l}(\t)\big)$
such that $\underline{\w}= \underline{\w}(0)$ and $\underline{\tilde\w}= \underline{\w}(1)$.
\end{definition}
It is clear that $\sim$ is an equivalence relation.  
The homotopy category  of $sL_\infty$-algebras over $R$ is the category $\mathit{ho}\category{sL}_\infty(R)$, whose objects
are $sL_\infty$-algebras over $R$ and morphisms are homotopy types of $sL_\infty$-morphisms.
It remains is to check that $\mathit{ho}\category{sL}_\infty(R)$ is indeed a category.

\begin{lemma}\label{unihflowcom}
Given ``composable'' $sL_\infty$-homotopy pairs 
\[
\xymatrix{
\big(V, {{l}}\big)\ar@{:>}[rr]^-{\big({\w}(\t), {\l}(\t)\big)}&&\big(V^\pr, {{l}}^\pr\big)
\ar@{:>}[rr]^-{\big({\w}(\t), {\l}(\t)\big)}&&\big(V^\ppr, {{l}}^\ppr\big)
}
\]
the composition 
$\big({\w}^\pr(\t), {\l}^\pr(\t)\big)\bullet \big({\w}(\t), {\l}(\t)\big)=\big({\w}^\ppr(\t), {\l}^\ppr(\t)\big)$
defined for all $n\geq 1$ and homogeneous $v_1,\ldots, v_n \in V$ by the equations
\begin{align*}
\w^\ppr(\t)\big(v_1\odot\ldots\odot v_n\big) \coloneqq{} 
&
 \sum_{\clapsubstack{|\mp| \in P(n)}}
\e(\mp)
\w^\pr(\t)\Big(
\w(\t)(v_{B_1})\odot\ldots\odot\w(\t)( v_{B_{|\mp|}})
\Big)
,\\
\l^\ppr(\t)\big(v_1\odot\ldots\odot v_n\big) \coloneqq{} 
& 
\sum_{\clapsubstack{|\mp| \in P(n)}}
\e(\mp)
\l^\pr(\t)\Big(
\w(\t)(v_{B_1})\odot\ldots\odot\w(\t)( v_{B_{|\mp|}})
\Big)
\\
+\sum_{\clapsubstack{|\mp| \in P(n)}}
\e(\mp)\sum_{\mathclap{i=1}}^{|\mp|}
&
\w^\pr(\t)\Big(
\w(\t)(Jv_{B_1})\odot\ldots\odot\l(\t)(v_{B_{i}})\odot\ldots\odot \w(\t)( v_{B_{|\mp|}})
\Big)
\end{align*}
is an $sL_\infty$-homotopy pair from $\big(V, \underline{{l}}\big)$
to $\big(V^\ppr, \underline{{l}}^\ppr\big)$.
The operation $\bullet$ is associative.
\end{lemma}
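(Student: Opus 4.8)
\textbf{Proof proposal for Lemma \ref{unihflowcom}.}

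The plan is to reduce everything to the coalgebraic reformulation of $sL_\infty$-homotopy pairs, because on the level of the bar construction the composition $\bullet$ becomes ordinary composition of coalgebra maps and the defining equation of a homotopy pair becomes a transparent ODE in $\Hom$-spaces. First I would recall the shifted-degree conventions: to a family $\underline{\w}(\t)$ associate the coalgebra map $\mF\big(\w(\t)\big):\Xbar S^{\mathit{co}}(V)\to\Xbar S^{\mathit{co}}(V^\pr)$ determined by $\proj_{V^\pr}\circ\mF\big(\w(\t)\big)=\w(\t)$, to a family $\underline{\l}(\t)$ associate the $\mF(\w(\t))$-coderivation $\La\big(\w(\t),\l(\t)\big)$ along it whose corank-one component is $\l(\t)$, and to $\underline{l}$ the coderivation $\mD(l)=\mb\d_l$ with $\mb\d_l\circ\mb\d_l=0$. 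Exactly as in Definition~\ref{slinftyhflow} spelled out coalgebraically (the same manipulation appearing in \eq{hpqflowxz} and in the proof of Proposition~\ref{BKfhomp}), the homotopy-pair equation is equivalent to
\[
\Fr{d}{d\t}\mF\big(\w(\t)\big)=\mD(l^\pr)\circ\La\big(\w(\t),\l(\t)\big)+\La\big(\w(\t),\l(\t)\big)\circ\mD(l),
\]
together with the initial-condition fact that $\underline\w(\t)$ is a family of $sL_\infty$-morphisms once $\underline\w(0)$ is one, i.e. $\mD(l^\pr)\circ\mF(\w(\t))=\mF(\w(\t))\circ\mD(l)$ for all $\t$. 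I would also recall the composition formulas of Definition~\ref{lmorcomp}/Lemma~\ref{unihflowcom} translate, under $\mF$ and $\La$, into
\[
\mF\big(\w^\ppr(\t)\big)=\mF\big(\w^\pr(\t)\big)\circ\mF\big(\w(\t)\big),
\qquad
\La\big(\w^\ppr(\t),\l^\ppr(\t)\big)=\mF\big(\w^\pr(\t)\big)\circ\La\big(\w(\t),\l(\t)\big)+\La\big(\w^\pr(\t),\l^\pr(\t)\big)\circ\mF\big(\w(\t)\big),
\]
which are verified by projecting onto $V^\ppr$ and comparing with the stated component formulas; these are routine and I would not write them out.

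With this dictionary in hand the proof is a short computation. Differentiate the first composition identity by the Leibniz rule:
\[
\Fr{d}{d\t}\mF\big(\w^\ppr(\t)\big)=\Big(\Fr{d}{d\t}\mF\big(\w^\pr(\t)\big)\Big)\circ\mF\big(\w(\t)\big)+\mF\big(\w^\pr(\t)\big)\circ\Fr{d}{d\t}\mF\big(\w(\t)\big).
\]
Substitute the two homotopy-flow equations for $\frac{d}{d\t}\mF(\w^\pr(\t))$ and $\frac{d}{d\t}\mF(\w(\t))$, getting four terms. In the two terms coming from the $\w^\pr$-flow, use $\mD(l^\pr)\circ\mF(\w(\t))=\mF(\w(\t))\circ\mD(l^\pr)$? — no: there one uses instead that $\La(\w^\pr(\t),\l^\pr(\t))$ already carries the $\mF(\w(\t))$ on the right after composing, so one rewrites $\La\big(\w^\pr(\t),\l^\pr(\t)\big)\circ\mD(l^\pr)\circ\mF(\w(\t))$ using $\mD(l^\pr)\circ\mF(\w(\t))=\mF(\w(\t))\circ\mD(l)$; and in the two terms from the $\w$-flow one pushes $\mF(\w^\pr(\t))$ through $\mD(l^\pr)$ using the morphism identity $\mF(\w^\pr(\t))\circ\mD(l^\pr)=\mD(l^\ppr)\circ\mF(\w^\pr(\t))$. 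Collecting, the four terms reorganize precisely into
\[
\mD(l^\ppr)\circ\Big(\mF\big(\w^\pr(\t)\big)\circ\La\big(\w(\t),\l(\t)\big)+\La\big(\w^\pr(\t),\l^\pr(\t)\big)\circ\mF\big(\w(\t)\big)\Big)+\big(\text{same}\big)\circ\mD(l),
\]
and by the second composition dictionary identity the middle parenthesis is $\La\big(\w^\ppr(\t),\l^\ppr(\t)\big)$. This is exactly the homotopy-flow equation for $\big(\w^\ppr(\t),\l^\ppr(\t)\big)$, so the pair is an $sL_\infty$-homotopy pair from $\big(V,\underline l\big)$ to $\big(V^\ppr,\underline l^\ppr\big)$; one also checks the unit/pointedness condition $\l^\ppr(\t)$ vanishes when an argument is the unit, which follows from the corresponding properties of $\l(\t)$ and $\l^\pr(\t)$ and the fact that $\mF$, $\La$ of unital data preserve units. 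Associativity of $\bullet$ follows from associativity of composition of coalgebra maps plus the associativity of the $\La$-bracketing, which is the same bookkeeping that gives associativity of $\bullet$ for plain $sL_\infty$-morphisms in Definition~\ref{lmorcomp}.

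The main obstacle I anticipate is purely organizational rather than conceptual: keeping the Koszul signs and the shifted degrees consistent when one passes back and forth between the explicit $\odot$-component formulas and the operators $\mF$, $\La$, $\mD$ — in particular the placement of the $J=(-1)^{|\cdot|}$ twists on the ``left'' arguments of $\La$, which must match the convention already fixed in \eq{qcalm}, \eq{qcoder}, and the proof of Proposition~\ref{BKfhomp}. I would therefore state the dictionary lemmas (the three displayed identities relating $\w^\ppr,\l^\ppr$ to compositions of $\mF$ and $\La$) precisely once, prove them by a single projection-to-$V^\ppr$ argument citing $\proj_{V^\ppr}\circ\mF=\,\cdot\,$ and the cogeneration property, and then run the four-term Leibniz computation above at the level of operators so that no sign ever needs to be recomputed. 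The associativity claim should then be a one-line remark. I expect no genuine difficulty beyond this, since the argument is the linearization, at the level of the bar complex, of the already-established fact (Proposition~\ref{techniq}, Proposition~\ref{BKfhomp}) that composition of homotopy pairs of binary QFT algebras descends correctly to their quantum descendants.
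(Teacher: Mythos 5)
Your overall route is the right one and is exactly what the paper's machinery supports: the paper never writes out a proof of this appendix lemma, but the dictionary identities you need, $\mF(\w^\ppr(\t))=\mF(\w^\pr(\t))\circ\mF(\w(\t))$ and $\La(\w^\ppr(\t),\l^\ppr(\t))=\mF(\w^\pr(\t))\circ\La(\w(\t),\l(\t))+\La(\w^\pr(\t),\l^\pr(\t))\circ\mF(\w(\t))$, are precisely what is established in Lemma \ref{flowtoflow}, and your four-term Leibniz computation is the same argument that runs in Proposition \ref{techniq} (and Proposition \ref{BKfhomp}) in the binary QFT setting. So as a strategy there is nothing to object to.

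There is, however, one genuine gap in the logic as you have written it. Your two substitutions use the intertwining identities $\mD(l^\pr)\circ\mF(\w(\t))=\mF(\w(\t))\circ\mD(l)$ and $\mF(\w^\pr(\t))\circ\mD(l^\pr)=\mD(l^\ppr)\circ\mF(\w^\pr(\t))$, i.e.\ the statements that $\w(\t)$ and $\w^\pr(\t)$ are families of $sL_\infty$-morphisms. Definition \ref{slinftyhflow} does not give you this: a homotopy pair is only required to satisfy the flow equation, and $\w(\t)$ is a morphism family only under the extra hypothesis that $\w(0)$ is an $sL_\infty$-morphism (this is the content of the lemma immediately following that definition). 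Without that hypothesis the composed flow equation can genuinely fail; already in arity one the discrepancy after your substitution is
\[
\l^\pr_1(\t)\circ\big(d^\pr\circ\w_1-\w_1\circ d\big)\;-\;\big(d^\ppr\circ\w^\pr_1-\w^\pr_1\circ d^\pr\big)\circ\l_1(\t),
\]
which need not vanish (take $\l(\t)=0$ with $\w_1$ constant but not a cochain map, and $\l^\pr_1(\t)\neq 0$). So you must either add the hypothesis that $\w(0)$ and $\w^\pr(0)$ are $sL_\infty$-morphisms (which is how the lemma is actually used in the paper: it is only ever invoked for homotopy pairs joining genuine morphisms, and the analogous Propositions \ref{BKfhomp} and \ref{techniq} carry this hypothesis explicitly), or state clearly that your proof establishes the lemma in that setting. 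With that hypothesis made explicit, your intertwining steps are justified for all $\t$, the four-term regrouping closes, and the rest of your outline (unit condition and associativity) is routine as you say.
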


Consider  $sL_\infty$-morphisms
$
\xymatrix{
\big(V,\underline{{l}}\big)
\ar@{..>}@/^/[r]^{\underline{\w}}
\ar@{..>}@/_/[r]_{\underline{\tilde\w}}
&\big(V^\pr,\underline{{l}}^\pr\big)
\ar@{..>}@/^/[r]^{\underline{\w}^\pr}
\ar@{..>}@/_/[r]_{\underline{\tilde\w}^\pr}
&\big(V^\ppr,\underline{{l}}^\ppr\big)
}
$
and assume that $\underline{\w}\sim \underline{\tilde\w}$ and  $\underline{\w}^\pr\sim \underline{\tilde\w}^\pr$.
Then,  there are corresponding $sL_\infty$-homotopy pairs as follows:
\begin{itemize}
\item $\big(\w(\t), \l(\t)\big)$ such that $\underline{\w}(0)=\underline{\w}$ and $\underline{\w}(1)=\underline{\tilde\w}$;
\item $\big(\w^\pr(\t), \l^\pr(\t)\big)$ 
such that $\underline{\w}^\pr(0)=\underline{\w}^\pr$ and $\underline{\w}(1)^\pr=\underline{\tilde\w}^\pr$.
\end{itemize}
By Lemma \ref{unihflowcom}, the composition 
$\big(\w^\ppr(\t), \l^\ppr(\t)\big)=\big({\w}^\pr(\t), {\l}^\pr(\t)\big)\bullet \big({\w}(\t), {\l}(\t)\big)$ is an
$sL_\infty$-homotopy pair  from $\big(V,\underline{{l}}\big)$ to $\big(V^\ppr,\underline{{l}}^\ppr\big)$
such that 
\[
\underline{\w}^\ppr(0) = \underline{\w}^\pr\bullet \underline{\w},\qquad
\underline{\w}^\ppr(1) = \underline{\tilde\w}^\pr\bullet \underline{\tilde\w}
.
\]
It follows that
$\underline{\w}^\pr\bullet \underline{\w} \sim \underline{\tilde\w}^\pr\bullet \underline{\tilde\w}$
or, equivalently, $\left[\underline{\w}^\pr\bullet \underline{\w}\right]= \left[\underline{\tilde\w}^\pr\bullet \underline{\tilde\w}\right]$
whenever $\underline{\tilde\w} \sim \underline{\w}$ and $\underline{\tilde\w}^\pr\sim \underline{\w}^\pr$
so that
the  homotopy type $\left[\underline{\w}^\pr\bullet \underline{\w}\right]$ of $\underline{\w}^\pr\bullet \underline{\w}$ 
depends only on the homotopy types $\left[\underline{\w}\right]$ and $\left[\underline{\w}^\pr\right]$
of $\underline{\w}$ and $\underline{\w}^\pr$, respectively. 
Therefore the composition which takes $\left[\underline{\w}\right]$ and $\left[\underline{\w}^\pr\right]$ to
$\left[\underline{\w}^\pr\right]\bullet_h \left[\underline{\w}\right]\coloneqq{} \left[\underline{\w}^\pr\bullet \underline{\w}\right]$ is well-defined.
It is obvious that $\bullet_h$ is associative.

\begin{definition}
The homotopy category of $sL_\infty$-algebras over $R$
is the category
$\mathit{ho}\category{sL}_\infty(R)$,
whose objects are $sL_\infty$-algebras over $R$ and whose morphisms are homotopy types 
of $sL_\infty$-morphisms with composition $\bullet_h$.

\end{definition}

In this paper, we shall work primarily with the category and homotopy category of {\em unital} $sL_\infty$-algebras.
\begin{definition}
\begin{itemize}
\item
A  unital $sL_\infty$-algebra over $R$ is a tuple 
$\big(V, 1_V,\underline{{l}}\big)$, where
$\big(V, \underline{{l}}\big)$ is an $sL_\infty$-algebra over $R$ 
and  $1_V$ is an element of $V^0$ such that, 
for all $n\geq 1$ and homogeneous
$v_1,\ldots, v_{n-1} \in V$,
\[
{l}_{n}\big(v_1,\ldots, v_{n-1}, 1_V)=0.
\]
\item
A unital $sL_\infty$-morphism from
$\big(V, 1_V,\underline{{l}}\big)$ to $\big(V^\pr, 1_{V^\pr},\underline{{l}}^\pr\big)$ is an
$sL_\infty$-morphism $\underline{\w}$ from $\big(V,\underline{{l}}\big)$ to $\big(V^\pr, \underline{{l}}^\pr\big)$
such that, for  all $n\geq 1$ and homogeneous
$v_1,\ldots, v_{n-1} \in V$,
\[
\w_{n}\big(v_1,\ldots, v_{n-1}, 1_V)=\d_{n,1}\times 1_{V^\pr},
\]
where $\d_{n,1}$ is  the Kronecker delta: $1$ for $n=1$ and $0$ otherwise.
\end{itemize}
\end{definition}
The composition of two unital $sL_\infty$-morphisms, as $sL_\infty$-morphisms,
is a unital $sL_\infty$-morphism.
\begin{notation}
We denote by $\category{UsL}_\infty(R)$ the {\em  category of  unital $sL_\infty$-algebras}
whose objects are unital $sL_\infty$-algebras over $R$ and whose morphisms are unital $sL_\infty$-morphisms.
\end{notation}
\begin{definition}
A unital $sL_\infty$-homotopy pair is  $sL_\infty$-homotopy pair 
$\big({\w}(\t), {\l}(\t)\big)$
such that for all $n\geq 1$ and homogeneous
$v_1,\ldots, v_{n-1} \in V$, 
\[
\l(\t)\big(v_1\odot\ldots\odot v_{n-1}\odot 1_V)=0.
\]
\end{definition}
Then, $\underline{\w}(\t)$ is a smooth $1$-parameter family of unital $sL_\infty$-morphisms
if $\underline{\w}(0)$ is a unital $sL_\infty$-morphism, so that we can define homotopy of unital $sL_\infty$-morphisms accordingly.
\begin{notation}
We denote by $ho\category{UsL}_\infty(R)$ is the {\em  homotopy category of  unital $sL_\infty$-algebras}
whose objects are unital $sL_\infty$-algebras over $R$ and whose morphisms are homotopy types of unital $sL_\infty$-morphisms.
\end{notation}
The following important lemma is well-known:
\begin{lemma}\label{htrans}
On the cohomology $H$ of an $sL_\infty$-algebra $\big(W,\underline{\ell}\big)$ over a field $\Bbbk$ of characteristic zero, there is 
the structure of a minimal $sL_\infty$-algebra $\big(H, \underline{\grave{\ell}}\big)$ and a quasi-isomorphism
$\underline{\w}: \big(H, \underline{\grave{\ell}}\big)\rightarrow \big(W,\underline{\ell}\big)$.
\end{lemma}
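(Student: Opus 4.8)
\textbf{Proof plan for Lemma \ref{htrans} (Homotopy Transfer Theorem for $sL_\infty$-algebras).}

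The plan is to give the standard homological-perturbation-theory construction, adapted to the shifted setting. First I would fix a homotopy retract of the underlying cochain complex: since we work over a field $\Bbbk$ of characteristic zero, the cochain complex $(W,\ell_1)$ splits, so there are $\Bbbk$-linear maps $\iota\colon H\to W$, $p\colon W\to H$ of degree $0$ and a homotopy $s\colon W\to W$ of degree $-1$ satisfying $p\circ\iota=\I_H$, $\iota\circ p=\I_W-\ell_1\circ s-s\circ\ell_1$, together with the usual side conditions $s\circ s=s\circ\iota=p\circ s=0$ (these can always be arranged, exactly as in \eqref{cdfrd}--\eqref{sidecon}). Because $\ell_1$ is the differential and $p$, $\iota$ are cochain maps, $p$ and $\iota$ are quasi-isomorphisms inducing mutually inverse identifications on cohomology.

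Next I would write the transferred structure and the quasi-isomorphism by the familiar sum-over-trees formula. Concretely, one extends $\iota$ to $\w_1\coloneqq\iota$ and defines $\w_n\colon S^nH\to W$ of degree $0$ and $\grave\ell_n\colon S^nH\to H$ of degree $1$ recursively: $\grave\ell_1=0$ (minimality), and for $n\geq 2$ one sets
\[
\w_n(v_1,\ldots,v_n)=\sum_{\substack{\mp\in P(n)\\|\mp|\geq 2}}(\pm)\,\e(\mp)\,s\,\ell_{|\mp|}\!\bigl(\w(v_{B_1}),\ldots,\w(v_{B_{|\mp|}})\bigr),
\qquad
\grave\ell_n(v_1,\ldots,v_n)=\sum_{\substack{\mp\in P(n)\\|\mp|\geq 2}}(\pm)\,\e(\mp)\,p\,\ell_{|\mp|}\!\bigl(\w(v_{B_1}),\ldots,\w(v_{B_{|\mp|}})\bigr),
\]
with Koszul signs dictated by the conventions of the paper. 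The recursion is well-posed because the right-hand sides involve only $\w_k$ with $k<n$. One then checks that $\underline{\grave\ell}=\grave\ell_2,\grave\ell_3,\ldots$ satisfies the defining relations of Definition \ref{slinftyalg} (with $\grave\ell_1=0$) and that $\underline{\w}$ satisfies the morphism relations of Definition \ref{slinftymor}; since $\w_1=\iota$ is a cochain quasi-isomorphism, $\underline{\w}$ is automatically a quasi-isomorphism in the sense defined after Definition \ref{slinftymor}. The cleanest way to organize these verifications is coalgebraically: transport everything to the cofree cocommutative coalgebras $\Xbar{S}^{\mathit{co}}(H)$ and $\Xbar{S}^{\mathit{co}}(W)$ as in Remark \ref{qderd} and Lemma \ref{fcoder}, so that the $sL_\infty$-structures become codifferentials $\mD(\ell)$, $\mD(\grave\ell)$ and the morphism becomes a coalgebra map $\mF(\w)$; the relations to be proved are then $\mD(\ell)^2=\mD(\grave\ell)^2=0$ and $\mD(\ell)\circ\mF(\w)=\mF(\w)\circ\mD(\grave\ell)$, which follow from the single ``perturbed'' identity produced by the recursion, together with $\ell_1\circ s+s\circ\ell_1=\I_W-\iota\circ p$ and the side conditions, by an induction on word-length (equivalently, the ``master equation'' bookkeeping already used in the proofs of Lemma \ref{qdslinftyalgebra} and Proposition \ref{zerodes}).

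I expect the main obstacle to be purely combinatorial-bookkeeping: getting the Koszul signs and the tree-sum indexing consistent with the paper's strictly-ordered-partition conventions, and then carrying out the induction that turns the recursive definitions of $\w_n$ and $\grave\ell_n$ into the closed identities $\mD(\grave\ell)^2=0$ and $\mD(\ell)\circ\mF(\w)=\mF(\w)\circ\mD(\grave\ell)$. There is no conceptual difficulty — this is the classical homotopy transfer theorem, stated here only for later reference — so in the paper I would give the retract, state the tree formulas, indicate the coalgebraic reformulation, and note that the remaining checks are a standard induction, citing \cite{Konst} for the formal-geometric version; if a unital refinement is wanted one additionally observes that choosing the retract so that $s(1_W)=0$, $p(1_W)=1_H$, $\iota(1_H)=1_W$ (as in \eqref{cdfrd}) forces $\grave\ell_n(\ldots,1_H)=0$ and $\w_n(\ldots,1_H)=\delta_{n,1}1_W$ by the same induction, so the transferred data is a unital minimal $sL_\infty$-algebra and $\underline{\w}$ a unital $sL_\infty$-quasi-isomorphism.
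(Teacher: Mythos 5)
Your plan is correct, and it follows the same overall strategy as the paper — transfer along a chosen retract of $(W,\ell_1)$ onto its cohomology, with the structure $\underline{\grave{\ell}}$ and the morphism $\underline{\w}$ built by a joint recursion and verified inductively — but the recursion you write is a different (Merkulov/Kontsevich--Soibelman style) variant of it. The paper's $L_n$ is not your pure tree sum: besides the terms $\ell_{|\mp|}\big(\w(v_{B_1}),\ldots,\w(v_{B_{|\mp|}})\big)$ it subtracts correction terms of the form $\w_{|\mp|}\big(Jv_{B_1},\ldots,\grave{\ell}(v_{B_i}),\ldots,v_{B_{|\mp|}}\big)$, and precisely because of these corrections the paper imposes no side conditions at all, only $h\circ f=\I_H$ and $f\circ h=\I_W-\ell_1\circ s-s\circ\ell_1$; the verification is then a single compact induction: one shows $\ell_1\circ L_n=f\circ F_n$ with $F_n$ the $sL_\infty$-obstruction, applies $h$ to conclude $F_n=0$ and $\ell_1\circ L_n=0$, and then $f\circ h=\I_W-\ell_1\circ s-s\circ\ell_1$ yields $L_n=\w_1\circ\grave{\ell}_n-\ell_1\circ\w_n$, which are literally the structure and morphism identities, so nothing is deferred to ``standard bookkeeping''. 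Your version trades this for the side conditions: since each $\w_k$ with $k\geq 2$ is $\pm s\circ(\cdot)$, the paper's correction terms take values in $\Im(s)$, so under $s\circ s=s\circ\iota=p\circ s=0$ they are annihilated by $p$ and by $s$, and your tree recursion produces the same maps as the paper's (up to your unspecified signs); but the proof that these maps satisfy the $sL_\infty$- and morphism-relations is exactly the part you defer, and with the bare tree formula it genuinely uses the side conditions at every stage, so it is the bulk of the work rather than a formality. What each route buys: the paper's formulation is self-contained and works for an arbitrary deformation retract, while yours gives closed tree formulas and, as you observe, the unital refinement for free once $s(1_W)=0$, $\iota(1_H)=1_W$, $p(1_W)=1_H$ are arranged.
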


\begin{proof}
Choose the data of a strong deformation retract $(f,h,s)$ between the cochain complex $\big(W, d\coloneqq{}\ell_1\big)$ over $\Bbbk$
and its homology $(H,0)$.
So $s\in\Hom_\Bbbk\big(W, W\big)^{-1}$ and $f\in \Hom_\Bbbk(H, W\big)^0$ and $h\in \Hom_\Bbbk(W, H\big)^0$
satisfy $f\circ h = \I_{W} - d\circ s -s\circ d$ and $h\circ f= \I_H$.
Define $\underline{\w}=\w_1,\w_2,\ldots $ and $\underline{\grave{\ell}}=\grave{\ell}_1, \grave{\ell}_2,\ldots$
recursively as follows: $\w_1=f$ and  $\grave{\ell}_1=0$, while
$\grave{\ell}_n \coloneqq{}h\circ L_n$ and $\w_n\coloneqq{}-s\circ L_n$ for all $n\geq 2$, 
where $L_n \in \Hom\big(S^nH, W\big)^1$  is given for homogeneous $v_1,\ldots,v_n \in H$ by
\begin{align*}
L_n(v_1,\ldots, v_n)\coloneqq{}
&
\sum_{\clapsubstack{|\mp| \in P(n)\\ |\mp|\neq 1}}
\e(\mp)
\ell_{|\mp|}\big(
\w(v_{B_1}),\w( v_{B_{|\mp|}})
\big) 
\\
&
-
\sum_{\clapsubstack{|\mp| \in P(n)\\ |B_i|=n-|\mp|+1\\ |\mp|\neq n,1}}
\e(\mp)
\w_{|\mp|}\Big(Jv_{B_1},\ldots, Jv_{B_{i-1}}, \grave{\ell}(x_{B_{i}}), v_{B_{i+1}},\ldots, v_{B_{|\mp|}}\Big)
.
\end{align*}
Note that $L_n$ depends only on $\w_1,\ldots, \w_{n-1}$ and $\grave{\ell}_2, \ldots, \grave{\ell}_{n-1}$.
For $n\geq 2$, let
\[
F_n(v_1,\ldots, v_n)\coloneqq{}\sum_{\clapsubstack{|\mp| \in P(n)\\ |B_i|=n-|\mp|+1\\ |\mp|\neq n, 1}}\e(\mp){l}_{|\mp|}\left(
Jv_{B_1},\ldots, Jv_{B_{i-1}}, {l}(v_{B_{i}}), v_{B_{i+1}}, \ldots, v_{B_{|\mp|}}\right),
\]
which depends only on $\grave{\ell}_2, \ldots, \grave{\ell}_{n-1}$.
From $L_2(v_1,v_2) = \big(\w_1(v_1),\w_1(v_2)\big)$, we have $d\circ L_2=0$ since $d\circ \w_1=0$ and $d$ is a derivation
of the bracket $(\ ,\,)$. From $\grave{\ell}_2 =h\circ L_2$
and $\w_2 =-s\circ L_2$, we obtain that $L_2=\w_1\circ \grave{\ell}_2 - d\circ \w_2$. Note that $F_2=0$.
Fix $n\geq 3$ and assume that $L_k=\w_1\circ \grave{\ell}_k - d\circ \w_k$ and $F_k =0$ for all $k=2,\ldots, n-1$.
Then it is straightforward to check that $d\circ L_n=f\circ F_n$, which implies that
$h\circ d\circ L_n=h\circ f\circ F_n=F_n=0$ and $d\circ L_n=0$. From $\grave{\ell}_n \coloneqq{}h\circ L_n$ and $\w_n\coloneqq{}-s\circ L_n$,
we obtain that $L_n=\w_1\circ \grave{\ell}_n - d\circ \w_n$. Therefore we have proved that $d\circ \w_1=\ell_1=0$
and $L_n+ d\circ \w_n-\w_1\circ \grave{\ell}_n =F_n=0$ for all $n\geq 2$, which are exactly the conditions
for $\big(H, \underline{\grave{\ell}}\big)$ to be a minimal $sL_\infty$-algebra over $\Bbbk$ and for
$\underline{\w}: \big(H, \underline{\grave{\ell}}\big)\rightarrow \big(W,\underline{\ell}\big)$ to be an $sL_\infty$-morphism,
which is a quasi-isomorphism since $\w_1=f: (H,0)\rightarrow (W,d)$ is a cochain quasi-isomorphism.
\naturalqed
\end{proof}

\subsection{The homotopy category of dg coalgebras}
\label{appendix: coalgebras}

A {\em dg coalgebra} over $R$ is a tuple $\Big(C, \triangle_C, d_C\Big)$, where 

- $\Big(C, d_C\Big)$ is a cochain complex over $R$, i.e., $d_C \in \Hom\big(C, C\big)^1$ and $d_C\circ d_C=0$,
and

- $\Big(C, \triangle_C\Big)$ is a $\Z$-graded coassociative coalgebra over $R$, i.e.,
\[
\triangle_C \in \Hom\big(C, C\otimes C\big)^0,\qquad
\big(\triangle_C\otimes \I_C\big)\circ  \triangle_C = \big(\I_C\otimes \triangle_C\big)\circ  \triangle_C,
\]
such that $d_C$ is a coderivation of $\triangle_C$, i.e.,
$\triangle_C \circ d_C =\big(d_C\otimes \I_C +\I_C\otimes d_C)\circ \triangle_C$.

A {\em morphism of  dg coalgebras}  from $\Big(C, \triangle_C, d_C\Big)$ to $\Big(C^\pr, \triangle_{C^\pr}, d_{C^\pr}\Big)$
is both a cochain map and coalgebra map, i.e., 
\[
F \in \Hom(C, C^\pr)^0
,\qquad 
d_{C^\pr}\circ F = F\circ d_C
,\qquad 
\triangle_{C^\pr}\circ F= \big(F\otimes F)\circ \triangle_C
.
\]
It is straightforward to check that the composition $F^\pr\circ F$ of dg coalgebra morphisms 
as $R$-linear maps is a dg coalgebra morphism. Therefore, we have the category $\category{dgC}(R)$ of 
dg coalgebras over $R$.  

\begin{definition}
A homotopy pair 
$\xymatrix{\Big(\mF(\t), {\La}(\t)\Big):\Big(C, \triangle_C, d_C\Big)\ar@{:>}[r]&\Big(C^\pr, \triangle_{C^\pr}, d_{C^\pr}\Big)}$
of dg coalgebras is a pair 
$\mF(\t)\oplus \La(\t):[0,1]\rightarrow\Hom\big(C, C^\pr\big)^0[\t]\oplus \Hom\big(C, C^\pr\big)^{-1}][\t]$
such that the following relations are satisfied:
\begin{align*}
 \Fr{d}{d\t}\mF(\t) &= d_{C^\pr}\circ \La(\t) +\La(\t)\circ d_C,
\\
\triangle_{C^\pr}\circ \mF(\t) &=\big(\mF(\t)\otimes \mF(\t)\big)\circ \triangle_{C}
 ,\\
\triangle_{C^\pr}\circ \La(\t) &=\big(\mF(\t)\otimes \La(\t) + \La(\t)\otimes \mF(\t)\big)\circ \triangle_{C}.
\end{align*}
\end{definition}
Then, it is straightforward to show that $\mF(\t)$ is determined uniquely with respect to $\La(\t)$
for a given initial condition $\mF(0)$ and is a smooth family of dg coalgebra morphisms if $\mF(0)$ is a dg coalgebra
morphism.
\begin{definition}
Two  dg coalgebra morphisms $F$ and $\tilde F$ are \emph{homotopic}, denoted $F\sim \tilde F$,
or have the same \emph{homotopy type}, denoted by $\big[F\big]=\big[\tilde F\big]$,
if there is a dg coalgebra homotopy pair  $\big(\mF(\t), \La(\t)\big)$  such that $\mF(0)=F$ and $\mF(1)=\tilde F$.
\end{definition}

It is clear that $\sim$ is an equivalence relation.  
The homotopy category  of dg coalgebras over $R$ shall be a category $ho\category{dgC}(R)$, whose objects
are dg coalgebras over $R$  and whose morphisms are homotopy types of dg coalgebra morphisms.
We check that $ho\category{dgC}(R)$ is indeed a category in the following Lemma:

\begin{lemma}\label{xunihflowcom}
Given ``composable'' homotopy pairs of dg coalgebras
\[
\xymatrix{
\Big(C, \triangle_C, d_C\Big)
\ar@{:>}[rr]^-{\big(\mF(\t), {\La}(\t)\big)}&&
\Big(C^\pr, \triangle_{C^\pr}, d_{C^\pr}\Big)
\ar@{:>}[rr]^-{\big({\mF}^\pr(\t), {\La}^\pr(\t)\big)}&&
\Big(C^\ppr, \triangle_{C^\ppr}, d_{C^\ppr}\Big)
}
\]
the composition 
$\Big({\mF}^\ppr(\t), {\La}^\ppr(\t)\Big)=\Big({\mF}^\pr(\t), {\La}^\pr(\t)\Big)\circ \Big({\mF}(\t), {\La}(\t)\Big)$,
defined by the formulas
\begin{align*}
{\mF}^\ppr(\t)\coloneqq{}& {\mF}^\pr(\t)\circ {\mF}(\t)
,\\
{\La}^\ppr(\t)\coloneqq{}& {\mF}^\pr(\t)\circ {\La}(\t) +{\La}^\pr(\t)\circ {\mF}(\t)
,
\end{align*}
is a homotopy pair of dg coalgebras from $\Big(C, \triangle_C, d_C\Big)$ to $\Big(C^\ppr, \triangle_{C^\ppr}, d_{C^\ppr}\Big)$ and $\circ$ is associative.
\end{lemma}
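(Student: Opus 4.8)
# Proof Proposal for Lemma~\ref{xunihflowcom}

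The plan is to verify directly that the pair $\big(\mF^\ppr(\t),\La^\ppr(\t)\big)$ defined by the two displayed formulas satisfies the three defining relations of a homotopy pair of dg coalgebras, and then note that associativity of $\circ$ is inherited from associativity of composition of $R$-linear maps. This is entirely a routine verification, but there is one genuine subtlety (see below) concerning signs and the fact that $\La(\t)$ has odd degree; I want to handle that carefully.

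First I would check the \emph{coalgebra-map} condition for $\mF^\ppr(\t)$. Since $\mF^\ppr(\t)=\mF^\pr(\t)\circ\mF(\t)$ and each of $\mF(\t)$, $\mF^\pr(\t)$ is a coalgebra map in each fixed $\t$, we get
\[
\triangle_{C^\ppr}\circ\mF^\ppr(\t)
=\triangle_{C^\ppr}\circ\mF^\pr(\t)\circ\mF(\t)
=\big(\mF^\pr(\t)\otimes\mF^\pr(\t)\big)\circ\triangle_{C^\pr}\circ\mF(\t)
=\big(\mF^\ppr(\t)\otimes\mF^\ppr(\t)\big)\circ\triangle_{C},
\]
using the second defining relation for each of the two input pairs. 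Next I would check the \emph{flow} equation. Differentiating the product, $\tfrac{d}{d\t}\mF^\ppr(\t)=\big(\tfrac{d}{d\t}\mF^\pr(\t)\big)\circ\mF(\t)+\mF^\pr(\t)\circ\big(\tfrac{d}{d\t}\mF(\t)\big)$, and substituting the flow equations $\tfrac{d}{d\t}\mF^\pr(\t)=d_{C^\ppr}\circ\La^\pr(\t)+\La^\pr(\t)\circ d_{C^\pr}$ and $\tfrac{d}{d\t}\mF(\t)=d_{C^\pr}\circ\La(\t)+\La(\t)\circ d_C$, one gets four terms; using $d_{C^\pr}\circ\mF(\t)=\mF(\t)\circ d_C$ (that $\mF(\t)$ is a cochain map, which follows from its own flow equation and the initial condition being a dg coalgebra morphism, or more simply since it is assumed to be a family of dg coalgebra morphisms) and $\mF^\pr(\t)\circ d_{C^\pr}=d_{C^\ppr}\circ\mF^\pr(\t)$, the middle two terms combine and one recovers exactly $d_{C^\ppr}\circ\La^\ppr(\t)+\La^\ppr(\t)\circ d_C$ with $\La^\ppr(\t)=\mF^\pr(\t)\circ\La(\t)+\La^\pr(\t)\circ\mF(\t)$.

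The step I expect to be the main obstacle is the \emph{third} relation, the co-Leibniz rule for $\La^\ppr(\t)$, namely $\triangle_{C^\ppr}\circ\La^\ppr(\t)=\big(\mF^\ppr(\t)\otimes\La^\ppr(\t)+\La^\ppr(\t)\otimes\mF^\ppr(\t)\big)\circ\triangle_C$. Here I would compute the two summands of $\La^\ppr(\t)$ separately. For the piece $\mF^\pr(\t)\circ\La(\t)$: apply $\triangle_{C^\ppr}$, then $\triangle_{C^\ppr}\circ\mF^\pr(\t)=(\mF^\pr(\t)\otimes\mF^\pr(\t))\circ\triangle_{C^\pr}$, then the co-Leibniz rule for $\La(\t)$, producing $(\mF^\pr(\t)\La(\t)\otimes\mF^\pr(\t)\mF(\t))\circ\triangle_C+(\mF^\pr(\t)\mF(\t)\otimes\mF^\pr(\t)\La(\t))\circ\triangle_C$. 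For the piece $\La^\pr(\t)\circ\mF(\t)$: apply $\triangle_{C^\ppr}$, then the co-Leibniz rule for $\La^\pr(\t)$, then $\triangle_{C^\pr}\circ\mF(\t)=(\mF(\t)\otimes\mF(\t))\circ\triangle_C$, producing $(\La^\pr(\t)\mF(\t)\otimes\mF^\pr(\t)\mF(\t))\circ\triangle_C+(\mF^\pr(\t)\mF(\t)\otimes\La^\pr(\t)\mF(\t))\circ\triangle_C$. Adding all four terms and collecting the ``$\mF^\ppr\otimes(\cdot)$'' and ``$(\cdot)\otimes\mF^\ppr$'' halves gives precisely $\big(\mF^\ppr(\t)\otimes\La^\ppr(\t)+\La^\ppr(\t)\otimes\mF^\ppr(\t)\big)\circ\triangle_C$. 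The only care needed is to track the Koszul signs when pushing the odd map $\La^\pr(\t)$ (degree $-1$) past the second tensor factor in $\mF^\pr(\t)\otimes\mF^\pr(\t)$ applied to $\triangle_{C^\pr}\circ\La(\t)$; since $\mF^\pr(\t)$ is even the sign is $+1$ in each place, so nothing goes wrong, but I would spell this out explicitly to be safe. Finally, associativity of $\circ$: for the $\mF$-components it is literally associativity of composition; for the $\La$-components, expanding $\big(({\mF}^\ppr,\La^\ppr)\circ(\mF^\pr,\La^\pr)\big)\circ(\mF,\La)$ and $({\mF}^\ppr,\La^\ppr)\circ\big((\mF^\pr,\La^\pr)\circ(\mF,\La)\big)$ both yield $\mF^\ppr\La^\pr\mF+\mF^\ppr\mF^\pr\La+\La^\ppr\mF^\pr\mF$ (suppressing $\t$), so they agree. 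This completes the verification that $ho\category{dgC}(R)$ is a category.
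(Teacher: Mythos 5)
Your verification is correct and is surely the argument the paper has in mind: the paper states this lemma without proof, treating it as a routine check, and your direct computation of the three defining relations (coalgebra condition, flow equation, co-Leibniz rule) plus the trivial associativity check is exactly that routine check. The co-Leibniz computation, including the observation that no Koszul signs intervene because $\mF^\pr(\t)$ has degree $0$, and the associativity bookkeeping for the $\La$-components are all fine.

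One point deserves sharpening, and you half-noticed it yourself. In the flow-equation step you need $d_{C^\pr}\circ\mF(\t)=\mF(\t)\circ d_C$ and $d_{C^\ppr}\circ\mF^\pr(\t)=\mF^\pr(\t)\circ d_{C^\pr}$, and this is \emph{not} part of the paper's definition of a homotopy pair of dg coalgebras (which only demands the flow equation, the coalgebra-map condition on $\mF(\t)$, and the co-Leibniz rule on $\La(\t)$). It is also not a consequence of those axioms: the flow equation only shows that $d_{C^\pr}\circ\mF(\t)-\mF(\t)\circ d_C$ is constant in $\t$, so the cochain-map property holds precisely when $\mF(0)$ is a cochain map. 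Indeed, the constant pair $(\mF(\t),\La(\t))=(F,0)$ with $F$ a coalgebra map that is not a cochain map is a homotopy pair in the paper's sense, and composing it with a pair having $\La^\pr(\t)\neq 0$ breaks the flow equation for the composite unless $\La^\pr(\t)\circ\big(d_{C^\pr}\circ F-F\circ d_C\big)=0$. So your parenthetical ``or more simply since it is assumed to be a family of dg coalgebra morphisms'' overstates what the definition provides; the honest statement is that the lemma carries the implicit standing hypothesis that $\mF(0)$ and $\mF^\pr(0)$ are cochain maps (e.g.\ dg coalgebra morphisms), which is satisfied in every use the paper makes of it, since homotopy pairs there always interpolate between dg coalgebra morphisms. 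With that hypothesis made explicit, your proof is complete.
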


Consider the following diagram in $\category{dgC}(R)$
\[
\xymatrix{
\Big(C, \triangle_C, d_C\Big)\ar@/^/[r]^{F} \ar@/_/[r]_{\tilde F} 
& \Big(C^\pr, \triangle_{C^\pr}, d_{C^\pr}\Big)\ar@/^/[r]^{F^\pr} \ar@/_/[r]_{\tilde F^\pr} 
& \Big(C^\ppr, \triangle_{C^\ppr}, d_{C^\ppr}\Big)
}
\]
and assume that $F \sim \tilde F$ and $F^\pr \sim \tilde F^\pr$.
Then there are homotopy pairs 

- $\Big(\mF(\t), {\La}(\t)\Big)$ such that $\mF(0)=F$ and $\mF(1)=\tilde F$;

- $\Big(\mF^\pr(\t), {\La}^\pr(\t)\Big)$ such that $\mF^\pr(0)=F^\pr$ and $\mF^\pr(1)=\tilde F^\pr$.

By Lemma~\ref{xunihflowcom}, their composition $\Big({\mF}^\ppr(\t), {\La}^\ppr(\t)\Big)$ is a homotopy pair
such that ${\mF}^\ppr(0)=F^\pr\circ F$ and ${\mF}^\ppr(1)=\tilde F^\pr\circ \tilde F$. It follows that
$F^\pr\circ F \sim \tilde F^\pr\circ \tilde F$ or, equivalently, $\big[F^\pr\circ F \big]=\big[\tilde F^\pr\circ \tilde F\big]$
whenever $F \sim \tilde F$ and $F^\pr \sim \tilde F^\pr$
and the homotopy type $\big[F^\pr\circ F\big]$ of $F^\pr\circ F$ depends only on the homotopy types
$\big[F^\pr\big]$ and $\big[ F\big]$ of $F^\pr$ and $F$, respectively. 
Therefore we can define the composition of $\big[F^\pr\big]$ and $\big[ F\big]$ by
$\big[F^\pr\big]\circ_h \big[ F\big]\coloneqq{} \big[F^\pr\circ F\big]$. Again it is obvious that $\circ_h$ is associative.

\begin{definition}
The homotopy category  of dg coalgebras over $R$ is the category $ho\category{dgC}(R)$ whose objects
are dg coalgebras over $R$  and whose morphisms are homotopy types of dg coalgebra morphisms with composition $\circ_h$.
\end{definition}

A dg coalgebra $\Big(C, \triangle_C, d_C\Big)$ is cocommutative if $\triangle_C =\check{\tauup}\circ\triangle_C$.
We use the notation $\category{cocdgC}(R)$ and $\mathit{ho}\category{cocdgC}(R)$ for the category and homotopy category of
cocommutative dg coalgebras over $R$, which are full subcategories of $\category{dgC}(R)$ and $\mathit{ho}\category{dgC}(R)$,
respectively.

\subsection{The bar functor.}
\label{appendix: bar}

The bar construction of $sL_\infty$-algebras is a 
homotopy functor $\mathfrak{B}$ 
from the category $\category{sL}_\infty(R)$ of $sL_\infty$-algebras 
to the category $\category{cocdgC}(R)$ of cocommutative dg-coalgebras.

The reduced symmetric module $\Xbar{S}(V)$ generated by a $\Z$-graded $R$-module $V$ has the structure of a
$\Z$-graded cocommutative and coassociative coalgebra 
$\Xbar{S}^{\mathit{co}}(V)=\big(\Xbar{S}(V), \Xbar{\blacktriangle}\big)$ over $R$ 
called the reduced symmetric coalgebra cogenerated by $V$,
where
the coproduct
$\Xbar{\blacktriangle}: \Xbar{S}(V)\rightarrow \Xbar{S}(V){\otimes} \Xbar{S}(V)$ is defined for all $n\geq 1$ and homogeneous
$v_1,\ldots, v_n \in V$ to be
\[
\Xbar{\blacktriangle}(v_1\odot\ldots \odot v_n)=\sum_{\mathclap{r=1}}^{n-1} \sum_{{\s \in Sh(r, n-r)}}\e (\s)
v_{\s(1)}\odot\ldots \odot v_{\s(r)}{\otimes} v_{\s(r+1)}\odot\ldots \odot v_{\s(n)}.
\]
Here the second sum is over all $(r, n-r)$-shuffles---these are those permutations of $n$ such that
$\s(1)< \ldots < \s(r),$ and $\s(r+1)< \ldots < \s(n)$.

The reduced symmetric coalgebras have the following properties:
\begin{lemma}\label{fcoder}
For any  ${l} \in \Hom_R\left(\Xbar{S}(V), V\right)^1$, 
define $\mathfrak{D}({l})\in \Hom_R\left(\Xbar{S}(V), \Xbar{S}(V)\right)^1$ 
for all $n\geq 1$ and homogeneous
element $v_1,\ldots,v_n \in V$ by the equation
\[
\mathfrak{D}({l})(v_1\odot\ldots\odot v_n) =
\sum_{\clapsubstack{|\mp| \in P(n)\\ |B_i|=n-|\mp|+1}}
\e(\mp)
Jv_{B_1}\odot \ldots\odot Jv_{B_{i-1}}\odot {l}(v_{B_{i}}) \odot v_{B_{i+1}}\odot \ldots \odot v_{B_{|\mp|}}.
\]
Then, $\mathfrak{D}({l})$ is the unique coderivation of
$\Xbar{S}^{\mathit{co}}(V)$ with the property $\proj_V\circ \mathfrak{D}={l}$,
where $\proj_V:\Xbar{S}(V)\rightarrow V$ is the natural projection.
Conversely, any degree $1$ coderivation $\mathfrak{D}$ of $\Xbar{S}^{\mathit{co}}(V)$,
$\Xbar{\blacktriangle}\circ \mathfrak{D} = \big(\mathfrak{D}\otimes \I +\I\otimes \mathfrak{D}\big)\circ \Xbar{\blacktriangle}$,
is in the form
$\mathfrak{D}({l})$, where ${l} =\proj_V\circ \mathfrak{D} \in \Hom\big(\Xbar{S}(V), V\big)^1$.
\end{lemma}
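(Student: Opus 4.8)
The plan is to prove Lemma~\ref{fcoder} in two halves: first that the explicitly defined $\mathfrak{D}({l})$ is a coderivation satisfying $\proj_V\circ\mathfrak{D}({l})={l}$, and second that any degree-$1$ coderivation $\mathfrak{D}$ of $\Xbar{S}^{\mathit{co}}(V)$ arises this way from ${l}\coloneqq\proj_V\circ\mathfrak{D}$, which simultaneously gives uniqueness. For the first half I would take $\mathfrak{D}({l})$ as defined by the partition formula and simply compute both sides of the co-Leibniz rule $\Xbar{\blacktriangle}\circ\mathfrak{D}({l})=(\mathfrak{D}({l})\otimes\I+\I\otimes\mathfrak{D}({l}))\circ\Xbar{\blacktriangle}$ applied to a generator $v_1\odot\cdots\odot v_n$. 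Both sides expand into sums indexed by a choice of block $B_i$ on which ${l}$ acts together with a splitting of the remaining symmetric factors into two groups; matching these up is a bookkeeping exercise in shuffles and Koszul signs, entirely parallel to the standard computation for the (unshifted) reduced symmetric coalgebra. The identity $\proj_V\circ\mathfrak{D}({l})={l}$ is immediate: on $S^nV$ the only term of $\mathfrak{D}({l})(v_1\odot\cdots\odot v_n)$ landing in $V=S^1V$ is the one with $|\mp|=1$, i.e.\ $B_i=[n]$, which contributes exactly ${l}_n(v_1,\ldots,v_n)$.

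For the converse I would use the standard fact that a $\Z$-graded coalgebra map or coderivation out of $\Xbar{S}^{\mathit{co}}(V)$ is \emph{cofreely} determined by its corestriction to $V$. Concretely, given any degree-$1$ coderivation $\mathfrak{D}$, set ${l}\coloneqq\proj_V\circ\mathfrak{D}\in\Hom_R(\Xbar{S}(V),V)^1$, so that ${l}_n\coloneqq{l}\circ\eb_{S^nV}$ recovers the ``top'' components of $\mathfrak{D}$. Then $\mathfrak{D}-\mathfrak{D}({l})$ is a coderivation whose corestriction $\proj_V\circ(\mathfrak{D}-\mathfrak{D}({l}))$ vanishes. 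One shows by induction on $n$ that a coderivation killing the $V$-corestriction is identically zero on each $S^nV$: for $n=1$ the coproduct is trivial and $\proj_V$ is the identity, so the claim is the hypothesis; for $n>1$ one applies $\Xbar{\blacktriangle}$ to $(\mathfrak{D}-\mathfrak{D}({l}))(v_1\odot\cdots\odot v_n)$, uses the co-Leibniz rule to express the result in terms of values on $S^{<n}V$ (which vanish by induction) and of $\proj_V$ of the value (which vanishes by hypothesis), and concludes that the value itself lies in the intersection of $\ker\Xbar{\blacktriangle}$ with $\ker\proj_V$ restricted to $S^nV$, which is zero. This yields $\mathfrak{D}=\mathfrak{D}({l})$, and in particular the coderivation extending a given ${l}$ is unique.

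The main obstacle I anticipate is purely combinatorial rather than conceptual: getting the Koszul signs $\e(\mp)$ and the shuffle signs $\e(\s)$ to match precisely when comparing $\Xbar{\blacktriangle}\circ\mathfrak{D}({l})$ with $(\mathfrak{D}({l})\otimes\I+\I\otimes\mathfrak{D}({l}))\circ\Xbar{\blacktriangle}$, because the partition-with-a-distinguished-block indexing on the left must be reconciled with the shuffle-then-act indexing on the right, and the degree-$1$ nature of ${l}$ (hence the $J$'s on the blocks preceding $B_i$) introduces extra sign factors that must be tracked carefully through the shift conventions fixed in the ``Notation'' subsection. I would handle this by reducing to the two building-block cases---$\mathfrak{D}({l})$ acting first and then comasked into two groups, versus the group structure chosen first---and invoking the already-established (super)commutativity of $\odot$ together with the defining property of $\e(\mp)$ from the Notation subsection to collapse the two expressions. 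Once the signs are pinned down the rest is formal, and the inductive cofreeness argument for the converse is standard and sign-robust.
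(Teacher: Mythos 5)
Your proposal is correct, and in fact the paper offers no proof of Lemma~\ref{fcoder} at all: it is stated as the standard cofreeness property of the reduced symmetric coalgebra $\Xbar{S}^{\mathit{co}}(V)$, so your argument (direct verification of the co-Leibniz rule for $\mathfrak{D}({l})$, the observation that only the one-block partition survives $\proj_V$, and the induction showing that a coderivation with vanishing corestriction to $V$ is zero because $\ker\Xbar{\blacktriangle}=V$ in characteristic zero) is exactly the standard proof the paper implicitly relies on. One small point of precision: in your base case $n=1$ the hypothesis $\proj_V\circ D=0$ alone does not kill $D(v)$, since $D(v)$ need not lie in $V$ a priori; you must first apply the co-Leibniz rule to $\Xbar{\blacktriangle}(v)=0$ to conclude that $D(v)$ is primitive, hence lies in $V$, and only then invoke the hypothesis --- i.e.\ the base case is just the $n=1$ instance of your inductive step, not a triviality. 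With that adjustment, and the sign bookkeeping you already flag, the argument is complete.
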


%
%

\begin{lemma}\label{fcoalg}
For any pair 
$\big(\w, \l\big)\in \Hom_R\left(\Xbar{S}(V), V^\pr\right)^0\oplus
\Hom_R\left(\Xbar{S}(V), V^\pr\right)^{-1}$,
define a pair $\big( \mF(\w), \La(\w, \l)\big)
\in 
\Hom_R\left(\Xbar{S}(V), \Xbar{S}(V)\right)^0\oplus\Hom_R\left(\Xbar{S}(V), \Xbar{S}(V)\right)^{-1}$
for all $n\geq 1$ and homogeneous
element $v_1,\ldots,v_n \in V$ via the equations
\begin{align*}
\mF\big(\w\big)(v_1\odot \ldots\odot v_n)\coloneqq{}
&
\sum_{\clapsubstack{|\mp| \in P(n)}}
\e(\mp)
\w\big(v_{B_1}\big)\odot\ldots\odot\w\big( v_{B_{|\mp|}}\big)
,\\
\La\big(\w,\l\big)(v_1\odot \ldots\odot v_n)\coloneqq{}
&
\sum_{\clapsubstack{|\mp| \in P(n)}}
\e(\mp)
\sum_{\mathclap{i=1}}^{|\mp|}
\w\big(Jv_{B_1}\big)\odot\ldots\odot\w\big(Jv_{B_{i-1}}\big)
\\
&
\qquad\qquad\qquad\quad
\odot \l\big(v_{B_i}\big)\odot\w\big(Jv_{B_{i+1}}\big)\odot\ldots\odot
\w\big( v_{B_{|\mp|}}\big)
.
\end{align*}
Then,  we have
\[
\begin{aligned}
\Xbar{\blacktriangle}^\pr\circ \mF(\w)
&=\big( \mF(\w)\otimes  \mF(\w)\big)\circ \Xbar{\blacktriangle}
,\\
\Xbar{\blacktriangle}^\pr\circ \La(\w,\l)
&=\big( \La(\w,\l)\otimes  \mF(\w)+ \mF\otimes  \La(\w,\l)\big)\circ \Xbar{\blacktriangle}
,
\end{aligned}
\qquad
\begin{aligned}
\proj_V\circ {\mF}(\w)&=\w
,\\
\proj_V\circ {\La}(\w,\l)&=\l
.
\end{aligned}
\]
Conversely, any pair 
$\big(\mF, \La\big)$ of $\mF\in  \Hom_R\left(\Xbar{S}(V), \Xbar{S}(V^\pr)\right)^0$ and 
$\La \in \Hom\left(\Xbar{S}(V), \Xbar{S}(V^\pr)\right)^{-1}$
satisfying  $\Xbar{\blacktriangle}^\pr\circ \mF=\big( \mF\otimes  \mF\big)\circ \Xbar{\blacktriangle}$
and $\Xbar{\blacktriangle}^\pr\circ \La=\big( \La\otimes  \mF+ \mF\otimes  \La\big)\circ \Xbar{\blacktriangle}$
is in the form $\big(\mF(\w), \La(\w,\l)\big)$, where
$\w = \proj_V\circ {\mF} \in \Hom_R\left(\Xbar{S}(V), V^\pr\right)^0$
and $\l = \proj_V\circ {\La} \in \Hom_R\left(\Xbar{S}(V), V^\pr\right)^{-1}$.
\end{lemma}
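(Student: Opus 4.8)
The plan is to treat this as the coalgebra-map counterpart of Lemma~\ref{fcoder}, exploiting the fact that $\Xbar{S}^{\mathit{co}}(V^\pr)$ is the cofree conilpotent cocommutative coalgebra cogenerated by $V^\pr$: a coalgebra morphism out of any conilpotent coalgebra into it is determined by its corestriction to $V^\pr$, and a twisted co-Leibniz map is likewise determined by its corestriction. So the real content is a usable explicit formula for the cofree lift together with a direct proof of its well-definedness.

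First I would record the corestriction identities, which are immediate from the definitions. In the sum defining $\mF(\w)(v_1\odot\cdots\odot v_n)$ the only summand lying in $S^1 V^\pr\subset\Xbar{S}(V^\pr)$ is the one indexed by the one-block partition $\mp=\{[n]\}$, contributing $\w(v_1\odot\cdots\odot v_n)$; hence $\proj_{V^\pr}\circ\mF(\w)=\w$. Similarly, in $\La(\w,\l)(v_1\odot\cdots\odot v_n)$ the one-block partition (for which $i=1$ is forced and the $J$-twisted prefactors are empty) is the unique summand valued in $S^1 V^\pr$, so $\proj_{V^\pr}\circ\La(\w,\l)=\l$.

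The substantive step is checking that $\mF(\w)$ intertwines the two coproducts and that $\La(\w,\l)$ satisfies the twisted co-Leibniz rule. For $\mF(\w)$ I would expand both $\Xbar{\blacktriangle}^\pr\circ\mF(\w)$ and $(\mF(\w)\otimes\mF(\w))\circ\Xbar{\blacktriangle}$ on a word $v_1\odot\cdots\odot v_n$: the first applies $\w$ block-by-block to a partition of $[n]$ and then shuffle-splits the resulting word into two factors, while the second first shuffle-splits $v_1\odot\cdots\odot v_n$ and then partitions and applies $\w$ within each half. Both expansions are indexed by the same combinatorial data --- an unordered partition of $[n]$ together with a choice of which blocks form the left factor --- and the Koszul signs match because the shuffle sign of the blocks composed with the block-internal signs equals the shuffle sign of the underlying elements. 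For $\La(\w,\l)$ the same reconciliation works, with the one extra datum of which block carries the single $\l$-insertion; since the coproduct keeps every block intact, that distinguished block lands entirely on the left or entirely on the right, which is precisely the decomposition $\La(\w,\l)\otimes\mF(\w)+\mF(\w)\otimes\La(\w,\l)$, and the parity twists $J$ on the arguments preceding the $\l$-slot are controlled by cocommutativity exactly as in Lemma~\ref{fcoder}. I expect this sign-and-shuffle matching to be the main obstacle; once it is in place the rest is formal.

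For the converse I would argue by induction on word length. If $\mF$ is a coalgebra map, set $\w\coloneqq\proj_{V^\pr}\circ\mF$. On $S^1 V=V$ the reduced coproduct vanishes, so $\mF(v)$ is primitive, hence lies in $S^1 V^\pr$, hence equals $\w(v)$, agreeing with $\mF(\w)$. Assume $\mF$ and $\mF(\w)$ agree on $S^{<n}V$. For $x\in S^n V$ the element $\Xbar{\blacktriangle}(x)$ lies in $\bigoplus_{r=1}^{n-1}S^r V\otimes S^{n-r}V$, so by the induction hypothesis $\Xbar{\blacktriangle}^\pr(\mF(x))=(\mF\otimes\mF)\Xbar{\blacktriangle}(x)=(\mF(\w)\otimes\mF(\w))\Xbar{\blacktriangle}(x)=\Xbar{\blacktriangle}^\pr(\mF(\w)(x))$; thus $\mF(x)-\mF(\w)(x)$ is primitive, i.e. lies in $S^1 V^\pr$, and its $V^\pr$-component vanishes since $\proj_{V^\pr}\circ\mF=\w=\proj_{V^\pr}\circ\mF(\w)$. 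Hence $\mF=\mF(\w)$, with $\w$ --- and therefore the lift --- uniquely determined by $\mF$. The identical induction, now comparing $\Xbar{\blacktriangle}^\pr\circ\La$ with $(\La\otimes\mF+\mF\otimes\La)\circ\Xbar{\blacktriangle}$ and using the already-established $\mF=\mF(\w)$, shows that any $\La$ with the stated compatibility equals $\La(\w,\l)$ for $\l=\proj_{V^\pr}\circ\La$, which completes the correspondence.
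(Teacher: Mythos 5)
Your argument is correct, but note that the paper itself offers no proof of this lemma: both Lemma~\ref{fcoder} and Lemma~\ref{fcoalg} are stated in Appendix~\ref{appendix: bar} as standard facts about the reduced symmetric (cofree conilpotent cocommutative) coalgebra, so there is nothing in the text to compare against line by line. What you supply is the expected argument and it is sound: the corestriction identities follow from isolating the one-block partition; the intertwining of the coproducts rests on the bijection between (a shuffle splitting $S\sqcup S^c$ of $[n]$ together with partitions of $S$ and of $S^c$) and (a partition $\mp$ of $[n]$ together with a nonempty proper subcollection of its blocks forming the left factor), with the marked $\l$-block landing wholly on one side, which is exactly the decomposition $\La\otimes\mF+\mF\otimes\La$; and the uniqueness induction on word length is complete, using that an element of $\Xbar{S}(V^\pr)$ with vanishing reduced coproduct lies in $S^1V^\pr$. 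Two points deserve to be made explicit rather than asserted: the Koszul sign bookkeeping (in particular that the $J$-twists on the blocks preceding the $\l$-slot are precisely the signs produced by commuting the odd map $\La$ past the left tensor factor in the term $\mF\otimes\La$), and the fact that ``primitive implies concentrated in $S^1V^\pr$'' uses injectivity of the reduced coproduct on $S^{\geq 2}V^\pr$, which holds here because $R$ has characteristic zero. With those details written out, your proof stands on its own and also yields, as you note, the uniqueness statement needed later in Lemma~\ref{barfunctor}.
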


\begin{definition}
The bar construction of an $sL_\infty$-algebra $\big(V,\underline{{l}}\big)$ is the cocommutative dg-coalgebra
defined as follows:
\[
\mathfrak{B}\big(V,\underline{{l}}\big)=\big(\Xbar{S}^{\mathit{co}}(V),\mD({l})\big).
\]
The bar construction of an $sL_\infty$-morphism  $\xymatrix{\big(V,\underline{{l}}\big)\ar@{..>}[r]^{\underline{\w}} &\big(V^\pr,\underline{{l}}^\pr\big)}$ is  $\mathfrak{B}\big(\underline{\w}\big) =\mF(\w)$.
\end{definition}
\begin{lemma}\label{barfunctor}
The bar construction $\mathfrak{B}$ is a functor from $\category{sL}_\infty(R)$ to $\category{cocdgC}(R)$.
\end{lemma}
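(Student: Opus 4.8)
The plan is to verify the three clauses in the definition of a functor: that $\mathfrak{B}$ sends $sL_\infty$-algebras to objects of $\category{cocdgC}(R)$, that it sends $sL_\infty$-morphisms to morphisms of cocommutative dg coalgebras, and that it respects identities and composition. In every case the key mechanism is the cofreeness of the reduced symmetric coalgebra: a (possibly twisted) coderivation of $\Xbar{S}^{\mathit{co}}$ is determined by its projection to the cogenerators, which is exactly the content of the converse halves of Lemmas \ref{fcoder} and \ref{fcoalg}.

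For objects: by Lemma \ref{fcoder}, $\mD({l})$ is a coderivation of $\Xbar{S}^{\mathit{co}}(V)$, and $\Xbar{S}^{\mathit{co}}(V)$ is cocommutative and coassociative by construction, so it only remains to establish $\mD({l})\circ\mD({l})=0$. The degree-$n$ component of $\proj_V\circ\mD({l})\circ\mD({l})$ is precisely the defining relation of an $sL_\infty$-algebra in Definition \ref{slinftyalg}, so $\proj_V\circ\mD({l})\circ\mD({l})=0$; since $\mD({l})^2$ is itself a coderivation (being, up to a scalar, the graded commutator $[\mD({l}),\mD({l})]$ of coderivations), the same reasoning as in Remark \ref{qderd} upgrades this to $\mD({l})\circ\mD({l})=0$. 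Hence $\mathfrak{B}(V,\underline{{l}})$ is a cocommutative dg coalgebra.

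For morphisms: given an $sL_\infty$-morphism $\underline{\w}\colon(V,\underline{{l}})\to(V^\pr,\underline{{l}}^\pr)$, Lemma \ref{fcoalg} already shows $\mF(\w)$ is a coalgebra map, so the remaining task is to show it is a cochain map, i.e.\ $\mD({l}^\pr)\circ\mF(\w)=\mF(\w)\circ\mD({l})$. One checks by a direct computation — using that $\mF(\w)$ is a coalgebra map and $\mD({l}),\mD({l}^\pr)$ are coderivations — that the difference $D:=\mD({l}^\pr)\circ\mF(\w)-\mF(\w)\circ\mD({l})$ satisfies the twisted co-Leibniz identity $\Xbar{\blacktriangle}^\pr\circ D=(D\otimes\mF(\w)+\mF(\w)\otimes D)\circ\Xbar{\blacktriangle}$, so that it is determined by $\proj_{V^\pr}\circ D$ by the same counting argument used for the converse of Lemma \ref{fcoalg}. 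The degree-$n$ component of $\proj_{V^\pr}\circ D=0$ is the defining relation of an $sL_\infty$-morphism in Definition \ref{slinftymor}, so $D=0$; this is the analogue over $R$ of the argument in Remark \ref{qcoalgd}.

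Finally, identities and composition. The identity $sL_\infty$-morphism of $(V,\underline{{l}})$ has $\w_1=\I_V$ and $\w_k=0$ for $k\ge 2$, so in the defining sum for $\mF(\w)$ only the all-singletons partition survives and $\mathfrak{B}(\underline{\I})=\I_{\Xbar{S}(V)}$. For a composable pair $\underline{\w},\underline{\w}^\pr$, the map $\mF(\w^\pr)\circ\mF(\w)$ is a composite of coalgebra maps, hence a coalgebra map, so by the converse of Lemma \ref{fcoalg} it equals $\mF(\w^\ppr)$ with $\w^\ppr=\proj_{V^\ppr}\circ\big(\mF(\w^\pr)\circ\mF(\w)\big)$; evaluating on $v_1\odot\cdots\odot v_n$ and using $\proj_{V^\ppr}\circ\mF(\w^\pr)=\w^\pr$ identifies $\underline{\w}^\ppr$ with $\underline{\w}^\pr\bullet\underline{\w}$ of Lemma \ref{lmorcomp}. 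Thus $\mathfrak{B}(\underline{\w}^\pr\bullet\underline{\w})=\mathfrak{B}(\underline{\w}^\pr)\circ\mathfrak{B}(\underline{\w})$, completing the verification. I expect the main obstacle to be the combinatorial bookkeeping in the morphism step — writing out $D$ as a sum over partitions with the correct Koszul signs and matching it termwise against the $sL_\infty$-morphism relations — together with stating and proving precisely the ``twisted coderivation is determined by its cogenerator projection'' lemma, which generalizes Lemma \ref{fcoalg} but is only implicitly used in the body.
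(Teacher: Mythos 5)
Your proposal is correct and follows essentially the same route as the paper: in each of the three steps you reduce the identity to its projection onto cogenerators, identify that projection with the defining $sL_\infty$ relations of Definitions \ref{slinftyalg} and \ref{slinftymor} (resp.\ the composition formula of Lemma \ref{lmorcomp}), and then invoke cofreeness to upgrade the projected identity to the full one. The only cosmetic difference is that where the paper disposes of the upgrade with ``a straightforward induction'' or ``can be checked,'' you make the underlying principle explicit (square of a coderivation is a coderivation; $D$ is a twisted coderivation along $\mF(\w)$), which is the same mechanism the paper itself uses in Remarks \ref{qderd} and \ref{qcoalgd}.
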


\begin{proof}
Lemma \ref{fcoder} implies that, for all $n\geq 1$ and homogeneous $v_1,\ldots,v_n \in V$,
\begin{align*}
&\Big(\proj_V\circ \mathfrak{D}({l})\circ \mathfrak{D}({l})\Big)(v_1\odot\ldots\odot v_n) 
\\
&\qquad\qquad
=
\sum_{\clapsubstack{|\mp| \in P(n)\\ |B_i|=n-|\mp|+1}}
\e(\mp ){l}\big(
Jv_{B_1}\odot \ldots\odot Jv_{B_{i-1}}\odot{l}(x_{B_{i}})\odot v_{B_{i+1}}\odot\ldots\odot v_{B_{|\mp|}}\big)
.
\end{align*}
From  Definition \ref{slinftyalg} of an $sL_\infty$-algebra, 
the coderivation $\mD({l})$ satisfies the condition 
$\proj_V\circ\mD({l})\circ \mD({l})=0$,
which  can be checked, by a straightforward induction, 
to be equivalent to the condition that $\mD({l})\circ \mD({l})=0$.
Therefore 
$\mathfrak{B}\big(V,\underline{{l}}\big)=\big(\Xbar{S}^{\mathit{co}}(V),\mD({l})\big)$
is a cocommutative dg-coalgebra.

Lemma \ref{fcoalg} implies that, for all $n\geq 1$ and homogeneous $v_1,\ldots,v_n \in V$,
\begin{align*}
&\Big(\proj_V\circ\big(\mD({l}^\pr)\circ \mF(\w)- \mF(\w)\circ\mathfrak{D}({l})\big)\Big)(v_1\odot \ldots\odot v_n)
\\
&\qquad\qquad
=
\sum_{\clapsubstack{|\mp| \in P(n)}}
\e(\mp){l}^\pr\big(
\w\big(v_{B_1}\big)\odot\ldots\odot \w\big( v_{B_{|\mp|}}\big)\big)
\\
&\qquad\qquad\quad
-\sum_{\clapsubstack{|\mp| \in P(n)\\ |B_i|=n-|\mp|+1}}
\e(\mp)\w\big(
Jv_{B_1}\odot\ldots\odot Jv_{B_{i-1}}\odot {l}(x_{B_{i}})\odot v_{B_{i+1}}\odot \ldots\odot v_{B_{|\mp|}}\big)
.
\end{align*}
From Definition \ref{slinftymor} of $sL_\infty$-morphism, 
we have $\proj_{V^\pr}\circ\big(\mD({l}^\pr)\circ \mF(\w)- \mF(\w)\circ\mathfrak{D}({l})\big)=0$,
which  can be checked to be equivalent to the condition that $\mD({l}^\pr)\circ \mF(\w)= \mF(\w)\circ\mathfrak{D}({l})$.
Therefore $\mathfrak{B}\big(\underline{\w}\big) =\mF(\w)$ is a morphism of cocommutative dg-coalgebras from
$\mathfrak{B}\big(V,\underline{{l}}\big)=\big(\Xbar{S}^c(V),\mD({l})\big)$ to
$\mathfrak{B}\big(V^\pr,\underline{{l}}^\pr\big)=\big(\Xbar{S}^c(V^\pr),\mD({l}^\pr)\big)$.

Consider the sequence of $sL_\infty$-morphisms
$
\xymatrix{
\big(V,\underline{{l}}\big)\ar@{..>}[r]^{\underline{\w}} 
&\big(V^\pr,\underline{{l}}^\pr\big)
\ar@{..>}[r]^{\underline{\w}^\pr} &\big(V^\ppr,\underline{{l}}^\ppr\big)}$. 
Then $ \underline{\w}^\ppr\coloneqq{} \underline{\w}^\pr\bullet \underline{\w}$ is an $sL_\infty$-morphism from
$\big(V,\underline{{l}}\big)$ to $\big(V^\ppr,\underline{{l}}^\ppr\big)$. Now
Lemma \ref{fcoalg} and Definition \ref{lmorcomp}
imply that
$\proj_{V^\ppr}\circ\Big(\mF({\w}^\ppr) - \mF(\w^\pr)\circ \mF(\w)\Big)=0$,
which can be checked to be equivalent to the  condition:
$\mF({\w}^\ppr)=\mF(\w^\pr)\circ \mF(\w)$.
Therefore, we have 
$\mB\big(\underline{\w}^\pr\bullet \underline{\w}\big) = \mB\big(\underline{\w}^\pr\big)\circ  \mB\big( \underline{\w}\big)$
so that
$\mathfrak{B}: \category{sL}_\infty(R)\rightsquigarrow \category{cocdgC}(R)$ is a functor.
\naturalqed
\end{proof}

The next lemma implies that $\mathfrak{B}: \category{sL}_\infty(R)\rightsquigarrow \category{cocdgC}(R)$ is
a homotopy functor,  so that it induces a functor 
$\mathit{ho}\mathfrak{B}: ho\category{sL}_\infty(R)\rightsquigarrow 
\mathit{ho}\category{cocdgC}(R)$.

\begin{lemma}\label{flowtoflow}
For each $sL_\infty$-homotopy pair $\big({\w}(\t), {\l}(\t)\big)$  from
$\big(V,\underline{{l}}\big)$ to $\big(V^\pr,\underline{{l}}^\pr\big)$, 
define
\[
\sB\big(\big({\w}(\t), {\l}(\t)\big)\big) \coloneqq{} \big(\mF\big(\w(\t), \La\big(\w(\t),\l(\t)\big)\big).
\]
Then $\sB\big({\w}(\t), {\l}(\t)\big)$ is a homotopy pair of cocdg-coalgebras
from $\mathfrak{B}\big(V,\underline{{l}}\big)$ to $\mathfrak{B}\big(V^\pr,\underline{{l}}^\pr\big)$.
Furthermore, for composable $sL_\infty$-homotopy pairs 
\[
\xymatrix{
\big(V, {{l}}\big)\ar@{:>}[rr]^-{({\w}(\t), {\l}(\t))}&&\big(V^\pr, {{l}}^\pr\big)
\ar@{:>}[rr]^-{({\w}^\pr(\t), {\l}^\pr(\t))}&&\big(V^\ppr, {{l}}^\ppr\big)
}
\]
we have
$\sB\big(\big({\w}^\pr(\t), {\l}^\pr(\t)\big)\bullet \big({\w}(\t), {\l}(\t)\big)\big)= 
\sB\big(\big({\w}^\pr(\t), {\l}^\pr(\t)\big)\big)\circ\sB\big( \big({\w}(\t), {\l}(\t)\big)\big)$.
\end{lemma}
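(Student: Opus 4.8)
\textbf{Proof proposal for Lemma \ref{flowtoflow}.} The plan is to verify that the pair $\bigl(\mF(\w(\t)),\La(\w(\t),\l(\t))\bigr)$ satisfies the three defining relations of a homotopy pair of dg coalgebras, namely the two comultiplicativity conditions and the flow equation $\tfrac{d}{d\t}\mF(\w(\t)) = \mD({l}^\pr)\circ\La(\w(\t),\l(\t)) + \La(\w(\t),\l(\t))\circ\mD({l})$. The first two of these are free: they are exactly the assertions of Lemma \ref{fcoalg} applied pointwise in $\t$, since $\mF$ and $\La$ are built by the universal formulas there and those formulas are $R[\t]$-linear in the data. Thus only the flow equation requires work, and by Lemma \ref{fcoder} and the counit/cogenerator property $\proj_{V^\pr}\circ\mF = \w$, $\proj_{V^\pr}\circ\La = \l$, it suffices to check the flow equation after postcomposition with $\proj_{V^\pr}$; that is, I would show
\[
\proj_{V^\pr}\circ\tfrac{d}{d\t}\mF(\w(\t)) = \proj_{V^\pr}\circ\bigl(\mD({l}^\pr)\circ\La(\w(\t),\l(\t)) + \La(\w(\t),\l(\t))\circ\mD({l})\bigr),
\]
and then invoke the standard fact (used repeatedly in the excerpt, e.g.\ in the proof of Lemma \ref{barfunctor}) that a degree-preserving map into $\Xbar{S}^{\mathit{co}}(V^\pr)$ is determined by its composition with $\proj_{V^\pr}$ once one knows both sides are compatible with the coproducts — here both sides are, by the comultiplicativity relations just established together with the Leibniz rule for $\mD({l}^\pr)$ and $\mD({l})$.

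First I would expand $\proj_{V^\pr}\circ\tfrac{d}{d\t}\mF(\w(\t))$ on a generator $v_1\odot\cdots\odot v_n$. Only the $|\mp|=1$ term of $\mF$ survives the projection, so this equals $\tfrac{d}{d\t}\w_n(\t)(v_1,\dots,v_n)$. Next I would expand the right-hand side: $\proj_{V^\pr}\circ\mD({l}^\pr)\circ\La(\w(\t),\l(\t))$ contributes the terms where ${l}^\pr$ is applied to the top-level symmetric word produced by $\La$, i.e.\ sums of ${l}^\pr_{|\mp|}$ evaluated on $\w(\t)$'s with exactly one $\l(\t)$ inserted, precisely matching the second sum in Definition \ref{slinftyhflow}; and $\proj_{V^\pr}\circ\La(\w(\t),\l(\t))\circ\mD({l})$ contributes $\l(\t)$ applied to a word in which one block has been hit by ${l}$, matching the first sum in Definition \ref{slinftyhflow}. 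So the identity to prove is exactly the componentwise flow equation of Definition \ref{slinftyhflow}, which holds by hypothesis that $\bigl(\w(\t),\l(\t)\bigr)$ is an $sL_\infty$-homotopy pair. This is the combinatorial heart, and while it is essentially a bookkeeping exercise it is where the Koszul signs and the partition-of-$[n]$ summations must be lined up carefully; I expect this matching to be the main obstacle, though it is of the same character as the sign verifications already carried out in Lemmas \ref{fcoder}, \ref{fcoalg} and \ref{barfunctor}.

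For the second assertion — compatibility of $\sB$ with composition of homotopy pairs — I would argue as in the proof of Lemma \ref{barfunctor}. Write $\bigl(\w^\ppr(\t),\l^\ppr(\t)\bigr)$ for the composite homotopy pair from Lemma \ref{unihflowcom}. Both $\sB\bigl((\w^\ppr(\t),\l^\ppr(\t))\bigr)$ and $\sB\bigl((\w^\pr(\t),\l^\pr(\t))\bigr)\circ\sB\bigl((\w(\t),\l(\t))\bigr)$ are homotopy pairs of dg coalgebras with the same $\mF$-component (since $\mF(\w^\ppr(\t)) = \mF(\w^\pr(\t))\circ\mF(\w(\t))$ by the functoriality established in Lemma \ref{barfunctor}, applied pointwise in $\t$). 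For the $\La$-components it again suffices to check equality after $\proj_{V^\ppr}$: the left side gives $\l^\ppr(\t)$, and the right side $\mF(\w^\pr(\t))\circ\La(\w(\t),\l(\t)) + \La(\w^\pr(\t),\l^\pr(\t))\circ\mF(\w(\t))$ projects, by Lemma \ref{fcoalg}, to precisely the defining formula for $\l^\ppr(\t)$ in Lemma \ref{unihflowcom}. Hence the two homotopy pairs agree. I would remark in passing that taking $\t=0$ and $\t=1$ then recovers, respectively, the statement that $\mathfrak{B}$ sends homotopic $sL_\infty$-morphisms to homotopic dg-coalgebra morphisms and hence descends to $\mathit{ho}\mathfrak{B}\colon \mathit{ho}\category{sL}_\infty(R)\rightsquigarrow\mathit{ho}\category{cocdgC}(R)$, which is the point of the lemma.
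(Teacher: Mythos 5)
Your proposal follows the paper's own proof essentially step for step: the two comultiplicativity relations come from Lemma \ref{fcoalg} applied pointwise in $\t$, the flow equation is verified after composing with $\proj_{V^\pr}$, where it is exactly the componentwise equation of Definition \ref{slinftyhflow}, and compatibility with composition is obtained by comparing the projections of the $\mF$- and $\La$-components with the formulas of Lemma \ref{unihflowcom}, just as the paper does. The only cosmetic difference is in how the projected identities are lifted to full identities: the paper invokes the same induction-on-word-length principle it uses in Lemmas \ref{fcoder}--\ref{barfunctor}, whereas you appeal to coproduct compatibility of both sides, but the substance and structure of the argument are identical.
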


\begin{proof}
Set $\big({\w}^\ppr(\t), {\l}^\ppr(\t)\big)=\big({\w}^\pr(\t), {\l}^\pr(\t)\big)\bullet \big({\w}(\t), {\l}(\t)\big)$
and
\[
\begin{aligned}
{\mF}(\t) &= \mF\big(\w(\t)\big),
&
{\mF}^\pr(\t) &= \mF\big(\w^\pr(\t)\big)
,
&
{\mF}^\ppr(\t) &= \mF\big(\w^\ppr(\t)\big)
,
\\
\La(\t) &= \La\big(\w(\t),\l(\t)\big)
,
&
\La^\pr(\t) &= \La\big(\w^\pr(\t),\l^\pr(\t)\big)
,
&
\La^\ppr(\t) &= \La\big(\w^\ppr(\t),\l(\t)\big)
.
\end{aligned}
\]
From Lemma \ref{fcoalg}, we have
\begin{align*}
\Xbar{\blacktriangle}^\pr\circ \mF(\t)
-\big( \mF(\t)\otimes  \mF(\t)\big)\circ \Xbar{\blacktriangle}
=0
,\\
\Xbar{\blacktriangle}^\pr\circ \La(\t)
-\big( \La(\t)\otimes  \mF(\t)+ \mF(\t)\otimes  \La(\t)\big)\circ \Xbar{\blacktriangle}
=0
.
\end{align*}
Note that the system of equations given in Definition \ref{slinftyhflow}
for the $sL_\infty$-homotopy pair $\big({\w}(\t), {\l}(\t)\big)$
is equivalent to  $\proj_{V^\pr}\circ\left(\Fr{d}{d\t}\mF(\t)-\sD(l^\pr)\circ \La(\t)- \La(\t)\circ \sD(l)\right)=0$, 
which implies  that 
\[
\Fr{d}{d\t}\mF(\t)=\sD(l^\pr)\circ \La(\t)+\La(\t)\circ \sD(l).
\]
Therefore
$\sB\Big(\big({\w}(\t), {\l}(\t)\big)\Big)=\big({\mF}(\t), \La(\t)\big)$ is a homotopy pair of cdg-coalgebras
from $\mathfrak{B}\big(V,\underline{{l}}\big)$ to $\mathfrak{B}\big(V^\pr,\underline{{l}}^\pr\big)$.
From Lemma \ref{unihflowcom} and Lemma \ref{fcoalg},  we have
\begin{align*}
\proj_{V^\ppr}\circ\Big(\mF^\ppr(\t) - \mF^\pr(\t)\circ \mF(\t)\Big)= 0
,\\
\proj_{V^\ppr}\circ\Big(\La^\ppr(\t)  - \mF^\pr(\t)\circ \La(\t) - \La^\pr(\t)\circ \mF(\t)\Big)=0
,
\end{align*}
Working inductively, it is straightforward to show that the above conditions imply that
\[
\begin{cases}
\mF^\ppr(\t) =\mF^\pr(\t)\circ \mF(\t)
\\
\La^\ppr(\t) =\mF^\pr(\t)\circ \La(\t)
+ \La^\pr(\t)\circ \mF(\t)
\end{cases}
\] so that
\[
\Big(\mF^\ppr(\t), \La^\ppr(\t)\Big)=\Big(\mF^\pr(\t), \La^\pr(\t)\Big)\circ \Big(\mF(\t), \La(\t)\Big).
\]
Restoring the original notation, we have the desired composition relation. \naturalqed
\end{proof}

Consider $sL_\infty$-morphisms as follows
\[
\xymatrix{
\Big(V,\underline{{l}}\Big)
\ar@{..>}@/^/[r]^{\underline{\w}}
\ar@{..>}@/_/[r]_{\underline{\tilde\w}}
&\Big(V^\pr,\underline{{l}}^\pr\Big)
\ar@{..>}@/^/[r]^{\underline{\w}^\pr}
\ar@{..>}@/_/[r]_{\underline{\tilde\w}^\pr}
&\Big(V^\ppr,\underline{{l}}^\ppr\Big),
}
\] 
and assume that $\underline{\w} \sim\underline{\tilde\w}$ and
$\underline{\w}^\pr \sim \underline{\tilde\w}^\pr$. Then the first part of Lemma~\ref{flowtoflow} implies
that $\mB(\underline{\w})$ and $\mB(\underline{\tilde\w})$ are homotopic morphisms of cocdg-algebras
from $\mathfrak{B}\big(V,\underline{{l}}\big)$ to $\mathfrak{B}\big(V^\pr,\underline{{l}}^\pr\big)$,
so that the homotopy type $\left[\mB(\underline{\w})\right]$  of  $\mB(\underline{\w})$ 
depends only the the homotopy type $\left[\underline{\w}\right]$ of $\underline{\w}$.
Define $\mathit{ho}\mB\Big(\left[\underline{\w}\right]\Big)\coloneqq{} \left[\mB(\underline{\w})\right]$.
Now the second part of Lemma~\ref{flowtoflow} implies that
 $\mB(\underline{\w}^\pr\bullet\underline{\w})= \mB(\underline{\w}^\pr)\circ \mB(\underline{\w})$ 
is homotopic to 
$\mB(\underline{\tilde\w}^\pr\bullet \underline{\tilde\w})
= \mB(\underline{\tilde\w}^\pr)\circ \mB(\underline{\tilde\w})$ 
as morphisms of cocdg-algebras from $\mathfrak{B}\big(V,\underline{{l}}\big)$ 
to $\mathfrak{B}\big(V^\ppr,\underline{{l}}^\ppr\big)$,
so that the homotopy type $\left[\mB(\underline{\w}^\pr\bullet\underline{\w})\right]$ 
of $\mB(\underline{\w}^\pr\bullet\underline{\w})$ 
depends only on the homotopy types $\left[\underline{\w}^\pr\right]$ and $\left[\underline{\w}\right]$.
Combining everything, we have 
$\mathit{ho}\mB\Big(\left[\underline{\w}\right]\bullet_h\left[\underline{\w}\right]\Big)
=\mathit{ho}\mB\Big(\left[\underline{\w}\right]\Big)\circ_h \mathit{ho}\mB\Big(\left[\underline{\w}\right]\Big)$.
Therefore, we conclude the following.

\begin{lemma}
$\mathfrak{B}: \category{sL}_\infty(R)\rightsquigarrow \category{cocdgC}(R)$ is a homotopy functor: it induces a functor 
$\mathit{ho}\mathfrak{B}: \mathit{ho}\category{sL}_\infty(R)\rightsquigarrow \mathit{ho}\category{cocdgC}(R)$.
\end{lemma}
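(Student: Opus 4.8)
The plan is to assemble the statement from the functoriality of $\mathfrak{B}$ already established in Lemma~\ref{barfunctor} together with the two assertions of Lemma~\ref{flowtoflow}, which do all the real work. First I would recall that by Lemma~\ref{barfunctor} the bar construction is a genuine functor $\mathfrak{B}\colon\category{sL}_\infty(R)\rightsquigarrow\category{cocdgC}(R)$, carrying $\underline{\w}$ to $\mF(\w)$ and identities to identities, and in particular satisfying $\mathfrak{B}(\underline{\w}^\pr\bullet\underline{\w})=\mathfrak{B}(\underline{\w}^\pr)\circ\mathfrak{B}(\underline{\w})$. So the only thing left is to check that $\mathfrak{B}$ descends to the homotopy categories: that it carries homotopic $sL_\infty$-morphisms to homotopic dg-coalgebra morphisms, and that the induced map on homotopy types respects composition.

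For homotopy invariance, given $\underline{\w}\sim\underline{\tilde\w}$ I would choose an $sL_\infty$-homotopy pair $\big(\w(\t),\l(\t)\big)$ with $\underline{\w}(0)=\underline{\w}$ and $\underline{\w}(1)=\underline{\tilde\w}$ (Definition~\ref{slinftyhflow}) and apply $\sB$. By the first half of Lemma~\ref{flowtoflow}, $\sB\big(\w(\t),\l(\t)\big)=\big(\mF(\w(\t)),\La(\w(\t),\l(\t))\big)$ is a homotopy pair of cocommutative dg-coalgebras from $\mathfrak{B}(V,\underline{{l}})$ to $\mathfrak{B}(V^\pr,\underline{{l}}^\pr)$; since $\mathfrak{B}(\underline{\psi})=\mF(\psi)$ for every $sL_\infty$-morphism $\underline{\psi}$, its endpoints at $\t=0,1$ are exactly $\mathfrak{B}(\underline{\w})$ and $\mathfrak{B}(\underline{\tilde\w})$. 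Hence $\mathfrak{B}(\underline{\w})\sim\mathfrak{B}(\underline{\tilde\w})$, so $\big[\mathfrak{B}(\underline{\w})\big]$ depends only on $\big[\underline{\w}\big]$, and I may set $\mathit{ho}\mathfrak{B}\big(\big[\underline{\w}\big]\big)\coloneqq\big[\mathfrak{B}(\underline{\w})\big]$.

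For compatibility with composition, suppose $\underline{\w}\sim\underline{\tilde\w}$ and $\underline{\w}^\pr\sim\underline{\tilde\w}^\pr$. By Lemma~\ref{unihflowcom} the composite homotopy pair witnesses $\underline{\w}^\pr\bullet\underline{\w}\sim\underline{\tilde\w}^\pr\bullet\underline{\tilde\w}$, so by the invariance step just proven $\big[\mathfrak{B}(\underline{\w}^\pr\bullet\underline{\w})\big]$ depends only on $\big[\underline{\w}^\pr\big]$ and $\big[\underline{\w}\big]$. Combining this with $\mathfrak{B}(\underline{\w}^\pr\bullet\underline{\w})=\mathfrak{B}(\underline{\w}^\pr)\circ\mathfrak{B}(\underline{\w})$ gives
\[
\mathit{ho}\mathfrak{B}\big(\big[\underline{\w}^\pr\big]\bullet_h\big[\underline{\w}\big]\big)
=\big[\mathfrak{B}(\underline{\w}^\pr)\circ\mathfrak{B}(\underline{\w})\big]
=\big[\mathfrak{B}(\underline{\w}^\pr)\big]\circ_h\big[\mathfrak{B}(\underline{\w})\big]
=\mathit{ho}\mathfrak{B}\big(\big[\underline{\w}^\pr\big]\big)\circ_h\mathit{ho}\mathfrak{B}\big(\big[\underline{\w}\big]\big),
\]
and $\mathit{ho}\mathfrak{B}$ manifestly preserves identity homotopy types, so $\mathit{ho}\mathfrak{B}\colon\mathit{ho}\category{sL}_\infty(R)\rightsquigarrow\mathit{ho}\category{cocdgC}(R)$ is a well-defined functor. (As an incidental remark, the second half of Lemma~\ref{flowtoflow} shows $\sB$ itself is compatible with $\bullet$, which reproves the composition identity at the level of homotopy pairs, but it is not needed for the functoriality above.)

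I expect the only point requiring genuine care — and it is already packaged inside Lemma~\ref{flowtoflow} — is verifying that $\sB$ of an $sL_\infty$-homotopy pair really satisfies the three defining relations of a dg-coalgebra homotopy pair: the coderivation-type compatibility of $\La(\w(\t),\l(\t))$, the coalgebra-map compatibility of $\mF(\w(\t))$, and the flow equation $\tfrac{d}{d\t}\mF(\t)=\mD({l}^\pr)\circ\La(\t)+\La(\t)\circ\mD({l})$. Via Lemma~\ref{fcoalg} and the projection-onto-$V^\pr$ trick, all three reduce to the componentwise identities of Definition~\ref{slinftyhflow}. Since Lemma~\ref{flowtoflow} is available, the present lemma is then a short bookkeeping argument with no further obstacle.
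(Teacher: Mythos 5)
Your proof is correct and follows essentially the same route as the paper: homotopy invariance of $\mathfrak{B}$ via the first half of Lemma~\ref{flowtoflow}, and compatibility of the induced map with $\bullet_h$ deduced from the strict identity $\mathfrak{B}(\underline{\w}^\pr\bullet\underline{\w})=\mathfrak{B}(\underline{\w}^\pr)\circ\mathfrak{B}(\underline{\w})$ of Lemma~\ref{barfunctor}. The only (harmless) deviation is that the paper handles composition by invoking the second half of Lemma~\ref{flowtoflow} directly, whereas you reach the same conclusion by combining Lemma~\ref{unihflowcom} with the invariance step, which is an equally valid piece of bookkeeping.
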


\subsection{On complete towers of classical symmetries and the classical BV master action}
\label{appendix: classical symmetries tower}

We explain  the notion of a complete tower of infinitesimal classical symmetries as well as a recipe to construct a classical BV master action and subsequent gauge fixing.
The key point is that every notion in {\em off-shell} classical physics should be defined modulo the classical equation of motion and coherence issues are naturally resolved using the language of $sL_\infty$-algebras.  

\begin{definition}
A BV-CFT algebra is a tuple $\big(\sC, 1_\sC, \,\cdot\,, K, (\ ,\,)\big)$, where
$\big(\sC, 1_\sC, \,\cdot\,, K\big)$ is a unital CDGA and $\big(\sC, 1_\sC,  K, (\ ,\,)\big)$ is a unital sDGLA,
such that the degree $1$ bracket is a derivation of the product.
\end{definition}

A typical example of BV-CFT algebra with geometric origin can be built from a smooth manifold with a distinguished function on it.
We present a dictionary for classical field theory.

Let $\sL_{\cl}$ be the space of smooth functions on a smooth manifold  $\mL_{\cl}$ with
a distinguished element  $S_{\cl}\in \sL_{\cl}$. Let $\sC_\cl = \cdots\oplus \sC_\cl^{-2}\oplus \sC_\cl^{-1}\oplus \sC^0_\cl$
be the $\Z$-graded space of smooth poly-vector fields on $\mL_{\cl}$, where $\sC_\cl^{-k} =\G\big(\mL_{\cl},\La^k T_{\mL_{\cl}}\big)$. We regard a $k$-poly vector field as an element of ghost number $-k$ in $\sC_\cl$. Note that $\sC_\cl^0=\sL_\cl$.
Then $\sC_\cl$ has the structure $\big(\sC_\cl, 1_{\sC_\cl}, \,\cdot\,, K_\cl, (\ ,\,)_\cl\big)$ of a BV-CFT algebra,
where the bracket $(\ ,\,)_\cl$ is the Schoutens--Nijenhuis bracket, the differential is defined by 
$K_\cl =\big(S_\cl, \,\big)_\cl$, the product $\cdot$ is the exterior product and the unit $1_{\sC_\cl}$ is the
constant function on $\mL_\cl$ with the value $1$. 
Recall that the Schoutens--Nijenhuis bracket
is the unique extension of the Lie bracket to the vector fields on $\mL_\cl$ as a derivation of the exterior product
of poly-vector fields. It follows that $K_\cl\circ K_\cl=0$, since $\big(S_\cl, S_\cl\big)_\cl=0$, and $K_\cl$ is a derivation
of both the product and the bracket.
Equivalently, we may regard $\sC_\cl$ as the space of smooth
functions on the total space $T^*[-1]\mL_{\cl}$ of the cotangent bundle to $\mL_{\cl}$  after twisting the fiber by ghost number $-1$.
We remark that $T^*[-1]\mL_{\cl}$ has a  canonical odd symplectic structure of ghost number $-1$, whose associated odd Poisson bracket
of ghost number $1$ is the bracket $(\ ,\,)_\cl$.

We regard $\mL_{\cl}$ as the space of {\em classical fields} and $S_\cl$ as the {\em classical action}  of a classical field theory.
Choose Darboux coordinates $\{z^I| z^\bullet_I\}$ of $T^*[-1]\mL_{\cl}$ with
$\gh(z^I)=0$ and $\gh(z^\bullet_I)=-1$, and call $\{z^I\}$ the classical fields and
$\{z^\bullet_I\}$ the anti-fields for the classical fields.  Then, the differential $K_{\cl}$ can be expressed
as  
\eqn{appcla}{
K_{\cl} = \left( \Fr{\d S_{\cl}}{\d z^I}\right)\Fr{\d}{\d z^\bullet_I}
,
}
where we use deWitt--Einstein notation.
Regarded as an odd vector field on $T^*[-1]\mL_{\cl}$, 
the vanishing loci
of $K_{\cl}$ is the solution space  $\mL_{\mathit{onshell}}\subset \mL_{\cl}$
of the {\em classical equation of motion}: 
\eqn{appclb}{
\Fr{\d S_{\cl}}{\d z^I}=0,\quad \forall I
.
}
In general, any element in $\Im K_\cl\cap \sC_\cl$
vanishes by the classical equation of motion.

Consider the cohomology $H_\cl$ of the cochain complex $\big(\sC_{\cl}, K_{\cl}\big)$. 
Then, by Lemma \ref{htrans}, there is
a minimal $sL_\infty$-structure $\big(H_{\cl}, 0, \grave{\ell}_2, \grave{\ell}_3,\ldots\big)$ on $H_{\cl}$
and an $sL_\infty$-quasi-isomorphism 
$\xymatrix{\underline{\w}:\big(H_{\cl}, 0, \grave{\ell}_2, \grave{\ell}_3,\ldots\big)
\ar@{..>}[r]& \big(\sC_{\cl},  K_{\cl},(\ ,\,)_\cl\big)}$. 
This is closely related with the notion of a complete tower of infinitesimal classical symmetries.

We begin with a physical interpretation of the cohomology $H_\cl=\cdots\oplus H^{-2}_\cl\oplus H^{-1}_\cl\oplus H^{0}_\cl$.
\begin{itemize}

\item
We say two elements
$O_\cl$ and $\tilde O_\cl$ of $\sL_\cl$ are equivalent if 
$\tilde  O_\cl - O_\cl = \l(z)^I \Fr{\d S_{\cl}}{\d z^I}$ for some $\{\l(z)^I\} \in  \sL_\cl$
--- they induce the same function on $\mL_{\mathit{onshell}}$. 
Then,  $H^0_{\cl}$ is exactly the set of such equivalence classes:   
every element in $\sC^0_\cl=\sL_\cl$ belongs to $\ker K_\cl$,
and any $\La \in \sC^{-1}_\cl$ is in the form $\La =\l(z)^I z^\bullet_I$ and $K_\cl \La= \l(z)^I \Fr{\d S_{\cl}}{\d z^I}$.
Note that $\big(\sC_\cl, 1_{\sC_\cl}, \,\cdot\,, K_\cl\big)$ is a unital CDGA, which induces
the structure $\big(H^0_\cl, 1_{H_\cl}, \grave{m}_2\big)$ of a unital commutative and associative algebra on $H^0_\cl$,
which is viewed as the algebra $\sL_{\mathit{onshell}}$ of functions on
the solution space
 $\mL_{\mathit{onshell}}$ of the classical equation of motion.

\medskip
\item 
An element $R=R(z)^I z^\bullet_I \in \sC^{-1}_\cl$ is called an infinitesimal symmetry
vector field (for the classical action $S_\cl$) if 
$K_\cl R\equiv R(z)^I \Fr{\d S_{\cl}}{\d z^I}=0$, i.e., $R \in \Ker K_\cl\cap \sC^{-1}_\cl$.
Two infinitesimal symmetry vector fields $R$ and $\tilde R$ are equivalent if 
$\tilde  R - R= \l(z)^{IJ} \Fr{\d S_{\cl}}{\d z^J}$, $\l(z)^{IJ}=-\l(z)^{JI}$ 
--- they induce the same vector fields on $\mL_{\mathit{onshell}}$.
Then  the cohomology group $H^{-1}_{\cl}$ is exactly the set of such equivalence classes: 
any $\La \in \sC^{-2}_\cl$ is in the form $\La =\Fr{1}{2}\l(z)^{IJ} z^\bullet_I z^\bullet_J$ and 
$K_\cl \La= \l(z)^{IJ} \Fr{\d S_{\cl}}{\d z^J}z^\bullet_I$.
Note that $\big(H^{-1}_\cl, \grave{\ell}_2\big)$ is a Lie algebra, which should be the Lie algebra of gauge symmetries of $S_{\cl}$.
It is straightforward to check that  $H^0_\cl$ is a module of the Lie algebra $H^{-1}_\cl$ with the action
$\grave{\vr}:H^{-1}_\cl\times H^0_\cl \rightarrow H^0_\cl$ given by 
$(\xi, \upsilon)\mapsto \grave{\vr}(\xi)(\upsilon)=\xi.\upsilon\coloneqq{}\grave{\ell}_2(\xi, \upsilon)$.
In fact, the entire space $H_\cl$ is a $\Z$-graded module over the Lie algebra $H^{-1}_\cl$ with a similarly defined action.
\end{itemize}

In general, we call 
$H^0_\cl$ the space of on-shell classical observables,
$H^{-1}_\cl$ the space of gauge symmetries, $H^{-2}_\cl$ the space of symmetries of the gauge symmetry, etc.

\begin{definition}
A \emph{tower of infinitesimal classical symmetries} of a BV-CFT algebra is a minimal $sL_\infty$-algebra
$\big(\mg, \ell_2^\mg,\ell_3^\mg,\ldots\big)$ together with an $sL_\infty$-morphism
$\underline{\r}=\r_1,\r_2,\ldots$ to  $\big({\sC}_{\cl},  K_{\cl},(\ ,\,)_{\cl}\big)$. 
Such a tower is \emph{complete} if  $\r_1$ induces an isomorphism from 
$\mg$ to $\Xbar{H}_{\cl}\coloneqq{} \cdots\oplus H^{-2}_{\cl}\oplus H^{-1}_{\cl}$.
\end{definition}

Assume, for demonstrative purposes  that $\mg$ is concentrated in degree $-1$ 
so that $\mg$ is a Lie algebra with bracket  $\ell_2^\mg$. 

\begin{itemize}
\item
The first condition for 
$\underline{\r}:\big(\mg, \ell_2^\mg\big)
\rightarrow \big({\sC}_{\cl},  K_{\cl},(\ ,\,)_{\cl}\big)$ 
to be an $sL_\infty$-morphism is 
\eqn{appclc}{K_\cl\circ \r_1=0,
}
and the image of $\r_1:\mg \rightarrow \sC_\cl$ lies on $\sC^{-1}_{\cl}$.
The condition  $K_\cl\circ \r_1=0$
is equivalent to the condition that $\vr(g)\big(S_\cl)=\big(\r_1(g), S_\cl\big)=0$.
Therefore, 
$\r_1(g) \in \Ker K_\cl\cap\sC^{-1}_\cl$ is an infinitesimal symmetry vector field of the classical
action $S_\cl$ for all $g \in \mg$.  Recall that
$\sC^{-1}_{\cl}$ is the space of vector fields on $\mL_\cl$, which is equivalent to the space $\Der\big(\sL_\cl\big)
\subset\End\big(\sL_\cl\big)$ of derivations of $\sL_{\cl}$.  Therefore $\r_1$ induces a
linear map $\vr : \mg \rightarrow \End\big(\sL_\cl\big)$ defined for all $g \in \mg$ and $O_\cl \in \sL_\cl$ by the equation
\eqn{appcld}{
\vr(g)\big(O_\cl)\coloneqq{} \big(\r_1(g), O_\cl\big)_\cl
.
}

\medskip

\item
The second condition for $\underline{\r}=\r_1,\r_2,\ldots$ to be an $sL_\infty$-morphism is that,  for all $g_1,g_2 \in \mg$,
\eqn{appcle}{
\r_1\big(\ell_2^\mg(g_1,g_2)\big) - \big(\r_1(g_1),\r_1(g_2)\big)_{\cl}=K_\cl \r_2(g_1,g_2).
}
The image of $\r_2: S^2\mg \rightarrow \sC_\cl$ lies on $\sC^{-1}_{\cl}$ and $K_\cl \r_2(g_1,g_2)$
vanishes by the classical equation of motion.  Therefore,  the linear map $\vr: \mg \rightarrow \End\big(\sL_\cl\big)$ 
is almost a  representation of the Lie algebra $\mg$ whose failure vanishes by  the classical equation of motion. It follows that
$\vr$ induces a representation 
$\grave{\vr}: \mg \rightarrow \End\big(\sL_{\mathit{onshell}}\big)$
of the Lie algebra $\mg$, and {\em this is exactly what is relevant for classical physics}. 

\medskip

\item
Note that the  relations in \eq{appcle} come with a coherence issue.
From the Jacobi-identities of   $\ell_2^\mg$ and $(\ ,\,)_{\cl}$, it can be checked
that $K_\cl\circ P_3=0$, where
$P_3$ is the element of $\Hom\big(S^3 \mg, \sC_\cl\big)^1$ defined to be the sum over cyclic permutations of the indices of the arguments of the expression
\[
\r_2\big(\ell^\mg_2(g_1,g_2), g_3\big)_\cl -  \big(\r_2(g_1,g_2), \r_1(g_3)\big)_\cl.
\]
Choose the data of a strong deformation retract 
$\xymatrix{
\big(H_\cl,0\big)\ar@/^/[r]^f &\ar@/^/[l]^{h}\big(\sC_\cl, K_\cl\big)\ar@(ul,ur)^{s}
}$. 
Define $\zeta_3$ to be $h\circ P_3 \in \Hom\big(S^3 \mg, H_\cl\big)^1$
and $\r_3$ to be $s\circ P_3\in \Hom\big(S^3 \mg, H_\cl\big)^0$.  Then we have
$f\circ \zeta_3 = P_3  - K_\cl\circ\r_3$. Provided that $\zeta_3=0$, we have $ P_3 = K_\cl\circ\r_3$,
which is precisely the thirdc ondition for $\underline{\r}=\r_1,\r_2,\r_3,\ldots$ to be an $sL_\infty$-morphism.
In general, we have $\zeta_3\neq 0$ so that the coherence issue can not be resolved within the Lie algebra.

\end{itemize}

To summarize, the correct notion of an infinitesimal classical symmetry is supposed to be an action of a Lie algebra $\mg$ on the
solution space $\mL_{\mathit{onshell}}$ of the classical equation of motion or, equivalently, a representation
$\grave{\vr}:\mg \rightarrow \End\big(\sL_{\mathit{onshell}}\big)$ of the Lie algebra $\mg$. 
Any lifting of such a representation to an off-shell $\mL_{\cl}$ should allow a weaker notion of representation 
${\vr}:\mg \rightarrow \End\big(\sL_{\mathit{cl}}\big)$ modulo the classical equation of motion, which introduces a potentially infinite sequence of coherence issues.

Resolution of all those coherence issues can be achieved,  leading to the notion of a complete tower of infinitesimal symmetries 
as a representative of the maximal symmetry modulo equivalence. To describe this solution,
we consider the reduced cochain complex $\big(\Frozenbar{\sC}_{\cl},  K_{\cl}\big)$,
where $\Frozenbar{\sC}_{\cl}=\cdots\oplus \sC^{-2}_{\cl}\oplus \sC^{-1}_{\cl}\cap \Ker K_{\cl}$. 
It follows that the cohomology of the reduced cochain complex is  isomorphic 
to $\Xbar{H}_{\cl}= \cdots\oplus H^{-2}_{\cl}\oplus H^{-1}_{\cl}$.
From the properties that the bracket $(\ ,\,)_\cl$ has ghost number $1$ and that $K_{\cl}$ is a derivation of the bracket,
it follows that $\big(\Frozenbar{\sC}_{\cl}, K_{\cl}, (\ ,\,)_{\cl}\big)$ is also an sDGLA.
Therefore  $\Xbar{H}_{\cl}$ also admits the structure $\big(\Xbar{H}_{\cl}, \grave{\ell}_2, \grave{\ell}_3, \ldots\big)$
of  a minimal $sL_\infty$-algebra together with
an $sL_\infty$-quasi-morphism 
$\underline{\w}: \big(\Xbar{H}_{\cl}, \grave{\ell}_2, \grave{\ell}_3, \ldots\big)
\rightarrow \big(\Frozenbar{\sC}_{\cl},  K_{\cl},(\ ,\,)_\cl\big)$. Then this data constitutes a complete tower of infinitesimal classical
symmetries, which is supposed to be encoded by a classical BV master action $S=S_\cl +\cdots$.

Choose a homogeneous basis $\{e_a\}_{a \in  \mathscr{I}}$ of $\Xbar{H}_{\cl}$ and work out 
the set of structure constants $\big\{C_{a_1a_2}{}^b,C_{a_1a_2a_3}{}^b,\ldots\big\}$
so that 
$\grave{\ell}_n(e_{a_1},\ldots, e_{a_n}) = C_{a_1\ldots a_n}{}^b e_b$,
where $n\geq 2$ and $a_1,\ldots, a_n,b\in  \mathscr{I}$. 
Then, we consider the following super-manifold:
\[
{\mL} 
=\mL_{\cl}\times \Xbar{H}_{\cl}\times \Xbar{H}^*_{\cl}\times \Xbar{H}^*_{\cl}[1],
\]
with homogeneous affine coordinate system $\big\{\mq^A\big\}= \big\{z^I, \eta^a, \Xbar\eta_a, \l_a\}$, where 
$\gh(z^I)=0$, $\gh(\eta^a)=- \gh(e_a)$, 
$\gh(\Xbar\eta_a) = -\gh(\eta^a)$ and $\gh(\l_a) = \gh(\Xbar\eta_a) +1$.
Then the algebra $\sL$ of functions on $\mL$ is isomorphic to $\sL_{\cl}\big[\!\big[ \eta^a,\Xbar\eta_a, \l_a\big]\!\big]$.
In physics terminology,  we call
$\big\{\eta^a\big\}$  the  ghost fields,
$\big\{\Xbar\eta_a\big\}$ anti-ghost fields, and $\big\{\l_a\big\}$ auxiliary or Lagrangian multiplier fields.
In particular $\big\{\eta^a\big|\gh(\eta^a)=1\big\}$ 
is the Faddeev--Popov ghosts,  $\big\{\eta^a\big|\gh(\eta^a)=2\big\}$ the ghosts of the the Faddeev--Popov ghosts, etc.
Collectively, one calls $\big\{\mq^A\big\}$ the {\tt fields}.  

Now we consider $T^*[-1]\mL$, which has the canonical symplectic structure
$\Omega_{\mathit{BV}}$ with ghost number $-1$, whose expression in terms of Darboux coordinates
\[
\left\{q^A|q^\bullet_A\right\}=\left\{z^I, \eta^a, \Xbar\eta_a, \l_a| z^\bullet_I, \eta^\bullet_a, \Xbar\eta^a_\bullet, \l^a_\bullet\right\},
\qquad \gh(q^\bullet_A) = -\gh(q^A) -1,
\]
is $\Omega=d z^\bullet_I\wedge d z^I+ d \eta^\bullet_a \wedge d\eta^a+d \Xbar\eta^a_\bullet \wedge d\Xbar\eta_a
+d \l^a_\bullet \wedge d\l_a$. We call $q^\bullet_A$ the {\tt antifield} for the {\tt field} $q_A$.
Then,  the space 
$\sC\cong \sL_{\cl}\big[\!\big[ \eta^a, \Xbar\eta_a, \l_a, \eta^\bullet_a, \Xbar\eta^a_\bullet, \l^a_\bullet\big]\!\big]_{a \in \mathscr{J}}$
of functions on $T^*[-1]\mL$ has the structure of BV-CFT algebra $\big(\sC, 1_\sC, \,\cdot\,  (\ ,\,)_{\mathit{BV}}\big)$ with zero differential, where $(\ ,\,)_{\mathit{BV}}$ is the odd Poisson bracket associated with $\Omega_{\mathit{BV}}$
and $1_\sC = 1_{\sC_\cl}$.

Now we define a classical BV master action $S \in \sC^0$ as follows:
\eqn{appclf}{
S = S_{\cl} 
+  \sum_{\mathclap{n\geq 2}}\Fr{1}{n!}\eta^{a_1}\cdots \eta^{a_n} C_{a_1\ldots a_n}{}^b \eta^{\bullet}_b
+ \sum_{\mathclap{n\geq 1}}\Fr{1}{n!}\eta^{a_1}\cdots \eta^{a_n}\w_n(e_{a_1},\ldots, e_{a_n})
+\l_a \Xbar\eta^a_\bullet
.
}
Then, it is straightforward to check that $S$ satisfies  the so called {\em classical BV master equation}:
\eqnalign{appclg}{
(S, S)_{\mathit{BV}}&=0
,\\
S\big|_{\mL}&=S_{\cl}
.
}
Define $K \coloneqq{}(S, -)_{\mathit{BV}}$. It follows that $\big(\sC, 1_\sC, \,\cdot\, , K, (\ ,\,)_{\mathit{BV}}\big)$
is a BV-CFT algebra.

We remark that $\w_n(e_{a_1},\ldots,e_{a_n}) \in \sC_{\cl}$
and  $\gh\big(\w_n(e_{a_1},\ldots,e_{a_n})\big)=\gh(e_{a_1})+\ldots+\gh(e_{a_n})\leq -1$
for all $n\geq 1$ and $e_{a_1},\ldots,e_{a_n} \in \Xbar H_{\cl}$. 
Therefore, we have
\[
\w_n(e_{a_1},\ldots,e_{a_n})=R(z)_{a_1\ldots a_n}{}^{I_1\ldots I_k}z^\bullet_{I_1}\cdots z^\bullet_{I_k},
\]
where $k =- \big(\gh(e_{a_1})+\ldots+\gh(e_{a_n})\big)\geq 1$, and we obtain the following  more explicit form of $S$ as defined 
in \eq{appclf}:
\[
S = S_{\cl} 
+  \sum_{\mathclap{n\geq 2}}\Fr{1}{n!}\eta^{\a_1}\cdots \eta^{\a_n} C_{a_1\ldots a_n}{}^b \eta^{\bullet}_b
+ \sum_{\mathclap{n\geq 1}}\Fr{1}{n!}\eta^{a_1}\cdots \eta^{a_n}R(z)_{a_1\ldots a_n}{}^{I_1\ldots I_k}z^\bullet_{I_1}\cdots z^\bullet_{I_k}
+\l_a \Xbar\eta^a_\bullet
.
\]

Encoding a complete tower of infinitesimal classical symmetries by the classical BV master action $S$, we turn to a general gauge fixing procedure.

\begin{definition}
A \emph{gauge fermion} is an element $\psi \in \sL^{-1}$ and the \emph{gauge fixed classical action} 
$S^\psi_\cl\in \sL^{0}$ with respect to the gauge fermion $\p$ is
\begin{align*}
S^\psi_\cl 
&\coloneqq{} 
S_\cl + \sum_{\mathclap{n\geq 1}}\Fr{1}{n!}\n_n(\psi,\ldots, \psi)
\\
&=
S_\cl + \l_a \Fr{\d \psi}{\d \Xbar\eta_a}+  \sum_{\mathclap{n\geq 2}}\Fr{1}{n!}\eta^{a_1}\cdots \eta^{a_n} 
C_{a_1\ldots a_n}{}^b \Fr{\d \psi}{\d \eta^{b}}
\\
&\quad{}
+ \sum_{\mathclap{n\geq 1}}\Fr{1}{n!}\eta^{a_1}\cdots \eta^{a_n}R(z)_{a_1\ldots a_n}{}^{I_1\ldots I_k}
\left(\Fr{\d \psi}{\d z^{I_1}}\right)\cdots \left(\Fr{\d \psi}{\d z^{I_k}}\right)
.
\end{align*}
\end{definition}

\begin{remark}
Define the family $\underline{\n}=\n_0,\n_1,\n_2,\ldots$ by declaring that $\n_0 =S_{\cl}$ and,
for all $n\geq 1$ and homogeneous $\g_1,\ldots, \g_n \in \sL$,
\[
\n_n(\g_1,\ldots, \g_n) \coloneqq{}\left(\left(\cdots\left( (S, \g_1)_{\mathit{BV}},\g_2\right)_{\mathit{BV}},\cdots \right)_{\mathit{BV}}
,\g_n\right)_{\mathit{BV}}\Big|_{\mL}.
\]
Then, we have 
\[
S^\psi_\cl= \n_0 + \n_1(\psi) +\Fr{1}{2!}\n_2(\psi,\psi) +\Fr{1}{3!}\n_3(\psi,\psi,\psi) +\ldots.
\]
In fact, it is easy to check that $\big(\sL, \underline{\n}\big)$ 
is a weakly homotopy Lie algebra, also known as a curved $L_\infty$-algebra 
--- for example the differential $\n_1: \sL \rightarrow \sL$, which is often called a BRST operator,
does not satisfy $\n_1\circ \n_1 =0$ strictly but only modulo the classical equation of motion. 
\naturalqed
\end{remark}

Alternatively, we consider an element $\Psi \in \sC^{-1}$ and  the canonical transformation generated by $\Psi$.
Then, we have
\[
S\rightarrow S^\Psi = S + (S, \Psi)_{\mathit{BV}} +\Fr{1}{2!}( (S, \Psi)_{\mathit{BV}},\Psi)_{\mathit{BV}} +\ldots \in \sC^0
\]
Let $\psi= \Psi\big|_{\mL}$.
Then, from \eq{appclg} and by definitions, we obtain that
\begin{align*}
(S^\Psi, S^\Psi)_{\mathit{BV}}&=0
,\\
S^\Psi\big|_{\mL}&=S^\psi_{\cl}
.
\end{align*}
Equivalently, we can interpret the canonical transformation generated by $\Psi=\psi$ as a deformation of the Lagrangian subspace
$\mL$ into $\mL^\psi$ so that $S^\Psi\big|_{\mL}=S\big|_{\mL^\psi}$, i.e., from the zeros of the section $\mq^\bullet_A$ 
to the zeros of the section $\mq^\bullet_A - \Fr{\d \psi}{\d \mq^A}$.

\begin{example}
Now we consider a simple case, where the standard BRST-FP method \cite{FaPo,BRST} can be used.
Assume that $\Xbar{H}_{\cl} = H^{-1}_\cl$. Then, the reduced 
sDGLA $\big(\Frozenbar{\sC}, K,(\ ,\,)_{\mathit{BV}}\big)$ is formal for degree reasons. 
Then the minimal $sL_\infty$-structure on $\Xbar{H}_{\cl}$ is   $\big(H^{-1}_\cl, \grave{\ell}_2\big)$,
which is  just a Lie algebra $\mg$ --- which we assume to be the Lie algebra of  a simply connected Lie group $\mG$.
Also assume that we have an $sL_\infty$-quasi-isomorphism $\underline{\w}$ to the sDGLA such that $\w_n=0$, for all $n\geq 2$.
Note that $\w_1(e_{a})=R(z)_a{}^I z^\bullet_I \in \sC^{-1}$.
Then the classical BV master action $S$ is reduced to the following simple form
\[
S = S_{\cl} 
+\l_a \Xbar\eta^a_\bullet 
+\Fr{1}{2}\eta^{a_1} \eta^{a_2} f_{a_1a_2}{}^b \eta^{\bullet}_b
+ \eta^{a}R(z)_{a}{}^{I}z^\bullet_{I}
.
\]
where $\gh(\eta^{a})=1$, $\gh(\Xbar\eta_{a})=-1$ and $\gh(\l_a)=0$. Then we have
$\big(\sL, \underline{\n}\big)$ where $\n_1= S_{\cl}$, $\n_1=\d_{\mathit{BRST}}$ and $\n_k=0$ for all $k\geq 3$,
where $\d_{\mathit{BRST}}$ is the so-called BRST operator given by
\[
\d_{\mathit{BRST}} = 
\l_a \Fr{\d}{\d \Xbar\eta_a} + \d_{\mathit{Lie}},\qquad
\d_{\mathit{Lie}}=\Fr{1}{2}\eta^{a_1} \eta^{a_2} f_{a_1a_2}{}^b \Fr{\d}{\d  \eta^b}
+ \eta^{a}R(z)_{a}{}^{I}\Fr{\d}{\d z^I}
\]
Then we have $\d_{\mathit{BRST}}\circ \d_{\mathit{BRST}}=\d_{\mathit{Lie}}\circ \d_{\mathit{Lie}}=0$
and $\d_{\mathit{BRST}} S_{\cl}\equiv \d_{\mathit{Lie}} S_{\cl}=0$.
Note that $\d_{\mathit{Lie}}$ is the differential for the standard Lie algebra cohomology.
In coordinates, any gauge fermion can be written $\psi = \Xbar\eta_a G^a(z)$. It follows
that 
\[
S^\psi_{\cl} = S_{\cl}  +\d_{\mathit{BRST}}\psi 
= 
S_{\cl}  
+\l_a G(z)^a
+ \eta^{a}R(z)_{a}{}^{I}\Fr{\d G(z)^b}{\d z^I}\Xbar\eta_b,
\]
which is precisely the gauge fixed action according to the Fadeev--Popov procedure:
the integral over $\{\l_a\}$ imposes the constraint $\{G(z)^a=0\}$ --- a gauge fixing ---
and the integral over $\{\eta^a, \Xbar\eta_a\}$ induces the so called Fadeev--Popov determinant,
which are the two ingredients for  constructing a quotient measure on $\mL_\cl\big/\mG$.
\naturalqed
\end{example}

Finally, we refer to \cite{Schwarz} for the geometry of quantum BV master action
and \cite{Cos} for  many subtleties in dealing with infinite dimensional spaces of fields from both the classical and quantum perspectives.   For a passage from the idea of infinitesimal symmetry of quantum expectation to binary QFT algebra, we refer
to  Section $5$ in \cite{Park15}, where $\kbar=1$.

\end{appendix}

\newpage

\renewcommand\refname{Bibliography}

\end{document}